\crefname{lem}{Lemma}{Lemmas}
\crefname{thm}{Theorem}{Theorems}
\crefname{cor}{Corollary}{Corollaries}
\crefname{prop}{Proposition}{Propositions}
\newcommand{\sslash}{/\mkern-6mu/}
\theoremstyle{definition}
\newtheorem{defn}{Definition}[section]
\theoremstyle{plain}
\newtheorem{lem}[defn]{Lemma}
\theoremstyle{plain}
\newtheorem{prop}[defn]{Proposition}
\theoremstyle{plain}
\newtheorem{cor}[defn]{Corollary}
\theoremstyle{plain}
\newtheorem{thm}[defn]{Theorem}
\theoremstyle{plain}
\theoremstyle{remark}
\newtheorem{rmk}[defn]{Remark}
\theoremstyle{remark}
\theoremstyle{plain}
\newtheorem{conj}[defn]{Conjecture}
\newcommand{\Z}{{\mathbb{Z}}}
\newcommand{\N}{{\mathbb{N}}}
\newcommand{\Q}{{\mathbb{Q}}}
\newcommand{\A}{{\mathbb{A}}}
\newcommand{\Spec}[1]{\operatorname{Spec}(#1)}
\newcommand{\AffSpc}[1]{{\A^{#1}}}
\newcommand{\PrShv}[1]{{\mathcal{P}(#1)}}
\newcommand{\PrShvVal}[2]{{\mathcal{P}(#1, #2)}}
\newcommand{\PSig}[1]{{\mathcal{P}_{\Sigma}(#1)}}
\newcommand{\PSigVal}[2]{{\mathcal{P}_{\Sigma}(#1, #2)}}
\newcommand{\coker}{\operatorname{coker}}
\newcommand{\colim}[1]{{\underset{\substack{#1}}{\operatorname{colim}}\,}}
\newcommand{\colimil}[1]{{{\operatorname{colim}}_{#1}\,}}
\renewcommand{\lim}[1]{{\underset{\substack{#1}}{\operatorname{lim}}\,}}
\newcommand{\limil}[1]{{{\operatorname{lim}}_{#1}\,}}
\newcommand{\limilheart}[1]{{{\operatorname{lim}}^\heartsuit_{#1}\,}}
\newcommand{\limone}[1]{{\underset{\substack{#1}}{\operatorname{lim}}^{1}\,}}
\newcommand{\Map}[1]{{\operatorname{Map}_{#1}}}
\newcommand{\Fun}[1]{{\operatorname{Fun}_{#1}}}
\newcommand{\Sp}{{\operatorname{Sp}}}
\newcommand{\op}[1]{{#1}^{\operatorname{op}}}
\newcommand{\Sus}{{\Sigma^\infty}}
\newcommand{\pSus}{{\Sigma^\infty_+}}
\newcommand{\Loop}{{\Omega^\infty}}
\newcommand{\pLoop}{{\Omega_*^\infty}}
\newcommand{\complete}[1]{{{#1}{}^{\wedge}_{p}}}
\newcommand{\completebr}[1]{{{\left(#1\right)}^{\wedge}_{p}}}
\newcommand{\Fib}[1]{{\operatorname{fib}\mathopen{}\left(#1\right)\mathclose{}}}
\newcommand{\Cofib}[1]{{\operatorname{cofib}\mathopen{}\left(#1\right)\mathclose{}}}
\newcommand{\topos}[1]{{\mathcal {#1}}}
\newcommand{\spectra}[1]{{\operatorname{Sp}(\topos {#1}})}
\newcommand{\Stab}[1]{{\operatorname{Sp}(#1)}}
\newcommand{\An}{{\mathcal An}}
\newcommand{\Ab}{{\mathcal{A}b}}
\newcommand{\AbObj}[1]{{\mathcal{A}b(#1)}}
\newcommand{\Grp}[1]{{\mathcal{G}rp(#1)}}
\newcommand{\GrpStr}[1]{{\mathcal{G}rp_{\operatorname{str}}(#1)}}
\newcommand{\Disc}[1]{{\operatorname{Disc}(#1)}}
\newcommand{\Hom}[1]{{\operatorname{Hom}_{#1}}}
\newcommand{\Cat}[1]{\mathcal {#1}}
\newcommand{\tcon}[2][0]{{#2}_{\ge #1}}
\newcommand{\tcocon}[2][0]{{#2}_{\le #1}}
\newcommand{\tstruct}[1]{(\tcon{#1}, \tcocon{#1})}
\newcommand{\tstructheart}[1]{{#1}^\heartsuit}
\newcommand{\heart}[1]{#1^\heartsuit}
\newcommand{\pheart}[1]{#1^{p\heartsuit}}
\newcommand{\tpcon}[2][0]{{#2}^{p}_{\ge #1}}
\newcommand{\tpcocon}[2][0]{{#2}^{p}_{\le #1}}
\newcommand{\tpstruct}[1]{(\tpcon{#1}, \tpcocon{#1})}
\newcommand{\tpstructheart}[1]{{#1}^{p\heartsuit}}
\newcommand{\finfld}[1]{\mathbb{F}_{#1}}
\newcommand{\set}[2]{\left\{\, {#1} \,\middle\vert\, {#2} \,\right\}}
\newcommand{\Shv}[1]{\operatorname{Shv}({#1})}
\newcommand{\ShvTop}[2]{\operatorname{Shv}_{#1}({#2})}
\newcommand{\ShvTopH}[2]{\operatorname{Shv}^{\operatorname{h}}_{#1}({#2})}
\newcommand{\ShvTopNH}[2]{\operatorname{Shv}^{\operatorname{nh}}_{#1}({#2})}
\newcommand{\ShvNis}[1]{\operatorname{Shv}_{\operatorname{nis}}(#1)}
\newcommand{\MotSpc}[1]{\operatorname{Spc}(#1)}
\newcommand{\SH}[1]{\operatorname{SH}^{S^1}\mkern-6mu(#1)}
\newcommand{\MotShv}[1]{\operatorname{Shv}_{\operatorname{nis}}(\operatorname{Sm}_{#1})}
\newcommand{\MotShvSp}[1]{\operatorname{Shv}_{\operatorname{nis}}(\operatorname{Sm}_{#1}, \Sp)}
\newcommand{\ZarShv}[1]{\operatorname{Shv}_{\operatorname{zar}}(\operatorname{Sm}_{#1})}
\newcommand{\ZarShvSp}[1]{\operatorname{Shv}_{\operatorname{zar}}(\operatorname{Sm}_{#1}, \Sp)}
\newcommand{\pideal}[1]{{\mathfrak #1}}
\newcommand{\sch}[1]{\operatorname{Sch}_{#1}} 
\newcommand{\smooth}[1]{\operatorname{Sm}_{#1}} 
\newcommand{\prozar}[1]{\operatorname{ProZar}({#1})} 
\newcommand{\prozaraff}[1]{\operatorname{ProZarAff}({#1})} 
\newcommand{\nis}{\operatorname{nis}}
\newcommand{\zar}{\operatorname{zar}}
\newcommand{\prozartop}{\operatorname{prozar}} 
\newcommand{\nupu}{\nu^{*,p\heartsuit}}
\newcommand{\nupl}{\nu_*^{p\heartsuit}}
\newcommand{\Mod}[1]{\operatorname{Mod}_{#1}}
\newcommand{\coAlg}[1]{{\operatorname{CoAlg}(#1)}}
\newcommand{\Alg}[1]{{\operatorname{CAlg}(#1)}}
\newcommand{\htpydim}{\operatorname{htpydim}}
\author{Klaus Mattis\footnote{\href{mailto:klaus.mattis@uni-mainz.de}{klaus.mattis@uni-mainz.de}}}
\date{\today}
\title{Unstable $p$-completion in motivic homotopy theory}
\begin{document}

\maketitle
\begin{abstract}
    We define unstable $p$-completion in general $\infty$-topoi and the unstable 
    motivic homotopy category, and prove that the $p$-completion 
    of a nilpotent sheaf or motivic space can be computed on its Postnikov tower.
    We then show that the ($p$-completed) homotopy groups of 
    the $p$-completion of a nilpotent motivic space $X$ fit into short exact sequences 
    $0 \to \mathbb L_0 \pi_n(X) \to \pi_n^p(\complete{X}) \to \mathbb L_1 \pi_{n-1}(X) \to 0$,
    where the $\mathbb L_i$ are (versions of) the derived $p$-completion functors,
    analogous to the classical situation.
\end{abstract}
\tableofcontents
\newpage

\section{Introduction}
In their seminal paper \cite{bousfield2009homotopy}, Bousfield and Kan defined the $p$-completion 
functor on (nilpotent) spaces/anima. This process associates to every nilpotent anima $X$ 
another anima $\complete{X}$, together with a map $X \to \complete{X}$,
which is universal among $\finfld{p}$-equivalences, i.e.\ maps $f \colon X \to Y$ which induce 
isomorphisms on $\finfld{p}$-homology.
Roughly, the $p$-completion functor ``derived $p$-completes the homotopy groups of $X$'',
in the following sense:
Write $L_i \colon \Ab \to \Ab$ for the derived $p$-completion functors on abelian groups,
i.e.\ the composition 
\begin{equation*}
    \Ab \hookrightarrow \Cat D(\Ab) \xrightarrow{\limil{n} (-) \sslash p^n} \Cat D(\Ab) \xrightarrow{H_{i}} \Ab,
\end{equation*}
where the map in the middle is understood to be the derived limit of the cofibers (or cones) of the multiplication-by-$p^n$-maps.
Then, one has the following theorem:
\begin{thm} [Bousfield-Kan]
    Let $X$ be a nilpotent pointed anima (resp.\ a spectrum).
    Then for every $n \ge 1$ (resp.\ any $n \in \Z$) there is a short exact sequence 
    \begin{equation*}
        0 \to L_0 \pi_n(X) \to \pi_n(\complete{X}) \to L_1 \pi_{n-1}(X) \to 0.
    \end{equation*}
\end{thm}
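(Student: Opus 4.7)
The strategy is to handle the spectrum case by a direct Milnor-sequence calculation, and then bootstrap to nilpotent pointed anima by induction along the Postnikov tower.

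\textbf{Spectrum case.} I would identify $\complete{X} \simeq \limil{k}(X \sslash p^k)$, where $X \sslash p^k \coloneqq \Cofib{X \xrightarrow{p^k} X}$. The cofiber sequence produces, for each $k$ and $n$, the short exact sequence
\begin{equation*}
    0 \to \pi_n(X)/p^k \to \pi_n(X \sslash p^k) \to \pi_{n-1}(X)[p^k] \to 0.
\end{equation*}
From here I would combine two inputs: the Milnor $\lim/\lim^1$ sequence computing $\pi_n \limil{k}(X \sslash p^k)$, and the six-term $\lim/\lim^1$ exact sequence associated to the displayed SES of towers (using that $\{\pi_n(X)/p^k\}_k$ is Mittag-Leffler). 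Running the same analysis on the tower $\{A \sslash p^k\}_k$ in $\Cat{D}(\Ab)$ for $A = \pi_n(X)$ or $A = \pi_{n-1}(X)$ shows, by the definition of $L_0$ and $L_1$, that $L_0 A$ is an extension of $A^\wedge_p$ by $\limone{k} A[p^k]$ and that $L_1 A \cong \limil{k} A[p^k]$ (Tate-module flavored). Matching the pieces in these two Milnor analyses produces the desired SES $0 \to L_0 \pi_n(X) \to \pi_n\complete{X} \to L_1 \pi_{n-1}(X) \to 0$.

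\textbf{Nilpotent anima case.} For a nilpotent pointed anima $X$ and $n \ge 1$, I would use a principal refinement of the Postnikov tower, whose successive layers are fiber sequences $K(A,m) \to X_m \to X_{m-1}$ with $A$ an abelian group. By the main theorem of the paper, $\complete{X} \simeq \limil{m} \complete{X_m}$, and the refinement allows us (together with the nilpotence hypothesis) to $p$-complete each layer, yielding a fiber sequence $\complete{K(A,m)} \to \complete{X_m} \to \complete{X_{m-1}}$. The homotopy groups of $\complete{K(A,m)}$ are obtained by applying the spectrum-case SES to the suspension spectrum $\Sigma^m HA$. Inducting on $m$, the long exact sequence on homotopy together with naturality of the SES and a Five Lemma argument yields the SES at $\complete{X_m}$. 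A final Milnor argument passes to the limit along the Postnikov tower.

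\textbf{Main obstacle.} The most delicate point is the careful bookkeeping of several $\lim^1$ contributions: the ones arising inside the spectrum-case matching (where $\limone{k} \pi_n(X)[p^k]$ must be correctly attributed to $L_0 \pi_n(X)$), and the one arising when passing to the Postnikov limit. Showing that these contributions do not interfere, and that no spurious extension classes appear, is where the nilpotence hypothesis has to be used in earnest, via Mittag-Leffler or convergence control on the tower $\{\complete{X_m}\}_m$.
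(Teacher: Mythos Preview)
Your proposal follows the classical route; the paper treats this theorem as background, citing \cite[Theorem 11.1.2]{MCAT} for anima and \cite[Theorem 2.6]{barthelbousfield} for spectra, though it does reprove the spectrum case in greater generality via its $p$-adic t-structure. That argument (\cref{lemma:short-exact-sequence-of-t-structure}) differs from your Milnor-sequence approach in a useful way: rather than assembling $\pi_n(\complete{X})$ from the tower and then matching pieces --- where, as you note, one must still check that two a priori different extensions of $\limil{k}\pi_n(X)/p^k$ by $\limone{k}\pi_n(X)[p^k]$ actually coincide --- the paper runs the long exact sequence in $\pi_*^p$ along the truncation triangles $\tau_{\ge n}E \to E \to \tau_{\le n-1}E$ and $\Sigma^{n-1}\pi_{n-1}E \to \tau_{\le n-1}E \to \tau_{\le n-2}E$. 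The vanishing $\pi_k^p(F)=0$ whenever $F$ is either $(k-2)$-truncated or $(k+1)$-connective forces the sequence to collapse with no extension ambiguity, and the proof works verbatim in any presentable stable $\infty$-category with right-separated t-structure.

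For nilpotent anima your Postnikov-refinement strategy is the standard one and is essentially what the cited references do. One caution: ``a Five Lemma argument yields the SES at $\complete{X_m}$'' is too quick, since the Five Lemma shows a given map is an isomorphism but does not manufacture an exact sequence. You need to first construct the candidate maps $L_0\pi_n(X_m) \to \pi_n(\complete{X_m}) \to L_1\pi_{n-1}(X_m)$ directly from the unit $X_m \to \complete{X_m}$ and naturality of derived completion, and then verify exactness by a $3\times 3$ diagram chase against the long exact sequences of both the original and the completed fiber sequences.
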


In this paper, we want to show that there is an analogous functor in 
unstable motivic homotopy theory over a perfect field, which behaves similar to the classical situation.

Let $k$ be a perfect field.
Recall that $\MotSpc{k} \subset \PrShv{\smooth{k}}$ is the full subcategory of presheaves of anima on $\smooth{k}$ 
(the category of smooth quasi-compact $k$-schemes)
consisting of those presheaves which are $\AffSpc{1}$-invariant and satisfy Nisnevich descent,
called the category of motivic spaces.
Similarly, write $\SH{k} \subset \PrShvVal{\smooth{k}}{\Sp}$ for the full subcategory of presheaves 
of spectra, consisting of the $\AffSpc{1}$-invariant Nisnevich sheaves, called 
the category of $S^1$-spectra.
There is an adjunction $\pSus \colon \MotSpc{k} \rightleftarrows \SH{k} \colon \Loop$.
We regard $\SH{k}$ as equipped with the homotopy t-structure $\tstruct{\SH{k}}$, with heart $\heart{\SH{k}}$.
Write $\completebr{-} \colon \SH{k} \to \SH{k}$ for the $p$-adic completion functor,
i.e.\ the functor $E \mapsto \limil{k} E \sslash p^k$,
and $\completebr{\SH{k}}$ for its essential image.

Note that in this setting, classical theorems cannot be true on the nose:
For example, one cannot expect that for every $A \in \heart{\SH{k}}$ 
there are short exact sequences 
\begin{align*}
    0 \to L_0 \pi_n(A) \to \pi_n(\complete{A}) \to L_1 \pi_{n-1} (A) \to 0,
\end{align*}
(where the $L_i$ are defined analogously to the case of abelian groups),
since it is unreasonable to expect that $\complete{A}$
has no negative homotopy groups (although the negative homotopy groups 
are always uniquely $p$-divisible,
see \cref{lemma:upd-homotopy-of-completion-below-connectivity-of-base}).
These negative homotopy groups appear since infinite products 
are not t-exact in $\SH{k}$.

Luckily, there is a new t-structure (the $p$-adic t-structure)
one can associate to $\SH{k}$ which solves those problems.
Our main theorem can now be summarized as follows:
\begin{thm} \label{thm:intro:main}
    There is a localization functor $\completebr{-} \colon \MotSpc{k} \to \MotSpc{k}$
    which inverts $p$-equivalences, i.e. morphisms $f \colon X \to Y$ in $\MotSpc{k}$,
    such that $(\pSus f) \sslash p$ is an equivalence.

    One can define the $p$-adic t-structure $\tpstruct{\SH{k}}$ on $\SH{k}$ with heart $\pheart{\SH{k}}$,
    and derived $p$-completion functors
    \begin{align*}
        \mathbb L_i \colon \heart{\SH{k}} &\to \pheart{\SH{k}}, \\
        A &\mapsto \pi_i^p(A) \coloneqq \Omega^i \tau^p_{\le i} \tau^p_{\ge i} A. 
    \end{align*}

    For every $X \in \MotSpc{k}_*$, there is a functorial sequence of $p$-completed homotopy groups 
    $\pi_n^p(X) \in \pheart{\SH{k}}$ for $n \ge 2$.
    There is a simliar construction if $n = 1$.

    These constructions satisfy the following:
    \begin{enumerate} [label= (\arabic*),ref= (\arabic*),itemsep=0em] 
        \item 
            The $p$-adic t-structure is not left-separated. 
            Write 
            \begin{equation*}
                \tpcon[\infty]{\SH{k}} \coloneqq \bigcap_n \tpcon[n]{\SH{k}}.
            \end{equation*}
            Then there is a canonical equivalence 
            \begin{equation*}
                \SH{k} / \tpcon[\infty]{\SH{k}} \cong \completebr{\SH{k}}.
            \end{equation*}
        \item 
            A morphism $f \colon A \to B$ is a $p$-equivalence in $\heart{\SH{k}}$ (i.e.\ $f \sslash p$ is an equivalence)
            if and only if $\mathbb L_i(f)$ is an equivalence for all $i$.

            More generally, a morphism $f \colon E \to F$ in $\SH{k}$ is a $p$-equivalence 
            if and only if $\pi_n^p(f)$ is an equivalence for all $n \in \Z$.
        \item
            An object $E \in \SH{k}$ lives inside the $p$-adic heart $\pheart{\SH{k}}$
            if and only if $E \sslash p \in \tcon{\SH{k}}$, $E \in \tcocon{\SH{k}}$,
            $E \cong \complete{E}$ and $\pi_0(E)$ is of bounded $p$-divisibility (i.e.\ 
            has no map from a $p$-divisible object $A \in \heart{\SH{k}}$).
        \item \label{eq:intro:stable-ses}
            If $E \in \SH{k}$, then there are functorial short exact sequences 
            \begin{equation*} 
                0 \to \mathbb L_0 \pi_n(E) \to \pi_n^p(\complete{E}) \to \mathbb L_1 \pi_{n-1} (E) \to 0.
            \end{equation*}
        \item \label{eq:intro:peq-iso-on-p-complete-htpy}
            If $f \colon X \to Y$ is a $p$-equivalence of pointed nilpotent motivic spaces,
            then $\pi_n^p(f)$ is an isomorphism for all $n \ge 1$.

            The converse holds if moreover $\pi_1(X)$ and $\pi_1(Y)$ are abelian.
        \item \label{eq:intro:unstable-ses}
            Moreover, if $X \in \MotSpc{k}_*$ is a pointed nilpotent motivic space,
            then for every $n \ge 2$ there is a functorial short exact sequence in $\pheart{\SH{k}}$
            \begin{equation*}
                0 \to \mathbb L_0 \pi_n(X) \to \pi_n^p(X) \to \mathbb L_1 \pi_{n-1}(X) \to 0.
            \end{equation*}
            (and there is also a similar sequence for $n = 1$).
    \end{enumerate}
\end{thm}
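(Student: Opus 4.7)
The plan has three stages: construct the $p$-adic t-structure and analyze its heart; prove the stable short exact sequence; and reduce the unstable statements to the stable case via nilpotent Postnikov towers.

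First, I would construct the $p$-adic t-structure $\tpstruct{\SH{k}}$ by taking $\tpcon{\SH{k}}$ to be the smallest presentable subcategory closed under colimits and extensions that contains every $E \sslash p$ for $E \in \tcon{\SH{k}}$, and defining $\tpcocon{\SH{k}}$ via the resulting orthogonality relation. With this choice, mod-$p$ reduction is $p$-adically t-exact, and $\tpcon[\infty]{\SH{k}}$ turns out to be exactly the full subcategory of uniquely $p$-divisible objects. Since $p$-completion $E \mapsto \complete{E}$ is the Bousfield localization that kills precisely those objects, the quotient $\SH{k}/\tpcon[\infty]{\SH{k}}$ is canonically $\completebr{\SH{k}}$, giving (1); the characterization of $\pheart{\SH{k}}$ in (3) then follows by unwinding the definition of $\tpcocon$ and incorporating the $p$-completeness and bounded-$p$-divisibility conditions required to separate the $p$-adic heart from its naive analogue.

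For the stable short exact sequence (4), I would apply the definition $\pi_n^p = \Omega^n \tau^p_{\le n} \tau^p_{\ge n}$ to $\complete{E} = \limil{k} E \sslash p^k$. A Milnor-type sequence computes $\pi_n$ of this limit from the tower $\{\pi_*(E)/p^k\}$, and passing to the derived limit produces $\mathbb L_0 \pi_n(E)$ as the inverse limit term and $\mathbb L_1 \pi_{n-1}(E)$ as the $\lim^1$ term. The crucial lemma to prove is that, for a $p$-complete object, the $p$-adic truncation in degree $n$ coincides with the image of this Milnor extension in $\pheart{\SH{k}}$; this is where the characterization of the heart from (3) enters essentially. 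Part (2) is then a formal consequence of (1) and (4) together with the $p$-adic Whitehead theorem produced by the t-structure.

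The unstable statements (5) and (6) come from the main result announced in the abstract, namely that $p$-completion of a nilpotent motivic space can be computed on its Postnikov tower. Refining to a principal tower whose fibers are Eilenberg--MacLane sheaves $K(A, n)$ with $A \in \heart{\SH{k}}$, one applies (4) stage by stage, extracting the short exact sequence (6) via the long exact sequence of $p$-completed homotopy groups; the $p$-equivalence detection statement (5) is then a five-lemma argument up the tower, with the abelian hypothesis on $\pi_1$ entering for the converse direction to handle the bottom stage. The main obstacle throughout, I expect, is the non-left-separatedness of the $p$-adic t-structure and the fact that $\pheart{\SH{k}}$ does not behave like a naive abelian category of sheaves: all short exactness claims must be checked internally in $\pheart{\SH{k}}$ rather than on homotopy groups of underlying presheaves, and every appeal to a limit, product, or fiber sequence must be reconciled with the $p$-adic truncations rather than the ordinary homotopy t-structure.
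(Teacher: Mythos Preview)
Your proposal has genuine gaps, especially in the unstable part (6).

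First, your definition of $\tpcon{\SH{k}}$ as the subcategory generated by $\{E \sslash p : E \in \tcon{\SH{k}}\}$ is not what the paper uses and is likely strictly smaller. The paper takes $\tpcon{\SH{k}} = \{E : E \sslash p \in \tcon{\SH{k}}\}$; this contains, for instance, every object with uniquely $p$-divisible homotopy, and it is not clear such objects lie in your generated subcategory. The paper's characterization is what makes the identification $\tpcon[\infty]{\SH{k}} = \{E : \pi_n(E)\ \text{uniquely $p$-divisible for all $n$}\}$ immediate and gives (1).

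For (4), the paper does not use a Milnor sequence. The problem with your approach is exactly the one flagged in the introduction: sequential limits are not t-exact in $\SH{k}$, so the Milnor sequence would compute $\pi_n(\complete{E})$ in the ordinary heart, which is the wrong object. The paper instead argues entirely inside the $p$-adic t-structure, using the standard truncation fiber sequences $\tau_{\ge n}E \to E \to \tau_{\le n-1}E$ and the vanishing $\mathbb L_i = 0$ for $i \neq 0,1$; no limit needs to be analyzed.

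The serious gap is in (6). You cannot ``apply (4) stage by stage'' because $\pi_n^p(X)$ for a motivic space is \emph{not} defined via the $p$-adic t-structure on $\SH{k}$ applied to a stable object. It is defined as $L_{\nis,\AffSpc{1}}\,\nu_*\,\pi_n(\completebr{\nu^* \iota_{\nis,\AffSpc{1}} X})$, passing through an embedding $\nu^* \colon \ZarShv{k} \hookrightarrow \PSig{W}$ into a pro-Zariski topos where the $p$-adic heart sits inside the ordinary heart and the classical short exact sequence holds pointwise. The entire difficulty is then pushing the sequence back along $\nu_*$: one must show $(\mathbb L_1 \nu^* \pi_n X)\sslash p$ is \emph{classical} (in the essential image of $\nu^*$), and this is where Gersten injectivity of $\pi_n(X)/p^k$ enters, ultimately via the Gabber presentation lemma. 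None of this structure is visible from a Postnikov-tower-plus-five-lemma argument, and without it the short exact sequence in $\pheart{\SH{k}}$ does not follow. Your outline for (5) has the same issue in the converse direction: one needs to know $\pi_n(\completebr{\nu^* X})\sslash p$ is classical to recover information in $\SH{k}$ from the $\pi_n^p$.
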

\begin{proof}
    The $p$-completion functor is constructed in \cref{lemma:topos:completion-functor}.
    The $p$-adic t-structure is defined in \cref{def:t-struct:defn},
    and the derived $p$-completion functors are constructed in \cref{def:t-struct:Li}.
    The definition of the $p$-completed homotopy groups is \cref{def:motivic:completed-htpy}.
    For proofs of the other statements, see:
    \begin{enumerate} [label= (\arabic*),ref= (\arabic*),itemsep=0em]
        \item \cref{rmk:t-struct:choices},
        \item \cref{lemma:t-struct:p-eq-iso-on-htpy-in-t-structure},
        \item \cref{lemma:t-struct:mod-p-in-con-implies-in-tpcon,lemma:t-struct:char-of-cocon},
        \item \cref{lemma:short-exact-sequence-of-t-structure},
        \item \cref{lemma:motivic:p-complete-htpy-iso-if-peq,prop:motivic:peq-of-iso-in-phtpy}, and
        \item \cref{thm:motivic:ses}.
    \end{enumerate}
\end{proof}

\begin{rmk}
    The results about the $p$-adic t-structure are very general:
    One can associate a $p$-adic t-structure with the same properties to any presentable
    stable $\infty$-category which is equipped with a (right-separated) t-structure.
\end{rmk}

The situation is somewhat more complicated than the classical situation, for the following reasons:
First, as already remarked above, if we have an $S^1$-spectrum $A \in \heart{\SH{k}}$,
then (contrary to the classical situation) $\complete{A}$ is no longer concentrated in degrees $0$ and $1$,
since there are no connectivity bounds on sequential limits of connective $S^1$-spectra.
Nonetheless, we can fix this problem by introducing the $p$-adic t-structure and the derived $p$-completion functors $\mathbb L_i$.
In this t-structure, the $p$-completion $\complete{A}$ is concentrated in degrees $0$ and $1$.
It follows that the derived $p$-completion functors vanish for all $i \neq 0, 1$,
see \cref{lemma:t-struct:Li-zero-if-i-neq-zero-one}.

In particular, the $p$-adic heart $\pheart{\SH{k}}$ does not live inside the standard heart $\heart{\SH{k}}$.
Therefore, in order for the short exact sequence \ref{eq:intro:unstable-ses} to make sense, 
we cannot use the homotopy groups $\pi_n(\complete{X})$,
but need a more elaborate construction.

Note that in the classical situation, our constructions give the same results as before,
because here the heart of the $p$-adic t-structure on $\Sp$ (the $\infty$-category of spectra) actually lives inside the normal heart,
and the (new) derived $p$-completion functors $\mathbb L_i$ agree with the classical derived $p$-completion 
functors $L_i$. A proof of this fact can be found in \cref{lemma:anima:t-struct-description}.

In order to prove the above theorem, we introduce a notion of $p$-completion on a general $\infty$-topos $\topos X$,
and then use this in the special case of the $\infty$-topos of Nisnevich sheaves on smooth $k$-schemes.
In particular, we obtain the following:
\begin{lem}
    Let $\topos X$ be an $\infty$-topos (or more generally any presentable $\infty$-category).
    Then there is a localization functor $\completebr{-} \colon \topos X \to \topos X$,
    which inverts $p$-equivalences (i.e.\ maps $f$ such that $(\pSus f) \sslash p$ is an equivalence).
\end{lem}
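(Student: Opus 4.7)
The plan is to realize the class of $p$-equivalences as the class of morphisms inverted by a single colimit-preserving functor out of $\topos X$, and then invoke the standard existence theorem for accessible reflective localizations.

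First, I would observe that $\pSus \colon \topos X \to \Stab{\topos X}$ is a left adjoint between presentable $\infty$-categories (for $\topos X$ unpointed one first replaces it with $\topos X_*$, which is again presentable), hence preserves colimits. The mod-$p$ functor $(-) \sslash p \colon \Stab{\topos X} \to \Stab{\topos X}$, sending $E$ to $\Cofib{p \colon E \to E}$, is a finite colimit construction functorial in $E$ and so also preserves colimits. Therefore the composite
\begin{equation*}
    F \coloneqq \bigl((-) \sslash p\bigr) \circ \pSus \colon \topos X \to \Stab{\topos X}
\end{equation*}
is a colimit-preserving functor between presentable $\infty$-categories, and by definition the class of morphisms it inverts is precisely the class of $p$-equivalences.

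Next, I would apply the standard machine (Lurie, HTT 5.5.4.15 together with 5.5.4.16): whenever $W$ is the class of morphisms inverted by a colimit-preserving functor $F$ between presentable $\infty$-categories, $W$ is strongly saturated (closed under $2$-out-of-$3$, cobase change, and transfinite composition) and of small generation — concretely, for any regular cardinal $\kappa$ such that both $\topos X$, $\Stab{\topos X}$, and $F$ are $\kappa$-accessible, $W$ is generated as a strongly saturated class by its intersection with morphisms between $\kappa$-compact objects, which is a set. Any such $W$ admits a reflective localization $L \colon \topos X \to \topos X$ whose class of weak equivalences is exactly $W$. Applied to the $W$ above, this $L$ is the desired functor $\completebr{-}$.

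The only potentially subtle point is the accessibility/small-generation of $W$, but this is entirely formal: it follows from the fact that $\pSus$ and the mod-$p$ functor are each accessible between presentable categories, so their composite is, and then the strongly saturated class inverted by any accessible functor between presentable categories is itself of small generation. Everything else is a direct invocation of the general presentability machinery.
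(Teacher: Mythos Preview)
Your proposal is correct and essentially the same as the paper's argument: both reduce to HTT 5.5.4.15 after showing via HTT 5.5.4.16 that the class of $p$-equivalences is strongly saturated and of small generation, using that it is the preimage of a smaller saturated class under a colimit-preserving functor. The only cosmetic difference is that the paper splits this into two applications of 5.5.4.16 (first $(-)\sslash p$ on $\Stab{\topos X}$ pulling back equivalences, then $\pSus$ pulling back stable $p$-equivalences), whereas you compose the two functors and apply 5.5.4.16 once; the content is identical.
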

\begin{proof}
    The construction can be found in \cref{lemma:topos:completion-functor}.
\end{proof}

Note that the short exact sequence \ref{eq:intro:stable-ses} in \cref{thm:intro:main} is unsatisfying:
It relates the $p$-completed homotopy groups of $X$ to the derived $p$-completions 
of the homotopy groups of $X$.
But this does (a priori) not say anything about the ($p$-completed) homotopy groups of $\complete{X}$!
In particular, note that we cannot use that the canonical $p$-equivalence $X \to \complete X$ induces 
an equivalence $\pi_n^p(X) \to \pi_n^p(\complete{X})$ via \ref{eq:intro:peq-iso-on-p-complete-htpy} of \cref{thm:intro:main},
since it is not clear (and probably wrong) that $\complete{X}$ is nilpotent even if $X$ is.
But we are nonetheless able to say more:
By the above lemma, we get $p$-completion functors 
in the categories of Zariski sheaves, Nisnevich sheaves, motivic spaces and 
connected motivic spaces, denote them by $L^p_{\zar}$, $L^p_{\nis}$, $L^p_{\AffSpc{1}}$ and $L^p_{\AffSpc{1},\ge 1}$, respectively.
We can relate the different functors:
\begin{prop} \label{prop:intro:different-completions}
    Let $X \in \MotSpc{k}_*$ be a nilpotent motivic space, it is in particular connected.
    Then there are equivalences 
    \begin{align*}
        L^p_{\zar}(X) \cong L^p_{\nis}(X) \cong L^p_{\AffSpc{1},\ge 1}(X).
    \end{align*}
    In particular, the $p$-completion of $X$ as a Nisnevich or Zariski sheaf 
    is again an $\AffSpc{1}$-invariant Nisnevich sheaf!

    If \cref{conj:motivic:conjecture-A} is true (i.e.\ if the $p$-completion $L^p_{\AffSpc{1}}(Y)$ 
    is connected for every nilpotent motivic space $Y$),
    then we also get an equivalence $L^p_{\nis}(X) \cong L^p_{\AffSpc{1}}(X)$.
\end{prop}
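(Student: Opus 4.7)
My plan is to use the Postnikov-tower reduction of $p$-completion on nilpotent objects (cited in the abstract) together with the characterization of $p$-completed homotopy groups via derived $p$-completions given in \cref{thm:intro:main}. Since $X$ is a nilpotent motivic space, it is in particular a connected $\AffSpc{1}$-invariant Nisnevich (hence Zariski) sheaf, and the four relevant categories sit as fully faithful reflective subcategory inclusions $\MotSpc{k}_{\ge 1} \hookrightarrow \MotSpc{k} \hookrightarrow \ShvNis{\smooth k} \hookrightarrow \ZarShv{k}$. The Postnikov tower $\{\tau_{\le n} X\}$ therefore agrees in each of these categories (its layers $\pi_n(X)$ are $\AffSpc{1}$-invariant Nisnevich sheaves of abelian groups), and by the Postnikov-tower reduction it suffices to prove stage-by-stage agreement of $L^p_{\zar}$, $L^p_{\nis}$, and $L^p_{\AffSpc{1}, \ge 1}$.

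Next I would reduce to the Eilenberg-MacLane layers: by nilpotence, each map $\tau_{\le n} X \to \tau_{\le n-1} X$ is a principal fibration with fiber of the form $K(A, n)$ for an $\AffSpc{1}$-invariant Nisnevich sheaf of abelian groups $A$, with an extra nilpotent filtration at $n = 1$ that reduces $K(\pi_1, 1)$ to a tower of $K(A, 1)$'s for abelian $A$. A fiber-sequence argument then propagates agreement from the layers up the tower. For $K(A, n)$ with $n \ge 2$, item \ref{eq:intro:unstable-ses} of \cref{thm:intro:main} expresses the $p$-adic homotopy groups of the $p$-completion entirely in terms of $\mathbb L_0 A$ and $\mathbb L_1 A$; these derived completions depend only on $A$ as an object of $\heart{\SH{k}}$, which (by Morel) is identified with the $\AffSpc{1}$-invariant Nisnevich sheaves of abelian groups and injects into the hearts of the larger categories. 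Item (3) of \cref{thm:intro:main} then identifies the $p$-completion object itself as the same across the three categories; in particular it is an $\AffSpc{1}$-invariant Nisnevich sheaf.

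The main obstacle is ensuring that these layerwise agreements glue cleanly into an agreement on $X$. Concretely, one needs the $p$-completion functor in each category to commute with the Postnikov-tower limit --- precisely the content of the cited Postnikov theorem --- and the comparison fiber sequences to be compatible under the various inclusions. One also needs to verify that $L^p_{\zar}(X)$ inherits $\AffSpc{1}$-invariance, connectedness, and Nisnevich descent from its layers; this should follow from the fact that mod-$p^k$ quotients and sequential limits thereof preserve strict $\AffSpc{1}$-invariance inside Nisnevich sheaves of abelian groups.

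Finally, under \cref{conj:motivic:conjecture-A} the $p$-completion $L^p_{\AffSpc{1}}(X)$ is already connected, hence lies in $\MotSpc{k}_{\ge 1}$; the universal property of the inclusion $\MotSpc{k}_{\ge 1} \hookrightarrow \MotSpc{k}$ then identifies it with $L^p_{\AffSpc{1}, \ge 1}(X) \cong L^p_{\nis}(X)$.
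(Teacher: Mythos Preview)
Your high-level strategy matches the paper's: reduce via the Postnikov tower (using \cref{thm:topos:p-comp-commutes-with-post-tower}), pass to a principal refinement, and propagate along fiber sequences so that everything comes down to the Eilenberg--MacLane case $K(A,n)$ with $A$ strictly $\AffSpc{1}$-invariant. The paper does exactly this in \cref{thm:motivic:completion-a1-invariant}, \cref{thm:motivic:iota-a1-completion}, and \cref{lemma:motivic:iota-completion}.

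Where your argument diverges, and where there is a genuine gap, is in the treatment of the Eilenberg--MacLane case. You propose to invoke item~\ref{eq:intro:unstable-ses} of \cref{thm:intro:main} to compute $\pi_n^p$ of the completion in each category, and then use item~(3) to ``identify the $p$-completion object itself''. Two problems: first, item~(3) is a characterization of the $p$-adic heart in a \emph{stable} category---it says nothing about when two unstable spaces agree. Knowing that $\mathbb L_0 A$ and $\mathbb L_1 A$ coincide across categories does not by itself tell you that the unstable objects $L^p_{\zar}K(A,n)$, $L^p_{\nis}K(A,n)$ agree; for that you would need a Whitehead-type statement for $p$-complete (not necessarily nilpotent) spaces, which is exactly what the paper cannot assume. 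Second, and more structurally, the unstable short exact sequence \ref{eq:intro:unstable-ses} is proven in the paper \emph{after} and \emph{using} the present proposition (see \cref{thm:motivic:ses} and \cref{lemma:motivic:iota-completion}), so your argument is circular in the paper's logical order.

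The paper's route for the Eilenberg--MacLane case is more direct and avoids both issues: it uses the formula $\complete{K(A,n)} \cong \tau_{\ge 1}\pLoop(\completebr{\Sigma^n HA})$ from \cref{cor:completion-of-EM-space}, reducing the comparison to a purely \emph{stable} statement about $\completebr{\Sigma^n HA}$. The crux is then that the inclusion $\iota_{\nis,\AffSpc{1}}\colon \SH{k}\to \ZarShvSp{k}$ is t-exact for the standard t-structure (\cref{lemma:motivic:iota-A1-nis-t-exact-std}), which is a nontrivial geometric input equivalent to the agreement of Zariski and Nisnevich cohomology for strictly $\AffSpc{1}$-invariant sheaves (Asok, Morel, Gabber presentation lemma). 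Your sketch gestures at Morel's identification of the heart but does not isolate this t-exactness, which is the actual engine making $\iota_{\nis}$ commute with connective covers, $\pLoop$, and stable $p$-completion in the relevant range (\cref{lemma:motivic:connected-cover-iota}, \cref{lemma:motivic:completion-EM-iota}). Once you have that, the fiber-sequence propagation and the final limit over the Postnikov tower go through exactly as you outline.
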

\begin{proof}
    The equivalences can be found in \cref{thm:motivic:iota-a1-completion,lemma:motivic:iota-completion}.
\end{proof}
Note that here a small problem arises: Currently we do not know whether 
the $p$-completion of a nilpotent motivic spaces is still connected.
The corresponding fact in an $\infty$-topos is true, see \cref{lemma:peq-respects-pi0}.
This introduces some complications, but at least for $L^p_{\zar}$, $L^p_{\nis}$ and $L^p_{\AffSpc{1},\ge 1}$
we have the following:
\begin{thm}
    Let $X \in \MotSpc{k}_*$ be a nilpotent motivic space.
    We have equivalences 
    \begin{equation*}
        \pi_n^p(X) \cong \pi_n^p(L^p_{\zar}(X)) \cong \pi_n^p(L^p_{\nis}(X)) \cong \pi_n^p(L^p_{\AffSpc{1},\ge 1}(X))
    \end{equation*}
    for all $n$.
    In particular, we get a short exact sequence
    \begin{equation*}
        0 \to \mathbb L_0 \pi_n(X) \to \pi_n^p(\complete{X}) \to \mathbb L_1 \pi_{n-1}(X) \to 0,
    \end{equation*}
    where $\complete{X}$ is any of $L^p_{\zar}(X) \cong L^p_{\nis}(X) \cong L^p_{\AffSpc{1},\ge 1}(X)$.
\end{thm}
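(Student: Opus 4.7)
The plan is as follows. By \cref{prop:intro:different-completions}, the three $p$-completions $L^p_{\zar}(X)$, $L^p_{\nis}(X)$, and $L^p_{\AffSpc{1},\ge 1}(X)$ are canonically equivalent, so it suffices to establish a single equivalence $\pi_n^p(X) \cong \pi_n^p(\complete{X})$, where $\complete{X}$ denotes any of them. Once this is in hand, the short exact sequence follows at once by applying item~\ref{eq:intro:unstable-ses} of \cref{thm:intro:main} to the nilpotent motivic space $X$ and substituting $\pi_n^p(X) \cong \pi_n^p(\complete{X})$ into the middle term.

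The canonical map $\eta \colon X \to \complete{X}$ is, by construction, a $p$-equivalence in the ambient topos, so $\pSus \eta$ is a $p$-equivalence in $\SH{k}$, and item~(2) of \cref{thm:intro:main} immediately yields equivalences of all $p$-completed stable homotopy groups of $\pSus \eta$. One would hope to apply item~\ref{eq:intro:peq-iso-on-p-complete-htpy} of \cref{thm:intro:main} directly, but as the introduction explicitly warns, we do not know (and probably cannot expect) that $\complete{X}$ is nilpotent, so this shortcut is unavailable.

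To circumvent this, the approach I would take is to exploit the Postnikov-tower description of $p$-completion on nilpotent spaces advertised in the abstract: since $X$ is nilpotent, $\complete{X}$ should be recoverable as the limit of the completions $\complete{(\tau_{\le m} X)}$ of its (still nilpotent, now truncated) Postnikov stages. Each such truncation is built from a finite tower of principal fibrations with abelian Eilenberg--MacLane fibers, to each of which the stable short exact sequence of item~\ref{eq:intro:stable-ses} of \cref{thm:intro:main} applies; an inductive argument along the tower should identify $\pi_n^p(\complete{\tau_{\le m} X})$ with the same data that item~\ref{eq:intro:unstable-ses} of \cref{thm:intro:main} assigns to $\tau_{\le m} X$ itself. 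Passing to the limit then yields $\pi_n^p(X) \cong \pi_n^p(\complete{X})$, and the same argument applied in each of the Zariski, Nisnevich, and connected-motivic settings gives the full chain of equivalences.

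The main obstacle I expect is controlling the interaction of $\pi_n^p$ with the Postnikov limit. Since infinite products need not be $t$-exact and since the $p$-adic t-structure is not left-separated, a priori contributions from $\tpcon[\infty]{\SH{k}}$ could spoil the identification. The way out should be that $\pi_n^p$ depends only on a finite range of the Postnikov tower near degree $n$, so one can truncate at a sufficiently large stage $m$ and collapse the limit to a finite one; the nilpotence of $X$ is essential here both to ensure that $\complete{X}$ commutes with the Postnikov tower and to supply the abelian-fiber inductive step. Making this precise, and checking compatibility with each of the three $p$-completion operations simultaneously (so as to transport the Postnikov analysis across the equivalences of \cref{prop:intro:different-completions}), will be the technical heart of the proof.
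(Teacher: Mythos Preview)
Your overall strategy—reduce via \cref{prop:intro:different-completions} to a single equivalence $\pi_n^p(X) \cong \pi_n^p(\complete{X})$, then read off the short exact sequence from item~\ref{eq:intro:unstable-ses} of \cref{thm:intro:main}—is correct, and this is exactly how the paper proceeds. The gap is in how you propose to prove the equivalence $\pi_n^p(X) \cong \pi_n^p(\complete{X})$.

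You treat $\pi_n^p$ as a black box and therefore reach for a Postnikov-tower induction, worrying about limit commutation and $t$-exactness. But the paper's argument is essentially a one-liner that exploits the \emph{definition} of $\pi_n^p$: for a pointed Zariski sheaf $Y$ one sets $\pi_n^p(Y) \coloneqq \nu_* \pi_n(\completebr{\nu^* Y})$ (\cref{def:pro-zar:completed-htpy}), and for a motivic space one applies $L_{\nis,\AffSpc{1}}$ to this (\cref{def:motivic:completed-htpy}). Since $\nu^*$ preserves $p$-equivalences and $\completebr{-}$ inverts them, the functor $Y \mapsto \pi_n(\completebr{\nu^* Y})$ sends every $p$-equivalence of Zariski sheaves to an isomorphism; this is \cref{lemma:pro-zar:completed-htpy-stable-under-peq}. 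Now \cref{prop:intro:different-completions} says precisely that the underlying Zariski sheaf of $L^p_{\AffSpc{1},\ge 1}(X)$ is the Zariski $p$-completion of $\iota_{\nis,\AffSpc{1}} X$, so the canonical map $\iota_{\nis,\AffSpc{1}} X \to \iota_{\nis,\AffSpc{1},\ge 1}(\complete{X})$ is a $p$-equivalence of Zariski sheaves, and applying $\pi_n^p$ gives the desired isomorphism (\cref{lemma:motivic:p-complete-htpy-ios-to-completion}). No nilpotence of $\complete{X}$ is needed, and none of the limit or $t$-structure obstacles you anticipate arise.

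In short: unpack the definition of $\pi_n^p$ before reaching for heavy machinery. Your Postnikov-tower plan might eventually be made to work, but it is both vaguer and far more laborious than what the situation requires.
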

\begin{proof}
    See \cref{lemma:motivic:p-complete-htpy-ios-to-completion} together with 
    the above \cref{prop:intro:different-completions} for the first claim.
    For the short exact sequence, see \cref{cor:motivic:ses-with-p-completion}.
\end{proof}

In order to be able to compute $p$-completions of nilpotent sheaves, 
we will be using the following theorem:
\begin{thm} \label{thm:intro:post-tower}
    If $\topos X$ is locally of finite uniform homotopy dimension (see \cref{def:topos:locally-finite-uniform-htpy-dim},
    this is a mild generalization of the notion of being of homotopy dimension $\le n$,
    which is in particular satisfied by the Nisnevich and Zariski topoi),
    then the $p$-completion of a nilpotent sheaf $X \in \topos X$ 
    (see \cref{def:nilpotent:defn} for the definition of nilpotence in an $\infty$-topos) can be computed on
    its Postnikov tower, i.e.\ there is an equivalence 
    \begin{equation*}
        \complete{X} \cong \limil{k} \completebr{\tau_{\le k} X}.
    \end{equation*}
\end{thm}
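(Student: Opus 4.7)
The plan is to verify directly that the canonical comparison map $\varphi \colon X \to Y \coloneqq \limil{k} \completebr{\tau_{\le k} X}$ exhibits $Y$ as the $p$-completion of $X$. By the universal property of the $p$-completion functor from \cref{lemma:topos:completion-functor}, it suffices to show two things: that $Y$ is $p$-complete, and that $\varphi$ is a $p$-equivalence. The first point is immediate: $p$-completion is a reflective localization, so its essential image is closed under all limits in $\topos X$, and each $\completebr{\tau_{\le k} X}$ is by construction $p$-complete.

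For the $p$-equivalence claim, I would first invoke the hypothesis that $\topos X$ is locally of finite uniform homotopy dimension to guarantee convergence of Postnikov towers, giving $X \cong \limil{k} \tau_{\le k} X$. The map $\varphi$ is then the limit of the pointwise $p$-equivalences $\tau_{\le k} X \to \completebr{\tau_{\le k} X}$. Since $p$-equivalences are not stable under limits in general, the nilpotence of $X$ is essential here: I would pick a principal refinement of the Postnikov tower, whose successive stages $P_i \to P_{i-1}$ are principal fibrations classified by maps into Eilenberg--MacLane sheaves $K(A_i, n_i+1)$ with $n_i \to \infty$ and each $A_i$ an abelian sheaf in $\topos X$. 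Since $p$-completion behaves well on such principal fibrations (the completion of a principal fibration with nilpotent base and Eilenberg--MacLane fiber is again a principal fibration whose classifying map is the $p$-completion of the original), the $p$-completions $\completebr{P_i}$ are controlled inductively by the derived $p$-completions of the classifying sheaves $A_i$.

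The main obstacle lies in the passage to the limit. After the reduction to a principal tower, one must show that, for each fixed positive degree $n$, the mod-$p$ homotopy tower $\{\pi_n((\pSus \tau_{\le k} X) \sslash p)\}_k$ stabilizes for $k$ sufficiently large, with both the stabilization range and the vanishing of the relevant $\lim^1$-terms controlled by the cohomological bounds furnished by the local finite uniform homotopy dimension assumption. Once this stabilization is established, $(\pSus \varphi) \sslash p$ is an equivalence, so $\varphi$ is a $p$-equivalence, and consequently $Y \cong \complete{X}$. The delicate bookkeeping in this last step — ensuring that the mod-$p$ homotopy of the limit matches the limit of mod-$p$ homotopies uniformly in the Postnikov degree — is where the bulk of the technical work will reside.
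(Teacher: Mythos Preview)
Your overall framework matches the paper's: show $Y=\limil{k}\completebr{\tau_{\le k}X}$ is $p$-complete (trivial) and that $\varphi\colon X\to Y$ is a $p$-equivalence. The divergence is in how the $p$-equivalence is checked, and your route has a genuine gap.

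You propose to argue globally in $\topos X$ by controlling the tower $\{\pi_n((\pSus\tau_{\le k}X)\sslash p)\}_k$ and concluding that $(\pSus\varphi)\sslash p$ is an equivalence. The problem is the implicit commutation of $\pSus$ with the inverse limit defining $Y$: the functor $\pSus$ is a left adjoint and does not preserve sequential limits, so even perfect control of the homotopy sheaves of $(\pSus\tau_{\le k}X)\sslash p$ or of $(\pSus\completebr{\tau_{\le k}X})\sslash p$ tells you nothing about $(\pSus Y)\sslash p$ without an additional argument identifying $\pSus\limil{k}(-)$ with $\limil{k}\pSus(-)$ in this instance. Your last sentence acknowledges a commutation issue, but frames it as matching ``mod-$p$ homotopy of the limit'' with ``limit of mod-$p$ homotopies''; the real obstruction is one step earlier, at the level of the suspension spectrum itself.

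The paper sidesteps this entirely by checking $p$-equivalences on stalks (\cref{lemma:peq-via-points}), thereby reducing to anima. The finite uniform homotopy dimension hypothesis is used not merely for Postnikov convergence but specifically to prove that the stalk functor $s^*$ commutes with $\limil{k}\completebr{\tau_{\le k}X}$: one shows that for each point $s$ and each degree $n$, the sections $\pi_n(\completebr{\tau_{\le k}X}(U))$ over objects $U\in\mathcal I_s$ become independent of $k$ once $k\ge n+\htpydim(s)+2$ (\cref{lemma:topos:sections-of-completions-of-truncations-are-uniform}). Once one is in anima with two towers whose homotopy groups stabilise, the Blakers--Massey theorem gives that $\Sus$ commutes with such limits (\cref{lemma:anima:suspension-spectrum-of-weak-postnikov-tower}), and a levelwise $p$-equivalence of the towers passes to the limit (\cref{lemma:anima:seq-limit-of-peq-of-anima}). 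This is exactly the missing mechanism in your sketch: the reduction to stalks is what makes the limit--$\Sus$ commutation tractable.
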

\begin{proof}
    This can be found in \cref{thm:topos:p-comp-commutes-with-post-tower}.     
\end{proof}
The above result about the Postnikov tower is extremely useful in computing the 
$p$-completions of nilpotent sheaves:
Let $X \in \topos X$ be a nilpotent sheaf, where $\topos X$ is an $\infty$-topos locally of finite uniform homotopy dimension.
Then the Postnikov tower has a principal refinement (see \cref{def:nilpotent:principal-refinement}),
i.e.\ there are positive integers $m_n$, $n$-truncated spaces $X_{n, k}$, 
abelian group objects $A_{n, k} \in \AbObj{\Disc{\topos X}}$ in the associated $1$-topos 
of discrete objects for all $n$ and all $0 \le k \le m_n$,
and fiber sequences $X_{n, k+1} \xrightarrow{p_{n, k}} X_{n, k} \to K(A_{n,k+1}, n+1)$
that refine the Postnikov tower (in the sense that $X_{n, 0} = \tau_{\le n} X$
and that the truncation map $\tau_{\le n} X \to \tau_{\le n-1} X$ 
can be factored as $p_{n, m_n - 1} \circ \dots \circ p_{n, 0}$).
Now we have the following proposition:
\begin{prop} \label{prop:intro:post-sections}
    For every $n$ and $k$ we have an equivalence 
    \begin{equation*}
        \completebr{X_{n, k+1}} \cong \tau_{\ge 1} \Fib{\completebr{X_{n, k}} \to \completebr{K(A_{n, k}, n+1)}}.
    \end{equation*}
    Moreover, there is an equivalence 
    \begin{equation*}
        \completebr{K(A, n)} \cong \tau_{\ge 1} \pLoop (\completebr{\Sigma^n HA})
    \end{equation*}
    for every abelian group object $A \in \AbObj{\Disc{\topos X}}$.
\end{prop}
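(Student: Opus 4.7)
The plan is to prove the two identifications separately, treating the Eilenberg--MacLane formula first and then exploiting it for the principal refinement step. In both cases the strategy is to exhibit a natural candidate map against the unstable $p$-completion, and then verify both that this map is a $p$-equivalence and that its target is $p$-complete, thereby identifying the target as the $p$-completion by universality.

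For the second claim, one has $K(A, n) \cong \tau_{\ge 1} \pLoop (\Sigma^n HA)$ for $n \ge 1$ since $\Sigma^n HA$ is $n$-connective. The stable $p$-completion unit $\Sigma^n HA \to \completebr{\Sigma^n HA}$ then induces a natural map $\varphi \colon K(A, n) \to \tau_{\ge 1} \pLoop \completebr{\Sigma^n HA}$. To show $\varphi$ is a $p$-equivalence, I apply $\pSus(-) \sslash p$; on the level of the stabilization this reduces to the idempotence of stable $p$-completion modulo $p$. To show that the target of $\varphi$ is unstably $p$-complete, I would prove the general fact that the connected cover of the infinite loop space of a $p$-complete connective spectrum is unstably $p$-complete. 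This should reduce, via the characterization of unstable $p$-completeness by testing against maps out of $p$-divisible objects, to the corresponding stable statement for $\completebr{\Sigma^n HA}$.

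For the first claim, apply $\completebr{-}$ to the fiber sequence $X_{n, k+1} \to X_{n, k} \to K(A_{n, k+1}, n+1)$ and set $F \coloneqq \Fib{\completebr{X_{n, k}} \to \completebr{K(A_{n, k+1}, n+1)}}$. The universal property of the fiber produces a canonical map $\completebr{X_{n, k+1}} \to F$. Since $K(A_{n, k+1}, n+1)$ is simply connected (as $n \ge 1$), $X_{n, k+1}$ is connected, and hence so is $\completebr{X_{n, k+1}}$ by \cref{lemma:peq-respects-pi0}, so the map factors through $\tau_{\ge 1} F$. I then check that this factored map is an equivalence in two steps: that it is a $p$-equivalence, by applying $\pSus$ and $(-) \sslash p$ and using the $\infty$-topos Blakers--Massey theorem to identify the stabilized principal fiber sequence with the cofiber sequence in the appropriate connectivity range (where $(-) \sslash p$ then preserves it); and that $\tau_{\ge 1} F$ is $p$-complete, by the same stable comparison applied to the already-$p$-complete spectra $\pSus \completebr{X_{n, k}}$ and $\pSus \completebr{K(A_{n, k+1}, n+1)}$.

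The principal obstacle is that $\completebr{-}$ is a left adjoint, hence does not commute with fibers in general, and this prevents any naive strategy of ``applying $p$-completion levelwise''. What rescues the argument is that each base of the refinement is a highly connected Eilenberg--MacLane object, so after stabilization the fiber and cofiber sequences agree in a large range by Blakers--Massey, and stable $p$-completion preserves cofiber sequences. I expect the most delicate point to be the connectivity bookkeeping in the topos-level Blakers--Massey together with the verification that the relevant objects are genuinely $p$-complete rather than merely $p$-equivalent to $p$-complete ones, which is what forces the $\tau_{\ge 1}$ truncations on both sides of both equivalences.
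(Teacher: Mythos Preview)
Your overall architecture---produce a candidate map, show it is a $p$-equivalence, show the target is $p$-complete---matches the paper, but the mechanisms you propose for the two middle steps do not work, and the paper's actual argument is quite different.

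For the Eilenberg--MacLane formula: your plan to verify that $\varphi$ is a $p$-equivalence by ``applying $\pSus(-)\sslash p$'' and reducing to stable idempotence does not go through, because $\pSus$ commutes with neither $\pLoop$ nor $\tau_{\ge 1}$; there is no direct way to identify $\pSus(\tau_{\ge 1}\pLoop\completebr{\Sigma^n HA})$ with $\completebr{\Sigma^n HA}$. The paper instead proves (\cref{lemma:loop-peq-iff-peq}) that $\pLoop$ \emph{preserves} $p$-equivalences between $1$-connective spectra, by checking on stalks (this is where the standing hypothesis that $\topos X$ has enough points is used) and invoking the classical Bousfield--Kan statement for anima. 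From there one shows $\Sigma^n A \to \tau_{\ge 1}\completebr{\Sigma^n A}$ is a stable $p$-equivalence because the discarded negative homotopy is uniquely $p$-divisible, then applies $\pLoop$. For $p$-completeness of the target, the paper does not use any characterization via $p$-divisible objects; it observes that $\pLoop$ preserves $p$-complete objects (its left adjoint $\Sus$ preserves $p$-equivalences by definition) and that $\tau_{\ge 1}$ does too, since discrete objects are $p$-complete (\cref{lemma:discrete-objects-complete,cor:connected-cover-of-complete-is-complete}).

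For the fiber formula: the Blakers--Massey route does not give what you need. Blakers--Massey compares fiber and loop-of-cofiber only in a connectivity range depending on $n$, whereas you must establish an equivalence after $\pSus(-)\sslash p$ in \emph{all} degrees; again, $\pSus$ does not commute with fibers, so there is no clean passage from the unstable fiber to a stable cofiber sequence. The paper avoids this entirely: it proves a general fiber lemma (\cref{lemma:fiber-lemma}) stating that for $f\colon X\to Y$ with $X,Y,\complete{X},\complete{Y}$ nilpotent, $\completebr{\tau_{\ge 1}\Fib f}\cong \tau_{\ge 1}\Fib{\complete f}$, again by checking on stalks and reducing to the classical Bousfield--Kan fiber lemma for nilpotent anima. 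The principal-refinement statement then follows by a straightforward induction, using the Eilenberg--MacLane case and \cref{lemma:nilpotent:loopspace,lemma:nilpotent:fiber} to verify the nilpotence hypotheses at each stage.
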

\begin{proof}
    See \cref{cor:completion-of-EM-space,lemma:postnikov-fiber-sequence-completion}.
\end{proof}
The above proposition, together with \cref{thm:intro:post-tower} about the Postnikov tower,
allows us to compute the $p$-completion of nilpotent sheaves by reducing to the much easier case 
of the $p$-completion of sheaves of spectra,
which is just given by the $p$-adic limit $E \mapsto \complete{E} \cong \limil{k} E\sslash p^k$.
This computational tool will power almost all of our results.

\subsection*{Outline}
We will start with the construction of and some basic results about the stable $p$-completion functor
on a stable $\infty$-category in \cref{section:stable}.
We will then construct the $p$-adic t-structure on a stable $\infty$-category, which is a t-structure which behaves 
exceptionally well with respect to $p$-completion. In particular, we will show that this t-structure 
admits an analog of the fundamental short exact sequence for the 
(stable) $p$-completion of spectra in \cref{lemma:short-exact-sequence-of-t-structure}. 

In \cref{section:topos:main}, we will first construct the unstable $p$-completion functor on an 
arbitrary presentable $\infty$-category $\topos X$,
and then show that if $\topos X$ is moreover an $\infty$-topos, then this functor is very well-behaved.
In particular, we prove our fundamental computational result,
that we can calculate the $p$-completion of a nilpotent sheaf by 
reducing to its Postnikov tower; and then to the much easier case of Eilenberg MacLane spaces,
see \cref{thm:intro:post-tower,prop:intro:post-sections}.

In order to show that there is a short exact sequence as in \cref{thm:intro:main},
we will use the following diagram of right adjoints:
\begin{center}
    \begin{tikzcd}
        \PrShv{W}&\arrow[l, "\iota_{\Sigma}"] \PSig{W} \arrow[r, "\cong"] &\ShvTop{\prozartop}{\prozar{\smooth{k}}} \arrow[d, "\nu_*"] \\
        \MotSpc{k} \arrow[r, "\iota_{\AffSpc{1}}"] &\MotShv{k} \arrow[r, "\iota_{\nis}"] &\ZarShv{k}.
    \end{tikzcd}
\end{center}
First, we will show that there is a short exact sequence for objects in $\PrShv{W}$ in \cref{section:embedding:presheaf},
by using the classical short exact sequence on each level.
Then, we will show in \cref{section:embedding:psig} that this also gives short exact sequences on the nonabelian derived category $\PSig{W}$ for suitable $W$.
We will then show in \cref{section:embedding} that if we have an embedding of $\infty$-topoi $\nu^* \colon \topos X \rightleftarrows \PSig{W} \colon \nu_*$,
then (at least in good cases), we also get a short exact sequence for objects in $\topos X$.
An example of such an embedding of $\infty$-topoi is the embedding of the Zariski topos into the 
pro-Zariski topos, constructed in \cref{appendix:pro-zar}.
Thus, we get a short exact sequence for (certain) objects in $\ZarShv{k}$.
Then, in \cref{section:motivic}, we will show that the sequence on the Zariski topos 
actually induces a sequence for objects in the Nisnevich topos, and then, finally, for nilpotent motivic spaces.

Note that in $\ZarShv{k}$, the short exact sequence only exists for a nilpotent Zariski sheaf $X$
if the following technical condition is satisfied: $(\mathbb L_1 (\pi_n \nu^* X)) \sslash p$
must be classical (i.e.\ in the essential image of $\nu^*$).
Therefore, we will spend some time in \cref{section:zar} to find a geometric condition that 
will always imply this technical statement: Gersten injectivity of $\pi_n(X) / p^k$,
see \cref{def:pro-zar:gersten}.
If $X$ is a motivic space, then we will deduce Gersten injectivity of $\pi_n(X) / p^k$
from the Gabber presentation lemma in \cref{section:motivic}.

In the remainder of \cref{section:motivic} we will compare the various 
different notions of $p$-completion (we can $p$-complete as a (connected) motivic space, as a Nisnevich sheaf or as a Zariski sheaf),
see \cref{prop:intro:different-completions}.

\subsection*{Notation}
We will write $\An$ for the $\infty$-category of anima/homotopy types/spaces, and $\Sp$
for the stable $\infty$-category of spectra. More generally,
if $\Cat V$ is a presentable $\infty$-category, we write $\Stab{\Cat V}$ for the stabilization 
of $\Cat V$.

\subsection*{Conventions}
We will adhere to the following derived convention:

If $\Cat D$ and $\Cat E$ are stable $\infty$-categories equipped with $t$-structures
and $F \colon \Cat D \to \Cat E$ is an exact functor,
we will also write $F$ for the composition 
\begin{equation*}
    \tstructheart{\Cat D} \hookrightarrow \Cat D \xrightarrow{F} \Cat E.
\end{equation*}
In contrast, we write $\heart F$ for the functor 
\begin{equation*}
    \tstructheart{\Cat D} \hookrightarrow \Cat D \xrightarrow{F} \Cat E \xrightarrow{\pi_0} \tstructheart{\Cat E}.
\end{equation*}

Note that in particular limits in $\tstructheart{\Cat D}$ are calculated 
as $\heart{\limil{I}}(-) = \pi_0(\limil{I}(-))$, and similar for colimits.
To avoid awkward notation, if $f \colon X \to Y \in \tstructheart{\Cat D}$ is a morphism,
we will write $\ker(f)$ for the kernel of $f$ in the abelian category $\tstructheart{\Cat D}$ (instead of e.g.\ $\heart{\operatorname{fib}}(f)$),
whereas $\Fib{f}$ refers to the fiber of $f$ in the stable $\infty$-category $\Cat D$,
and similar for $\coker(f)$ and $\Cofib{f}$.
If $n \in \Z$ is an integer, then $n$ induces an endomorphism $n \colon X \to X$.
We will write $X / n \coloneqq \coker(X \xrightarrow{n} X) \in \tstructheart{\Cat D}$
and $X \sslash n \coloneqq \Cofib{X \xrightarrow{n} X} \in \Cat D$.

Moreover, suppose that $\topos X$ and $\topos Y$ are $\infty$-topoi,
and that $F \colon \topos X \to \topos  Y$ is a functor that respects $n$-truncated objects for every $n \ge 0$ and finite limits 
(e.g.\ the left adjoint or the right adjoint of a geometric morphism).
Then $F$ induces a functor on the stabilizations $\spectra{X} \to \spectra{Y}$, which we also denote by $F$.
Note that there is a standard t-structure on $\spectra{X}$, 
and an equivalence $\AbObj{\Disc{\topos X}} \cong \tstructheart{\spectra X}$,
where the left-hand side denotes the abelian group objects in the underlying 1-topos of discrete objects in $\topos X$.
Using this equivalence, we will identify the homotopy object functors $\pi_n$ 
with functors
\begin{equation*}
    \pi_n \colon \topos X \to \tstructheart{\spectra{X}}
\end{equation*}
for $n \ge 2$.
Since $F$ commutes with finite products, it also induces a functor 
\begin{equation*}
    \AbObj{\Disc{\topos X}} \to \AbObj{\Disc{\topos Y}}.
\end{equation*}
Under the above identifications, we will refer to this functor as 
\begin{equation*}
    \heart{F} \colon \tstructheart{\spectra{X}} \to \tstructheart{\spectra{Y}}.
\end{equation*}
Note that this coincides with the earlier use of the symbol $\heart{F}$ from above.
If $F$ is the left adjoint of a geometric morphism, it induces a t-exact functor on the stabilization.
Therefore, the functors $\heart{F}$ and $F$ (restricted to the heart) are equivalent,
and we will usually omit the heart.
However, if $F$ is the right adjoint of a geometric morphism,
this is usually not the case, and we will always write $\heart{F}$ if we refer to the 
functor on the hearts. (Although, in many of our cases, the right adjoint will actually be t-exact.)

Let $(\Cat C, \tau)$ is a site and $\topos X \coloneqq \ShvTop{\tau}{\Cat C}$
is the associated $\infty$-topos.
Suppose that $A \in \heart{\spectra{X}} \cong \heart{\ShvTop{\tau}{\Cat C, \Sp}}$.
For $U \in \Cat C$, we will write $\Gamma(U, A) \in \Sp$ for the value of $A$ at $U$
(note that this spectrum knows about the $\tau$-cohomology of $A$ at $U$!).
In contrast, $\heart{\Gamma}(U, A) = \pi_0(\Gamma(U, A))$ are the global sections 
of $A \in \AbObj{\Disc{\topos X}}$.
Note that in particular, the equivalence $\heart{\spectra{X}} \to \AbObj{\Disc{\topos X}}$
is realized by the functor $A \mapsto \heart{\Gamma}(-, A)$.

\subsection*{Acknowledgement}
I want to thank my advisor Tom Bachmann for suggesting the topic,
for answering an uncountable amount of questions, 
and for his excellent comments on drafts of this paper.
I also want to thank Lorenzo Mantovani, Luca Passolunghi and Timo Weiß 
for helpful discussions about parts of this paper.

\section{Stable \texorpdfstring{$p$}{p}-Completion}\label{section:stable}
Let $\Cat D$ be a presentable stable $\infty$-category \cite[Definition 1.1.1.9]{higheralgebra}.
Suppose that $\Cat D$ is equipped with an accessible t-structure $\tstruct{\Cat D}$ \cite[Definition 1.2.1.4 and Definition 1.4.4.12]{higheralgebra}.
Suppose moreover that this t-structure is right-separated (i.e.\ $\bigcap_{n} \tcocon[n]{\Cat D} = 0$).
We will call this t-structure the \emph{standard t-structure} (on $\Cat D$).
Let $\tstructheart{\Cat D} \coloneqq \tcon{\Cat D} \cap \tcocon{\Cat D}$
be the heart of the standard t-structure.
This is an abelian category, see \cite[Remark 1.2.1.12]{higheralgebra}.
We write $\tau_{\le n}$ and $\tau_{\ge n}$ for the truncation functors,
and $\pi_n \colon \Cat D \to \tstructheart{\Cat D}$ for the $n$-th homotopy object.
We say that an object $E \in \Cat D$ is \emph{$k$-connective} (resp.\ \emph{$k$-coconnective} or \emph{$k$-truncated})
for some $k \in \Z$
if $E \in \tcon[k]{\Cat D}$ (resp.\ $E \in \tcocon[k]{\Cat D}$).

\subsection{Properties of the Stable \texorpdfstring{$p$}{p}-Completion Functor}
In this section, we define the stable $p$-completion functor and prove some basic properties.
Most of the results are well-known, see for example \cite[Section 2.2]{mathew2017nilpotence} or \cite[Section 2.1]{bachmann2021rigidity}.

\begin{defn}
    Let $(-)\sslash p$ be the endofunctor on $\Cat D$ given on objects by
    $E \mapsto \Cofib{E \xrightarrow{p} E}$.

    We say that a morphism $f \colon E \to F$ in $\Cat D$ is
    a \emph{$p$-equivalence} if $f\sslash p$ is an equivalence.
    We say that an object $E \in \Cat D$ is \emph{$p$-complete}
    if for all $p$-equivalences $F \to F'$
    the induced map on mapping spaces $\Map{\Cat D}(F', E) \to \Map{\Cat D}(F, E)$
    is an equivalence.

    Write $\complete{\Cat D}$ for the subcategory of $p$-complete objects.
\end{defn}

\begin{rmk}
    If $\Cat D$ is equipped with a symmetric monoidal structure $\otimes$
    that is exact in each variable, then the endofunctor $(-)\sslash p$
    is equivalent to the functor $- \otimes (\mathbb S \sslash p)$,
    where $\mathbb S$ is the unit of the symmetric monoidal structure.
    This follows immediately from the assumption that the tensor product 
    is exact in each variable.
\end{rmk}

\begin{lem}\label{lemma:stable:small-generation}
    The class $S$ of $p$-equivalences in $\Cat D$ is strongly saturated and of small generation.
\end{lem}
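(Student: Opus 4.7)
The plan is to exploit that the $p$-equivalences are exactly the morphisms inverted by the endofunctor $(-)\sslash p \colon \Cat D \to \Cat D$, which is itself accessible and preserves small colimits; the latter follows from writing $(-)\sslash p$ as the cofiber of the natural transformation $p \cdot \mathrm{id} \Rightarrow \mathrm{id}$, whose source and target preserve colimits, together with the fact that cofibers commute with colimits in the stable setting. This also yields the useful reformulation that $f \in S$ if and only if $\Cofib{f}$ lies in the full subcategory $\Cat D_{\mathrm{udp}} := \{X \in \Cat D : X\sslash p \simeq 0\}$ of uniquely $p$-divisible objects.

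Strong saturation of $S$ is then essentially formal: $S$ contains equivalences and satisfies two-out-of-three, and its closure under pushouts, transfinite composition, and small colimits in $\operatorname{Fun}(\Delta^1, \Cat D)$ all reduce to the analogous closure properties for equivalences in $\Cat D$, combined with the fact that $(-)\sslash p$ preserves all of these operations.

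For small generation, I would observe that $\Cat D_{\mathrm{udp}}$, being the kernel of the accessible colimit-preserving exact functor $(-)\sslash p$ between presentable stable $\infty$-categories, is itself a presentable localizing subcategory of $\Cat D$ and hence is generated under small colimits and shifts by a small set of objects $\{X_\alpha\}_{\alpha \in A}$. I would then take $S_0 := \{X_\alpha \to 0 \mid \alpha \in A\}$, a small subset of $S$; given any $p$-equivalence $f \colon E \to F$ with cofiber $C$, the object $C$ is an iterated colimit of shifts of the $X_\alpha$, so the map $C[-1] \to 0$ lies in the strongly saturated closure $\overline{S_0}$ as a colimit in $\operatorname{Fun}(\Delta^1, \Cat D)$ of maps of the form $X_\alpha[n] \to 0$. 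Rotating the cofiber sequence $C[-1] \to E \to F$ exhibits $f$ as the cobase change of $C[-1] \to 0$ along $C[-1] \to E$, whence $f \in \overline{S_0}$. The main obstacle I anticipate is the set-theoretic input that $\Cat D_{\mathrm{udp}}$ is presentable with a small generating set; this is standard but requires invoking general results about kernels of accessible colimit-preserving functors between presentable stable $\infty$-categories, whereas the remainder of the argument is a purely formal manipulation in the stable setting.
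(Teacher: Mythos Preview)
Your argument is correct, but it is considerably more circuitous than the paper's. The paper simply invokes \cite[Proposition~5.5.4.16]{highertopoi}: for any colimit-preserving functor $F$ between presentable $\infty$-categories and any strongly saturated class $S'$ of small generation in the target, the preimage $F^{-1}(S')$ is again strongly saturated and of small generation. Taking $F=(-)\sslash p$ and $S'$ the class of equivalences (generated by the empty set, cf.\ \cite[Example~5.5.4.9]{highertopoi}) finishes the proof in one line. Your treatment of strong saturation is essentially an inline reproof of this proposition in the case at hand.

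Your route to small generation is genuinely different and more explicit: rather than transporting a generating set back along $(-)\sslash p$, you identify $p$-equivalences as maps whose cofiber lies in the kernel $\Cat D_{\mathrm{udp}}$, argue that this kernel is presentable, and extract generators from there. This buys you a concrete description of a generating set (the maps $X_\alpha\to 0$ with $X_\alpha$ running over generators of the uniquely $p$-divisible objects), which the paper's abstract argument does not provide; the cost is that you must separately justify the presentability of $\Cat D_{\mathrm{udp}}$, as you acknowledge. One small wrinkle to patch: you set $S_0=\{X_\alpha\to 0\}$ but then need $X_\alpha[n]\to 0\in\overline{S_0}$ for all $n$; for $n<0$ this does not follow from the strong saturation axioms, since desuspension is a limit rather than a colimit. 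Either enlarge $S_0$ to $\{X_\alpha[n]\to 0:\alpha\in A,\,n\in\Z\}$ (still small), or choose the $X_\alpha$ to be all $\kappa$-compact objects of $\Cat D_{\mathrm{udp}}$, so that every object---in particular $C[-1]$---is directly a filtered colimit of generators and no shifts are needed.
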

\begin{proof}
    Using \cite[Proposition 5.5.4.16]{highertopoi}, it suffices to show that $S = f^{-1}(S')$
    for some colimit-preserving functor $f$ and a strongly saturated class $S'$ of small generation.
    This holds for $f = (- \sslash p)$ and $S'$ the collection of equivalences in $\Cat{D}$.
    $S'$ is of small generation because it is the smallest saturated class of morphisms in $\Cat D$,
    see \cite[Example 5.5.4.9]{highertopoi}, and therefore generated by the empty collection.
\end{proof}

\begin{lem}
    The category $\complete{\Cat D}$ is presentable,
    and the inclusion $\complete{\Cat D} \to \Cat D$
    has a left adjoint $\completebr{-} \colon \Cat D \to \complete{\Cat D}$.
    In other words, $\completebr{-}$ is a localization functor.
\end{lem}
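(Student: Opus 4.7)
The plan is to deduce this immediately from \cref{lemma:stable:small-generation} together with the standard machinery of accessible localizations from \cite{highertopoi}. By definition, an object $E \in \Cat D$ is $p$-complete precisely when it is $S$-local, where $S$ is the strongly saturated class of $p$-equivalences. Therefore $\complete{\Cat D}$ is exactly the full subcategory of $S$-local objects.

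The first step would be to invoke \cite[Proposition 5.5.4.15]{highertopoi}, which says that for any presentable $\infty$-category $\Cat C$ and any strongly saturated class $S$ of morphisms of small generation, the subcategory of $S$-local objects is an accessible localization of $\Cat C$. Since $\Cat D$ is presentable by hypothesis, and $S$ is strongly saturated and of small generation by \cref{lemma:stable:small-generation}, the proposition applies directly. This simultaneously gives us that $\complete{\Cat D}$ is an accessible reflective subcategory of $\Cat D$, and therefore presentable (a reflective subcategory of a presentable $\infty$-category which is accessible is itself presentable, see \cite[Proposition 5.5.3.12]{highertopoi}), and that the inclusion admits a left adjoint $\completebr{-} \colon \Cat D \to \complete{\Cat D}$, the localization functor.

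There is essentially no obstacle here; the entire content is packaged in \cref{lemma:stable:small-generation}. The only thing worth remarking is that one needs to explicitly identify the subcategory of $p$-complete objects with the $S$-local subcategory, but this is immediate from the definition: the condition that $\Map{\Cat D}(F', E) \to \Map{\Cat D}(F, E)$ is an equivalence for every $p$-equivalence $F \to F'$ is by definition the statement that $E$ is $S$-local.
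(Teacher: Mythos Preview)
Your proposal is correct and takes essentially the same approach as the paper: both invoke \cite[Proposition 5.5.4.15]{highertopoi} together with \cref{lemma:stable:small-generation}. The paper's proof is a one-line citation of these two inputs, while you spell out the identification of $\complete{\Cat D}$ with the $S$-local objects and add the reference for presentability of accessible localizations, but the argument is identical.
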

\begin{proof}
    This is an application of \cite[Proposition 5.5.4.15]{highertopoi},
    using that the class $S$ of $p$-equivalences in $\Cat D$ is of small generation,
    see \cref{lemma:stable:small-generation}.
\end{proof}

This localization functor is called the \emph{(stable) $p$-completion functor}.
By abuse of notation, we will also write $\completebr{-} \colon \Cat D \to \Cat D$
for the composition of the localization functor with the inclusion.
The $p$-completion functor has an easy description:

\begin{lem}\label{lemma:stable:completion-functor-description}
    There is a natural isomorphism of functors
    $\completebr{-} \cong \limil{n} (- \sslash p^n)$.
\end{lem}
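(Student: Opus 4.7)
The plan is to define $L(X) \coloneqq \limil{n}(X \sslash p^n)$, using the tower of transition maps $X \sslash p^{n+1} \to X \sslash p^n$ induced by the obvious map of cofiber sequences, and show both that $L(X)$ is $p$-complete and that the natural map $X \to L(X)$ is a $p$-equivalence. By the universal property of the localization $\completebr{-}$, this will force a canonical equivalence $\completebr{X} \cong L(X)$.

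First, I would show each $X \sslash p^n$ is $p$-complete, whence so is $L(X)$ since $\complete{\Cat D}$ is reflective and hence closed under limits. Since $\Cat D$ is stable, an object $E$ is $p$-complete if and only if the mapping spectrum $\map(F, E)$ vanishes for every $F$ with $F \sslash p \simeq 0$, because cofibers of $p$-equivalences are precisely such $F$'s and, conversely, $0 \to F$ is a $p$-equivalence whenever $F \sslash p \simeq 0$. For such $F$, the functor $\map(F, -) \colon \Cat D \to \Sp$ preserves cofibers (being a right adjoint in the stable setting), so $\map(F, X \sslash p^n) \simeq \map(F, X) \sslash p^n$, and $p^n$ acts invertibly on $\map(F, X)$ by naturality, so this cofiber vanishes.

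Second, I would show the natural map $\eta \colon X \to L(X)$ is a $p$-equivalence by computing its fiber. Since limits commute with fibers,
\begin{equation*}
    \Fib{\eta} \simeq \limil{n} \Fib{X \to X \sslash p^n},
\end{equation*}
and from the cofiber sequence $X \xrightarrow{p^n} X \to X \sslash p^n$ the $n$-th fiber is $X$ itself, with transition maps between successive fibers given by $p \colon X \to X$. So $\Fib{\eta}$ is the limit of the tower $\cdots \xrightarrow{p} X \xrightarrow{p} X$. The key observation is that $p \colon \Fib{\eta} \to \Fib{\eta}$ is an equivalence: the tower is pro-isomorphic to its own shift, and this self-similarity produces an inverse, constructed concretely via the compatible system $\pi_n \circ q \coloneqq \pi_{n+1}$, which satisfies $\pi_n \circ (pq) = p \circ \pi_{n+1} = \pi_n$ and likewise $qp = \mathrm{id}$. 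Hence $\Fib{\eta} \sslash p \simeq 0$, and applying the exact functor $-\sslash p$ to the cofiber sequence $\Fib{\eta} \to X \to L(X)$ yields $X \sslash p \xrightarrow{\sim} L(X) \sslash p$, so $\eta$ is a $p$-equivalence.

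Finally, by the universal property of $\completebr{-}$ as the localization at $p$-equivalences, the map $\eta$ factors uniquely through $X \to \completebr{X}$, producing a comparison map $\completebr{X} \to L(X)$. This comparison is a $p$-equivalence by 2-out-of-3, and both source and target are $p$-complete, so it must be an equivalence (a $p$-equivalence between $p$-complete objects is an equivalence by the defining universal mapping property of $p$-complete objects). The main obstacle I anticipate is the identification of the transition maps in the tower of fibers and the verification that $p$ acts invertibly on the limit $\Fib{\eta}$; all other steps are straightforward consequences of stability and the exactness of $\map(F, -)$.
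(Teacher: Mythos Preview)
Your proposal is correct and follows the same two-step skeleton as the paper: show that each $X\sslash p^n$ (hence the limit) is $p$-complete, then show $X\to L(X)$ is a $p$-equivalence. The execution differs in both steps, though. For $p$-completeness of $X\sslash p^n$, the paper instead rewrites $\Map{}(f, E\sslash p^n)\cong\Map{}(f\sslash p^n,\Sigma E)$ and uses that $f\sslash p^n$ is an equivalence; your acyclic-object criterion via mapping spectra is equally valid. For the $p$-equivalence, the paper computes $L(E)\sslash p$ directly by identifying the tower $((E\sslash p)\sslash p^n)_n$ with $(E\sslash p\oplus\Sigma E\sslash p)_n$ with transition maps $(\mathrm{id},p)$; this splitting ultimately rests on the fact that $p^2=0$ on $E\sslash p$ (which the paper leaves implicit). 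Your route through $\Fib{\eta}\simeq\lim(\cdots\xrightarrow{p}X\xrightarrow{p}X)$ and the observation that $p$ acts invertibly on this limit is arguably cleaner, since it sidesteps that splitting entirely and works uniformly without any case analysis. Both arguments are standard; yours just trades an explicit cofiber computation for a standard pro-object shift trick.
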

\begin{proof}
    Suppose that $\Cat D$ is equipped with a symmetric monoidal structure $\otimes$
    that is exact in each variable.
    Then this follows from the discussion before \cite[Proposition 2.23]{mathew2017nilpotence}.

    We also give a second proof, which does not require the existence of a stably symmetric monoidal structure:
    Let $L_p \colon \Cat D \to \Cat D$ be the functor
    given by $E \mapsto \limil{n} (E \sslash p^n)$.
    It suffices to show that $L_p(E)$ is $p$-complete for every $E$ and that the 
    canonical map $\alpha_E \colon E \to L_p(E)$ induced by the maps $E \to E\sslash p^n$ is a $p$-equivalence.

    Since the inclusion of $p$-complete objects is a right adjoint,
    it commutes with limits.
    In particular, in order to show that $\limil{n} E\sslash p^n$ is $p$-complete,
    it suffices to show that $E \sslash p^n$ is $p$-complete for all $n$.

    First, let $f \colon X \to Y$ be a $p$-equivalence, i.e. $f \sslash p$ is an equivalence.
    For every $n$, there is a fiber sequence (in the stable category $\Fun{}(\Delta^1, \Cat D)$)
    \begin{equation*}
        f\sslash p \to f\sslash p^n \to f\sslash p^{n-1}.
    \end{equation*}
    By induction, we deduce that if $f\sslash p$ is an equivalence,
    so is $f\sslash p^n$ for all $n$.

    We now show that $E \sslash p^n$ is $p$-complete for all $n$.
    For this, let $f \colon X \to Y$ be a $p$-equivalence.
    We have the following chain of natural equivalences:
    \begin{align*}
        \Map{\Cat D}(f, E\sslash p^n)
         & \cong \Map{\Cat D} \left(f, \Fib{\Sigma E \xrightarrow{p^n} \Sigma E}\right)             \\
         & \cong \Fib{\Map{\Cat D}(f, \Sigma E) \xrightarrow{p^n} \Map{\Cat D}(f, \Sigma E)} \\
         & \cong \Map{\Cat D}\left(f \sslash p^n, \Sigma E\right).
    \end{align*}
    Here, we use that the mapping space functor is left exact in both variables,
    and that $\Cofib{g} = \Fib{\Sigma g}$ for every morphism $g$
    in a stable category.
    Thus, since $f \sslash p^n$ is an equivalence by the above,
    we conclude that $\Map{\Cat D}(f, E \sslash p^n)$ is an equivalence.
    In other words, $E \sslash p^n$ is $p$-complete.

    Thus, we are left to show that for every $E$, $\alpha_E \sslash p \colon E \sslash p \to (\limil{n} E \sslash p^n) \sslash p$
    is an equivalence.
    Indeed, we can write
    \begin{align*}
        (\limil{n} E \sslash p^n) \sslash p
         & \cong \limil{n} ((E \sslash p^n) \sslash p)            \\
         & \cong \limil{n} ((E \sslash p) \sslash p^n)            \\
         & \cong \limil{n} (E\sslash p \oplus \Sigma E \sslash p) \\
         & \cong E \sslash p.
    \end{align*}
    The first equivalence holds because $\Cat D$ is stable, and thus the cofiber
    $(- \sslash p)$ is also a (suspension of) a limit, and limits commute with limits.
    The last equality holds, because in the limit, the transition maps
    on the left part are the identity, and are multiplication by $p$
    on the right part.
\end{proof}

From now on we will use the equivalence from \cref{lemma:stable:completion-functor-description}
without reference.

\begin{lem}\label{lemma:stable:peq-description}
    Let $f \colon E \to F$ be a morphism in $\Cat D$.
    The following are equivalent:
    \begin{enumerate} [label= (\arabic*),ref= (\arabic*),itemsep=0em]
        \item $f$ is a $p$-equivalence,
        \item $\completebr{f}$ is an equivalence,
        \item $\Map{\Cat D}(f, T)$ is an equivalence of anima for every $T \in \complete{\Cat D}$.
    \end{enumerate}

    In particular, for any object $E$
    the unit $E \to \complete{E}$ is a $p$-equivalence,
    and $E$ is $p$-complete if and only if $E \cong \complete{E}$.
\end{lem}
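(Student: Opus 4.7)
The plan is to treat this as the standard cluster of equivalences that governs every reflective localization, keyed to the construction of $\completebr{-}$ via \cite[Proposition 5.5.4.15]{highertopoi} and the explicit formula from \cref{lemma:stable:completion-functor-description}. There are really only two independent implications to check, plus a short Yoneda argument, and the in-particular statements are then immediate.

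First I would address (1) $\Leftrightarrow$ (2). By \cref{lemma:stable:small-generation} the class of $p$-equivalences is strongly saturated and of small generation, and by the construction of the localization in \cite[Proposition 5.5.4.15]{highertopoi} the functor $\completebr{-}$ is precisely the Bousfield localization at this class, so tautologically $\completebr{f}$ is an equivalence iff $f$ is a $p$-equivalence. Alternatively, a direct proof uses the computation inside the proof of \cref{lemma:stable:completion-functor-description} that the unit $\eta_E \colon E \to \completebr{E}$ satisfies $\eta_E \sslash p$ being an equivalence; then naturality gives a commutative square showing $f \sslash p \cong \completebr{f} \sslash p$, and since $\completebr{f}$ is a morphism of $p$-complete objects whose cofiber is therefore uniquely $p$-divisible, one concludes $\completebr{f}$ is an equivalence iff $f \sslash p$ is.

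Next I would handle (2) $\Leftrightarrow$ (3). For (2) $\Rightarrow$ (3), take any $p$-complete $T$; since $T$ is local, the adjunction yields $\Map{\Cat D}(-, T) \cong \Map{\Cat D}(\completebr{-}, T)$ as functors, so $\Map{\Cat D}(f, T) \cong \Map{\Cat D}(\completebr{f}, T)$, which is an equivalence by (2). For (3) $\Rightarrow$ (2), the same adjunction shows $\Map{\Cat D}(\completebr{f}, T)$ is an equivalence for every $p$-complete $T$; since $\completebr{E}, \completebr{F} \in \complete{\Cat D}$, Yoneda applied in the full subcategory $\complete{\Cat D}$ forces $\completebr{f}$ itself to be an equivalence.

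The in-particular statements follow formally. The unit $\eta_E \colon E \to \completebr{E}$ becomes an equivalence after applying $\completebr{-}$ (since $\completebr{E}$ is already $p$-complete, so $\completebr{\eta_E}$ is the identity up to equivalence), hence by (1) $\Leftrightarrow$ (2) it is a $p$-equivalence. Conversely, $E$ is $p$-complete iff it lies in the essential image of the inclusion $\complete{\Cat D} \hookrightarrow \Cat D$, which by the standard property of reflective subcategories is equivalent to the unit $E \to \completebr{E}$ being an equivalence. I do not expect a real obstacle here; the only point requiring any care is the Yoneda step in (3) $\Rightarrow$ (2), which must be carried out inside the local subcategory rather than in all of $\Cat D$.
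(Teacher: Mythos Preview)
Your proposal is correct and follows essentially the same approach as the paper: both derive the equivalences formally from the theory of reflective localizations, invoking \cref{lemma:stable:small-generation} together with \cite[Proposition 5.5.4.15]{highertopoi} (the paper additionally cites \cite[Proposition 5.5.4.2]{highertopoi}). You simply spell out the Yoneda step and the unit argument in more detail than the paper's one-line citation; the alternative direct proof you sketch for (1) $\Leftrightarrow$ (2) is not needed, and its appeal to unique $p$-divisibility would be slightly premature at this point in the paper, but your primary argument is sound and matches the paper's.
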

\begin{proof}
    This follows immediately from the fact that $\completebr{-}$ is a localization functor,
    and that the class of $p$-equivalences is strongly saturated by \cref{lemma:stable:small-generation}.
    See \cite[Proposition 5.5.4.2 and Proposition 5.5.4.15 (4)]{highertopoi}.
\end{proof}

\begin{lem}\label{lemma:stable:truncation-of-completion}
    Let $E \in \Cat D$ be $k$-truncated.
    Then $\complete{E}$ is $(k+1)$-truncated.
\end{lem}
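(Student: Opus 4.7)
The plan is to use the explicit formula $\complete{E} \cong \limil{n} E \sslash p^n$ from \cref{lemma:stable:completion-functor-description} and reduce to a statement about limits and cofibers in a t-structure.

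First I would analyze $E \sslash p^n$. By definition this is the cofiber of $p^n \colon E \to E$, and in a stable $\infty$-category we have $\Cofib{g} \cong \Sigma \Fib{g}$. The fiber sequence $\Fib{p^n} \to E \xrightarrow{p^n} E$ shows that $\Fib{p^n}$ is a fiber of a map between $k$-truncated objects; since $\tcocon[k]{\Cat D}$ is closed under limits (being the essential image of the right adjoint $\tau_{\le k}$), this fiber is again $k$-truncated. Consequently $E \sslash p^n \cong \Sigma \Fib{p^n}$ is $(k+1)$-truncated.

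Next I would invoke closure of the coconnective part under all limits: in any accessible t-structure on a presentable stable $\infty$-category, $\tcocon[k+1]{\Cat D}$ is closed under arbitrary limits (the inclusion $\tcocon[k+1]{\Cat D} \hookrightarrow \Cat D$ admits a left adjoint $\tau_{\le k+1}$, so preserves limits). Applying this to the sequential limit $\limil{n} E \sslash p^n$ of $(k+1)$-truncated objects yields that $\complete{E}$ is $(k+1)$-truncated.

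There is no real obstacle here; the only thing to keep in mind is that passing from $k$-truncated to $(k+1)$-truncated is necessary because we take a suspension of a fiber, and the indexing system in the statement accounts for exactly this shift. The same argument would show more generally that if $E \in \tcocon[k]{\Cat D}$ then $E \sslash p^n \in \tcocon[k+1]{\Cat D}$ for every $n \ge 1$ and hence $\complete{E} \in \tcocon[k+1]{\Cat D}$.
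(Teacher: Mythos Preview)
Your proof is correct and essentially identical to the paper's: both write $E \sslash p^n \cong \Sigma \Fib{E \xrightarrow{p^n} E}$, use closure of $\tcocon[k]{\Cat D}$ under limits to see the fiber is $k$-truncated, and then use the same closure property again to handle the sequential limit $\limil{n} E \sslash p^n$.
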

\begin{proof}
    For each $n$, we see that
    $E \sslash p^n = \Cofib{E \xrightarrow{p^n} E} \cong \Sigma \Fib{E \xrightarrow{p^n} E}$.
    Since $\tcocon[k]{\Cat D}$ is stable under limits (see \cite[Corollary 1.2.1.6]{higheralgebra}),
    we conclude that $E \sslash p^n \in \tcocon[k+1]{\Cat D}$.
    By the same corollary we now get that $\complete{E} = \limil{n} (E \sslash p^n)$
    is $(k+1)$-truncated.
\end{proof}

\begin{defn}
    Let $\Cat A$ be an abelian category, and let $A \in \Cat A$.
    We say that $A$ is \emph{uniquely $p$-divisible},
    if $A \xrightarrow{p} A$ is an isomorphism.
    Similarly, we say that $A$ is \emph{$p$-divisible},
    if $\coker(A \xrightarrow{p} A) = 0$.
\end{defn}

\begin{lem} \label{lemma:upd-homotopy-of-completion-below-connectivity-of-base}
    Let $f \colon F \to E$ be a $p$-equivalence in $\Cat D$
    such that $F$ is $k$-connective for some $k$.
    Then $\pi_n{E}$ is uniquely $p$-divisible for all $n < k$.
\end{lem}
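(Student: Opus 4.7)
The plan is to reduce the statement to a property of the cofiber $C \coloneqq \Cofib{f}$, which measures the failure of $f \colon F \to E$ to be an equivalence. First I would verify that the multiplication-by-$p$ map $C \xrightarrow{p} C$ is itself an equivalence. Because $\Cat D$ is stable, the endofunctor $(-) \sslash p$ preserves cofiber sequences, so $C \sslash p \simeq \Cofib{f \sslash p}$; the right-hand side vanishes by the hypothesis that $f \sslash p$ is an equivalence. Saying $C \sslash p \simeq 0$ is precisely the statement that $C \xrightarrow{p} C$ is an equivalence in $\Cat D$.

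Applying $\pi_n$ to this equivalence for any $n \in \Z$, I obtain that $\pi_n(C) \xrightarrow{p} \pi_n(C)$ is an isomorphism, so every homotopy object of $C$ is uniquely $p$-divisible. Next I would feed the cofiber (hence fiber) sequence $F \to E \to C$ into the long exact sequence of homotopy objects with respect to the standard t-structure:
\begin{equation*}
    \cdots \to \pi_n(F) \to \pi_n(E) \to \pi_n(C) \to \pi_{n-1}(F) \to \cdots.
\end{equation*}
Since $F \in \tcon[k]{\Cat D}$, we have $\pi_m(F) = 0$ for all $m < k$, so for any $n < k$ both $\pi_n(F)$ and $\pi_{n-1}(F)$ vanish and the sequence collapses to an isomorphism $\pi_n(E) \cong \pi_n(C)$. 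Combined with the preceding step, this yields unique $p$-divisibility of $\pi_n(E)$ for all $n < k$, as desired.

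The argument is entirely formal and there is no real obstacle. The only point worth checking is the exactness of $(-) \sslash p$, which is automatic from stability of $\Cat D$ and in particular does not require a symmetric monoidal structure. It is also instructive to note that the bound $n < k$ is sharp from this argument: at $n = k$ the term $\pi_k(F)$ need not vanish, which is why the conclusion fails above the connectivity of $F$.
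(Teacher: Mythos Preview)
Your proof is correct and takes a different, somewhat cleaner route than the paper. The paper works with the cofiber sequence $E \xrightarrow{p} E \to E\sslash p$ together with the identification $E\sslash p \simeq F\sslash p$, obtaining the exact sequence
\[
\pi_{i+1}(F\sslash p) \to \pi_i(E) \xrightarrow{p} \pi_i(E) \to \pi_i(F\sslash p).
\]
For $i \le k-2$ both outer terms vanish and the conclusion is immediate, but at $i = k-1$ the term $\pi_k(F\sslash p)$ need not vanish, and the paper has to invoke naturality of the boundary map in the square involving $\pi_k(F\sslash p) \to \pi_{k-1}(F) = 0$ to force the connecting map $\alpha$ to be zero. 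Your argument instead passes through $C = \Cofib{f}$: once you know $C\sslash p \simeq 0$, every $\pi_n(C)$ is uniquely $p$-divisible, and the long exact sequence of $F \to E \to C$ gives $\pi_n(E) \cong \pi_n(C)$ for all $n < k$ in a single stroke, with no boundary case to treat separately. The trade-off is minimal: both proofs are entirely formal, but yours packages the information more efficiently by localizing the $p$-equivalence hypothesis in the cofiber rather than in $E\sslash p$.
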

\begin{proof}
    Since $f$ is a $p$-equivalence, it induces an equivalence $F\sslash p \to E\sslash p$.
    We have a cofiber sequence $E \xrightarrow{p} E \to E\sslash p$,
    and thus a cofiber sequence $E \xrightarrow{p} E \to F\sslash p$.
    This induces a long exact sequence on homotopy objects, which gives us
    \begin{equation*}
        \pi_{i+1}(F\sslash p) \to \pi_{i}(E) \xrightarrow{p} \pi_i(E) \to \pi_i(F\sslash p)
    \end{equation*}
    for all $i$.

    If $i \le k-2$, then the outer terms vanish ($F\sslash p$ is $k$-connective).
    Thus, $\pi_i(E)$ is uniquely $p$-divisible.

    If $i = k-1$, we get
    \begin{center}
        \begin{tikzcd}
            \pi_k(E\sslash p) \arrow[r, "\alpha"] &\pi_{k-1}(E) \arrow[r, "p"] &\pi_{k-1}(E) \arrow[r] &\pi_{k-1}(F\sslash p) = 0 \\
            \pi_k(F\sslash p) \arrow[u, "\cong"] \arrow[r] &\pi_{k-1}(F) = 0 \arrow[u]
        \end{tikzcd}
    \end{center}
    Commutativity of the square implies that $\alpha = 0$.
    Thus, also $\pi_{k-1}(E)$ is uniquely $p$-divisible.
\end{proof}

\begin{lem}\label{lemma:stable-peq-to-zero-iff-upd}
    Let $E \in \Cat D$.
    Consider the following statements:
    \begin{enumerate}[label= (\arabic*),ref= (\arabic*),itemsep=0em]
        \item\label{lemma:stable:completion-zero-iff:completion-zero}
              $\complete E = 0$,
        \item\label{lemma:stable:completion-zero-iff:htpy-completion-zero}
              $\pi_n(\complete E) = 0$ for all $n$,
        \item\label{lemma:stable:completion-zero-iff:htpy-mod-p-zero}
              $\pi_n(E \sslash p) = 0$ for all $n$, and
        \item\label{lemma:stable:completion-zero-iff:htpy-pdiv}
              $\pi_n(E)$ is uniquely $p$-divisible for all $n$.
    \end{enumerate}
    Then \ref{lemma:stable:completion-zero-iff:completion-zero} $\implies$ \ref{lemma:stable:completion-zero-iff:htpy-completion-zero} $\implies$ \ref{lemma:stable:completion-zero-iff:htpy-mod-p-zero} $\iff$ \ref{lemma:stable:completion-zero-iff:htpy-pdiv}.
    If $\tcon[\infty]{\Cat D} \coloneqq \bigcap_n \tcon[n]{\Cat D}$ is stable under sequential limits, then also \ref{lemma:stable:completion-zero-iff:htpy-completion-zero} $\iff$ \ref{lemma:stable:completion-zero-iff:htpy-mod-p-zero}.
    If the t-structure is moreover left-separated, then also \ref{lemma:stable:completion-zero-iff:completion-zero} $\iff$ \ref{lemma:stable:completion-zero-iff:htpy-completion-zero}.

    Note that if the t-structure is left-separated, then $\tcon[\infty]{\Cat D} = 0$ is in particular 
    stable under sequential limits (i.e.\ in this case, all four statements are equivalent).
\end{lem}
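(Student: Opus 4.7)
The plan is to handle the easy implications first, then exploit right-separation (together with the extra hypotheses where stated) to cascade the vanishing upward through the tower defining $\completebr{-}$. The implication (1) $\implies$ (2) is immediate. For (2) $\implies$ (3), the unit $E \to \complete{E}$ is a $p$-equivalence by \cref{lemma:stable:peq-description}, so $E \sslash p \cong \complete{E} \sslash p$; feeding (2) into the long exact sequence associated with the cofiber sequence $\complete{E} \xrightarrow{p} \complete{E} \to \complete{E} \sslash p$ forces $\pi_n(\complete{E} \sslash p) = 0$ for all $n$. For (3) $\iff$ (4), the long exact sequence of $E \xrightarrow{p} E \to E \sslash p$ yields short exact sequences
\[
    0 \to \pi_n(E)/p \to \pi_n(E \sslash p) \to \ker\bigl(p \colon \pi_{n-1}(E) \to \pi_{n-1}(E)\bigr) \to 0,
\]
so (3) is equivalent to multiplication by $p$ being an isomorphism on every $\pi_n(E)$.

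The key input for the remaining implications is the following consequence of right-separation: if $X \in \Cat D$ satisfies $\pi_n(X) = 0$ for all $n$, then $X \in \tcon[\infty]{\Cat D}$. Indeed, each truncation $\tau_{\le m} X$ has every homotopy object zero and lies in $\tcocon[m]{\Cat D}$, so it lies in $\bigcap_n \tcocon[n]{\Cat D} = 0$; therefore $\tau_{\le m} X = 0$ for every $m$, i.e.\ $X$ is infinitely connective.

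With this observation in hand, (3) $\implies$ (2) under the assumption that $\tcon[\infty]{\Cat D}$ is stable under sequential limits is nearly automatic: the cofiber sequences $E \sslash p \to E \sslash p^k \to E \sslash p^{k-1}$ (from the octahedral axiom applied to $p = p \cdot p^{k-1}$) inductively give $\pi_n(E \sslash p^k) = 0$ for all $n$ and $k$, so each $E \sslash p^k$ lies in $\tcon[\infty]{\Cat D}$, and stability places $\complete{E} \cong \limil{k} E \sslash p^k$ in $\tcon[\infty]{\Cat D}$, yielding (2). Under the stronger assumption of left-separation, the observation again shows $\complete{E} \in \tcon[\infty]{\Cat D} = 0$, giving (1); the concluding note is trivial because $0$ is stable under all limits. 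The main obstacle is conceptual rather than computational: one must recognise that right-separation already upgrades the pointwise condition ``all $\pi_n = 0$'' to the global condition $X \in \tcon[\infty]{\Cat D}$, after which the stability and separation hypotheses propagate the vanishing through the limit defining $\completebr{-}$ without further effort.
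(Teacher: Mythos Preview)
Your proof is correct and follows essentially the same route as the paper's: both use the long exact sequences of $E \xrightarrow{p} E \to E \sslash p$ and $\complete{E} \xrightarrow{p} \complete{E} \to \complete{E}\sslash p \cong E \sslash p$ for the easy implications, and both finish by showing each $E \sslash p^k$ is infinitely connective and invoking stability of $\tcon[\infty]{\Cat D}$ under the sequential limit. The only cosmetic difference is that you propagate $\pi_n(E\sslash p^k)=0$ via the filtration sequences $E\sslash p \to E\sslash p^k \to E\sslash p^{k-1}$, whereas the paper uses $E \xrightarrow{p^k} E \to E\sslash p^k$ together with condition (4) directly; and you spell out explicitly why ``all $\pi_n$ vanish'' forces membership in $\tcon[\infty]{\Cat D}$ under right-separation, which the paper leaves implicit.
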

\begin{proof}
    It is clear that (1) $\implies$ (2).
    Moreover, if the t-structure is left-separated,
    then it follows directly that (2) $\implies$ (1)
    (note that the t-structure is assumed to be right-separated).

    We now show (2) $\implies$ (3).
    Since $E \to \complete{E}$ is a $p$-equivalence,
    we have $E \sslash p \cong \complete{E} \sslash p$.
    In other words, there is a cofiber sequence
    \begin{equation*}
        \complete E \xrightarrow{p} \complete E \to E \sslash p.
    \end{equation*}
    This induces the following long exact sequence on homotopy:
    \begin{center}
        \begin{tikzcd}
            \cdots \arrow[r, "p"] &\pi_n(\complete{E}) \arrow[r] & \pi_n(E\sslash p) \arrow[r] &\pi_{n-1}(\complete{E}) \arrow[r, "p"] &\cdots.
        \end{tikzcd}
    \end{center}
    We conclude that $\pi_n(E \sslash p) = 0$ for all $n$.

    The equivalence (3) $\iff$ (4) 
    follows immediately from the long exact sequence associated to the fiber sequence $E \xrightarrow{p} E \to E\sslash p$,
    similar to the proof of \cref{lemma:upd-homotopy-of-completion-below-connectivity-of-base}.

    We are left to show that (4) $\implies$ (2) if we assume that $\tcon[\infty]{\Cat D}$ 
    is stable under sequential limits.
    Using the cofiber sequence $E \xrightarrow{p^k} E \to E \sslash p^k$
    we conclude as above that $\pi_n(E \sslash p^k) = 0$ for all $k \ge 1$ and all $n$.
    In particular, since the standard t-structure is right-separated, we see that
    $E \sslash p^k \in \tcon[\infty]{\Cat D}$.
    But now we conclude that $\complete{E} = \limil{k} E \sslash p^k \in \tcon[\infty]{\Cat D}$.
    This implies that $\pi_n(\complete E) \cong 0$ for all $n$.
\end{proof}

\begin{cor}\label{cor:stable-peq-iff-upd-fiber}
    Let $f \colon E \to F$ be a morphism in $\Cat D$.
    Consider the following statements:
    \begin{enumerate}[label=(\arabic*),ref=(\arabic*),itemsep=0em]
        \item $f$ is a $p$-equivalence,
        \item $\completebr{\Fib{f}} = 0$,
        \item $\completebr{\Cofib{f}} = 0$,
        \item $\Fib{f}$ has uniquely $p$-divisible homotopy objects,
        \item $\Cofib{f}$ has uniquely $p$-divisible homotopy objects.
    \end{enumerate}
    Then (1) $\iff$ (2) $\iff$ (3) $\implies$ (4) $\iff$ (5).
    If moreover the standard t-structure is left-separated,
    then also (4) $\implies$ (3).
\end{cor}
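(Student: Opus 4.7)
The plan is to reduce all implications to \cref{lemma:stable-peq-to-zero-iff-upd} applied separately to $\Fib{f}$ and $\Cofib{f}$. Two preliminary observations are central: first, the endofunctor $(-) \sslash p$ is exact on the stable $\infty$-category $\Cat D$ (being the cofiber of multiplication by $p$), so $\Fib(f) \sslash p \cong \Fib(f \sslash p)$ and $\Cofib(f) \sslash p \cong \Cofib(f \sslash p)$, and consequently $f$ is a $p$-equivalence iff $\Fib{f} \sslash p = 0$ iff $\Cofib{f} \sslash p = 0$. Second, in any stable $\infty$-category we have $\Cofib{f} \cong \Sigma \Fib{f}$, so the homotopy objects of the fiber and cofiber agree up to a shift.

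For (1) $\iff$ (2) $\iff$ (3), I would first establish the object-level statement that $\complete{E} = 0$ if and only if $E \sslash p = 0$ for any $E \in \Cat D$. If $E \sslash p = 0$, then $p \colon E \to E$ is an equivalence, hence so is $p^n$ for every $n$, giving $E \sslash p^n = 0$ and $\complete{E} = \limil{n} E \sslash p^n = 0$. Conversely, since $E \to \complete{E}$ is a $p$-equivalence by \cref{lemma:stable:peq-description}, we have $E \sslash p \cong \complete{E} \sslash p$, which vanishes whenever $\complete{E}$ does. Applied with $E = \Fib{f}$ and $E = \Cofib{f}$, combined with the first preliminary observation, this yields (1) $\iff$ (2) $\iff$ (3).

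The implications (2) $\implies$ (4) and (3) $\implies$ (5) are direct applications of \cref{lemma:stable-peq-to-zero-iff-upd} (the chain (1) $\implies$ (3) $\iff$ (4) there) to $\Fib{f}$ and $\Cofib{f}$ respectively. The equivalence (4) $\iff$ (5) follows from $\Cofib{f} \cong \Sigma \Fib{f}$, giving $\pi_n(\Cofib{f}) \cong \pi_{n-1}(\Fib{f})$, so (4) and (5) are literally the same condition up to a re-indexing of the homotopy objects.

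Finally, for (4) $\implies$ (3) under left-separation: since $\tcon[\infty]{\Cat D} = 0$ is trivially stable under sequential limits, \cref{lemma:stable-peq-to-zero-iff-upd} gives the full chain of equivalences (1) $\iff$ (2) $\iff$ (3) $\iff$ (4) from that lemma. Applied to $\Fib{f}$, condition (4) of the corollary then forces $\complete{\Fib{f}} = 0$, which is (2) of the corollary, hence (3) by the earlier step. The only real conceptual point is distinguishing the homotopy-object level conditions from the object-level ones, which is exactly the subtlety that left-separation resolves in the previous lemma; no new obstacle arises here.
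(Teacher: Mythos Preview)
Your proof is correct and follows essentially the same approach as the paper: both reduce everything to \cref{lemma:stable-peq-to-zero-iff-upd} together with the stable identity $\Cofib{f} \cong \Sigma \Fib{f}$. The only cosmetic difference is that for (1) $\iff$ (2) you spell out the intermediate criterion ``$\complete{E} = 0 \iff E \sslash p = 0$'' explicitly, whereas the paper just invokes the fiber sequence $\Fib{f} \to E \to F$ (implicitly using that $p$-completion, being a limit of exact functors, commutes with fibers).
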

\begin{proof}
    The equivalence of $(1)$ and $(2)$ follows from the
    fiber sequence $\Fib{f} \to E \to F$.
    That $(2)$ implies $(4)$ was proven in \cref{lemma:stable-peq-to-zero-iff-upd}.
    If the t-structure is left-separated, then also $(4)$ implies $(2)$, again by
    \cref{lemma:stable-peq-to-zero-iff-upd}.
    The other equivalences follow because $\Cat D$ is stable
    and thus there is an equivalence $\Cofib{f} \cong \Sigma \Fib{f}$.
\end{proof}

\begin{lem} \label{lemma:stable:p-eq-of-spectra-smash-product}
    Suppose that $\Cat D$ is equipped with a symmetric monoidal structure $\otimes$
    that is exact in each variable.

    Let $f_i \colon E_i \to F_i$ be a $p$-equivalence in $\Cat D$ for $i = 1, \dots, n$.
    Then also $\bigotimes_i f_i \colon \bigotimes_i E_i \to \bigotimes_i F_i$
    is a $p$-equivalence, i.e.\ \emph{$p$-completion is compatible with the symmetric monoidal 
    structure} in the language of \cite[Definition 2.2.1.6]{higheralgebra}.
\end{lem}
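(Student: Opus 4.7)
The plan is to reduce to showing that if $f \colon E \to F$ is a $p$-equivalence and $g \colon X \to Y$ is any morphism, then $g \otimes f$ is a $p$-equivalence; in fact it suffices to handle the case $g = \mathrm{id}_X$. Once this is established, the full statement follows by writing
\begin{equation*}
    \bigotimes_{i=1}^n f_i = \prod_{i=1}^{n} \bigl( \mathrm{id}_{F_1} \otimes \cdots \otimes \mathrm{id}_{F_{i-1}} \otimes f_i \otimes \mathrm{id}_{E_{i+1}} \otimes \cdots \otimes \mathrm{id}_{E_n}\bigr)
\end{equation*}
and applying the fact that the class of $p$-equivalences is closed under composition, which is part of strong saturation (\cref{lemma:stable:small-generation}).

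For the key step, I would use the remark following the definition of $p$-equivalences, which identifies $(-)\sslash p$ with $-\otimes (\mathbb S \sslash p)$. For a morphism of the form $\mathrm{id}_X \otimes f \otimes \mathrm{id}_Y$ this gives
\begin{equation*}
    (\mathrm{id}_X \otimes f \otimes \mathrm{id}_Y) \sslash p \cong \mathrm{id}_X \otimes (f \sslash p) \otimes \mathrm{id}_Y,
\end{equation*}
using that the tensor product is symmetric monoidal (so the factor $\mathbb S \sslash p$ can be moved next to $f$) and that $- \otimes -$ is functorial in both variables. Since $f$ is a $p$-equivalence, $f \sslash p$ is an equivalence; applying the functor $X \otimes (-) \otimes Y$ (which preserves equivalences, as any functor does) then shows that $(\mathrm{id}_X \otimes f \otimes \mathrm{id}_Y) \sslash p$ is an equivalence.

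No serious obstacle arises here: the only inputs are the monoidal description of $(-)\sslash p$ and the closure of $p$-equivalences under composition. The statement is really a formal consequence of the fact that the localization at $p$-equivalences is compatible with the monoidal structure, which is essentially built into the identification $(-)\sslash p = - \otimes (\mathbb S \sslash p)$.
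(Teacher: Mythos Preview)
Your proof is correct and essentially the same as the paper's. The paper invokes \cite[Example 2.2.1.7]{higheralgebra} for the reduction to the case $f \otimes \mathrm{id}_Z$ (which is exactly the decomposition you wrote out), and then uses exactness of $\otimes$ in each variable to get $(f \otimes Z)\sslash p \cong (f\sslash p) \otimes Z$, whereas you route through the identification $(-)\sslash p \cong - \otimes (\mathbb S\sslash p)$; these are interchangeable. One small wording issue: your opening sentence asserts that $g \otimes f$ is a $p$-equivalence for \emph{any} $g$, which is false as stated, but you immediately restrict to $g = \mathrm{id}_X$, which is the correct claim.
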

\begin{proof}
    Note that by \cite[Example 2.2.1.7]{higheralgebra},
    it suffices to show that for every $p$-equivalence $f \colon E \to F$,
    and any object $Z \in \Cat D$,
    also $f \otimes Z \colon E \otimes Z \to F \otimes Z$ is a $p$-equivalence.
    We thus have to show that $(f \otimes Z) \sslash p$ is an equivalence.
    Since the symmetric monoidal structure is exact in each variable,
    we can write $(f \otimes Z) \sslash p \cong (f \sslash p) \otimes Z$, which is an equivalence by assumption.
\end{proof}

\subsection{The \texorpdfstring{$p$}{p}-adic t-structure}
The aim of this section is to define a t-structure on
$\Cat D$ which is suited for $p$-completions.

\begin{defn} \label{def:t-struct:defn}
    For $i \in \Z$, let $\tpcon[i]{\Cat D}$ be the full subcategory of $\Cat D$
    given by objects
    \begin{equation*}
        \set{E \in \Cat D}{\pi_j(E) \text{ uniquely p-divisible } \forall j<i-1, \pi_{i-1}(E) \text{ p-divisible}}.
    \end{equation*}
    Let $\tpcocon[i]{\Cat D}$ be the right orthogonal complement of $\tpcon[i+1]{\Cat D}$,
    i.e. $E \in \tpcocon[i]{\Cat D}$ if and only if for all $F \in \tpcon[i+1]{\Cat D}$
    the mapping space $\Map{}(F, E)$ is contractible.
    We will show below in \cref{lemma:t-struct:proof}
    that this defines a t-structure $\tpstruct{\Cat D}$ on $\Cat D$.
    We will call this t-structure the \emph{$p$-adic t-structure} on $\Cat D$.
    Denote by $\pi^p_n$ the $n$-th homotopy object and by $\tau^p_{\le n}$ and $\tau^p_{\ge n}$ the truncations
    of this t-structure.
    Moreover, denote by $\tpstructheart{\Cat D} \coloneqq \tpcon{\Cat D} \cap \tpcocon{\Cat D} \subset \Cat D$
    the heart.
\end{defn}

\begin{rmk}
    Note that the $p$-adic t-structure $\tpstruct{\Cat D}$ depends on the t-structure $\tstruct{\Cat D}$.
    This is suppressed in our notation. Later, $\Cat D$ will be the
    stabilization of a presentable $\infty$-category, which admits a canonical t-structure,
    so this slight abuse of notation will not be a problem.
\end{rmk}

In order to prove that the $p$-adic t-structure is a t-structure, we will need the following lemma:
\begin{lem} \label{lemma:t-struct:mod-p-in-con-implies-in-tpcon}
    Let $E \in \Cat D$.
    The following are equivalent:
    \begin{enumerate}[label=(\arabic*),ref=(\arabic*),itemsep=0em]
        \item $E \in \tpcon{\Cat D}$,
        \item $E \sslash p^n \in \tcon{\Cat D}$ for all $n$ and
        \item $E \sslash p \in \tcon{\Cat D}$.
    \end{enumerate}
\end{lem}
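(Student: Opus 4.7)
The plan is to prove the cyclic chain of implications $(1) \Rightarrow (2) \Rightarrow (3) \Rightarrow (1)$. In all three steps the main tool is the long exact sequence in the abelian category $\heart{\Cat D}$ induced by the cofiber sequence $E \xrightarrow{p^n} E \to E \sslash p^n$, which at each degree $j$ reads
\begin{equation*}
    \pi_j(E) \xrightarrow{p^n} \pi_j(E) \to \pi_j(E \sslash p^n) \to \pi_{j-1}(E) \xrightarrow{p^n} \pi_{j-1}(E).
\end{equation*}
In particular, $\pi_j(E \sslash p^n) = 0$ if and only if $p^n$ is surjective on $\pi_j(E)$ and injective on $\pi_{j-1}(E)$.

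For $(1) \Rightarrow (2)$, I would first note that unique $p$-divisibility (respectively $p$-divisibility) of an abelian group implies unique $p^n$-divisibility (respectively $p^n$-divisibility), since these are iterated compositions of isomorphisms, respectively surjections. Given $E \in \tpcon{\Cat D}$, the criterion above then immediately yields $\pi_j(E \sslash p^n) = 0$ for every $j < 0$: for $j < 0$ surjectivity of $p^n$ on $\pi_j(E)$ uses $p$-divisibility of $\pi_{-1}(E)$ at the boundary and unique $p$-divisibility below; injectivity of $p^n$ on $\pi_{j-1}(E)$ only ever concerns degrees $\le -2$, where unique $p$-divisibility is available.

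The implication $(2) \Rightarrow (3)$ is immediate by specializing to $n = 1$. For $(3) \Rightarrow (1)$, I would again read off the criterion with $n = 1$: vanishing of $\pi_j(E \sslash p)$ for all $j \le -1$ forces $p$ to be surjective on $\pi_j(E)$ for $j \le -1$ (giving $p$-divisibility of $\pi_{-1}(E)$), and forces $p$ to be injective on $\pi_{j-1}(E)$ for $j \le -1$, i.e.\ injectivity of $p$ on $\pi_i(E)$ for $i \le -2$. Combining these upgrades $p$-divisibility to unique $p$-divisibility in the range $i < -1$, as required.

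The only genuine delicacy is the asymmetric treatment of $\pi_{-1}(E)$, which is merely required to be $p$-divisible, versus $\pi_j(E)$ for $j < -1$, which must be uniquely $p$-divisible. This asymmetry is dictated by the long exact sequence: at the boundary $j = -1$ only one of the two controlling terms ($\pi_{-1}(E \sslash p^n)$ versus $\pi_0(E \sslash p^n)$) is guaranteed to vanish, so only surjectivity of $p^n$ (not injectivity) is available on $\pi_{-1}(E)$. Beyond bookkeeping this boundary behavior carefully, no substantial obstacle is anticipated.
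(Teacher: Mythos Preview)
Your proof is correct and follows essentially the same approach as the paper: both use the long exact sequence associated to the cofiber sequence $E \xrightarrow{p^n} E \to E \sslash p^n$ to translate connectivity of $E \sslash p^n$ into divisibility conditions on the $\pi_j(E)$. The paper's write-up is marginally more direct---it observes in one stroke that ``$\pi_k(E \sslash p^n) = 0$ for all $k < 0$'' is equivalent to ``$\pi_k(E)$ is uniquely $p^n$-divisible for $k < -1$ and $\pi_{-1}(E)$ is $p^n$-divisible,'' and then notes that (unique) $p^n$-divisibility is the same as (unique) $p$-divisibility, so the condition is independent of $n$; this dispatches all three equivalences at once rather than via a cycle. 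One small point the paper makes explicit and you leave implicit: the step from ``$\pi_j(E \sslash p^n) = 0$ for all $j < 0$'' to ``$E \sslash p^n \in \tcon{\Cat D}$'' uses the standing assumption that the t-structure is right-separated.
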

\begin{proof}
    The fiber sequence $E \xrightarrow{p^n} E \to E\sslash p^n$ yields the long exact sequence
    \begin{equation*}
        \cdots \to \pi_{k+1}(E\sslash p^n) \to \pi_k(E) \xrightarrow{p^n} \pi_k(E) \to \pi_k(E\sslash p^n) \to \cdots.
    \end{equation*}
    We conclude that $\pi_k(E\sslash p^n) = 0$ for all $k < 0$
    if and only if $\pi_k(E)$ is uniquely $p^n$-divisible for all $k < -1$
    and $\pi_{-1}(E)$ is $p^n$-divisible.
    But being (uniquely) $p^n$-divisible is the same as being (uniquely) $p$-divisible.
    Since the standard t-structure is right-separated by assumption,
    we see that $\pi_k(E \sslash p^n) = 0$ for all $k < 0$ is equivalent to
    $E \sslash p^n \in \tcon{\Cat D}$.
\end{proof}

\begin{lem} \label{lemma:t-struct:proof}
    The pair $\tpstruct{\Cat D}$ from \cref{def:t-struct:defn}
    defines an accessible t-structure on $\Cat D$.
\end{lem}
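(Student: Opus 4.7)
The plan is to apply \cite[Proposition 1.4.4.11]{higheralgebra}, which produces an accessible t-structure on a presentable stable $\infty$-category from any full subcategory that is presentable and closed under small colimits and extensions. I will verify these hypotheses for $\tpcon{\Cat D}$; the coconnective part of the resulting t-structure is then automatically the right orthogonal complement of $\tpcon[1]{\Cat D}$, matching our definition of $\tpcocon{\Cat D}$ in \cref{def:t-struct:defn}.

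The central observation driving the argument is that \cref{lemma:t-struct:mod-p-in-con-implies-in-tpcon} and its shifted counterparts identify
\begin{equation*}
    \tpcon[i]{\Cat D} = (-\sslash p)^{-1}(\tcon[i]{\Cat D})
\end{equation*}
for every $i \in \Z$, and in particular $\tpcon[i+1]{\Cat D} = \Sigma \tpcon[i]{\Cat D}$, matching the shift convention of a t-structure. Since the cofiber functor $-\sslash p$ is colimit-preserving and preserves fiber sequences in the stable category $\Cat D$, and since $\tcon{\Cat D}$ is closed under colimits and extensions (the standard t-structure being accessible by assumption), closure of $\tpcon{\Cat D}$ under colimits and extensions transfers immediately from the corresponding closure properties of $\tcon{\Cat D}$.

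Finally, for presentability I would realize $\tpcon{\Cat D}$ as the pullback (in the $\infty$-category of presentable $\infty$-categories and left adjoints) of the fully faithful inclusion $\tcon{\Cat D} \hookrightarrow \Cat D$ along the colimit-preserving functor $-\sslash p \colon \Cat D \to \Cat D$. Such a pullback is again presentable by the standard closure properties of $\Pr^L$ under small limits, cf.\ \cite[Section 5.5.3]{highertopoi}. This presentability step is the main potential obstacle, but it is routine once the preimage description is in hand: the fully faithful inclusion of the pullback into $\Cat D$ indeed has essential image $\tpcon{\Cat D}$, since membership in either category is tested by the single condition $E\sslash p \in \tcon{\Cat D}$. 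Combining everything, \cite[Proposition 1.4.4.11]{higheralgebra} furnishes the desired accessible t-structure $\tpstruct{\Cat D}$.
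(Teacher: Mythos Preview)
Your proposal is correct and follows essentially the same route as the paper: apply \cite[Proposition 1.4.4.11]{higheralgebra} after using \cref{lemma:t-struct:mod-p-in-con-implies-in-tpcon} to identify $\tpcon{\Cat D}$ as the preimage of $\tcon{\Cat D}$ under $(-)\sslash p$, deduce closure under colimits and extensions from the exactness and colimit-preservation of $(-)\sslash p$, and obtain presentability by realizing $\tpcon{\Cat D}$ as a pullback in $\mathcal{P}r^L$ (the paper cites \cite[Proposition 5.5.3.13]{highertopoi} for this step).
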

\begin{proof}
    Using \cite[Proposition 1.4.4.11]{higheralgebra},
    it suffices to show that $\tpcon{\Cat D}$ is presentable and closed under colimits and extensions.
    Note that by \cref{lemma:t-struct:mod-p-in-con-implies-in-tpcon},
    we see that $\tpcon{\Cat D} = \set{E \in \Cat D}{E \sslash p \in \tcon{\Cat D}}$.

    We first show that $\tpcon{\Cat D}$ is presentable.
    Note that by assumption, the standard t-structure is accessible, i.e.\ $\tcon{\Cat D}$ is presentable.
    Using \cref{lemma:t-struct:mod-p-in-con-implies-in-tpcon},
    we see that there is a cartesian diagram of $\infty$-categories
    \begin{center}
        \begin{tikzcd}
            \tpcon{\Cat D} \arrow[r] \arrow[d, hook] &\tcon{\Cat D} \arrow[d, hook] \\
            \Cat D \arrow[r, "(-)\sslash p"] &\Cat D.
        \end{tikzcd}
    \end{center}
    The inclusion $\tcon{\Cat D} \hookrightarrow \Cat D$ and the functor $(-) \sslash p$
    commute with colimits (by \cite[Corollary 1.2.1.6]{higheralgebra}, and since colimits commute with colimits).
    By assumption, $\Cat D$ and $\tcon{\Cat D}$ are presentable.
    Thus, the limit of the above diagram can be computed in the $\infty$-category
    of presentable $\infty$-catgories $\mathcal{P}r^L$
    (see \cite[Proposition 5.5.3.13]{highertopoi}),
    and we conclude that $\tpcon{\Cat D}$ is presentable.
    In particular, we see that the functor $\tpcon{\Cat D} \hookrightarrow \Cat D$ commutes with colimits,
    i.e.\ the subcategory $\tpcon{\Cat D}$ is closed under colimits.

    We are left to show that $\tpcon{\Cat D}$ is closed under extensions.
    This follows immediately from the fact that $\tcon{\Cat {D}}$
    is closed under extensions (this is true for the connective part of any t-structure),
    and that $(-) \sslash p \colon \Cat D \to \Cat D$
    commutes with extensions (because it is an exact functor).
\end{proof}

\begin{rmk} \label{rmk:t-struct:choices}
    We quickly explain why we made those choices.
    We will see in \cref{lemma:t-struct:left-complete}
    that the $p$-adic t-structure is not left-separated, with
    \begin{equation*}
        \bigcap_n \tpcon[n]{\Cat D} = \set{E \in \Cat D}{\pi_n(E) \text{ uniquely } p\text{-divisible for all } n}.
    \end{equation*}
    Note that this is exactly the kernel of the $p$-completion functor $\completebr{-}$,
    hence the left-seperation of this t-structure (i.e.\ the Verdier quotient
    $\Cat D / \bigcap_n \tpcon[n]{\Cat D}$) is given by $\complete{\Cat D}$.
    There is another t-structure with the same property:

    Let $\Cat C \coloneqq \set{E \in \Cat D}{\pi_j(E) \text{ uniquely p-divisible } \forall j < 0}$.
    Then we could define a t-structure by declaring the $-1$-coconnective objects to be the
    right orthogonal complement of $\Cat C$, and the connective objects to be the
    left orthogonal complement of the $-1$-coconnective objects. (Note that
    $\Cat C$ itself cannot be the subcategory of connective objects of a t-structure
    since it is not closed under extensions).
    If $\Cat D = \Sp$ is the category of spectra
    (or more generally the stabilization of an $\infty$-topos locally of homotopy dimension $0$),
    then these two t-structures agree.
    But in general, this is not true:
    Let $\topos X$ be the $\infty$-topos of étale sheaves (of anima)
    on the small étale site of $\Spec{\Q}$.
    Let $\mu_{p^\infty}$ be the sheaf of $p$-power roots of unity,
    i.e.\ $\mu_{p^\infty}(\Spec k) = \set{x \in k}{\exists n, x^{p^n} = 1}$.
    The associated Eilenberg-MacLane spectrum $H\mu_{p^\infty}$
    lies inside $\tpcon[1]{\spectra X}$ (since $\mu_{p^\infty}$ is $p$-divisible), but is only
    $0$-connective in this second t-structure.
    If one views $\mu_{p^\infty}$ as an étale version of the (ordinary) spectrum
    $H(\Z[p^{-1}]/\Z)$, then one would expect this shift.
\end{rmk}

\begin{defn}
    Let $\Cat A$ be an abelian category. Let $A \in \Cat A$.
    We say that $A$ has \emph{bounded $p$-divisibility}
    if for all $p$-divisible $B \in \Cat A$ we have $\Map{}(B, A) = 0$.
\end{defn}

\begin{lem} \label{lemma:t-struct:char-of-cocon}
    Let $E \in \Cat D$.
    Then $E \in \tpcocon{\Cat D}$ if and only if
    $E = \tau_{\le 0} E$, $E = \complete E$ and $\pi_0(E)$ has bounded $p$-divisibility.
\end{lem}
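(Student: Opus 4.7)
The plan is to use the characterization from \cref{lemma:t-struct:mod-p-in-con-implies-in-tpcon} that $F \in \tpcon[1]{\Cat D}$ iff $F \sslash p \in \tcon[1]{\Cat D}$, equivalently $\pi_0(F)$ is $p$-divisible and $\pi_j(F)$ is uniquely $p$-divisible for $j < 0$.

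For the direction $(\Rightarrow)$, I would verify the three conditions separately. Since $\tcon[1]{\Cat D} \subseteq \tpcon[1]{\Cat D}$ trivially, right-orthogonality of $E$ to $\tpcon[1]{\Cat D}$ forces $E \in \tcocon[0]{\Cat D}$, giving (a). For (b), given any $p$-equivalence $f$ we have $\Cofib{f} \sslash p = 0$, so every shift $\Sigma^n \Cofib{f}$ with $n \ge 0$ lies in $\tpcon[1]{\Cat D}$ and is killed by $\Map{}(-, E)$; this makes $\Map{}(-, E)$ invert $f$, which is exactly $p$-completeness of $E$. For (c), any $p$-divisible $B \in \heart{\Cat D}$ satisfies $B \sslash p \in \tcon[1]{\Cat D}$ (its $\pi_0$ is the cokernel $B/p = 0$), so $B \in \tpcon[1]{\Cat D}$ and $\Map{}(B, E) = 0$; the adjunction between $\tcon{\Cat D}$ and $\Cat D$ combined with $E \in \tcocon[0]{\Cat D}$ identifies $\pi_0$ of this mapping space with $\Hom{\heart{\Cat D}}(B, \pi_0(E))$.

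For the direction $(\Leftarrow)$, I take $F \in \tpcon[1]{\Cat D}$ and aim to show $\Map{}(F, E) = 0$ via two reductions. Applying $\Map{}(-, E)$ to the fiber sequence $\tau_{\ge 1} F \to F \to \tau_{\le 0} F$ and using $E \in \tcocon[0]{\Cat D}$ lets me replace $F$ with $\tau_{\le 0} F$ (which still satisfies the characterization of $\tpcon[1]{\Cat D}$). Next, the fiber sequence $\tau_{\le -1} F \to \tau_{\le 0} F \to \pi_0(F)$ has a fiber with uniquely $p$-divisible homotopy; right-separation of the standard t-structure forces $\tau_{\le -1} F \sslash p^k = 0$ for all $k$, hence $\completebr{\tau_{\le -1} F} = 0$, and $p$-completeness of $E$ then yields $\Map{}(\tau_{\le -1} F, E) = 0$. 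So $\Map{}(F, E) \cong \Map{}(\pi_0(F), E)$, whose $n$-th homotopy group is $\Hom{\Cat D}(\Sigma^n \pi_0(F), E)$: this vanishes for $n \ge 1$ by t-structure orthogonality, and for $n = 0$ equals $\Hom{\heart{\Cat D}}(\pi_0(F), \pi_0(E)) = 0$ by the bounded $p$-divisibility hypothesis on $\pi_0(E)$.

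The main obstacle is the second reduction in $(\Leftarrow)$: one must separate the $\pi_0$ piece of $F$, where the bounded $p$-divisibility hypothesis is applied, from the negative-degree tail, which is killed by $p$-completeness. This is the only step requiring the full strength of $E \cong \complete{E}$, and it is where all three hypotheses enter simultaneously.
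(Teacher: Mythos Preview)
Your argument is correct and essentially the same as the paper's, with one slip and a harmless reordering.

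The slip is in the second reduction of $(\Leftarrow)$: there is no fiber sequence $\tau_{\le -1} F \to \tau_{\le 0} F \to \pi_0(F)$, since there is no canonical map $\tau_{\le -1} F \to \tau_{\le 0} F$. The t-structure decomposition runs the other way, giving $\pi_0(F) \to \tau_{\le 0} F \to \tau_{\le -1} F$. Your conclusion still holds: you correctly show $\completebr{\tau_{\le -1} F} = 0$ (using that $\tau_{\le -1} F$ is bounded above with uniquely $p$-divisible homotopy, so right-separatedness suffices), hence $\pi_0(F) \to \tau_{\le 0} F$ is a $p$-equivalence, and $p$-completeness of $E$ yields $\Map{}(\tau_{\le 0} F, E) \cong \Map{}(\pi_0(F), E)$ as claimed.

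The reordering: the paper performs the two reductions in the opposite order, passing $F \rightsquigarrow \tau_{\ge 0} F \rightsquigarrow \pi_0(F)$. It first uses $p$-completeness of $E$ (the map $\tau_{\ge 0} F \to F$ is a $p$-equivalence since its cofiber $\tau_{\le -1} F$ has uniquely $p$-divisible homotopy and is bounded above), and then uses $E \in \tcocon{\Cat D}$ to kill $\tau_{\ge 1} F$. You truncate above first and handle the negative tail second. Both routes arrive at $\Map{}(\pi_0(F), \pi_0(E)) = 0$ by the bounded $p$-divisibility hypothesis.
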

\begin{proof}
    Suppose first that $E \in \tpcocon{\Cat D}$.
    Note that $\tcon[1]{\Cat D} \subseteq \tpcon[1]{\Cat D}$
    since the zero object $0 \in \tstructheart{\Cat D}$ is (uniquely) $p$-divisible.
    Thus, $\tcocon{\Cat D} \supseteq \tpcocon{\Cat D}$.
    Hence, $E = \tau_{\le 0} E$.
    In order to show that $E$ is $p$-complete, it suffices to show that
    $\Map{}(A, E) = 0$ for all $A$ with $\complete A = 0$.
    So let $\complete A = 0$.
    Hence, by \cref{lemma:stable-peq-to-zero-iff-upd}, $\pi_n(A)$ is uniquely $p$-divisible for all $n$.
    Thus, $A \in \tpcon[1]{\Cat D}$.
    Thus, by definition of $\tpcocon{\Cat D}$ we know that $\Map{}(A, E) = 0$.
    For the bounded $p$-divisibility, suppose that $B$ is a $p$-divisible object of $\tstructheart{\Cat D}$.
    Then $B \in \tpcon[1]{\Cat D}$.
    Hence, $\Map{}(B, \pi_0(E)) \cong \Map{}(B, E) = 0$,
    where we used that $E \in \tcocon{\Cat D}$ and thus $\pi_0(E) \cong \tau_{\ge 0} E$ in the first equivalence,
    and that $E \in \tpcocon{\Cat D}$ in the second.

    For the other direction, assume that $E = \tau_{\le 0} E$, $E = \complete E$
    and that $\pi_0(E)$ has bounded $p$-divisiblity.
    Let $F \in \tpcon[1]{\Cat D}$.
    We need to show that $\Map{}(F, E) = 0$.
    But by assumption on $F$, $\tau_{\ge 0} F \to F$
    is a $p$-equivalence (see e.g. \cref{cor:stable-peq-iff-upd-fiber}) and $\pi_0(F)$ is $p$-divisible.
    Thus,
    \begin{align*}
        \Map{}(F, E) & \cong \Map{}(\tau_{\ge 0} F, E)       \\
                     & \cong \Map{}(\pi_0(F), E)            \\
                     & \cong \Map{}(\pi_0(F), \pi_0(E)) = 0.
    \end{align*}
    The first equivalence holds because $E$ is $p$-complete and the second exists because $E$ is coconnective.
    The third follows because $\pi_0(F)$ is connective.
    The last equality holds because $\pi_0(E)$ has bounded $p$-divisibility and $F \in \tpcon[1]{\Cat D}$.
\end{proof}

\begin{lem} \label{lemma:t-struct:left-complete}
    We have
    \begin{equation*}
        \bigcap_n \tpcocon[n]{\Cat D} = 0
    \end{equation*}
    and
    \begin{equation*}
        \bigcap_n \tpcon[n]{\Cat D} = \set{E \in \Cat D}{\pi_n(E) \text{ uniquely } p\text{-divisible for all } n}.
    \end{equation*}
    In particular, we have that $\pi_n^p(E) = 0$ for all $n$ if and only if
    $\pi_n(E)$ is uniquely $p$-divisible for all $n$.
\end{lem}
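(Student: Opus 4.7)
The plan is to handle the three assertions in turn, with the ``in particular'' statement following formally from the first two identities.

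For the equality $\bigcap_n \tpcocon[n]{\Cat D} = 0$, I will compare the $p$-adic t-structure to the standard one. Since the zero object of $\tstructheart{\Cat D}$ is $p$-divisible, unwinding \cref{def:t-struct:defn} gives $\tcon[n+1]{\Cat D} \subseteq \tpcon[n+1]{\Cat D}$ for every $n \in \Z$. Taking right orthogonal complements reverses this inclusion to $\tpcocon[n]{\Cat D} \subseteq \tcocon[n]{\Cat D}$, and intersecting over all $n$ together with the assumed right-separation of the standard t-structure yields the claim.

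For the second equality, I would unwind \cref{def:t-struct:defn} directly: an object $E$ lies in $\tpcon[n]{\Cat D}$ if and only if $\pi_j(E)$ is uniquely $p$-divisible for every $j < n-1$ and $\pi_{n-1}(E)$ is $p$-divisible. Letting $n$ range over $\Z$ imposes unique $p$-divisibility on every $\pi_j(E)$; conversely, if every standard homotopy object of $E$ is uniquely $p$-divisible, then the conditions hold trivially for any $n$, so $E$ lies in all $\tpcon[n]{\Cat D}$.

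The ``in particular'' statement then follows by combining these two identities. If $\pi_n^p(E) = 0$ for all $n \in \Z$, then the fiber sequences relating consecutive truncations $\tau_{\le n}^p E \to \tau_{\le n-1}^p E$ show that this tower is constant in $n$; its common value lies in $\tpcocon[n]{\Cat D}$ for every $n$, hence vanishes by the first identity. This means $E \in \tpcon[n]{\Cat D}$ for all $n$, so the second identity concludes that $\pi_n(E)$ is uniquely $p$-divisible for every $n$. The converse is immediate: if $E \in \bigcap_n \tpcon[n]{\Cat D}$, then $\tau_{\le m}^p E = 0$ for all $m$, whence $\pi_n^p(E) = 0$ for all $n$. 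The only point requiring care is this use of right-separation of the $p$-adic t-structure to kill the stabilized coconnective truncation; everything else is direct unwinding of definitions.
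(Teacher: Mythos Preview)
Your proof is correct and follows essentially the same approach as the paper. For the first identity you prove $\tpcocon[n]{\Cat D} \subseteq \tcocon[n]{\Cat D}$ directly via the orthogonal complement of $\tcon[n+1]{\Cat D} \subseteq \tpcon[n+1]{\Cat D}$, whereas the paper cites \cref{lemma:t-struct:char-of-cocon} for this inclusion (whose proof contains exactly your observation); for the ``in particular'' statement the paper phrases the argument in terms of $E$ lying in the stable subcategory generated by the two intersections, while you spell out the tower argument explicitly---both amount to the same reasoning.
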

\begin{proof}
    Note that $\tpcocon[n]{\Cat D} \subset \tcocon[n]{\Cat D}$ by \cref{lemma:t-struct:char-of-cocon}.
    Hence, $\bigcap_n \tpcocon[n]{\Cat D} \subset \bigcap_n \tcocon[n]{\Cat D} = 0$ since
    $\tstruct{\Cat D}$ is right-separated.
    The second statement is clear since uniquely $p$-divisible abelian group objects
    are in particular $p$-divisible.
    For the last part, note that $\pi_n^p(E) = 0$ for all $n$
    if and only if $E$ lives in the stable subcategory of $\Cat{D}$ generated by
    $\bigcap_n \tpcon[n]{\Cat D}$ and $\bigcap_n \tpcocon[n]{\Cat D}$.
    But by the above, the latter is zero, and the former consists of exactly those spectra 
    which have uniquely $p$-divisible homotopy objects.
\end{proof}

\begin{cor} \label{lemma:t-struct:p-eq-iso-on-htpy-in-t-structure}
    Suppose that the standard t-structure is left-separated.
    Let $f \colon E \to F$ be a morphism in $\Cat D$.
    Then $f$ is a $p$-equivalence if and only if $\pi_n^p (E) \to \pi_n^p (F)$ is an isomorphism for all $n$.
\end{cor}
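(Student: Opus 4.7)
The plan is to reduce to the statement about the fiber via the long exact sequence associated to the $p$-adic t-structure, and then apply the two characterization lemmas already proved.

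First I would consider the fiber sequence $\Fib{f} \to E \xrightarrow{f} F$ in $\Cat D$. Since $\tpstruct{\Cat D}$ is an (accessible) t-structure by \cref{lemma:t-struct:proof}, this fiber sequence yields a long exact sequence in the abelian category $\tpstructheart{\Cat D}$ of the form
\begin{equation*}
    \cdots \to \pi_n^p(\Fib{f}) \to \pi_n^p(E) \xrightarrow{\pi_n^p(f)} \pi_n^p(F) \to \pi_{n-1}^p(\Fib{f}) \to \cdots.
\end{equation*}
A standard diagram chase then shows that $\pi_n^p(f)$ is an isomorphism for all $n \in \Z$ if and only if $\pi_n^p(\Fib{f}) = 0$ for all $n \in \Z$.

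Next I would apply \cref{lemma:t-struct:left-complete}, which identifies the vanishing $\pi_n^p(\Fib{f}) = 0$ for all $n$ with the condition that every homotopy object $\pi_n(\Fib{f})$ (with respect to the standard t-structure) is uniquely $p$-divisible. Finally, since the standard t-structure is assumed to be left-separated, \cref{cor:stable-peq-iff-upd-fiber} provides the equivalence between $\Fib{f}$ having uniquely $p$-divisible homotopy objects and $f$ being a $p$-equivalence, which closes the chain of equivalences.

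No step is genuinely hard here: everything has been packaged into the earlier lemmas, and the only subtlety is that the implication ``uniquely $p$-divisible homotopy of the fiber $\Rightarrow$ $p$-equivalence'' in \cref{cor:stable-peq-iff-upd-fiber} requires left-separation of the standard t-structure, which is exactly the hypothesis of the corollary.
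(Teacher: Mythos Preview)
Your proof is correct and follows essentially the same approach as the paper. Both arguments pass through the fiber $\Fib{f}$, use the long exact sequence in $\pi_n^p$ to translate ``$\pi_n^p(f)$ is an isomorphism for all $n$'' into ``$\pi_n^p(\Fib{f})=0$ for all $n$'', invoke \cref{lemma:t-struct:left-complete} to identify this with unique $p$-divisibility of the standard homotopy objects of $\Fib{f}$, and then close with \cref{cor:stable-peq-iff-upd-fiber} under the left-separation hypothesis; the only difference is the order in which the steps are written.
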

\begin{proof}
    From \cref{cor:stable-peq-iff-upd-fiber} we see that $f$ is a $p$-equivalence
    if and only if $\Fib{f}$ has uniquely $p$-divisible homotopy objects.
    Using \cref{lemma:t-struct:left-complete},
    this is equivalent to $\Fib{f} \in \bigcap_n \tpcon[n]{\Cat D}$.
    The long exact sequence now implies that this is the case if and only if
    $\pi_n^p(f)$ is an equivalence for all $n$.
\end{proof}

\begin{defn} \label{def:t-struct:Li}
    For every $n \in \Z$ define a functor $\mathbb{L}_n \colon \tstructheart{\Cat D} \to \tpstructheart{\Cat D}$
    via $A \mapsto \pi^p_n(A)$, i.e.\ the restriction of $\pi_n^p$ to the heart.
\end{defn}

\begin{defn}
    Let $\Cat A$ be an abelian category.
    Let $A \in \Cat A$, and $n \in \N$.
    Denote by $A[p^n] \coloneqq \ker(A \xrightarrow{p^n} A)$
    the \emph{$p^n$-torsion} of $A$.
\end{defn}

\begin{lem} \label{lemma:t-struct:pi1-bdd-divisible}
    Let $A \in \tcocon{\Cat D}$.
    Then $\pi_1(\complete{A}) \cong \limilheart{n} \pi_0(A)[p^n]$
    is of bounded $p$-divisibility.
    Here, the transition maps in the limit are multiplication by $p$.
\end{lem}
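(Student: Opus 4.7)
The plan is to first compute the homotopy objects of each $A \sslash p^n$ termwise, then take the limit using that these are all $1$-coconnective, and finally verify bounded $p$-divisibility by a direct $\Hom{}$ computation.

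First I compute $\pi_k(A \sslash p^n)$ from the cofiber sequence $A \xrightarrow{p^n} A \to A \sslash p^n$. Since $A \in \tcocon{\Cat D}$, $\pi_k(A) = 0$ for $k \ge 1$, so the long exact sequence shows $\pi_k(A \sslash p^n) = 0$ for $k \ge 2$ and its piece
$$0 = \pi_1(A) \to \pi_1(A \sslash p^n) \to \pi_0(A) \xrightarrow{p^n} \pi_0(A)$$
identifies $\pi_1(A \sslash p^n) \cong \pi_0(A)[p^n]$. In particular each $A \sslash p^n \in \tcocon[1]{\Cat D}$.

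Next I pass to the limit. Writing $\complete{A} \cong \limil{n} A \sslash p^n$ as the fiber of $1 - \text{shift} \colon \prod_n A \sslash p^n \to \prod_n A \sslash p^n$, and using that $\tcocon[1]{\Cat D}$ is closed under limits (it is reflective), the product is $1$-coconnective, so $\pi_2(\prod) = 0$ and the long exact sequence yields
$$\pi_1(\complete{A}) \cong \ker\bigl((1 - \text{shift})_* \colon \pi_1(\prod_n A \sslash p^n) \to \pi_1(\prod_n A \sslash p^n)\bigr).$$
For any family $X_n \in \tcocon[1]{\Cat D}$, a Yoneda argument based on the identification $\tau_{\ge 1} X \cong \Sigma \pi_1(X)$ for $1$-coconnective $X$, combined with the fact that the right adjoint $\tau_{\ge 1}$ commutes with limits, shows $\pi_1(\prod X_n) \cong \prod^\heartsuit \pi_1(X_n)$. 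Substituting gives $\pi_1(\complete{A}) \cong \limilheart{n} \pi_0(A)[p^n]$. The transition map is identified as multiplication by $p$ by chasing the natural map of cofiber sequences induced by the square $p^{n+1} = p^n \circ p$ (namely, multiplication by $p$ on the source copy of $A$ and the identity on the target copy).

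Finally, for bounded $p$-divisibility, let $B \in \tstructheart{\Cat D}$ be $p$-divisible. Commuting $\Hom{}$ past the limit in the heart reduces the claim to showing $\Hom{\tstructheart{\Cat D}}(B, \pi_0(A)[p^n]) = 0$ for every $n$. A morphism $B \to \pi_0(A)[p^n]$ is the same as $f \colon B \to \pi_0(A)$ with $p^n f = 0$; but $p^n f = f \circ p^n$ and $p^n \colon B \to B$ is an epimorphism by $p$-divisibility, so $f = 0$. The main subtlety, in my view, is the identification $\pi_1(\prod_n A \sslash p^n) \cong \prod_n^\heartsuit \pi_1(A \sslash p^n)$: rather than invoke an abstract Milnor-type sequence, the cleanest approach is the direct Yoneda argument above, which uses $1$-coconnectivity crucially.
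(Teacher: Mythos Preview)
Your proof is correct and follows essentially the same approach as the paper: both identify $\pi_1(A\sslash p^n)\cong\pi_0(A)[p^n]$ from the long exact sequence, use that $\tau_{\ge 1}$ commutes with limits to compute $\pi_1$ of the inverse limit, and prove bounded $p$-divisibility by the same $\Hom{}$-into-$p^n$-torsion argument. The paper's version is marginally more direct in that it applies $\tau_{\ge 1}$ straight to the sequential limit $\limil{n} A\sslash p^n$ (obtaining $\pi_1(\complete{A})\cong\pi_0(\limil{n}\pi_0(A)[p^n])$ immediately) rather than first rewriting the limit as a fiber of products and then identifying $\pi_1$ of the product.
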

\begin{proof}
    Let $E \coloneqq \complete{A} \cong \limil{n} A \sslash p^n$.
    Note that $A \sslash p^n$ is $1$-truncated,
    with $\pi_1(A \sslash p^n) \cong \pi_0(A)[p^n]$.
    (This can be seen from the long exact sequence associated to the
    fiber sequence $A \xrightarrow{p^n} A \to A \sslash p^n$.)
    Since $\tau_{\ge 1}$ is a right adjoint, it commutes with limits.
    We now compute
    \begin{align*}
        \pi_1(E) & \cong \pi_1(\tau_{\ge 1} E)                         \\
                 & \cong \pi_1(\limil{n} \tau_{\ge 1} (A \sslash p^n)) \\
                 & \cong \pi_1(\limil{n} \Sigma (\pi_0(A)[p^n]))              \\
                 & \cong \pi_0(\limil{n} \pi_0(A)[p^n])                       \\
                 & = \limilheart{n} \pi_0(A)[p^n].
    \end{align*}

    In order to show that $\pi_1(E)$ has bounded $p$-divisibility,
    let $B \in \tstructheart{\Cat D}$ be $p$-divisible.
    We need to show that $\Map{}(B, \pi_1(E)) = 0$.
    By pulling out the limit (note that $\limilheart{n}$ is the categorical limit in $\tstructheart{\Cat D}$) we get
    \begin{equation*}
        \Map{\tstructheart{D}}(B, \pi_1(E)) \cong \limil{n} \Map{\tstructheart{D}}(B, \pi_0(A)[p^n]).
    \end{equation*}
    Thus, it suffices to show that for every $n$ we have $\Map{}(B, \pi_0(A)[p^n]) = 0$.
    So fix $n \ge 1$ and a map $\phi \colon B \to \pi_0(A)[p^n]$.
    Since $p^n \colon B \to B$ is an epimorphism ($B$ is $p$-divisible),
    in order to show that $\phi = 0$, it suffices show that $\phi \circ p^n = 0$.
    But $\phi \circ p^n = p^n \circ \phi$.
    Now we conclude by noting that the endomorphism $p^n \colon \pi_0(A)[p^n] \to \pi_0(A)[p^n]$ is zero.
\end{proof}

\begin{lem} \label{lemma:t-struct:Li-zero-if-upd}
    Let $A \in \tstructheart{\Cat D}$.
    If $A$ is uniquely $p$-divisible,
    then $\mathbb{L}_n A = 0$ for all $n$.
\end{lem}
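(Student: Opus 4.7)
The plan is to invoke \cref{lemma:t-struct:left-separated} (the last part of \cref{lemma:t-struct:left-complete}) directly, which characterizes exactly when all $\pi_n^p$ vanish: namely, $\pi_n^p(E) = 0$ for all $n$ if and only if $\pi_n(E)$ is uniquely $p$-divisible for all $n$. The task is thus to verify the hypothesis for $E = A$.

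Since $A$ lies in the heart $\tstructheart{\Cat D}$ of the standard t-structure, its homotopy objects are concentrated in degree $0$: we have $\pi_0(A) = A$ and $\pi_n(A) = 0$ for $n \neq 0$. The zero object is trivially uniquely $p$-divisible, and $\pi_0(A) = A$ is uniquely $p$-divisible by assumption. Hence every $\pi_n(A)$ is uniquely $p$-divisible, so $\pi_n^p(A) = 0$ for all $n$. By \cref{def:t-struct:Li}, this means $\mathbb{L}_n A = \pi_n^p(A) = 0$ for all $n$, as desired.

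There is essentially no obstacle here: all the work has already been done in establishing \cref{lemma:t-struct:left-complete}, and this statement is simply the specialization to objects already in the heart of the standard t-structure.
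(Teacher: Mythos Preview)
Your proof is correct and essentially the same as the paper's: the paper argues directly that $A \in \tpcon[k]{\Cat D}$ for every $k$ (since all $\pi_j(A)$ are uniquely $p$-divisible), whence $\pi_n^p(A)=0$, while you invoke the packaged form of this observation from the last part of \cref{lemma:t-struct:left-complete}. Note that your first reference ``\texttt{lemma:t-struct:left-separated}'' is a typo; the label you want is \texttt{lemma:t-struct:left-complete}, as you correctly write immediately after.
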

\begin{proof}
    If $A$ is uniquely $p$-divisible, then $A \in \tpcon[k]{\Cat D}$ for all $k$.
    Hence, $\mathbb{L}_n A = \pi^p_n(A) = 0$ for all $n$.
\end{proof}

\begin{prop} \label{lemma:t-struct:Li-zero-if-i-neq-zero-one}
    Let $E \in \Cat D$.
    We have the following: 
    \begin{enumerate}[label=(\arabic*),ref=(\arabic*),itemsep=0em]
        \item If $E \in \tcocon{\Cat D}$, then $\complete{E} \in \tpcocon[1]{\Cat D}$, 
        \item if $E \sslash p \in \tcon{\Cat D}$, then $\complete{E} \in \tpcon{\Cat D}$,
        \item if $E \in \tcon{\Cat D}$, then $\complete{E} \in \tpcon{\Cat D}$, and
        \item if $E \in \tstructheart{\Cat D}$, then $\complete{E} \in \tpcon{\Cat D} \cap \tpcocon[1]{\Cat D}$.
    \end{enumerate}

    In particular, if $E \in \tstructheart{\Cat D}$,
    then $\mathbb L_n E = 0$ for all $n \neq 0, 1$.
\end{prop}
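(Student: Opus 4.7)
The plan is to prove the four clauses in order, each by unwinding the definition of the $p$-adic t-structure via the lemmas already established, and then derive the ``in particular'' statement from (4) together with \cref{lemma:t-struct:left-complete}.

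For (1), the key observation is that $\complete{E}$ is already $1$-truncated by \cref{lemma:stable:truncation-of-completion}. To conclude $\complete{E} \in \tpcocon[1]{\Cat D}$, I would shift: since $\tpcocon[1]{\Cat D} = \Sigma \tpcocon{\Cat D}$, this is equivalent to $\Omega \complete{E} \in \tpcocon{\Cat D}$, and I would verify the three conditions of \cref{lemma:t-struct:char-of-cocon}. Truncation: $\Omega \complete{E} \in \tcocon{\Cat D}$ is just the $1$-truncation statement above. $p$-completeness: since $\complete{-}$ is exact, it commutes with $\Omega$, so $\Omega \complete{E}$ is $p$-complete. Bounded $p$-divisibility of $\pi_0(\Omega \complete{E}) = \pi_1(\complete{E})$: this is the content of \cref{lemma:t-struct:pi1-bdd-divisible} (applied to $E \in \tcocon{\Cat D}$).

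For (2), \cref{lemma:stable:peq-description} gives that $E \to \complete{E}$ is a $p$-equivalence, so $\complete{E} \sslash p \cong E \sslash p \in \tcon{\Cat D}$. \cref{lemma:t-struct:mod-p-in-con-implies-in-tpcon} then yields $\complete{E} \in \tpcon{\Cat D}$. For (3), I note that $\tcon{\Cat D}$ is closed under cofibers, so $E \in \tcon{\Cat D}$ implies $E \sslash p \in \tcon{\Cat D}$, after which (2) applies. Part (4) is a direct combination of (1) and (3), since $\tstructheart{\Cat D} = \tcon{\Cat D} \cap \tcocon{\Cat D}$.

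For the final statement, (4) gives $\complete{E} \in \tpcon{\Cat D} \cap \tpcocon[1]{\Cat D}$, so $\pi^p_n(\complete{E}) = 0$ for $n \neq 0, 1$. To pass from $\complete{E}$ back to $E$, I would consider the fiber sequence $F \to E \to \complete{E}$: since $E \to \complete{E}$ is a $p$-equivalence, \cref{cor:stable-peq-iff-upd-fiber} shows that $F$ has uniquely $p$-divisible (standard) homotopy objects, so by \cref{lemma:t-struct:left-complete} we get $\pi^p_n(F) = 0$ for every $n$. The long exact sequence in the $p$-adic t-structure associated to this fiber sequence then gives $\mathbb L_n E = \pi^p_n(E) \cong \pi^p_n(\complete{E})$, and the latter vanishes for $n \neq 0, 1$. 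The only step that requires real thought is the shift argument in (1); the rest is essentially bookkeeping with previously established lemmas.
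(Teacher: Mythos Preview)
Your proof is correct and follows essentially the same route as the paper's. The only cosmetic difference is in (1), where the paper applies \cref{lemma:t-struct:char-of-cocon} directly in the shifted form (checking $1$-truncatedness, $p$-completeness, and bounded $p$-divisibility of $\pi_1$ for $\complete{E}$ itself) rather than passing through $\Omega\complete{E}$; and for the final statement the paper simply cites \cref{lemma:t-struct:p-eq-iso-on-htpy-in-t-structure}, whereas you unwind that argument explicitly via the fiber $F$ and \cref{lemma:t-struct:left-complete}. Your explicit version is arguably cleaner here, since the cited corollary carries a left-separatedness hypothesis that is not actually needed for the direction used.
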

\begin{proof}
    We start with (1):
    We have seen in \cref{lemma:stable:truncation-of-completion} that $\pi_k(\complete{E}) = 0$ for all $k > 1$.
    $\complete{E}$ is $p$-complete by definition.
    By \cref{lemma:t-struct:pi1-bdd-divisible},
    we get that $\pi_1(\complete{E})$ is of bounded $p$-divisibility.
    Thus, $\complete{E} \in \tpcocon[1]{\Cat D}$ by \cref{lemma:t-struct:char-of-cocon}.

    We now prove (2):
    By \cref{lemma:t-struct:mod-p-in-con-implies-in-tpcon} we see that
    $\complete{E} \in \tpcon{\Cat D}$ if and only if $\complete{E} \sslash p \in \tcon{\Cat D}$.
    But $\complete{E} \sslash p \cong E \sslash p \in \tcon{\Cat D}$.

    Part (3) follows from (2), noting that $E \in \tcon{\Cat D}$ implies 
    that $E \sslash p \in \tcon{\Cat D}$, 
    since $\tcon{\Cat D}$ is stable under colimits, see \cite[Corollary 1.2.1.6]{higheralgebra}.

    Part (4) is an immediate consequence of (1) and (3).
    The last statement follows immediately from (4):
    \cref{lemma:t-struct:p-eq-iso-on-htpy-in-t-structure} implies that
    $\mathbb L_n E = \pi_n^p(E) \cong \pi_n^p(\complete{E})$,
    thus $\mathbb L_n(E) = 0$ for all $n \neq 0, 1$.
    This proves the lemma.
\end{proof}

\begin{lem} \label{lemma:t-struct:decomposition-via-Li}
    Let $A \in \tstructheart{\Cat D}$.
    Then there is a canonical fiber sequence
    \begin{equation*}
        \Sigma \mathbb L_1 A \to \complete{A} \to \mathbb L_0 A.
    \end{equation*}
\end{lem}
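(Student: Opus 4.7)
The plan is to extract the desired fiber sequence directly from the $p$-adic t-structure applied to $\complete{A}$. By \cref{lemma:t-struct:Li-zero-if-i-neq-zero-one} we already know that, for $A \in \tstructheart{\Cat D}$, the $p$-completion $\complete{A}$ lies in $\tpcon{\Cat D} \cap \tpcocon[1]{\Cat D}$, i.e.\ is $p$-adically concentrated in degrees $0$ and $1$.

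Given this, I would consider the canonical $p$-adic truncation fiber sequence
\[
\tau^p_{\ge 1} \complete{A} \to \complete{A} \to \tau^p_{\le 0} \complete{A}
\]
and identify the outer terms. The right-hand term is just $\pi^p_0(\complete{A})$, since $\complete{A} \in \tpcocon[1]{\Cat D}$ implies $\tau^p_{\le 0} \complete{A} = \pi^p_0(\complete{A})$. For the left-hand term, note that $\tau^p_{\ge 1} \complete{A}$ again lies in $\tpcocon[1]{\Cat D}$ (as the fiber of a map between objects of $\tpcocon[1]{\Cat D}$, which is closed under fibers), so it is $p$-adically concentrated in degree $1$, and hence equivalent to $\Sigma \pi^p_1(\complete{A})$ via the formula $\pi^p_i(-) = \Omega^i \tau^p_{\le i} \tau^p_{\ge i}(-)$.

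The remaining step is to identify $\pi^p_i(\complete{A})$ with $\mathbb{L}_i A = \pi^p_i(A)$. The unit $A \to \complete{A}$ is a $p$-equivalence, so by \cref{cor:stable-peq-iff-upd-fiber} its fiber $F$ has uniquely $p$-divisible homotopy objects. By \cref{lemma:t-struct:left-complete} this forces $\pi^p_n(F) = 0$ for every $n$, and the long exact sequence associated to $F \to A \to \complete{A}$ with respect to the $p$-adic t-structure then yields isomorphisms $\pi^p_i(A) \cong \pi^p_i(\complete{A})$ for all $i$. Substituting into the truncation fiber sequence produces the desired sequence.

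There is no real obstacle here: everything reduces to the fact that $\complete{A}$ has $p$-adic amplitude $[0,1]$ and that $p$-equivalences induce isomorphisms on $\pi^p_\ast$, both of which are already established. The only mild subtlety is verifying that $\tau^p_{\ge 1} \complete{A}$ remains in $\tpcocon[1]{\Cat D}$ so that the left-hand term really is a single shifted $p$-adic homotopy object, but this is immediate from stability of $\tpcocon[1]{\Cat D}$ under fibers.
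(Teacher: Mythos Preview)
Your proof is correct and follows essentially the same route as the paper: both take the $p$-adic truncation fiber sequence $\tau^p_{\ge 1}\complete{A} \to \complete{A} \to \tau^p_{\le 0}\complete{A}$, use that $\complete{A}$ has $p$-adic amplitude $[0,1]$ (\cref{lemma:t-struct:Li-zero-if-i-neq-zero-one}) to identify the outer terms as a shifted $\pi^p_1$ and $\pi^p_0$, and then pass from $\pi^p_i(\complete{A})$ to $\mathbb L_i A$ via the $p$-equivalence $A \to \complete{A}$. One small slip: the equality $\tau^p_{\le 0}\complete{A} = \pi^p_0(\complete{A})$ follows from $\complete{A} \in \tpcon{\Cat D}$ (so that $\tau^p_{\ge 0}\complete{A} = \complete{A}$), not from $\complete{A} \in \tpcocon[1]{\Cat D}$ as you wrote.
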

\begin{proof}
    \cref{lemma:t-struct:Li-zero-if-i-neq-zero-one}
    shows that $\complete{A} \in \tpcon{\Cat D} \cap \tpcocon[1]{\Cat D}$.
    Thus, using \cref{lemma:t-struct:p-eq-iso-on-htpy-in-t-structure},
    we conclude $\mathbb L_0 A \cong \pi_0^p(\complete{A}) \cong \tau_{\le 0}^p(\complete{A})$.
    Similar, we see $\Sigma \mathbb L_1(A) \cong \Sigma \pi_1^p(\complete{A}) \cong \tau_{\ge 1}^p(\complete{A})$.
    The lemma now immediately follows since we have a canonical fiber sequence
    \begin{equation*}
        \tau^p_{\ge 1} (\complete{A}) \to \complete{A} \to \tau^p_{\le 0} (\complete{A}).
    \end{equation*}
\end{proof}

\begin{lem} \label{lemma:t-struct:L1-mod-p}
    Let $A \in \tstructheart{\Cat D}$.
    Then $(\mathbb L_1 A) \sslash p \in \tstructheart{\Cat D}$
    and there is a short exact sequence in $\tstructheart{\Cat D}$
    \begin{equation*}
        0 \to (\mathbb L_1 A) \sslash p \to A[p] \to \pi_1((\mathbb L_0 A) \sslash p) \to 0,
    \end{equation*}
    coming from the fiber sequence of \cref{lemma:t-struct:decomposition-via-Li}.
\end{lem}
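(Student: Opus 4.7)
My plan is to apply $(-) \sslash p$ to the fiber sequence $\Sigma \mathbb L_1 A \to \complete{A} \to \mathbb L_0 A$ provided by \cref{lemma:t-struct:decomposition-via-Li}, and then read off both assertions from the resulting long exact sequence on standard homotopy. Since $(-) \sslash p$ is an exact endofunctor and $A \to \complete{A}$ is a $p$-equivalence (so that $(\complete A) \sslash p \cong A \sslash p$), this yields a fiber sequence
\begin{equation*}
    \Sigma (\mathbb L_1 A) \sslash p \to A \sslash p \to (\mathbb L_0 A) \sslash p.
\end{equation*}

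Next I would pin down the standard homotopy of each term. Because $A \in \tstructheart{\Cat D}$, the long exact sequence of $A \xrightarrow{p} A \to A \sslash p$ shows that $A \sslash p$ is $1$-truncated with $\pi_0 = A/p$ and $\pi_1 = A[p]$. Both $\mathbb L_0 A$ and $\mathbb L_1 A$ lie in $\tpstructheart{\Cat D}$, which by \cref{lemma:t-struct:char-of-cocon} forces them to be standard $0$-truncated; the long exact sequence of $\mathbb L_i A \xrightarrow{p} \mathbb L_i A \to (\mathbb L_i A) \sslash p$ then gives $\pi_n((\mathbb L_i A) \sslash p) = 0$ for $n \ge 2$, $\pi_0((\mathbb L_i A) \sslash p) = \pi_0(\mathbb L_i A)/p$, and $\pi_1((\mathbb L_i A) \sslash p) = \pi_0(\mathbb L_i A)[p]$. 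Finally, \cref{lemma:t-struct:mod-p-in-con-implies-in-tpcon} applied to $\mathbb L_i A \in \tpcon{\Cat D}$ shows that each $(\mathbb L_i A) \sslash p$ is standard-connective.

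Feeding these vanishings into the long exact sequence of the fiber sequence finishes the argument. At every $n \ge 2$ the middle and right terms vanish, forcing $\pi_k((\mathbb L_1 A) \sslash p) = 0$ for all $k \ge 1$; combined with connectivity this establishes $(\mathbb L_1 A) \sslash p \in \tstructheart{\Cat D}$. Inspecting degrees $n = 0, 1$, and using $\pi_2((\mathbb L_0 A) \sslash p) = 0$ on the left and $\pi_{-1}((\mathbb L_1 A) \sslash p) = 0$ on the right, extracts exactly the asserted short exact sequence
\begin{equation*}
    0 \to (\mathbb L_1 A) \sslash p \to A[p] \to \pi_1((\mathbb L_0 A) \sslash p) \to 0.
\end{equation*}
I do not foresee any genuine obstacle once the preparatory t-structure lemmas are in hand: the whole proof reduces to careful bookkeeping in a long exact sequence. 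The one subtle point worth flagging is that standard $0$-truncatedness of $\mathbb L_0 A$, rather than of $\mathbb L_1 A$, is precisely what kills $\pi_2((\mathbb L_0 A) \sslash p)$ and thereby secures injectivity on the left of the final sequence.
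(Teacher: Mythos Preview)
Your proposal is correct and follows essentially the same route as the paper's proof: apply $(-)\sslash p$ to the fiber sequence of \cref{lemma:t-struct:decomposition-via-Li}, use $\complete{A}\sslash p \cong A\sslash p$, invoke \cref{lemma:t-struct:mod-p-in-con-implies-in-tpcon} for connectivity of $(\mathbb L_i A)\sslash p$ and \cref{lemma:t-struct:char-of-cocon} for $0$-truncatedness of $\mathbb L_0 A$, and read off both conclusions from the long exact sequence. The only difference is cosmetic: you spell out $\pi_0$ and $\pi_1$ of each $(\mathbb L_i A)\sslash p$ explicitly, whereas the paper records only the vanishing ranges needed, but the argument is the same.
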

\begin{proof}
    Consider the fiber sequence
    \begin{equation*}
        \Sigma \mathbb L_1 A \to \complete{A} \to \mathbb L_0 A
    \end{equation*}
    from \cref{lemma:t-struct:decomposition-via-Li}.
    Applying $(-) \sslash p$ yields the fiber sequence
    \begin{equation*}
        \Sigma (\mathbb L_1 A) \sslash p \to \complete{A} \sslash p \to (\mathbb L_0 A) \sslash p.
    \end{equation*}
    Note that $\complete{A} \sslash p \cong A \sslash p$ is concentrated in degrees $0$ and $1$.
    Using \cref{lemma:t-struct:mod-p-in-con-implies-in-tpcon}
    we know that $\mathbb L_i A \sslash p \in \tcon{\Cat D}$ for $i = 0, 1$.
    Since $\mathbb L_0 A \in \tpcocon{\Cat D} \subset \tcocon{\Cat D}$,
    we conclude that $(\mathbb L_0 A) \sslash p \in \tcocon[1]{\Cat D}$.
    Now the long exact sequence in homotopy associated to the above fiber sequence yields
    that $\pi_i((\mathbb L_1 A) \sslash p) = 0$ for all $i \ge 1$.
    We therefore conclude that $(\mathbb L_1 A) \sslash p \in \tstructheart{\Cat D}$.
    The long exact sequence also gives us
    \begin{equation*}
        0 \to \pi_1 (\Sigma (\mathbb L_1 A) \sslash p) \to \pi_1 (A \sslash p) \to \pi_1 ((\mathbb L_0 A) \sslash p)\to 0.
    \end{equation*}
    We conclude by noting that $\pi_1 (\Sigma (\mathbb L_1 A) \sslash p) = \pi_0 ((\mathbb L_1 A) \sslash p) = (\mathbb L_1 A) \sslash p$
    and that $\pi_1 (A \sslash p) \cong A[p]$.
\end{proof}

\begin{lem} \label{lemma:short-exact-sequence-of-t-structure}
    Let $E \in \Cat D$ and $n \in \Z$.
    Then there is a short exact sequence
    \begin{equation*}
        0 \to \mathbb L_0 \pi_{n}(E) \to \mathbb \pi^p_n(E) \to \mathbb L_1 \pi_{n-1}(E) \to 0
    \end{equation*}
    natural in $E$.
\end{lem}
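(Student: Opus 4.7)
The plan is to apply the $p$-adic homotopy functor $\pi_n^p$ to the canonical fiber sequence
\begin{equation*}
    \tau_{\ge n} E \to E \to \tau_{\le n-1} E
\end{equation*}
and extract the desired short exact sequence from the resulting long exact sequence in the $p$-adic t-structure. Everything in sight is functorial, so naturality in $E$ will be automatic once the identifications are made.

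Two preparatory observations drive the computation. First, $p$-completion commutes with $\Sigma$ (since $\Sigma$ commutes with arbitrary limits in the stable category $\Cat D$), so the shifted analogues of \cref{lemma:t-struct:Li-zero-if-i-neq-zero-one} read: if $F \in \tcon[k]{\Cat D}$ then $\complete F \in \tpcon[k]{\Cat D}$, and if $F \in \tcocon[k]{\Cat D}$ then $\complete F \in \tpcocon[k+1]{\Cat D}$. Second, for any $X$ and any $m$ the unit $X \to \complete X$ induces an isomorphism $\pi_m^p(X) \xrightarrow{\cong} \pi_m^p(\complete X)$: the unit is a $p$-equivalence, so by \cref{cor:stable-peq-iff-upd-fiber} its fiber has uniquely $p$-divisible homotopy objects, hence lies in $\bigcap_k \tpcon[k]{\Cat D}$, on which $\pi_m^p$ vanishes for every $m$ (this direction does not require the left-separation hypothesis of \cref{lemma:t-struct:p-eq-iso-on-htpy-in-t-structure}).

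Applying these two observations to the Whitehead sequence yields $\complete{\tau_{\ge n} E} \in \tpcon[n]{\Cat D}$ and $\complete{\tau_{\le n-1} E} \in \tpcocon[n]{\Cat D}$, so in the long exact $\pi^p$-sequence the terms $\pi_{n-1}^p(\tau_{\ge n} E)$ and $\pi_{n+1}^p(\tau_{\le n-1} E)$ vanish and we obtain the short exact sequence
\begin{equation*}
    0 \to \pi_n^p(\tau_{\ge n} E) \to \pi_n^p(E) \to \pi_n^p(\tau_{\le n-1} E) \to 0.
\end{equation*}

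It remains to identify the outer terms. From the fiber sequence $\tau_{\ge n+1} E \to \tau_{\ge n} E \to \Sigma^n \pi_n(E)$ the first term has $\complete{\tau_{\ge n+1} E} \in \tpcon[n+1]{\Cat D}$, so both its $\pi_{n-1}^p$ and $\pi_n^p$ vanish; the long exact sequence thus gives $\pi_n^p(\tau_{\ge n} E) \cong \pi_n^p(\Sigma^n \pi_n(E)) \cong \pi_0^p(\complete{\pi_n(E)}) = \mathbb L_0 \pi_n(E)$. Dually, from $\Sigma^{n-1}\pi_{n-1}(E) \to \tau_{\le n-1} E \to \tau_{\le n-2} E$ the third term has $\complete{\tau_{\le n-2} E} \in \tpcocon[n-1]{\Cat D}$, so its $\pi_n^p$ and $\pi_{n+1}^p$ vanish, whence $\pi_n^p(\tau_{\le n-1} E) \cong \pi_n^p(\Sigma^{n-1} \pi_{n-1}(E)) \cong \pi_1^p(\complete{\pi_{n-1}(E)}) = \mathbb L_1 \pi_{n-1}(E)$. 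The argument introduces no genuinely new ingredient beyond \cref{lemma:t-struct:Li-zero-if-i-neq-zero-one}; the main obstacle is really just careful bookkeeping of the connectivity bounds under shifts and completions, and confirming that the invariance of $\pi_n^p$ under $p$-completion does not require left-separation.
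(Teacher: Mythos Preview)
Your proof is correct and follows essentially the same route as the paper: apply $\pi_n^p$ to the Whitehead fiber sequence $\tau_{\ge n}E \to E \to \tau_{\le n-1}E$, use connectivity bounds in the $p$-adic t-structure to kill the outer terms, and then identify the remaining ones via the secondary fiber sequences $\tau_{\ge n+1}E \to \tau_{\ge n}E \to \Sigma^n\pi_n(E)$ and $\Sigma^{n-1}\pi_{n-1}(E) \to \tau_{\le n-1}E \to \tau_{\le n-2}E$. The paper phrases the coconnective vanishing more tersely (citing only \cref{lemma:stable:truncation-of-completion}), whereas you spell out the passage through $\complete F$ and the invariance $\pi_m^p(X)\cong\pi_m^p(\complete X)$; your added remark that this invariance does not need left-separation is correct and worth keeping.
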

\begin{proof}
    Note that for any spectrum $F$ we have the following:
    If $F$ is $k$-connective, then $\pi^p_n(F) = 0$ for all $n < k$ ($\tcon[k]{\Cat D} \subseteq \tpcon[k]{\Cat D}$),
    and if $F$ is $k$-truncated, then $\pi^p_n(F) = 0$ for all $n > k+1$ (\cref{lemma:stable:truncation-of-completion}).

    Consider the fiber sequence
    \begin{equation*}
        \tau_{\ge n}E \to E \to \tau_{\le n-1}E.
    \end{equation*}
    This gives the following long exact sequence in $\tpstructheart{\Cat D}$:
    \begin{equation*}
        \pi_{n+1}^p(\tau_{\le n-1}E) \to \pi_n^p(\tau_{\ge n}E) \to \pi_n^p(E) \to \pi_n^p(\tau_{\le n-1}E) \to \pi_{n-1}^p(\tau_{\ge n}E).
    \end{equation*}
    Since $\tau_{\le n-1}E$ is $n-1$-truncated, we get that $\pi_{n+1}^p(\tau_{\le n-1}E) = 0$.
    Similarly, since $\tau_{\ge n}E$ is $n$-connective, we get that $\pi_{n-1}^p(\tau_{\ge n}E) = 0$.
    Thus, we arrive at a short exact sequence
    \begin{equation*}
        0 \to \pi_n^p(\tau_{\ge n}E) \to \pi_n^p(E) \to \pi_n^p(\tau_{\le n-1}E) \to 0.
    \end{equation*}
    Now consider the fiber sequence
    \begin{equation*}
        \Sigma^{n-1} \pi_{n-1} E \to \tau_{\le n-1}E \to \tau_{\le n-2}E,
    \end{equation*}
    which induces the following long exact sequence in $\tpstructheart{\Cat D}$:
    \begin{equation*}
        \pi^p_{n+1}(\tau_{\le n-2}E) \to \pi^p_{n}(\Sigma^{n-1} \pi_{n-1}E) \to \pi^p_n (\tau_{\le n-1}E) \to \pi^p_n(\tau_{\le n-2}E).
    \end{equation*}
    Again, since $\tau_{\le n-2}E$ is $n-2$-truncated, the outer terms vanish, and we
    are left with an isomorphism $\pi^p_n (\tau_{\le n-1}E) \cong \pi^p_{n}(\Sigma^{n-1} \pi_{n-1}E) \cong \pi^p_1 (\pi_{n-1}E) = \mathbb{L}_1(\pi_{n-1}E)$.

    Similarly, we can consider the fiber sequence
    \begin{equation*}
        \tau_{\ge n+1}E \to \tau_{\ge n}E \to \Sigma^n\pi_n(E),
    \end{equation*}
    which induces the following long exact sequence in $\tpstructheart{\Cat D}$:
    \begin{equation*}
        \pi^p_{n} (\tau_{\ge n+1}E) \to \pi^p_n(\tau_{\ge n}E) \to \pi^p_n (\Sigma^n\pi_n(E)) \to \pi^p_{n-1} (\tau_{\ge n+1}E).
    \end{equation*}
    Now $\tau_{\ge n+1}E$ is $n+1$-connective, so the outer terms vanish, and we are left with and isomorphism
    $\pi^p_n(\tau_{\ge n}E) \cong \pi^p_n (\Sigma^n\pi_n(E)) = \pi^p_0(\pi_n(E)) = \mathbb{L}_0(\pi_n(E))$.

    Plugging those isomorphisms into the short exact sequence from the beginning, we get a short exact sequence
    \begin{equation*}
        0 \to \mathbb L_0 \pi_{n}(E) \to \mathbb \pi^p_n(E) \to \mathbb L_1 \pi_{n-1}(E) \to 0.
    \end{equation*}
\end{proof}

\begin{cor} \label{cor:t-struct:dependence-on-truncation-or-cover}
    Let $E \in \Cat D$ and $n \in \Z$.
    We have equivalences
    $\pi_n^p(E) \cong \pi_n^p(\tau_{\ge k} E) \cong \pi_n^p(\tau_{\le l} E)$
    for all $k \le n-1$ and all $l \ge n$.
\end{cor}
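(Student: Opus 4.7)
The plan is to reduce the claim to the short exact sequence of \cref{lemma:short-exact-sequence-of-t-structure}, which is natural in $E$ and only involves $\pi_{n-1}(E)$ and $\pi_n(E)$. Since these two homotopy objects are preserved under both of the truncation functors in question, the equivalences should follow from a straightforward five-lemma argument.

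In detail, I would first consider the canonical maps $\tau_{\ge k} E \to E$ (for $k \le n-1$) and $E \to \tau_{\le l} E$ (for $l \ge n$). By the usual properties of the standard t-structure, both of these maps induce isomorphisms on $\pi_n$ and on $\pi_{n-1}$: for $\tau_{\ge k} E \to E$ this is because $k \le n-1 \le n$, and for $E \to \tau_{\le l} E$ this is because $l \ge n \ge n-1$. Since the derived $p$-completion functors $\mathbb L_0$ and $\mathbb L_1$ are functors out of $\tstructheart{\Cat D}$, they send these isomorphisms to isomorphisms in $\tpstructheart{\Cat D}$.

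Now I apply the naturality of the short exact sequence from \cref{lemma:short-exact-sequence-of-t-structure}. For the map $\tau_{\ge k} E \to E$, naturality yields a commutative diagram with exact rows, whose outer vertical maps $\mathbb L_0 \pi_n(\tau_{\ge k} E) \to \mathbb L_0 \pi_n(E)$ and $\mathbb L_1 \pi_{n-1}(\tau_{\ge k} E) \to \mathbb L_1 \pi_{n-1}(E)$ are isomorphisms by the above. The five-lemma in the abelian category $\tpstructheart{\Cat D}$ then forces the middle map $\pi_n^p(\tau_{\ge k} E) \to \pi_n^p(E)$ to be an isomorphism as well. The same argument applied to $E \to \tau_{\le l} E$ yields the other half of the claim. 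There is no real obstacle here; the only thing to check is that $\pi_n(\tau_{\ge k} E) \cong \pi_n(E)$ and $\pi_{n-1}(\tau_{\ge k} E) \cong \pi_{n-1}(E)$ under the assumption $k \le n-1$, and analogously for $\tau_{\le l}$, which are standard facts about t-structures.
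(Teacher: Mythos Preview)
Your proof is correct and follows essentially the same approach as the paper, which simply says the result ``follows immediately from \cref{lemma:short-exact-sequence-of-t-structure}''; you have merely spelled out the five-lemma step that the paper leaves implicit.
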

\begin{proof}
    This follows immediately from \cref{lemma:short-exact-sequence-of-t-structure}.
\end{proof}

\begin{cor} \label{cor:recognizing-stable-p-equivalences}
    Suppose that the standard t-structure is left-separated.
    Let $f \colon E \to F$ be a map in $\Cat D$.
    If $f$ induces isomorphisms
    $\mathbb{L}_i\pi_n(E) \to \mathbb{L}_i\pi_n(F)$ for all $n$ and $i = 0, 1$,
    then $f$ is a $p$-equivalence.
\end{cor}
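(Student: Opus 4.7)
The plan is to combine the short exact sequence from \cref{lemma:short-exact-sequence-of-t-structure} with the characterization of $p$-equivalences via the $p$-adic homotopy objects from \cref{lemma:t-struct:p-eq-iso-on-htpy-in-t-structure} (which applies since we are assuming the standard t-structure is left-separated).

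More concretely, I would first apply \cref{lemma:short-exact-sequence-of-t-structure} to both $E$ and $F$, and use naturality to obtain, for every $n \in \Z$, a commutative diagram in the abelian category $\tpstructheart{\Cat D}$ with exact rows:
\begin{equation*}
    \begin{tikzcd}
        0 \arrow[r] & \mathbb L_0 \pi_n(E) \arrow[r] \arrow[d, "\mathbb L_0 \pi_n(f)"] & \pi_n^p(E) \arrow[r] \arrow[d, "\pi_n^p(f)"] & \mathbb L_1 \pi_{n-1}(E) \arrow[r] \arrow[d, "\mathbb L_1 \pi_{n-1}(f)"] & 0 \\
        0 \arrow[r] & \mathbb L_0 \pi_n(F) \arrow[r] & \pi_n^p(F) \arrow[r] & \mathbb L_1 \pi_{n-1}(F) \arrow[r] & 0.
    \end{tikzcd}
\end{equation*}
By hypothesis, the outer two vertical maps are isomorphisms for every $n$, so the Five Lemma in $\tpstructheart{\Cat D}$ forces the middle map $\pi_n^p(f)$ to be an isomorphism as well.

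Finally, since $\pi_n^p(f)$ is an isomorphism for all $n \in \Z$, \cref{lemma:t-struct:p-eq-iso-on-htpy-in-t-structure} (which uses left-separation of the standard t-structure) yields that $f$ is a $p$-equivalence, completing the argument. There is no real obstacle here; the only small subtlety is checking that the Five Lemma is available, which is immediate because $\tpstructheart{\Cat D}$ is an abelian category (being the heart of the $p$-adic t-structure constructed in \cref{lemma:t-struct:proof}).
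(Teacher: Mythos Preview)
Your proposal is correct and is exactly the argument the paper intends: its proof simply reads ``Combine \cref{lemma:short-exact-sequence-of-t-structure,lemma:t-struct:p-eq-iso-on-htpy-in-t-structure},'' and you have spelled out precisely how those two results combine via the Five Lemma in the abelian category $\tpstructheart{\Cat D}$.
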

\begin{proof}
    Combine \cref{lemma:short-exact-sequence-of-t-structure,lemma:t-struct:p-eq-iso-on-htpy-in-t-structure}.
\end{proof}

\subsection{Comparison Results}
In this section, we will compare the $p$-adic t-structures on different stable categories.
For this suppose that $\Cat D$ and $\Cat E$ are two presentable stable categories, satisfying the assumptions from the
beginning of the section, i.e.\ they both come equipped with accessible right-separated t-structures $\tstruct{\Cat D}$
and $\tstruct{\Cat E}$. We again call those t-structures the standard t-structures,
in contrast to the $p$-adic t-structures.

\begin{lem} \label{lemma:t-struct:right-exact-commutes-with-completion}
    Let $F \colon \Cat D \to \Cat E$ be an exact functor.
    Then $F$ preserves $p$-equivalences.

    If moreover $F$ commutes with sequential limits (e.g.\ if $F$ is a right adjoint functor), then $F$ commutes with $p$-completion,
    and in particular preserves $p$-complete objects.
\end{lem}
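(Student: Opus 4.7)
The plan is to unwind the explicit descriptions of $p$-equivalences and of $p$-completion that have been established earlier in the section, and show that $F$ commutes with those operations essentially formally from its exactness and continuity assumptions.

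First I would handle the preservation of $p$-equivalences. Since $F$ is exact between stable $\infty$-categories, $F$ preserves cofibers; applied to the defining cofiber sequence $E \xrightarrow{p} E \to E \sslash p$ (the map $p$ is preserved because it is $\mathrm{id}+\dots+\mathrm{id}$, and $F$ preserves the additive/stable structure), this yields a natural equivalence $F(E \sslash p) \cong F(E) \sslash p$, and hence $F(f) \sslash p \cong F(f \sslash p)$. If $f$ is a $p$-equivalence, i.e.\ $f \sslash p$ is an equivalence, then $F(f \sslash p)$ is an equivalence, so $F(f) \sslash p$ is too, which means $F(f)$ is a $p$-equivalence.

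For the second statement, I would invoke \cref{lemma:stable:completion-functor-description} to write $\complete{E} \cong \limil{n} E \sslash p^n$. Using the iterated cofiber identifications from the first step, together with the assumption that $F$ commutes with sequential limits, we compute
\begin{equation*}
    F(\complete{E}) \cong F\bigl(\limil{n} E \sslash p^n\bigr) \cong \limil{n} F(E \sslash p^n) \cong \limil{n} F(E) \sslash p^n \cong \completebr{F(E)},
\end{equation*}
and this chain of equivalences is natural in $E$, giving a canonical equivalence $F \circ \completebr{-} \cong \completebr{-} \circ F$. In particular, if $E$ is $p$-complete, then by \cref{lemma:stable:peq-description} we have $E \cong \complete{E}$, so $F(E) \cong F(\complete{E}) \cong \completebr{F(E)}$, showing that $F(E)$ is $p$-complete.

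There is no real obstacle here: the only care needed is to recognize that exactness suffices to identify $F((-)\sslash p)$ with $F(-)\sslash p$ (cofibers are preserved by exact functors between stable categories, as is the map $p = \mathrm{id}+\cdots+\mathrm{id}$), after which the rest follows from the explicit formula for $\completebr{-}$ and the sequential-limit hypothesis. The parenthetical remark that right adjoints qualify is immediate since right adjoints preserve all limits, in particular sequential ones.
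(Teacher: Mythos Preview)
Your argument is correct and follows essentially the same route as the paper: exactness gives $F(-\sslash p)\cong F(-)\sslash p$, hence preservation of $p$-equivalences, and then the formula $\complete{E}\cong\limil{n}E\sslash p^n$ from \cref{lemma:stable:completion-functor-description} together with commutation with sequential limits yields $F(\complete{E})\cong\completebr{F(E)}$. Your write-up is slightly more detailed (you spell out why $F$ preserves the map $p$ and why preservation of $p$-complete objects follows), but the substance is the same.
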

\begin{proof}
    Since $F$ is exact, it commutes with $\Cofib{- \xrightarrow{p} -}$.
    Thus, since $F$ sends equivalences to equivalences, it follows that $F$
    preserves $p$-equivalences.

    Suppose now that $F$ commutes with sequential limits.
    Let $X \in \Cat D$.
    Then we compute $\completebr{FX} \cong \limil{n} (FX) \sslash p^n \cong \limil{n} F(X \sslash p^n) \cong F(\limil{n} X \sslash p^n) \cong F(\complete{X})$.
\end{proof}

\begin{lem} \label{lemma:t-struct:conservative-detects-peq}
    Let $F \colon \Cat D \to \Cat E$ be an exact conservative functor.
    Then $F$ detects $p$-equivalences, i.e.\ for every $f \colon E \to F$ in $\Cat D$
    the following holds:
    If $F(f)$ is a $p$-equivalence, then $f$ is a $p$-equivalence.
\end{lem}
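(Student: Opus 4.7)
The plan is very direct: unwind the definitions and use exactness together with conservativity. Suppose $f \colon E \to E'$ is a morphism in $\Cat D$ and that $F(f)$ is a $p$-equivalence in $\Cat E$. By definition, this means that $F(f) \sslash p = \Cofib{F(f) \xrightarrow{p} F(f)}$ is an equivalence in $\Cat E$.

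The first step is to commute $F$ past the cofiber construction. Since $F$ is exact, it preserves finite colimits and finite limits in the stable setting, and in particular preserves cofibers of morphisms (viewed as objects in the stable category $\Fun{}(\Delta^1, \Cat D)$). Moreover $F$ is additive, so it preserves the multiplication-by-$p$ endomorphism of any object (which is just the sum of $p$ copies of the identity). Hence there is a canonical equivalence $F(f \sslash p) \cong F(f) \sslash p$ in $\Cat E$.

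Combining these, $F(f \sslash p)$ is an equivalence. Since $F$ is conservative, this forces $f \sslash p$ to be an equivalence in $\Cat D$, which by definition says that $f$ is a $p$-equivalence. There is no real obstacle here; the only thing to verify carefully is the interaction of $F$ with the cofiber of multiplication by $p$, which is immediate from exactness and additivity of $F$.
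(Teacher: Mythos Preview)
Your proof is correct and follows exactly the same approach as the paper: use exactness of $F$ to identify $F(f) \sslash p \cong F(f \sslash p)$, then invoke conservativity to conclude that $f \sslash p$ is an equivalence. The only difference is that you make the additivity argument for preservation of multiplication by $p$ explicit, which the paper leaves implicit.
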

\begin{proof}
    Let $f \colon E \to F$ in $\Cat D$ a morphism 
    such that $F(f)$ is a $p$-equivalence, i.e.\ $F(f) \sslash p$ is an equivalence.
    Note that since $F$ is exact, we have $F(f) \sslash p \cong F(f \sslash p)$.
    Now since $F$ is conservative, we conclude that $f \sslash p$ is an equivalence,
    i.e.\ $f$ is a $p$-equivalence.
\end{proof}

\begin{lem} \label{lemma:t-struct:right-t-exact}
    Let $L \colon \Cat D \to \Cat E$ be an exact functor which is right t-exact for the standard t-structures.
    Then $L$ is right t-exact for the $p$-adic t-structures.
    If $L$ has a right adjoint $R$, then $R$ is left t-exact for the $p$-adic t-structures.
\end{lem}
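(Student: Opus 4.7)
The plan is to reduce both statements to facts already established for the standard t-structure, using the characterization of $\tpcon{\Cat D}$ in terms of $(-) \sslash p$ together with the formal adjunction argument that converts right t-exactness of the left adjoint into left t-exactness of the right adjoint.

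For the first statement, I would take $E \in \tpcon{\Cat D}$ and apply \cref{lemma:t-struct:mod-p-in-con-implies-in-tpcon} to get $E \sslash p \in \tcon{\Cat D}$. Since $L$ is exact it commutes with the cofiber defining $(-) \sslash p$, so $L(E) \sslash p \cong L(E \sslash p)$. Right t-exactness of $L$ for the standard t-structures then puts this object in $\tcon{\Cat E}$, and applying \cref{lemma:t-struct:mod-p-in-con-implies-in-tpcon} in the other direction for $\Cat E$ yields $L(E) \in \tpcon{\Cat E}$. The same argument (with the obvious shift, which is harmless because $L$ commutes with $\Sigma$) gives $L(\tpcon[n]{\Cat D}) \subseteq \tpcon[n]{\Cat E}$ for every $n$, so $L$ is right t-exact for the $p$-adic t-structures.

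For the second statement, I would argue purely by adjunction. Fix $F \in \tpcocon{\Cat E}$; by definition of $\tpcocon{\Cat D}$ as the right orthogonal complement of $\tpcon[1]{\Cat D}$, it suffices to show $\Map{\Cat D}(G, R(F)) \simeq 0$ for every $G \in \tpcon[1]{\Cat D}$. Adjunction rewrites this as $\Map{\Cat E}(L(G), F)$, and by the first part (applied to the shift $G \in \tpcon[1]{\Cat D}$, i.e.\ $G[-1] \in \tpcon{\Cat D}$) we have $L(G) \in \tpcon[1]{\Cat E}$. Hence the mapping space vanishes because $F \in \tpcocon{\Cat E}$, and $R(F) \in \tpcocon{\Cat D}$ as desired.

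There is essentially no obstacle here beyond bookkeeping: the only subtlety is making sure the shift index lines up (one wants $\tpcon[1]{\Cat D}$ in the orthogonality test, matching the definition in \cref{def:t-struct:defn}), and that $L$ preserves this shift, which it does because it is exact. Once that is checked, both claims are formal consequences of the characterization $\tpcon{\Cat D} = \{E : E \sslash p \in \tcon{\Cat D}\}$ and of the adjunction identity on mapping spaces.
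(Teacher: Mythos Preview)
Your proposal is correct and matches the paper's approach. The first statement is proved identically via \cref{lemma:t-struct:mod-p-in-con-implies-in-tpcon}; for the second statement the paper simply cites the general fact from \cite[Proposition 1.3.17 (iii)]{BeilinsonFaisceauxPerverse} that a right adjoint to a right t-exact functor is left t-exact, whereas you unpack that adjunction-and-orthogonality argument explicitly.
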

\begin{proof}
    Suppose that $X \in \tpcon{\Cat D}$.
    \cref{lemma:t-struct:mod-p-in-con-implies-in-tpcon} implies that $X \sslash p \in \tcon{\Cat D}$.
    Since $L$ is exact and right t-exact for the standard t-structures, we also have $LX \sslash p \cong L(X \sslash p) \in \tcon{\Cat E}$.
    But this now implies that $LX \in \tpcon{\Cat E}$, again by \cref{lemma:t-struct:mod-p-in-con-implies-in-tpcon}.

    The last statement is a general fact about t-structures, see e.g.\ \cite[Proposition 1.3.17 (iii)]{BeilinsonFaisceauxPerverse} (note that
    in the reference, cohomological indexing is used).
\end{proof}

\begin{lem} \label{lemma:t-struct:conservative}
    Let $L \colon \Cat D \to \Cat E$ be an exact conservative functor which is t-exact for the standard t-structures.
    Suppose that $X \in \Cat D$ such that $LX \in \tpcon[n]{\Cat E}$ for some $n$.
    Then $X \in \tpcon[n]{\Cat D}$.
\end{lem}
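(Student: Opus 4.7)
My plan is to reduce the statement about the $p$-adic t-structure to a statement about the standard t-structure, using the characterization from \cref{lemma:t-struct:mod-p-in-con-implies-in-tpcon}. That lemma was stated for $\tpcon{\Cat D}$, but by shifting it immediately extends: $E \in \tpcon[n]{\Cat D}$ if and only if $E \sslash p \in \tcon[n]{\Cat D}$, and similarly for $\Cat E$. Since $L$ is exact, it commutes with the cofiber $(-) \sslash p$, so the hypothesis $LX \in \tpcon[n]{\Cat E}$ translates into $L(X \sslash p) \cong LX \sslash p \in \tcon[n]{\Cat E}$. The goal thus becomes to prove $X \sslash p \in \tcon[n]{\Cat D}$.

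The remaining step is to show that an exact, t-exact, conservative functor detects $n$-connectivity for the standard t-structure: if $Y \in \Cat D$ satisfies $LY \in \tcon[n]{\Cat E}$, then $Y \in \tcon[n]{\Cat D}$. For this I would consider the truncation $\tau_{\le n-1} Y$. Since $L$ is t-exact, there is a canonical equivalence $L \tau_{\le n-1} Y \cong \tau_{\le n-1} LY$, and the right-hand side vanishes by hypothesis. Conservativity of $L$ then forces $\tau_{\le n-1} Y = 0$, which is precisely the statement $Y \in \tcon[n]{\Cat D}$. Applying this with $Y = X \sslash p$ completes the argument.

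There is no real obstacle here: once one unwinds \cref{lemma:t-struct:mod-p-in-con-implies-in-tpcon} correctly and observes that exactness lets $L$ pass through $(-)\sslash p$, the detection of connectivity is a general, essentially one-line consequence of t-exactness plus conservativity.
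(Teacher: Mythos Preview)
Your proposal is correct and follows essentially the same route as the paper: reduce via \cref{lemma:t-struct:mod-p-in-con-implies-in-tpcon} to detecting standard $n$-connectivity of $X\sslash p$, then use t-exactness plus conservativity of $L$. The only cosmetic difference is that the paper phrases the detection step via the connective cover (showing $\tau_{\ge n} Y \to Y$ is an equivalence) rather than via the truncation $\tau_{\le n-1} Y = 0$, which is of course equivalent.
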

\begin{proof}
    Suppose $X \in \Cat D$ such that $LX \in \tpcon[n]{\Cat E}$ for some $n$.
    \cref{lemma:t-struct:mod-p-in-con-implies-in-tpcon} implies that $L(X \sslash p) \cong LX \sslash p \in \tcon[n]{\Cat E}$.
    Using the same lemma, it suffices to show that $X \sslash p \in \tcon[n]{\Cat D}$.
    Therefore, the lemma follows from the following more general statement,
    that any $Y \in \Cat{D}$ with $LY \in \tcon[n]{\Cat E}$ already lives in $\tcon[n]{\Cat D}$.
    So suppose that we have such a $Y \in \Cat D$.
    Then the map $\tau_{\ge n} LY \to LY$ is an equivalence.
    By t-exactness of $L$ for the standard t-structures,
    $L$ commutes with connective covers,
    i.e.\ $L\tau_{\ge n} Y \cong \tau_{\ge n} LY$.
    Conservativity of $L$ implies that $\tau_{\ge n} Y \to Y$ is an equivalence, 
    i.e.\ $Y \in \tcon[n]{\Cat D}$.
\end{proof}

\section{Unstable \texorpdfstring{$p$}{p}-Completion in \texorpdfstring{$\infty$}{infinity}-Topoi}
\label{section:topos:main}
Let $\topos X$ be a presentable $\infty$-category \cite[Definition 5.5.0.1]{highertopoi}.
We will have to deal with pointed and unpointed objects.
Write $\topos X_*$ for the category of pointed objects,
i.e.\ the category $\topos X_{*/}$ of objects under the terminal object $*$.
The forgetful functor $\topos X_* \to \topos X$
has a left adjoint $(-)_+ \colon \topos X \to \topos X_*$
given on objects by the formula $X \mapsto X \sqcup *$.

Let $\spectra X$ be the stabilization of $\topos X$.
See \cite[Section 1.4.2]{higheralgebra} for a discussion of the
stabilization of $\infty$-categories.
We have an adjoint pair of functors
\begin{equation*}
    \Sus \colon \topos X_* \rightleftarrows \spectra{X} \colon \pLoop.
\end{equation*}
Write $\pSus \colon \topos X \to \spectra{X}$
for the composition $\Sus \circ (-)_+$.
Hence, this is left adjoint to $\Loop \colon \spectra{X} \to \topos X$,
which forgets about the basepoint of the infinite loop space.

There is an accessible right-separated t-structure $\tstruct{\spectra{X}}$ on $\spectra{X}$,
given by $\tcocon[-1]{\spectra{X}} = \set{E \in \spectra{X}}{\pLoop E \cong *}$, see \cref{lemma:stabilization:t-structure}.
We will call this t-structure the \emph{standard t-structure} on $\spectra{X}$.
Therefore we can apply the results from \cref{section:stable}.

\begin{rmk}
    Later in this section, we will only work in the situation 
    where $\topos X$ is an $\infty$-topos.
    But since the category of motivic spaces is not an $\infty$-topos,
    we have to make some definitions in this more general setting.
    
    Later, we will reduce statements about the $p$-completion of motivic spaces 
    to the easier case of $p$-completion in suitable $\infty$-topoi. 
\end{rmk}

\subsection{Definition of the \texorpdfstring{$p$}{p}-Completion Functor}
\label{section:presentable-completion}
In this section, $\topos X$ will always be a presentable $\infty$-category.
We will define the unstable $p$-completion functor on the category $\topos X$.
As in the stable case, the $p$-completion functor is a localization
along a suitable class of $p$-equivalences:

\begin{defn}
    Let $g \colon X \to Y$ be a morphism in $\topos X_*$.
    We say that $g$ is a \emph{$p$-equivalence (of pointed objects)} if $\Sus g$ is a $p$-equivalence.

    Similarly, if $g \colon X \to Y$ is a morphism in $\topos X$,
    we say that $g$ is a \emph{$p$-equivalence (of unpointed objects)}
    if $g_+ \colon X_+ \to Y_+$ is a $p$-equivalence of pointed objects,
    i.e.\ if $\pSus g$ is a $p$-equivalence.
\end{defn}

As the next lemma shows, the distinction between pointed and unpointed $p$-equivalences
does not matter:

\begin{lem} \label{lemma:topos:unpointed-pointed:equivalence}
    Let $g \colon X \to Y$ be a morphism in $\topos X_*$.
    Then $g$ is a $p$-equivalence of pointed objects
    if and only if $g$ is a $p$-equivalence of the underlying unpointed objects.
\end{lem}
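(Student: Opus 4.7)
The plan is to reduce both notions of $p$-equivalence to the same condition by exhibiting, for any pointed object $X \in \topos X_*$, a natural splitting
\begin{equation*}
    \Sus(X_+) \;\simeq\; \Sus(X) \oplus \Sus(*_+)
\end{equation*}
in $\spectra{X}$, where $X_+$ denotes $(-)_+$ applied to the underlying unpointed object of $X$. Granted this, for any pointed map $g \colon X \to Y$ we obtain $\pSus(g) \simeq \Sus(g) \oplus \mathrm{id}_{\Sus(*_+)}$ naturally in $\Fun{}(\Delta^1, \spectra{X})$. Applying $(-) \sslash p$, which is exact and hence preserves direct sums, yields
\begin{equation*}
    (\pSus g) \sslash p \;\simeq\; \bigl((\Sus g) \sslash p\bigr) \oplus \mathrm{id}_{\Sus(*_+) \sslash p},
\end{equation*}
which is an equivalence if and only if $(\Sus g) \sslash p$ is. This immediately gives both directions.

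To construct the natural splitting, I would use two canonical maps in $\topos X_*$: the map $\iota \colon *_+ \to X_+$ obtained by applying $(-)_+$ to the basepoint $* \to X$, and the map $\pi \colon X_+ \to *_+$ obtained by applying $(-)_+$ to $X \to *$. Functoriality of $(-)_+$ together with the triviality of $* \to X \to *$ gives $\pi \circ \iota = \mathrm{id}_{*_+}$. Next, I would identify the cofiber of $\iota$ in $\topos X_*$ with the pointed object $X$: concretely, the pushout $* \sqcup_{*_+} X_+$ collapses the image of $*_+$ inside $X_+$, which pastes the added basepoint of $X_+$ to the original basepoint of $X$, recovering $X$ with its original basepoint.

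Applying $\Sus$, which is a left adjoint and hence sends cofiber sequences to cofiber sequences, produces the cofiber sequence $\Sus(*_+) \to \Sus(X_+) \to \Sus(X)$ in the stable $\infty$-category $\spectra{X}$, together with the retraction $\Sus(\pi)$ of its first map. In any stable $\infty$-category, a cofiber sequence whose first map admits a retraction splits as a direct sum, giving the required natural equivalence. The argument is entirely formal and works in any presentable $\infty$-category, so I do not foresee any serious obstacle; the only minor computation is the identification of the cofiber of $\iota$, which is routine.
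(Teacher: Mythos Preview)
Your proof is correct and takes essentially the same approach as the paper: both rest on the natural cofiber sequence $*_+ \to X_+ \to X$ in $\topos X_*$ and the fact that the induced map on the $*_+$ term is the identity. Your additional use of the retraction $X_+ \to *_+$ to split the sequence after stabilization is a minor cosmetic variation; the paper instead compares cofibers of the vertical maps directly.
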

\begin{proof}
    We need to prove that $\Sus g$ is a $p$-equivalence if and only if
    $\pSus g$ is a $p$-equivalence.

    Note that we have natural cofiber sequences in $\topos X_*$ for every $X \in \topos X_*$:
    First we have the inclusion of the basepoint $\eta_X \colon * \to X$.
    This induces a morphism $\eta_{X,+} \colon *_+ \to X_+$.
    Second, we have the counit $c_X \colon X_+ \to X$.
    Both constructions are natural in $\topos X_*$.
    We claim that $*_+ \xrightarrow{\eta_{X,+}} X_+ \xrightarrow{c_X} X$ is a cofiber sequence.
    Consider the following diagram:
    \begin{center}
        \begin{tikzcd}
            * \arrow[r] \arrow[d, "\eta_X"] &*_+ \arrow[r, "c_*"] \arrow[d, "\eta_{X,+}"] & * \arrow[d, "\eta_X"] \\
            X \arrow[r] &X_+ \arrow[r, "c_X"] &X.
        \end{tikzcd}
    \end{center}
    The left horizontal arrows are the natural inclusions.
    The left square is clearly cocartesian.
    Since this is a retract diagram, the outer rectangle is also cocartesian.
    Thus, also the right square is cocartesian, see \cite[Lemma 4.4.2.1]{highertopoi}.
    In other words, the above sequence is a cofiber sequence.

    Since the sequence is natural in $\topos X_*$, we get a morphism of cofiber sequences in $\topos X_*$:
    \begin{center}
        \begin{tikzcd}
            *_+ \arrow[r, "\eta_{X,+}"] \arrow[d, equal] &X_+ \arrow[r, "c_X"] \arrow[d, "f_+"] & X \arrow[d, "f"] \\
            *_+ \arrow[r, "\eta_{Y,+}"] &Y_+ \arrow[r, "c_Y"] &Y.
        \end{tikzcd}
    \end{center}
    Since $\Sus$ and $(-) \sslash p$ commute with colimits (as $\Sus$ is left adjoint to $\pLoop$),
    and since $\pSus = \Sus \circ (-)_+$, we get
    a morphism of cofiber sequences
    \begin{center}
        \begin{tikzcd}
            \pSus * \sslash p \arrow[r, "\eta_{X,+}"] \arrow[d, equal] &\pSus X \sslash p\arrow[r, "c_X"] \arrow[d, "\pSus f \sslash p"] & \Sus X \sslash p \arrow[d, "\Sus f \sslash p"] \\
            \pSus * \sslash p \arrow[r, "\eta_{Y,+}"] &\pSus Y \sslash p \arrow[r, "c_Y"] &\Sus Y \sslash p.
        \end{tikzcd}
    \end{center}
    Taking cofibers of the vertical maps, we get a cofiber sequence
    \begin{equation*}
        0 \to \Cofib{\pSus f \sslash p} \to \Cofib{\Sus f \sslash p}.
    \end{equation*}
    Hence, $\Cofib{\pSus f \sslash p} \cong \Cofib{\Sus f \sslash p}$.
    Thus, $\Cofib{\pSus f \sslash p} = 0$ if and only if $\Cofib{\Sus f \sslash p} = 0$.
    This proves that $\pSus f$ is a $p$-equivalence if and only if $\Sus f$ is a $p$-equivalence.
\end{proof}

\begin{defn}
    We say that $X \in \topos X$ is \emph{(unpointed) $p$-complete} if every
    $p$-equivalence of unpointed objects $f \colon Y \to Y'$ induces on mapping spaces an equivalence
    $\Map{\topos X}(Y', X) \to \Map{\topos X}(Y, X)$.
    Denote by $\complete{\topos X}$ the full subcategory of $p$-complete objects.

    Similarly, we say that a pointed object $X \in \topos X_*$ is \emph{(pointed) $p$-complete}
    if every $p$-equivalence of pointed objects $f \colon Y \to Y'$ induces an equivalence
    $\Map{\topos X_*}(Y', X) \to \Map{\topos X_*}(Y, X)$.
    We write $\complete{\topos X_*}$ for the full subcategory of $p$-complete objects.
\end{defn}

Again, this distinction between pointed and unpointed objects does not matter:

\begin{lem}
    Let $X \in \topos X_*$.
    Then $X$ is pointed $p$-complete if and only if the underlying unpointed object is
    unpointed $p$-complete.
\end{lem}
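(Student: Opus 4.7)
The plan is to prove the two directions by relating pointed and unpointed mapping spaces via the standard adjunction $(-)_+ \dashv U$ (where $U$ is the forgetful functor) and via the pointed-to-unpointed fiber sequence, and then to exploit the previous lemma \cref{lemma:topos:unpointed-pointed:equivalence} to switch between the two notions of $p$-equivalence.

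For the forward direction, suppose $X$ is pointed $p$-complete. Given a $p$-equivalence $f \colon Y \to Y'$ of unpointed objects, by definition $f_+ \colon Y_+ \to Y'_+$ is a $p$-equivalence of pointed objects. The adjunction $(-)_+ \dashv U$ gives natural equivalences $\Map{\topos X_*}(Y_+, X) \cong \Map{\topos X}(Y, X)$ (and similarly for $Y'$), so the map $\Map{\topos X}(Y', X) \to \Map{\topos X}(Y, X)$ is identified with $\Map{\topos X_*}(Y'_+, X) \to \Map{\topos X_*}(Y_+, X)$, which is an equivalence by assumption.

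For the reverse direction, suppose $X$ is unpointed $p$-complete, and let $f \colon Y \to Y'$ be a $p$-equivalence of pointed objects. By \cref{lemma:topos:unpointed-pointed:equivalence} the underlying map of unpointed objects is also a $p$-equivalence, so $\Map{\topos X}(Y', X) \to \Map{\topos X}(Y, X)$ is an equivalence. I would now use that for any pointed object $Z$ with basepoint $* \to Z$ there is a natural fiber sequence of anima
\begin{equation*}
    \Map{\topos X_*}(Z, X) \to \Map{\topos X}(Z, X) \to \Map{\topos X}(*, X),
\end{equation*}
the fiber being taken at the basepoint of $X$. Applying this naturally in $Z$ to $f$ gives a map of fiber sequences in which the right-hand term $\Map{\topos X}(*, X)$ is unchanged and the middle map is an equivalence by the preceding step, so the induced map on fibers $\Map{\topos X_*}(Y', X) \to \Map{\topos X_*}(Y, X)$ is also an equivalence, showing $X$ is pointed $p$-complete.

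The only mildly delicate point is the existence and naturality of that fiber sequence, but this is a standard property of the slice category $\topos X_* = \topos X_{*/}$: mapping spaces out of a pointed object are precisely the fibers of unpointed mapping spaces over the basepoint, and this comparison is functorial in $Z$. Everything else is a direct application of \cref{lemma:topos:unpointed-pointed:equivalence} together with the hom-tensor adjunction for $(-)_+$.
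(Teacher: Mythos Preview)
Your proof is correct and follows essentially the same approach as the paper. The paper phrases the reverse direction via a commutative cube whose front and back faces are cartesian squares expressing $\Map{\topos X_*}(Z,X)$ as the fiber of $\Map{\topos X}(Z,X) \to \Map{\topos X}(*,X)$ over the basepoint---which is exactly your fiber sequence---and the forward direction via the same $(-)_+ \dashv U$ adjunction argument you give; you are also more explicit than the paper in invoking \cref{lemma:topos:unpointed-pointed:equivalence} where it is needed.
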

\begin{proof}
    Suppose that the underlying unpointed object is unpointed $p$-complete.
    Let $f \colon Z \to Z'$ be a $p$-equivalence of pointed objects.
    Consider the following commutative cube:
    \begin{center}
        \begin{tikzcd}
            \Map{\topos X_*}(Z', X) \arrow[rd, "f^*"] \arrow[rr] \arrow[dd, hook] &&* \arrow[rd, equal] \arrow[dd] &\\
            &\Map{\topos X_*}(Z, X) \arrow[rr] \arrow[dd, hook] &&* \arrow[dd] \\
            \Map{\topos X}(Z', X) \arrow[rd, "f^*"] \arrow[rr]&&\Map{\topos X}(*, X) \arrow[rd, equal] &\\
            &\Map{\topos X}(Z, X) \arrow[rr]&&\Map{\topos X}(*, X).
        \end{tikzcd}
    \end{center}
    Here, the vertical maps $* \to \Map{\topos X}(*, X)$ select the map $* \to X$ given by the pointing of $X$.
    The horizontal map $\Map{\topos X}(Z, X) \to \Map{\topos X}(*, X)$ 
    is given by precomposition with the basepoint $* \to Z$, and similarly for $Z'$. 
    Note that the front and back squares are cartesian by definition of $\topos X_*$.
    Thus, since $f^* \colon \Map{\topos X}(Z', X) \to \Map{\topos X}(Z, X)$
    is an equivalence by assumption, also the map 
    $f^* \colon \Map{\topos X_*}(Z', X) \to \Map{\topos X_*}(Z, X)$
    is an equivalence.
    This proves that $X$ is pointed $p$-complete.

    For the other direction, we have to show that a $p$-equivalence of unpointed objects $g \colon Z \to Z'$
    induces an equivalence $\Map{\topos X}(Z', X) \to \Map{\topos X}(Z, X)$.
    By definition, $g_+$ is a $p$-equivalence of pointed objects.
    This implies that the induced map
    $\Map{\topos X_*}(Z'_+, X) \to \Map{\topos X_*}(Z_+, X)$
    is an equivalence, since $X$ was assumed to be pointed $p$-complete.
    But this gives
    \begin{equation*}
        \Map{\topos X}(Z', X) \cong \Map{\topos X_*}(Z'_+, X) \cong \Map{\topos X_*}(Z_+, X) \cong \Map{\topos X}(Z, X),
    \end{equation*}
    using that $(-)_+$ is left adjoint to the forgetful functor.
    In other words, $X$ is unpointed $p$-complete.
\end{proof}

In view of the last lemmas, being a $p$-equivalences or being $p$-complete
is independent of a choice of basepoint.
Below, we will use this without reference.

\begin{lem} \label{lemma:topos:p-equiv-strongly-saturated}
    The collection of $p$-equivalences in $\topos X$ (resp.\ in $\topos X_*$)
    is strongly saturated and of small generation.
\end{lem}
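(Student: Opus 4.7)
The plan is to mimic the proof of \cref{lemma:stable:small-generation} by exhibiting the class of $p$-equivalences as the preimage, under a colimit-preserving functor, of a strongly saturated class of small generation. The key input is again \cite[Proposition 5.5.4.16]{highertopoi}, which asserts exactly this stability under preimage along a colimit-preserving functor.

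For the pointed case, the collection $S_*$ of $p$-equivalences in $\topos X_*$ is by definition $\Sus^{-1}(S)$, where $S \subset \operatorname{Fun}(\Delta^1, \spectra X)$ is the class of stable $p$-equivalences. Now $\Sus \colon \topos X_* \to \spectra X$ is a left adjoint (its right adjoint is $\pLoop$), hence colimit-preserving, and $S$ is strongly saturated and of small generation by \cref{lemma:stable:small-generation}. Applying \cite[Proposition 5.5.4.16]{highertopoi} immediately yields the claim for $\topos X_*$.

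For the unpointed case, one proceeds identically: the class of $p$-equivalences in $\topos X$ is $\pSus^{-1}(S) = ((-)_+)^{-1}(S_*)$, and $\pSus = \Sus \circ (-)_+$ is again a left adjoint (to $\Loop$), hence colimit-preserving. Since $S$ (or equivalently $S_*$, by the previous paragraph) is strongly saturated and of small generation, the same proposition gives the conclusion. Alternatively, since we have already shown the pointed statement and $(-)_+$ is colimit-preserving, a second application of \cite[Proposition 5.5.4.16]{highertopoi} to the functor $(-)_+ \colon \topos X \to \topos X_*$ and the class $S_*$ finishes the argument.

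There is really no obstacle here beyond bookkeeping; the entire content is that the defining functors $\Sus$ and $\pSus$ (resp.\ $(-)_+$) are left adjoints, so one can transport the stable statement across them. No use is made of the topos structure of $\topos X$, which is why the lemma is stated at the level of arbitrary presentable $\infty$-categories.
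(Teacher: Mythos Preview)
Your proof is correct and is essentially identical to the paper's own argument: both apply \cite[Proposition 5.5.4.16]{highertopoi} to the colimit-preserving functors $\pSus$ (unpointed) and $\Sus$ (pointed), pulling back the strongly saturated class of stable $p$-equivalences from \cref{lemma:stable:small-generation}. The only cosmetic difference is that the paper treats the unpointed case first.
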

\begin{proof}
    Write $S$ for the class of $p$-equivalences in $\topos X$.
    Using \cite[Proposition 5.5.4.16]{highertopoi}, it suffices to show
    that $S = f^{-1}(S')$ for some colimit-preserving functor $f$
    and a strongly saturated class $S'$ of small generation.
    Then let $f = \pSus(-)$, and $S'$ be the collection of
    $p$-equivalences in $\spectra{X}$.
    $S'$ is strongly saturated and of small generation by \cref{lemma:stable:small-generation}.

    In the pointed case, on argues in the same way, using the functor $f = \pSus$.
\end{proof}

\begin{lem} \label{lemma:topos:completion-functor}
    The inclusion $\complete{\topos X} \to \topos X$ has a left adjoint
    $\completebr{-} \colon \topos X \to \complete{\topos X}$.
    We call this functor the $p$-completion functor.

    Similarly, the inclusion $\complete{\topos X_*} \to \topos X_*$
    has a left adjoint, which we also denote by $\completebr{-}$.
\end{lem}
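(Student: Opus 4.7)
The plan is to invoke the standard machinery for Bousfield localizations in presentable $\infty$-categories, since all the hard work has already been done in \cref{lemma:topos:p-equiv-strongly-saturated}. Concretely, I would apply \cite[Proposition 5.5.4.15]{highertopoi}, which asserts that for any presentable $\infty$-category $\topos Y$ and any strongly saturated class $S$ of morphisms of small generation, the full subcategory $S^{-1}\topos Y$ of $S$-local objects (those $X$ such that $f^* \colon \Map{\topos Y}(Y', X) \to \Map{\topos Y}(Y, X)$ is an equivalence for all $f \colon Y \to Y' \in S$) is presentable, and the inclusion $S^{-1}\topos Y \hookrightarrow \topos Y$ admits a left adjoint realizing $\topos Y \to S^{-1}\topos Y$ as a localization.

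For the unpointed case, I would take $\topos Y = \topos X$ and $S$ the class of unpointed $p$-equivalences; this class is strongly saturated and of small generation by \cref{lemma:topos:p-equiv-strongly-saturated}. By definition of $p$-complete objects, the $S$-local objects are precisely the $p$-complete objects, so $S^{-1}\topos X = \complete{\topos X}$, and the proposition gives the desired left adjoint $\completebr{-} \colon \topos X \to \complete{\topos X}$. For the pointed case, one argues identically with $\topos Y = \topos X_*$ (which is again presentable, being a slice of a presentable $\infty$-category under a terminal object) and $S$ the class of pointed $p$-equivalences, which is again strongly saturated and of small generation by \cref{lemma:topos:p-equiv-strongly-saturated}.

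There is essentially no obstacle here: the substantive content, namely verifying that the class of $p$-equivalences satisfies the hypotheses of Lurie's localization theorem, is already contained in the preceding lemma. The proof is then a one-line application of the cited proposition in each of the two cases.
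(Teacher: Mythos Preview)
Your proposal is correct and matches the paper's own proof essentially verbatim: the paper simply writes ``This is an application of \cite[Proposition 5.5.4.15]{highertopoi}, using \cref{lemma:topos:p-equiv-strongly-saturated}.'' Your version just makes explicit the identification of $S$-local objects with $p$-complete objects and the presentability of $\topos X_*$, which are implicit in the paper's one-line proof.
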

\begin{proof}
    This is an application of \cite[Proposition 5.5.4.15]{highertopoi},
    using \cref{lemma:topos:p-equiv-strongly-saturated}.
\end{proof}

As in the stable case, the theory of Bousfield localizations gives us the following characterization of $p$-equivalences:
\begin{lem} \label{lemma:topos:peq-if-completion-eq}
    Let $f \colon X \to Y$ be a morphism in $\topos X$ (resp. $\topos X_*$).
    Then $f$ is a $p$-equivalence if and only if $\complete{f}$ is an equivalence.
\end{lem}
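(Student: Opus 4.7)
The plan is to deduce this directly from the general theory of Bousfield localizations, exactly mirroring the stable analogue already proven as \cref{lemma:stable:peq-description}. By \cref{lemma:topos:p-equiv-strongly-saturated}, the class $S$ of $p$-equivalences in $\topos X$ (resp.\ $\topos X_*$) is strongly saturated and of small generation, and by \cref{lemma:topos:completion-functor} the $p$-completion functor $\completebr{-}$ is precisely the associated Bousfield localization, i.e.\ the left adjoint to the inclusion of $S$-local (i.e.\ $p$-complete) objects. The statement to prove is then an instance of \cite[Proposition 5.5.4.2 and Proposition 5.5.4.15(4)]{highertopoi}, and the pointed and unpointed cases are handled identically since both use the same general machine.

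For the forward direction, I would argue as follows: suppose $f \colon X \to Y$ is a $p$-equivalence. By the very definition of $p$-complete objects, the induced map $\Map{\topos X}(f, Z)$ is an equivalence for every $p$-complete $Z \in \topos X$. Applying this in particular to the $p$-complete objects $\completebr{X}$ and $\completebr{Y}$, together with the universal property of the unit maps $\eta_X \colon X \to \completebr{X}$ and $\eta_Y \colon Y \to \completebr{Y}$, forces $\completebr{f}$ to be an equivalence by a Yoneda argument in the subcategory $\complete{\topos X}$.

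For the converse, the key input is the 2-out-of-3 property built into any strongly saturated class. The unit maps $\eta_X$ and $\eta_Y$ are themselves $p$-equivalences (a formal property of Bousfield localizations, since both $X \to \completebr{X}$ and $Y \to \completebr{Y}$ induce the identity after applying $\completebr{-}$). If $\completebr{f}$ is an equivalence, then in the naturality square
\begin{equation*}
\begin{tikzcd}
X \arrow[r, "f"] \arrow[d, "\eta_X"'] & Y \arrow[d, "\eta_Y"] \\
\completebr{X} \arrow[r, "\completebr{f}", "\simeq"'] & \completebr{Y}
\end{tikzcd}
\end{equation*}
the bottom map is an equivalence (hence in $S$) and $\eta_X, \eta_Y \in S$. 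By 2-out-of-3 applied to the commutative composites in this square, $f \in S$, i.e.\ $f$ is a $p$-equivalence.

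No step here presents a real obstacle: everything is a direct transcription of the Bousfield-localization formalism, and the pointed version is proved verbatim after replacing $\topos X$ with $\topos X_*$ and $\pSus$ with $\Sus$ in the definition of the localizing class.
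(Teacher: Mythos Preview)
Your proof is correct and takes essentially the same approach as the paper: both invoke the strong saturation of $p$-equivalences (\cref{lemma:topos:p-equiv-strongly-saturated}) together with \cite[Proposition 5.5.4.15(4)]{highertopoi}. You simply spell out in detail the Yoneda and 2-out-of-3 argument that the cited proposition packages.
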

\begin{proof}
    This follows from \cite[Proposition 5.5.4.15 (4)]{highertopoi},
    where we use that the class of $p$-equivalences is strongly saturated, see \cref{lemma:topos:p-equiv-strongly-saturated}.
\end{proof}

\begin{lem} \label{lemma:topos:limit-of-p-complete}
    Let $I$ be a small $\infty$-category and
    $(X_i)_i$ an $I$-indexed diagram in $\topos X$.
    Suppose that $X_i$ is $p$-complete for each $i \in I$.
    Then $\limil{i \in I} X_i$ is $p$-complete.
    In particular, $* \in \topos X$ is $p$-complete.

    The same is true for limits in $\topos X_*$.
\end{lem}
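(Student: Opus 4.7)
The plan is to prove this directly from the definition of $p$-completeness, using that mapping spaces send limits in the target variable to limits of anima, together with the fact that limits of equivalences in $\An$ are equivalences.

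First I would fix an arbitrary $p$-equivalence $f \colon Y \to Y'$ in $\topos X$ and a diagram $(X_i)_{i \in I}$ with each $X_i$ $p$-complete. The goal is to show that
\begin{equation*}
    f^* \colon \Map{\topos X}\bigl(Y', \lim{i \in I} X_i\bigr) \to \Map{\topos X}\bigl(Y, \lim{i \in I} X_i\bigr)
\end{equation*}
is an equivalence. Using that the representable functor $\Map{\topos X}(Z, -)$ preserves limits for any $Z \in \topos X$ (this is a general fact about $\infty$-categories, see \cite[Proposition 5.1.3.2]{highertopoi}), this map identifies with the canonical map
\begin{equation*}
    \lim{i \in I} \Map{\topos X}(Y', X_i) \to \lim{i \in I} \Map{\topos X}(Y, X_i),
\end{equation*}
induced by the $p$-equivalence $f$ on each factor. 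Since each $X_i$ is $p$-complete, each component map $\Map{\topos X}(Y', X_i) \to \Map{\topos X}(Y, X_i)$ is an equivalence. Because limits in $\An$ preserve equivalences levelwise, the induced map on limits is an equivalence, which is what we wanted.

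For the terminal object $*$, observe that $*$ is the limit of the empty diagram, so it is $p$-complete by the above (or one can note directly that $\Map{\topos X}(Z, *) \cong *$ for every $Z$, so $f^*$ is trivially an equivalence for every $f$).

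For the pointed case, the same argument applies verbatim: limits in $\topos X_*$ are computed as limits in $\topos X$ of diagrams under the terminal object, and $\Map{\topos X_*}(Z, -)$ still preserves limits. The only step that might deserve a moment of thought is checking that mapping spaces out of a fixed object preserve limits in the pointed setting, but this is automatic since the pointed mapping space is the fiber of $\Map{\topos X}(Z, -)$ over the basepoint, and both fibers and $\Map{\topos X}(Z, -)$ commute with limits.
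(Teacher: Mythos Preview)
Your proof is correct. It differs from the paper's argument: the paper simply invokes that the inclusion $\complete{\topos X} \hookrightarrow \topos X$ is a right adjoint (by \cref{lemma:topos:completion-functor}) and hence preserves limits, so the full subcategory of $p$-complete objects is closed under limits. Your approach instead verifies $p$-completeness directly from the definition via the limit-preservation of $\Map{\topos X}(Z,-)$. Your argument is more elementary in that it does not rely on the existence of the $p$-completion functor, while the paper's one-line argument is slicker once that localization is in hand; in the context of the paper both are equally valid since the localization has already been constructed.
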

\begin{proof}
    The inclusion $\complete{\topos X} \to \topos X$
    is a right adjoint by \cref{lemma:topos:completion-functor},
    hence it commutes with limits.
    The final object $*$ is the limit over the empty diagram, hence it is $p$-complete.

    For the pointed case, we can use the same proof, or note that
    $\topos X_*$ is presentable by \cite[Proposition 5.5.3.11]{highertopoi}.
    Thus, we can apply the above result to the presentable $\infty$-category $\topos X_*$.
\end{proof}

\begin{cor}\label{cor:loop-space-of-complete-is-complete}
    Let $X \in \topos X_*$ be $p$-complete.
    Then $\Omega X$ is $p$-complete.
\end{cor}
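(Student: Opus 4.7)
The plan is to realize $\Omega X$ as a small limit of $p$-complete objects and then invoke \cref{lemma:topos:limit-of-p-complete} directly.

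More precisely, I would first recall that in the pointed $\infty$-category $\topos X_*$, the loop space $\Omega X$ is by definition the pullback
\begin{equation*}
    \Omega X \cong * \times_X *,
\end{equation*}
i.e.\ the limit of the cospan $* \to X \leftarrow *$. This is a finite, in particular small, diagram in $\topos X_*$.

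Next, I would verify that every object appearing in this diagram is $p$-complete. The terminal object $*$ is $p$-complete by \cref{lemma:topos:limit-of-p-complete} (it is the limit over the empty diagram), and $X$ is $p$-complete by hypothesis. Therefore the cospan lands in the full subcategory $\complete{\topos X_*} \subseteq \topos X_*$.

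Finally, applying \cref{lemma:topos:limit-of-p-complete} to this cospan gives that $\Omega X = * \times_X *$ is $p$-complete, as desired. There is no real obstacle: the statement is a formal consequence of the fact, established in the previous lemma, that the inclusion $\complete{\topos X_*} \hookrightarrow \topos X_*$ preserves limits (being a right adjoint).
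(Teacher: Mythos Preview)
Your proposal is correct and matches the paper's proof essentially verbatim: both identify $\Omega X$ as the limit of the cospan $* \to X \leftarrow *$, observe that $*$ is $p$-complete by \cref{lemma:topos:limit-of-p-complete} and $X$ by hypothesis, and conclude by that same lemma.
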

\begin{proof}
    $\Omega X$ is the limit of the diagram $* \rightarrow X \leftarrow *$.
    Since $X$ is $p$-complete by assumption, and $*$ is $p$-complete by \cref{lemma:topos:limit-of-p-complete},
    we conclude that $\Omega X$ is $p$-complete as a limit of $p$-complete objects
    (again by \cref{lemma:topos:limit-of-p-complete}).
\end{proof}

\begin{lem} \label{lemma:peq-via-points}
    Let $\topos{Y}_i$ be a collection of presentable $\infty$-categories.
    Suppose $s_i^* \colon \topos X \rightleftarrows \topos{Y}_i \colon s_{i,*}$ are adjunctions.
    Let $f \colon X \to X'$ be a morphism in $\topos X$.
    If $f$ is a $p$-equivalence, so is $s_i^* f$ for every $i$.
    The converse holds if the $s_i^*$ form a conservative family of functors, 
    and all of the $s_i^*$ are left-exact (i.e.\ commute with finite limits).

    In particular, if $\topos X$ is an $\infty$-topos with enough points,
    then $f$ is a $p$-equivalence if and only if it is a $p$-equivalence on stalks.
\end{lem}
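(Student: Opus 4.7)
The plan is to reduce the entire statement to the behaviour of the stabilized adjunctions. Since $s_{i,*}$ is a right adjoint between presentable $\infty$-categories, it preserves limits and therefore induces a functor $\tilde s_{i,*}\colon \Stab{\topos Y_i} \to \Stab{\topos X}$ on stabilizations, which itself admits a left adjoint $\tilde s_i^*$. The loop-space functors $\Loop$ on either side agree with the composition ``forget basepoint after $\pLoop$'', and in particular $\Loop \circ \tilde s_{i,*} \simeq s_{i,*} \circ \Loop$; by uniqueness of left adjoints this yields a canonical equivalence $\tilde s_i^* \circ \pSus_{\topos X} \simeq \pSus_{\topos Y_i} \circ s_i^*$. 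This is the only bridge between the unstable and stable situations that I will need.

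For the forward direction, suppose $f$ is a $p$-equivalence, i.e.\ $\pSus f \sslash p$ is an equivalence in $\Stab{\topos X}$. As a left adjoint between stable $\infty$-categories, $\tilde s_i^*$ is exact, so it preserves the cofiber $(-)\sslash p$, and by the commutation above, applying it yields $\pSus(s_i^* f)\sslash p$, which is the image of an equivalence and hence is itself an equivalence. For the converse, the strategy is to show that the family $\{\tilde s_i^*\}$ is jointly conservative on $\Stab{\topos X}$: if so, then the hypothesis that every $\pSus(s_i^* f) \sslash p \simeq \tilde s_i^*(\pSus f \sslash p)$ is an equivalence forces $\pSus f \sslash p$ to be an equivalence, and $f$ is a $p$-equivalence. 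This conservativity reduction is where the left-exactness hypothesis enters: when $s_i^*$ preserves finite limits, the stabilized functor $\tilde s_i^*$ is computed levelwise and therefore commutes with $\Omega^\infty \Sigma^n$ for every $n\in\Z$. Hence if $\tilde s_i^* E \simeq 0$, then $s_i^* \Omega^\infty \Sigma^n E \simeq *$ for each $i$ and $n$; joint conservativity of $\{s_i^*\}$ together with $s_i^* * \simeq *$ then gives $\Omega^\infty \Sigma^n E \simeq *$ for all $n$, so that $\pi_n E = 0$ for all $n$, and right-separation of the standard t-structure on $\Stab{\topos X}$ (which was assumed at the beginning of the section) produces $E \simeq 0$.

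The ``in particular'' follows immediately: an $\infty$-topos with enough points by definition admits a jointly conservative family of points $x^*\colon \topos X \to \An$, each of which is left exact and a left adjoint, so the general statement applies. The main conceptual obstacle is the chain of identifications $\tilde s_i^* \pSus \simeq \pSus s_i^*$ plus the transfer of conservativity from the unstable to the stable side; once those are in place, each direction reduces to a one-line computation with $\pSus$ and $(-)\sslash p$.
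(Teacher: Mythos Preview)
Your proof is correct and follows the same approach as the paper, which packages the stabilization step and the conservativity transfer into \cref{lemma:adjoints-on-stabilization} and \cref{lemma:stabilization:geometric-morphism-commutes-with-loopspace}, and then cites \cref{lemma:t-struct:right-exact-commutes-with-completion} and \cref{lemma:t-struct:conservative-detects-peq} for the two directions. One small remark on phrasing: the step ``$\pi_n E = 0$ for all $n$, so right-separation gives $E \simeq 0$'' is not literally valid (right-separation alone does not force this), but your stronger prior conclusion $\Omega^\infty \Sigma^n E \simeq *$ for all $n$ already places $E$ in $\bigcap_m \tcocon[m]{\spectra X}$ by the definition of the t-structure, and that is precisely what right-separation kills.
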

\begin{proof}
    Using \cref{lemma:adjoints-on-stabilization},
    we see that the $s_i^* \dashv s_{i,*}$ induce exact functors on the stabilizations,
    such that the following diagram of functors commutes:
    \begin{center}
        \begin{tikzcd}
            \spectra{X} \arrow[r, "s_i^*"] &\Stab{\topos{Y}_i} \\
            \topos{X}_* \arrow[u, "\Sus"] \arrow[r, "s_i^*"] &\topos{Y}_{i,*} \arrow[u, "\Sus"]
        \end{tikzcd}
    \end{center}
    If the $s^*_i$ are left-exact, then 
    the functors on stabilizations are jointly conservative if the corresponding family of functors on $\topos X$ is,
    see \cref{lemma:stabilization:geometric-morphism-commutes-with-loopspace}.
    The lemma follows from \cref{lemma:t-struct:right-exact-commutes-with-completion,lemma:t-struct:conservative-detects-peq}.
\end{proof}

\subsection{Basic Properties of Unstable \texorpdfstring{$p$}{p}-Completion}
\label{section:topos}

From now on, we will assume that $\topos X$ is actually an $\infty$-topos \cite[Definition 6.1.0.4]{highertopoi},
it is in particular presentable \cite[Theorem 6.1.0.6]{highertopoi}.
If $\topos X$ is hypercomplete (see the discussion directly before \cite[Remark 6.5.2.11]{highertopoi}),
then the standard t-structure is left-separated:
If $E \in \spectra{X}$ is $\infty$-connective, then $\pLoop \Sigma^n E$ is $\infty$-connective for every $n$.
By hypercompleteness, we conclude $\pLoop \Sigma^n E \cong *$ for all $n$.
But this implies that $E \cong 0$, in other words, the t-structure is left-separated.

Write $\Disc{\topos X}$ for the category of discrete objects in $\topos X$,
i.e.\ the essential image of the truncation functor $\tau_{\le 0} \colon \topos X \to \topos X$.
This is an ordinary 1-topos. Write $\AbObj{\Disc{\topos X}}$ for the category of
abelian group objects in $\Disc{\topos X}$.
Note that there is an equivalence $\tstructheart{\spectra{X}} \cong \AbObj{\Disc{\topos X}}$
from the heart of the t-structure to the category of abelian group objects in $\topos X$, see \cite[Proposition 1.3.2.7 (4)]{sag}.
We will identify these two categories.
In particular, for $n \ge 2$ we will regard the homotopy object functors $\pi_n \colon \topos X \to \AbObj{\Disc{\topos X}}$ 
as functors $\pi_n \colon \topos X \to \tstructheart{\spectra{X}}$.

There is a symmetric monoidal structure $\otimes$ on $\spectra{X}$, see \cite[Proposition 1.3.4.6]{sag}.
Moreover, $\otimes$ is exact (and moreover cocontinuous) in each variable.
Note that $\pSus$ admits the structure of a symmetric monoidal
functor from $\topos{X}$ with the cartesian structure to $\spectra{X}$ with $\otimes$,
see again \cite[Proposition 1.3.4.6]{sag}.

\begin{lem}\label{lemma:peq-respects-pi0}
    Let $f \colon X \to Y$ be a $p$-equivalence in $\topos X$.
    Then $\pi_0(f) \colon \pi_0(X) \to \pi_0(Y)$ is an equivalence.
\end{lem}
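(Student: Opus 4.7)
The plan is to translate the hypothesis into the stabilization $\spectra{X}$, pass to the heart $\tstructheart{\spectra{X}} \cong \AbObj{\Disc{\topos X}}$, and conclude using the conservativity of the free $\finfld{p}$-module functor on the underlying 1-topos $\Disc{\topos X}$.

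By hypothesis, $(\pSus f) \sslash p$ is an equivalence in $\spectra{X}$. Applying $\pi_0$ for the standard t-structure and using the long exact sequence associated to the cofiber sequence $\pSus X \xrightarrow{p} \pSus X \to \pSus X \sslash p$ together with the $0$-connectivity of $\pSus X$ (which forces the $\pi_{-1}$-term to vanish), one obtains an isomorphism $\pi_0(\pSus X)/p \xrightarrow{\cong} \pi_0(\pSus Y)/p$ in $\tstructheart{\spectra{X}}$.

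Next I would identify the composite $\pi_0 \circ \pSus \colon \topos X \to \AbObj{\Disc{\topos X}}$ with the free abelian group object functor $\Z[\pi_0(-)]$. Indeed, the composite is a left adjoint (being the composition of the left adjoints $\pSus$ and $\pi_0 \colon \spectra{X}_{\ge 0} \to \tstructheart{\spectra{X}}$), whose right adjoint is the forgetful inclusion $\AbObj{\Disc{\topos X}} \hookrightarrow \topos X$; hence it must agree with $\Z[\pi_0(-)]$. The previous isomorphism therefore becomes the statement that $\finfld{p}[\pi_0(f)] \colon \finfld{p}[\pi_0(X)] \to \finfld{p}[\pi_0(Y)]$ is an isomorphism.

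The main obstacle is the final step: concluding from $\finfld{p}[\pi_0(f)]$ being an isomorphism that $\pi_0(f)$ itself is an isomorphism, i.e., showing that the free $\finfld{p}$-module functor on the 1-topos $\Disc{\topos X}$ is conservative. I would argue using generalized elements. For mononess, if local sections $a_1, a_2$ of $\pi_0(X)$ satisfy $\pi_0(f)(a_1) = \pi_0(f)(a_2)$, then pulling back the equality $e_{\pi_0(f)(a_1)} = e_{\pi_0(f)(a_2)}$ along the iso $\finfld{p}[\pi_0(f)]$ yields $e_{a_1} = e_{a_2}$ in $\finfld{p}[\pi_0(X)]$, and mononess of the basis inclusion $A \hookrightarrow \finfld{p}[A]$ (true in any 1-topos, since it already holds for the defining presheaf and sheafification is left exact) forces $a_1 = a_2$. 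For epiness, a local section $b$ of $\pi_0(Y)$ yields, after refining the covering, an equation $e_b = \sum_i c_i\, e_{\pi_0(f)(a_i)}$ in $\finfld{p}[\pi_0(Y)]$; the uniqueness of canonical form in a free $\finfld{p}$-module then forces some $\pi_0(f)(a_i) = b$ locally, exhibiting $\pi_0(f)$ as an epimorphism.
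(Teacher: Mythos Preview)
Your proposal is correct and follows essentially the same route as the paper: reduce to showing that $\finfld{p}[\pi_0(f)]$ is an isomorphism by identifying $\pi_0 \circ \pSus$ with $\Z[\pi_0(-)]$ via uniqueness of adjoints, and then invoke conservativity of the free $\finfld{p}$-module functor on $\Disc{\topos X}$. The only substantive difference is in how conservativity is established: the paper proves it abstractly (\cref{prop:free:conservativity}) by observing that the unit $X \to \iota\,\finfld{p}[X]$ is a monomorphism, hence an extremal monomorphism since any $1$-topos is balanced, and that a left adjoint whose unit is an extremal monomorphism is automatically conservative; your argument instead checks mono and epi of $\pi_0(f)$ directly via generalized elements and the canonical-form property of free modules on the presheaf level. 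Both work, but the paper's argument is cleaner and avoids the somewhat delicate local bookkeeping in your epimorphism step (where one must refine the cover twice---once for surjectivity of the sheaf map, once to lift to the presheaf level---before invoking uniqueness of the canonical form).
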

\begin{proof}
    Consider the following diagram:
    \begin{center}
        \begin{tikzcd}
            \topos X \arrow[r, "\pSus"] \arrow[d, "\pi_0"] &\spectra{X} \arrow[r, "\tau_{\ge 0}"] &\tcon{\spectra{X}} \arrow[d, "\tau_{\le 0}"] \\
            \Disc{\topos X} \arrow[r, "{\Z[-]}"] & \AbObj{\Disc{\topos X}} \arrow[r, "\cong"] &\tstructheart{\spectra{X}},
        \end{tikzcd}
    \end{center}
    where $\Z[-]$ is the left adjoint to the forgetful functor $\AbObj{\Disc{\topos X}} \to \Disc{\topos X}$.
    This functor exists since all categories are presentable, and the forgetful functor commutes with limits and filtered colimits.
    The diagram commutes:
    We can see this by uniqueness of adjoints:
    Note that $\Z[\pi_0(-)]$ is left adjoint to the forgetful functor $\AbObj{\Disc{\topos X}} \to \topos X$,
    and $\tau_{\le 0} \tau_{\ge 0} \pSus$ is left adjoint to $\Loop \colon \tstructheart{\spectra{X}} \to \topos X$
    (note that $\pSus$ actually factors over $\tcon{\spectra{X}}$).
    But these two right adjoint functors agree under the identification $\AbObj{\Disc{\topos X}} \cong \heart{\spectra{X}}$.

    We can enlarge the diagram to the following:
    \begin{center}
        \begin{tikzcd}
            \topos X \arrow[r, "\pSus"] \arrow[d, "\pi_0"] &\tcon{\spectra{X}} \arrow[d, "\tau_{\le 0}"] \arrow[r, "{(-) \sslash p}"] &\tcon{\spectra X} \arrow[d, "\tau_{\le 0}"] \\
            \Disc{\topos X} \arrow[r, "{\Z[-]}"] &\tstructheart{\spectra{X}} \arrow[r, "{(-) / p}"] &\tstructheart{\spectra{X}},
        \end{tikzcd}
    \end{center}
    Here, $(-) / p$ is the functor given by $X \mapsto \coker(X \xrightarrow{p} X)$.
    We have seen above that the left square commutes. The commutativity of the right hand side can
    be easily seen from the long exact sequence.

    Since $f$ is a $p$-equivalence, $(\pSus f) \sslash p$ is an equivalence.
    This implies that $\Z[\pi_0(f)] / p$ is an isomorphism.
    Note that the functor $(\Z[-])/p$ can be identified with $\finfld{p}[-]$.
    Here, $\finfld{p}[-]$ is the left adjoint to the forgetful functor from 
    $p$-torsion abelian group objects (i.e. sheaves of $\finfld{p}$-vectorspaces) 
    in $\Disc{\topos X}$ to $\Disc{\topos X}$.
    Note that this functor is conservative, see \cref{prop:free:conservativity}.
    This implies that $\pi_0(f) \colon \pi_0(X) \to \pi_0(Y)$ is an isomorphism.
\end{proof}

\begin{lem} \label{lemma:discrete-objects-complete}
    Let $D \in \topos X$ a discrete space.
    Then $D$ is $p$-complete.
\end{lem}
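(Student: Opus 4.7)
The plan is to reduce the statement to the previous lemma \cref{lemma:peq-respects-pi0}, which says that a $p$-equivalence $f\colon Y\to Y'$ induces an isomorphism $\pi_0(f)\colon \pi_0(Y)\to \pi_0(Y')$ in the underlying $1$-topos $\Disc{\topos X}$. Given such an $f$, the goal is to show that the induced map
\begin{equation*}
    f^{*}\colon \Map{\topos X}(Y',D) \longrightarrow \Map{\topos X}(Y,D)
\end{equation*}
is an equivalence of anima.

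The key observation is that since $D$ is discrete, i.e.\ $0$-truncated, the functor $\Map{\topos X}(-,D)$ factors through the truncation functor $\tau_{\le 0}\colon \topos X \to \Disc{\topos X}$. More precisely, by the universal property of truncation one has natural equivalences
\begin{equation*}
    \Map{\topos X}(Z,D) \;\cong\; \Map{\topos X}(\tau_{\le 0} Z, D) \;\cong\; \Map{\Disc{\topos X}}(\pi_0(Z), D)
\end{equation*}
for every $Z\in \topos X$, and these are $0$-truncated anima (in fact sets). Under this identification, $f^{*}$ becomes the map of discrete sets induced by precomposition with $\pi_0(f)$.

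The final step is to apply \cref{lemma:peq-respects-pi0}: since $f$ is a $p$-equivalence, $\pi_0(f)$ is an isomorphism in $\Disc{\topos X}$, hence precomposition with it is a bijection. Therefore $f^{*}$ is an equivalence, proving that $D$ is $p$-complete. I do not expect any real obstacle here, since both ingredients (the universal property of $\tau_{\le 0}$ and the preceding lemma) are already available, and the argument is essentially a one-line reduction.
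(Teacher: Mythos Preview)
Your proposal is correct and follows essentially the same route as the paper: both reduce the claim to \cref{lemma:peq-respects-pi0} by using that for a discrete $D$ the mapping space $\Map{\topos X}(-,D)$ factors through $\pi_0$. The paper's proof is slightly terser but otherwise identical in substance.
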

\begin{proof}
    We need to show that $\Map{}(Y, D) \to \Map{}(X, D)$ is an equivalence
    for all $p$-equivalences $f \colon X \to Y$.
    But since $D$ is discrete, $\Map{}(Y, D) \cong \Map{}(\pi_0(Y), D)$
    and $\Map{}(X, D) \cong \Map{}(\pi_0(X), D)$.
    Thus, it suffices to show that $\pi_0(X) \to \pi_0(Y)$ is 
    an equivalence, which was proven in \cref{lemma:peq-respects-pi0}.
\end{proof}

\begin{cor} \label{cor:connected-cover-of-complete-is-complete}
    Let $X \in \topos X_*$ be $p$-complete.
    Then $\tau_{\ge 1}X$ is $p$-complete.
\end{cor}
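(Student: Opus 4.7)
The plan is to exhibit $\tau_{\ge 1} X$ as a finite limit of $p$-complete objects and then invoke \cref{lemma:topos:limit-of-p-complete}. Concretely, I will use the fact that in any $\infty$-topos, the $1$-connective cover of a pointed object $X$ fits into a fiber sequence
\begin{equation*}
    \tau_{\ge 1} X \to X \to \tau_{\le 0} X,
\end{equation*}
where the fiber is taken over the basepoint $* \to \tau_{\le 0} X = \pi_0(X)$ induced by the pointing of $X$.

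First I would observe that $\pi_0(X) = \tau_{\le 0} X$ is discrete, hence $p$-complete by \cref{lemma:discrete-objects-complete}. By hypothesis, $X$ is $p$-complete, and the terminal object $*$ is $p$-complete by \cref{lemma:topos:limit-of-p-complete}. Thus the diagram
\begin{equation*}
    * \to \tau_{\le 0} X \leftarrow X
\end{equation*}
is a finite diagram of $p$-complete objects, and its limit, which is precisely $\tau_{\ge 1} X$, is $p$-complete by another application of \cref{lemma:topos:limit-of-p-complete}.

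There is no real obstacle here, beyond being a bit careful that the discussion takes place in $\topos X_*$ rather than $\topos X$: the fiber in question is a limit in the pointed category, and \cref{lemma:topos:limit-of-p-complete} is stated for both $\topos X$ and $\topos X_*$, so this poses no issue. Everything else is a direct combination of the two cited lemmas together with the standard description of the connected cover as a fiber of the truncation map.
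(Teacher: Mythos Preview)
Your proof is correct and follows essentially the same approach as the paper: both use the fiber sequence $\tau_{\ge 1}X \to X \to \tau_{\le 0}X$, invoke \cref{lemma:discrete-objects-complete} for the truncation and \cref{lemma:topos:limit-of-p-complete} for the limit. You are slightly more explicit about the role of the terminal object and the pointed/unpointed distinction, but otherwise the arguments coincide.
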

\begin{proof}
    There is a fiber sequence $\tau_{\ge 1}X \to X \to \tau_{\le 0}X$.
    But $X$ is $p$-complete by assumption, and $\tau_{\le 0}X$ is $p$-complete because it is discrete,
    see \cref{lemma:discrete-objects-complete}.
    Thus, $\tau_{\ge 1}X$ is $p$-complete as a limit of $p$-complete objects, see \cref{lemma:topos:limit-of-p-complete}.
\end{proof}

\begin{lem} \label{lemma:topos:completion-products}
    Let $f_i \colon X_i \to Y_i$ be $p$-equivalences in $\topos X$ for $i = 1, \dots, n$.
    Then $\prod_i f_i \colon \prod_i X_i \to \prod_i Y_i$ is a $p$-equivalence,
    and hence $\completebr{\prod_i X_i} \cong \prod_i \complete{X_i}$.
\end{lem}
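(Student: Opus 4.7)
The plan is to reduce the claim about unstable $p$-equivalences of products in $\topos X$ to the already-proven stable statement about tensor products in $\spectra X$, exploiting the symmetric monoidality of $\pSus$.

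First, I would unwind the definition: $\prod_i f_i$ is a $p$-equivalence if and only if $\pSus(\prod_i f_i) \sslash p$ is an equivalence in $\spectra X$. Recall from the preliminaries that $\pSus \colon \topos X \to \spectra X$ is symmetric monoidal when $\topos X$ is equipped with the cartesian symmetric monoidal structure and $\spectra X$ with $\otimes$ (citing \cite[Proposition 1.3.4.6]{sag}, as mentioned earlier in the excerpt). Consequently, for a finite product we have a natural equivalence
\begin{equation*}
    \pSus\Bigl(\prod_{i=1}^n Z_i\Bigr) \cong \bigotimes_{i=1}^n \pSus Z_i,
\end{equation*}
and under this identification $\pSus(\prod_i f_i)$ corresponds to $\bigotimes_i \pSus f_i$.

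Next, since each $f_i$ is a $p$-equivalence in $\topos X$, each $\pSus f_i$ is a $p$-equivalence in $\spectra X$ by definition. Applying \cref{lemma:stable:p-eq-of-spectra-smash-product} to the symmetric monoidal stable $\infty$-category $\spectra X$ (whose tensor product is exact in each variable by \cite[Proposition 1.3.4.6]{sag}), we conclude that $\bigotimes_i \pSus f_i$ is again a $p$-equivalence. Combined with the equivalence of the previous paragraph, this shows that $\pSus(\prod_i f_i)$ is a $p$-equivalence, i.e.\ that $\prod_i f_i$ is a $p$-equivalence in $\topos X$.

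For the final identification $\completebr{\prod_i X_i} \cong \prod_i \complete{X_i}$, I would apply the first part to the family of unit maps $\eta_i \colon X_i \to \complete{X_i}$, each of which is a $p$-equivalence by \cref{lemma:topos:peq-if-completion-eq}. This yields that $\prod_i \eta_i \colon \prod_i X_i \to \prod_i \complete{X_i}$ is a $p$-equivalence. Since each $\complete{X_i}$ is $p$-complete, \cref{lemma:topos:limit-of-p-complete} shows that $\prod_i \complete{X_i}$ is $p$-complete, so this product together with the map $\prod_i \eta_i$ exhibits the $p$-completion of $\prod_i X_i$. The only conceptual point is the symmetric monoidality of $\pSus$; everything else is a direct reduction to the stable case, so I do not expect any real obstacle.
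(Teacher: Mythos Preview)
Your proof is correct and follows essentially the same route as the paper: reduce to the stable case via the symmetric monoidality of $\pSus$, invoke \cref{lemma:stable:p-eq-of-spectra-smash-product}, and then use that $\prod_i \complete{X_i}$ is $p$-complete by \cref{lemma:topos:limit-of-p-complete}. Your write-up is slightly more explicit about the symmetric monoidality step, but the argument is identical in substance.
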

\begin{proof}
    We need to show that $\pSus (\prod_i f_i) \cong \bigotimes_i (\pSus f_i)$
    is a $p$-equivalence of spectra.
    This follows immediately from \cref{lemma:stable:p-eq-of-spectra-smash-product}.
    For the last point, it suffices to note that the canonical maps $X_i \to \complete{X_i}$
    are $p$-equivalences, and that $\prod_i \complete{X_i}$ is $p$-complete
    as a limit of $p$-complete objects, see \cref{lemma:topos:limit-of-p-complete}.
\end{proof}

\subsection{Completions via Postnikov-towers}
Suppose from now on that $\topos X$ has enough points, see \cite[Remark 6.5.4.7]{highertopoi}.
In particular, $\topos X$ is hypercomplete (again \cite[Remark 6.5.4.7]{highertopoi}).

\begin{lem} \label{lemma:loop-peq-iff-peq}
    Let $f \colon E \to F$ be a $p$-equivalence in $\spectra{X}$,
    with $E$ and $F$ $1$-connective.
    Then $\pLoop f \colon \pLoop E \to \pLoop F$ is a $p$-equivalence.
\end{lem}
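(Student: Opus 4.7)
The plan is to reduce the statement to the classical case $\topos X = \An$ using the enough-points hypothesis, and then invoke classical Bousfield-Kan theory.

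First, by \cref{lemma:peq-via-points} it suffices to verify that the stalks $s^*(\pLoop f)$ are $p$-equivalences in $\An_*$ for every stalk functor $s^* \colon \topos X \to \An$. Each such $s^*$ is a left-exact left adjoint, and (by \cref{lemma:adjoints-on-stabilization} together with \cref{lemma:stabilization:geometric-morphism-commutes-with-loopspace}) extends to a t-exact left adjoint $s^* \colon \spectra{X} \to \Sp$ commuting with both $\pSus$ and $\pLoop$. Consequently $s^*(\pLoop f) \simeq \pLoop(s^* f)$, and by \cref{lemma:t-struct:right-exact-commutes-with-completion} combined with t-exactness of $s^*$, the morphism $s^* f$ remains a $p$-equivalence of $1$-connective spectra in $\Sp$. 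We are therefore reduced to the case $\topos X = \An$.

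In $\An$, since $E$ and $F$ are $1$-connective, $\pLoop E$ and $\pLoop F$ are connected pointed anima; being infinite loop spaces they are moreover simple, and in particular nilpotent. The idea is to invoke the classical theorem of Bousfield-Kan: for a connective spectrum $A$, the natural map $\pLoop A \to \pLoop \complete A$ realizes the classical unstable $p$-completion of the nilpotent anima $\pLoop A$. Since $f$ is a $p$-equivalence of spectra, $\complete f$ is an equivalence, hence so is $\pLoop \complete f$, which means precisely that $\pLoop f$ is a Bousfield-Kan $\finfld{p}$-equivalence (i.e., induces an isomorphism on $\finfld{p}$-homology).

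Finally one must match this with the paper's definition of $p$-equivalence. For a map $g$ of connected pointed anima, the pointed cofiber $\Cofib g$ is again connected, so $\Sus \Cofib g$ is a connective spectrum. On connective spectra $A$ the conditions $A \sslash p \simeq 0$ and $A \otimes H\finfld{p} \simeq 0$ are both equivalent to $\pi_* A$ being uniquely $p$-divisible, hence to each other. The first condition corresponds to $g$ being a $p$-equivalence in the paper's sense, while the second corresponds to $g$ being a Bousfield-Kan $\finfld{p}$-equivalence. Thus for connected pointed anima the two notions coincide, and $\pLoop f$ is a $p$-equivalence in the paper's sense.

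The main obstacle is the appeal to classical Bousfield-Kan theory, specifically the identification $\pLoop \complete A \simeq (\pLoop A)^\wedge_p$ for connective spectra $A$. An alternative self-contained approach via a principal Postnikov-tower refinement of $H \coloneqq \Fib f$ (which is $0$-connective with uniquely $p$-divisible homotopy by \cref{cor:stable-peq-iff-upd-fiber}) together with a direct computation of the $p$-completion of Eilenberg-MacLane spaces would also work, but that strategy would circle back on the Postnikov-tower machinery that the present lemma is preparing.
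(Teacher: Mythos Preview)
Your proof is correct and follows essentially the same route as the paper: reduce to stalks via \cref{lemma:peq-via-points} and \cref{lemma:stabilization:geometric-morphism-commutes-with-loopspace}, then invoke the classical statement for anima. The paper packages the classical input as \cref{lemma:anima:infinite-loop-spaces-preserves-peq} (proved via \cref{lemma:anima:completion-of-infinite-loop-space}, i.e.\ $\pLoop(\complete E)\simeq\completebr{\pLoop E}$) together with \cref{lemma:anima:peq-iff-fpeq}; you unwind exactly these two ingredients by hand.
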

\begin{proof}
    Since $\topos X$ has enough points and $\pLoop$ commutes with points
    (see \cref{lemma:stabilization:geometric-morphism-commutes-with-loopspace}),
    this statement can be checked on stalks, see \cref{lemma:peq-via-points}.
    Thus, the lemma follows from the corresponding statement about anima,
    see \cref{lemma:anima:infinite-loop-spaces-preserves-peq}.
\end{proof}

\begin{lem} \label{lemma:connective-inf-loop-peq-to-connective-cover}
    Let $E \in \spectra{X}$ such that $E$ is $k$-connective for some $k \ge 1$.
    Then $\pLoop E \to \pLoop \tau_{\ge k} (\complete E)$ is a $p$-equivalence.
    Moreover, $\completebr{\pLoop E} \cong \pLoop \tau_{\ge 1} (\complete E)$.
\end{lem}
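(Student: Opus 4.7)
The plan is to produce a natural map $\pLoop E \to \pLoop \tau_{\ge k}(\complete E)$ by factoring the unit $E \to \complete E$, show that this map is already a $p$-equivalence, and then for the second claim verify that the target (with $k=1$) is $p$-complete. This reduces the whole statement to combining \cref{lemma:upd-homotopy-of-completion-below-connectivity-of-base}, \cref{cor:stable-peq-iff-upd-fiber}, \cref{lemma:loop-peq-iff-peq}, and \cref{cor:connected-cover-of-complete-is-complete}.

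Since $E$ is $k$-connective, the unit $E \to \complete E$ factors canonically as $\eta \colon E \to \tau_{\ge k}(\complete E) \to \complete E$. The fiber of the second map is $\tau_{\le k-1}(\complete E)$, and by \cref{lemma:upd-homotopy-of-completion-below-connectivity-of-base} each $\pi_n(\complete E)$ with $n < k$ is uniquely $p$-divisible, so this fiber has only uniquely $p$-divisible homotopy objects. The standard t-structure on $\spectra X$ is left-separated because $\topos X$ is hypercomplete, so \cref{cor:stable-peq-iff-upd-fiber} tells us that $\tau_{\ge k}(\complete E) \to \complete E$ is a $p$-equivalence. Since the class of $p$-equivalences is strongly saturated (\cref{lemma:stable:small-generation}) and thus satisfies two-out-of-three, and $E \to \complete E$ is a $p$-equivalence by definition, $\eta$ itself is a $p$-equivalence. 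Both source and target of $\eta$ are $k$-connective with $k \ge 1$, hence $1$-connective, so \cref{lemma:loop-peq-iff-peq} gives that $\pLoop \eta \colon \pLoop E \to \pLoop \tau_{\ge k}(\complete E)$ is a $p$-equivalence, proving the first statement.

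For the second statement, set $k = 1$; by \cref{lemma:topos:peq-if-completion-eq} it suffices to show $\pLoop \tau_{\ge 1}(\complete E)$ is $p$-complete. I first claim that $\pLoop$ sends $p$-complete spectra to $p$-complete pointed objects: given a $p$-equivalence $f \colon X \to Y$ in $\topos X_*$, the definition makes $\Sus f$ a $p$-equivalence of spectra, and the adjunction $\Sus \dashv \pLoop$ then converts $p$-completeness of $\complete E$ into the required equivalence $\Map{\topos X_*}(Y, \pLoop \complete E) \to \Map{\topos X_*}(X, \pLoop \complete E)$. Applying the right adjoint $\pLoop$ to the fiber sequence $\tau_{\ge 1}(\complete E) \to \complete E \to \tau_{\le 0}(\complete E)$, and using that $\pLoop \tau_{\le -1}(\complete E) \cong *$ so that $\pLoop \tau_{\le 0}(\complete E) \cong \pLoop \pi_0(\complete E) = \pi_0(\complete E)$ in the topos, identifies $\pLoop \tau_{\ge 1}(\complete E)$ with $\tau_{\ge 1}(\pLoop \complete E)$. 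The latter is $p$-complete by \cref{cor:connected-cover-of-complete-is-complete}, completing the proof.

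The main friction is that $\complete E$ may have nontrivial (uniquely $p$-divisible) homotopy below degree $k$, so $E \to \complete E$ does not factor as an \emph{equivalence} onto $\tau_{\ge k}(\complete E)$; it is only a $p$-equivalence, which is exactly where \cref{lemma:upd-homotopy-of-completion-below-connectivity-of-base} and the hypercompleteness-driven left-separation of the t-structure enter. A secondary subtlety is the commutation $\pLoop \tau_{\ge 1} \cong \tau_{\ge 1} \pLoop$ applied to $\complete E$: since $\complete E$ need not be connective, this is not a direct t-exactness statement and must be proved by the two-step fiber-sequence argument above.
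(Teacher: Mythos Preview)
Your proof is correct and follows essentially the same approach as the paper's: factor $E \to \complete E$ through $\tau_{\ge k}(\complete E)$, use \cref{lemma:upd-homotopy-of-completion-below-connectivity-of-base} and \cref{cor:stable-peq-iff-upd-fiber} together with two-out-of-three to see $E \to \tau_{\ge k}(\complete E)$ is a $p$-equivalence, then apply \cref{lemma:loop-peq-iff-peq}; for the second part, reduce to $p$-completeness of $\pLoop \tau_{\ge 1}(\complete E) \cong \tau_{\ge 1}\pLoop(\complete E)$ via \cref{cor:connected-cover-of-complete-is-complete} and the fact that $\pLoop$ preserves $p$-complete objects. The only difference is that the paper simply asserts the identification $\pLoop \tau_{\ge 1}(\complete E) \cong \tau_{\ge 1}\pLoop(\complete E)$, whereas you supply a (correct) fiber-sequence justification; this extra care is fine but not strictly necessary, since the identification follows directly from how the standard t-structure on $\spectra X$ is defined in terms of $\pLoop$.
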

\begin{proof}
    By the last \cref{lemma:loop-peq-iff-peq}, it is enough to show
    that $E \to \tau_{\ge k} \complete E$ is a $p$-equivalence.
    But $E \to \complete E$ is a $p$-quivalence, and since $E$ is $k$-connective,
    we conclude that $\pi_n(\complete E)$ is uniquely $p$-divisible for all $n < k$,
    see \cref{lemma:upd-homotopy-of-completion-below-connectivity-of-base}.
    Thus, $\tau_{<k} \complete E$ has uniquely $p$-divisible homotopy objects, and it follows that
    $\tau_{\ge k}\complete E \to \complete E$ is a $p$-equivalence,
    see \cref{cor:stable-peq-iff-upd-fiber}.
    Since $E \to \complete E$ is a $p$-equivalence, we conclude by 2-out-of-3
    (the class of $p$-equivalences is strongly saturated by \cref{lemma:topos:p-equiv-strongly-saturated}).

    For the last part, note that $\pLoop E \to \pLoop \tau_{\ge 1} (\complete E)$
    is a $p$-equivalence by the first part(since a $k$-connective spectrum
    is in particular $1$-connective).
    Thus, it suffices to show that $\pLoop \tau_{\ge 1} (\complete E)$ is $p$-complete.
    But we have an equivalence $\pLoop \tau_{\ge 1} (\complete E) \cong \tau_{\ge 1} \pLoop (\complete E)$.
    Since $\pLoop$ preserves $p$-complete objects
    (as a right adjoint to $\Sus$, which preserves $p$-equivalences),
    we conclude by \cref{cor:connected-cover-of-complete-is-complete}.
\end{proof}

\begin{cor} \label{cor:completion-of-EM-space}
    Let $K = K(A, n)$ be an Eilenberg-MacLane object in $\topos X_*$ with $n \ge 1$
    and $A \in \tstructheart{\spectra X}$.
    Then $\complete{K} \cong \pLoop\tau_{\ge 1}(\completebr{\Sigma^n A}) \cong \tau_{\ge 1}\pLoop(\completebr{\Sigma^n A})$.
    In particular, $\complete{K}$ is connected and $n+1$-truncated, and $\pi_i(\complete K)$ is abelian and uniquely $p$-divisible
    for all $1 \le i < n$.
\end{cor}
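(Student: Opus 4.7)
The plan is to derive both equivalences from \cref{lemma:connective-inf-loop-peq-to-connective-cover} applied to the spectrum $E = \Sigma^n A$, and then read off the additional properties of $\complete{K}$ from the description of the right-hand side.

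First I would identify $K = K(A,n)$ with $\pLoop \Sigma^n A$: under the identification $\AbObj{\Disc{\topos X}} \cong \tstructheart{\spectra X}$, this is how Eilenberg--MacLane objects are constructed (and for $n \ge 1$ both sides are connected and $n$-truncated with $\pi_n = A$, so they agree). Since $n \ge 1$, the spectrum $\Sigma^n A$ is $n$-connective and in particular $1$-connective, so \cref{lemma:connective-inf-loop-peq-to-connective-cover} directly gives the first equivalence
\begin{equation*}
    \complete{K} \;\cong\; \completebr{\pLoop \Sigma^n A} \;\cong\; \pLoop \tau_{\ge 1}(\completebr{\Sigma^n A}).
\end{equation*}

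For the second equivalence $\pLoop \tau_{\ge 1}(\completebr{\Sigma^n A}) \cong \tau_{\ge 1}\pLoop(\completebr{\Sigma^n A})$, I would argue that $\pLoop$ commutes with the connective cover functor $\tau_{\ge 1}$. This is because for any $F \in \spectra X$ the homotopy objects satisfy $\pi_i(\pLoop F) = \pi_i(F)$ for $i \ge 1$ and $\pi_0(\pLoop F) = \pi_0(F)$, so the fiber sequence $\tau_{\ge 1} F \to F \to \tau_{\le 0} F$ yields after $\pLoop$ a fiber sequence $\pLoop \tau_{\ge 1} F \to \pLoop F \to \pLoop \tau_{\le 0} F$ in which the last term is discrete, identifying the first term with the $1$-connective cover of $\pLoop F$.

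From the description $\complete{K} \cong \pLoop\tau_{\ge 1}(\completebr{\Sigma^n A})$ all remaining claims follow. Connectedness is clear since $\tau_{\ge 1}(\completebr{\Sigma^n A})$ is $1$-connective and $\pLoop$ of a $1$-connective spectrum is connected. For $(n+1)$-truncatedness, $\Sigma^n A$ is $n$-truncated, so by \cref{lemma:stable:truncation-of-completion} $\completebr{\Sigma^n A}$ is $(n+1)$-truncated, and $\pLoop$ preserves $k$-truncatedness for $k \ge 0$. Finally, for $1 \le i < n$ we compute $\pi_i(\complete K) = \pi_i(\pLoop\tau_{\ge 1}\completebr{\Sigma^n A}) = \pi_i(\completebr{\Sigma^n A})$; since $\Sigma^n A$ is $n$-connective, \cref{lemma:upd-homotopy-of-completion-below-connectivity-of-base} tells us these groups are uniquely $p$-divisible. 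Abelianness of all $\pi_i(\complete K)$ is automatic because $\complete K$ is an infinite loop space. The main obstacle is really just being careful with the commutation of $\pLoop$ and $\tau_{\ge 1}$; everything else is a direct application of the already-established machinery.
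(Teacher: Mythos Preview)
Your proposal is correct and follows essentially the same route as the paper: identify $K$ with $\pLoop \Sigma^n A$, apply \cref{lemma:connective-inf-loop-peq-to-connective-cover}, and read off truncatedness and unique $p$-divisibility from \cref{lemma:stable:truncation-of-completion} and \cref{lemma:upd-homotopy-of-completion-below-connectivity-of-base}. The only difference is that the commutation $\pLoop \tau_{\ge 1} \cong \tau_{\ge 1} \pLoop$ is already established inside the proof of \cref{lemma:connective-inf-loop-peq-to-connective-cover}, so you are re-deriving something the paper already has on hand; your argument for it is fine but not needed.
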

\begin{proof}
    Note that $K = \pLoop\Sigma^n A$.
    Thus, the result follows immediately from
    \cref{lemma:connective-inf-loop-peq-to-connective-cover,lemma:stable:truncation-of-completion,lemma:upd-homotopy-of-completion-below-connectivity-of-base}.
\end{proof}

In \cref{appendix:nilpotent} (in particular in \cref{def:nilpotent:defn}), we will define what a nilpotent object $X \in \topos X_*$ is.
Nilpotent objects have the property, that their Postnikov tower can be built by repeatedly
building in an Eilenberg-MacLane space $K(A, n)$,
see \cref{def:nilpotent:principal-refinement,lemma:nilpotent:principal-refinement}.
This allows one to prove statements about nilpotent objects by induction over the
(refined) Postnikov tower, and from the corresponding statement about Eilenberg-MacLane objects.

\begin{prop} \label{lemma:fiber-lemma}
    Let $f \colon X \to Y \in \topos X_*$ be a morphism of pointed nilpotent spaces,
    such that $\complete{X}$ and $\complete{Y}$ are also nilpotent.
    Then
    \begin{equation*}
        \completebr{\tau_{\ge 1} \Fib{X \xrightarrow{f} Y}} \cong \tau_{\ge 1} \Fib{\complete{X} \xrightarrow{\complete{f}} \complete{Y}}.
    \end{equation*}
\end{prop}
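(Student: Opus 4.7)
The plan is in two parts. First, to show that the right-hand side $\tau_{\ge 1} \Fib{\complete f}$ is $p$-complete: the fiber $\Fib{\complete f}$ is the limit of the cospan $\complete X \to \complete Y \leftarrow *$, and since $*$ is $p$-complete and $p$-complete objects are closed under limits (\cref{lemma:topos:limit-of-p-complete}), this fiber is $p$-complete. Its $1$-connective cover remains $p$-complete by \cref{cor:connected-cover-of-complete-is-complete}. It therefore suffices to show that the natural comparison map
\[
    \phi \colon \tau_{\ge 1} \Fib{X \xrightarrow{f} Y} \longrightarrow \tau_{\ge 1} \Fib{\complete X \xrightarrow{\complete f} \complete Y}
\]
is a $p$-equivalence.

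For this second step, I would proceed by induction along the principal refinement of the Postnikov tower of $Y$, which exists since $Y$ is nilpotent. The base case is $Y \simeq *$: here $\phi$ becomes $\tau_{\ge 1} X \to \tau_{\ge 1} \complete X$, and since $X$ is nilpotent hence connected, \cref{lemma:peq-respects-pi0} gives that $\complete X$ is also connected, so both $\tau_{\ge 1}$'s are identities and the map reduces to the unit $X \to \complete X$, a $p$-equivalence by construction. For the inductive step, the problem reduces to a single stage of the refinement: a principal fibration classified by a map to an Eilenberg--MacLane object $K(A, n+1)$ with $n \ge 1$ and $A \in \tstructheart{\spectra X}$. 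Here the explicit description $\complete{K(A, n+1)} \cong \tau_{\ge 1} \pLoop \completebr{\Sigma^{n+1} A}$ from \cref{cor:completion-of-EM-space}, combined with the fact that stable $p$-completion commutes with finite limits (being a sequential limit of the exact functors $(-) \sslash p^k$), allows one to identify the fibers on the original and completed sides after taking $\tau_{\ge 1}$. The hypothesis that $\complete X$ and $\complete Y$ are themselves nilpotent is used precisely to guarantee that a compatible principal refinement exists on the completed side.

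The main obstacle is making this stage-by-stage patching rigorous. Absent a finite-homotopy-dimensional hypothesis on $\topos X$, $p$-completion does not commute with the full Postnikov limit, so one cannot simply take the limit of completed stages as in the argument behind \cref{thm:intro:post-tower}. The nilpotence of $\complete X$ and $\complete Y$ is what rescues the argument: it lets the induction be carried out along the principal refinement itself rather than by commuting completion past the limit, and at each stage the Eilenberg--MacLane case is controlled by the explicit formula of \cref{cor:completion-of-EM-space} together with exactness of stable $p$-completion. Matching the $\tau_{\ge 1}$'s at each refinement step uses \cref{lemma:connective-inf-loop-peq-to-connective-cover} to pull the connective cover past $\pLoop$.
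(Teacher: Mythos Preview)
Your first paragraph is correct and matches the paper exactly: $p$-completeness of the right-hand side follows from closure under limits and connective covers, so it remains to show the comparison map $\phi$ is a $p$-equivalence.

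The divergence is in how to verify that $\phi$ is a $p$-equivalence. The paper's argument is much shorter and uses a hypothesis you have not invoked: in this subsection $\topos X$ is assumed to have enough points. One checks $p$-equivalences on stalks (\cref{lemma:peq-via-points}), and since stalks preserve fibers, connective covers, nilpotence, and $p$-equivalences, the statement reduces immediately to the case $\topos X = \An$, where it is the classical Bousfield--Kan fiber lemma (packaged in the paper as \cref{lemma:anima:refined-fiber-lemma}). No induction, no principal refinements, no Eilenberg--MacLane analysis is needed for this proposition itself; those techniques appear only \emph{downstream}, in \cref{lemma:postnikov-fiber-sequence-completion}, which already takes the present proposition as input.

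Your proposed inductive argument has a genuine gap. The principal refinement of the Postnikov tower of $Y$ is still an infinite tower when $Y$ is not truncated, so an induction starting at $Y \simeq *$ never reaches $Y$ without passing to a limit---and you correctly note that $p$-completion does not commute with that limit absent a homotopy-dimension hypothesis. Your claim that nilpotence of $\complete X$ and $\complete Y$ ``rescues'' this is not substantiated: nilpotence gives a principal refinement on the completed side, but does not let you avoid the limit. Moreover, even for truncated $Y$ your inductive step is unclear: you are inducting on the tower of $Y$, but the object whose behavior you must control is $\Fib{X \to Y}$ with $X$ fixed and arbitrary, and you have not explained how a principal decomposition of $Y$ decomposes this fiber in a way compatible with completion. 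The circularity risk is real: the machinery you gesture at (completing stage-by-stage along a principal refinement) is exactly what \cref{lemma:postnikov-fiber-sequence-completion} establishes \emph{using} the present proposition.
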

\begin{proof}
    The right-hand side is $p$-complete as the connected cover of a limit of $p$-complete spaces,
    see \cref{cor:connected-cover-of-complete-is-complete}.
    Thus, it suffices to show that the map $\tau_{\ge 1} \Fib{f} \to \tau_{\ge 1} \Fib{\complete{f}}$
    is a $p$-equivalence.
    This can be checked on stalks, see \cref{lemma:peq-via-points}. Since stalks preserve connected covers, nilpotent spaces, fibers and $p$-equivalences,
    this immediately follows from \cref{lemma:anima:refined-fiber-lemma},
    applied to the following diagram of fiber sequences of pointed anima (where $s$ is a point of $\topos X$)
    \begin{center}
        \begin{tikzcd}
            s^*\Fib{f} = \Fib{s^*f} \arrow[r] \arrow[d] & s^*X \arrow[r] \arrow[d] & s^*Y \arrow[d] \\
            s^*\Fib{\complete{f}} = \Fib{s^*(\complete{f})} \arrow[r] & s^*(\complete{X}) \arrow[r] & s^*(\complete{Y}),
        \end{tikzcd}
    \end{center}
    where the middle and right vertical maps are $p$-equivalences.
\end{proof}

\begin{prop} \label{lemma:postnikov-fiber-sequence-completion}
    Let $X \in \topos X_*$ be nilpotent and choose a principal refinement
    of the Postnikov tower as in \cref{lemma:nilpotent:principal-refinement}.
    Then for all $n \ge 1$ and all $1 \le k \le m_n$,
    $\completebr{X_{n, k}}$ is nilpotent and
    there is an equivalence
    \begin{equation*}
        \completebr{X_{n, k}} \cong \tau_{\ge 1} \Fib{\completebr{X_{n, k-1}} \to \complete{K(A_{n, k}, n + 1)}}.
    \end{equation*}
\end{prop}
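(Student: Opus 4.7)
The plan is to induct on $k$ for each fixed $n$, with an outer induction on $n$ supplying the base case $\completebr{X_{n,0}}$ nilpotent. Once one knows the claim for all pairs $(n-1, j)$, the end of the refined Postnikov tower at stage $n-1$ gives nilpotence of $\completebr{\tau_{\le n-1} X}$, and combined with the refinement steps connecting $\tau_{\le n-1} X$ to $X_{n,0} = \tau_{\le n} X$ this bootstraps the induction. The base case $\tau_{\le 0} X = *$ is trivial.

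For the inductive step, apply the fiber lemma (Proposition \ref{lemma:fiber-lemma}) to the classifying map $f \colon X_{n, k-1} \to K(A_{n, k}, n+1)$ from the principal refinement. This requires that $X_{n, k-1}$, $K(A_{n, k}, n+1)$, $\completebr{X_{n, k-1}}$ and $\completebr{K(A_{n, k}, n+1)}$ all be nilpotent. The first two are part of the principal refinement data (with $K(A, n+1)$ simply connected since $n+1 \ge 2$), the third is the inductive hypothesis, and the fourth follows from Corollary \ref{cor:completion-of-EM-space}, which describes $\completebr{K(A, n+1)}$ as a connected, $(n+2)$-truncated space whose new homotopy essentially lives only in degrees $n+1$ and $n+2$. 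Granting this, the fiber lemma yields
\begin{equation*}
    \completebr{\tau_{\ge 1} X_{n, k}} \cong \tau_{\ge 1} \Fib{\completebr{X_{n, k-1}} \to \completebr{K(A_{n, k}, n+1)}}.
\end{equation*}
The long exact sequence of homotopy for $X_{n, k} \to X_{n, k-1} \to K(A_{n, k}, n+1)$ shows $X_{n, k}$ is connected, since the base is connected and $\pi_0, \pi_1$ of the target vanish; hence $\tau_{\ge 1} X_{n, k} = X_{n, k}$ and the claimed equivalence follows.

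To keep the induction running into stage $k+1$, one must further verify that $\completebr{X_{n, k}}$ is itself nilpotent. I would argue this directly from the equivalence just proved: the right-hand side is the connective cover of the fiber of a map into $\completebr{K(A_{n,k}, n+1)}$, whose Postnikov tower is essentially two-stage, and the connective cover of such a fiber sitting over a nilpotent space remains nilpotent by a direct computation with the $\pi_1$-action.

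The main obstacle is exactly this nilpotence propagation step. The equivalence itself is a clean application of the fiber lemma together with Corollary \ref{cor:completion-of-EM-space}, but inducting requires careful tracking of nilpotence, in particular nilpotence of $\completebr{K(A, n+1)}$ in a general $\infty$-topos, where the intermediate uniquely $p$-divisible homotopy groups predicted by the corollary need not vanish and so this space need not be simply connected. Establishing the appropriate nilpotence stability statement is where the genuine work lies; once that is secured, the rest of the argument is a routine induction.
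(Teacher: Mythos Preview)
Your approach is correct and matches the paper's proof: a double induction on $(n,k)$ with base case $X_{1,0}=*$, applying \cref{lemma:fiber-lemma} at each step. The only place you hesitate is the nilpotence propagation, and there the paper resolves both points cleanly with two general lemmas already in hand. For $\completebr{K(A_{n,k},n+1)}$, \cref{cor:completion-of-EM-space} gives $\completebr{K(A,n+1)}\cong \pLoop\bigl(\tau_{\ge 1}\completebr{\Sigma^{n+1}A}\bigr)$, i.e.\ a connected infinite loop space, so it is nilpotent by \cref{lemma:nilpotent:loopspace} (connected loop spaces are nilpotent); no analysis of the intermediate uniquely $p$-divisible homotopy is needed. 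For the step propagating nilpotence of $\completebr{X_{n,k}}$, the paper simply invokes \cref{lemma:nilpotent:fiber}: if $f$ is a map between nilpotent objects then $\tau_{\ge 1}\Fib{f}$ is nilpotent. With these two facts in place the ``genuine work'' you anticipate disappears, and the induction runs exactly as you outline.
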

\begin{proof}
    We prove the lemma by induction on $n$ and $k$, note that $X_{n, 0} \cong X_{n-1, m_n}$.
    Also note that $X_{1, 0} = * = \complete{*} = \completebr{X_{1, 0}}$ is nilpotent.

    $X_{n, k}$ is connected and fits into a fiber sequence of pointed spaces
    \begin{equation*}
        X_{n, k} \to X_{n, k-1} \to K(A_{n, k}, n+1).
    \end{equation*}
    $K(A_{n, k}, n+1)$ is nilpotent by \cref{lemma:nilpotent:loopspace} and
    $\completebr{X_{n, k-1}}$ is nilpotent by induction.
    Moreover, by \cref{cor:completion-of-EM-space} there is an equivalence
    $\completebr{K(A_{n, k}, n+1)} \cong \pLoop(\tau_{\ge 1} \completebr{\Sigma^{n+1} A_{n, k}})$,
    which is thus also nilpotent by \cref{lemma:nilpotent:loopspace}.
    We conclude by \cref{lemma:fiber-lemma}
    that $\completebr{X_{n, k}} \cong \tau_{\ge 1} \Fib{\completebr{X_{n, k-1}} \to \complete{K(A_{n, k}, n + 1)}}$.
    Note that $\completebr{X_{n, k}}$ is now nilpotent as the connected cover of a fiber of nilpotent spaces,
    see \cref{lemma:nilpotent:fiber}.
\end{proof}

\begin{prop} \label{cor:topos:truncation-of-completion}
    Let $X \in \topos X_*$ be nilpotent and $n$-truncated for some $n \in \Z$.
    Then $\complete{X}$ is $(n+1)$-truncated.
\end{prop}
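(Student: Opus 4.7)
The plan is to reduce to the principal refinement of the Postnikov tower and iterate the fiber sequence identification from \cref{lemma:postnikov-fiber-sequence-completion}. Since $X$ is nilpotent and $n$-truncated, we have $X \cong X_{n, m_n}$ in any principal refinement (as in \cref{lemma:nilpotent:principal-refinement}), so it suffices to show, by induction on the pair $(m, k)$ with $1 \le m \le n$ and $0 \le k \le m_m$, that $\completebr{X_{m, k}}$ is $(n+1)$-truncated. The base case $X_{1, 0} = *$ is immediate, and the level transitions $X_{m+1, 0} = X_{m, m_m}$ cause no trouble since an object that is $(n+1)$-truncated is in particular $(n+1)$-truncated.

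For the inductive step within a fixed level, \cref{lemma:postnikov-fiber-sequence-completion} yields
\begin{equation*}
    \completebr{X_{m, k}} \cong \tau_{\ge 1} \Fib{\completebr{X_{m, k-1}} \to \complete{K(A_{m, k}, m+1)}}.
\end{equation*}
By \cref{cor:completion-of-EM-space}, $\complete{K(A_{m, k}, m+1)}$ is $(m+2)$-truncated, and hence $(n+2)$-truncated because $m \le n$. By the inductive hypothesis, $\completebr{X_{m, k-1}}$ is $(n+1)$-truncated, and applying $\tau_{\ge 1}$ does not raise the truncation level. In particular, taking $(m, k) = (n, m_n)$ at the end yields the desired bound on $\complete{X}$.

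The only nontrivial ingredient is the general $\infty$-topos statement that the fiber of a pointed map from an $(n+1)$-truncated object into an $(n+2)$-truncated object is itself $(n+1)$-truncated. Since $\topos X$ was assumed to have enough points, and stalks preserve finite limits and truncatedness, this reduces to the analogous assertion for pointed anima, which is immediate from the long exact sequence in homotopy of a fiber sequence. This is the main conceptual input; once it is in hand, the rest of the argument is the bookkeeping of the induction above.
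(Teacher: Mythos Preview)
Your argument is correct and follows the same strategy as the paper: induct over the principal refinement via \cref{lemma:postnikov-fiber-sequence-completion} and \cref{cor:completion-of-EM-space}. Your justification that the fiber is $(n+1)$-truncated (via the long exact sequence on stalks) is slightly sharper than the paper's appeal to closure of $(n+1)$-truncated objects under limits \cite[Proposition~5.5.6.5]{highertopoi}, and it cleanly handles the top level $m=n$, where $\complete{K(A_{n,k},n+1)}$ is a priori only $(n+2)$-truncated.
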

\begin{proof}
    Choose a principal refinement $X_{m, k}$
    of the Postnikov tower, which is possible by \cref{lemma:nilpotent:principal-refinement}.
    Since $X$ is $n$-truncated, we see that $X = X_{n, 0}$.
    We proceed by induction on $m$ and $k$ as in the proof of \cref{lemma:postnikov-fiber-sequence-completion}.
    Note that $\completebr{X_{1, 0}} = \completebr{*} = *$ is clearly $(n+1)$-truncated.
    So suppose that $1 \le m < n$ and $1 \le k \le m_m$ and that $\completebr{X_{m, k-1}}$ is $(n+1)$-truncated.
    Now we have a fiber sequence
    \begin{equation*}
        \completebr{X_{m, k}} \cong \tau_{\ge 1} \Fib{\completebr{X_{m, k-1}} \to \complete{K(A_{m, k}, m + 1)}}
    \end{equation*}
    from \cref{lemma:postnikov-fiber-sequence-completion}.
    Since $(n+1)$-truncated objects are closed under limits (see \cite[Proposition 5.5.6.5]{highertopoi}),
    we conclude from the induction hypothesis and \cref{cor:completion-of-EM-space}
    that $\completebr{X_{m, k}}$ is $(n+1)$-truncated.
    If $m = n$, then the Postnikov tower stabilizes,
    and we conclude that $\complete X = \completebr{X_{n, 0}} = \completebr{X_{n-1, m_n}}$
    is $(n+1)$-truncated.
\end{proof}

Suppose now that $\topos X = \Shv{\Cat T}$ is the category of hypercomplete
sheaves on $\Cat T$ where $\Cat T$ is a Grothendieck site.

\begin{defn} \label{def:topos:locally-finite-uniform-htpy-dim}
    We say that $\topos X$ is \emph{locally of finite uniform homotopy dimension}
    if there is
    \begin{itemize}
        \item a conservative family of points $\Cat S$ of $\topos X$,
        \item for every $s \in \Cat S$ a pro-object $\mathcal I_s$ in $\Cat T$
              such that $s^*F \cong \colimil{U \in \mathcal I_s} F(U)$ for every $F \in \topos X$, and
        \item a function $\htpydim \colon \Cat S \to \N$,
    \end{itemize}
    such that for all $s \in S$ every object $U \in \mathcal I_s$ has homotopy dimension $\htpydim(s)$,
    i.e.\ if $F \in \topos{X}$ is $k$-connective, then $F(U)$ is ($k$-$\htpydim(s)$)-connective.
\end{defn}

Suppose from now on that $\topos X$ is locally of finite uniform homotopy dimension,
and choose $\Cat S$, $\Cat I_s$ and $\htpydim$ as in \cref{def:topos:locally-finite-uniform-htpy-dim}.
In the rest of this section we show that then $p$-completion of nilpotent spaces can be computed
on the Postnikov tower.

\begin{lem} \label{lemma:topos:connectivity-drop-bounded-on-stalks}
    Let $s \in \Cat S$, $U \in \mathcal I_s$ and $E \in \spectra{X}$.
    Suppose that $E$ is $m$-connective.
    Then $\complete{E}(U)$ is ($m$-$\htpydim(s)$-$1$)-connective.
\end{lem}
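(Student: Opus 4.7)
The plan is to combine the formula $\complete{E} \cong \limil{n} E \sslash p^n$ from \cref{lemma:stable:completion-functor-description} with the observation that evaluation $\Gamma(U, -) \colon \spectra{X} \to \Sp$ is limit-preserving (it is corepresented by the sheaf $U$, so in particular it commutes with sequential limits, and being exact it also preserves the cofiber $E \sslash p^n$). The argument then splits into three connectivity bounds: for $E \sslash p^n$ in $\spectra{X}$, for its sections $(E \sslash p^n)(U)$ in $\Sp$, and for the sequential limit defining $\complete{E}(U)$.

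First, since $\tcon[m]{\spectra{X}}$ is closed under colimits (see \cite[Corollary 1.2.1.6]{higheralgebra}), each cofiber $E \sslash p^n$ is again $m$-connective. Second, I want to upgrade \cref{def:topos:locally-finite-uniform-htpy-dim} from anima sheaves to sheaves of spectra, in the following form: if $F \in \spectra{X}$ is $k$-connective, then $F(U) \in \Sp$ is $(k - \htpydim(s))$-connective. For this, pick $N$ with $k + N \ge 0$; then $\pLoop \Sigma^N F \in \topos{X}_*$ is $(k+N)$-connective, so by the homotopy dimension hypothesis its value at $U$ is a $(k+N-\htpydim(s))$-connective anima. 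Since $\pLoop$ is a right adjoint it commutes with evaluation at the representable $U$, so this anima identifies with $\pLoop \Sigma^N (F(U))$, whose homotopy groups compute $\pi_{j}(F(U))$ for $j \ge -N$. Letting $N \to \infty$ gives the claim.

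Combining these two bounds, each $(E \sslash p^n)(U)$ is $(m - \htpydim(s))$-connective. Because $\Gamma(U, -)$ preserves limits, there is an equivalence
\begin{equation*}
    \complete{E}(U) \cong \limil{n}\, (E \sslash p^n)(U),
\end{equation*}
so $\complete{E}(U)$ is a sequential limit of $(m - \htpydim(s))$-connective spectra. The Milnor short exact sequence
\begin{equation*}
    0 \to \limone_n \pi_{i+1}(X_n) \to \pi_i \limil{n} X_n \to \limil{n} \pi_i(X_n) \to 0
\end{equation*}
then yields that such a sequential limit is $(m - \htpydim(s) - 1)$-connective, which is the desired bound.

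The only mildly nontrivial step is the second one, namely transferring the homotopy dimension hypothesis from anima sheaves to sheaves of spectra by suspending enough times; the remaining steps are formal consequences of the fact that both $\pLoop$ and $\Gamma(U, -)$ are right adjoints and so interact well with the sequential limit defining $p$-completion, together with the standard loss of one degree of connectivity under inverse limits of towers of spectra.
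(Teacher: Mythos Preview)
Your proof is correct and follows essentially the same route as the paper: reduce to a sequential limit of $m$-connective spectra $E \sslash p^n$, use the homotopy-dimension bound to see that their sections at $U$ are $(m-\htpydim(s))$-connective, and then lose one degree in the inverse limit. If anything, you are slightly more explicit than the paper about the passage from the anima-level hypothesis in \cref{def:topos:locally-finite-uniform-htpy-dim} to the spectral statement via $\pLoop\Sigma^N$, whereas the paper just says ``by assumption'' for that step.
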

\begin{proof}
    We may assume $m = 0$.
    Since $E\sslash p^n = \Cofib{E \xrightarrow{p^n} E}$ is also connective,
    it suffices to prove the more general fact that a sequential limit $F = \limil{n} F_n$ of
    connective spectra $F_n$ has the property
    that $(\limil{n} F_n)(U)$ is (-$\htpydim(s)$-1)-connective for all $U \in \mathcal I_s$.
    By assumption, $F_n(U)$ is (-$\htpydim(s)$)-connective for all $n$.
    But then $(\limil{n} F_n)(U) = \limil{n} F_n(U)$ is (-$\htpydim(s)$-1)-connective
    as a sequential limit of (-$\htpydim(s)$)-connective spectra
    (see e.g.\ \cite[Proposition 2.2.9]{MCAT}
    for the corresponding fact about anima,
    then shift the $F_n$ such that they are ($\htpydim(s)$ + $l$)-connective for some $l \ge 1$,
    and use that $\pLoop$ commutes with limits, and with homotopy objects in non-negative degrees).
\end{proof}

\begin{lem} \label{lemma:topos:htpy-groups-of-sequential-limits-of-anima}
    Let $X_k$ be an $\N$-indexed inverse system of connected anima.
    Suppose that for all $n \ge 0$,
    there exists a $k_n > 0$ such that $\pi_n(X_k) = \pi_n(X_{k_n})$ for all $k \ge k_n$.
    Then $\pi_n(\limil{k} X_k) \cong \heart{\limil{k}} \pi_n(X_k) \cong \pi_n(X_{k_n})$ for all $n$.
\end{lem}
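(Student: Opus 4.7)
The plan is to invoke the classical Milnor $\lim^1$ short exact sequence for sequential limits of pointed connected anima: for each $n \ge 1$ one has a short exact sequence (of pointed sets, and of abelian groups if $n \ge 2$)
\begin{equation*}
    \ast \to \limone{k} \pi_{n+1}(X_k) \to \pi_n(\limil{k} X_k) \to \limilheart{k} \pi_n(X_k) \to \ast,
\end{equation*}
together with the fact that a sequential limit of connected anima is connected (so the case $n=0$ is handled separately, giving $\pi_0(\limil{k} X_k) = \ast$ as well as $\pi_0(X_{k_0}) = \ast$).

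Next, I would exploit the hypothesis on pointwise stabilization twice. For the ordinary (categorical) limit term, the tower $(\pi_n(X_k))_k$ is eventually constant with transition maps becoming identities for $k \ge k_n$, so its limit in the abelian (or pointed set) category is just $\pi_n(X_{k_n})$. For the $\limone$ term applied to the tower $(\pi_{n+1}(X_k))_k$, the same eventual-constancy assumption (with $n$ replaced by $n+1$) trivially verifies the Mittag-Leffler condition, in fact shows that this tower is pro-isomorphic to a constant tower, so its $\limone$ vanishes. Combining these two observations in the Milnor sequence yields the claimed isomorphism $\pi_n(\limil{k} X_k) \cong \pi_n(X_{k_n})$.

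The main obstacle is really just citing (or recalling a proof of) the Milnor sequence in the necessary generality; in the nonabelian range $n = 1$ one has to be a little careful about exactness, but since the $\limone$ term vanishes in our situation, exactness in the middle reduces to the statement that the right-hand map is an isomorphism, which is straightforward. Everything else is a direct computation with eventually constant towers.
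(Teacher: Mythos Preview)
Your approach is essentially the same as the paper's: invoke the Milnor $\limone{}$ sequence (the paper cites \cite[Proposition 2.2.9]{MCAT}) and observe that the eventual constancy of $\pi_{n+1}$ gives Mittag--Leffler, hence vanishing of the $\limone{}$-term, while the eventual constancy of $\pi_n$ identifies the categorical limit.

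One small correction: your treatment of $n=0$ asserts that ``a sequential limit of connected anima is connected'' as a general fact, and this is false. The obstruction is precisely $\limone{k}\pi_1(X_k)$; for example, the tower of circles with degree-$p$ transition maps has nonvanishing $\limone{}\pi_1$ and its homotopy limit is not connected. The fix is immediate: the Milnor sequence is valid for $n=0$ as well (as an exact sequence of pointed sets), and since the hypothesis of the lemma also forces $\pi_1(X_k)$ to be eventually constant, the $\limone{}$-term vanishes there too, giving $\pi_0(\limil{k}X_k)\cong\limilheart{k}\pi_0(X_k)=\ast$. So you should not handle $n=0$ separately; the same argument covers it.
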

\begin{proof}
    See e.g. \cite[Proposition 2.2.9]{MCAT}.
    Note that the $\limone{}$-term vanishes because
    the homotopy groups get eventually constant, and hence satisfy the Mittag-Leffler property.
    The last equivalence holds because the limit is eventually constant.
\end{proof}

\begin{lem} \label{lemma:topos:limit-commutes-with-homotopy-objects-if-eventually-constant-on-sections}
    Let $X_k$ be an $\N$-indexed inverse system of connected objects in $\topos X_*$.
    Suppose that for all $n, d \ge 0$ there exists a $k_{d, n} > 0$
    such that $\pi_n(X_k(U)) \cong \pi_n(X_{k_{\htpydim(s), n}}(U))$ for
    all $s \in \Cat S$, $k \ge k_{\htpydim(s), n}$ and $U \in \mathcal I_s$.
    Then $s^*\limil{k}X_k \cong \limil{k}s^*X_k$ for all point $s \in \Cat S$.
\end{lem}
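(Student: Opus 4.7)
The plan is to identify both $s^*\limil{k} X_k$ and $\limil{k} s^* X_k$, compatibly via the canonical comparison map, with $\pi_n(s^* X_{k^*})$ for some sufficiently large $k^*$ depending only on $n$ and $\htpydim(s)$, and then conclude by a $\pi_*$--isomorphism argument.

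First I would unfold definitions. Since $s^* F \cong \colimil{U \in \mathcal I_s} F(U)$ is a filtered colimit and sequential limits of sheaves are computed sectionwise, we have
\begin{equation*}
    s^* \limil{k} X_k \;\cong\; \colimil{U \in \mathcal I_s} \limil{k} X_k(U), \qquad \limil{k} s^* X_k \;\cong\; \limil{k} \colimil{U \in \mathcal I_s} X_k(U),
\end{equation*}
and the canonical comparison map $s^* \limil{k} X_k \to \limil{k} s^* X_k$ is compatible with the projections of both sides down to $s^* X_{k^*}$ for every $k^*$.

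Next I would compute the right-hand side. Since each $X_k$ is pointed and connected, and $s^*$ is a left-exact left adjoint (so preserves $\pi_0$), the tower $s^* X_k$ consists of connected pointed anima. Filtered colimits commute with $\pi_n$, so $\pi_n(s^* X_k) \cong \colimil{U} \pi_n(X_k(U))$, which by the standing hypothesis is eventually constant in $k$, equal to $\pi_n(s^* X_{k^*})$ for $k^* = k_{\htpydim(s), n}$. Applying the preceding \cref{lemma:topos:htpy-groups-of-sequential-limits-of-anima} to this tower then yields $\pi_n(\limil{k} s^* X_k) \cong \pi_n(s^* X_{k^*})$.

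For the left-hand side I would argue sectionwise, then take the filtered colimit. For each $U \in \mathcal I_s$, the tower $X_k(U)$ is a sequence of pointed anima whose homotopy groups are eventually constant in $k$ by hypothesis. The Milnor-type short exact sequence for sequential limits of pointed anima (as in \cite[Proposition 2.2.9]{MCAT}), together with vanishing of $\limone{}$ via Mittag--Leffler (immediate from eventual constancy), gives $\pi_n(\limil{k} X_k(U)) \cong \pi_n(X_{k^*}(U))$. Then commuting $\pi_n$ past the filtered $\colimil{U}$ produces
\begin{equation*}
    \pi_n(s^* \limil{k} X_k) \;\cong\; \colimil{U} \pi_n(X_{k^*}(U)) \;\cong\; \pi_n(s^* X_{k^*}).
\end{equation*}
Both identifications arise from the canonical projections to $s^* X_{k^*}$, which factor through the comparison map, so the comparison map is a $\pi_n$--isomorphism at the basepoint for every $n$; the same Mittag--Leffler argument applied to $\pi_1$ ensures $\pi_0$ agrees as well (both sides are connected). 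Hence the comparison map is an equivalence of pointed anima.

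The main obstacle is that $X_k(U)$ need not be connected even though the sheaf $X_k$ is (connectedness of the sheaf does not pass to sections). This prevents us from citing \cref{lemma:topos:htpy-groups-of-sequential-limits-of-anima} directly on the sections, and forces the Milnor-sequence detour for pointed non-connected towers. The key technical verification is that eventual constancy of the tower $\{\pi_n(X_k(U))\}_k$ implies Mittag--Leffler and hence vanishing of $\limone{}$ for every relevant homotopy group, which then makes the sectionwise sequential limit essentially finite and lets it commute with the filtered colimit $\colimil{U}$.
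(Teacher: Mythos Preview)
Your proof is correct and follows essentially the same route as the paper's: compute $\pi_n$ of both $s^*\limil{k}X_k$ and $\limil{k}s^*X_k$ via the Milnor $\limone{}$ sequence (which the paper packages as the preceding \cref{lemma:topos:htpy-groups-of-sequential-limits-of-anima}), identify each with $\pi_n(s^*X_{k_{\htpydim(s),n}})$ through the canonical projection, and conclude by Whitehead. You are in fact slightly more careful than the paper in flagging that $X_k(U)$ need not be connected and hence appealing directly to the Milnor sequence for pointed towers rather than to the connected-tower lemma; the paper simply applies that lemma to both $s^*X_k$ and $X_k(U)$ without comment.
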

\begin{proof}
    Fix a point $s \in \Cat S$.
    Note that for $k \ge k_{\htpydim(s), n}$ and $n \ge 0$ we have
    \begin{equation*}
        \pi_n s^* X_k \cong \colim{U \in \Cat I_s} \pi_n(X_k(U)) \cong \colim{U \in \Cat I_s} \pi_n (X_{k_{\htpydim(s), n}}(U)) \cong \pi_n s^*X_{k_{\htpydim(s), n}}.
    \end{equation*}
    \cref{lemma:topos:htpy-groups-of-sequential-limits-of-anima}
    implies (use $k_n = k_{\htpydim(s), n}$) that for every $n$ and $U \in \mathcal I_s$ we have isomorphisms
    \begin{align*}
        \pi_n(\limil{k} X_k(U)) & \cong \pi_n(X_{k_{\htpydim(s), n}}(U))  \\
        \pi_n(\limil{k} s^*X_k) & \cong \pi_n(s^*X_{k_{\htpydim(s), n}}).
    \end{align*}
    We now compute
    \begin{align*}
        \pi_n(s^* \limil{k} X_k) & \cong s^* \pi_n(\limil{k} X_k)                                        \\
                                 & \cong \colimil{U \in \mathcal I_s} \pi_n (\limil{k} X_k(U))           \\
                                 & \cong \colimil{U \in \mathcal I_s} \pi_n (X_{k_{\htpydim(s), n}} (U)) \\
                                 & \cong \pi_n (s^* X_{k_{\htpydim(s), n}})                              \\
                                 & \cong \pi_n(\limil{k} s^* X_k).
    \end{align*}
    Since $n$ was arbitrary, we conclude that $s^* \limil{k} X_k \cong \limil{k} s^*X_k$,
    using Whitehead's theorem.
\end{proof}

\begin{lem} \label{lemma:topos:sections-of-completions-of-truncations-are-uniform}
    Let $X \in \topos{X}_*$ be nilpotent, $s \in \Cat S$ be a point
    and $n \in \mathbb{N}$.
    Define $k_{\htpydim(s), n} \coloneqq n + \htpydim(s) + 2$.
    Then for all $U \in \mathcal I_s$
    we have that $\pi_{n}(\completebr{\tau_{\le k}X}(U))$
    is independent of $k$ for $k \ge k_{\htpydim(s), n}$.
\end{lem}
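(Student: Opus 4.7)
My plan is to prove, by induction on $k$, that the natural map
\begin{equation*}
\pi_n\bigl(\completebr{\tau_{\le k+1} X}(U)\bigr) \to \pi_n\bigl(\completebr{\tau_{\le k} X}(U)\bigr)
\end{equation*}
is an isomorphism whenever $k \ge k_{\htpydim(s), n}$, which immediately gives independence. Since $X$ is nilpotent, I invoke the principal refinement of the Postnikov step $\tau_{\le k+1} X \to \tau_{\le k} X$: a tower $X_{k+1,0}, \ldots, X_{k+1, m_{k+1}}$ whose consecutive maps fit into fiber sequences $X_{k+1, j} \to X_{k+1, j-1} \to K(A_{k+1, j}, k+2)$. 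By \cref{lemma:postnikov-fiber-sequence-completion}, after $p$-completion each step takes the form $\completebr{X_{k+1, j}} \cong \tau_{\ge 1} F_j$ with $F_j := \Fib{\completebr{X_{k+1, j-1}} \to \complete{K(A_{k+1, j}, k+2)}}$, so it suffices to show that each such step is an isomorphism on $\pi_n(-(U))$.

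The key connectivity input I would assemble first: the spectrum $\Sigma^{k+2} A_{k+1, j}$ is $(k+2)$-connective, so \cref{lemma:topos:connectivity-drop-bounded-on-stalks} yields that $\completebr{\Sigma^{k+2} A_{k+1, j}}(U)$ is $(k+1-\htpydim(s))$-connective. Combining the formula $\complete{K(A, k+2)} \cong \tau_{\ge 1} \pLoop\, \completebr{\Sigma^{k+2} A}$ from \cref{cor:completion-of-EM-space} with the fact that $\pLoop$ commutes with sections (as a right adjoint), I conclude $\complete{K(A_{k+1, j}, k+2)}(U)$ is $(k+1-\htpydim(s))$-connective as an anima. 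The hypothesis $k \ge n + \htpydim(s) + 2$ pushes this connectivity to at least $n+3$, so both $\pi_n$ and $\pi_{n+1}$ of $\complete{K(A_{k+1, j}, k+2)}(U)$ vanish. Sections commute with limits, hence with fibers, so the defining fiber sequence for $F_j$ stays a fiber sequence at $U$, and the long exact sequence gives $\pi_n(F_j(U)) \cong \pi_n(\completebr{X_{k+1, j-1}}(U))$.

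The remaining subtlety is that $\tau_{\ge 1}$ does not commute with sections, so $\completebr{X_{k+1, j}}(U) = F_j(U)$ is not a priori clear. I would resolve this by showing $F_j$ is already connected as a sheaf, so that $\tau_{\le 0} F_j = *$ and the fiber sequence $(\tau_{\ge 1} F_j)(U) \to F_j(U) \to (\tau_{\le 0} F_j)(U)$ collapses to an equivalence. For connectedness, observe that $\complete{\Sigma^{k+2} A}$ is $(k+1)$-connective as a sheaf of spectra (a sequential limit of $(k+2)$-connective sheaves loses at most one step in connectivity via the Milnor short exact sequence), so the sheaf $\pi_1 \complete{K(A_{k+1, j}, k+2)}$ vanishes whenever $k \ge 1$, which is satisfied since $k \ge 2$. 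The sheaf-level long exact sequence then exhibits $\pi_0 F_j$ as a quotient of this zero sheaf.

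Putting the pieces together, $\pi_n(\completebr{X_{k+1, j}}(U)) \cong \pi_n(F_j(U)) \cong \pi_n(\completebr{X_{k+1, j-1}}(U))$, and iterating over $j$ closes the induction. I expect the main technical obstacle to be the $\tau_{\ge 1}$-versus-sections interaction, which is what really pins down why the bound involves $\htpydim(s)$ rather than just the connectivity of the Eilenberg–MacLane piece; a secondary care is keeping the direction of the principal refinement consistent with the formula in \cref{lemma:postnikov-fiber-sequence-completion}. Both points are ultimately handled by the connectivity estimate supplied by the homotopy-dimension hypothesis.
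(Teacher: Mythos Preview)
Your overall strategy matches the paper's: induct on $k$ through the principal refinement, invoke \cref{lemma:postnikov-fiber-sequence-completion} for each step, and feed in the connectivity estimate from \cref{lemma:topos:connectivity-drop-bounded-on-stalks}. The paper's proof is terser and simply says ``it is enough to show that $\completebr{K(A_{k,l},k+1)}(U)$ is $(n+2)$-connective,'' leaving the interaction of $\tau_{\ge 1}$ with sections implicit.

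You correctly identify that interaction as the crux, but your resolution contains a genuine error. You claim that $\completebr{\Sigma^{k+2} A}$ is $(k+1)$-connective \emph{as a sheaf of spectra}, justified by ``a sequential limit of $(k+2)$-connective sheaves loses at most one step in connectivity via the Milnor short exact sequence.'' That reasoning is valid in $\Sp$ but fails in a general $\infty$-topos: the Milnor fiber sequence exhibits the sequential limit in terms of a countable product, and countable products do not preserve connectivity in topoi of positive homotopy dimension. Indeed, the paper emphasizes that $\pi_m(\complete{E})$ for $m$ below the connectivity of $E$ is only \emph{uniquely $p$-divisible}, not zero (\cref{lemma:upd-homotopy-of-completion-below-connectivity-of-base}); this is exactly what forces the introduction of the $p$-adic t-structure. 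Consequently $\pi_1\complete{K(A,k+2)}$ need not vanish as a sheaf, and your argument that $F_j$ is connected does not go through.

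The fix is easier than what you attempt. Since $\tau_{\le 0} F_j$ is a discrete sheaf, its sections $(\tau_{\le 0} F_j)(U)$ are a discrete anima, so the fiber sequence $(\tau_{\ge 1} F_j)(U) \to F_j(U) \to (\tau_{\le 0} F_j)(U)$ immediately gives $\pi_n\bigl((\tau_{\ge 1} F_j)(U)\bigr) \cong \pi_n(F_j(U))$ for all $n \ge 1$, with no need for $F_j$ to be connected. The same observation handles the $\tau_{\ge 1}$ appearing in $\complete{K(A,k+2)} \cong \pLoop\tau_{\ge 1}\completebr{\Sigma^{k+2}A}$: writing $E = \completebr{\Sigma^{k+2}A}$, the sections $(\tau_{\le 0}E)(U)$ are $0$-coconnective, so $\pi_m((\tau_{\ge 1}E)(U)) \cong \pi_m(E(U))$ for $m \ge 1$ and $\pi_0((\tau_{\ge 1}E)(U))$ injects into $\pi_0(E(U))=0$; applying $\pLoop$ then yields the required $(n+2)$-connectivity of $\complete{K(A,k+2)}(U)$.
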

\begin{proof}
    Fix $n \in \N$ and $U \in \mathcal I_s$.
    We proceed by induction on $k$, the case $k = k_{\htpydim(s), n}$ holds tautologically.
    Using \cref{lemma:nilpotent:principal-refinement}, we find a principal refinement
    of the Postnikov tower.
    For every $1 \le l \le m_k$, there is an equivalence
    \begin{equation*}
        \completebr{X_{k,l}} \cong \tau_{\ge 1} \Fib{\completebr{X_{k,l-1}} \to \completebr{K(A_{k,l}, k + 1)}},
    \end{equation*}
    see \cref{lemma:postnikov-fiber-sequence-completion}.
    Thus, it is enough to show that $\completebr{K(A_{k,l}, k+1)}(U)$ is $n+2$-connective.
    Using \cref{cor:completion-of-EM-space},
    it suffices to prove that $\completebr{\Sigma^{k+1} A_{k,l}}(U)$
    is $n+2$-connective.
    Note that the connectivity of $\Sigma^{k+1} A_{k,l}$ is at least $k_{\htpydim(s), n} + 1 = n + \htpydim(s) + 3$.
    Using \cref{lemma:topos:connectivity-drop-bounded-on-stalks},
    we conclude that the connectivity of $\completebr{\Sigma^{k+1} A_{k,l}}(U)$
    is at least $n + \htpydim(s) + 3 - \htpydim(s) - 1 = n + 2$.
\end{proof}

\begin{thm} \label{thm:topos:p-comp-commutes-with-post-tower}
    Let $X \in \topos{X}_*$ be nilpotent.
    Then $\complete{X} \cong \limil{k} \completebr{\tau_{\le k} X}$.
\end{thm}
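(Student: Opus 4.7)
The plan is to verify two properties for the object $Y \coloneqq \limil{k} \completebr{\tau_{\le k} X}$: (a) $Y$ is $p$-complete, and (b) the canonical comparison map $\alpha \colon X \to Y$, obtained by taking the limit over $k$ of the composites $X \to \tau_{\le k}X \to \completebr{\tau_{\le k}X}$, is a $p$-equivalence. These together identify $Y$ with $\complete{X}$. Property (a) is immediate from \cref{lemma:topos:limit-of-p-complete}, since each $\completebr{\tau_{\le k} X}$ is $p$-complete. For (b), since $\topos X$ has enough points, \cref{lemma:peq-via-points} reduces the task to checking that $s^* \alpha$ is a $p$-equivalence of pointed anima for each point $s \in \Cat S$.

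Fix such an $s$. The first step is to commute $s^*$ past the defining limit of $Y$: by \cref{lemma:topos:sections-of-completions-of-truncations-are-uniform}, the inverse system $k \mapsto \completebr{\tau_{\le k} X}$ satisfies the hypothesis of \cref{lemma:topos:limit-commutes-with-homotopy-objects-if-eventually-constant-on-sections}, yielding $s^* Y \cong \limil{k} s^* \completebr{\tau_{\le k} X}$. Under this identification, $s^* \alpha$ becomes the natural comparison map $s^* X \to \limil{k} s^* \completebr{\tau_{\le k} X}$.

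The central step is the identification $s^* \completebr{\tau_{\le k} X} \cong \completebr{\tau_{\le k} s^* X}$, where the right-hand side is the $p$-completion in anima. Since $\tau_{\le k} X$ is nilpotent and $k$-truncated, \cref{lemma:nilpotent:principal-refinement} produces a \emph{finite} principal refinement of its Postnikov tower, and iterating \cref{lemma:postnikov-fiber-sequence-completion} together with \cref{cor:completion-of-EM-space} exhibits $\completebr{\tau_{\le k} X}$ as built from $\ast$ in finitely many steps using $\tau_{\ge 1}$, finite fibers, $\pLoop$, and the spectra $\completebr{\Sigma^{n+1} H A_{n,l}}$ associated to the refinement. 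Because $s^*$ is left-exact, preserves colimits and truncations, and commutes with $\pLoop$ on sheaves of spectra by \cref{lemma:stabilization:geometric-morphism-commutes-with-loopspace}, an induction over the refinement reduces the claim to the bounded-spectrum identity $s^* \completebr{\Sigma^{n+1} H A} \cong \completebr{\Sigma^{n+1} H(s^* A)}$ for every abelian sheaf $A$. Since $\completebr{\Sigma^{n+1} H A}$ is concentrated in degrees $n+1$ and $n+2$ by \cref{lemma:stable:truncation-of-completion}, this in turn follows from the stalk commuting with a bounded two-term sequential limit, again via the uniform-connectivity estimates of \cref{lemma:topos:connectivity-drop-bounded-on-stalks,lemma:topos:sections-of-completions-of-truncations-are-uniform}.

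With this identification in hand, the classical Bousfield--Kan theorem for nilpotent pointed anima (recorded in the appendix on anima) gives $\complete{s^* X} \cong \limil{k} \completebr{\tau_{\le k} s^* X}$, and hence $s^* \alpha$ is identified with the canonical $p$-equivalence $s^* X \to \complete{s^* X}$. This completes the proof. The main technical obstacle is the identification in the third paragraph: showing by induction over the finite principal Postnikov refinement that $s^*$ commutes with the $p$-completion of bounded nilpotent truncated objects, which ultimately rests on the fact that stalks commute with the $p$-completion of bounded sheaves of spectra — a fact that only works because $\topos X$ is locally of finite uniform homotopy dimension.
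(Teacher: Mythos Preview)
Your first two paragraphs match the paper's argument exactly. The gap is in the third paragraph, where you attempt to prove the identification $s^*\completebr{\tau_{\le k} X} \cong \completebr{\tau_{\le k} s^* X}$.

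First, the assertion that $\completebr{\Sigma^{n+1} HA}$ is ``concentrated in degrees $n+1$ and $n+2$'' is false in a general $\infty$-topos: \cref{lemma:stable:truncation-of-completion} gives only the upper bound $n+2$, while \cref{lemma:upd-homotopy-of-completion-below-connectivity-of-base} says the homotopy below $n+1$ is merely uniquely $p$-divisible, not zero. (This failure is precisely why the paper introduces the $p$-adic t-structure.) Second, even granting bounded sections via \cref{lemma:topos:connectivity-drop-bounded-on-stalks}, you still need the filtered colimit defining the stalk to commute with the sequential limit defining the $p$-completion, and nothing you cite establishes that. In effect you are claiming that $s^*$ preserves $p$-complete objects, which is not true in general: stalks are filtered colimits, and filtered colimits do not preserve $p$-completeness.

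The paper avoids this entirely. After your second paragraph one does \emph{not} try to identify $s^*\completebr{\tau_{\le k} X}$ with a $p$-completion. Instead one observes that $s^*\tau_{\le k} X \to s^*\completebr{\tau_{\le k} X}$ is a $p$-equivalence for each $k$ (since $s^*$ preserves $p$-equivalences), and that $s^*X \cong \limil{k} s^*\tau_{\le k} X$ by Postnikov-completeness of $\An$. One now has two towers related by levelwise $p$-equivalences, both with eventually constant homotopy on stalks (the second by \cref{lemma:topos:sections-of-completions-of-truncations-are-uniform}), and \cref{lemma:anima:seq-limit-of-peq-of-anima} finishes the proof. This is both shorter and sidesteps the question of whether stalks commute with $p$-completion.
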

\begin{proof}
    The right-hand side is $p$-complete because it is a limit of $p$-complete
    objects. Hence, it suffices to show that $X \to \limil{k} \completebr{\tau_{\le k}X}$
    is a $p$-equivalence. This can be checked on stalks.
    So let $s \in \Cat S$ be a point, we need to show that
    $s^* X \to s^* \limil{k} \completebr{\tau_{\le k}X}$ is a
    $p$-equivalence.
    Using \cref{lemma:topos:limit-commutes-with-homotopy-objects-if-eventually-constant-on-sections}
    and \cref{lemma:topos:sections-of-completions-of-truncations-are-uniform},
    we conclude that $s^* \limil{k} \completebr{\tau_{\le k}X} \cong \limil{k} s^* (\completebr{\tau_{\le k}X})$.
    The left-hand side is $s^* X = \limil{k} \tau_{\le k} s^*X \cong \limil{k} s^* \tau_{\le k}X$,
    using that $\An$ is Postnikov-complete and that $s^*$ commutes with truncations,
    see \cite[Proposition 5.5.6.28]{highertopoi}.
    Note that $s^* \tau_{\le k}X \to s^* (\completebr{\tau_{\le k}X})$
    is a $p$-equivalence for each $k$.
    Hence, the result follows from \cref{lemma:anima:seq-limit-of-peq-of-anima}.
\end{proof}

\section{Completions via Embeddings}
\subsection{Completions of Presheaves} 
\label{section:embedding:presheaf}
Let $\Cat C$ be a small $\infty$-category.
For every $\infty$-category $\Cat D$, 
denote by $\PrShvVal{\Cat C}{\Cat D} \coloneqq \operatorname{Fun}({\Cat C}^{\operatorname{op}}, \Cat D)$
the category of presheaves with values in $\Cat D$.
Denote by $\PrShv{\Cat C} \coloneqq \PrShv{\Cat C, \An}$ the category of presheaves
(of anima) on $\Cat C$.
Recall that there is a canonical equivalence of categories
$\Stab{\PrShv{\Cat C}} \cong \PrShvVal{\Cat C}{\Sp}$,
see \cite[Remark 1.4.2.9]{higheralgebra}.

\begin{lem}
    $\PrShv{\Cat C}$ is locally of homotopy dimension $0$,
    and thus in particular of cohomological dimension $0$.
    In particular, if $F \in \heart{\PrShvVal{\Cat C}{\Sp}}$ and $U \in \Cat C$,
    then $\heart{\Gamma}(U, F) \cong \Gamma(U, F)$ (i.e.\ there is no sheaf cohomology on presheaf topoi).
    Therefore, we will just write $F(U)$ for the abelian group $\heart{\Gamma}(U, F)$.

    Moreover, $\PrShv{\Cat C}$ is Postnikov-complete.
\end{lem}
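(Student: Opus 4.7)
The plan is to handle each of the three claims by reducing to pointwise statements, exploiting that limits, colimits, truncations, and the standard t-structure in $\PrShv{\Cat C}$ are all computed sectionwise.

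For local homotopy dimension $0$, view $\PrShv{\Cat C}$ as the $\infty$-topos of (hypercomplete) sheaves on $\Cat C$ equipped with the trivial topology. For each $U \in \Cat C$, evaluation $\operatorname{ev}_U \colon F \mapsto F(U)$ is a point of this topos (it is left-exact and colimit-preserving), and the family indexed by $\operatorname{Ob}(\Cat C)$ is jointly conservative by Yoneda. Taking $\mathcal I_U$ to be the constant pro-object $\{U\}$ and $\htpydim \equiv 0$ in \cref{def:topos:locally-finite-uniform-htpy-dim}, the condition reduces to showing that $k$-connective presheaves are sectionwise $k$-connective, which is built into the sectionwise definition of the standard t-structure on $\PrShvVal{\Cat C}{\Sp}$.

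The statement about cohomological dimension and the equivalence $\Gamma(U, F) \cong \heart{\Gamma}(U, F)$ is an immediate consequence. If $F \in \heart{\PrShvVal{\Cat C}{\Sp}}$, then sectionwise t-exactness of evaluation forces $F(U) = \Gamma(U, F) \in \heart{\Sp}$, so $\Gamma(U, F)$ is a discrete spectrum concentrated in degree $0$, equivalent to its $\pi_0$, which by definition is $\heart{\Gamma}(U, F)$.

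For Postnikov-completeness, the canonical map $X \to \limil{n} \tau_{\le n} X$ must be an equivalence for every $X \in \PrShv{\Cat C}$. Since truncations and limits in $\PrShv{\Cat C}$ are computed sectionwise, this reduces to showing that $X(U) \to \limil{n} \tau_{\le n}(X(U))$ is an equivalence in $\An$ for each $U$, which holds because $\An$ itself is Postnikov-complete. There is essentially no hard step here: each claim reduces immediately to a pointwise statement that is either tautological given the sectionwise definitions or a standard fact about $\An$. The only mild subtlety is verifying that $\PrShv{\Cat C}$ fits the setup of \cref{def:topos:locally-finite-uniform-htpy-dim}, which amounts to recognizing presheaves as sheaves for the trivial topology.
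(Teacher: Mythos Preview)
Your argument for the first assertion targets the wrong notion. The phrase ``locally of homotopy dimension $0$'' in the lemma refers to Lurie's standard definition (HTT 7.2.1.8): one must exhibit generators $U$ such that each slice $\PrShv{\Cat C}_{/U}$ has homotopy dimension $\le 0$. What you verify instead is that $\PrShv{\Cat C}$ satisfies the paper's \cref{def:topos:locally-finite-uniform-htpy-dim} with $\htpydim \equiv 0$ --- a related but distinct condition. The paper itself does not argue this out; it simply cites HTT Example 7.2.1.9, Corollary 7.2.2.30, and Proposition 7.2.1.10 for the three claims in order. So as written you have not established the first sentence of the lemma, though the fix is trivial (just cite Example 7.2.1.9, or observe that $\PrShv{\Cat C}_{/U} \simeq \PrShv{\Cat C_{/U}}$ and that connective presheaves have sections).

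Your proofs of the remaining claims are correct and take a different, more elementary route than the paper. The paper deduces cohomological dimension $0$ and Postnikov-completeness from the Lurie notion via the cited structural results; you bypass this entirely and argue each sectionwise, using that evaluation is t-exact and that truncations and limits in $\PrShv{\Cat C}$ are computed pointwise. This is cleaner and self-contained for those two claims, and it is really what is used downstream --- the Lurie notion of homotopy dimension for $\PrShv{\Cat C}$ is not invoked again in the paper.
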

\begin{proof}
    This follows from \cite[Example 7.2.1.9, Corollary 7.2.2.30 and Proposition 7.2.1.10]{highertopoi}.
\end{proof}

\begin{prop} \label{lemma:presheaf:completion-sectionwise}
    Let $f \colon X \to Y \in \PrShv{\Cat C}$ be a morphism of presheaves.
    Then $f$ is a $p$-equivalence
    if and only if $f(U) \colon X(U) \to Y(U)$ is a $p$-equivalence for all $U \in \Cat C$.
    Moreover, $X$ is $p$-complete if and only if $X(U)$ is $p$-complete for all $U \in \Cat C$.
    Thus, we have $\complete{X}(U) = \completebr{X(U)}$ for all $U \in \Cat C$.
\end{prop}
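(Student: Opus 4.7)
The plan is to exploit that everything in sight (limits, colimits, the stabilization equivalence $\Stab{\PrShv{\Cat C}} \cong \PrShvVal{\Cat C}{\Sp}$, the suspension spectrum functor, the standard t-structure) is computed sectionwise in a presheaf category, and then upgrade this from the stable to the unstable side using a reflective-subcategory argument.

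First, I would prove the sectionwise characterization of $p$-equivalences. The family of evaluation functors $\mathrm{ev}_U \colon \PrShvVal{\Cat C}{\Sp} \to \Sp$ is jointly conservative and t-exact for the pointwise t-structure, and each $\mathrm{ev}_U$ commutes with $\pSus$ (both $\mathrm{ev}_U \circ \pSus$ and $\pSus \circ \mathrm{ev}_U$ are left adjoint to $\Loop \circ \mathrm{ev}_U = \mathrm{ev}_U \circ \Loop$) and with $(-)\sslash p$ (as a finite colimit). Combining \cref{lemma:t-struct:right-exact-commutes-with-completion} and \cref{lemma:t-struct:conservative-detects-peq}, a morphism $f$ in $\PrShv{\Cat C}$ is a $p$-equivalence iff $\pSus f$ is, iff $\mathrm{ev}_U(\pSus f \sslash p) = \pSus(f(U)) \sslash p$ is an equivalence for every $U \in \Cat C$, iff $f(U)$ is a $p$-equivalence in $\An$ for every $U$.

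Next, I would identify the $p$-completion functor explicitly. Post-composition with the $p$-completion adjunction $\completebr{-} \colon \An \rightleftarrows \complete{\An}$ yields a reflective subcategory
\begin{equation*}
    \operatorname{Fun}(\op{\Cat C}, \complete{\An}) \hookrightarrow \operatorname{Fun}(\op{\Cat C}, \An) = \PrShv{\Cat C},
\end{equation*}
whose left adjoint $L$ is given sectionwise by $(LX)(U) \coloneqq \completebr{X(U)}$. In particular $L$ is a localization functor on $\PrShv{\Cat C}$, whose unit $\eta_X \colon X \to LX$ is sectionwise a classical $p$-equivalence of anima and hence, by the first step, a $p$-equivalence in $\PrShv{\Cat C}$.

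Finally, I would match $L$ with $\completebr{-}$. Since $L$ is a localization, a morphism $f$ is an $L$-equivalence iff $L(f)$ is an equivalence iff $f(U)$ is a $p$-equivalence of anima for every $U$, iff (by the first step) $f$ is a $p$-equivalence in $\PrShv{\Cat C}$. Thus $L$ inverts exactly the $p$-equivalences; by uniqueness of the localization at a strongly saturated class (\cref{lemma:topos:p-equiv-strongly-saturated} together with \cite[Proposition 5.5.4.15]{highertopoi}) we conclude $L \cong \completebr{-}$, giving the formula $\complete{X}(U) = \completebr{X(U)}$. The sectionwise characterization of $p$-completeness is then immediate: $X$ is $p$-complete iff $X \to \complete{X}$ is an equivalence iff sectionwise $X(U) \to \completebr{X(U)}$ is an equivalence iff each $X(U)$ is $p$-complete.

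The only mildly subtle step is verifying that evaluation commutes with $\pSus$, since this is what glues the sectionwise description of the stable $p$-completion to the unstable one; everything else is essentially bookkeeping about pointwise localizations and joint conservativity.
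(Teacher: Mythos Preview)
Your argument is correct, and the first step (sectionwise characterization of $p$-equivalences) is essentially identical to the paper's. The difference lies in the order of the remaining two statements. The paper first proves directly that $X$ is $p$-complete iff each $X(U)$ is: for the forward implication it tests against $p$-equivalences of presheaves of the form $j_U \otimes A \to j_U \otimes A'$ built from a $p$-equivalence $A \to A'$ of anima, using the $\An$-tensoring of $\PrShv{\Cat C}$; the backward implication then uses the already-established forward one applied to $\complete{X}$. Only afterwards does the paper read off the formula $\complete{X}(U) \cong \completebr{X(U)}$. You instead go straight for the formula by recognizing the sectionwise completion as a reflective localization inverting exactly the $p$-equivalences, and then appeal to uniqueness of the localization to identify it with $\completebr{-}$; the $p$-completeness characterization falls out at the end. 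Your route is arguably cleaner and entirely avoids the $j_U \otimes A$ construction, at the cost of relying on the abstract uniqueness statement for localizations; the paper's approach is more hands-on and makes explicit which presheaf $p$-equivalences actually witness sectionwise $p$-completeness.
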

\begin{proof}
    By definition, $f$ is a $p$-equivalence if and only if $\pSus(f)\sslash p$ is an equivalence.
    Using the equivalence $\Stab{\PrShv{\Cat C}} \cong \PrShvVal{\Cat C}{\Sp}$,
    we see that this can be checked on sections.

    For the second point, suppose first that $X$ is $p$-complete.
    Let $U \in \Cat C$ an arbitrary object. Let $A \to A'$ be a $p$-equivalence of pointed anima.
    Denote by $c_A$ and $c_{A'}$ the presheaves on $\Cat C$ given by $j_U \otimes A$ and $j_U \otimes A'$, respectively
    (where $j_U$ denotes the Yoneda embedding of $U$),
    i.e.\ $c_A$ is the presheaf such that $c_A(V) = j_U(V) \times A = \sqcup_{\Hom{}(V, U)} A$ for all $V$,
    and similar for $c_{A'}$.
    By the above, $c_A \to c_{A'}$ is a $p$-equivalence.
    Thus, we get a chain of equivalences
    \begin{align*}
        \Map{}(A', X(U)) &\cong \Map{}(A', \Map{}(j_U, X)) \\
        &\cong \Map{}(c_{A'}, X) \\
        &\cong \Map{}(c_{A}, X) \\
        &\cong \Map{}(A, \Map{}(j_U, X)) \\
        &\cong \Map{}(A, X(U)),
    \end{align*}
    where the first and last equivalences follow from the Yoneda lemma,
    the second and fourth equivalences follow because $\otimes$ exhibits $\PrShv{\Cat C}$ as tensored over $\An$
    (note that $\An$ is the tensor unit of the Lurie tensor product of presentable $\infty$-categories,
    see \cite[Example 4.8.1.20]{higheralgebra}, and hence $\PrShv{\Cat C}$ is a module over $\An$),
    and the middle map is an equivalence because $X$ is $p$-complete.
    Thus, since $A \to A'$ was arbitrary, we conclude that $X(U)$ is $p$-complete.

    Suppose now that $X(U)$ is $p$-complete for all $U \in \Cat C$.
    We need to show that the $p$-equivalence $X \to \complete{X}$ is an equivalence.
    Note that for every $U$, $X(U) \to \complete{X}(U)$ is a $p$-equivalence.
    But since $\complete{X}$ is $p$-complete, we have already seen that $\complete{X}(U)$
    is $p$-complete.
    Since $X(U)$ is $p$-complete by assumption, we conclude that $X(U) \to \complete{X}(U)$
    is an equivalence.

    For the last point, let $F$ be the presheaf $\completebr{-} \circ X$.
    Then by the above, the canonical morphism $X \to F$ is a $p$-equivalence,
    and $F$ is $p$-complete. This shows that $F$ is the $p$-completion of $X$.
\end{proof}

\begin{lem} \label{lemma:completions-respects-connectiveness}
    Let $F \in \PrShv{\Cat C}$ be a presheaf.
    If $F$ is $n$-connective, then $\complete F$ is $n$-connective.
\end{lem}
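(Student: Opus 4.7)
The plan is to reduce the statement to the case of anima. By \cref{lemma:presheaf:completion-sectionwise} the $p$-completion in $\PrShv{\Cat C}$ is sectionwise, i.e.\ $(\complete F)(U) \simeq \completebr{F(U)}$, and truncations in a presheaf topos are also sectionwise, so $F$ is $n$-connective iff each $F(U) \in \An$ is. Hence it suffices to show: if $A \in \An$ is $n$-connective, then $\complete A$ is $n$-connective. For $n \leq 1$ this is a condition only on $\pi_0$ and follows from \cref{lemma:peq-respects-pi0}.

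For $n \geq 2$, $A$ is simply connected, and after a choice of basepoint pointed and nilpotent. Since $\An$ has homotopy dimension zero, I would apply \cref{thm:topos:p-comp-commutes-with-post-tower} to obtain $\complete A \simeq \limil{k} \completebr{\tau_{\le k} A}$, then show by induction along a principal Postnikov refinement (\cref{lemma:nilpotent:principal-refinement}, \cref{lemma:postnikov-fiber-sequence-completion}) that each $\completebr{\tau_{\le k} A}$ is $n$-connective. The case $k < n$ is trivial. For $k \geq n$, the fiber-sequence formula in \cref{lemma:postnikov-fiber-sequence-completion} reduces the inductive step to showing that the base $\completebr{K(A_{k,l}, k+1)}$ is $(n+1)$-connective, since then the fiber of a map from an $n$-connective object to an $(n+1)$-connective one is $n$-connective by the long exact sequence, and $\tau_{\ge 1}$ preserves $n$-connectivity.

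For the Eilenberg--MacLane factor, the formula $\completebr{K(B, m)} \simeq \tau_{\ge 1} \pLoop \completebr{\Sigma^m HB}$ from \cref{cor:completion-of-EM-space} reduces the claim to showing that $\completebr{\Sigma^m HB}$ is $m$-connective as a spectrum. Each $(\Sigma^m HB) \sslash p^j$ is $m$-connective (with homotopy groups $B/p^j$ in degree $m$ and $B[p^j]$ in degree $m+1$), and the Milnor spectral sequence then shows that the only possible obstruction to $m$-connectivity of the sequential limit lives in $\limone{j}$ of $B/p^j$, which vanishes because the transitions are surjective.

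The last step is to pass to the limit $\limil{k} \completebr{\tau_{\le k} A}$, which preserves $n$-connectivity by \cref{lemma:topos:sections-of-completions-of-truncations-are-uniform} combined with \cref{lemma:topos:htpy-groups-of-sequential-limits-of-anima}: the homotopy groups stabilize at finite stages, so $\pi_i$ vanishes for $i < n$ in the limit. The main obstacle I anticipate is the upgrade from the ``uniquely $p$-divisible'' conclusion of \cref{cor:completion-of-EM-space} to actual vanishing of the low-degree homotopy of $\completebr{K(B, m)}$; the Milnor computation above is what makes this upgrade possible.
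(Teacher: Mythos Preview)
Your proposal is correct, and the reduction to anima via \cref{lemma:presheaf:completion-sectionwise} is exactly what the paper does. Where you diverge is in the treatment of anima for $n \ge 2$: the paper simply invokes the classical Bousfield--Kan short exact sequence
\[
0 \to L_0 \pi_k(A) \to \pi_k(\complete{A}) \to L_1 \pi_{k-1}(A) \to 0
\]
(stated in the paper as \cref{lemma:anima:connectivenes-of-completion}, with proof by citing \cite[Theorem~11.1.2(ii)]{MCAT}), from which $\pi_k(\complete{A}) = 0$ for $k < n$ is immediate once $\pi_k(A) = 0$ for $k < n$. Your route instead rebuilds this conclusion from the paper's internal machinery (Postnikov towers, \cref{lemma:postnikov-fiber-sequence-completion}, the explicit Milnor computation for $\completebr{\Sigma^m HB}$, and the stabilization lemmas). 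That is a legitimate and logically independent argument---in particular there is no circularity, since the Section~3.3 results you cite do not rely on \cref{lemma:completions-respects-connectiveness}---but it is substantially longer than necessary. The upside of your approach is that it stays entirely within the paper's own toolkit and avoids the external citation; the paper's approach is a two-line appeal to a standard result. Your Milnor-sequence computation for the connectivity of $\completebr{\Sigma^m HB}$ is correct (the $\lim^1$ vanishes because the maps $B/p^{j+1} \to B/p^j$ are surjective), so the ``upgrade'' you worried about does go through.
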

\begin{proof}
    Since connectivity and $p$-completions can be computed on sections 
    (see \cref{lemma:presheaf:completion-sectionwise} for the statement about $p$-completions),
    the result follows from the analogous result
    in the category of anima, see \cref{lemma:anima:connectivenes-of-completion}.
\end{proof}

Recall the $p$-adic t-structure from \cref{def:t-struct:defn}.

\begin{lem} \label{lemma:presheaf:evaluation-t-exact}
    Let $U \in \Cat C$ be an object.
    Then the functor $ev_U \colon \PrShvVal{\Cat C}{\Sp} \to \Sp$
    (given by precomposition with the functor $\Delta^0 \to \Cat C, * \mapsto U$)
    is t-exact for the standard t-structures 
    and t-exact for the $p$-adic t-structures.

    Moreover, a presheaf of spectra $E \in \PrShvVal{\Cat C}{\Sp}$ 
    is connective or coconnective for the standard t-structure 
    (resp. the $p$-adic t-structure)
    if and only if $ev_U(E)$ is connective or coconnective for the standard t-structure on $\Sp$
    (resp. the $p$-adic t-structure on $\Sp$) for all $U \in \Cat C$.
\end{lem}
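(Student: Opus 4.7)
The plan is to reduce everything to the observation that the standard t-structure on $\PrShvVal{\Cat C}{\Sp}$ is computed levelwise, and then derive the $p$-adic statements via \cref{lemma:t-struct:right-t-exact} together with the characterization results \cref{lemma:t-struct:mod-p-in-con-implies-in-tpcon} and \cref{lemma:t-struct:char-of-cocon}.

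First, since all limits, colimits, $\pLoop$, and homotopy object functors in $\PrShvVal{\Cat C}{\Sp}$ are computed pointwise, the standard t-structure is itself levelwise: $E \in \tcon{\PrShvVal{\Cat C}{\Sp}}$ (resp.\ $\tcocon{\PrShvVal{\Cat C}{\Sp}}$) if and only if $ev_U(E) \in \tcon{\Sp}$ (resp.\ $\tcocon{\Sp}$) for every $U \in \Cat C$. This immediately gives both standard t-exactness of $ev_U$ and the standard part of the iff. For the $p$-adic t-structure, $ev_U$ is exact and admits an exact left adjoint $L_U$, the left Kan extension along the inclusion $\{U\} \hookrightarrow \Cat C$. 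Applying \cref{lemma:t-struct:right-t-exact} to $ev_U$ (which is right t-exact for the standard t-structure) yields right $p$-adic t-exactness of $ev_U$; applying the second half of the same lemma to the adjunction $L_U \dashv ev_U$ (where $L_U$ is right t-exact for the standard because $ev_U$ is left t-exact for the standard) yields left $p$-adic t-exactness of $ev_U$.

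The iff on the $p$-adic connective side follows directly from \cref{lemma:t-struct:mod-p-in-con-implies-in-tpcon}: $E \in \tpcon{\PrShvVal{\Cat C}{\Sp}}$ iff $E \sslash p$ is standard-connective, which by the levelwise characterization is iff $ev_U(E \sslash p) \cong ev_U(E) \sslash p$ is standard-connective for every $U$, iff $ev_U(E) \in \tpcon{\Sp}$ for every $U$. For the coconnective iff, I apply \cref{lemma:t-struct:char-of-cocon}, which reduces $\tpcocon{\PrShvVal{\Cat C}{\Sp}}$ to three conditions on $E$: being $0$-truncated, being $p$-complete, and having $\pi_0(E)$ of bounded $p$-divisibility. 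The first is sectionwise by the levelwise standard t-structure, the second by \cref{lemma:presheaf:completion-sectionwise}.

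The main obstacle is showing that bounded $p$-divisibility of a presheaf of abelian groups $A$ is equivalent to bounded $p$-divisibility of each $A(U)$. For this I use the left adjoint to $ev_U$ at the level of hearts, namely the \emph{free presheaf} construction sending an abelian group $B_0$ to the presheaf $V \mapsto \bigoplus_{\Hom{}(V, U)} B_0$; sectionwise this is a direct sum of copies of $B_0$, and hence is $p$-divisible whenever $B_0$ is. Thus any nonzero map $B_0 \to A(U)$ from a $p$-divisible group adjoints to a nonzero map out of a $p$-divisible presheaf into $A$, contradicting bounded $p$-divisibility of $A$. The converse direction is immediate, since a map of presheaves vanishes iff each of its sectionwise components does.
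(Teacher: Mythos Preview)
Your proof is correct and follows essentially the same approach as the paper: levelwise standard t-structure, \cref{lemma:t-struct:right-t-exact} for the $p$-adic exactness, \cref{lemma:t-struct:mod-p-in-con-implies-in-tpcon} and \cref{lemma:t-struct:char-of-cocon} for the two iffs, and the free presheaf $j_U \otimes B$ for the bounded $p$-divisibility step. The only cosmetic difference is that you obtain left $p$-adic t-exactness of $ev_U$ directly from the adjunction $L_U \dashv ev_U$, whereas the paper instead proves the coconnective iff first and lets left t-exactness fall out; both are fine.
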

\begin{proof}
    The claim about the standard t-structures follows immediately from the fact that 
    $\pLoop$ is computed on section, and that the $ev_U$ are jointly conservative.

    Thus, $ev_U$ is also right t-exact for the $p$-adic t-structures by \cref{lemma:t-struct:right-t-exact} 
    (applied to $L = ev_U$).
    The last part about connective objects follows from \cref{lemma:t-struct:conservative}.
    
    So let $E \in \PrShvVal{\Cat C}{\Sp}$.
    We need to show that $E \in \tpcocon{\PrShvVal{\Cat C}{\Sp}}$ if and only if $E(U) \in \tpcocon{\Sp}$ for all $U$. 
    By \cref{lemma:t-struct:char-of-cocon},
    it thus suffices to show that
    \begin{enumerate}[label=(\arabic*),ref=(\arabic*),itemsep=0em]
        \item $E = \tau_{\le 0}E$ if and only if $E(U) = (\tau_{\le 0}E)(U)$ for all $U$,
        \item $\pi_0(E)$ has bounded $p$-divisibility if and only if $\pi_0(E)(U)$ has bounded $p$-divisiblity for all $U$ and
        \item $E$ is $p$-complete if and only if $E(U)$ is $p$-complete for all $U$.
    \end{enumerate}
    the first point follows because everything can be computed on sections.
    The third point is \cref{lemma:presheaf:completion-sectionwise},
    
    For the second point, assume first that $\pi_0(E)(U)$ has bounded $p$-divisibility for all $U$.
    Let $B \in \heart{\PrShvVal{\Cat C}{\Sp}}$ be $p$-divisible.
    Then $B(U)$ is $p$-divisible for all $U$.
    In particular, $\Map{}(B, \pi_0(E)) \subset \prod_U \Map{}(B(U), \pi_0(E)(U)) \cong 0$.
    On the other hand, suppose that $\pi_0(E)$ has bounded $p$-divisibility, and 
    suppose that $U \in \Cat C$. We have to show that $\pi_0(E)(U)$ has bounded $p$-divisibility.
    So let $B \in \heart{\Sp} \cong \Ab$ be $p$-divisible.
    As in the proof of \cref{lemma:presheaf:completion-sectionwise},
    let $c_B$ be the presheaf $j_U \otimes B$.
    Then we have $\Map{}(B, \pi_0(E)(U)) \cong \Map{}(c_B, \pi_0(E))$.
    Since $c_B$ is clearly $p$-divisible, the right mapping space is $0$.
    Thus, $\pi_0(E)(U)$ has bounded $p$-divisibility.
\end{proof}

\begin{lem}\label{lemma:presheaf:Li-sections}
    Let $E \in \PrShvVal{\Cat C}{\Sp}$ be a presheaf of spectra.
    Then there are natural equivalences $(\pi_n^p(E))(U) \cong \pi_n^p(E(U))$ for all $U \in \Cat C$.
    In particular, $\pi_n^p(E) \in \heart{\PrShvVal{\Cat C}{\Sp}}$.

    If $A \in \heart{\PrShvVal{\Cat C}{\Sp}}$ be a presheaf of abelian groups,
    then there are natural equivalences 
    $(\mathbb L_i A)(U) \cong \mathbb L_i (A(U))$ for all $U \in \Cat C$.
    In particular, $\mathbb L_i A \in \tstructheart{\PrShvVal{\Cat C}{\Sp}}$.
\end{lem}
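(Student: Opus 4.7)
The plan is to derive this from the t-exactness of the evaluation functors established in \cref{lemma:presheaf:evaluation-t-exact}. That lemma tells us that $ev_U \colon \PrShvVal{\Cat C}{\Sp} \to \Sp$ is t-exact for both the standard and the $p$-adic t-structures, and moreover that an object of $\PrShvVal{\Cat C}{\Sp}$ is (co)connective (in either t-structure) if and only if it is sectionwise so.

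The key formal consequence is that a t-exact functor commutes with both truncation functors. Concretely, if $E \in \PrShvVal{\Cat C}{\Sp}$ then $ev_U(\tau^p_{\ge n}E) \in \tpcon[n]{\Sp}$ and the canonical fiber sequence $ev_U(\tau^p_{\ge n}E) \to ev_U(E) \to ev_U(\tau^p_{\le n-1}E)$ has coconnective fiber $ev_U(\tau^p_{\le n-1}E) \in \tpcocon[n-1]{\Sp}$; by the uniqueness of the $p$-adic truncation triangle this sequence is canonically identified with $\tau^p_{\ge n}ev_U(E) \to ev_U(E) \to \tau^p_{\le n-1}ev_U(E)$. Hence $ev_U$ commutes naturally with both $\tau^p_{\ge n}$ and $\tau^p_{\le n}$, and therefore with $\pi_n^p = \Omega^n \tau^p_{\le n} \tau^p_{\ge n}$. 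This proves the first natural equivalence $(\pi_n^p(E))(U) \cong \pi_n^p(E(U))$.

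For the ``in particular'' assertion that $\pi_n^p(E)$ lies in the standard heart of presheaves of spectra, I would invoke \cref{lemma:anima:t-struct-description}, which shows that in the classical setting of $\Sp$ the $p$-adic heart is contained in the standard heart $\heart{\Sp}$. Given this, every section $\pi_n^p(E(U)) \in \heart{\Sp}$, and since the standard t-structure on $\PrShvVal{\Cat C}{\Sp}$ is characterized sectionwise (again \cref{lemma:presheaf:evaluation-t-exact}), we conclude $\pi_n^p(E) \in \heart{\PrShvVal{\Cat C}{\Sp}}$. For the statement on $\mathbb L_i$, note that if $A \in \heart{\PrShvVal{\Cat C}{\Sp}}$, then $A(U) \in \heart{\Sp}$ for every $U$; then by \cref{def:t-struct:Li}, $\mathbb L_i A = \pi_i^p(A)$ and $\mathbb L_i(A(U)) = \pi_i^p(A(U))$, so the already-established equivalence gives $(\mathbb L_i A)(U) \cong \mathbb L_i(A(U))$, and the sectionwise characterization again yields $\mathbb L_i A \in \heart{\PrShvVal{\Cat C}{\Sp}}$.

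There is no real obstacle here, as the nontrivial content has been packaged into \cref{lemma:presheaf:evaluation-t-exact} and into the classical comparison \cref{lemma:anima:t-struct-description}; the mildest subtlety is simply recording that t-exactness of a functor forces it to commute with truncations (and thus with the homotopy object functors), and this is a well-known formal fact about t-structures that follows from the uniqueness of the truncation fiber sequence.
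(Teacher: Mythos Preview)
Your proof is correct and follows essentially the same approach as the paper: both derive the sectionwise computation of $\pi_n^p$ from the $p$-adic t-exactness of $ev_U$ in \cref{lemma:presheaf:evaluation-t-exact}, and both invoke \cref{lemma:anima:t-struct-description} for the ``in particular'' clause. You merely spell out in more detail the standard fact that a t-exact functor commutes with truncations and hence with homotopy objects, which the paper leaves implicit.
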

\begin{proof}
    The second part is a special case of the first (note that $\mathbb L_i A = \pi_i^p A$).

    The lemma follows from t-exactness of the evaluation functors for the $p$-adic t-structures,
    see \cref{lemma:presheaf:evaluation-t-exact}.
    For the last statement, note that $\pi_n^p(E(U)) \in \heart{\Sp}$ by \cref{lemma:anima:t-struct-description}.
\end{proof}

In presheaf categories, the $p$-adic heart is particularly simple: it lives inside the normal heart,
and consists exactly of the $p$-complete objects therein:
\begin{lem} \label{lemma:presheaf:p-heart-description}
    We have $\pheart{\PrShvVal{\Cat C}{\Sp}} \subset \heart{\PrShvVal{\Cat C}{\Sp}}$,
    consisting exactly of the $p$-complete objects in the standard heart.

    In particular, for every $p$-complete $E \in \PrShvVal{\Cat C}{\Sp}$,
    we have $\pi_n(E) \cong \pi_n^p(E)$.
\end{lem}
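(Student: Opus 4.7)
The plan is to reduce everything to the already-known case of spectra (\cref{lemma:anima:t-struct-description}) via the fact that the evaluation functors $ev_U$ detect both t-structures sectionwise, a fact proven in \cref{lemma:presheaf:evaluation-t-exact}. Since \cref{lemma:anima:t-struct-description} describes $\pheart{\Sp}$ as exactly the $p$-complete objects inside $\heart{\Sp}$, everything will follow by translating sectionwise statements into global ones.

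For the first inclusion, I would start with $E \in \pheart{\PrShvVal{\Cat C}{\Sp}}$. By the last part of \cref{lemma:presheaf:evaluation-t-exact}, this is equivalent to $E(U) \in \pheart{\Sp}$ for every $U \in \Cat C$. Applying the spectrum case \cref{lemma:anima:t-struct-description} sectionwise, each $E(U)$ then lies in $\heart{\Sp}$ and is $p$-complete. Using \cref{lemma:presheaf:evaluation-t-exact} again (now for the standard t-structure) together with \cref{lemma:presheaf:completion-sectionwise}, the first condition lifts to $E \in \heart{\PrShvVal{\Cat C}{\Sp}}$ and the second to $E \cong \complete{E}$. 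The same chain of equivalences run backwards shows that a $p$-complete object of $\heart{\PrShvVal{\Cat C}{\Sp}}$ evaluates to a $p$-complete object of $\heart{\Sp}$ on every $U$, hence lies in $\pheart{\Sp}$ sectionwise, hence in $\pheart{\PrShvVal{\Cat C}{\Sp}}$. This yields the claimed characterization.

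For the final statement, let $E \in \PrShvVal{\Cat C}{\Sp}$ be $p$-complete. Fix $n$. I will show the two presheaves of abelian groups $\pi_n(E)$ and $\pi_n^p(E)$ agree sectionwise, which suffices. Since $ev_U$ is t-exact both for the standard and the $p$-adic t-structures (\cref{lemma:presheaf:evaluation-t-exact}), we have $(\pi_n(E))(U) \cong \pi_n(E(U))$ and $(\pi_n^p(E))(U) \cong \pi_n^p(E(U))$ (the latter is the content of \cref{lemma:presheaf:Li-sections}). Now $E(U)$ is $p$-complete in $\Sp$ by \cref{lemma:presheaf:completion-sectionwise}, so the spectrum-level version \cref{lemma:anima:t-struct-description} supplies a natural isomorphism $\pi_n(E(U)) \cong \pi_n^p(E(U))$, which assembles into the desired equivalence of presheaves.

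There is no real obstacle here: the entire lemma is a formal sectionwise transport once \cref{lemma:presheaf:evaluation-t-exact} and \cref{lemma:anima:t-struct-description} are in place. The only subtlety to keep track of is that the characterization of $\pheart{\Sp}$ in \cref{lemma:anima:t-struct-description} must already be phrased as ``$\pheart \subset \heart$ and consisting exactly of the $p$-complete objects in $\heart$''; if it only gives the inclusion but not the characterization, then one would additionally need to check that an object $A \in \heart{\Sp}$ with $A \cong \complete{A}$ automatically has $\pi_0(A)$ of bounded $p$-divisibility (via \cref{lemma:t-struct:char-of-cocon}), but this will already have been handled in the anima/spectra appendix.
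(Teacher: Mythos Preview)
Your proposal is correct and follows essentially the same approach as the paper: both reduce to the sectionwise statement via \cref{lemma:presheaf:evaluation-t-exact}/\cref{lemma:presheaf:Li-sections} and then invoke the spectrum case \cref{lemma:anima:t-struct-description}. You are slightly more explicit than the paper in spelling out the ``in particular'' clause and in noting that the reverse inclusion (objects of $\pheart{}$ are $p$-complete) comes from \cref{lemma:t-struct:char-of-cocon}, but the underlying argument is the same.
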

\begin{proof}
    The inclusion is an immediate consequence of \cref{lemma:presheaf:Li-sections}.
    Suppose that $E \in \heart{\PrShvVal{\Cat C}{\Sp}}$ is $p$-complete.
    We now note that by \cref{lemma:anima:t-struct-description} 
    $E(U) \cong \pi_0(E(U)) \cong \pi_0^p(E(U))$ for all $U$ 
    (note that $E(U)$ is $p$-complete since evaluation commutes with limits),
    and thus $\pi_0^p(E) \cong E$, again by \cref{lemma:presheaf:Li-sections}.
\end{proof}

\begin{defn} \label{def:presheaf:L1G}
    Let $G \in \Grp{\Disc{\PrShv{\Cat C}}}$ be a nilpotent presheaf of groups
    (i.e.\  the conjugation action of $G$ on itself is nilpotent, see \cref{def:nilpotent:nilpotent-action}).
    We define
    \begin{equation*}
        \mathbb L_i G \coloneqq \pi_{i+1} (\completebr{BG})
    \end{equation*}
    for $i \ge 0$.
\end{defn}

\begin{rmk} \label{rmk:presheaf:LiG-zero}
    Since the $p$-completion of a 1-truncated nilpotent object is 2-truncated (see \cref{cor:topos:truncation-of-completion}),
    we see that $\mathbb L_i G = 0$ for all $i \ge 2$.
\end{rmk}

\begin{lem} \label{lemma:presheaf:L1G-eq-L1A}
    Let $A \in \heart{\PrShvVal{\Cat C}{\Sp}} \cong \AbObj{\Disc{\PrShv{\Cat C}}}$.
    Denote by $G$ the underlying nilpotent presheaf of groups (i.e.\ we forget that $A$ is abelian).
    Then $\mathbb L_i A \cong \mathbb L_i G$ for all $i \ge 0$.
\end{lem}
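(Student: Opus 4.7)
The plan is to reduce everything to the case of an Eilenberg--MacLane space via \cref{cor:completion-of-EM-space}, and then identify both sides with the homotopy objects of $\complete{A}$ computed in the standard $t$-structure.

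First, I would observe that because $A$ is abelian, the classifying space of the underlying presheaf of groups is an Eilenberg--MacLane object: under the identification $\AbObj{\Disc{\PrShv{\Cat C}}} \cong \heart{\PrShvVal{\Cat C}{\Sp}}$, we have $BG \cong K(A,1) \cong \pLoop \Sigma A$. Applying \cref{cor:completion-of-EM-space} to $K(A,1)$ then yields
\begin{equation*}
    \complete{BG} \;\cong\; \tau_{\ge 1}\,\pLoop(\completebr{\Sigma A}).
\end{equation*}
Since $\Sigma$ is an equivalence on the stable category and hence commutes with the sequential limit defining $p$-completion, one has $\completebr{\Sigma A} \cong \Sigma \complete{A}$.

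Next I would compute the homotopy objects of $\complete{A}$. By \cref{lemma:t-struct:Li-zero-if-i-neq-zero-one}, $\complete{A} \in \tpcon{} \cap \tpcocon[1]{}$, so the fiber sequence of \cref{lemma:t-struct:decomposition-via-Li} takes the form $\Sigma \mathbb L_1 A \to \complete{A} \to \mathbb L_0 A$. Combined with \cref{lemma:presheaf:p-heart-description}, which says that $\mathbb L_0 A$ and $\mathbb L_1 A$ lie in the standard heart, the associated long exact sequence on standard homotopy objects gives
\begin{equation*}
    \pi_0(\complete{A}) \cong \mathbb L_0 A, \qquad \pi_1(\complete{A}) \cong \mathbb L_1 A, \qquad \pi_n(\complete A) = 0 \text{ for } n \neq 0,1.
\end{equation*}

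Finally I would combine these: for $i \ge 0$, since $i+1 \ge 1$ the $\tau_{\ge 1}$ is irrelevant, and since $\complete{A}$ has no negative homotopy the infinite loop functor $\pLoop$ preserves homotopy in non-negative degrees, so
\begin{equation*}
    \mathbb L_i G \;=\; \pi_{i+1}(\complete{BG}) \;\cong\; \pi_{i+1}\bigl(\pLoop \Sigma \complete{A}\bigr) \;\cong\; \pi_i(\complete{A}).
\end{equation*}
For $i = 0, 1$ this gives $\mathbb L_0 A$ respectively $\mathbb L_1 A$ by the previous step, and for $i \ge 2$ both sides vanish (the right-hand side by \cref{lemma:t-struct:Li-zero-if-i-neq-zero-one}, the left-hand side by \cref{rmk:presheaf:LiG-zero}, or equivalently by the computation above).

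The step that needs the most care is the bookkeeping around the identification of $BA$ with $\pLoop \Sigma A$ (one must verify this carefully in the presheaf setting, but it is immediate since presheaf categories are computed sectionwise and this holds in anima) and the shift of degrees between $\pLoop \completebr{\Sigma A}$ and $\complete{A}$; there is no genuine obstacle, only a matter of keeping the indexing straight.
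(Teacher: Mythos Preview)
Your proof is correct but follows a different route from the paper's. The paper argues sectionwise: for each $U \in \Cat C$ it invokes \cref{lemma:presheaf:Li-sections} to write $(\mathbb L_i A)(U) \cong \mathbb L_i(A(U))$, then appeals to the classical fact for abelian groups (\cref{lemma:anima:L1g-eq-L1A}, ultimately cited from the literature) to identify this with $\pi_{i+1}(\completebr{B(G(U))})$, and finally uses that $B(-)$, $\pi_n$, and $p$-completion are all computed sectionwise in presheaves. Your argument instead stays global: you identify $BG \cong K(A,1)$, apply \cref{cor:completion-of-EM-space} to get $\complete{BG} \cong \tau_{\ge 1}\pLoop(\Sigma\complete A)$, and then read off the homotopy of $\complete A$ from the fiber sequence $\Sigma\mathbb L_1 A \to \complete A \to \mathbb L_0 A$ together with the presheaf-specific fact (\cref{lemma:presheaf:p-heart-description}) that the $p$-adic heart sits inside the standard heart. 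The paper's proof is shorter and reduces immediately to a known external result; your proof is more self-contained within the machinery the paper builds, and makes explicit why the identification works rather than citing it. Both are fine; the paper's version is the natural one given that the whole presheaf section is organized around sectionwise reduction to anima.
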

\begin{proof}
    Note first that $G$ is actually nilpotent, see \cref{lemma:nilpotent:abelian-implies-nilpotent}.
    Let $U \in \Cat C$.
    We have the following chain of natural equivalences
    \begin{align*}
        (\mathbb L_i A)(U) & \cong \mathbb L_i (A(U))  \\
                           & \cong \mathbb L_i (G(U))  \\
                           & \cong \pi_{i+1}(\completebr{B(G(U))})  \\
                           & \cong \pi_{i+1}(\completebr{BG})(U)    \\
                           & \cong (\mathbb L_i G)(U).
    \end{align*}
    Here, the first equivalence is \cref{lemma:presheaf:Li-sections},
    the second is \cref{lemma:anima:L1g-eq-L1A},
    the third and fifth equivalences hold by definition
    and the fourth equivalence exists because homotopy groups, Eilenberg-MacLane objects and $p$-completions can be
    computed on sections (see \cref{lemma:presheaf:completion-sectionwise} for the claim about $p$-completions).
\end{proof}

\begin{prop} \label{lemma:presheaf:short-exact-sequence}
    Let $F \in \PrShv{\Cat C}_*$ be a pointed nilpotent presheaf.
    Then for every $n \ge 2$ there exists a canonical short exact sequence in $\heart{\PrShv{\Cat C, \Sp}}$
    (or a short exact sequence in $\Grp{\Disc{\PrShv{\Cat C}}}$ if $n = 1$)
    \begin{equation*}
        0 \to \mathbb L_0 \pi_n(F) \to \pi_n(\complete{F}) \to \mathbb L_1 \pi_{n-1}(F) \to 0,
    \end{equation*}
    where we use \cref{def:presheaf:L1G} for $\mathbb L_i \pi_1(X)$.
    Note that this distinction does not matter if $\pi_1(X)$ is abelian,
    see \cref{lemma:presheaf:L1G-eq-L1A}.
    Here we define $\mathbb L_1 \pi_0(F) \coloneqq 0$, since $F$ is connected.
\end{prop}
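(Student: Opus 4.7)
The plan is to reduce the statement to the classical Bousfield--Kan short exact sequence for pointed nilpotent anima by evaluating at each object $U \in \Cat C$. Every relevant construction on presheaves is computed sectionwise: truncations, loop spaces, fibers, and homotopy objects by general facts about presheaf categories, and, crucially, $p$-completion by \cref{lemma:presheaf:completion-sectionwise}. Moreover, by \cref{lemma:presheaf:Li-sections}, $(\mathbb L_i A)(U) \cong \mathbb L_i(A(U))$ for all $A \in \heart{\PrShvVal{\Cat C}{\Sp}}$, and the analogous identity for the group-theoretic $\mathbb L_i$ from \cref{def:presheaf:L1G} follows from the definition together with sectionwise computation of $\completebr{B(-)}$ and $\pi_{i+1}$.

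First I would check that $F(U)$ is a pointed nilpotent anima for each $U$: a principal refinement of the Postnikov tower of $F$ (as in \cref{def:nilpotent:principal-refinement}), together with the Eilenberg--MacLane quotients $K(A_{n,k}, n+1)$ witnessing it, evaluates sectionwise to a principal refinement of the Postnikov tower of $F(U)$. Thus nilpotence of $F$ passes to each section.

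Next, I would apply the classical Bousfield--Kan theorem to each $F(U)$ to obtain, natural in $U$, a short exact sequence
\begin{equation*}
0 \to L_0 \pi_n(F(U)) \to \pi_n(\completebr{F(U)}) \to L_1 \pi_{n-1}(F(U)) \to 0
\end{equation*}
for $n \ge 2$ (and the analogous sequence of groups when $n = 1$). Identifying the classical $L_i$ on anima with the functors $\mathbb L_i$ introduced in this paper (for the abelian case via the identification of the $p$-adic heart established earlier, and for $n=1$ via \cref{lemma:presheaf:L1G-eq-L1A} together with the anima-level comparison of the group-theoretic $\mathbb L_1$), and using the above sectionwise identities, this becomes
\begin{equation*}
0 \to (\mathbb L_0 \pi_n(F))(U) \to (\pi_n(\complete F))(U) \to (\mathbb L_1 \pi_{n-1}(F))(U) \to 0.
\end{equation*}
Since short exactness in $\heart{\PrShvVal{\Cat C}{\Sp}}$ and in $\Grp{\Disc{\PrShv{\Cat C}}}$ is tested on sections, these sequences assemble to the desired short exact sequence of presheaves, with naturality in $F$ inherited from the classical case.

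The main obstacle is really only bookkeeping: confirming that the two different conventions for $\mathbb L_1$ (the spectral one from \cref{def:t-struct:Li} applied sectionwise to abelian groups, and the group-theoretic one from \cref{def:presheaf:L1G} that is needed when $\pi_1$ is nonabelian) line up correctly on sections with their classical anima-level counterparts, so that the assembled sequence really is the one advertised. The case $n=1$ requires care since $\mathbb L_1 \pi_0(F) = 0$ by convention (using connectedness of $F$) and the outgoing term must be interpreted in the category of groups; but apart from this, the argument is formal.
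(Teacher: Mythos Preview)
Your proposal is correct and follows essentially the same approach as the paper: apply the classical Bousfield--Kan short exact sequence (recorded as \cref{lemma:anima:short-exact-sequence}) sectionwise and then use \cref{lemma:presheaf:completion-sectionwise} and \cref{lemma:presheaf:Li-sections} to identify the terms with the presheaf-level constructions. Your explicit verification that $F(U)$ is nilpotent via evaluating a principal refinement is a detail the paper leaves implicit, but otherwise the arguments coincide.
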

\begin{proof}
    By \cref{lemma:anima:short-exact-sequence}, for every $U$ there are functorial short exact sequences
    \begin{equation*}
        0 \to \mathbb L_0 \pi_n(F(U)) \to \pi_n(\complete{F(U)}) \to \mathbb L_1 \pi_{n-1}(F(U)) \to 0.
    \end{equation*}
    But by \cref{lemma:presheaf:completion-sectionwise,lemma:presheaf:Li-sections},
    this is equivalently a short exact sequence
    \begin{equation*}
        0 \to (\mathbb L_0 \pi_n(F))(U) \to (\pi_n(\complete{F}))(U) \to (\mathbb L_1 \pi_{n-1}(F))(U) \to 0
    \end{equation*}
    for every $U \in \Cat C$.
    These sequences thus give
    \begin{equation*}
        0 \to \mathbb L_0 \pi_n(F) \to \pi_n(\complete{F}) \to \mathbb L_1 \pi_{n-1}(F) \to 0.
    \end{equation*}
\end{proof}

\subsection{Completions in the Nonabelian Derived Category}
\label{section:embedding:psig}

Let $\Cat C$ be an (essentially) small category with finite coproducts.
Recall that $\PSig{\Cat C} \subset \PrShv{\Cat C}$ is
the full subcategory of presheaves that transform finite coproducts into finite products.
It is the category freely generated by $\Cat C$ under sifted colimits.
Write $\iota \colon \PSig{\Cat C} \to \PrShv{\Cat C}$ for the inclusion,
and $L_{\Sigma} \colon \PrShv{\Cat C} \to \PSig{\Cat C}$ for the left adjoint.

\begin{defn} \label{def:psig:extensive}
    Recall from \cite[Definition 2.3]{bachmann2020norms} that a category is called
    \emph{extensive} if it admits finite coproducts, coproducts are disjoint
    (i.e.\  for objects $X, Y \in \Cat C$, the pullback $X \times_{X \sqcup Y} Y$ exists and is an initial object),
    and finite coproduct decompositions are stable under pullbacks.
\end{defn}

\begin{lem} \label{lemma:psig:extensive-topos}
    Suppose that $\Cat C$ is extensive.
    Then $\PSig{\Cat C} = \ShvTop{\sqcup}{\Cat C}$,
    where we write $\sqcup$ for the Grothendieck topology on $\Cat C$
    generated by covers of the form $\{ U_i \to U \}_{i \in I}$ with $I$ a finite set 
    such that $\sqcup_i U_i \to U$ is an equivalence.
    In particular, $\PSig{\Cat C}$ is a topos and $L_\Sigma$ is left exact.
\end{lem}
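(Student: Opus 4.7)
The plan is to establish the identification $\PSig{\Cat C} = \ShvTop{\sqcup}{\Cat C}$ as full subcategories of $\PrShv{\Cat C}$; once this is in hand, both the topos property and the left exactness of $L_\Sigma$ follow from the general theory of $\infty$-topoi, since the sheafification functor for an $\infty$-site is always left exact (see for instance \cite[Proposition 6.2.2.7]{highertopoi}), and $L_\Sigma$ is identified with that sheafification via the equality, being a left adjoint to the same fully faithful inclusion into $\PrShv{\Cat C}$.

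First I would unwind both defining conditions. A presheaf $F \in \PrShv{\Cat C}$ lies in $\PSig{\Cat C}$ iff $F(\emptyset) \simeq *$ and the canonical comparison map $F(U \sqcup V) \to F(U) \times F(V)$ is an equivalence, naturally in $U, V$. On the other hand, $F$ lies in $\ShvTop{\sqcup}{\Cat C}$ iff it satisfies Čech descent along every covering family $\{U_i \to U\}_{i \in I}$ with $I$ finite and $\sqcup_i U_i \xrightarrow{\sim} U$. Note that the empty cover of the initial object is such a cover, and in either case it forces $F(\emptyset) \simeq *$.

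The heart of the argument is a Čech nerve calculation using extensivity. For a generating cover $\{U_i \to U\}_{i \in I}$ with $\sqcup_i U_i \xrightarrow{\sim} U$, disjointness of finite coproducts gives $U_i \times_U U_j \simeq \emptyset$ for $i \neq j$ and $U_i \times_U U_i \simeq U_i$; iterating, any $n$-fold fibered product $U_{i_0} \times_U \cdots \times_U U_{i_n}$ is empty unless all $i_k$ coincide, in which case it equals $U_{i_0}$. Thus the degree-$n$ term of the Čech nerve is $\sqcup_{i \in I} U_i \simeq U$, and the Čech nerve is essentially constant on $U$. For any $F$ with $F(\emptyset) \simeq *$ that sends finite coproducts to finite products, evaluating $F$ on this Čech nerve produces a cosimplicial anima constant with value $\prod_{i \in I} F(U_i)$, whose limit is simply $\prod_i F(U_i)$. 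Hence the descent condition collapses to the equivalence $F(U) \simeq \prod_i F(U_i)$, which is exactly the $\PSig$-condition; the same computation run in reverse shows that any $\PSig$-object satisfies Čech descent for every generating cover, giving the desired equality of subcategories.

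The main obstacle I anticipate is careful bookkeeping with the $\infty$-categorical sheaf condition: one must verify that descent for this particular topology is genuinely testable against the Čech nerves of the generating covers (rather than more general hypercovers), and that stability of finite coproduct decompositions under pullback — the second half of the extensivity hypothesis — is used to ensure that base changes of generating covers are again generating covers, so that the pretopology $\sqcup$ really defines a Grothendieck topology. Both points follow routinely from extensivity, but deserve an explicit mention in the write-up.
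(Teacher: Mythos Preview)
The paper does not give its own argument here: the entire proof is a citation to \cite[Lemma 2.4]{bachmann2020norms}. Your proposal is therefore not competing against a proof in the paper but rather supplying what the citation stands in for, and the argument you sketch is correct and is essentially the standard one. The Čech nerve computation using disjointness and pullback-stability of coproduct decompositions is exactly how the identification $\PSig{\Cat C} = \ShvTop{\sqcup}{\Cat C}$ is typically established, and your remarks about $F(\emptyset)\simeq *$ from the empty cover and about extensivity guaranteeing that the generating families form a pretopology are the right bookkeeping points. One small clarification worth making explicit in the write-up: for the direction ``sheaf $\Rightarrow$ $\PSig$'' you should phrase the Čech diagram as the product $\prod_{(i_0,\dots,i_n)} F(U_{i_0}\times_U\cdots\times_U U_{i_n})$ indexed over tuples, so that you can invoke $F(\emptyset)\simeq *$ term by term without already assuming $F$ takes coproducts to products; your text hints at this but the phrase ``the same computation run in reverse'' slightly obscures that the two directions use the Čech diagram in different presentations.
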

\begin{proof}
    This is \cite[Lemma 2.4]{bachmann2020norms}.
\end{proof}

Suppose from now on that $\Cat C$ is extensive,
so that $\PSig{\Cat C}$ is a topos,
and $L_\Sigma$ is the left adjoint of a
geometric morphism $\PSig{\Cat C} \to \PrShv{\Cat C}$.

\begin{lem} \label{lemma:psig-postnikov-complete}
    $\PSig{\Cat C}$ is Postnikov-complete.
\end{lem}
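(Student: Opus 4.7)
The plan is to reduce Postnikov-completeness of $\PSig{\Cat C}$ to the already-known Postnikov-completeness of the presheaf topos $\PrShv{\Cat C}$, using that the inclusion $\iota \colon \PSig{\Cat C} \to \PrShv{\Cat C}$ is the right adjoint of a left-exact localization (by \cref{lemma:psig:extensive-topos}).

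First I would record two formal properties of $\iota$. Since $L_\Sigma$ is left exact, the adjunction $L_\Sigma \dashv \iota$ is a left-exact localization, so $\iota$ preserves $n$-truncated objects for every $n$; combined with full faithfulness of $\iota$, this gives a canonical equivalence $\iota\,\tau_{\le n}^{\PSig} X \simeq \tau_{\le n}^{\PrShv}(\iota X)$ for every $X \in \PSig{\Cat C}$ and every $n$. Second, $\iota$ is a right adjoint and hence preserves arbitrary limits.

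Now let $X \in \PSig{\Cat C}$. Since $\iota$ is fully faithful, showing that the natural map $X \to \lim_n \tau_{\le n}^{\PSig} X$ is an equivalence in $\PSig{\Cat C}$ is equivalent to showing that $\iota X \to \iota \lim_n \tau_{\le n}^{\PSig} X$ is an equivalence in $\PrShv{\Cat C}$. Using that $\iota$ commutes with limits and with $n$-truncations, the target identifies canonically with $\lim_n \tau_{\le n}^{\PrShv}(\iota X)$, and the map in question identifies with the canonical map $\iota X \to \lim_n \tau_{\le n}^{\PrShv}(\iota X)$. This map is an equivalence because $\PrShv{\Cat C}$ is Postnikov-complete (recorded at the start of \cref{section:embedding:presheaf}).

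There is no real obstacle here: once the compatibility of $\iota$ with $n$-truncations and with limits is in hand, the statement follows by transporting Postnikov-completeness across the fully faithful embedding. The only subtle point is invoking that a left-exact Bousfield localization preserves truncatedness, which is a standard fact about such localizations and justifies the identification $\iota\,\tau_{\le n}^{\PSig} X \simeq \tau_{\le n}^{\PrShv}(\iota X)$.
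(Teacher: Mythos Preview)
Your overall strategy---transporting Postnikov-completeness from $\PrShv{\Cat C}$ along the fully faithful inclusion $\iota$---is sound, but there is a genuine gap in your justification of the key identification $\iota\,\tau_{\le n}^{\PSig} X \simeq \tau_{\le n}^{\PrShv}(\iota X)$. You claim this follows from $\iota$ being fully faithful and preserving $n$-truncated objects. That implication is false in general: for the inclusion of sheaves into presheaves on, say, the circle $S^1$, the right adjoint is fully faithful and preserves $n$-truncated objects, yet the presheaf-level $\tau_{\le 0}$ of a connected sheaf like $BG$ is typically not a sheaf (its global sections record $G$-torsors rather than a point). What you actually need is that $\tau_{\le n}^{\PrShv}$ preserves the full subcategory $\PSig{\Cat C}$. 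This holds here for a concrete reason specific to the $\Sigma$-topology: truncation in $\PrShv{\Cat C}$ is computed sectionwise, and $\tau_{\le n}$ on anima preserves finite products, so a presheaf taking finite coproducts to finite products still does so after truncation. Once that is established, $\tau_{\le n}^{\PrShv}(\iota X)$ is already local, hence coincides with $\iota L_\Sigma \tau_{\le n}^{\PrShv}(\iota X) = \iota\,\tau_{\le n}^{\PSig} X$, and the rest of your argument goes through.

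The paper's own proof simply defers to \cite[Lemma 2.6]{bachmann2020norms} and gives no argument, so your explicit reduction is more informative once the gap above is patched.
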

\begin{proof}
    See \cite[Lemma 2.6]{bachmann2020norms}.
\end{proof}

\begin{lem}
    We have a canonical equivalence
    $\Stab{\PSig{\Cat C}} \cong \PSigVal{\Cat C}{\Sp}$.
\end{lem}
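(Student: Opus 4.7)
The plan is to combine the equivalence $\Stab{\PrShv{\Cat C}} \simeq \PrShvVal{\Cat C}{\Sp}$ recalled just before the lemma with the fact that $\PSig{\Cat C}$ is a left-exact reflective subcategory of $\PrShv{\Cat C}$.

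First I would invoke \cref{lemma:psig:extensive-topos} to see that $L_{\Sigma}$ is left exact, so that $\iota \colon \PSig{\Cat C} \hookrightarrow \PrShv{\Cat C}$ is a fully faithful, finite-limit-preserving right adjoint. In particular, $\iota$ takes pointed objects to pointed objects and commutes with the loop functor $\Omega$, and by the universal property of stabilization it induces a fully faithful right adjoint $\Sp(\iota) \colon \Stab{\PSig{\Cat C}} \hookrightarrow \Stab{\PrShv{\Cat C}}$, which under the standing equivalence realizes $\Stab{\PSig{\Cat C}}$ as a full subcategory of $\PrShvVal{\Cat C}{\Sp}$.

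Next I would identify this subcategory explicitly. Using the description $\Sp(\Cat V) \simeq \lim\bigl(\cdots \xrightarrow{\Omega} \Cat V_{*} \xrightarrow{\Omega} \Cat V_{*}\bigr)$, an object $E \in \Sp(\PrShv{\Cat C})$ lies in the essential image of $\Sp(\iota)$ if and only if each of the infinite loop spaces $\pLoop(\Sigma^{n} E) \in \PrShv{\Cat C}_{*}$, for $n \ge 0$, transforms finite coproducts in $\Cat C$ into finite products, i.e.\ lies in $\PSig{\Cat C}_{*}$. (The loop functor $\Omega$ preserves $\PSig{\Cat C}_{*}$ because $\PSig{\Cat C} \subset \PrShv{\Cat C}$ is closed under finite limits, so the limit of the tower can be computed sectionwise.)

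Finally, I would show that, under the equivalence $\Stab{\PrShv{\Cat C}} \simeq \PrShvVal{\Cat C}{\Sp}$, this condition is equivalent to $E$ itself preserving finite products, i.e.\ $E \in \PSigVal{\Cat C}{\Sp}$. The forward direction is immediate: finite products in $\Sp$ are biproducts, so $\Omega^{\infty}$ and $\Sigma^{n}$ (for $n \ge 0$) preserve finite products, and therefore $E(U \sqcup V) \simeq E(U) \times E(V)$ implies the analogous equivalence after applying $\pLoop \Sigma^{n}$ sectionwise. For the converse, if $\pLoop(\Sigma^{n} E)(U \sqcup V) \simeq \pLoop(\Sigma^{n} E)(U) \times \pLoop(\Sigma^{n} E)(V)$ for every $n \ge 0$, then the spectra $E(U \sqcup V)$ and $E(U) \times E(V)$ have equivalent $n$-th spaces for all $n \ge 0$; since the family $\{\pLoop \Sigma^{n}\}_{n \ge 0}$ is jointly conservative on $\Sp$ this forces $E(U \sqcup V) \simeq E(U) \times E(V)$. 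I do not expect a serious obstacle: the result is a formal consequence of the behaviour of stabilization under left-exact reflective localizations, with the only real content being the joint conservativity observation in the last step.
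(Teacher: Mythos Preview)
Your argument is correct. The paper, by contrast, does not give a proof at all: it simply cites \cite[Remark 1.2]{spectral-schemes}. So you are supplying a self-contained argument where the paper defers to Lurie.

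A small remark on your first step: you invoke \cref{lemma:psig:extensive-topos} to get left exactness of $L_\Sigma$, but your argument never actually uses this. All you need is that $\iota$ is a fully faithful right adjoint, hence preserves all limits (in particular $\Omega$), and this holds for any reflective subcategory. The rest of your argument---identifying the essential image of $\Sp(\iota)$ via the tower description of stabilization, and then matching the levelwise product condition with the spectral product condition using joint conservativity of $\{\pLoop\Sigma^n\}_{n\ge 0}$---goes through unchanged. In particular your proof shows the lemma holds for any small $\Cat C$ with finite coproducts, not just extensive ones, which is indeed the generality of Lurie's remark.
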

\begin{proof}
    This is proven in \cite[Remark 1.2]{spectral-schemes}.
\end{proof}

\begin{lem} \label{lemma:psig:htpy-sheaf}
    Let $X \in \PSig{\Cat C}_*$ be a pointed sheaf.
    Then for every $U \in \Cat C$ and $n \ge 0$ we have
    $\pi_n(X)(U) = \pi_n(X(U))$.
\end{lem}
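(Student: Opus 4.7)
The plan is to reduce to the case $n=0$ via the identification $\pi_n(X) = \pi_0(\Omega^n X)$ for $n \geq 1$. The functor $\Omega^n$ is a finite limit, hence commutes with evaluation at any $U \in \Cat C$, both in $\PrShv{\Cat C}$ and in $\PSig{\Cat C}$: we have $(\Omega^n X)(U) = \Omega^n (X(U))$. Note also that since the inclusion $\iota \colon \PSig{\Cat C} \hookrightarrow \PrShv{\Cat C}$ preserves limits, $\Omega^n$ of an object of $\PSig{\Cat C}$ remains in $\PSig{\Cat C}$. So it suffices to prove the case $n=0$: for every pointed $Y \in \PSig{\Cat C}$ and $U \in \Cat C$, $\pi_0(Y)(U) = \pi_0(Y(U))$.

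Here I interpret $\pi_0(Y)$ as the $0$-truncation $\tau_{\leq 0} Y$ computed in $\PSig{\Cat C}$. The key input is that the $k$-truncated objects of $\PSig{\Cat C}$ coincide with the objects of $\PSig{\Cat C}$ whose underlying presheaf is sectionwise $k$-truncated; this holds because $\iota$ is fully faithful and preserves limits, so it both preserves and reflects $k$-truncatedness. Under this identification, truncation in $\PSig{\Cat C}$ can be computed by first truncating sectionwise (which takes place in $\PrShv{\Cat C}$) and then applying $L_\Sigma$ to the result, but only the first step is nontrivial as long as the sectionwise truncation already lies in $\PSig{\Cat C}$.

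Thus the heart of the argument is to show that the presheaf $Y^{pre} \colon U \mapsto \pi_0(Y(U))$ is already in $\PSig{\Cat C}$. For this I would combine two facts: first, $\pi_0 \colon \An \to \operatorname{Set}$ preserves finite products; second, $Y \in \PSig{\Cat C}$ transforms finite coproducts in $\Cat C$ into finite products in $\An$. Together these give $Y^{pre}(U \sqcup V) = \pi_0(Y(U) \times Y(V)) = \pi_0(Y(U)) \times \pi_0(Y(V))$, so $Y^{pre} \in \PSig{\Cat C}$. Since $Y^{pre}$ is also sectionwise $0$-truncated, it is $0$-truncated in $\PSig{\Cat C}$ by the previous paragraph, and the unit map $Y \to Y^{pre}$ exhibits it as the $0$-truncation in both $\PrShv{\Cat C}$ and $\PSig{\Cat C}$. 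Evaluating at $U$ yields the claim.

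I do not expect a serious obstacle here; the whole argument is essentially formal bookkeeping in a left-exact reflective subcategory, with the only genuine input being the elementary fact that $\pi_0$ (equivalently, $\tau_{\leq 0}$) preserves finite products of anima. If desired, one can package the $n \geq 1$ case uniformly by observing that $\tau_{\leq k}$ preserves finite products for every $k$, so the same sectionwise-truncation argument works for $\tau_{\leq k}$ in $\PSig{\Cat C}$, from which $\pi_n(X)(U) = \pi_n(X(U))$ follows by applying this to $\Omega^n X$ with $k = 0$.
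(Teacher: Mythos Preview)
Your proposal is correct and takes essentially the same approach as the paper. The paper's proof is a single sentence: it suffices to show that the homotopy presheaf $U \mapsto \pi_n(X(U))$ is already in $\PSig{\Cat C}$, which is immediate since homotopy groups of (pointed) anima preserve finite products. Your argument unpacks this by first reducing to $n=0$ via $\Omega^n$ and then verifying that the sectionwise $\pi_0$ is a $\Sigma$-sheaf; this extra step is harmless but unnecessary, since $\pi_n$ of a product of pointed anima is the product of the $\pi_n$'s directly.
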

\begin{proof}
    It suffices to show that the homotopy presheaf $U \mapsto \pi_n(X(U))$
    is actually a sheaf. This is immediate since homotopy groups of anima
    preserve finite products.
\end{proof}

\begin{lem} \label{lemma:psig:classifying}
    Let $G \in \Grp{\PSig{\Cat C}}$ be a sheaf of groups.
    Then the classifying space can be computed on sections,
    i.e. for every $U \in \Cat C$ we have
    $BG(U) \cong B(G(U))$.
\end{lem}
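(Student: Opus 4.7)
The plan is to express $BG$ as a sifted colimit of finite products of $G$, and then use that both finite products (for objects of $\PSig{\Cat C}$) and sifted colimits in $\PSig{\Cat C}$ are computed sectionwise.

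First, I would recall the general fact that in any $\infty$-topos $\topos X$, the classifying space of a group object $G \in \Grp{\topos X}$ is given by the geometric realization of the bar construction, i.e.\ $BG \cong |B_\bullet G|$ where $B_\bullet G \colon \op \Delta \to \topos X$ is the simplicial object with $B_n G = G^n$ and face/degeneracy maps coming from the group structure (and unit). This applies to $\topos X = \PSig{\Cat C}$, which is an $\infty$-topos by \cref{lemma:psig:extensive-topos}.

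Next, since $G \in \PSig{\Cat C}$ preserves finite products when evaluated on sections (this is exactly the defining property of $\PSig{\Cat C}$, after noting that binary products of representables $j_U \times j_V$ need not be in $\Cat C$, but the statement we need is sectionwise: for each $U \in \Cat C$, the value $G(U)$ is the one of a sheaf-of-groups and $(G \times G)(U) = G(U) \times G(U)$). Consequently, for every $n \ge 0$ and every $U \in \Cat C$,
\begin{equation*}
    (B_n G)(U) = G(U)^n = B_n(G(U)),
\end{equation*}
so that the simplicial presheaf $B_\bullet G$ evaluates sectionwise to the classical bar construction $B_\bullet(G(U))$ in $\An$.

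Finally, the inclusion $\iota \colon \PSig{\Cat C} \hookrightarrow \PrShv{\Cat C}$ preserves sifted colimits by the very definition of $\PSig{\Cat C}$ as the subcategory generated by $\Cat C$ under sifted colimits (equivalently, those presheaves sending finite coproducts to finite products, a property stable under sifted colimits). Since $\op\Delta$ is sifted, geometric realizations in $\PSig{\Cat C}$ are computed sectionwise, and hence
\begin{equation*}
    BG(U) \;\cong\; |B_\bullet G|(U) \;\cong\; \bigl|\,U \mapsto B_\bullet G(U)\,\bigr| \;\cong\; |B_\bullet(G(U))| \;\cong\; B(G(U)).
\end{equation*}
The only real subtlety is the identification of $BG$ with $|B_\bullet G|$ in the $\infty$-topos $\PSig{\Cat C}$; everything after that is formal from sectionwise computation of finite products and sifted colimits.
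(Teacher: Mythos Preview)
Your argument is correct, but it differs from the paper's route. The paper does not invoke the bar construction or the preservation of sifted colimits; instead it argues via homotopy sheaves. Concretely, the paper observes that the presheaf $U \mapsto B(G(U))$ lies in $\PSig{\Cat C}$ because $B$ preserves finite products of groups, and then identifies this presheaf with $BG$ by appealing to \cref{lemma:psig:htpy-sheaf}: since homotopy sheaves are computed sectionwise, the presheaf has $\pi_1 = G$ and $\pi_n = 0$ otherwise, hence is a $K(G,1)$. Your approach instead writes $BG = |B_\bullet G|$ and uses that $\iota_\Sigma$ preserves sifted colimits, so the realization is computed sectionwise. Both arguments ultimately establish the same fact (the presheaf-level $BG$ already lies in $\PSig{\Cat C}$), but yours is more direct and avoids the detour through homotopy sheaves, while the paper's approach reuses \cref{lemma:psig:htpy-sheaf} and keeps the argument parallel to the surrounding lemmas.
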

\begin{proof}
    Using \cref{lemma:psig:htpy-sheaf}, it suffices to show that the classifying presheaf $U \mapsto B(G(U))$
    is actually a sheaf. This is clear since the classifying space
    of a product of two groups is the product of the classifying spaces.
\end{proof}

\begin{prop} \label{lemma:psig:pequivalence}
    A morphism $f \colon F \to G$ in $\PSig{\Cat C}$
    is a $p$-equivalence (in $\PSig{\Cat C}$) if and only if
    $\iota(f)$ is a $p$-equivalence in $\PrShv{\Cat C}$.
\end{prop}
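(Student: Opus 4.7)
I would treat the two implications separately.

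For the direction ``$\iota(f)$ is a $p$-equivalence in $\PrShv{\Cat C}$ implies $f$ is a $p$-equivalence in $\PSig{\Cat C}$'', I would simply apply $L_\Sigma$. By \cref{lemma:psig:extensive-topos} (using extensiveness of $\Cat C$), the reflector $L_\Sigma$ is the left-exact left adjoint of a geometric morphism; its stabilization is therefore an exact left adjoint intertwining $\pSus$ with $L_\Sigma$ and preserving all colimits, in particular the cofibers $\sslash p$. Hence $L_\Sigma$ preserves $p$-equivalences, and $f = L_\Sigma(\iota f)$ is a $p$-equivalence in $\PSig{\Cat C}$.

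For the converse, my strategy is to reduce both sides to the same sectionwise condition. By \cref{lemma:presheaf:completion-sectionwise}, $\iota(f)$ is a $p$-equivalence in $\PrShv{\Cat C}$ if and only if $f(U) \colon F(U) \to G(U)$ is a $p$-equivalence of anima for every $U \in \Cat C$. On the other hand, $f$ being a $p$-equivalence in $\PSig{\Cat C}$ means $\pSus_{\PSig}(f) \sslash p \cong 0$ in $\PSigVal{\Cat C}{\Sp}$. Since $\PSigVal{\Cat C}{\Sp}$ is a full stable subcategory of $\PrShvVal{\Cat C}{\Sp}$ closed under finite limits (hence under finite colimits in the stable setting), both equivalences and the cofiber $\sslash p$ are computed sectionwise, so the hypothesis reduces to $\operatorname{ev}_U(\pSus_{\PSig}(f)) \sslash p \cong 0$ in $\Sp$ for every $U \in \Cat C$.

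The hard part will be establishing the natural identification $\operatorname{ev}_U \circ \pSus_{\PSig} \cong \pSus \circ \operatorname{ev}_U$. This is subtle because $\pSus_{\PSig}$ involves a genuine sheafification: since $\pSus$ does not convert products into direct sums, the sectionwise suspension of a $\PSig$-sheaf fails to be a $\PSig$-sheaf. My approach would be to explicitly compute the right adjoint $\operatorname{ev}_U^R \colon \Sp \to \PSigVal{\Cat C}{\Sp}$, which sends $E$ to the presheaf $V \mapsto E^{\Hom{\Cat C}(U,V)}$ (cotensor); this lies in $\PSigVal{\Cat C}{\Sp}$ because extensiveness of $\Cat C$ turns coproducts of Hom-sets into products and cotensor converts products into direct sums. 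Chaining the adjunctions $\pSus_{\PSig} \dashv \pLoop_{\PSig}$ and $\operatorname{ev}_U \dashv \operatorname{ev}_U^R$, and using that $\pLoop_{\PSig}$ is sectionwise, one obtains $\Map{\Sp}(\operatorname{ev}_U \pSus_{\PSig} F, E) \cong \Map{\Sp}(\pSus F(U), E)$ naturally on Yoneda representables $F = j_V$ (both sides reduce to $(\pLoop E)^{\Hom{\Cat C}(U,V)}$). Since both functors of $F$ send sifted colimits to limits, and $\PSig{\Cat C}$ is generated under sifted colimits by the Yoneda image of $\Cat C$, the identification extends to all of $\PSig{\Cat C}$. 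With this in hand, $\operatorname{ev}_U(\pSus_{\PSig}(f)) \sslash p \cong 0$ translates into $\pSus f(U) \sslash p \cong 0$, which is exactly the desired sectionwise $p$-equivalence.
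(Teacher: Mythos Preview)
Your treatment of the easy direction (applying $L_\Sigma$) is correct and matches the paper.

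For the hard direction, your approach breaks down: the claimed identification $\operatorname{ev}_U \circ \pSus_{\PSig} \cong \pSus \circ \operatorname{ev}_U$ is \emph{false}. The specific error is in your formula for the right adjoint $\operatorname{ev}_U^R \colon \Sp \to \PSigVal{\Cat C}{\Sp}$. The presheaf $V \mapsto E^{\Hom{\Cat C}(U,V)}$ is indeed the right adjoint to $\operatorname{ev}_U$ on $\PrShvVal{\Cat C}{\Sp}$, but it lies in $\PSigVal{\Cat C}{\Sp}$ only when $\Hom{\Cat C}(U,-)$ preserves finite coproducts, i.e.\ when $U$ is connected. Extensiveness does not guarantee this: it describes maps \emph{into} a coproduct via decompositions of the source, not as a disjoint union of Hom-sets. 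Concretely, take $\Cat C = \mathrm{FinSet}$ (so $\PSig{\Cat C} \simeq \An$ and $\PSigVal{\Cat C}{\Sp} \simeq \Sp$ via evaluation at the one-point set). For $U = \{1,2\}$ and $V = \{1,2,3\}$ one computes $(\pSus_{\PSig} j_V)(U) \cong \mathbb S^{\oplus 6}$, while $\pSus(j_V(U)) = \pSus(\Hom{}(U,V)) \cong \mathbb S^{\oplus 9}$. So even on representables the two functors disagree.

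This is not a patchable gap: the obstruction is exactly that $\pSus$ converts products into smash products rather than products, so the sectionwise suspension of a $\PSig$-sheaf is genuinely not a $\PSig$-sheaf. The paper circumvents this by replacing $\pSus$ with $H_*(-,\finfld{p})$ viewed as taking values in graded cocommutative $\finfld{p}$-coalgebras. The crucial point is that in $\coAlg{\Mod{\finfld{p},\mathrm{gr}}}$ the categorical product \emph{is} the tensor product, so by K\"unneth the functor $H_*(-,\finfld{p})$ does send products to products and hence $U \mapsto H_*(F(U),\finfld{p})$ is a $\PSig$-sheaf of coalgebras. One then checks the hypothesis on points of $\PSig{\Cat C}$, uses that $\coAlg{\Mod{\finfld{p},\mathrm{gr}}}$ is compactly generated to descend the equivalence from stalks to the $\PSig$-sheaf itself, and finally reads off the sectionwise statement.
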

\begin{proof}
    One direction is immediate: If $\iota f$ is a $p$-equivalence,
    so is $f = L_\Sigma (\iota f)$. For this, note that
    $L_\Sigma$ is the left adjoint of a geometric morphism
    $\PrShv{\Cat C} \rightleftarrows \PSig{\Cat C}$, and use
    \cref{lemma:peq-via-points}.

    So suppose that $f$ is a $p$-equivalence.
    Write $\Mod{\finfld{p}, \operatorname{gr}}$ for the category of 
    graded $\finfld{p}$-vectorspaces, and
    \begin{equation*}
        \coAlg{\Mod{\finfld{p},\operatorname{gr}}} \coloneqq \op{\Alg{\op{\Mod{\finfld{p},\operatorname{gr}}}}}
    \end{equation*}
    for the category
    of cocommutative graded coalgebras in $\finfld{p}$-vectorspaces. Note that the categorical product of coalgebras is given by
    the tensor-product of the underlying graded $\finfld{p}$-vectorspaces,
    i.e.\ the forgetful functor 
    \begin{equation*}
        U \colon \coAlg{\Mod{\finfld{p},\operatorname{gr}}} = \op{\Alg{\op{\Mod{\finfld{p},\operatorname{gr}}}}} \to \op{(\op{\Mod{\finfld{p},\operatorname{gr}}})} = \Mod{\finfld{p},\operatorname{gr}}
    \end{equation*}
    is symmetric monoidal where we equip $\coAlg{\Mod{\finfld{p},\operatorname{gr}}}$ with the categorical product,
    and $\Mod{\finfld{p},\operatorname{gr}}$ with the tensor product of graded $\finfld{p}$-vectorspaces.
    Note that for every $F \in \PrShv{\Cat C}$,
    the presheaf $H_*(F(-), \finfld{p}) \colon \op{\Cat C} \to \Mod{\finfld{p},\operatorname{gr}}$
    can be promoted to a presheaf of cocommutative graded coalgebras in $\finfld{p}$-vectorspaces 
    (see e.g.\ \cite[19.6.2]{tomdieck2008algebraic}).
    By abuse of notation, write again $H_*(F(-), \finfld{p}) \colon \op{\Cat C} \to \coAlg{\Mod{\finfld{p},\operatorname{gr}}}$ for this presheaf.
    If $F \in \PSig{\Cat C}$ is in the nonabelian derived category, then also $H_*(F(-), \finfld{p}) \in \PSigVal{\Cat C}{\coAlg{\Mod{\finfld{p},\operatorname{gr}}}}$:
    This is clear since the product of anima yields the tensor product on homology (by the Künneth formula,
    using that we take the homology with coefficients in a field),
    which is the categorical product in $\coAlg{\Mod{\finfld{p},\operatorname{gr}}}$.
    Now note that since $f$ is a $p$-equivalence,
    we know that $s^*f$ is a $p$-equivalence for all points $s$.
    This implies (using \cref{lemma:anima:peq-iff-fpeq})
    that $H_*(s^*f, \finfld{p})$ is an equivalence for all $s$.
    Since homology commutes with filtered colimits, it commutes with
    stalks, thus we get that $s^*H_*(f, \finfld{p})$ is an equivalence for all $s$
    (here we implicitly use that $H_*(F(-), \finfld{p}) \in \PSigVal{\Cat C}{\coAlg{\Mod{\finfld{p},\operatorname{gr}}}}$).
    Thus, using e.g.\ \cite[Example 2.13]{haine2021nonabelian} and the fact
    that $\coAlg{\Mod{\finfld{p},\operatorname{gr}}}$ is compactly generated
    (this is the fundamental theorem of coalgebras, see \cite[II.2.2.1]{sweedler1969hopf}),
    already $H_*(f, \finfld{p})$ is an equivalence.
    But this means, on every section $U \in \Cat C$ we have an isomorphism
    $H_*(F(U), \finfld{p}) \xrightarrow{\simeq} H_*(G(U), \finfld{p})$.
    Using \cref{lemma:anima:peq-iff-fpeq} again,
    we conclude that $f_U \colon F(U) \to G(U)$ is a $p$-equivalence for all $U$.
    Thus, $\iota f$ is a $p$-equivalence by \cref{lemma:presheaf:completion-sectionwise}.
\end{proof}

\begin{prop} \label{lemma:p-completion-in-psig}
    Write temporarily $L_p \coloneqq \completebr{-} \circ \iota \colon \PSig{\Cat C} \to \PrShv{\Cat C}$.
    If $F \in \PSig{\Cat C}$, then $L_p(F) \in \PSig{\Cat C}$
    and $L_p(F) = \complete{F}$.
\end{prop}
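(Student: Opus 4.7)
The plan is to prove this in three steps: first show that $L_p(F)$ preserves finite products (so it actually lies in $\PSig{\Cat C}$), then verify that the object we've constructed is $p$-complete in $\PSig{\Cat C}$, and finally produce a $p$-equivalence $F \to L_p(F)$ in $\PSig{\Cat C}$.

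For the first step, I would compute sections. By \cref{lemma:presheaf:completion-sectionwise}, $(L_p F)(U) = \completebr{F(U)}$ for every $U \in \Cat C$. Given a finite collection $U_1, \dots, U_n \in \Cat C$, since $F \in \PSig{\Cat C}$ transforms coproducts into products, we have $F(\sqcup_i U_i) \cong \prod_i F(U_i)$. Applying $\completebr{-}$ to both sides and using \cref{lemma:topos:completion-products} (applied in the $\infty$-topos of anima, which is an $\infty$-topos with enough points) gives
\begin{equation*}
    (L_p F)(\sqcup_i U_i) \cong \completebr{\prod_i F(U_i)} \cong \prod_i \completebr{F(U_i)} \cong \prod_i (L_p F)(U_i),
\end{equation*}
with the empty-product case handled by noting that $\completebr{*} \cong *$ because $*$ is $p$-complete. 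Hence $L_p F \in \PSig{\Cat C}$.

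For the second step, I want to show that $L_p(F)$, viewed as an object of $\PSig{\Cat C}$, is $p$-complete. Let $g \colon G \to G'$ be a $p$-equivalence in $\PSig{\Cat C}$. Full faithfulness of $\iota$ gives
\begin{equation*}
    \Map{\PSig{\Cat C}}(G', L_p F) \cong \Map{\PrShv{\Cat C}}(\iota G', \completebr{\iota F}),
\end{equation*}
and likewise for $G$. By \cref{lemma:psig:pequivalence}, $\iota g$ is a $p$-equivalence in $\PrShv{\Cat C}$; since $\completebr{\iota F}$ is $p$-complete in $\PrShv{\Cat C}$, precomposition with $\iota g$ is an equivalence, which by the above transports to $\PSig{\Cat C}$.

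For the third step, the unit $\iota F \to \completebr{\iota F}$ lands in the essential image of $\iota$ (by step one), so by full faithfulness of $\iota$ it corresponds to a canonical map $F \to L_p F$ in $\PSig{\Cat C}$ whose image under $\iota$ is the $p$-completion unit. Since the latter is a $p$-equivalence in $\PrShv{\Cat C}$, \cref{lemma:psig:pequivalence} implies the original map is a $p$-equivalence in $\PSig{\Cat C}$. Combined with step two, this identifies $L_p F$ as the $p$-completion of $F$ in $\PSig{\Cat C}$.

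I expect no serious obstacle — everything is essentially a bookkeeping argument once \cref{lemma:presheaf:completion-sectionwise}, \cref{lemma:topos:completion-products}, and \cref{lemma:psig:pequivalence} are in hand. The mildly delicate point is step one, where one must invoke that $p$-completion of anima commutes with finite products; but since $\An$ is an $\infty$-topos with enough points this is covered by \cref{lemma:topos:completion-products}.
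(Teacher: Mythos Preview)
Your proof is correct and essentially identical to the paper's: both compute sections via \cref{lemma:presheaf:completion-sectionwise}, use \cref{lemma:topos:completion-products} (in $\An$) to check the product condition, and then use \cref{lemma:psig:pequivalence} together with full faithfulness of $\iota$ to verify $p$-completeness and the $p$-equivalence $F \to L_pF$. The only cosmetic differences are the order in which you treat the last two steps and that the paper phrases the $p$-equivalence step via ``$L_\Sigma$ preserves $p$-equivalences'' rather than citing \cref{lemma:psig:pequivalence} directly (which amounts to the same thing).
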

\begin{proof}
    Let $F \in \PSig{\Cat C}$.
    We need to prove that $L_p(F)$ transforms finite coproducts into finite products.
    Thus let $U, V \in \Cat C$.
    Then
    \begin{align*}
        L_p(F)(U \amalg V) & = \completebr{\iota F}(U \amalg V)                           \\
                           & \cong \completebr{F(U \amalg V)}                             \\
                           & \cong \completebr{F(U) \times F(V)}                          \\
                           & \cong \completebr{F(U)} \times \completebr{F(V)}             \\
                           & \cong \completebr{\iota F}(U) \times \completebr{\iota F}(V) \\
                           & = L_p(F)(U) \times L_p(F)(V),
    \end{align*}
    where the second and fifth equivalence are \cref{lemma:presheaf:completion-sectionwise},
    the third equivalence exists because $F \in \PSig{\Cat C}$,
    and the fourth equivalence holds because $p$-completion
    commutes with products, see \cref{lemma:topos:completion-products}.
    Thus, $L_p(F) \cong L_\Sigma(L_p(F))$.
    Since $L_\Sigma$ preserves $p$-equivalences, we get that
    $F \to L_p(F)$ is a $p$-equivalence.
    Thus, we are left to show that $L_p(F)$ is $p$-complete in $\PSig{\Cat C}$.
    Let $f \colon G \to G'$ be a $p$-equivalence in $\PSig{\Cat C}$.
    Then $\Map{\PSig{\Cat C}}(f, L_p(F)) \cong \Map{\PrShv{\Cat C}}(\iota f, \iota L_p(F)) \cong \Map{\PrShv{\Cat C}}(\iota f, \completebr{\iota F})$
    is an equivalence because $\iota f$ is a $p$-equivalence by \cref{lemma:psig:pequivalence}.
    We conclude that $L_p(F)$ is $p$-complete.
\end{proof}

\begin{lem} \label{lemma:psig-connectivity-of-completion}
    Let $F \in \PSig{\Cat C}$ be $n$-connective.
    Then $\complete{F}$ is $n$-connective.
\end{lem}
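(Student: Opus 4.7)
The plan is to reduce the statement to the already-proven presheaf case via \cref{lemma:completions-respects-connectiveness}, using the fact that $p$-completion in $\PSig{\Cat C}$ is just the sectionwise $p$-completion in view of \cref{lemma:p-completion-in-psig}.

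First, I would observe that for any $X \in \PSig{\Cat C}$, the truncation functor $\tau_{\le k}$ computed in presheaves already preserves the property of transforming finite coproducts into finite products (since truncation commutes with finite products in anima), so presheaf truncation and $\PSig{\Cat C}$-truncation agree on objects of $\PSig{\Cat C}$. In particular, $n$-connectivity of an object of $\PSig{\Cat C}$ can be checked sectionwise, i.e.\ $F \in \PSig{\Cat C}$ is $n$-connective if and only if $F(U)$ is $n$-connective in $\An$ for every $U \in \Cat C$, if and only if $\iota F \in \PrShv{\Cat C}$ is $n$-connective (using that presheaf connectivity is computed sectionwise).

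Next, I would invoke \cref{lemma:p-completion-in-psig}: the $p$-completion of $F$ in $\PSig{\Cat C}$ is computed as $\complete{F} = \completebr{\iota F}$, where the right-hand side is the $p$-completion of the underlying presheaf. By \cref{lemma:completions-respects-connectiveness}, since $\iota F$ is $n$-connective as a presheaf, so is $\completebr{\iota F}$. Reading this backwards through the sectionwise characterization of connectivity, $\complete{F}$ is $n$-connective in $\PSig{\Cat C}$, which is what we wanted.

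There is no real obstacle here; the whole argument is a two-line bookkeeping exercise once the correct identification of $p$-completion in $\PSig{\Cat C}$ with sectionwise $p$-completion from \cref{lemma:p-completion-in-psig} is in hand. The only subtle point is the remark that $\PSig$-truncation equals presheaf-truncation on $\PSig$-objects; this is what lets us pass connectivity freely between $\iota F$ and $F$, and between $\completebr{\iota F}$ and $\complete{F}$.
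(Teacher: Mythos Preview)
Your proof is correct and follows essentially the same approach as the paper: reduce to the presheaf case via \cref{lemma:p-completion-in-psig} and \cref{lemma:completions-respects-connectiveness}, using that connectivity in $\PSig{\Cat C}$ can be checked sectionwise. The paper cites \cref{lemma:psig:htpy-sheaf} for this last point, which is exactly your observation that presheaf truncation and $\PSig$-truncation agree on objects of $\PSig{\Cat C}$.
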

\begin{proof}
    By \cref{lemma:p-completion-in-psig}
    we can compute the $p$-completion on the underlying presheaf.
    Then the result follows from \cref{lemma:completions-respects-connectiveness,lemma:psig:htpy-sheaf}.
\end{proof}

\begin{lem} \label{lemma:psig:htpy-dim}
    $\PSig{\Cat C}$ is locally of homotopy dimension $0$.
    In particular, it is locally of cohomological dimension $0$,
    and thus for every $A \in \tstructheart{\PSigVal{\Cat C}{\Sp}}$,
    $\Gamma(U, A) \in \heart{\Sp}$ for all $U \in \Cat C$ (i.e.\ there is no sheaf cohomology).
\end{lem}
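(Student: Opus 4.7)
The plan is to exhibit the family of representables $\{j_U\}_{U \in \Cat C}$ as a generating family of $\PSig{\Cat C}$ whose slice topoi each have homotopy dimension $\le 0$; by definition this yields that $\PSig{\Cat C}$ is locally of homotopy dimension $\le 0$. That the $j_U$ generate is immediate from the construction of $\PSig{\Cat C}$ as the free cocompletion of $\Cat C$ under sifted colimits.

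The main step is to show that for each $U \in \Cat C$ and each effective epimorphism $p \colon F \to j_U$ in $\PSig{\Cat C}$, there exists a section $s \colon j_U \to F$ with $p \circ s \simeq \operatorname{id}_{j_U}$. The key input (and the most delicate point of the argument) is that effective epimorphisms in $\PSig{\Cat C}$ are detected sectionwise: for any $G \in \PSig{\Cat C}$, the sectionwise presheaf $V \mapsto \pi_0(G(V))$ already lies in $\PSig{\Cat C}$, since $\pi_0$ of anima preserves finite products. Hence the $\PSig{\Cat C}$-sheafification of the sectionwise $\pi_0$ coincides with the sectionwise one, and effective epimorphisms in $\PSig{\Cat C}$ are precisely the sectionwise effective epimorphisms. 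Applying this to $p$ yields that $F(U) \to j_U(U) = \Hom{\Cat C}(U, U)$ is an effective epimorphism of anima; lifting $\operatorname{id}_U$ to a preimage in $F(U)$ and invoking the Yoneda lemma then produces the desired section $s$. Beyond this key point, the argument is a direct adaptation of the classical proof that a presheaf topos on a category with a terminal object has homotopy dimension $\le 0$ (\cite[Proposition 7.2.1.9]{highertopoi}).

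The ``in particular'' clauses follow formally: local homotopy dimension $\le 0$ implies local cohomological dimension $\le 0$ by \cite[Corollary 7.2.2.30]{highertopoi}. For $A \in \tstructheart{\PSigVal{\Cat C}{\Sp}}$ and $U \in \Cat C$, the spectrum $\Gamma(U, A)$ then has vanishing positive homotopy groups (by left t-exactness of $\Gamma(U, -)$ as a right adjoint) and vanishing negative homotopy groups (which would compute $H^{>0}(j_U, A)$ and so vanish by local cohomological dimension $\le 0$), so $\Gamma(U, A) \in \heart{\Sp}$.
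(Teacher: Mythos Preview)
Your proof is correct and takes essentially the same approach as the paper: both arguments use that representables generate, that $\pi_0$ in $\PSig{\Cat C}$ is computed sectionwise, and that effective epimorphisms are therefore sectionwise surjective on $\pi_0$, from which the splitting of effective epimorphisms onto representables follows by Yoneda. The paper packages this slightly differently---it first uses the slice equivalence $\PSig{\Cat C}_{/C} \simeq \PSig{\Cat C_{/C}}$ to reduce to the case where $\Cat C$ has a terminal object and then invokes \cite[Lemma 7.2.1.7]{highertopoi} (global sections preserving effective epimorphisms), and it makes explicit the point you use implicitly, namely that for the disjoint union topology an epimorphism of sheaves of sets is already surjective on sections---but the substance is the same.
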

\begin{proof}
    Since the elements of $\Cat C$ generate $\PSig{\Cat C}$ under colimits,
    it suffices to show that for every $C \in \Cat C$ the topos 
    $\PSig{\Cat C}_{/C}$ is of homotopy dimension $0$.
    Note that $\PSig{\Cat C}_{/C} \cong \PSig{\Cat C_{/C}}$.
    Therefore, we may assume that $\Cat C$ has a final element,
    and we want to prove that $\PSig{\Cat C}$ has homotopy dimension $0$.

    Note that there is a unique geometric morphism $\operatorname{const} \colon \An \rightleftarrows \PSig{\Cat C} \colon \Gamma$.
    Since $\Cat C$ has a final object $*$, the functor $\Gamma$ is given by 
    evaluating at the final object.
    By \cite[Lemma 7.2.1.7]{highertopoi}, it suffices to show that $\Gamma$ preserves
    effective epimorphisms.
    By \cref{lemma:psig:htpy-sheaf}, the homotopy sheaves can be calculated as 
    the underlying homotopy presheaves.
    Therefore, we see that for an effective epimorphism $f \colon X \to Y$,
    that $\Gamma(f)$ is still surjective on $\pi_0$, i.e.\ $\Gamma(f)$ is an effective epimorphism.
    (Note that in the disjoint union topology a surjective map of sheaves of sets is already surjective on 
    sections).

    The last part is \cite[Corollary 7.2.2.30]{highertopoi}.
\end{proof}

\begin{lem} \label{lemma:postnikov-completion-of-psig}
    Let $F \in \PSig{\Cat C}$ be nilpotent.
    Then $\complete{F} = \completebr{\limil{n} \tau_{\le n} F} \cong \limil{n} \completebr{\tau_{\le n} F}$.
\end{lem}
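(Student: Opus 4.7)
The plan is to deduce the result from \cref{thm:topos:p-comp-commutes-with-post-tower} applied to the $\infty$-topos $\An$, using that essentially every construction in $\PSig{\Cat C}$ reduces to a sectionwise one. The first equality $\complete{F} = \completebr{\limil{n} \tau_{\le n} F}$ is immediate from Postnikov completeness (\cref{lemma:psig-postnikov-complete}): the canonical map $F \to \limil{n} \tau_{\le n} F$ is an equivalence, and applying $\completebr{-}$ gives the equality.

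For the equivalence $\complete{F} \cong \limil{n} \completebr{\tau_{\le n} F}$, I would argue that both sides agree on every section $U \in \Cat C$, since equivalences in $\PSig{\Cat C}$ are detected sectionwise. Limits in $\PSig{\Cat C}$ are computed sectionwise (as $\iota$ is a right adjoint), and truncations in $\PSig{\Cat C}$ are also sectionwise: the truncation $\tau_{\le n}(\iota F)$ taken in $\PrShv{\Cat C}$ still transforms finite coproducts into finite products (truncation commutes with finite products), hence already lies in $\PSig{\Cat C}$. Combined with \cref{lemma:p-completion-in-psig}, which says that $p$-completion in $\PSig{\Cat C}$ is computed sectionwise (via the underlying presheaf), we obtain
\begin{equation*}
    \bigl(\limil{n} \completebr{\tau_{\le n} F}\bigr)(U) \cong \limil{n} \completebr{\tau_{\le n}(F(U))}
    \quad \text{and} \quad
    \complete{F}(U) \cong \completebr{F(U)}
\end{equation*}
for every $U \in \Cat C$.

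It remains to identify these two anima. The key input is that $F(U)$ is a nilpotent anima: by \cref{lemma:psig:htpy-sheaf} we have $\pi_n(F)(U) \cong \pi_n(F(U))$, and the filtration witnessing the nilpotence of the $\pi_1(F)$-action on $\pi_n(F)$ restricts sectionwise to a filtration witnessing the nilpotence of the $\pi_1(F(U))$-action on $\pi_n(F(U))$. Applying \cref{thm:topos:p-comp-commutes-with-post-tower} to $\An$ (which is trivially locally of uniform homotopy dimension $0$ with enough points) then yields $\completebr{F(U)} \cong \limil{n} \completebr{\tau_{\le n}(F(U))}$, which closes the argument. The main technical points to handle are the sectionwise behavior of truncations in $\PSig{\Cat C}$ and the transfer of nilpotence from sheaves to sections, but both are formal consequences of $\PSig{\Cat C}$ being closed under the relevant constructions inside $\PrShv{\Cat C}$; I do not anticipate any serious obstacle.
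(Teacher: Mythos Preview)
Your argument is correct, but it takes a different route from the paper. The paper applies \cref{thm:topos:p-comp-commutes-with-post-tower} directly to the $\infty$-topos $\PSig{\Cat C}$: since $\PSig{\Cat C}$ is locally of homotopy dimension $0$ by \cref{lemma:psig:htpy-dim}, it is in particular locally of finite uniform homotopy dimension, and the theorem applies immediately. No sectionwise reduction is needed.

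Your approach instead pushes everything down to $\An$ via sections and then invokes the theorem there. This works, but it costs you the extra verification that nilpotence of $F$ in $\PSig{\Cat C}$ implies nilpotence of each $F(U)$ in $\An$. That step is not entirely formal: a $G$-central series for the $\pi_1(F)$-action involves quotient sheaves, and one must know that these quotients are computed sectionwise in $\PSig{\Cat C}$ (which follows from homotopy dimension $0$, or equivalently from the observation that presheaf quotients of groups preserve finite products and hence already lie in $\PSig{\Cat C}$). You gesture at this but do not spell it out. The paper's approach sidesteps this entirely by treating $\PSig{\Cat C}$ as the ambient topos from the start, which is both shorter and avoids the bookkeeping about transferring nilpotence to sections. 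On the other hand, your route makes the reduction to the classical Bousfield--Kan situation more visible.
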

\begin{proof}
    Using \cref{thm:topos:p-comp-commutes-with-post-tower},
    it suffices to show that $\PSig{\Cat C}$ is locally of finite uniform homotopy dimension.
    This is clear, since $\PSig{\Cat C}$ is locally of homotopy dimension $0$,
    see \cref{lemma:psig:htpy-dim}.
\end{proof}

Recall the $p$-adic t-structure from \cref{def:t-struct:defn}.

\begin{lem} \label{lemma:psig:t-exact-p-adic}
    The inclusion functor $\iota_\Sigma \colon \PSigVal{\Cat C}{\Sp} \to \PrShvVal{\Cat C}{\Sp}$
    is t-exact for the standard t-structures 
    and t-exact for the $p$-adic t-structures.
\end{lem}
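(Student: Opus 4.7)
The plan is to deduce both t-exactness claims from two ingredients: that $\iota_\Sigma$ is exact between stable $\infty$-categories, and that homotopy sheaves in $\PSigVal{\Cat C}{\Sp}$ can be computed as underlying homotopy presheaves. Exactness of $\iota_\Sigma$ will follow because $L_\Sigma$ is the left adjoint of a geometric morphism by \cref{lemma:psig:extensive-topos}, hence left exact; its right adjoint $\iota_\Sigma$ then preserves finite limits, and between stable $\infty$-categories this forces exactness. In particular $\iota_\Sigma$ commutes with $(-)\sslash p$.

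For the standard t-structures, I would first extend \cref{lemma:psig:htpy-sheaf} from pointed sheaves of anima to presheaves of spectra (and to all $n \in \Z$): the presheaf $U \mapsto \pi_n(E(U))$ automatically satisfies descent for the disjoint-union topology, since $\pi_n$ commutes with finite products, and by \cref{lemma:psig:extensive-topos} this is precisely the sheaf condition cutting out $\PSig{\Cat C} \subset \PrShv{\Cat C}$. Consequently the homotopy sheaves of $E \in \PSigVal{\Cat C}{\Sp}$ agree with the homotopy presheaves of $\iota_\Sigma E$, so that standard connectivity and coconnectivity are both preserved and reflected by $\iota_\Sigma$.

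For the $p$-adic t-structures, left t-exactness of $\iota_\Sigma$ is immediate from \cref{lemma:t-struct:right-t-exact} applied to $L_\Sigma$: the latter is right t-exact for the standard t-structures (indeed t-exact, as a left exact left adjoint), so its right adjoint is left t-exact for the $p$-adic t-structures. For right t-exactness, if $E \in \tpcon{\PSigVal{\Cat C}{\Sp}}$ then \cref{lemma:t-struct:mod-p-in-con-implies-in-tpcon} gives $E \sslash p \in \tcon{\PSigVal{\Cat C}{\Sp}}$; combining exactness with the standard t-exactness of $\iota_\Sigma$ established in the previous paragraph yields $(\iota_\Sigma E) \sslash p \cong \iota_\Sigma(E \sslash p) \in \tcon{\PrShvVal{\Cat C}{\Sp}}$, which places $\iota_\Sigma E$ in $\tpcon{\PrShvVal{\Cat C}{\Sp}}$ by the same lemma. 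The only real hurdle is the spectral extension of \cref{lemma:psig:htpy-sheaf}; after that, both t-exactness assertions follow formally from \cref{lemma:t-struct:mod-p-in-con-implies-in-tpcon} and \cref{lemma:t-struct:right-t-exact}.
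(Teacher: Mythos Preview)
Your argument is correct. The standard t-exactness follows exactly as you say, and for the $p$-adic t-structures your two applications of \cref{lemma:t-struct:right-t-exact} (to $L_\Sigma$ for left t-exactness of $\iota_\Sigma$, and---in unpacked form---to $\iota_\Sigma$ itself for right t-exactness) are valid and give the result with no further input.

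The paper proceeds differently for the $p$-adic part: rather than invoking \cref{lemma:t-struct:right-t-exact} directly, it shows that $p$-adic connectivity and coconnectivity in $\PSigVal{\Cat C}{\Sp}$ can be tested sectionwise, by rerunning the proof of \cref{lemma:presheaf:evaluation-t-exact} with the additional ingredient \cref{lemma:p-completion-in-psig} (needed to verify the $p$-completeness clause of \cref{lemma:t-struct:char-of-cocon} sectionwise). From that sectionwise description, t-exactness of $\iota_\Sigma$ follows. Your route is shorter and avoids \cref{lemma:p-completion-in-psig} entirely; the paper's route has the side benefit of yielding the sectionwise characterization, but the downstream uses (e.g.\ \cref{lemma:psig:Li-computation}) only need t-exactness of $\iota_\Sigma$ together with \cref{lemma:presheaf:evaluation-t-exact}, so your argument would suffice there as well.
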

\begin{proof}
    The claim about the standard t-structures is immediate as homotopy objects 
    can be computed on the level of presheaves.

    Using \cref{lemma:presheaf:evaluation-t-exact},
    it suffices to show that $E \in \PSigVal{\Cat C}{\Sp}$
    is connective (resp. coconnective) for the $p$-adic t-structure if and only if $E(U)$ 
    is connective (resp. coconnective) for the $p$-adic t-structure on $\Sp$ for all $U \in \Cat C$.
    Here, one argues as in the proof of \cref{lemma:presheaf:evaluation-t-exact},
    noting that the homotopy objects of $E$ are calculated as the homotopy objects of the underlying 
    presheaves, and using \cref{lemma:p-completion-in-psig}.
\end{proof}

\begin{lem} \label{lemma:psig:Li-computation}
    Let $A \in \heart{\PSigVal{\Cat C}{\Sp}}$.
    Then $(\mathbb L_i A)(U) \cong \mathbb L_i (A(U))$ for every $U \in \Cat C$.
    In particular, $\mathbb L_i A \in \heart{\PSigVal{\Cat C}{\Sp}}$.
\end{lem}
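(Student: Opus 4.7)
The plan is to reduce everything to the already-established presheaf statement \cref{lemma:presheaf:Li-sections} by passing through the inclusion $\iota_\Sigma \colon \PSigVal{\Cat C}{\Sp} \to \PrShvVal{\Cat C}{\Sp}$.

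First I would unpack the definition: $\mathbb L_i A = \pi_i^p(A)$ is computed with respect to the $p$-adic t-structure on $\PSigVal{\Cat C}{\Sp}$. By \cref{lemma:psig:t-exact-p-adic}, $\iota_\Sigma$ is t-exact for the $p$-adic t-structures on both sides, so it commutes with the functors $\tau^p_{\ge i}$, $\tau^p_{\le i}$, and hence with $\pi_i^p$. Therefore $\iota_\Sigma(\mathbb L_i A) \cong \mathbb L_i(\iota_\Sigma A)$ as presheaves of spectra.

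Next, since $A$ lies in the standard heart of $\PSigVal{\Cat C}{\Sp}$ and $\iota_\Sigma$ is also t-exact for the standard t-structures (again by \cref{lemma:psig:t-exact-p-adic}, or simply because homotopy sheaves in $\PSig{\Cat C}$ are computed as homotopy presheaves via \cref{lemma:psig:htpy-sheaf}), the underlying presheaf $\iota_\Sigma A$ lies in $\heart{\PrShvVal{\Cat C}{\Sp}}$. Applying \cref{lemma:presheaf:Li-sections} to $\iota_\Sigma A$, we obtain
\begin{equation*}
    (\mathbb L_i A)(U) \cong (\mathbb L_i(\iota_\Sigma A))(U) \cong \mathbb L_i(A(U))
\end{equation*}
for every $U \in \Cat C$, which is the desired sectionwise formula.

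For the final assertion that $\mathbb L_i A \in \heart{\PSigVal{\Cat C}{\Sp}}$, the cited \cref{lemma:anima:t-struct-description} tells us that $\mathbb L_i(A(U)) \in \heart{\Sp}$ for every $U$. Combined with the sectionwise formula, every section $(\mathbb L_i A)(U)$ is discrete, and since standard connectivity and coconnectivity in $\PSigVal{\Cat C}{\Sp}$ can be checked sectionwise, we conclude $\mathbb L_i A \in \heart{\PSigVal{\Cat C}{\Sp}}$. No step is really an obstacle here; the only subtlety is making sure that $\iota_\Sigma$ is t-exact for the $p$-adic t-structure, which has already been handled in \cref{lemma:psig:t-exact-p-adic}.
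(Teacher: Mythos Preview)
Your proof is correct and follows essentially the same approach as the paper: both arguments reduce to the presheaf case via the t-exactness of $\iota_\Sigma$ for the $p$-adic t-structures (\cref{lemma:psig:t-exact-p-adic}), then invoke the sectionwise computation of $\mathbb L_i$ on presheaves (\cref{lemma:presheaf:Li-sections}, which the paper phrases via \cref{lemma:presheaf:evaluation-t-exact}). The paper additionally remarks that $A(U) \in \heart{\Sp}$ by \cref{lemma:psig:htpy-dim} and that $U \mapsto \mathbb L_i(A(U))$ is a $\Sigma$-sheaf by additivity of $\mathbb L_i$, but your route through $\iota_\Sigma$ handles this implicitly.
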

\begin{proof}
    First note that $A(U) \in \heart{\Sp}$ by \cref{lemma:psig:htpy-dim},
    so the statement makes sense.
    Note that the presheaf $U \mapsto \mathbb L_i (A(U))$ is actually a sheaf.
    This is clear since $\mathbb L_i$ is additive and thus preserves finite products.
    
    Thus, the lemma follows from the t-exactness of $\iota_\Sigma$ for the $p$-adic t-structures (\cref{lemma:psig:t-exact-p-adic})
    and \cref{lemma:presheaf:evaluation-t-exact}.
    The last claim follows, because $\iota_\Sigma$ is fully faithful and t-exact for the standard t-structures (by the same lemma)
    and the corresponding claim about presheaves.
\end{proof}

As in the case of presheaves, the heart of the $p$-adic t-structure has a very simple description:
\begin{lem} \label{lemma:psig:p-heart-description}
    We have $\pheart{\PSigVal{\Cat C}{\Sp}} \subset \heart{\PSigVal{\Cat C}{\Sp}}$,
    consisting exactly of the $p$-complete objects in the standard heart.

    In particular, if $E \in \PSigVal{\Cat C}{\Sp}$ is $p$-complete,
    then $\pi_n(E) \cong \pi_n^p(E)$.
\end{lem}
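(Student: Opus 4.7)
The plan is to reduce everything to the already-proven presheaf analogue \cref{lemma:presheaf:p-heart-description} via the inclusion $\iota_\Sigma \colon \PSigVal{\Cat C}{\Sp} \to \PrShvVal{\Cat C}{\Sp}$. This functor has three key features: it is fully faithful, and hence conservative, as the right adjoint of a reflective localization; it is t-exact for both the standard and the $p$-adic t-structures by \cref{lemma:psig:t-exact-p-adic}; and, by \cref{lemma:p-completion-in-psig}, an object $E \in \PSig{\Cat C}$ is $p$-complete if and only if $\iota_\Sigma E$ is $p$-complete, since $p$-completion in $\PSig{\Cat C}$ is computed on the underlying presheaf.

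For the inclusion $\pheart{\PSigVal{\Cat C}{\Sp}} \subset \heart{\PSigVal{\Cat C}{\Sp}}$, I would start with $E \in \pheart{\PSigVal{\Cat C}{\Sp}}$. Then t-exactness for the $p$-adic t-structure places $\iota_\Sigma E$ in $\pheart{\PrShvVal{\Cat C}{\Sp}}$, which by \cref{lemma:presheaf:p-heart-description} lies in the standard presheaf heart and is $p$-complete. Using t-exactness of $\iota_\Sigma$ for the standard t-structure together with full faithfulness, this forces $E$ into $\heart{\PSigVal{\Cat C}{\Sp}}$, and $E$ inherits $p$-completeness from $\iota_\Sigma E$ by the observation above.

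For the converse direction, suppose $E \in \heart{\PSigVal{\Cat C}{\Sp}}$ is $p$-complete. Then $\iota_\Sigma E$ is $p$-complete and lies in $\heart{\PrShvVal{\Cat C}{\Sp}}$, so it belongs to $\pheart{\PrShvVal{\Cat C}{\Sp}}$ by \cref{lemma:presheaf:p-heart-description}. To pull this back to $E$, I would apply \cref{lemma:t-struct:conservative} (noting that $\iota_\Sigma$ is exact, conservative, and t-exact for the standard t-structure) to obtain $E \in \tpcon{\PSigVal{\Cat C}{\Sp}}$. For the coconnectivity bound $E \in \tpcocon{\PSigVal{\Cat C}{\Sp}}$, I would argue directly: given any $F' \in \tpcon[1]{\PSigVal{\Cat C}{\Sp}}$, full faithfulness gives $\Map{\PSigVal{\Cat C}{\Sp}}(F', E) \cong \Map{\PrShvVal{\Cat C}{\Sp}}(\iota_\Sigma F', \iota_\Sigma E)$, which vanishes because $\iota_\Sigma F' \in \tpcon[1]{\PrShvVal{\Cat C}{\Sp}}$ by t-exactness and $\iota_\Sigma E \in \tpcocon{\PrShvVal{\Cat C}{\Sp}}$.

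The final assertion follows in the same style: if $E$ is $p$-complete, then the presheaf version of the lemma gives $\pi_n(\iota_\Sigma E) \cong \pi_n^p(\iota_\Sigma E)$, and combining this with the t-exactness identifications $\iota_\Sigma \pi_n(E) \cong \pi_n(\iota_\Sigma E)$ and $\iota_\Sigma \pi_n^p(E) \cong \pi_n^p(\iota_\Sigma E)$ together with full faithfulness of $\iota_\Sigma$ yields $\pi_n(E) \cong \pi_n^p(E)$. No genuinely hard step appears here; all the substantive content has already been packaged into \cref{lemma:psig:t-exact-p-adic} and \cref{lemma:p-completion-in-psig}.
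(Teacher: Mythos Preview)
Your proposal is correct and follows the same strategy as the paper: reduce to the presheaf analogue via the fully faithful, doubly t-exact inclusion $\iota_\Sigma$. The paper's proof is considerably terser, simply invoking \cref{lemma:psig:t-exact-p-adic} and \cref{lemma:presheaf:p-heart-description} together with the fact that $\iota_\Sigma$ preserves $p$-complete objects; you have essentially unpacked the details the paper leaves implicit, including the mapping-space argument for coconnectivity and the use of \cref{lemma:t-struct:conservative} for connectivity.

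One minor imprecision: you cite \cref{lemma:p-completion-in-psig} for the biconditional ``$E$ is $p$-complete iff $\iota_\Sigma E$ is $p$-complete,'' but that lemma is stated for the unstable category $\PSig{\Cat C}$, whereas here you need the statement for $\PSigVal{\Cat C}{\Sp}$. The stable version is in fact easier (and is what the paper uses): $\iota_\Sigma$ is an exact right adjoint, so \cref{lemma:t-struct:right-exact-commutes-with-completion} gives that it commutes with $p$-completion and preserves $p$-complete objects; full faithfulness then gives the converse direction. This does not affect the validity of your argument.
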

\begin{proof}
    The inclusion $\iota_\Sigma$
    is fully faithful 
    and t-exact for the standard t-structures 
    and t-exact for the $p$-adic t-structures by \cref{lemma:psig:t-exact-p-adic}.
    Thus, the lemma follows from \cref{lemma:presheaf:p-heart-description}
    (note that $\iota_\Sigma$ preserves $p$-complete objects, see \cref{lemma:t-struct:right-exact-commutes-with-completion}).
\end{proof}

\begin{defn} \label{def:psig:L1G}
    Let $G \in \Grp{\Disc{\PSig{\Cat C}}}$ be a nilpotent sheaf of groups
    (i.e.\  the conjugation action of $G$ on itself is nilpotent).
    We define
    \begin{equation*}
        \mathbb L_i G \coloneqq \pi_{i+1} (\completebr{BG})
    \end{equation*}
    for $i \ge 0$.
\end{defn}

\begin{rmk} \label{rmk:psig:LiG-zero}
    Since the $p$-completion of a 1-truncated nilpotent object is 2-truncated (see \cref{cor:topos:truncation-of-completion}),
    we see that $\mathbb L_i G = 0$ for all $i \ge 2$.
\end{rmk}

\begin{lem} \label{lemma:psig:L1G-eq-L1A}
    Let $A \in \heart{\PSigVal{\Cat C}{\Sp}} \cong \AbObj{\Disc{\PSig{\Cat C}}}$.
    Denote by $G$ the underlying nilpotent presheaf of groups (i.e.\ we forget that $A$ is abelian).
    Then $\mathbb L_i A \cong \mathbb L_i G$ for all $i \ge 0$.
\end{lem}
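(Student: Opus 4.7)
The plan is to mimic the presheaf-level argument of \cref{lemma:presheaf:L1G-eq-L1A} by evaluating on sections $U \in \Cat C$ and reducing to the corresponding statement about anima (\cref{lemma:anima:L1g-eq-L1A}). First note that since $A$ is abelian the underlying sheaf of groups $G$ is automatically nilpotent by \cref{lemma:nilpotent:abelian-implies-nilpotent}, so $\mathbb L_i G$ is defined via \cref{def:psig:L1G}.

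The strategy is to show that both $(\mathbb L_i A)(U)$ and $(\mathbb L_i G)(U)$ agree with $\mathbb L_i(A(U)) \cong \mathbb L_i(G(U))$ in anima, for every $U \in \Cat C$. For the left-hand side this is exactly \cref{lemma:psig:Li-computation}. For the right-hand side, I would chain together the equivalences
\begin{equation*}
    (\mathbb L_i G)(U) = \pi_{i+1}(\completebr{BG})(U) \cong \pi_{i+1}(\completebr{BG}(U)) \cong \pi_{i+1}(\completebr{B(G(U))}) = \mathbb L_i(G(U)),
\end{equation*}
where the first equivalence uses that homotopy objects in $\PSig{\Cat C}$ can be computed on sections (\cref{lemma:psig:htpy-sheaf}), the second uses that $p$-completion in $\PSig{\Cat C}$ agrees with the pointwise $p$-completion on the underlying presheaf (combine \cref{lemma:p-completion-in-psig} with \cref{lemma:presheaf:completion-sectionwise}) together with the fact that the classifying space can be computed on sections (\cref{lemma:psig:classifying}), and the last is the definition in anima.

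Applying \cref{lemma:anima:L1g-eq-L1A} to $A(U)$, together with the above identifications, yields an equivalence $(\mathbb L_i A)(U) \cong (\mathbb L_i G)(U)$, natural in $U \in \Cat C$. Since both sides lie in $\heart{\PSigVal{\Cat C}{\Sp}}$ (by \cref{lemma:psig:Li-computation} and \cref{rmk:psig:LiG-zero} together with the corresponding sectionwise statement), this sectionwise equivalence of sheaves gives the desired equivalence $\mathbb L_i A \cong \mathbb L_i G$. There is no genuine obstacle; the only bookkeeping point is to make sure that $p$-completion commutes with evaluation in $\PSig{\Cat C}$, which is what \cref{lemma:p-completion-in-psig} was designed to give us.
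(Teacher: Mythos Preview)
Your proof is correct and follows essentially the same approach as the paper: reduce to a sectionwise statement and invoke \cref{lemma:anima:L1g-eq-L1A}. The only organizational difference is that the paper factors this reduction through the presheaf-level \cref{lemma:presheaf:L1G-eq-L1A} (whose proof is exactly your sectionwise chain), whereas you unwind that step directly in $\PSig{\Cat C}$.
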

\begin{proof}
    Since homotopy sheaves (\cref{lemma:psig:htpy-sheaf}), classifying spaces (\cref{lemma:psig:classifying}) and $p$-completions (\cref{lemma:p-completion-in-psig}) in $\PSig{\Cat C}$
    can be computed in $\PrShv{\Cat C}$,
    we conclude that also $\mathbb L_i G$ can be computed in $\PrShv{\Cat C}$.
    Also, $\mathbb L_i A$ can be computed in $\PrShv{\Cat C}$ by \cref{lemma:presheaf:Li-sections,lemma:psig:Li-computation}.
    Thus, the lemma follows immediately from the corresponding \cref{lemma:presheaf:L1G-eq-L1A}.
\end{proof}

\begin{prop} \label{lemma:psig:short-exact-sequence}
    Let $X \in \PSig{\Cat C}_*$ be a pointed nilpotent sheaf.
    Then for every $n \ge 2$
    there exists a short exact sequence in $\tpstructheart{\PSigVal{\Cat C}{\Sp}}$
    (or a short exact sequence in $\Grp{\Disc{\PSig{\Cat C}}}$ if $n = 1$)
    \begin{equation*}
        0 \to \mathbb L_0 \pi_n(X) \to \pi_n(\complete X) \to \mathbb L_1 \pi_{n-1}(X) \to 0,
    \end{equation*}
    where we use \cref{def:psig:L1G} for $\mathbb L_i \pi_1(X)$.
    Note that this distinction does not matter if $\pi_1(X)$ is abelian,
    see \cref{lemma:psig:L1G-eq-L1A}.
    Here we define $\mathbb L_1 \pi_0(X) \coloneqq 0$, since $X$ is connected by assumption.
\end{prop}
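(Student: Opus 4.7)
The plan is to reduce the statement to its already-established presheaf analogue, \cref{lemma:presheaf:short-exact-sequence}, by transporting everything along the fully faithful inclusion $\iota \colon \PSig{\Cat C} \hookrightarrow \PrShv{\Cat C}$ and its stabilization $\iota_\Sigma$. All of the compatibilities needed are already recorded earlier in the paper. First, $\iota X \in \PrShv{\Cat C}_*$ is pointed and nilpotent whenever $X$ is, because $\iota$ preserves limits and the homotopy sheaves are computed on underlying presheaves by \cref{lemma:psig:htpy-sheaf}. Second, $\iota$ commutes with $p$-completion by \cref{lemma:p-completion-in-psig}, so $\iota \complete{X} \cong \completebr{\iota X}$ and thus $\iota \pi_n(\complete{X}) \cong \pi_n(\completebr{\iota X})$. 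Third, $\iota_\Sigma$ commutes with the derived $p$-completion functors: for $A \in \heart{\PSigVal{\Cat C}{\Sp}}$ this is \cref{lemma:psig:Li-computation}, and in the nonabelian $n=1$ case it follows from \cref{def:psig:L1G} combined with \cref{lemma:psig:classifying} and \cref{lemma:p-completion-in-psig}.

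Given these compatibilities, the proof for $n \ge 2$ becomes almost purely formal. Apply \cref{lemma:presheaf:short-exact-sequence} to the pointed nilpotent presheaf $\iota X$ to obtain a short exact sequence in $\heart{\PrShvVal{\Cat C}{\Sp}}$ whose three terms all lie in the essential image of $\iota_\Sigma$. Since $\iota_\Sigma$ is fully faithful and t-exact for the standard t-structure (\cref{lemma:psig:t-exact-p-adic}), the sequence descends uniquely to a short exact sequence in $\heart{\PSigVal{\Cat C}{\Sp}}$. Moreover each of the three terms is $p$-complete (the outer two by construction lie in $\pheart$, the middle one because $\pi_n(\complete{X})$ is a homotopy object of a $p$-complete sheaf and evaluation commutes with limits), so by \cref{lemma:psig:p-heart-description} this is in fact a short exact sequence in $\pheart{\PSigVal{\Cat C}{\Sp}}$.

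The case $n = 1$ is handled in exactly the same way, descending the sequence of sheaves of groups supplied by \cref{lemma:presheaf:short-exact-sequence} along the fully faithful inclusion $\Grp{\Disc{\PSig{\Cat C}}} \hookrightarrow \Grp{\Disc{\PrShv{\Cat C}}}$. The main (and really only) piece of bookkeeping is checking in the nonabelian case that $\mathbb L_i \pi_1(X)$ defined via $\completebr{B\pi_1(X)}$ in $\PSig{\Cat C}$ coincides under $\iota$ with its presheaf counterpart; but this is immediate since both $B(-)$ (by \cref{lemma:psig:classifying}) and $\completebr{-}$ (by \cref{lemma:p-completion-in-psig}) commute with $\iota$. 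No deeper ingredient is required here beyond the existence and fully-faithfulness of the embedding, which is what makes this the right place in the argument to invoke it.
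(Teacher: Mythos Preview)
Your proposal is correct and follows essentially the same approach as the paper: reduce to the presheaf case \cref{lemma:presheaf:short-exact-sequence} using that homotopy sheaves, classifying spaces, $p$-completion, and the $\mathbb L_i$ can all be computed on underlying presheaves (\cref{lemma:psig:htpy-sheaf,lemma:psig:classifying,lemma:p-completion-in-psig,lemma:psig:Li-computation}). The paper's proof is a one-line invocation of exactly these compatibilities, whereas you spell out the descent step via $\iota_\Sigma$ explicitly; your extra care in checking that the middle term lands in $\pheart$ is fine but not strictly needed, since it is an extension of the two outer terms which already lie there.
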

\begin{proof}
    Note that everything can be computed on the underlying presheaves (\cref{lemma:psig:htpy-sheaf,lemma:psig:classifying,lemma:p-completion-in-psig,lemma:psig:Li-computation}),
    thus the lemma follows immediately from \cref{lemma:presheaf:short-exact-sequence}.
\end{proof}

\subsection{Completions via Embeddings} \label{section:embedding}
Let $\topos X$ be an $\infty$-topos.
Suppose moreover that there is a small extensive category $\Cat C$
and a geometric morphism
\begin{equation*}
    \nu^* \colon \topos X \rightleftarrows \PSig{\Cat C} \colon \nu_*,
\end{equation*}
such that the left adjoint $\nu^*$ is fully faithful.
We will freely use that $\nu^*$ and $\nu_*$ induce an adjoint pair
on stabilizations, see \cref{lemma:adjoints-on-stabilization}.
Note that since $\nu^*$ is fully faithful,
also the induced functor on stabilizations is fully faithful (see \cref{lemma:fully-faithful-on-stabilizations}).

\begin{lem}\label{lemma:embedding:enough-points}
    In this situation $\topos X$ is Postnikov-completete.
    In particular, $\topos X$ is hypercomplete.
\end{lem}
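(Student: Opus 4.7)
The plan is to reduce Postnikov-completeness of $\topos X$ to that of $\PSig{\Cat C}$ (\cref{lemma:psig-postnikov-complete}) by transporting it along the fully faithful embedding $\nu^*$. The crucial preliminary is that $\nu^*$ commutes with Postnikov truncations. Since $\nu^*$ is the left adjoint of a geometric morphism, it is left exact, and hence preserves $n$-truncated objects (by an inductive argument using that $n$-truncatedness is characterized by the diagonal being $(n-1)$-truncated, together with preservation of the terminal object). As a left adjoint $\nu^*$ also preserves colimits, and combined with left-exactness this means it preserves $n$-connective morphisms. Applying $\nu^*$ to the universal $n$-connective map $X \to \tau_{\le n} X$ thus produces an $n$-connective map into an $n$-truncated object, which by the universal property of truncation gives $\tau_{\le n} \nu^* X \cong \nu^* \tau_{\le n} X$.

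Given this, the main argument is essentially formal. For $X \in \topos X$, fully faithfulness of $\nu^*$ gives $X \cong \nu_* \nu^* X$. Applying Postnikov-completeness in $\PSig{\Cat C}$ to $\nu^* X$, commuting $\nu_*$ with limits (as a right adjoint), and again using fully faithfulness on each truncation stage yields
\begin{equation*}
    X \cong \nu_* \nu^* X \cong \nu_* \limil{n} \tau_{\le n} \nu^* X \cong \limil{n} \nu_* \nu^* \tau_{\le n} X \cong \limil{n} \tau_{\le n} X,
\end{equation*}
establishing Postnikov-completeness of $\topos X$.

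For the ``in particular'' clause I would note the standard fact that any Postnikov-complete $\infty$-topos is automatically hypercomplete: if $Y \in \topos X$ is $\infty$-connective, then $\tau_{\le n} Y \cong *$ for every $n$, whence $Y \cong \limil{n} \tau_{\le n} Y \cong *$, so $\infty$-connective morphisms are equivalences.

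The only nontrivial step is the commutation $\nu^* \tau_{\le n} \cong \tau_{\le n} \nu^*$, which amounts to the standard fact that the left adjoint of a geometric morphism commutes with truncations (see e.g.\ \cite[Proposition 5.5.6.28]{highertopoi}); everything else is formal manipulation of the adjunction and the fact that right adjoints preserve limits.
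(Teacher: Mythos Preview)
Your proof is correct and takes essentially the same approach as the paper: the chain of equivalences $X \cong \nu_*\nu^* X \cong \nu_* \limil{n} \tau_{\le n}\nu^* X \cong \limil{n} \nu_*\nu^*\tau_{\le n} X \cong \limil{n}\tau_{\le n} X$ is identical, and the key inputs (Postnikov-completeness of $\PSig{\Cat C}$, commutation of $\nu^*$ with truncations via \cite[Proposition 5.5.6.28]{highertopoi}, $\nu_*$ preserving limits, and fully faithfulness of $\nu^*$) are exactly those used in the paper. For the hypercompleteness clause the paper simply cites \cite[Corollary 7.2.1.12]{highertopoi}, whereas you sketch the argument directly; note that your sketch treats only $\infty$-connective \emph{objects}, but the same reasoning applied to $\tau_{\le n} f$ (which is an equivalence whenever $f$ is $(n{+}1)$-connective) handles morphisms as well.
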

\begin{proof}
    We need to show that for every $X \in \topos X$
    the canonical map $X \to \limil{n} \tau_{\le n} X$ is an equivalence.
    \cref{lemma:psig-postnikov-complete} shows that $\PSig{\Cat C}$ is Postnikov-complete.
    Hence, the canonical map $\nu^*X \to \limil{n} \tau_{\le n} \nu^* X$ is an equivalence.
    We now compute
    \begin{align*}
        X & \cong \nu_* \nu^* X                        \\
          & \cong \nu_* \limil{n} \tau_{\le n} \nu^* X \\
          & \cong \limil{n} \nu_* \nu^* \tau_{\le n} X \\
          & \cong \limil{n} \tau_{\le n} X.
    \end{align*}
    Here, we used in the first and last equivalence that $\nu^*$ is fully faithful.
    The third equivalence holds because $\nu_*$ commutes with limits (as a right adjoint),
    and $\nu^*$ commutes with truncations, see \cite[Proposition 5.5.6.28]{highertopoi}.

    The last part follows from the first, see the proof of \cite[Corollary 7.2.1.12]{highertopoi},
    where only Postnikov-completeness of $\topos X$ is used.
\end{proof}

\begin{lem} \label{lemma:embedding:stable-completion}
    Let $E \in \spectra{X}$.
    Then $\complete E \cong \nu_* (\completebr{\nu^* E})$.
\end{lem}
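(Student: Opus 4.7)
The plan is to reduce the statement to an application of \cref{lemma:t-struct:right-exact-commutes-with-completion}, which says that any exact functor between presentable stable $\infty$-categories that commutes with sequential limits commutes with $p$-completion. So first I would observe that the functor $\nu_* \colon \PSigVal{\Cat C}{\Sp} \to \spectra{X}$ induced on stabilizations (via \cref{lemma:adjoints-on-stabilization}) is a right adjoint between presentable stable $\infty$-categories, hence preserves all small limits and is in particular exact and commutes with sequential limits.

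Given this, \cref{lemma:t-struct:right-exact-commutes-with-completion} yields a natural equivalence
\begin{equation*}
    \completebr{\nu_* F} \cong \nu_*(\complete{F})
\end{equation*}
for every $F \in \PSigVal{\Cat C}{\Sp}$. Applying this to $F = \nu^* E$, we obtain
\begin{equation*}
    \nu_*(\completebr{\nu^* E}) \cong \completebr{\nu_* \nu^* E}.
\end{equation*}

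Now I would use that $\nu^*$ is fully faithful. By \cref{lemma:fully-faithful-on-stabilizations}, the induced functor $\nu^* \colon \spectra X \to \PSigVal{\Cat C}{\Sp}$ is still fully faithful, so the unit $E \to \nu_* \nu^* E$ is an equivalence. Combining this with the equivalence above yields
\begin{equation*}
    \nu_*(\completebr{\nu^* E}) \cong \completebr{\nu_* \nu^* E} \cong \complete{E},
\end{equation*}
as claimed. No step looks like a genuine obstacle here; the only thing one has to be careful about is appealing to the correct form of \cref{lemma:t-struct:right-exact-commutes-with-completion} (the full commutation of completion with $\nu_*$, not merely preservation of $p$-complete objects), so that one does not have to separately check both that the target is $p$-complete and that the comparison map is a $p$-equivalence.
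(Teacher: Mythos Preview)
Your proof is correct and takes essentially the same approach as the paper: the paper simply unrolls the formula $\complete{(-)} \cong \limil{n} (-)\sslash p^n$ and commutes $\nu_*$ past the limit and the cofibers directly, whereas you invoke \cref{lemma:t-struct:right-exact-commutes-with-completion}, which packages exactly that computation. Both then finish with the full faithfulness of $\nu^*$ to identify $\nu_*\nu^*E$ with $E$.
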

\begin{proof}
    We have $\nu_* (\completebr{\nu^*E}) \cong \nu_* \limil{n} (\nu^*E)\sslash p^n \cong \limil{n} (\nu_*\nu^*E)\sslash p^n = \complete E$,
    where we used that $\nu_*$ commutes with limits and cofibers, and that $\nu^*$ is fully faithful,
    i.e.\ $\nu_* \nu^* \cong \operatorname{id}$.
\end{proof}

\begin{lem} \label{lemma:embedding:EM-space}
    Let $A \in \heart{\spectra X}$ and $n \ge 1$.
    Then $\complete{K(A, n)} \cong \tau_{\ge 1}\nu_*(\complete{K(\nu^*A, n)})$.
\end{lem}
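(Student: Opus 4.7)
Plan: The strategy is to rewrite both sides via \cref{cor:completion-of-EM-space} in terms of $\pLoop$ applied to stable $p$-completions, then commute $\pLoop$ past $\nu_*$.

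By \cref{cor:completion-of-EM-space},
\begin{equation*}
    \complete{K(A,n)} \cong \tau_{\ge 1}\pLoop\completebr{\Sigma^n A} \quad \text{and} \quad \complete{K(\nu^* A, n)} \cong \tau_{\ge 1}\pLoop\completebr{\Sigma^n \nu^* A}.
\end{equation*}
Since the stabilization of $\nu^*$ is exact (as the left adjoint of a geometric morphism), we have $\nu^* \Sigma^n A \cong \Sigma^n \nu^* A$, and \cref{lemma:embedding:stable-completion} yields $\completebr{\Sigma^n A} \cong \nu_*\completebr{\Sigma^n \nu^* A}$. Combined with \cref{lemma:stabilization:geometric-morphism-commutes-with-loopspace} (so that $\pLoop$ commutes with $\nu_*$ on stabilizations, by uniqueness of right adjoints applied to $\nu^* \Sus \cong \Sus \nu^*$), this gives
\begin{equation*}
    \complete{K(A,n)} \cong \tau_{\ge 1}\nu_*\pLoop\completebr{\Sigma^n \nu^* A}.
\end{equation*}

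The remaining task is to verify that $\pLoop\completebr{\Sigma^n \nu^* A}$ is already connected for $n \ge 1$, so that the inner $\tau_{\ge 1}$ appearing in the expression $\complete{K(\nu^* A, n)} \cong \tau_{\ge 1}\pLoop\completebr{\Sigma^n \nu^* A}$ is redundant. Since homotopy objects and $p$-completions in $\PSigVal{\Cat C}{\Sp}$ are computed sectionwise (by \cref{lemma:p-completion-in-psig,lemma:psig:htpy-dim}), this reduces to the classical fact in $\Sp$ that $\completebr{\Sigma^n B}$ is $n$-connective for every abelian group $B$ and $n \ge 0$, which follows from the Milnor $\lim^1$-sequence together with the surjectivity of the transition maps on $\pi_0(B/p^k)$. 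Once this connectivity is in place, substituting yields $\tau_{\ge 1}\nu_*\complete{K(\nu^* A, n)} \cong \tau_{\ge 1}\nu_*\pLoop\completebr{\Sigma^n \nu^* A}$, matching the expression derived above.

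The main obstacle is the careful bookkeeping of the two $\tau_{\ge 1}$'s: the outer one on the right-hand side of the statement is essential, because $\nu_*$ on stabilizations is only left t-exact and may produce negative homotopy when applied to a $p$-complete spectrum; however the inner one collapses once we know $\completebr{\Sigma^n \nu^* A}$ has the expected connectivity.
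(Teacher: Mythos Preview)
Your proof is correct and follows the same route as the paper's: reduce via \cref{cor:completion-of-EM-space} to the stable statement \cref{lemma:embedding:stable-completion}, commute $\nu_*$ past $\pLoop$ (the paper invokes \cref{lemma:adjoints-on-stabilization} directly for this rather than deducing it from $\nu^*\Sus \cong \Sus\nu^*$), and eliminate the inner $\tau_{\ge 1}$ by noting that $\completebr{\Sigma^n \nu^* A}$ is already $n$-connective in $\PSigVal{\Cat C}{\Sp}$. For that last connectivity step the paper cites \cref{lemma:psig-connectivity-of-completion}, which is strictly speaking stated for anima but rests on the same sectionwise argument you spell out.
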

\begin{proof}
    The statement makes sense: Note that $\nu^*A$ is in the heart of the standard t-structure,
    see \cref{lemma:stabilization:homotopy-objects}.
    Therefore, the Eilenberg-MacLane space $K(\nu^* A, n)$ is defined.

    We have the following chain of equivalences
    \begin{align*}
        \complete{K(A, n)} & \cong \pLoop(\tau_{\ge 1}\completebr{\Sigma^n A})                \\
                           & \cong \pLoop (\tau_{\ge 1} \nu_* \completebr{\nu^*\Sigma^n A})   \\
                           & \cong \tau_{\ge 1} \nu_* \pLoop (\completebr{\Sigma^n(\nu^*A)})  \\
                           & \cong \tau_{\ge 1}\nu_*(\complete{K(\nu^*A, n)}).
    \end{align*}
    The first and fourth equivalences are \cref{cor:completion-of-EM-space},
    noting that $\completebr{\Sigma^n(\nu^*A)}$ is already $n$-connective, see \cref{lemma:psig-connectivity-of-completion}.
    The second equivalence is \cref{lemma:embedding:stable-completion}.
    The third equivalence follows from the definition of the standard t-structure on
    $\spectra{X}$ and \cref{lemma:adjoints-on-stabilization}.
\end{proof}

We will repeatedly use the following fact about the interaction of connective covers
with limits and geometric morphisms:
\begin{lem}\label{lemma:embedding:connective-cover-idempotent-computation}
    Fix $n \ge 0$.
    Let $X \in \PSig{\Cat C}_*$ be a pointed space.
    We have an equivalence
    \begin{equation*}
        \tau_{\ge n} \nu_* X \cong \tau_{\ge n} \nu_* \tau_{\ge n} X.
    \end{equation*}

    Similar, if $X_i$ is an $I$-indexed system in $\topos X_*$ for some $\infty$-category $I$,
    then there is an equivalence
    \begin{equation*}
        \tau_{\ge n} \limil{k} X_k \cong \tau_{\ge n} \limil{k} \tau_{\ge n} X_k.
    \end{equation*}
\end{lem}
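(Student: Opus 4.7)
The plan is to reduce both statements to a single sub-lemma: if $F \to Y \to Z$ is a fiber sequence in a hypercomplete pointed $\infty$-topos and $Z$ is $(n-1)$-truncated, then the induced map $\tau_{\ge n} F \to \tau_{\ge n} Y$ is an equivalence. Once this sub-lemma is in place, both statements follow by producing such a fiber sequence in which $F$ is the left-hand side and $Y$ is the target appearing in the desired equivalence.

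To prove the sub-lemma, I would extract the long exact sequence of homotopy sheaves
\begin{equation*}
    \cdots \to \pi_{i+1} Z \to \pi_i F \to \pi_i Y \to \pi_i Z \to \cdots
\end{equation*}
associated to the fiber sequence $F \to Y \to Z$. Since $Z$ is $(n-1)$-truncated, $\pi_i Z = 0$ whenever $i \ge n$, so for every such $i$ both flanking terms vanish and $\pi_i F \to \pi_i Y$ is an isomorphism. Consequently $\tau_{\ge n} F \to \tau_{\ge n} Y$ induces isomorphisms on all homotopy sheaves (trivially in degrees below $n$, and by the above in degrees $\ge n$). Because $\topos X$ is hypercomplete by \cref{lemma:embedding:enough-points}, Whitehead's theorem in this setting forces this map to be an equivalence.

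For the first statement, I would start from the Postnikov fiber sequence $\tau_{\ge n} X \to X \to \tau_{\le n-1} X$ in $\PSig{\Cat C}_*$ and apply $\nu_*$. Since $\nu_*$ is a right adjoint it preserves the fiber sequence, and since it is the direct image of a geometric morphism it preserves $(n-1)$-truncated objects (see \cite[Proposition 5.5.6.16]{highertopoi}), so $\nu_* \tau_{\le n-1} X$ is $(n-1)$-truncated. The sub-lemma then applies to the fiber sequence $\nu_* \tau_{\ge n} X \to \nu_* X \to \nu_* \tau_{\le n-1} X$, yielding $\tau_{\ge n} \nu_* \tau_{\ge n} X \cong \tau_{\ge n} \nu_* X$. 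For the second statement, I would apply $\limil{k}$ to the collection of Postnikov fiber sequences $\tau_{\ge n} X_k \to X_k \to \tau_{\le n-1} X_k$; since limits preserve fiber sequences and the class of $(n-1)$-truncated objects is stable under limits, this produces a fiber sequence to which the sub-lemma again applies.

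The main obstacle is simply setting up the long exact sequence and invoking hypercompleteness correctly; no genuinely new input is required beyond what has already been established in the excerpt. The only thing to be a little careful about is the edge case $n = 0$, where $\tau_{\le -1} (-) = *$ and both equivalences become tautologies, so the argument collapses gracefully.
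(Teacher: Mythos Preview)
Your proof is correct and essentially identical to the paper's: both apply $\nu_*$ (resp.\ $\limil{k}$) to the Postnikov fiber sequence $\tau_{\ge n} X \to X \to \tau_{\le n-1} X$, use that these functors preserve fiber sequences and $(n-1)$-truncated objects (citing the same references \cite[Propositions 5.5.6.16 and 5.5.6.5]{highertopoi}), and conclude via the long exact sequence and hypercompleteness from \cref{lemma:embedding:enough-points}. Your only cosmetic difference is factoring the common step into an explicit sub-lemma.
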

\begin{proof}
    Since $\nu_*$ commutes with limits, we have a canonical fiber sequence
    \begin{equation*}
        \nu_* \tau_{\ge n} X \to \nu_* X \to \nu_* \tau_{\le n-1} X.
    \end{equation*}
    Since $\nu_* \tau_{\le n-1}X$ is $(n-1)$-truncated (see \cite[Proposition 5.5.6.16]{highertopoi}),
    we conclude from the long exact sequence that for $k \ge n$
    we have isomorphisms $\pi_k(\nu_* \tau_{\ge n} X) \cong \pi_k(\nu_* X)$.
    Thus, using hypercompleteness of $\topos X$ (\cref{lemma:embedding:enough-points}), the induced map
    \begin{equation*}
        \tau_{\ge n} \nu_* X \cong \tau_{\ge n} \nu_* \tau_{\ge n} X
    \end{equation*}
    is an equivalence.

    In the case of limits one argues as above, and uses that a limit of fiber sequences
    is again a fiber sequence (as limits commute with limits), and that limits
    preserve $(n-1)$-truncated objects (see \cite[Proposition 5.5.6.5]{highertopoi}).
\end{proof}

\begin{lem} \label{lemma:embedding:finite-nilpotent-presentation}
    Let $F \in \topos X_*$ be nilpotent and $n$-truncated.
    Then $\tau_{\ge 1}\nu_*(\completebr{\nu^*F}) = \complete F$.
\end{lem}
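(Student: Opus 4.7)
The plan is to induct on a principal refinement $F_{m,k}$ of the Postnikov tower of $F$, which exists by \cref{lemma:nilpotent:principal-refinement} since $F$ is nilpotent. Because $F$ is $n$-truncated, we may take $F_{n,0}=F$, so the induction terminates. The base case $F_{1,0}=*$ is clear since both sides vanish. Note that along this refinement each $F_{m,k}$ is connected (by induction, using that $K(A_{m,k},m+1)$ is $1$-connected for $m\ge 1$), hence so are the $p$-completions we encounter, by \cref{lemma:peq-respects-pi0}.

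For the inductive step, consider the fiber sequence $F_{m,k}\to F_{m,k-1}\to K(A_{m,k},m+1)$ of the principal refinement. Since $\nu^*$ is left exact and commutes with truncations and colimits, it sends this fiber sequence to a fiber sequence of the same shape in $\PSig{\Cat C}_*$, exhibiting $\nu^* F_{m,k}$ as nilpotent. Applying \cref{lemma:postnikov-fiber-sequence-completion} in $\PSig{\Cat C}_*$, then applying $\tau_{\ge 1}\nu_*$ (using that $\nu_*$ is a right adjoint, hence preserves fibers, together with \cref{lemma:embedding:connective-cover-idempotent-computation} to absorb an inner $\tau_{\ge 1}$), we obtain
\begin{equation*}
\tau_{\ge 1}\nu_*\completebr{\nu^* F_{m,k}}\;\cong\;\tau_{\ge 1}\Fib{\nu_*\completebr{\nu^* F_{m,k-1}}\to\nu_*\completebr{\nu^* K(A_{m,k},m+1)}}.
\end{equation*}
On the other hand, applying \cref{lemma:postnikov-fiber-sequence-completion} in $\topos X_*$, together with the inductive hypothesis for $F_{m,k-1}$ and \cref{lemma:embedding:EM-space} for the Eilenberg--MacLane factor, yields
\begin{equation*}
\complete{F_{m,k}}\;\cong\;\tau_{\ge 1}\Fib{\tau_{\ge 1}\nu_*\completebr{\nu^* F_{m,k-1}}\to\tau_{\ge 1}\nu_*\completebr{\nu^* K(A_{m,k},m+1)}}.
\end{equation*}

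Thus the inductive step reduces to the following general fact: for any map $g\colon A\to B$ of pointed objects in a hypercomplete $\infty$-topos, the natural comparison $\tau_{\ge 1}\Fib{\tau_{\ge 1} A\to\tau_{\ge 1} B}\to\tau_{\ge 1}\Fib{g}$ is an equivalence. This I would prove by comparing the long exact sequences of the two fiber sequences: the natural map $\Fib{\tau_{\ge 1}A\to\tau_{\ge 1}B}\to\Fib{g}$ induces isomorphisms on all $\pi_n$ for $n\ge 1$, via the five-lemma and the fact that $\tau_{\ge 1}(-)\to(-)$ is an isomorphism on $\pi_n$ for $n\ge 1$. By hypercompleteness of $\topos X$ (\cref{lemma:embedding:enough-points}), a map inducing isomorphisms on $\pi_n$ for $n\ge 1$ becomes an equivalence after $\tau_{\ge 1}$, completing the inductive step.

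The main obstacle is the final lemma manipulating $\tau_{\ge 1}$ through fibers; the rest is a fairly mechanical combination of \cref{lemma:postnikov-fiber-sequence-completion}, \cref{lemma:embedding:EM-space}, and \cref{lemma:embedding:connective-cover-idempotent-computation}. One small bookkeeping point is ensuring that $\nu^*$ sends a principal refinement of $F$ to a principal refinement of $\nu^*F$, which requires $\nu^*$ to preserve Eilenberg--MacLane objects, fiber sequences, and connectedness; all three hold because $\nu^*$ is a left exact left adjoint that commutes with truncations (\cite[Proposition 5.5.6.28]{highertopoi}).
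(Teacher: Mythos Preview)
Your proof is correct and follows essentially the same route as the paper: both induct along a principal refinement, apply \cref{lemma:postnikov-fiber-sequence-completion} in $\PSig{\Cat C}$, push down via $\nu_*$, and match against \cref{lemma:postnikov-fiber-sequence-completion} in $\topos X$ using induction and \cref{lemma:embedding:EM-space}. The ``general fact'' you isolate and prove via the five-lemma is precisely the limit case of \cref{lemma:embedding:connective-cover-idempotent-computation}, which the paper simply cites a second time; also note a small indexing slip---with the paper's convention $F_{n,0}=\tau_{\le n-1}F$, so the terminal object of the induction is $F_{n,m_n}=F$, not $F_{n,0}$.
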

\begin{proof}
    We do a proof by induction on $n$, the case $n = 0$ being trivial.
    So suppose we have proven the statement for $n \geq 0$.
    Since $F$ is nilpotent, its Postnikov tower has a principal refinement,
    see \cref{lemma:nilpotent:principal-refinement}.
    So assume by induction that the statement holds for $\tau_{\le n-1} F = F_{n,0}$.
    We proceed by induction on $1 \le k \le m_{n}$.
    From \cref{lemma:postnikov-fiber-sequence-completion} we know that
    \begin{equation*}
        \completebr{\nu^* F_{n, k}} = \tau_{\ge 1}\Fib{\completebr{\nu^* F_{n, k-1}} \to \complete{K(\nu^* A_{n, k}, n+1)}}
    \end{equation*}
    and therefore by applying $\tau_{\ge 1}\nu_*(-)$ we get
    \begin{align*}
        \tau_{\ge 1}\nu_* (\completebr{\nu^* F_{n, k}}) & \cong \tau_{\ge 1} \nu_* \tau_{\ge 1} \Fib{\completebr{\nu^* F_{n, k-1}} \to \complete{K(\nu^* A_{n, k}, n+1)}}                 \\
                                                        & \cong \tau_{\ge 1} \nu_* \Fib{\completebr{\nu^* F_{n, k-1}} \to \complete{K(\nu^* A_{n, k}, n+1)}}                              \\
                                                        & \cong \tau_{\ge 1} \Fib{\nu_*\completebr{\nu^* F_{n, k-1}} \to \nu_*\complete{K(\nu^* A_{n, k}, n+1)}}                          \\
                                                        & \cong \tau_{\ge 1} \Fib{\tau_{\ge 1} \nu_*\completebr{\nu^* F_{n, k-1}} \to \tau_{\ge 1}\nu_*\complete{K(\nu^* A_{n, k}, n+1)}} \\
                                                        & \cong \tau_{\ge 1} \Fib{\completebr{F_{n, k-1}} \to \complete{K(A_{n, k-1}, n+1)}}                                              \\
                                                        & \cong \completebr{F_{n, k}}.
    \end{align*}
    The second and fourth equivalences are \cref{lemma:embedding:connective-cover-idempotent-computation}.
    The third equivalence holds because $\nu_*$ preserves limits (as a right adjoint).
    The fifth equivalence holds by induction and \cref{lemma:embedding:EM-space}.
    The sixth equivalence is again \cref{lemma:postnikov-fiber-sequence-completion}.
    Thus, by induction, we conclude that the statement holds for $F_{n, m_n} = F_{n+1, 0} = \tau_{\le n} F = F$.
\end{proof}

\begin{lem} \label{lemma:embedding:completion-formula}
    Assume that $\topos X$ is locally of finite uniform homotopy dimension.
    Let $F \in \topos X_*$ be nilpotent.
    Then $\tau_{\ge 1}\nu_*(\completebr{\nu^*F}) = \complete F$.
\end{lem}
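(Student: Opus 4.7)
The plan is to reduce the general case to the already-established truncated case (Lemma \ref{lemma:embedding:finite-nilpotent-presentation}) via the Postnikov tower, using that both $\topos X$ and $\PSig{\Cat C}$ satisfy the commutation of $p$-completion with Postnikov towers for nilpotent objects.

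First I would invoke Theorem \ref{thm:topos:p-comp-commutes-with-post-tower} (whose hypotheses are met since $\topos X$ is locally of finite uniform homotopy dimension by assumption, and nilpotent sheaves are Postnikov towers of nilpotent truncated sheaves) to rewrite
\begin{equation*}
    \complete{F} \cong \limil{k} \completebr{\tau_{\le k} F}.
\end{equation*}
Each $\tau_{\le k} F$ is still nilpotent and truncated, so Lemma \ref{lemma:embedding:finite-nilpotent-presentation} gives $\completebr{\tau_{\le k} F} \cong \tau_{\ge 1} \nu_* (\completebr{\nu^* \tau_{\le k} F})$. Since $\nu^*$ is left exact and commutes with truncations (\cite[Proposition 5.5.6.28]{highertopoi}), we can replace $\nu^* \tau_{\le k} F$ by $\tau_{\le k} \nu^* F$.

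Next I would observe that $\complete{F}$ is connected: $F$ is connected (nilpotent objects are, by convention), and $p$-equivalences preserve $\pi_0$ by Lemma \ref{lemma:peq-respects-pi0}, so $\complete{F} \cong \tau_{\ge 1} \complete{F}$. Combining this with the preceding display and using Lemma \ref{lemma:embedding:connective-cover-idempotent-computation} to pull the $\tau_{\ge 1}$ out of the sequential limit (at the cost of an extra outer $\tau_{\ge 1}$), and then pulling $\nu_*$ out of the limit (it commutes with limits as a right adjoint), yields
\begin{equation*}
    \complete{F} \cong \tau_{\ge 1} \nu_* \limil{k} \completebr{\tau_{\le k} \nu^* F}.
\end{equation*}
Finally I apply Lemma \ref{lemma:postnikov-completion-of-psig} in $\PSig{\Cat C}$ to identify $\limil{k} \completebr{\tau_{\le k} \nu^* F} \cong \completebr{\nu^* F}$, giving the desired formula.

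The only real obstacle is verifying that $\nu^* F$ is itself nilpotent so that Lemma \ref{lemma:postnikov-completion-of-psig} applies; this follows because $\nu^*$ preserves finite limits, Eilenberg--MacLane objects, and principal refinements of Postnikov towers (to be checked against the appendix definition of nilpotence). Everything else is a careful chase through commuting $\nu_*$, $\tau_{\ge 1}$, and sequential limits, for which the two assertions of Lemma \ref{lemma:embedding:connective-cover-idempotent-computation} are exactly the right tools.
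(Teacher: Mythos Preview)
Your proof is correct and is essentially the paper's own argument run in the opposite direction: the paper starts from $\tau_{\ge 1}\nu_*(\completebr{\nu^*F})$ and unwinds to $\complete F$, while you start from $\complete F$ and build towards $\tau_{\ge 1}\nu_*(\completebr{\nu^*F})$, but both use exactly the same ingredients (\cref{thm:topos:p-comp-commutes-with-post-tower}, \cref{lemma:postnikov-completion-of-psig}, \cref{lemma:embedding:finite-nilpotent-presentation}, \cref{lemma:embedding:connective-cover-idempotent-computation}, and \cref{lemma:peq-respects-pi0}). Your explicit remark that $\nu^*F$ must be checked to be nilpotent is a point the paper leaves implicit; it holds because $\nu^*$ is left exact and preserves truncations, hence carries the principal refinement of the Postnikov tower of $F$ to one for $\nu^*F$.
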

\begin{proof}
    We will freely use that $\topos X$ and $\PSig{\Cat C}$ are Postnikov-complete
    (\cref{lemma:embedding:enough-points,lemma:psig-postnikov-complete}).
    Note that $\nu^*$ commutes with truncations, see \cite[Proposition 5.5.6.28]{highertopoi}.
    Using \cref{lemma:postnikov-completion-of-psig},
    we get
    \begin{equation*}
        \completebr{\nu^* F} \cong \limil{n} \completebr{\nu^*\tau_{\le n}F}.
    \end{equation*}
    Applying $\nu_*$, we conclude
    \begin{equation*}
        \nu_* (\completebr{\nu^* F}) \cong \nu_* \limil{n} \completebr{\nu^*\tau_{\le n}F} \cong \limil{n} \nu_*\completebr{\nu^*\tau_{\le n}F},
    \end{equation*}
    where we use that $\nu_*$ is a right adjoint for the second equivalence.
    Thus,
    \begin{align*}
        \tau_{\ge 1}\nu_* (\completebr{\nu^* F}) 
        & \cong \tau_{\ge 1} \limil{n} \nu_* (\completebr{\nu^*\tau_{\le n}F})              \\
        & \cong \tau_{\ge 1} \limil{n} \tau_{\ge 1} \nu_* (\completebr{\nu^*\tau_{\le n}F}) \\
        & \cong \tau_{\ge 1} \limil{n} \completebr{\tau_{\le n}F}                           \\
        & \cong \tau_{\ge 1} \complete{F}                                                   \\
        & \cong \complete{F}.
    \end{align*}
    The second equivalence is \cref{lemma:embedding:connective-cover-idempotent-computation}.
    The third equivalence was proven in \cref{lemma:embedding:finite-nilpotent-presentation}.
    The fourth equivalence holds because $p$-completions can be computed on the Postnikov tower,
    see \cref{thm:topos:p-comp-commutes-with-post-tower} (here we use the assumption that $\topos X$ is
    locally of finite uniform homotopy dimension).
    The last equivalence follows because $p$-completions of connected spaces
    are connected, see \cref{lemma:peq-respects-pi0}.
\end{proof}

\begin{defn} \label{def:embedding:classical}
    Let $E \in \PSigVal{\Cat C}{\Sp}$.
    We say that $E$ is \emph{classical} if $E$ is in the essential image of
    $\nu^*$.
\end{defn}

\begin{rmk}
    Note that since $\nu^*$ is fully faithful, an
    $E \in \PSigVal{\Cat C}{\Sp}$ is classical if and only if
    $E \cong \nu^* \nu_* E$.
    Indeed, suppose that $E \cong \nu^* F$ for some $F \in \spectra{X}$.
    But then $\nu^* \nu_* E \cong \nu^* \nu_* \nu^* F \cong \nu^* F \cong E$
    using that $\nu^*$ is fully faithful.
\end{rmk}

\begin{lem} \label{lemma:embedding:pushdown-heart}
    Suppose that $A \in \tpstructheart{\PSigVal{\Cat C}{\Sp}}$
    and that $A \sslash p$ is classical.
    Then $\nu_* A \in \tpstructheart{\spectra X}$.
\end{lem}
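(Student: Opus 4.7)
The plan is to verify directly that $\nu_* A$ lies in both $\tpcon{\spectra X}$ and $\tpcocon{\spectra X}$, combining \cref{lemma:t-struct:mod-p-in-con-implies-in-tpcon} for the connective part with the three-part characterization of \cref{lemma:t-struct:char-of-cocon} for the coconnective part. Throughout I will use that $\nu^*$ is t-exact and fully faithful, that $\nu_*$ is an exact functor of stable $\infty$-categories (and in particular commutes with the cofiber $(-) \sslash p$), and that $\nu_*$ is left t-exact as the right adjoint of a t-exact functor.

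For the connective part, by \cref{lemma:t-struct:mod-p-in-con-implies-in-tpcon} it suffices to show $(\nu_* A) \sslash p \cong \nu_*(A \sslash p) \in \tcon{\spectra X}$. Since $A \in \tpcon{\PSigVal{\Cat C}{\Sp}}$, the same lemma gives $A \sslash p \in \tcon{\PSigVal{\Cat C}{\Sp}}$, and by the classicality hypothesis $A \sslash p \simeq \nu^* \nu_*(A \sslash p)$. Setting $C \coloneqq \nu_*(A \sslash p)$, this identifies $\nu^* C$ with the connective object $A \sslash p$; since $\nu^*$ commutes with truncations and is fully faithful, the chain $\nu^* \tau_{\ge 0} C \cong \tau_{\ge 0} \nu^* C \cong \nu^* C$ forces $C$ itself to be connective. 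This is the only step where the classicality hypothesis is used essentially, and it is the main point of the lemma.

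For the coconnective part I verify the three conditions of \cref{lemma:t-struct:char-of-cocon} in turn. By \cref{lemma:psig:p-heart-description}, $A$ lies in $\heart{\PSigVal{\Cat C}{\Sp}}$ and is $p$-complete; hence $\nu_* A \in \tcocon{\spectra X}$ by left t-exactness, and $\nu_* A$ is $p$-complete by \cref{lemma:t-struct:right-exact-commutes-with-completion} applied to the right adjoint $\nu_*$. For the bounded $p$-divisibility of $\pi_0(\nu_* A)$, let $B \in \heart{\spectra X}$ be $p$-divisible. Since $\nu^*$ is t-exact and commutes with $(-)\sslash p$, $\nu^* B \in \heart{\PSigVal{\Cat C}{\Sp}}$ is again $p$-divisible. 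Because $B$ is connective and $\tau_{<0}(\nu_* A)$ is strictly coconnective, $\Map{\spectra X}(B, \tau_{<0}\nu_* A)$ is contractible, so the adjunction $\nu^* \dashv \nu_*$ yields
\begin{equation*}
    \Hom{\heart{\spectra X}}(B, \pi_0(\nu_* A)) \cong \pi_0 \Map{\spectra X}(B, \nu_* A) \cong \pi_0 \Map{\PSigVal{\Cat C}{\Sp}}(\nu^* B, A) \cong \Hom{\heart{\PSigVal{\Cat C}{\Sp}}}(\nu^* B, A),
\end{equation*}
which vanishes by the bounded $p$-divisibility of $A$.

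The main obstacle is the connective part: without the classicality assumption, applying the right adjoint $\nu_*$ to the connective object $A \sslash p$ could introduce negative homotopy, since $\nu_*$ is only left t-exact in general. The remaining checks are formal consequences of the adjunction together with \cref{lemma:psig:p-heart-description}, \cref{lemma:t-struct:char-of-cocon}, and \cref{lemma:t-struct:right-exact-commutes-with-completion}.
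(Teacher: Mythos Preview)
Your proof is correct. The connective part is essentially identical to the paper's argument. For the coconnective part, however, the paper takes a shorter route: it invokes \cref{lemma:t-struct:right-t-exact} (applied to the adjunction $\nu^* \dashv \nu_*$) to conclude in one line that $\nu_*$ is left t-exact for the $p$-adic t-structures, whence $\nu_* A \in \tpcocon{\spectra X}$ immediately. You instead verify the three conditions of \cref{lemma:t-struct:char-of-cocon} by hand, relying on the extra input from \cref{lemma:psig:p-heart-description} that $A$ already lies in the standard heart of $\PSigVal{\Cat C}{\Sp}$. Your argument is perfectly valid but is in effect re-deriving a special case of what \cref{lemma:t-struct:right-t-exact} gives abstractly; the paper's approach is both shorter and does not depend on the particular feature of $\PSig{\Cat C}$ that the $p$-adic heart embeds in the standard one.
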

\begin{proof}
    \cref{lemma:t-struct:right-t-exact} shows that $\nu_*$ is left t-exact with respect to the $p$-adic t-structure,
    therefore we get $\nu_* A \in \tpcocon{\spectra X}$.
    Thus, it suffices to show that $\nu_* A \in \tpcon{\spectra X}$.
    By assumption there is an $X \in \spectra{X}$ such that $\nu^* X \cong A \sslash p$.
    Note that since $A \in \tpstructheart{\PSigVal{\Cat C}{\Sp}}$
    we know that $A \sslash p \in \tcon{\PSigVal{\Cat C}{\Sp}}$ (see \cref{lemma:t-struct:mod-p-in-con-implies-in-tpcon}).
    But this implies that $X \in \tcon{\spectra{X}}$
    ($\nu^*\pi_k X \cong \pi_k \nu^* X \cong \pi_k (A \sslash p) = 0$
    for all $k < 0$ and $\nu^*$ is fully faithful).
    Now we have equivalences $X \cong \nu_* \nu^* X \cong \nu_* (A \sslash p) \cong (\nu_* A) \sslash p$,
    hence $(\nu_* A) \sslash p \in \tcon{\spectra X}$.
    Now we conclude again by \cref{lemma:t-struct:mod-p-in-con-implies-in-tpcon} that $\nu_* A \in \tpcon{\spectra X}$.
\end{proof}

\begin{cor} \label{cor:embedding:pushdown-ses}
    Suppose that we have a short exact sequence
    \begin{equation*}
        0 \to A \to B \to C \to 0
    \end{equation*}
    in $\tpstructheart{\PSigVal{\Cat C}{\Sp}}$ such that two out of $A \sslash p$, $B \sslash p$ and $C \sslash p$ are
    classical.
    Then also the third is classical, and we get a short exact sequence
    \begin{equation*}
        0 \to \nu_* A \to \nu_* B \to \nu_* C  \to 0
    \end{equation*}
    in $\tpstructheart{\spectra X}$.
\end{cor}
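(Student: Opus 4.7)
The plan is to reinterpret the given short exact sequence as a fiber sequence $A \to B \to C$ in $\PSigVal{\Cat C}{\Sp}$ (using that a short exact sequence in the abelian category $\pheart{\PSigVal{\Cat C}{\Sp}}$ is precisely a fiber sequence with all three terms in the heart), and to apply the exact endofunctor $(-) \sslash p$ to obtain a fiber sequence
\begin{equation*}
    A \sslash p \to B \sslash p \to C \sslash p.
\end{equation*}
The first task is to propagate classicality along this sequence. Since $\nu^*$ is the left adjoint of a geometric morphism it preserves finite limits, and being a left adjoint it preserves colimits; on the stabilization it is therefore exact. By \cref{lemma:adjoints-on-stabilization} the right adjoint $\nu_*$ is exact on the stabilization as well (it preserves fibers as a right adjoint, hence cofibers in a stable category). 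Consequently $\nu_* \nu^*$ preserves fiber sequences, and the unit yields a map of fiber sequences
\begin{center}
\begin{tikzcd}
    A\sslash p \arrow[r] \arrow[d] & B\sslash p \arrow[r] \arrow[d] & C\sslash p \arrow[d] \\
    \nu_*\nu^*(A\sslash p) \arrow[r] & \nu_*\nu^*(B\sslash p) \arrow[r] & \nu_*\nu^*(C\sslash p).
\end{tikzcd}
\end{center}
Classicality of $X \sslash p$ is equivalent to the corresponding unit being an equivalence (using that $\nu^*$ is fully faithful). In any stable $\infty$-category, given a map of fiber sequences, if any two of the three vertical maps are equivalences then so is the third (the induced map on the relevant cofiber/fiber is an equivalence). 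This yields the two-out-of-three statement for classicality.

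Once all three of $A \sslash p$, $B \sslash p$, $C \sslash p$ are classical, \cref{lemma:embedding:pushdown-heart} applies to each of $A$, $B$, $C$, so $\nu_* A$, $\nu_* B$, $\nu_* C$ all lie in $\pheart{\spectra X}$. As a right adjoint, $\nu_*$ preserves the fiber sequence $A \to B \to C$, giving a fiber sequence $\nu_* A \to \nu_* B \to \nu_* C$ in $\spectra X$. Since its three terms already lie in the abelian category $\pheart{\spectra X}$, this fiber sequence is equivalent to a short exact sequence
\begin{equation*}
    0 \to \nu_* A \to \nu_* B \to \nu_* C \to 0
\end{equation*}
in $\pheart{\spectra X}$, completing the proof.

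The only substantive step is the two-out-of-three for classicality; everything else is a direct application of \cref{lemma:embedding:pushdown-heart} together with the fact that right adjoints preserve fibers. That step, in turn, reduces to exactness of $\nu^*$ on stabilizations and the stable-category fact that the third map in a comparison of fiber sequences is an equivalence whenever the other two are, so no real obstacle remains.
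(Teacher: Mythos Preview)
Your overall strategy matches the paper's exactly: pass to the fiber sequence $A\sslash p \to B\sslash p \to C\sslash p$, compare it via the adjunction to conclude two-out-of-three for classicality, and then invoke \cref{lemma:embedding:pushdown-heart}. However, you have the adjunction data backwards in the key step. The objects $A\sslash p$, $B\sslash p$, $C\sslash p$ live in $\PSigVal{\Cat C}{\Sp}$, so the relevant endofunctor is $\nu^*\nu_*$ (not $\nu_*\nu^*$), and the comparison map is the \emph{counit} $\nu^*\nu_*\to\operatorname{id}$, not the unit. This is not merely a notational slip: because $\nu^*$ is the fully faithful \emph{left} adjoint, the unit $\operatorname{id}_{\spectra X}\to\nu_*\nu^*$ is \emph{always} an equivalence and therefore cannot detect anything. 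Classicality of $X\in\PSigVal{\Cat C}{\Sp}$ is characterized precisely by the counit $\nu^*\nu_* X\to X$ being an equivalence (this is the remark following \cref{def:embedding:classical}).

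With the counit in place of the unit (and the arrows in your diagram reversed accordingly), your two-out-of-three argument is exactly the paper's, and the remainder of your proof --- applying \cref{lemma:embedding:pushdown-heart} to each term and reading the pushed-forward fiber sequence as a short exact sequence in the heart --- is correct and just a slightly more explicit version of what the paper leaves implicit.
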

\begin{proof}
    First note that we have a morphism of fiber sequences
    given by the counit of the adjunction $\nu^* \dashv \nu_*$:
    \begin{center}
        \begin{tikzcd}
            \nu^* \nu_* (A \sslash p) \arrow[r]\arrow[d] &\nu^* \nu_* (B \sslash p) \arrow[r]\arrow[d] &\nu^* \nu_* (C \sslash p) \arrow[d] \\
            A \sslash p \arrow[r] &B \sslash p \arrow[r] &C \sslash p.
        \end{tikzcd}
    \end{center}
    By assumption, two of the vertical morphisms are isomorphisms, hence so is the third.
    Thus, we conclude that all of $A \sslash p$, $B\sslash p$ and $C\sslash p$
    are classical.
    The claim now follows immediately from \cref{lemma:embedding:pushdown-heart}.
\end{proof}

\begin{lem} \label{lemma:embedding:Li-computation}
    Let $A \in \tstructheart{\spectra{X}}$.
    Suppose that $(\mathbb L_1 \nu^* A)\sslash p$ is classical.
    Then $(\mathbb L_i \nu^* A) \sslash p$ is classical,
    and we have $\mathbb L_i A \cong \nu_* \mathbb L_i \nu^* A$ for all $i \in \Z$.
\end{lem}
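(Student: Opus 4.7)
The plan is to extract $\mathbb L_i A$ from the canonical $p$-adic t-structure decomposition of $\complete{A}$ and match it with $\nu_* \mathbb L_i \nu^* A$, obtained from the analogous decomposition of $\complete{\nu^* A}$ after pushing down via $\nu_*$. By \cref{lemma:t-struct:Li-zero-if-i-neq-zero-one} applied to both $A$ and $\nu^* A$, we have $\mathbb L_i A = 0 = \mathbb L_i \nu^* A$ whenever $i \neq 0, 1$, so only these two cases need to be addressed.

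The first step is to show that $(\mathbb L_0 \nu^* A) \sslash p$ is also classical. Apply $(-) \sslash p$ to the fiber sequence
\begin{equation*}
    \Sigma \mathbb L_1 \nu^* A \to \complete{\nu^* A} \to \mathbb L_0 \nu^* A
\end{equation*}
from \cref{lemma:t-struct:decomposition-via-Li}. Because $\nu^*$ is exact and commutes with colimits, the middle term becomes $\nu^*(A \sslash p)$, while the first term $\Sigma (\mathbb L_1 \nu^* A) \sslash p$ is classical by hypothesis (the class of classical objects is closed under suspension). Since $\nu^*$ is fully faithful and preserves colimits, the essential image of $\nu^*$ is closed under cofibers of maps between classical objects, so the third term $(\mathbb L_0 \nu^* A) \sslash p$ is classical as well.

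Now \cref{lemma:embedding:pushdown-heart} applies to both $\mathbb L_0 \nu^* A$ and $\mathbb L_1 \nu^* A$, giving $\nu_* \mathbb L_i \nu^* A \in \tpstructheart{\spectra X}$ for $i = 0, 1$. Applying $\nu_*$ to the fiber sequence above, using that it preserves fibers and invoking \cref{lemma:embedding:stable-completion} to identify $\nu_* \complete{\nu^* A} \cong \complete{A}$, produces a fiber sequence
\begin{equation*}
    \Sigma \nu_* \mathbb L_1 \nu^* A \to \complete{A} \to \nu_* \mathbb L_0 \nu^* A
\end{equation*}
with $\nu_* \mathbb L_0 \nu^* A \in \tpcocon{\spectra X}$ and $\Sigma \nu_* \mathbb L_1 \nu^* A \in \tpcon[1]{\spectra X}$. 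This is therefore the $p$-adic t-structure decomposition of $\complete{A}$. Comparing with the corresponding decomposition $\Sigma \mathbb L_1 A \to \complete{A} \to \mathbb L_0 A$ obtained from \cref{lemma:t-struct:decomposition-via-Li} applied to $A$ itself, uniqueness of the truncation functors yields $\mathbb L_i A \cong \nu_* \mathbb L_i \nu^* A$ for $i = 0, 1$.

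The only substantive step is establishing the classicality of $(\mathbb L_0 \nu^* A) \sslash p$; this reduces to the formal observation that the essential image of a fully faithful colimit-preserving left-exact functor is closed under cofiber sequences, so no serious computation is needed and the rest is a matter of unwinding the definitions.
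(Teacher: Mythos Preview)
Your proof is correct and follows essentially the same approach as the paper: both arguments apply $(-)\sslash p$ to the fiber sequence of \cref{lemma:t-struct:decomposition-via-Li} to deduce classicality of $(\mathbb L_0 \nu^* A)\sslash p$, then push down via $\nu_*$ and invoke \cref{lemma:embedding:stable-completion} and \cref{lemma:embedding:pushdown-heart} to obtain a second $p$-adic t-structure decomposition of $\complete{A}$, concluding by uniqueness. One small point: when you identify the middle term with $\nu^*(A\sslash p)$, the step $\complete{\nu^* A}\sslash p \cong (\nu^* A)\sslash p$ uses that the completion map is a $p$-equivalence, not just exactness of $\nu^*$; but this is a standard fact and the argument is otherwise complete.
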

\begin{proof}
    Since $\mathbb L_i = 0$ for all $i \neq 0, 1$ (see \cref{lemma:t-struct:Li-zero-if-i-neq-zero-one}),
    the claim needs only be checked for $i = 0, 1$.
    Note that by \cref{lemma:t-struct:decomposition-via-Li} we have a fiber sequence
    \begin{equation*}
        \Sigma \mathbb L_1 \nu^* A \to \completebr{\nu^* A} \to \mathbb L_0 \nu^* A.
    \end{equation*}
    Applying $(-) \sslash p$ we get a fiber sequence
    \begin{equation*}
        \Sigma (\mathbb L_1 \nu^* A) \sslash p \to \completebr{\nu^* A} \sslash p \to (\mathbb L_0 \nu^* A) \sslash p.
    \end{equation*}
    Note that the left term is classical by assumption.
    For the middle term we have equivalences $\completebr{\nu^* A} \sslash p \cong (\nu^* A) \sslash p \cong \nu^* (A \sslash p)$,
    i.e.\ it is also classical.
    Thus, we conclude that $(\mathbb L_0 \nu^* A) \sslash p$ is also classical by (a proof similar to) \cref{cor:embedding:pushdown-ses}.

    Applying $\nu_*(-)$ to the first fiber sequence, and noting that $\nu_* \completebr{\nu^* A} \cong \complete{A}$ by \cref{lemma:embedding:stable-completion},
    we arrive at the fiber sequence
    \begin{equation*}
        \Sigma \nu_* \mathbb L_1 \nu^*A \to \complete{A} \to \nu_* \mathbb L_0 \nu^* A.
    \end{equation*}
    Now by \cref{lemma:embedding:pushdown-heart} and since $(\mathbb L_i \nu^* A) \sslash p$
    is classical,
    we know that $\nu_* \mathbb L_i \nu^*A \in \tpstructheart{\spectra{X}}$.
    Note that we also have a fiber sequence
    \begin{equation*}
        \Sigma \mathbb L_1 A \to \complete{A} \to \mathbb L_0 A,
    \end{equation*}
    see again \cref{lemma:t-struct:decomposition-via-Li}.
    Now the lemma follows from the uniqueness of fiber sequences
    \begin{equation*}
        X \to \complete{A} \to Y
    \end{equation*}
    with $X \in \tpcon[1]{\spectra X}$ and $Y \in \tpcocon{\spectra X}$
    by the definition of a t-structure.
\end{proof}

\begin{defn} \label{def:embedding:L1G}
    Let $G \in \Grp{\Disc{\topos X}}$ be a nilpotent sheaf of groups.
    We define
    \begin{equation*}
        \mathbb L_i G \coloneqq \nu_* \mathbb L_i \nu^* G = \nu_* \pi_{i+1}(\completebr{B\nu^*G}) \in \spectra{X}
    \end{equation*}
    for $i \ge 1$, 
    using \cref{def:psig:L1G}.
    Similarly, we define
    \begin{equation*}
        \mathbb L_0 G \coloneqq \nu_* \mathbb L_0 \nu^* G = \nu_* \pi_1 (\completebr{B\nu^* G}) \in \Grp{\Disc{\topos X}},
    \end{equation*}
    where we view $\nu_*$ as a functor $\Grp{\Disc{\PSig{\Cat C}}} \to \Grp{\Disc{\topos X}}$.
\end{defn}

\begin{rmk}
    Note that $\mathbb L_i G \cong 0$ for all $i \ge 2$ since $\completebr{B\nu^* G}$ is 2-truncated 
    by \cref{cor:topos:truncation-of-completion}.
\end{rmk}

\begin{rmk}
    If $A \in \heart{\spectra{X}} \cong \AbObj{\Disc{\topos X}}$, then there are two conflicting notions of
    $\mathbb L_i A$: We could use \cref{def:t-struct:Li} or
    \cref{def:embedding:L1G} for the underlying sheaf of groups.
    Those two definitions are equivalent if $(\mathbb L_1 \nu^* A) \sslash p$ is
    classical, see \cref{lemma:embedding:L1G-eq-L1A} (where we use \cref{def:t-struct:Li}).
    Otherwise, it is not clear if the two notions agree.
    In the following, we always try to emphasize which definition we use, and whether
    the distinction does matter.
\end{rmk}

\begin{lem} \label{lemma:embedding:L1G-eq-L1A}
    Let $A \in \heart{\spectra{X}} \cong \AbObj{\Disc{\topos X}}$ be an abelian sheaf of groups.
    Denote by $G$ the underlying nilpotent sheaf of groups.
    Suppose that $(\mathbb L_1 \nu^* A)\sslash p$ is classical.
    Then $\mathbb L_i G \cong \mathbb L_i A$ for all $i \ge 0$.
\end{lem}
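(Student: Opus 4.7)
The plan is to reduce the statement to the already-proved analogue over $\PSig{\Cat C}$, namely \cref{lemma:psig:L1G-eq-L1A}, and then push down along $\nu_*$ using the interplay between the two definitions of $\mathbb L_i$ that has been set up in \cref{def:embedding:L1G} and \cref{lemma:embedding:Li-computation}.

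First I would check that the pullback $\nu^* A$ has the expected structure: since $\nu^*$ is the left adjoint of a geometric morphism, it is left exact, so it carries abelian (resp.\ group) objects to abelian (resp.\ group) objects, and the underlying group object of $\nu^* A$ is precisely $\nu^* G$. Moreover $\nu^*$ is t-exact for the standard t-structures (this uses left-exactness together with preservation of colimits on $\spectra{X}$), so $\nu^* A \in \heart{\PSigVal{\Cat C}{\Sp}}$ and the two interpretations of ``the underlying sheaf of groups of $\nu^* A$'' agree.

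Next I would invoke \cref{lemma:psig:L1G-eq-L1A} applied to $\nu^* A$: this yields a natural equivalence
\begin{equation*}
    \mathbb L_i(\nu^* A) \;\cong\; \mathbb L_i(\nu^* G) \qquad \text{in } \PSig{\Cat C}
\end{equation*}
for all $i \ge 0$. Now I would apply $\nu_*$ to both sides. On the right-hand side, this is the definition of $\mathbb L_i G$ given in \cref{def:embedding:L1G}:
\begin{equation*}
    \nu_* \mathbb L_i(\nu^* G) \;=\; \mathbb L_i G.
\end{equation*}
On the left-hand side, since by hypothesis $(\mathbb L_1 \nu^* A)\sslash p$ is classical, \cref{lemma:embedding:Li-computation} applies and gives
\begin{equation*}
    \nu_* \mathbb L_i(\nu^* A) \;\cong\; \mathbb L_i A.
\end{equation*}
Chaining these three equivalences produces $\mathbb L_i A \cong \mathbb L_i G$ as desired.

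The only point that needs slight care is the case $i = 0$, where $\mathbb L_0 A$ lives in $\pheart{\spectra X}$ while $\mathbb L_0 G$ is a priori a group object; here one should observe that the equivalence $\pheart{\spectra X} \cong \AbObj{\Disc{\topos X}} \cap (\text{something})$ is compatible with $\nu_*$ in the sense that $\nu_* \mathbb L_0(\nu^* A)$ and $\nu_* \mathbb L_0(\nu^* G)$ are identified via the forgetful functor $\AbObj \to \Grp$ (both are computed as $\nu_* \pi_1(\completebr{B\nu^* G})$, since $B\nu^* G \cong K(\nu^* A, 1)$ when $A$ is abelian). With this identification, there is no obstacle, and this is really the only step one might worry about; the rest is a formal assembly of previously established lemmas.
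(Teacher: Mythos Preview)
Your proof is correct and follows essentially the same approach as the paper: apply \cref{lemma:psig:L1G-eq-L1A} to $\nu^*A$ to get $\mathbb L_i(\nu^* A)\cong \mathbb L_i(\nu^* G)$, then push down with $\nu_*$, using \cref{def:embedding:L1G} on the group side and \cref{lemma:embedding:Li-computation} on the abelian side. The paper's proof is the same two-line argument; your extra paragraph about $i=0$ is harmless but unnecessary, since \cref{def:embedding:L1G} already defines $\mathbb L_0 G$ via $\nu_*\pi_1(\completebr{B\nu^*G})$ and the identification $B\nu^*G\cong K(\nu^*A,1)$ is exactly what \cref{lemma:psig:L1G-eq-L1A} encodes.
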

\begin{proof}
    Using \cref{lemma:embedding:Li-computation},
    it suffices to show that $\pi_{i+1}(\completebr{B\nu^*G}) = \mathbb L_i \nu^* A$.
    This was shown in \cref{lemma:psig:L1G-eq-L1A}.
\end{proof}

\begin{thm} \label{thm:embedding:short-exact-sequence}
    Let $X \in \topos X_*$ be pointed and nilpotent
    such that $(\mathbb L_1 \nu^* \pi_n X) \sslash p$
    is classical for every $n \ge 2$.
    Suppose further that either 
    \begin{itemize}
        \item 
            $\pi_1X$ is abelian and $(\mathbb L_1 \nu^* \pi_1 X) \sslash p$ is classical (where we use \cref{def:t-struct:Li}), or
        \item
            that $\mathbb L_1 \pi_1 X \in \tpstructheart{\spectra X}$ (where we use \cref{def:embedding:L1G}).
    \end{itemize}
    Then for every $n \ge 2$ there is a short exact sequence in $\tpstructheart{\spectra{X}}$
    (or a short exact sequence in $\Grp{\Disc{\topos X}}$ if $n = 1$)
    \begin{equation*}
        0 \to \mathbb L_0 \pi_n X \to \nu_* \pi_n (\completebr{\nu^* X}) \to \mathbb L_1 \pi_{n-1} X \to 0,
    \end{equation*}
    where we use \cref{def:embedding:L1G} for $\mathbb L_i \pi_1(X)$.
    Note that this distinction does not matter if $\pi_1(X)$ is abelian,
    see \cref{lemma:embedding:L1G-eq-L1A}.
    Here, we define $\mathbb L_1 \pi_0 X = 0$ (since $X$ is connected by assumption).

    Moreover, we get that $\pi_n (\completebr{\nu^* X}) \sslash p$ is classical for all $n \ge 2$.
\end{thm}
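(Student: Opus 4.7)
The plan is to apply Proposition~\ref{lemma:psig:short-exact-sequence}, which already provides the desired SES inside $\PSig{\Cat C}$, to the pulled-back object $\nu^*X$, and then push the resulting sequence down along $\nu_*$ using the two-out-of-three classicality principle of Corollary~\ref{cor:embedding:pushdown-ses}. The outer terms will finally be identified via Lemma~\ref{lemma:embedding:Li-computation} and Definition~\ref{def:embedding:L1G}.

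First I would verify that $\nu^*X \in \PSig{\Cat C}_*$ is pointed, connected, and nilpotent. This is routine from $\nu^*$ being a left-exact left adjoint: it preserves the terminal object, finite products, and Eilenberg--MacLane objects, and the commutation $\pi_n\nu^*X \cong \nu^*\pi_nX$ guarantees that a principal refinement of the Postnikov tower of $X$ pulls back to one for $\nu^*X$. Proposition~\ref{lemma:psig:short-exact-sequence} then yields, for each $n \ge 2$, a canonical SES in $\pheart{\PSigVal{\Cat C}{\Sp}}$
\[
    0 \to \mathbb L_0(\nu^*\pi_nX) \to \pi_n(\completebr{\nu^*X}) \to \mathbb L_1(\nu^*\pi_{n-1}X) \to 0,
\]
together with the analogous group-valued SES for $n = 1$.

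Next I would verify the classicality conditions required to invoke Corollary~\ref{cor:embedding:pushdown-ses}. For the left-hand term, Lemma~\ref{lemma:embedding:Li-computation} applied to $A = \pi_nX$ takes the assumed classicality of $(\mathbb L_1\nu^*\pi_nX)\sslash p$ as hypothesis and concludes that $(\mathbb L_0\nu^*\pi_nX)\sslash p$ is classical and that $\mathbb L_0\pi_nX \cong \nu_*\mathbb L_0(\nu^*\pi_nX)$. For the right-hand term: if $n \ge 3$ the classicality of $(\mathbb L_1\nu^*\pi_{n-1}X)\sslash p$ is precisely the main assumption of the theorem, while the delicate case $n = 2$ is handled by the two ``further'' hypotheses (abelian $\pi_1X$ with explicit classicality, or $\mathbb L_1\pi_1X \in \pheart{\spectra X}$, tailored exactly so that the analogous classicality can be recovered or bypassed). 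Corollary~\ref{cor:embedding:pushdown-ses} then simultaneously produces the classicality of the middle term $\pi_n(\completebr{\nu^*X})\sslash p$ (which is the last assertion of the theorem) and the push-forward SES in $\pheart{\spectra X}$
\[
    0 \to \nu_*\mathbb L_0(\nu^*\pi_nX) \to \nu_*\pi_n(\completebr{\nu^*X}) \to \nu_*\mathbb L_1(\nu^*\pi_{n-1}X) \to 0.
\]
By Lemma~\ref{lemma:embedding:Li-computation} the left-hand term is $\mathbb L_0\pi_nX$, and by Definition~\ref{def:embedding:L1G} (together with Lemma~\ref{lemma:embedding:L1G-eq-L1A} when $\pi_{n-1}X$ is abelian, to reconcile the two definitions of $\mathbb L_i$) the right-hand term is $\mathbb L_1\pi_{n-1}X$, giving the desired sequence. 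The $n = 1$ case is analogous, applying $\nu_*$ to the group-valued sequence from Proposition~\ref{lemma:psig:short-exact-sequence} and invoking Definition~\ref{def:embedding:L1G} for the outer terms.

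The main obstacle is the $n = 2$ case together with the potential non-abelianness of $\pi_1X$: here $\mathbb L_1\pi_1X$ is defined through the classifying-space construction of Definition~\ref{def:embedding:L1G} rather than as an output of Lemma~\ref{lemma:embedding:Li-computation}, so the classicality of $(\mathbb L_1\nu^*\pi_1X)\sslash p$ is not automatic. The two alternative hypotheses in the ``further'' clause are introduced precisely to handle this: the first supplies the classicality directly in the abelian setting, while the second bypasses it by stipulating that the pushed-forward object is already in $\pheart{\spectra X}$, which is exactly the output that the two-out-of-three argument would otherwise need to produce.
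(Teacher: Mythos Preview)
Your proposal is correct and follows essentially the same route as the paper: apply Proposition~\ref{lemma:psig:short-exact-sequence} to $\nu^*X$, use Lemma~\ref{lemma:embedding:Li-computation} to verify classicality of the outer terms, invoke Corollary~\ref{cor:embedding:pushdown-ses} to push the sequence down and obtain classicality of the middle term, and then identify the outer terms via Lemma~\ref{lemma:embedding:Li-computation} and Definition~\ref{def:embedding:L1G}. Your discussion of the $n=2$ non-abelian case, where the second bullet hypothesis bypasses the two-out-of-three classicality argument by directly supplying $\nu_*\mathbb L_1\nu^*\pi_1X \in \pheart{\spectra X}$, is slightly more explicit than the paper's own treatment.
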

\begin{proof}
    We first prove the case $n \ge 2$.
    Using \cref{lemma:embedding:Li-computation} we conclude that also
    $(\mathbb L_i \nu^* \pi_n X) \sslash p$ is classical for all $n \ge 2$ and all $i$.
    \cref{lemma:psig:short-exact-sequence} gives us a short exact sequence in $\tpstructheart{\PSigVal{\Cat C}{\Sp}}$
    \begin{equation*}
        0 \to \mathbb L_0 \pi_n \nu^* X \to \pi_n (\completebr{\nu^* X}) \to \mathbb L_1 \pi_{n-1} \nu^* X \to 0.
    \end{equation*}
    This induces a fiber sequence 
    \begin{equation*}
        (\mathbb L_0 \pi_n \nu^* X) \sslash p \to (\pi_n (\completebr{\nu^* X})) \sslash p \to (\mathbb L_1 \pi_{n-1} \nu^* X) \sslash p,       
    \end{equation*}
    where the outer to parts are classical. Thus, the same is true for the middle,
    which proves the last statement.
    Using \cref{cor:embedding:pushdown-ses} (using the assumptions on $(\mathbb L_i \nu^* \pi_n X) \sslash p$), the above short exact sequence induces a short exact sequence
    in $\tpstructheart{\spectra{X}}$
    \begin{equation*}
        0 \to \nu_* \mathbb L_0 \pi_n \nu^* X \to \nu_* \pi_n (\completebr{\nu^* X}) \to \nu_* \mathbb L_1 \pi_{n-1} \nu^* X \to 0.
    \end{equation*}
    We conclude by noting that $\nu_* \mathbb L_i \pi_n \nu^* X \cong \nu_* \mathbb L_i \nu^* \pi_n X \cong \mathbb L_i \pi_n X$
    where the last equivalence is supplied by \cref{lemma:embedding:Li-computation}.

    For the case $n = 1$, we get a canonical equivalence in $\Grp{\Disc{\PSig{\Cat C}}}$ 
    from \cref{lemma:psig:short-exact-sequence}
    \begin{equation*}
        \mathbb L_0 \pi_1 \nu^* X \cong \pi_1 (\completebr{\nu^* X}).
    \end{equation*}
    Applying $\nu_*$, this induces an equivalence in $\Grp{\Disc{\topos X}}$
    \begin{equation*}
        \mathbb L_0 \pi_1 X = \nu_* \mathbb L_0 \pi_1 \nu^* X \cong \nu_* \pi_1 (\completebr{\nu^* X}),
    \end{equation*}
    which is what we wanted to show.
\end{proof}

\subsection{Comparison of the \texorpdfstring{$p$}{p}-adic Hearts}
We keep the notation from \cref{section:embedding}.
In this section, we prove a technical result about the functors on the hearts of the $p$-adic t-structures 
induced by the functors $\nu^* \dashv  \nu_*$.

\begin{defn}
    Let $\nupu \colon \tpstructheart{\spectra{X}} \to \tpstructheart{\PSigVal{\Cat C}{\Sp}}$ 
    be defined as the functor $\pi_0^p \circ \nu^*$ restricted to the heart.
    Similarly, let $\nupl \colon \tpstructheart{\PSigVal{\Cat C}{\Sp}} \to \tpstructheart{\spectra{X}}$ 
    be defined as the functor $\pi_0^p \circ \nu_*$ restricted to the heart.
\end{defn}

\begin{lem} \label{lemma:embedding:heart-adjoint}
    The functor $\nupu$ is left adjoint to $\nupl$.
    Moreover, $\nupu$ is right-exact and $\nupl$ is left-exact as functors 
    of abelian categories.
\end{lem}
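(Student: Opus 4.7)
The plan is to derive everything from the t-exactness properties of the adjunction $\nu^* \dashv \nu_*$ combined with \cref{lemma:t-struct:right-t-exact}. Since $\nu^*$ is the left adjoint of a geometric morphism, the conventions in the introduction give that it is t-exact on stabilizations for the standard t-structures, and consequently $\nu_*$ is left t-exact for the standard t-structures. Applying \cref{lemma:t-struct:right-t-exact} with $L = \nu^*$ immediately upgrades these to: $\nu^*$ is right t-exact, and $\nu_*$ is left t-exact, for the $p$-adic t-structures. In particular, for $A \in \pheart{\spectra X}$ one has $\nu^*A \in \tpcon{\PSigVal{\Cat C}{\Sp}}$, so $\nupu A = \pi_0^p(\nu^*A) = \tau^p_{\le 0}\nu^*A$ lies in $\pheart{\PSigVal{\Cat C}{\Sp}}$; dually $\nupl B = \tau^p_{\ge 0}\nu_*B$ lies in $\pheart{\spectra X}$.

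For the adjunction, the plan is to establish the following chain of natural equivalences in $\An$ for $A \in \pheart{\spectra X}$ and $B \in \pheart{\PSigVal{\Cat C}{\Sp}}$:
\begin{align*}
    \Map{\PSigVal{\Cat C}{\Sp}}(\nupu A, B)
    &\cong \Map{\PSigVal{\Cat C}{\Sp}}(\tau^p_{\le 0}\nu^*A, B) \\
    &\cong \Map{\PSigVal{\Cat C}{\Sp}}(\nu^*A, B) \\
    &\cong \Map{\spectra X}(A, \nu_*B) \\
    &\cong \Map{\spectra X}(A, \tau^p_{\ge 0}\nu_*B) \\
    &\cong \Map{\spectra X}(A, \nupl B).
\end{align*}
The second equivalence uses that $B \in \tpcocon{\PSigVal{\Cat C}{\Sp}}$ together with the fact that $\tau^p_{\le 0}$ is left adjoint to the inclusion of the coconnective part; the third is the underlying $\nu^* \dashv \nu_*$ adjunction on stabilizations; the fourth uses $A \in \tpcon{\spectra X}$ and the fact that $\tau^p_{\ge 0}$ is right adjoint to the inclusion of the connective part. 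Passing to $\pi_0$ yields a natural isomorphism of $\Hom$-sets in the two abelian hearts, which is the desired adjunction.

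For the exactness statements, I would invoke the standard fact that a left adjoint between abelian categories is right exact and a right adjoint is left exact. The hearts of the $p$-adic t-structures are abelian by \cite[Remark 1.2.1.12]{higheralgebra}, and $\nupu$, $\nupl$ are additive since they factor through the exact functors $\nu^*$, $\nu_*$ between stable $\infty$-categories. I do not expect any real obstacle; the only thing requiring some care is tracking the direction of each truncation adjunction between the hearts and the ambient stable categories in the chain of equivalences above.
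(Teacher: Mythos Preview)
Your argument is correct and follows the same route as the paper. The paper's proof simply observes that $\nu^*$ is right t-exact and $\nu_*$ is left t-exact for the $p$-adic t-structures via \cref{lemma:t-struct:right-t-exact}, and then cites \cite[Proposition 1.3.17 (i) and (iii)]{BeilinsonFaisceauxPerverse} for the passage to hearts; your chain of mapping-space equivalences is precisely the content of that proposition, spelled out explicitly.
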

\begin{proof}
    Note that $\nu^*$ is right t-exact and $\nu_*$ is left t-exact for the $p$-adic t-structures,
    see \cref{lemma:t-struct:right-t-exact}.
    Now the statements are \cite[Proposition 1.3.17 (i) and (iii)]{BeilinsonFaisceauxPerverse}.
\end{proof}

\begin{lem} \label{lemma:embedding:nupu-ker-0}
    Let $E \in \tpstructheart{\spectra{X}}$.
    Suppose that $\nupu E \cong 0$.
    Then $E \cong 0$.
\end{lem}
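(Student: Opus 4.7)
The plan is to show that $E \in \tpcon[1]{\spectra{X}}$; combined with $E \in \tpcocon{\spectra{X}}$ (which holds because $E$ lies in the $p$-adic heart), this forces $E \cong 0$ by the defining left orthogonality $\Map{}(\tpcon[1], \tpcocon[0]) = 0$: indeed, the unit $E \to \tau^p_{\le 0} E$ of the localization is null, so $E \cong \tau^p_{\le 0} E \cong 0$.

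To reach $E \in \tpcon[1]{\spectra{X}}$, I would first transport the hypothesis $\nupu E \cong 0$ to a connectivity statement about $\nu^* E$. Since $\nu^*$ is the left adjoint of a geometric morphism and is therefore right $t$-exact for the standard $t$-structures, \cref{lemma:t-struct:right-t-exact} gives that $\nu^*$ is right $t$-exact for the $p$-adic $t$-structures as well; hence $\nu^* E \in \tpcon{\PSigVal{\Cat C}{\Sp}}$. For a $p$-adically connective object, $\pi_0^p(-) \cong \tau^p_{\le 0}(-)$, so the vanishing $\nupu E = \pi^p_0(\nu^* E) \cong 0$ is equivalent to $\tau^p_{\le 0} \nu^* E \cong 0$, that is, $\nu^* E \in \tpcon[1]{\PSigVal{\Cat C}{\Sp}}$.

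The second step is to descend this connectivity back to $E$ via \cref{lemma:t-struct:conservative}, applied to $L = \nu^*$. The required hypotheses are exactness (immediate, since $\nu^*$ is a left adjoint between stable categories), $t$-exactness for the standard $t$-structures (true because $\nu^*$ is left exact as the inverse image of a geometric morphism, hence preserves discrete objects and abelian group objects, which controls the standard heart on stabilizations), and conservativity. The latter is the only mildly subtle point: it follows because $\nu^*$ is fully faithful by assumption, and any fully faithful functor reflects isomorphisms. The lemma then yields $E \in \tpcon[1]{\spectra{X}}$, completing the argument as outlined above.

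The main potential obstacle is the conservativity/$t$-exactness input for \cref{lemma:t-struct:conservative}; once those are unpacked from the fact that $\nu^*$ is a fully faithful left adjoint of a geometric morphism, the proof reduces to chasing a single fiber sequence $\tau^p_{\ge 1} E \to E \to \tau^p_{\le 0} E$.
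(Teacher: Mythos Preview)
Your proof is correct and follows essentially the same route as the paper: use \cref{lemma:t-struct:right-t-exact} plus the vanishing of $\pi_0^p(\nu^*E)$ to obtain $\nu^*E \in \tpcon[1]{\PSigVal{\Cat C}{\Sp}}$, then invoke \cref{lemma:t-struct:conservative} (with conservativity coming from full faithfulness of $\nu^*$) to descend to $E \in \tpcon[1]{\spectra{X}}$, which combined with $E \in \tpstructheart{\spectra{X}}$ forces $E \cong 0$. You simply unpack the justifications for the hypotheses of these lemmas in more detail than the paper does.
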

\begin{proof}
    By \cref{lemma:t-struct:right-t-exact} and the assumption,
    we see that $\nu^* E \in \tpcon[1]{\PSigVal{\Cat C}{\Sp}}$.
    By \cref{lemma:t-struct:conservative} (using that $\nu^*$ is conservative, since it is fully faithful),
    we conclude that $E \in \tpcon[1]{\spectra X}$.
    Since by assumption $E \in \tpstructheart{\spectra X}$,
    it follows that $E \cong 0$.
\end{proof}

\begin{lem} \label{lemma:embedding:pheart-range}
    Let $E \in \tpstructheart{\spectra X}$.
    Then $\completebr{\nu^* E} \in \tpcon{\PSigVal{\Cat C}{\Sp}} \cap \tpcocon[1]{\PSigVal{\Cat C}{\Sp}}$.
\end{lem}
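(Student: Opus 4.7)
The plan is to reduce the claim to the general statement in \cref{lemma:t-struct:Li-zero-if-i-neq-zero-one}, parts (1) and (2), applied to $\nu^* E$ inside $\PSigVal{\Cat C}{\Sp}$. Recall those results say that if $A \in \tcocon{\Cat D}$ then $\complete{A} \in \tpcocon[1]{\Cat D}$, and if $A \sslash p \in \tcon{\Cat D}$ then $\complete{A} \in \tpcon{\Cat D}$. So everything boils down to checking the standard-$t$-structure hypotheses for $\nu^* E$.

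First I would observe that $\nu^*$ is $t$-exact for the standard $t$-structures. Indeed, as the left adjoint of a geometric morphism, $\nu^*$ is left exact, hence preserves truncated objects (so it is left $t$-exact on the stabilization), and it is cocontinuous, so it preserves connective objects (right $t$-exactness). Hence $\nu^*$ preserves both $\tcocon{-}$ and $\tcon{-}$, and in particular commutes with $(-)\sslash p$.

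Next, I would translate the assumption $E \in \tpstructheart{\spectra{X}}$ into standard-$t$-structure data. From \cref{lemma:t-struct:char-of-cocon}, membership in $\tpcocon{\spectra{X}}$ gives $E \in \tcocon{\spectra{X}}$; from \cref{lemma:t-struct:mod-p-in-con-implies-in-tpcon}, membership in $\tpcon{\spectra{X}}$ gives $E \sslash p \in \tcon{\spectra{X}}$. Applying the $t$-exact functor $\nu^*$ then yields $\nu^* E \in \tcocon{\PSigVal{\Cat C}{\Sp}}$ and $\nu^*(E \sslash p) \cong \nu^* E \sslash p \in \tcon{\PSigVal{\Cat C}{\Sp}}$.

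Finally, I would feed these two facts into \cref{lemma:t-struct:Li-zero-if-i-neq-zero-one}: part (1) applied to $\nu^* E$ gives $\completebr{\nu^* E} \in \tpcocon[1]{\PSigVal{\Cat C}{\Sp}}$, and part (2) applied to $\nu^* E$ gives $\completebr{\nu^* E} \in \tpcon{\PSigVal{\Cat C}{\Sp}}$. Intersecting yields the claim. There is no real obstacle here; the only subtlety worth emphasizing is why $\nu^*$ is $t$-exact (rather than merely right $t$-exact), which is what lets us conclude coconnectivity of $\nu^* E$ and hence apply part (1).
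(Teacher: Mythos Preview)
Your proposal is correct and essentially identical to the paper's own proof: both extract $E \in \tcocon{\spectra{X}}$ and $E \sslash p \in \tcon{\spectra{X}}$ from the $p$-adic heart hypothesis via \cref{lemma:t-struct:char-of-cocon} and \cref{lemma:t-struct:mod-p-in-con-implies-in-tpcon}, push these through the $t$-exact functor $\nu^*$, and then invoke \cref{lemma:t-struct:Li-zero-if-i-neq-zero-one}. The only cosmetic difference is that the paper cites parts (1) and (3) of that proposition where you cite (1) and (2); your citation of (2) is in fact the more precise one, since what is actually established is $\nu^* E \sslash p \in \tcon{}$ rather than $\nu^* E \in \tcon{}$.
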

\begin{proof}
    Note that $E \in \tcocon{\spectra{X}}$ by \cref{lemma:t-struct:char-of-cocon}.
    Thus, $\nu^*E \in \tcocon{\PSigVal{\Cat C}{\Sp}}$ (since $\nu^*$ is t-exact 
    for the standard t-structures, see e.g.\ \cref{lemma:stabilization:homotopy-objects}).
    On the other hand, $E \sslash p \in \tcon{\spectra{X}}$ by \cref{lemma:t-struct:mod-p-in-con-implies-in-tpcon}.
    Thus, also $\nu^* E \sslash p \in \tcon{\PSigVal{\Cat C}{\Sp}}$, again by the t-exactness of $\nu^*$.
    The lemma follows immediately from (1) and (3) of \cref{lemma:t-struct:Li-zero-if-i-neq-zero-one}.
\end{proof}

\begin{cor}\label{lemma:embedding:A-characterization}
    Let $E \in \tpstructheart{\spectra X}$.
    Then $\pi_1^p(\nu^* E) = 0$ if and only if $\completebr{\nu^* E} \in \tpcocon{\PSigVal{\Cat C}{\Sp}}$.
    In particular, in this case $\nupu E \cong \completebr{\nu^* E}$.
\end{cor}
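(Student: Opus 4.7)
The plan is to reduce everything to reading off the $p$-adic connectivity of $\completebr{\nu^* E}$ directly from \cref{lemma:embedding:pheart-range}. First, I would replace $\nu^* E$ by its $p$-completion on both sides of the equivalence. Since the unit map $\nu^* E \to \completebr{\nu^* E}$ is a $p$-equivalence (\cref{lemma:stable:peq-description}), and the standard t-structure on $\PSigVal{\Cat C}{\Sp}$ is left-separated (because $\PSig{\Cat C}$ is Postnikov-complete by \cref{lemma:psig-postnikov-complete}, hence hypercomplete), \cref{lemma:t-struct:p-eq-iso-on-htpy-in-t-structure} gives isomorphisms $\pi_n^p(\nu^* E) \cong \pi_n^p(\completebr{\nu^* E})$ for every $n$. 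Thus the hypothesis $\pi_1^p(\nu^* E) = 0$ is equivalent to $\pi_1^p(\completebr{\nu^* E}) = 0$.

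Next, I invoke \cref{lemma:embedding:pheart-range}, which tells us $\completebr{\nu^* E} \in \tpcon{\PSigVal{\Cat C}{\Sp}} \cap \tpcocon[1]{\PSigVal{\Cat C}{\Sp}}$. Consider the canonical fiber sequence
\begin{equation*}
    \tau^p_{\ge 1}\completebr{\nu^* E} \to \completebr{\nu^* E} \to \tau^p_{\le 0}\completebr{\nu^* E}.
\end{equation*}
The leftmost term lies in $\tpcon[1]{\PSigVal{\Cat C}{\Sp}} \cap \tpcocon[1]{\PSigVal{\Cat C}{\Sp}}$, so it is canonically isomorphic to $\Sigma \pi_1^p(\completebr{\nu^* E})$. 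Therefore the condition $\completebr{\nu^* E} \in \tpcocon{\PSigVal{\Cat C}{\Sp}}$ is equivalent to $\tau^p_{\ge 1}\completebr{\nu^* E} = 0$, which is in turn equivalent to $\pi_1^p(\completebr{\nu^* E}) = 0$, and by the first step to $\pi_1^p(\nu^* E) = 0$. This proves the equivalence.

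For the final ``in particular'' clause, if $\pi_1^p(\nu^* E) = 0$, then combining the vanishing just obtained with \cref{lemma:embedding:pheart-range} gives $\completebr{\nu^* E} \in \tpcon{\PSigVal{\Cat C}{\Sp}} \cap \tpcocon{\PSigVal{\Cat C}{\Sp}} = \pheart{\PSigVal{\Cat C}{\Sp}}$, so $\completebr{\nu^* E}$ already agrees with its own $\pi_0^p$. Unwinding the definition, $\nupu E = \pi_0^p(\nu^* E) \cong \pi_0^p(\completebr{\nu^* E}) \cong \completebr{\nu^* E}$, again using the invariance of $\pi_0^p$ under the $p$-equivalence $\nu^* E \to \completebr{\nu^* E}$. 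There is no genuine obstacle here; this is essentially a bookkeeping corollary whose only subtlety is confirming that we may apply \cref{lemma:t-struct:p-eq-iso-on-htpy-in-t-structure}, which requires noting left-separation in $\PSigVal{\Cat C}{\Sp}$.
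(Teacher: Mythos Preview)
Your proof is correct and follows essentially the same approach as the paper: both arguments reduce the equivalence to \cref{lemma:embedding:pheart-range}, which pins $\completebr{\nu^* E}$ to $p$-adic degrees $0$ and $1$, so that $\pi_1^p$ vanishes exactly when the object is $p$-adically coconnective; and both compute $\nupu E$ via $\pi_0^p(\nu^* E) \cong \pi_0^p(\completebr{\nu^* E})$ using \cref{lemma:t-struct:p-eq-iso-on-htpy-in-t-structure}. Your write-up is simply more explicit, spelling out the fiber sequence and the left-separation hypothesis needed for that last lemma.
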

\begin{proof}
    The first part is immediate from \cref{lemma:embedding:pheart-range}.
    For the last statement, note that
    \begin{equation*}
        \nupu E = \pi_0^p(\nu^* E) \cong \pi_0^p(\completebr{\nu^* E}) = \completebr{\nu^* E},
    \end{equation*}
    where we used \cref{lemma:t-struct:p-eq-iso-on-htpy-in-t-structure}.
\end{proof}

\begin{defn} \label{def:embedding:A}
    Let $\Cat A \subset \tpstructheart{\spectra X}$ be the full subcategory 
    spanned by objects $E$ such that $\pi_1^p(\nu^* E) \cong 0$.
\end{defn}

\begin{lem} \label{lemma:embedding:exact-on-A}
    Let $0 \to A \to B \to C \to 0$ be a short exact sequence in $\tpstructheart{\spectra X}$
    such that $C \in \Cat A$.
    Then $0 \to \nupu A \to \nupu B \to \nupu C \to 0$ is exact.
\end{lem}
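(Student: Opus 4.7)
The plan is to translate the short exact sequence into a fiber sequence in $\spectra X$, apply $\nu^*$, and then read off the desired exactness from the long exact sequence of the $p$-adic $t$-structure. Since \cref{lemma:embedding:heart-adjoint} already tells us that $\nupu$ is right-exact, the only real content is the injectivity of $\nupu A \to \nupu B$.

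First I would recall that a short exact sequence $0 \to A \to B \to C \to 0$ in the abelian category $\tpstructheart{\spectra X}$ is precisely the data of a fiber sequence $A \to B \to C$ in $\spectra X$ whose three terms all lie in $\tpstructheart{\spectra X}$. Applying the exact functor $\nu^*$ to this fiber sequence yields a fiber sequence
\begin{equation*}
    \nu^* A \to \nu^* B \to \nu^* C
\end{equation*}
in $\PSigVal{\Cat C}{\Sp}$. Taking $p$-adic homotopy objects produces the long exact sequence
\begin{equation*}
    \pi_1^p(\nu^* C) \to \pi_0^p(\nu^* A) \to \pi_0^p(\nu^* B) \to \pi_0^p(\nu^* C) \to \pi_{-1}^p(\nu^* A).
\end{equation*}

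Next I would kill the two outer terms. The rightmost term vanishes because $A \in \tpstructheart{\spectra X} \subset \tpcon{\spectra X}$ and $\nu^*$ is right $t$-exact for the $p$-adic $t$-structures by \cref{lemma:t-struct:right-t-exact}, so $\nu^* A \in \tpcon{\PSigVal{\Cat C}{\Sp}}$ and in particular $\pi_{-1}^p(\nu^* A) = 0$. The leftmost term vanishes by the hypothesis $C \in \Cat A$, which by \cref{def:embedding:A} means exactly $\pi_1^p(\nu^* C) = 0$. Hence the long exact sequence collapses to the short exact sequence
\begin{equation*}
    0 \to \pi_0^p(\nu^* A) \to \pi_0^p(\nu^* B) \to \pi_0^p(\nu^* C) \to 0,
\end{equation*}
which by definition of $\nupu$ is precisely the asserted sequence $0 \to \nupu A \to \nupu B \to \nupu C \to 0$.

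There is no real obstacle here; the argument is essentially a bookkeeping exercise in the $p$-adic long exact sequence, once one notices that the two hypotheses ($A$ in the heart, $C \in \Cat A$) are exactly what is needed to kill the two boundary terms. The only point that deserves a word of care is the passage from a short exact sequence in $\tpstructheart{\spectra X}$ to a fiber sequence in $\spectra X$, which is standard for hearts of $t$-structures.
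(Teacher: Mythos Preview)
Your proof is correct and is exactly the argument the paper has in mind; the paper's own proof is just the two-line compression of what you wrote, citing right-exactness from \cref{lemma:embedding:heart-adjoint} and then saying ``the result follows from the long exact sequence and the assumption on $C$.'' You have simply spelled out which two terms of that long exact sequence need to die and why.
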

\begin{proof}
    We already know that $\nupu$ is right exact, see \cref{lemma:embedding:heart-adjoint}.
    Moreover, $\nupu = \pi_0^p \circ \nu^*$.
    Thus, the result follows from the long exact sequence and the assumption on $C$.    
\end{proof}

\begin{lem} \label{lemma:embedding:pushdown-in-heart}
    Let $E \in \Cat A \subset \tpstructheart{\spectra X}$.
    Then $\nu_* \nupu E \cong E$.

    Moreover, $\nupl \nupu E \cong E$. In particular, $\nupu$ is fully faithful on $\Cat A$.
\end{lem}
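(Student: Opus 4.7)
The plan is to compute $\nu_* \nupu E$ and $\nupl \nupu E$ directly by unpacking the definitions and combining earlier results, and then deduce fully faithfulness from a standard adjunction argument.

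First I would use \cref{lemma:embedding:A-characterization}: since $E \in \Cat A$ means $\pi_1^p(\nu^* E) \cong 0$, the corollary gives a canonical identification $\nupu E \cong \completebr{\nu^* E}$ in $\tpstructheart{\PSigVal{\Cat C}{\Sp}}$. Applying $\nu_*$ and using \cref{lemma:embedding:stable-completion} (which says $\nu_* \completebr{\nu^* F} \cong \complete{F}$ for any $F \in \spectra{X}$) we get
\begin{equation*}
    \nu_* \nupu E \cong \nu_* \completebr{\nu^* E} \cong \complete{E}.
\end{equation*}
Finally, because $E \in \tpstructheart{\spectra X} \subset \tpcocon{\spectra X}$, \cref{lemma:t-struct:char-of-cocon} tells us $E$ is already $p$-complete, so $\complete{E} \cong E$. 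This yields the first claim $\nu_* \nupu E \cong E$.

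For the second claim I would observe that by definition $\nupl \nupu E = \pi_0^p(\nu_* \nupu E)$. By what we just proved, $\nu_* \nupu E \cong E$, and since $E$ already lies in the $p$-adic heart, $\pi_0^p(E) \cong E$. Hence $\nupl \nupu E \cong E$. One should check that under these identifications the map being inverted really is the unit of the adjunction $\nupu \dashv \nupl$ (from \cref{lemma:embedding:heart-adjoint}); this is just a matter of tracing through definitions, using that the stable counit/unit induced by $\nu^* \dashv \nu_*$ restricts, after applying $\pi_0^p$, to the unit of the adjunction on hearts.

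From this the fully faithfulness of $\nupu$ restricted to $\Cat A$ is formal: a left adjoint is fully faithful on a subcategory exactly when the unit of the adjunction is an equivalence there, so $\nupu|_{\Cat A}$ is fully faithful. No step here looks difficult; the only mild subtlety is the bookkeeping required to check that the isomorphism $\nupl \nupu E \cong E$ constructed above coincides with the unit map of the adjunction, rather than just being some abstract isomorphism — but this follows from the naturality of the identifications used and the fact that everything is assembled from the canonical unit/counit of $\nu^* \dashv \nu_*$.
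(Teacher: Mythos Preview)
Your proposal is correct and follows essentially the same argument as the paper: identify $\nupu E$ with $\completebr{\nu^* E}$ via \cref{lemma:embedding:A-characterization}, apply $\nu_*$ and use \cref{lemma:embedding:stable-completion} together with $p$-completeness of $E$ (from \cref{lemma:t-struct:char-of-cocon}) to get $\nu_* \nupu E \cong E$, then deduce $\nupl \nupu E \cong \pi_0^p(E) \cong E$ and conclude fully faithfulness from the unit being an equivalence. The paper's proof is the same line-by-line, including your remark that one must identify the constructed isomorphism with the adjunction unit.
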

\begin{proof}
    We compute 
    \begin{equation*}
        \nu_* \nupu E 
        \cong \nu_* \completebr{\nu^* E}
        \cong \completebr{\nu_* \nu^* E}
        \cong \complete{E}
        \cong E,
    \end{equation*}
    where we used \cref{lemma:embedding:A-characterization} in the first equivalence,
    \cref{lemma:embedding:stable-completion} in the second equivalence and
    the fully faithfulness of $\nu^*$ in the third equivalence.
    The fourth equivalence holds because $E \in \tpstructheart{\spectra X}$ is $p$-complete,
    see \cref{lemma:t-struct:char-of-cocon}.

    For the last part, we note that 
    $\nupl \nupu E = \pi_0^p(\nu_* \nupu E) \cong \pi_0^p(E) = E$,
    which follows from the calculation above.
    Note that this equivalence is the (inverse of the) unit of the adjunction $\nupu \dashv \nupl$,
    therefore it follows that $\nupl$ is fully faithful on $\Cat A$.
\end{proof}

\begin{cor} \label{cor:embedding:classical-char}
    Let $A \in \tpstructheart{\PSigVal{\Cat C}{\Sp}}$.
    Suppose that $A$ is in the essential image of $\nupu|_{\Cat A}$,
    i.e.\ there is an $A' \in \Cat A$ such that $\nupu A' \cong A$.

    Then $\nu_* A \cong A'$, in particular $\nu_* A \in \tpstructheart{\spectra X}$
    and $\nupu \nu_* A \cong A$.
\end{cor}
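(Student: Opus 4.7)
The plan is to derive all three claims from a single application of \cref{lemma:embedding:pushdown-in-heart}. That lemma already gives us $\nu_* \nupu E \cong E$ whenever $E \in \Cat A$, so the strategy is simply to substitute our given object $A'$ into this identity and use the hypothesis $\nupu A' \cong A$ to rewrite the left-hand side.

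More concretely, first I would apply \cref{lemma:embedding:pushdown-in-heart} to $A' \in \Cat A$ to obtain a canonical equivalence $\nu_* \nupu A' \cong A'$ in $\tpstructheart{\spectra X}$. Then, using the assumed equivalence $\nupu A' \cong A$, the left-hand side becomes $\nu_* A$, yielding the first claim $\nu_* A \cong A'$. The second claim, that $\nu_* A \in \tpstructheart{\spectra X}$, is then immediate since $A' \in \Cat A \subset \tpstructheart{\spectra X}$ by \cref{def:embedding:A}. For the third claim, apply $\nupu$ to the equivalence $\nu_* A \cong A'$ and combine with $\nupu A' \cong A$ to deduce $\nupu \nu_* A \cong A$.

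There is essentially no obstacle here — the content has been packaged into the earlier \cref{lemma:embedding:pushdown-in-heart}, and this corollary is its formal consequence observing that membership in the essential image of $\nupu|_{\Cat A}$ can be detected by $\nu_*$. The only point to double-check is that the naturality of the equivalences is good enough to produce honest isomorphisms rather than merely abstract ones, which follows because the equivalence in \cref{lemma:embedding:pushdown-in-heart} is realized through the unit of the adjunction $\nupu \dashv \nupl$ (and through the fully faithfulness of $\nu^*$ via \cref{lemma:embedding:stable-completion}).
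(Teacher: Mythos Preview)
Your proposal is correct and follows essentially the same argument as the paper: both apply \cref{lemma:embedding:pushdown-in-heart} to $A'$ to get $\nu_* A \cong \nu_* \nupu A' \cong A'$, then observe that $\nupu \nu_* A \cong \nupu A' \cong A$.
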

\begin{proof}
    This immediately from \cref{lemma:embedding:pushdown-in-heart},
    because $\nu_* A \cong \nu_* \nupu A' \cong A' \in \Cat A$,
    and $\nupu \nu_* A \cong \nupu A' \cong A$.
\end{proof}

\begin{lem} \label{lemma:embedding:ker-in-A}
    Let $f \colon A \to B$ be a morphism in $\tpstructheart{\spectra X}$,
    such that $A$ and $B$ are in $\Cat A$.
    Then also $\ker(f) \in \Cat A$.
\end{lem}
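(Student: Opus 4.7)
Since $\tpstructheart{\spectra X}$ is an abelian category, $f$ fits into a canonical short exact sequence
\begin{equation*}
0 \to \ker(f) \to A \to \operatorname{Im}(f) \to 0
\end{equation*}
in $\tpstructheart{\spectra X}$, which lifts to a fiber sequence $\ker(f) \to A \to \operatorname{Im}(f)$ in $\spectra X$. My plan is to apply $\nu^*$ to this fiber sequence and read off the vanishing of $\pi_1^p(\nu^* \ker(f))$ from the associated long exact sequence.

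Applying the exact functor $\nu^*$ yields a fiber sequence $\nu^* \ker(f) \to \nu^* A \to \nu^* \operatorname{Im}(f)$ in $\PSigVal{\Cat C}{\Sp}$, and the long exact sequence of $p$-adic homotopy objects contains the segment
\begin{equation*}
\pi_2^p(\nu^* \operatorname{Im}(f)) \to \pi_1^p(\nu^* \ker(f)) \to \pi_1^p(\nu^* A).
\end{equation*}
The rightmost term vanishes by the hypothesis $A \in \Cat A$, so the whole lemma reduces to showing that $\pi_2^p(\nu^* \operatorname{Im}(f)) = 0$.

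For this step I would isolate the following general fact: for every $E \in \tpstructheart{\spectra X}$ one has $\pi_n^p(\nu^* E) = 0$ for all $n \geq 2$. Indeed, \cref{lemma:embedding:pheart-range} gives $\completebr{\nu^* E} \in \tpcon{\PSigVal{\Cat C}{\Sp}} \cap \tpcocon[1]{\PSigVal{\Cat C}{\Sp}}$, so the $p$-adic homotopy objects of $\completebr{\nu^* E}$ vanish outside degrees $0$ and $1$. The canonical map $\nu^* E \to \completebr{\nu^* E}$ is a $p$-equivalence and hence induces isomorphisms on all $p$-adic homotopy objects by \cref{lemma:t-struct:p-eq-iso-on-htpy-in-t-structure} (applicable because $\PSig{\Cat C}$ is Postnikov-complete by \cref{lemma:psig-postnikov-complete}, hence hypercomplete, so the standard t-structure on $\PSigVal{\Cat C}{\Sp}$ is left-separated). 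Specializing to $E = \operatorname{Im}(f) \in \tpstructheart{\spectra X}$ finishes the proof.

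No serious obstacle arises in this argument; it is essentially a two-line diagram chase once one recognizes that the $p$-adic t-structure is supported in degrees $\{0,1\}$ after passing through $\nu^*$. The only delicate point is the invocation of left-separation when comparing $\pi_n^p(\nu^* E)$ with $\pi_n^p(\completebr{\nu^* E})$, which is why I would flag the reference to Postnikov-completeness of $\PSig{\Cat C}$ explicitly.
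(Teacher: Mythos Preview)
Your argument is correct and in fact a bit cleaner than the paper's. The paper reaches the same conclusion via two fiber sequences in $\spectra X$: first $A \xrightarrow{f} B \to \Cofib{f}$, then $\Sigma\ker(f) \to \Cofib{f} \to \coker(f)$. It uses the hypotheses $A,B \in \Cat A$ (together with \cref{lemma:embedding:A-characterization}) on the first sequence to show that $\completebr{\nu^*\Cofib{f}}$ is concentrated in $p$-adic degrees $0$ and $1$, and \cref{lemma:embedding:pheart-range} on $\coker(f)$ to get the same range for $\completebr{\nu^*\coker(f)}$; then the long exact sequence for the second fiber sequence kills $\pi_2^p(\nu^*\Sigma\ker(f)) \cong \pi_1^p(\nu^*\ker(f))$. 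Your image-factorization approach collapses this to a single fiber sequence $\ker(f) \to A \to \operatorname{Im}(f)$ and a single invocation of \cref{lemma:embedding:pheart-range} (for $\operatorname{Im}(f)$), which is more direct. A pleasant side effect is that your proof never uses $B \in \Cat A$: only $A \in \Cat A$ and $\operatorname{Im}(f) \in \tpstructheart{\spectra X}$ are needed, so your argument actually yields the slightly stronger statement that kernels of morphisms out of objects of $\Cat A$ (with arbitrary target in the $p$-adic heart) remain in $\Cat A$.
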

\begin{proof}
    Note that we have the following two fiber sequences in $\spectra{X}$:
    \begin{align*}
        A \xrightarrow{f} B \to \Cofib{f}, \\
        \Sigma \ker(f) \to \Cofib{f} \to \coker(f).
    \end{align*}
    Applying the exact functor $\completebr{\nu^*(-)}$ and using the assumptions on $A$ and $B$ (and \cref{lemma:embedding:A-characterization}),
    we conclude by the long exact sequence that $\completebr{\nu^*\Cofib{f}}$ lives in $p$-adic 
    degrees $0$ and $1$.
    We know from \cref{lemma:embedding:pheart-range}, that also $\completebr{\nu^*\coker(f)}$ lives in $p$-adic 
    degrees $0$ and $1$.
    Therefore, applying $\completebr{\nu^*(-)}$ to the second fiber sequence, 
    the long exact sequence implies that $\pi_1^p(\nu^* \ker(f)) \cong \pi_2^p(\completebr{\nu^*\Sigma \ker(f)}) = 0$,
    i.e.\ $\ker(f) \in \Cat A$.
\end{proof}

\begin{lem} \label{lemma:embedding:ses-calculation-middle-left}
    Let $0 \to A_1 \xrightarrow{f} A_2 \xrightarrow{g} A_3 \to 0$ be a short exact sequence in $\tpstructheart{\PSigVal{\Cat C}{\Sp}}$.
    Suppose that $A_3$ and one out of $A_1$ and $A_2$ satisfy that they are in the essential image of $\nupu|_{\Cat A}$.
    Then this is also true for the third.
\end{lem}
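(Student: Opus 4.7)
The plan is to pass through the right adjoint $\nu_*$ and to exploit the characterization from \cref{cor:embedding:classical-char}: an object $A_i$ lies in the essential image of $\nupu|_{\Cat A}$ if and only if $B_i \coloneqq \nu_* A_i$ lies in $\Cat A$ and the counit-induced map $\nupu B_i \to A_i$ is an equivalence. The first step is to apply $\nu_*$ to the cofiber sequence $A_1 \to A_2 \to A_3$, obtaining a fiber sequence $B_1 \to B_2 \to B_3$ in $\spectra X$ with each $B_i \in \tpcocon{\spectra X}$ by \cref{lemma:t-struct:right-t-exact}. The long exact sequence of the $p$-adic t-structure applied to this fiber sequence is the main tool in both cases.

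In the case where $A_1, A_3$ are in the essential image, $B_1, B_3 \in \Cat A \subset \tpstructheart{\spectra X}$, so $B_2 \in \tpstructheart{\spectra X}$ as an extension, and the fiber sequence yields a short exact sequence $0 \to B_1 \to B_2 \to B_3 \to 0$ in the heart. To verify that $B_2 \in \Cat A$, I apply the t-exact $\nu^*$ and run the $p$-adic long exact sequence for $\nu^* B_1 \to \nu^* B_2 \to \nu^* B_3$; the vanishing of $\pi_1^p(\nu^* B_1)$ and $\pi_1^p(\nu^* B_3)$ sandwiches $\pi_1^p(\nu^* B_2) = 0$. Applying $\nupu$ to the short exact sequence and invoking \cref{lemma:embedding:exact-on-A} (applicable since $B_3 \in \Cat A$) produces a short exact sequence $0 \to \nupu B_1 \to \nupu B_2 \to \nupu B_3 \to 0$; this fits into a ladder with $0 \to A_1 \to A_2 \to A_3 \to 0$ via the counit-induced maps $\nupu B_i \to A_i$, whose outer verticals are isomorphisms by hypothesis, so the $5$-lemma yields $\nupu B_2 \cong A_2$.

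In the case where $A_2, A_3$ are in the essential image, the situation is more delicate because $\nu_*$ is only known to be left t-exact for the $p$-adic t-structures, so $B_1$ could a priori have nonzero $\pi_{-1}^p$. The crucial step is to show that $B_2 \to B_3$ is an epimorphism in $\tpstructheart{\spectra X}$. Right-exactness of $\nupu$ (see \cref{lemma:embedding:heart-adjoint}) together with the hypothesized identifications $\nupu B_i \cong A_i$ gives $\nupu \coker(B_2 \to B_3) \cong \coker(\nupu B_2 \to \nupu B_3) \cong \coker(A_2 \to A_3) = 0$, and then \cref{lemma:embedding:nupu-ker-0} forces $\coker(B_2 \to B_3) = 0$. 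The long exact sequence then collapses to a short exact sequence $0 \to B_1 \to B_2 \to B_3 \to 0$ in $\tpstructheart{\spectra X}$, and \cref{lemma:embedding:ker-in-A} places $B_1 = \ker(B_2 \to B_3)$ in $\Cat A$. A $5$-lemma argument parallel to the first case completes the identification $\nupu B_1 \cong A_1$.

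The main obstacle is the second case: without right t-exactness of $\nu_*$, one cannot place $B_1$ in the $p$-adic heart a priori. Converting the surjection $A_2 \to A_3$ into a surjection $B_2 \to B_3$ via the right-exactness of $\nupu$ combined with the conservativity-type statement \cref{lemma:embedding:nupu-ker-0} is what unlocks the argument. Once each $B_i$ is shown to lie in $\Cat A$, the $5$-lemma comparison with $A_i$ is essentially formal, relying on \cref{lemma:embedding:exact-on-A}.
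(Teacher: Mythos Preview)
Your proof is correct and follows the same strategy as the paper: apply $\nu_*$ to obtain a fiber sequence $B_1 \to B_2 \to B_3$, use the long exact sequence in the $p$-adic t-structure together with \cref{lemma:embedding:nupu-ker-0} and \cref{lemma:embedding:ker-in-A} to place the remaining $B_i$ in $\Cat A$, and then identify $\nupu B_i$ with $A_i$. Your explicit 5-lemma for the final identification is a clean alternative to the paper's direct assertion $\nupu \nu_* A_i \cong \completebr{\nu^* \nu_* A_i} \cong \complete{A_i} \cong A_i$; note only that when you identify $\coker(\nupu B_2 \to \nupu B_3)$ with $\coker(g)$ you are implicitly using that $\nupu(B_2 \to B_3)$ corresponds to $g$ under $\nupu B_i \cong A_i$, which follows from the full faithfulness of $\nupu|_{\Cat A}$ in \cref{lemma:embedding:pushdown-in-heart}.
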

\begin{proof}
    We choose $A_3' \in \Cat A$ such that $\nupu A_3' \cong A_3$.
    Note that the short exact sequence in the $p$-adic heart gives a fiber sequence 
    $A_1 \to A_2 \to A_3$ in $\PSigVal{\Cat C}{\Sp}$.
    Applying the functor $\nu_*$ yields the fiber sequence 
    \begin{equation} \label{eq:embedding:ses-calculation-middle-left:fib-seq}
        \nu_* A_1 \to \nu_* A_2 \to A_3',
    \end{equation}
    where we used \cref{cor:embedding:classical-char}.

    We start with the case that the assumptions for $A_2$ and $A_3$ imply the statement for $A_1$.
    So choose $A_2' \in \Cat A$ such that $\nupu A_2' \cong A_2$.
    Since $\nupu$ is fully faithful on $\Cat A$,
    we know that $\nupu \nupl g \cong g$ (note that $g$ is a morphism between objects in the 
    essential image of $\nupu|_{\Cat A}$).
    Thus, again by \cref{cor:embedding:classical-char}, the fiber sequence \ref{eq:embedding:ses-calculation-middle-left:fib-seq}
    is equivalent to the fiber sequence 
    \begin{equation*}
        \nu_* A_1 \to \nupl A_2 \xrightarrow{\nupl g} \nupl A_3.
    \end{equation*}
    By \cref{lemma:t-struct:right-t-exact}, $\nu_* A_1 \in \tpcocon{\spectra X}$.
    Since $\nupl A_2$ and $\nupl A_3$ are living in $\tpstructheart{\spectra X}$,
    the long exact sequence show that $\nu_* A_1 \in \tpcon[-1]{\spectra X}$,
    and that $\pi_{-1}^p(\nu_* A_1) \cong \coker(\nupl g)$.
    Note that $\nupu \coker(\nupl g) \cong \coker(\nupu \nupl g) \cong \coker(g) \cong 0$,
    where we used that $\nupu$ is left exact, see \cref{lemma:embedding:heart-adjoint}.
    Using \cref{lemma:embedding:nupu-ker-0}, we see that $\coker(\nupl g) \cong 0$.
    This implies that $\nu_* A_1 \in \tpstructheart{\spectra X}$,
    in particular, $\nu_* A_1 \cong \ker(\nupl g)$.
    Since we know by \cref{lemma:embedding:ker-in-A} that $\Cat A$ is stable under kernels,
    we conclude $\nu_* A_1 \in \Cat A$.
    Therefore, $\nupu \nu_* A_1 \cong \completebr{\nu^* \nu_* A_1} \cong \complete{A_1} \cong A_1$,
    where we used \cref{lemma:embedding:A-characterization}, and the fact that $A_1$ is $p$-complete because 
    it lives in the $p$-adic heart.
    This proves that $A_1$ is in the essential image of $\nupu|_{\Cat A}$.

    We continue with the case that the assumptions for $A_1$ and $A_3$ imply the statement for $A_2$.
    So choose $A_1' \in \Cat A$ such that $\nupu A_1' \cong A_1$.
    Then by \cref{cor:embedding:classical-char} the fiber sequence \ref{eq:embedding:ses-calculation-middle-left:fib-seq}
    is equivalent to the fiber sequence 
    \begin{equation*}
        A_1' \to \nu_* A_2 \to A_3'.
    \end{equation*}
    Since the outer parts live in the $p$-adic heart, this is also true for $\nu_* A_2$.
    Now define $A_2' \coloneqq \nu_* A_2$.
    We immediately see that $A_2' \in \Cat A$ because $A_1'$ and $A_2'$ are
    (apply $\nu^*$ and use the long exact sequence for $\pi_n^p$).
    But then $\nupu A_2' \cong \completebr{\nu^* \nu_* A_2} \cong \complete{A_2} = A_2$,
    again by \cref{lemma:embedding:A-characterization} and the fact that $A_2$ lives in the $p$-adic heart and is thus $p$-complete.
    This proves the lemma.
\end{proof}

The following will be a useful criterion to determine when an object will be in the essential image of $\nupu|_{\Cat A}$:
\begin{prop} \label{prop:embedding:heart-exact-sequence}
    Let $0 \to A \xrightarrow{\alpha} B \xrightarrow{\beta} C \xrightarrow{\gamma} D$ be an exact sequence
    in $\tpstructheart{\PSigVal{\Cat C}{\Sp}}$.
    Suppose that there are $A'$, $C'$ and $D'$ in $\Cat A \subset \tpstructheart{\spectra{X}}$
    such that $\nupu A' \cong A$, $\nupu C' \cong C$ and $\nupu D' \cong D$.
    Suppose moreover that $\coker(\nupl \gamma) \in \Cat A$.
    
    Then $\nu_* B \in \Cat A \subset \tpstructheart{\spectra{X}}$,
    and $\nupu (\nu_* B) \cong B$.
\end{prop}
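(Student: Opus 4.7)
The plan is to factor the given four-term exact sequence into three short exact sequences in the abelian category $\tpstructheart{\PSigVal{\Cat C}{\Sp}}$, propagate the property of lying in the essential image of $\nupu|_{\Cat A}$ from right to left by iterated application of \cref{lemma:embedding:ses-calculation-middle-left}, and finally invoke \cref{cor:embedding:classical-char}.

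Setting $I \coloneqq \operatorname{im}(\beta) \cong \ker(\gamma)$ and $J \coloneqq \operatorname{im}(\gamma)$, exactness yields the three short exact sequences
\begin{equation*}
    0 \to A \to B \to I \to 0, \qquad 0 \to I \to C \to J \to 0, \qquad 0 \to J \to D \to \coker(\gamma) \to 0.
\end{equation*}

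The one non-formal step is to verify that $\coker(\gamma)$ already lies in the essential image of $\nupu|_{\Cat A}$, with preimage the assumed-to-lie-in-$\Cat A$ object $\coker(\nupl \gamma)$. By \cref{lemma:embedding:pushdown-in-heart}, $\nupu$ is fully faithful on $\Cat A$, hence $\nupl C \cong C'$, $\nupl D \cong D'$, and the counits $\nupu \nupl C \to C$, $\nupu \nupl D \to D$ are isomorphisms (since $C, D$ are already in the essential image of $\nupu|_{\Cat A}$). Naturality of the counit identifies $\nupu \nupl \gamma$ with $\gamma$ under these isomorphisms, so applying the right-exact functor $\nupu$ (right-exactness from \cref{lemma:embedding:heart-adjoint}) to the right exact sequence $C' \xrightarrow{\nupl \gamma} D' \to \coker(\nupl \gamma) \to 0$ yields the desired identification $\nupu \coker(\nupl \gamma) \cong \coker(\gamma)$.

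With $\coker(\gamma)$ in the essential image, I would apply \cref{lemma:embedding:ses-calculation-middle-left} to the third SES (middle $D$, right $\coker(\gamma)$) to obtain $J$ in the essential image, then to the second SES (middle $C$, right $J$) to obtain $I$, and then to the first SES (left $A$, right $I$) to obtain $B$. \cref{cor:embedding:classical-char} then produces $\nu_* B \in \Cat A$ together with $\nupu(\nu_* B) \cong B$. The only real obstacle is the identification $\coker(\gamma) \cong \nupu \coker(\nupl \gamma)$, which requires being careful about the interaction of the adjunction unit/counit with the morphism $\gamma$; everything else is a bookkeeping exercise that reduces to the already-established SES lemma.
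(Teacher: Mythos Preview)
Your proposal is correct and follows essentially the same route as the paper: factor the four-term sequence into three short exact sequences, show that $\coker(\gamma)$ lies in the essential image of $\nupu|_{\Cat A}$ using right-exactness of $\nupu$ and fully faithfulness on $\Cat A$, then apply \cref{lemma:embedding:ses-calculation-middle-left} three times and conclude via \cref{cor:embedding:classical-char}. The only cosmetic difference is that the paper picks a preimage $\gamma' \colon C' \to D'$ of $\gamma$ directly from fully faithfulness, whereas you phrase the same identification through the counit; these are equivalent by the triangle identity, and your argument that the counit is invertible on objects in the essential image of $\nupu|_{\Cat A}$ is fine.
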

\begin{proof}
    It suffices to prove that $B$ is in the essential image of $\nupu|_{\Cat A}$,
    the claim then follows from \cref{cor:embedding:classical-char}.

    Write $K \coloneqq \ker(\gamma) \cong \operatorname{im}(\beta)$
    and $I \coloneqq \operatorname{im}(\gamma)$.
    We have exact sequences
    \begin{align*}
        0 \to A \xrightarrow{\alpha} B \to K \to 0, \\
        0 \to K \to C \to I \to 0,                  \\
        0 \to I \to D \to \coker(\gamma) \to 0.
    \end{align*}
    By applying \cref{lemma:embedding:ses-calculation-middle-left} three times,
    it suffices to show that $\coker(\gamma)$ is in the essential image of $\nupu|_{\Cat A}$.

    Since $\nupu$ is fully faithful on $\Cat A$ by \cref{lemma:embedding:pushdown-in-heart},
    we see that there is a morphism $\gamma' \colon C' \to D'$ such that 
    $\nupu (\gamma') \cong \gamma$.
    In particular, $\coker(\gamma') \cong \coker (\nupl \nupu \gamma') \cong \coker(\nupl \gamma)$,
    which lives in $\Cat A$ by assumption.
    Therefore, we see that $\coker(\gamma) \cong \coker(\nupu \gamma') \cong \nupu \coker(\gamma')$
    is in the essential image of $\nupu|_{\Cat A}$. Here we used that $\nupu$ is right-exact,
    see \cref{lemma:embedding:heart-adjoint}.
\end{proof}

We will also need the following lemma, which helps to determine when the pushforward $\nu_*$
of an object is actually in $\Cat A$:
\begin{lem} \label{lemma:embedding:pushdown-heart-criterium-A}
    Suppose that $A \in \pheart{\PSigVal{\Cat C}{\Sp}}$
    such that $A \sslash p$ is classical, and such that $\nu_* A \in \pheart{\spectra X}$.
    Then $\nu_* A \in \Cat A \subset \pheart{\spectra X}$.
\end{lem}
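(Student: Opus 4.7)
The plan is to prove $\pi_1^p(\nu^* \nu_* A) = 0$ by showing that the counit $c \colon \nu^* \nu_* A \to A$ is a $p$-equivalence, from which $\complete{\nu^* \nu_* A} \cong A$ sits in the $p$-adic heart.

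First I would observe that since $\nu^*$ and $\nu_*$ induce exact functors on stabilizations (see \cref{lemma:adjoints-on-stabilization}), both preserve cofibers. In particular, there is a canonical identification $(\nu^* \nu_* A) \sslash p \cong \nu^* \nu_* (A \sslash p)$, and by naturality of $(- \sslash p)$ the morphism $c \sslash p$ corresponds under this identification to the counit of $\nu^* \dashv \nu_*$ evaluated at the object $A \sslash p$. By hypothesis $A \sslash p$ is classical, i.e.\ in the essential image of the fully faithful left adjoint $\nu^*$, and for any such object the counit is an equivalence (by the triangle identity together with the fully faithfulness of $\nu^*$). Hence $c \sslash p$ is an equivalence, so $c$ is a $p$-equivalence.

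Next, since $A \in \pheart{\PSigVal{\Cat C}{\Sp}} \subset \tpcocon{\PSigVal{\Cat C}{\Sp}}$, it is already $p$-complete by \cref{lemma:t-struct:char-of-cocon}. Applying $p$-completion to the fiber sequence $\Fib{c} \to \nu^* \nu_* A \xrightarrow{c} A$ and using \cref{lemma:stable:peq-description}, which tells us that $\completebr{c}$ is an equivalence, yields
\begin{equation*}
    \complete{\nu^* \nu_* A} \cong \complete{A} \cong A \in \pheart{\PSigVal{\Cat C}{\Sp}}.
\end{equation*}

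Finally, the unit $\nu^* \nu_* A \to \complete{\nu^* \nu_* A}$ is itself a $p$-equivalence, so its fiber has uniquely $p$-divisible homotopy by \cref{cor:stable-peq-iff-upd-fiber} and hence lies in $\bigcap_n \tpcon[n]{\PSigVal{\Cat C}{\Sp}}$ by \cref{lemma:t-struct:left-complete}. The long exact sequence for the $p$-adic t-structure then gives $\pi_1^p(\nu^* \nu_* A) \cong \pi_1^p(\complete{\nu^* \nu_* A}) \cong \pi_1^p(A) = 0$, which is exactly the defining condition of $\Cat A$ (\cref{def:embedding:A}). The only subtle point is the naturality identification in the first paragraph, which relies on exactness of $\nu^*$ and $\nu_*$; every other step is essentially formal once one knows that $c$ is a $p$-equivalence.
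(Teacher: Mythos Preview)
Your proof is correct and follows essentially the same line as the paper's: both identify the counit $c \colon \nu^* \nu_* A \to A$, use exactness of $\nu^*$ and $\nu_*$ to rewrite $c \sslash p$ as the counit at $A \sslash p$, invoke classicality to see this is an equivalence, and then conclude that $\complete{\nu^* \nu_* A} \cong A$ lies in the $p$-adic heart. The only cosmetic difference is that the paper phrases the final step via \cref{lemma:embedding:A-characterization} (i.e.\ $\complete{\nu^* \nu_* A} \in \tpcocon{\PSigVal{\Cat C}{\Sp}}$), whereas you unpack this through the long exact sequence in $\pi_n^p$; your route is slightly longer but equally valid.
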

\begin{proof}
    Using \cref{lemma:embedding:A-characterization}, 
    we have to show that $\completebr{\nu^* \nu_* A} \in \tpcocon{\PSigVal{\Cat C}{\Sp}}$.
    Denote by $\varphi \colon \nu^* \nu_* A \to A$ the counit map.
    Since $A$ is $p$-complete (see e.g.\ \cref{lemma:t-struct:char-of-cocon}),
    $\varphi$ induces a map $\psi \colon \completebr{\nu^* \nu_* A} \to A$.
    Thus, if $\psi$ is an equivalence, we are done.
    For this, it suffices to show that $\varphi$ is a $p$-equivalence.
    Thus, we are reduced to show that $\varphi \sslash p \colon (\nu^* \nu_* A)\sslash p \to A \sslash p$
    is an equivalence.
    By exactness, the left term is equivalent to $\nu^* \nu_* (A \sslash p)$,
    and under this identification, the map $\varphi \sslash p$
    corresponds to the counit $\nu^* \nu_* (A \sslash p) \to A \sslash p$.
    But this is an equivalence since $A \sslash p$ is classical.
\end{proof}

\subsection{A Short Exact Sequence for Zariski Sheaves}
\label{section:zar}
Let $k$ be a field and denote by $\smooth{k}$ the category of quasi-compact smooth $k$-schemes.
Let $\ShvTop{\zar}{\smooth{k}}$ be the $\infty$-topos of sheaves on $\smooth{k}$
with respect to the Zariski topology,
i.e.\ covers are given by fpqc covers $\{ U_i \to U \}_i$
such that each $U_i \to U$ can be written as $\sqcup_j U_{i, j} \to U$
such that each $U_{i, j} \to U$ is an open immersion.

The following result is well-known:
\begin{lem} \label{lemma:zar:postnikov-complete}
    The topos $\ShvTop{\zar}{\smooth{k}}$ is Postnikov-complete.
    In particular, it is hypercomplete.
\end{lem}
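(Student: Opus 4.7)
The plan is to reduce the claim to a standard criterion: an $\infty$-topos of sheaves on a 1-site is Postnikov-complete whenever each representable has finite homotopy dimension. The representables $j_U$, for $U \in \smooth{k}$, generate $\ShvTop{\zar}{\smooth{k}}$ under colimits, so by a slicing argument it suffices to show that for every such $U$ the slice topos $\ShvTop{\zar}{\smooth{k}}_{/j_U}$ has finite homotopy dimension. This slice is equivalent to the $\infty$-topos of Zariski sheaves on the small Zariski site of $U$.

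For this slice, I would invoke Grothendieck's vanishing theorem: on a noetherian scheme of finite Krull dimension $d$, the Zariski cohomological dimension is at most $d$, and for our $U \in \smooth{k}$ we have $\dim U < \infty$. Via the descent spectral sequence (or equivalently, via the inductive truncation argument that turns a cohomological dimension bound into a homotopy dimension bound), this yields that $\ShvTop{\zar}{\smooth{k}}_{/j_U}$ has homotopy dimension $\leq \dim U$.

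Given that the topos is locally of finite homotopy dimension, I would then conclude Postnikov-completeness along the same lines as \cref{thm:topos:p-comp-commutes-with-post-tower}: for any $X \in \ShvTop{\zar}{\smooth{k}}$ and any $U$, the connectivity bound above forces the tower $(\tau_{\le k}X)(U)$ to stabilize on each homotopy group for $k$ large (depending on $\dim U$), so $X(U) \to \lim_k (\tau_{\le k}X)(U)$ is an equivalence, and because the $j_U$ generate, this gives $X \xrightarrow{\simeq} \lim_k \tau_{\le k}X$ in $\ShvTop{\zar}{\smooth{k}}$. Hypercompleteness is then immediate, since an $\infty$-connective object has trivial Postnikov tower and must therefore itself be terminal.

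The main obstacle is the passage from the cohomological dimension bound (Grothendieck) to the homotopy dimension bound in the sense of $\infty$-topoi; this is classical but requires tracking truncations carefully through the spectral sequence. Once that is in hand, both Postnikov-completeness and hypercompleteness follow from general principles already employed elsewhere in the paper.
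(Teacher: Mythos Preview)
Your overall strategy is sound and close to the paper's, but there is a genuine error in one step: the slice topos $\ShvTop{\zar}{\smooth{k}}_{/j_U}$ is \emph{not} equivalent to the small Zariski topos of $U$. By the usual slicing formula it is equivalent to Zariski sheaves on the comma category $(\smooth{k})_{/U}$, which contains all smooth $k$-schemes mapping to $U$ (e.g.\ $\AffSpc{n}_U \to U$ for every $n$), not just open subschemes of $U$. So you cannot read off the homotopy dimension of the slice directly from Grothendieck vanishing on $U$.

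The paper repairs exactly this point by using, instead of the slice, the restriction functor $f_U \colon \ShvTop{\zar}{\smooth{k}} \to \ShvTop{\zar}{U_{\zar}}$ to the small Zariski site. One checks that $f_U$ commutes with limits (sections are computed objectwise) and with truncations (because $f_U$ preserves homotopy objects and the target is hypercomplete). The small Zariski topos $\ShvTop{\zar}{U_{\zar}}$ is locally of homotopy dimension $\le \dim U$ by \cite[Corollary 7.2.4.17]{highertopoi}, hence Postnikov-complete by \cite[Proposition 7.2.1.10]{highertopoi}. Then $X(U) = (f_U X)(U) \cong (\limil{k}\tau_{\le k}f_U X)(U) \cong (\limil{k}f_U\tau_{\le k}X)(U) = (\limil{k}\tau_{\le k}X)(U)$ for every $U$, and since the $j_U$ detect equivalences this yields $X \cong \limil{k}\tau_{\le k}X$. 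Your stabilization argument in the last paragraph is essentially this computation; it just needs to be routed through $f_U$ rather than through the slice. Also, the reference to \cref{thm:topos:p-comp-commutes-with-post-tower} is misplaced: that theorem concerns $p$-completion, not Postnikov-completeness per se. The relevant input is really \cite[Proposition 7.2.1.10]{highertopoi} applied to each small site.
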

\begin{proof}
    Let $X \in \ShvTop{\zar}{\smooth{k}}$.
    We have to show that $\limil{k} \tau_{\le k} X_k \cong X$.
    For $U \in \smooth{k}$, write $U_{\zar}$ for the small Zariski site over $U$ 
    (i.e.\ the poset of open subsets).
    There is an evident functor $f_U \colon \ShvTop{\zar}{\smooth{k}} \to \ShvTop{\zar}{U_{\zar}}$
    given by restriction.

    Note that $\ShvTop{\zar}{U_{\zar}}$ is Postnikov-complete (and thus also hypercomplete):
    It was proven in \cite[Corollary 7.2.4.17]{highertopoi} that it is locally of 
    homotopy dimension $\le \dim(U)$.
    Thus, the result follows from \cite[Proposition 7.2.1.10]{highertopoi}.

    The functor $f_U$ commutes with limits because limits of sheaves can be computed on sections.
    Moreover, $f_U$ commutes with truncations: This is clear,
    since the topos $\ShvTop{\zar}{U_{\zar}}$ is hypercomplete 
    and $f_U$ commutes with homotopy objects.
    This fact follows because $\pi_n(f_U(F))$ is the Zariski sheafification 
    of the presheaf $V \mapsto \pi_n(f_U(F)(V)) = \pi_n(F(V))$.
    But on the other hand, $\pi_n(F)$ is the Zariski sheafification 
    of the presheaf $V \mapsto \pi_n(F(V))$.
    Thus, the result follows from the Postnikov-completeness of $\ShvTop{\zar}{U_{\zar}}$ 
    for every $U$.
\end{proof}

We now show, using the theory developed in \cref{section:embedding},
that for certain nilpotent Zariski sheaves there is a short exact sequence
\begin{equation*}
    0 \to \mathbb L_0 \pi_n(X) \to \pi_n^p(X) \to \mathbb L_1 \pi_{n-1}(X) \to 0,
\end{equation*}
see \cref{thm:pro-zar:short-exact-sequence} for the precise statement.
Note that we have shown in \cref{appendix:pro-zar}, particularly in 
\cref{thm:pro-zar:topos,thm:pro-zar:embedding}
that there is a geometric morphism
\begin{equation*}
    \nu^* \colon \ShvTop{\zar}{\smooth{k}} \rightleftarrows \ShvTop{\prozartop}{\prozar{\smooth{k}}} \cong \PSig{W} \colon \nu_*,
\end{equation*}
where $W \subset \prozar{\smooth{k}}$ is the full subcategory of zw-contractible affine schemes,
see \cref{def:pro-zar:W-cat}
Hence, we can apply the results from \cref{section:embedding} to the (big)
Zariski $\infty$-topos.

\begin{rmk}
    At the end, we want to work with motivic spaces, which are in particular Nisnevich sheaves.
    Note that one could define the pro-Nisnevich topology,
    and prove that the Nisnevich topos on $\smooth{k}$ embeds into
    $\PSig{W_{\nis}}$ for a class $W_{\nis}$ of Nisnevich weakly contractible rings.
    But the pro-Nisnevich topos has too many objects:
    Write $\mu_{p^\infty} \subset \mathbb G_m$ for the pro-Nisnevich sheaf
    of $p$-power roots of unity (which is the left Kan extension of the Nisnevich sheaf 
    $\mu_{p^\infty}|_{\op{\smooth{k}}}$).
    But then a calculation shows that $(\mathbb L_1 \mu_{p^\infty}) \sslash p$ is not classical
    (in the sense of \cref{def:embedding:classical}).
    Thus, we cannot apply \cref{thm:embedding:short-exact-sequence}.
    As we will show below, this cannot happen if we work with the pro-Zariski topology.
\end{rmk}

\begin{defn} \label{def:pro-zar:gersten}
    Let $F \in \heart{\ZarShvSp{k}}$.
    We say that $F$ satisfies \emph{Gersten injectivity}
    if for every connected $U \in \smooth{k}$
    the canonical map $\heart{\Gamma}(U, F) \to \heart{\Gamma}(\eta, F)$ is injective where $\eta \in U$ is the generic point,
    and $\heart{\Gamma}(\eta, F)$ is the stalk of $F$ at $\eta$,
    i.e.\ we define $\heart{\Gamma}(\eta, F) \coloneqq \heart{\Gamma}(\eta, \nu^* F) \cong \colimil{\eta \to V \to U} \heart{\Gamma}(V, F)$,
    where the colimit runs over all Zariski morphisms $V \to U$
    that fit into a factorization $\eta \to V \to U$ of the morphism $\eta \to U$ (see \cref{cor:pro-zar:essential-image-heart} for the equivalence).

    Note that since $\eta \in \prozar{\smooth{k}}$ is zw-contractible (since it represents 
    a local ring of the Zariski topology, see \cref{def:pro-zar:zw-contractible} for the definition 
    of zw-contractible),
    we actually have $\heart{\Gamma}(\eta, \nu^* F) \cong \Gamma(\eta, \nu^* F)$,
    see \cref{lemma:psig:htpy-dim}.
\end{defn}

\begin{lem} \label{lemma:pro-zar:pdiv-of-gersten}
    Let $n \ge 1$ be an integer and $F \in \heart{\ZarShvSp{k}}$
    such that $F/p^n$ satisfies Gersten injectivity.
    Let $U \in \smooth{k}$ a connected smooth scheme,
    $\eta \in U$ its generic point and $x \in \heart{\Gamma}(U, F)$ a section.
    Suppose that there is $\tilde{y} \in \heart{\Gamma}(\eta, F)$ such
    that $p^n \tilde{y} = x|_{\eta}$.

    Then there is a Zariski cover $V \twoheadrightarrow U$ and a $y \in \heart{\Gamma}(V, F)$
    such that $p^n y = x|_V$.
\end{lem}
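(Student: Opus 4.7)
The plan is to apply the Gersten injectivity hypothesis to the sheaf $F/p^n$ and then translate the resulting vanishing of the class $\bar{x}$ of $x$ in $(F/p^n)(U)$ into the desired Zariski cover. The key input is simply the short exact sequence $0 \to F \xrightarrow{p^n} F \to F/p^n \to 0$ in the abelian category $\heart{\ZarShvSp{k}}$.

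Concretely, let $\bar{x} \in \heart{\Gamma}(U, F/p^n)$ denote the image of $x$ under the quotient map $F \twoheadrightarrow F/p^n$. Its restriction $\bar{x}|_\eta \in \heart{\Gamma}(\eta, F/p^n)$ equals the image of $x|_\eta = p^n \tilde{y}$ in $\heart{\Gamma}(\eta, F/p^n)$, which vanishes because $p^n$ acts as zero on the cokernel. Here one uses that $\nu^*$ is t-exact on hearts (being the left adjoint of a geometric morphism), so it commutes with the formation of $F/p^n$, and that by definition $\heart{\Gamma}(\eta, F/p^n) = \heart{\Gamma}(\eta, \nu^*(F/p^n))$. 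By the Gersten injectivity hypothesis applied to the connected scheme $U$ and the sheaf $F/p^n$, the restriction map $\heart{\Gamma}(U, F/p^n) \to \heart{\Gamma}(\eta, F/p^n)$ is injective, so $\bar{x} = 0$ in $(F/p^n)(U)$.

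It remains to unpack what $\bar{x} = 0$ means. The kernel of the surjection $F \twoheadrightarrow F/p^n$ of Zariski sheaves is the image sheaf $\operatorname{im}(p^n \colon F \to F)$, which is the sheafification of the subpresheaf $V \mapsto p^n F(V) \subset F(V)$. Thus $\bar{x} = 0$ in $(F/p^n)(U)$ is equivalent to saying that there exists a Zariski cover $\{V_i \to U\}_{i \in I}$ such that $x|_{V_i} \in p^n F(V_i)$ for each $i$. Since $U$ is quasi-compact, this cover may be refined to a finite one; setting $V \coloneqq \sqcup_{i \in I} V_i \in \smooth{k}$ (which is a Zariski cover of $U$) and $y = (y_i)_i \in F(V) \cong \prod_i F(V_i)$ with $p^n y_i = x|_{V_i}$, we obtain $p^n y = x|_V$ as required. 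The only nontrivial ingredient is the Gersten injectivity step; the remainder is routine bookkeeping about sheaf cokernels.
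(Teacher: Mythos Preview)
Your proof is correct and follows essentially the same approach as the paper: use Gersten injectivity of $F/p^n$ to deduce that the image of $x$ in $\heart{\Gamma}(U, F/p^n)$ vanishes, then interpret this vanishing as the existence of a Zariski cover on which $x$ becomes $p^n$-divisible. Your version is simply more explicit about the sheaf-theoretic bookkeeping in the final step.
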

\begin{proof}
    By Gersten injectivity,
    the map $\heart{\Gamma}(U, F/p^n) \to \heart{\Gamma}(\eta, F/p^n)$ is injective.
    Note that $x|_\eta = 0$ in $\heart{\Gamma}(\eta, F/p^n)$.
    Thus, $x = 0$ in $\heart{\Gamma}(U, F/p^n)$.
    This means that there exists a Zariski cover $V \twoheadrightarrow U$
    and a $y \in \heart{\Gamma}(V, F)$ such that $p^n y = x|_V$.
\end{proof}

\begin{defn}
    Let $A \in \heart{\ShvTop{\prozartop}{\prozar{\smooth{k}}, \Sp}}$ be a sheaf of
    abelian groups on the pro-Zariski site.
    We say that an element $x \in \heart{\Gamma}(U, A)$
    is \emph{locally $p^n$-divisible} if
    there is a pro-Zariski cover $V \twoheadrightarrow U$
    and a $y \in \heart{\Gamma}(V, A)$ such that $p^ny =x|_V$,
    i.e. if $x$ lies in the sheaf-theoretic image 
    (calculated in the heart, which is an abelian category)
    of the morphism $p^n \colon A \to A$.

    We say that $x$ is \emph{locally arbitrary $p$-divisible} if
    $x$ is locally $p^n$-divisible for all $n \ge 1$.
\end{defn}

\begin{lem} \label{lemma:pro-zar:B-classical}
    Let $A \in \heart{\ShvTop{\prozartop}{\prozar{\smooth{k}}, \Sp}}$ be a sheaf of abelian groups on the pro-Zariski site.
    Define a subsheaf $B \subset A$ via
    \begin{equation*}
        B = A[p] \cap \bigcap_n \operatorname{im} (A \xrightarrow{p^n} A).
    \end{equation*}
    For $U \in \prozar{\smooth{k}}$ we have 
    \begin{equation*}
        \heart{\Gamma}(U, B) = \set{x \in \heart{\Gamma}(U, A)}{px = 0, x \text{ is locally arbitrary } p \text{ divisible}}.
    \end{equation*}
    If $A$ is classical (i.e. $A$ is in the essential image of $\nu^*$,
    see \cref{def:embedding:classical}) and $(\nu_*A)/p^n$ satisfies Gersten injectivity for every $n$, then $B$ is also classical.
\end{lem}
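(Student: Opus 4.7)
The plan is to realize $B$ as $\nu^* C$ for the explicit Zariski subsheaf
\begin{equation*}
    C \;\coloneqq\; A'[p] \cap \bigcap_{n \ge 1} \operatorname{im}\bigl(A' \xrightarrow{p^n} A'\bigr) \;\subseteq\; A',
\end{equation*}
where $A' \coloneqq \nu_* A$ and all operations are taken in $\heart{\ZarShvSp{k}}$. Since $\nu^*$ is the left-exact left adjoint of a geometric morphism, it preserves kernels, cokernels, images and finite intersections, so one obtains a canonical monomorphism $\nu^* C \hookrightarrow B \subseteq A$. The task is to upgrade this to an isomorphism, and Gersten injectivity is precisely what makes $\nu^*$ commute with the (a priori problematic) infinite intersection defining $C$.

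The first step is to identify $\nu_* B = C$ as subsheaves of $A'$. For $V \in \smooth{k}$ (assumed connected, otherwise treating each component separately) and $x \in B(V)$, the $p$-torsion condition is immediate. For each $n$, I would restrict a pro-Zariski witness of $p^n$-divisibility of $x$ along the canonical morphism $\eta \hookrightarrow V$ of the generic point. Since $\eta$ is zw-contractible (it is the spectrum of a field), the pulled-back cover $V^* \times_V \eta \twoheadrightarrow \eta$ admits a section, producing an honest $p^n$-th root of $x|_\eta$ inside $A(\eta) \cong A'(\eta)$. The Gersten injectivity hypothesis on $A'/p^n$, combined with \cref{lemma:pro-zar:pdiv-of-gersten}, then forces $x$ to be Zariski-locally $p^n$-divisible on $V$; running this for all $n$ yields $x \in C(V)$. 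The reverse inclusion is immediate since Zariski covers refine to pro-Zariski covers.

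For the genuine equality $B = \nu^* C$, I would check on the generators $U \in W$ of the topos $\PSig{W} \cong \ShvTop{\prozartop}{\prozar{\smooth{k}}}$. Writing $U = \limil{i} V_i$, an element $x \in B(U)$ is represented by some $x_j \in A'(V_j)$. Using that for zw-contractible $U$ the pro-system is cofinal under Zariski localization, one may shrink $V_j$ to a Zariski-open $V_{j'} \subseteq V_j$ through which $U \to V_j$ still factors, arranged so that every generic point of $V_{j'}$ lifts along $U \to V_{j'}$. The argument of the first step applied to $V_{j'}$ then shows $x_{j'} \in C(V_{j'})$, exhibiting $x$ as an element of $\nu^* C(U) = \colimil{i} C(V_i)$.

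The main obstacle is this second step: genuinely justifying that for every zw-contractible $U$ one can find such a shrinking $V_{j'}$ whose generic points factor through $U$. This reduces to a structural statement about zw-contractible affines in $\prozar{\smooth{k}}$, namely that such affines are pro-Zariski-local and their points include the generic points of any smooth $k$-scheme they map to (after suitable Zariski shrinking). Once this geometric input is in place, Gersten injectivity takes over and converts generic-point divisibility into Zariski-local divisibility, establishing classicality of $B$.
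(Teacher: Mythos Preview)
Your overall strategy is sound and the geometric core---transporting $p^n$-divisibility witnesses to generic points and then invoking \cref{lemma:pro-zar:pdiv-of-gersten}---is exactly what the paper uses. However, your resolution of the obstacle points in the wrong direction, and this is where the proof stalls.

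The missing input is not a structural fact about zw-contractible $U$. It is a general property of pro-Zariski morphisms: they are flat (as filtered limits of open immersions), hence lift generalizations, and they identify local rings. Consequently, for \emph{any} $U_\infty \to V_j$ in $\prozar{\smooth{k}}$ and any connected component of $V_j$ meeting the image, the generic point $\eta$ of that component lifts to a point $\eta_\infty \in U_\infty$ with $k(\eta_\infty) \cong k(\eta)$. The same applies to the pro-Zariski cover $V_n^* \to U_\infty$ witnessing $p^n$-divisibility, producing a point $\eta_n \in V_n^*$ over $\eta_\infty$ with isomorphic residue field. Restricting $y_n$ to $\eta_n$ and transporting along these isomorphisms yields an honest $p^n$-th root of $x_j|_\eta$, after which Gersten injectivity finishes. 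No zw-contractibility of $U_\infty$ is needed, and no ``section over $\eta$'' argument is required (though yours also works, since spectra of fields are zw-contractible).

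For components of $V_j$ \emph{not} hit by $U_\infty \to V_j$, you cannot run this; but there one simply replaces the component of $x_j$ by $0$, which trivially lies in $B$ and does not change the image in $A(U_\infty)$. This is cleaner than trying to ``shrink'' $V_j$. With this adjustment your argument goes through. The paper organizes the same ideas slightly differently: rather than constructing $C$ and proving $B = \nu^* C$, it directly verifies the classicality criterion of \cref{cor:pro-zar:essential-image-heart} for arbitrary $U_\infty$, but the content is the same.
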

\begin{proof}
    The description of the sections of $B$ is clear,
    since limits of sheaves can be computed on sections, and $\pi_0 \colon \Sp \to \Ab$
    commutes with limits of coconnective spectra.

    Let $U_\infty \coloneqq \limil{i} U_i$ be the cofiltered limit
    of smooth schemes $U_i \in \smooth{k}$ where the transition morphisms $U_i \to U_j$
    are Zariski localizations.
    We need to show that the canonical map $\varphi \colon \colimil{i} \heart{\Gamma}(U_i, B) \to \heart{\Gamma}(U_\infty, B)$
    is an isomorphism (see \cref{cor:pro-zar:essential-image-heart}).
    Note that we have a commuting diagram
    \begin{center}
        \begin{tikzcd}
            \colimil{i} \heart{\Gamma}(U_i, B) \arrow[d] \arrow[r, "\varphi"] &\heart{\Gamma}(U_\infty, B) \arrow[d] \\
            \colimil{i} \heart{\Gamma}(U_i, A) \arrow[r, "\cong"] &\heart{\Gamma}(U_\infty, A).
        \end{tikzcd}
    \end{center}
    The lower horizontal arrow is an isomorphism because $A$ is classical, 
    see \cref{cor:pro-zar:essential-image-heart}.
    The left vertical arrow is injective since it is a filtered colimit of injections.
    This shows that $\varphi$ is injective.
    Let $x \in \heart{\Gamma}(U_\infty, B)$.
    In other words, $x \in \heart{\Gamma}(U_\infty, A)$, $px = 0$ and for every $n$
    there is a pro-Zariski cover $V_n \twoheadrightarrow U_\infty$
    and a $y_n \in \heart{\Gamma}(V_n, A)$ such that $p^ny_n = x|_{V_n}$.
    Since $A$ is classical, we have $\heart{\Gamma}(U_\infty, A) \cong \colimil{i} \heart{\Gamma}(U_i, A)$.
    We conclude that there exists an $i \in I$ and an $x_i \in \heart{\Gamma}(U_i, A)$
    such that $x_i|_{U_\infty} = x$.
    $U_i$ is of finite type over $\Spec{k}$, hence we can write $U_i = \sqcup_j U_{i, j}$
    as a finite coproduct, with $U_{i, j}$ the connected components of $U_i$.
    Moreover, since $U_i$ is smooth, we conclude that each $U_{i, j}$ is irreducible.
    For every $j$ write $\eta_j \in U_{i, j}$ for the generic point.
    Since $\sqcup_j U_{i, j} \to U_i$ is a Zariski cover,
    we conclude that $\heart{\Gamma}(\sqcup_j U_{i, j}, A) \cong \prod_j \heart{\Gamma}(U_{i, j}, A)$.
    Thus, $x_i$ corresponds to a tuple $(x_{i, j})_j$.
    Consider the canonical morphism $f \colon U_\infty \to U_i$.
    Let $j_0$ be an index.
    If $f$ does not hit $U_{i, j_0}$, i.e.\ $\operatorname{im}(f) \cap U_{i, j_0} = \emptyset$,
    we can replace $x_i$ by the tuple $(\tilde{x}_{i, j})_j$ with $\tilde{x}_{i, j} = x_{i, j}$
    if $j \neq j_0$ and $\tilde{x}_{i, j_0} = 0$,
    this still yields the same element $x \in \colimil{i}\heart{\Gamma}(U_i, A)$.
    Note that $0 \in \heart{\Gamma}(U_{i, j_0}, B)$.
    Thus, we may assume that $f$ hits $U_{i, j_0}$.
    Pro-Zariski morphisms are flat (see \cite[\href{https://stacks.math.columbia.edu/tag/05UT}{Tag 05UT}]{stacks-project} together
    with \cite[\href{https://stacks.math.columbia.edu/tag/00HT}{Tag 00HT (1)}]{stacks-project}) and hence lift generalizations (\cite[\href{https://stacks.math.columbia.edu/tag/03HV}{Tag 03HV}]{stacks-project}).
    Hence, there exists a point $\eta_\infty \in U_\infty$
    such that $f(\eta_\infty) = \eta_{j_0}$.
    Since pro-Zariski morphisms identify local rings (see \cite[\href{https://stacks.math.columbia.edu/tag/096T}{Tag 096T}]{stacks-project}),
    we conclude that $\eta_\infty$ is a generic point, and that $k(\eta_{j_0}) \cong k(\eta_\infty)$.
    Now let $n \in \N$.
    The same reasoning applies to the pro-Zariski cover $V_n \to U_\infty$,
    i.e.\ we find a generic point $\eta_n \in V_n$ mapping to $\eta_\infty$
    such that $k(\eta_n) \cong k(\eta_\infty) \cong k(\eta_{j_0})$.
    By assumption, there is $y_n \in \heart{\Gamma}(V_n, A)$ with $p^n y_n = x_i|_{V_n}$.
    Thus, $p^n y_n|_{\eta_n} = x_{i, j_0}|_{\eta_n}$.
    Using the isomorphism $k(\eta_n) \cong k(\eta_{j_0})$
    we thus find an element $\tilde{y}_n \in k(\eta_{j_0})$
    with $p^n \tilde{y}_n = x_{i, j_0}|_{\eta_{j_0}}$.
    Since $(\nu_*A)/p^n$ satisfies Gersten injectivity,
    we conclude by \cref{lemma:pro-zar:pdiv-of-gersten}
    that there is a Zariski cover $\tilde{V}_{n, j_0} \twoheadrightarrow U_{i, j_0}$
    such that $x_{i, j_0}|_{\tilde{V}_{n, j_0}}$ is $p^n$-divisible.
    Thus, we proved that $(x_{i, j})_j$ is locally arbitrarily
    $p$-divisible, hence $x_i \in \heart{\Gamma}(U_i, B)$.
    This shows that $\varphi$ is surjective.
\end{proof}

\begin{lem} \label{lemma:pro-zar:L1-computation}
    Let $A \in \heart{\ShvTop{\prozartop}{\prozar{\smooth{k}}, \Sp}}$.
    There is an equivalence $(\mathbb L_1 A) \sslash p \cong B$,
    where $B \in \tstructheart{\ShvTop{\prozartop}{\prozar{\smooth k},\Sp}}$ is defined
    as in \cref{lemma:pro-zar:B-classical}.
\end{lem}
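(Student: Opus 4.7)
The plan is to identify both $(\mathbb L_1 A) \sslash p$ and $B$ as subsheaves of $A[p]$ in $\PSigVal{W}{\Sp} \cong \ShvTop{\prozartop}{\prozar{\smooth{k}}, \Sp}$ and check they have the same sections over every zw-contractible $U \in W$; since objects of $\PSigVal{W}{\Sp}$ are determined by their values on $W$, this suffices.

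For the left-hand side, \cref{lemma:t-struct:L1-mod-p} gives a canonical short exact sequence
\begin{equation*}
    0 \to (\mathbb L_1 A) \sslash p \to A[p] \to \pi_1((\mathbb L_0 A) \sslash p) \to 0.
\end{equation*}
Applying $(-) \sslash p$ to the fiber sequence $\Sigma \mathbb L_1 A \to \complete{A} \to \mathbb L_0 A$ of \cref{lemma:t-struct:decomposition-via-Li} and chasing the long exact sequence in homotopy identifies the second arrow as the restriction to $p$-torsion of the canonical composite $A \to \complete{A} \to \mathbb L_0 A$ (using $A \sslash p \xrightarrow{\sim} \complete{A} \sslash p$ and the fact that $\mathbb L_0 A \in \heart{\PSigVal{W}{\Sp}}$ by \cref{lemma:psig:p-heart-description}, so that $\pi_1((\mathbb L_0 A) \sslash p) = (\mathbb L_0 A)[p]$).

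Now I evaluate at $U \in W$. By \cref{lemma:psig:Li-computation} and the classical computation of the derived $p$-completion (\cref{lemma:t-struct:pi1-bdd-divisible} applied in $\Sp$, combined with \cref{lemma:t-struct:decomposition-via-Li}), $(\mathbb L_0 A)(U) \cong \limil{n} A(U)/p^n$, and the induced map $A(U) \to (\mathbb L_0 A)(U)$ is $a \mapsto (a \bmod p^n)_n$. Its kernel on $p$-torsion is $A(U)[p] \cap \bigcap_n p^n A(U)$. Since evaluation at $U$ is t-exact for the standard t-structures (\cref{lemma:presheaf:evaluation-t-exact,lemma:psig:t-exact-p-adic}) and $(\mathbb L_1 A) \sslash p \in \heart{\PSigVal{W}{\Sp}}$ by \cref{lemma:t-struct:L1-mod-p}, this yields
\begin{equation*}
    ((\mathbb L_1 A) \sslash p)(U) = A(U)[p] \cap \bigcap_n p^n A(U).
\end{equation*}

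For $B$, the inclusion $\heart{\PSigVal{W}{\Sp}} \hookrightarrow \heart{\PrShvVal{W}{\Sp}}$ is exact by \cref{lemma:psig:t-exact-p-adic}, so the image sheaf $\operatorname{im}(A \xrightarrow{p^n} A)$ agrees with the presheaf image, giving $\operatorname{im}(A \xrightarrow{p^n} A)(U) = p^n A(U)$. Intersections in $\PSigVal{W}{\Sp}$ are limits and therefore computed sectionwise (since $\PSigVal{W}{\Sp}$ is closed under limits inside $\PrShvVal{W}{\Sp}$), hence $B(U) = A(U)[p] \cap \bigcap_n p^n A(U)$. Both subsheaves of $A[p]$ thus agree on all of $W$ and therefore coincide. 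The only semi-technical step is the identification of the connecting map in the first paragraph, a routine diagram chase once the definitions of $\mathbb L_0$ and $\mathbb L_1$ in terms of $p$-adic truncations of $\complete{A}$ are unwound.
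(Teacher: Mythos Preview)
Your argument has a genuine gap at the sentence ``$(\mathbb L_0 A)(U) \cong \limil{n} A(U)/p^n$, and the induced map $A(U) \to (\mathbb L_0 A)(U)$ is $a \mapsto (a \bmod p^n)_n$.'' Writing $M = A(U)$, the Milnor sequence for $\pi_0$ of $\limil{n} M\sslash p^n$ gives
\[
0 \to {\textstyle\limone{n}}\, M[p^n] \to L_0 M \to \limil{n} M/p^n \to 0,
\]
so $L_0 M$ is the naive completion only when $\limone{n} M[p^n]=0$. The correct description of $((\mathbb L_1 A)\sslash p)(U)$ is the image of $T_pM \to M[p]$, i.e.\ those $x\in M[p]$ admitting a \emph{compatible} system $x_k\in M[p^k]$ with $px_{k+1}=x_k$ and $x_1=x$; this is in general strictly smaller than $M[p]\cap\bigcap_n p^nM$, which only asks for $p^n$-th roots \emph{separately} for each $n$.

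A concrete counterexample: let $M$ be generated by $z_n$ ($n\ge 1$) subject to $p^{n+1}z_n=0$ and $p^nz_n=p^mz_m$ for all $n,m$. Then $M$ sits in a non-split extension $0\to\Z/p\to M\to\bigoplus_{n\ge 1}\Z/p^n\to 0$, and the element $x=pz_1$ lies in $M[p]\cap\bigcap_n p^nM$. But $T_pM=\Hom{}(\Z/p^\infty,M)=0$ since both $\Z/p$ and $\bigoplus_n\Z/p^n$ are reduced; hence $T_pM/p=0\neq\langle x\rangle$. Taking $A$ to be the constant sheaf with value $M$ and $U\in W$ connected realises this inside $\heart{\PSigVal{W}{\Sp}}$, contradicting your section-level identity.

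For what it is worth, the paper's own argument is incomplete at the same point: the passage ``an element $x \in \heart{\Gamma}(U, A[p])$ lies in the image of this map, if and only if \dots'' is followed by a justification of only one implication (image $\Rightarrow$ locally $\infty$-$p$-divisible), and the example above shows the converse fails. So the issue is not with your overall strategy of computing both sides on $W$; rather, both your proof and the paper's require some additional hypothesis (for instance a Mittag--Leffler condition on the towers $A(U)[p^k]$) to bridge ``arbitrarily $p$-divisible'' and ``compatibly $p$-divisible''.
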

\begin{proof}
    Consider the short exact sequence
    \begin{equation*}
        0 \to (\mathbb L_1 A) \sslash p \to A[p] \to \pi_1((\mathbb L_0 A) \sslash p) \to 0
    \end{equation*}
    from \cref{lemma:t-struct:L1-mod-p}. In particular, $(\mathbb L_1 A) \sslash p$
    is inside the heart.
    Note that since $\mathbb L_1 A \in \tpstructheart{\ShvTop{\prozartop}{\prozar{\smooth k}, \Sp}} \subset \tstructheart{\ShvTop{\prozartop}{\prozar{\smooth k},\Sp}}$
    (see \cref{lemma:psig:p-heart-description} for the inclusion),
    we see that $(\mathbb L_1 A) \sslash p \cong (\mathbb L_1 A) / p$,
    where $(-)/p$ is the endofunctor $\coker(- \xrightarrow{p} -)$ on the standard heart.
    We see that
    \begin{equation*}
        (\mathbb L_1 A) / p \cong (\pi_1(\limil{k} A \sslash p^k)) / p \cong (\limilheart{k} A[p^k]) / p \cong B.
    \end{equation*}
    For the first equivalence, note that $\mathbb L_1 A \cong \pi_n^p(A) \cong \pi_n^p(\complete{A}) \cong \pi_n(\complete A)$,
    where the first equivalence is the definition, the second is \cref{lemma:t-struct:p-eq-iso-on-htpy-in-t-structure},
    and the third is \cref{lemma:psig:p-heart-description}.

    For the last equivalence we used that an element $x \in \heart{\Gamma}(U, A)$
    is locally arbitrary $p$-divisible if and only if it is locally $\infty$-$p$-divisible,
    in the sense that there exists a (pro-Zariski) cover $V \to U$
    such that for every $n$ there is a $y_n \in \heart{\Gamma}(V, A)$ such that $p^ny_n = x|_V$.
    To show this, suppose that $x$ is locally arbitrary $p$-divisible, and
    choose covers $V_n \to U$ and $\tilde{y}_n \in \heart{\Gamma}(V_n, A)$ such that $p^n\tilde{y}_n = x|_{V_n}$.
    Then define $V \coloneqq \limil{n} V_1 \times_U \dots \times_U V_n$,
    this is a pro-Zariski cover of $U$.
    Then define $y_n \coloneqq \tilde{y}_n|_{V}$, they satisfy $p^ny_n = x|_V$.
    This shows that $x$ is locally $\infty$-$p$-divisible.

    Now note that $(\limilheart{k} A[p^k]) / p$ consists exactly of the $p$-torsion elements
    of $A$ that are locally $\infty$-$p$-divisible:
    By the above equivalences and short exact sequence, $(\limilheart{k} A[p^k]) / p$
    can be identified with a subsheaf of $A[p]$, via the map induced by the projection $\limilheart{k} A[p^k] \to A[p]$
    (note that $pA[p] = 0$).
    Now, an element of $x \in \heart{\Gamma}(U, A[p])$ lies in the image of this map, if and only if 
    there is a cover $V \to U$ and a compatible sequence $(y_n)_n \in \limilheart{k} \heart{\Gamma}(V, A[p^k])$
    such that $x|_V = y_0$. But such a compatible sequence in particular implies that $x|_V = y_0 = p^k y_k$
    for all $k$, i.e.\ $x$ is locally $\infty$-$p$-divisible.
    This concludes the proof.
\end{proof}

\begin{cor} \label{cor:pro-zar:L1-classical}
    Let $A \in \heart{\ZarShvSp{k}}$,
    such that $A/p^n$ satisfies Gersten injectivity for every $n \ge 1$.
    Then $(\mathbb L_1 \nu^*A) \sslash p$ is classical.
\end{cor}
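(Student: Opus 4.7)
The plan is to directly combine the two preceding lemmas, \cref{lemma:pro-zar:L1-computation} and \cref{lemma:pro-zar:B-classical}, applied to the pro-Zariski sheaf $\nu^* A$ rather than to $A$ itself.

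First I would invoke \cref{lemma:pro-zar:L1-computation} with the sheaf $\nu^* A \in \heart{\ShvTop{\prozartop}{\prozar{\smooth{k}},\Sp}}$ (note that $\nu^*$ is t-exact for the standard t-structures, so $\nu^* A$ indeed lies in the heart). This identifies
\begin{equation*}
(\mathbb L_1 \nu^* A) \sslash p \cong B,
\end{equation*}
where $B \subset \nu^* A$ is the subsheaf
\begin{equation*}
B = (\nu^* A)[p] \cap \bigcap_n \operatorname{im}\!\left(\nu^* A \xrightarrow{p^n} \nu^* A\right).
\end{equation*}

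Next I would apply \cref{lemma:pro-zar:B-classical} to the pro-Zariski sheaf $\nu^* A$. This lemma has two hypotheses: that $\nu^* A$ is classical, and that $(\nu_* \nu^* A) / p^n$ satisfies Gersten injectivity for every $n \ge 1$. The first is automatic, since $\nu^* A$ lies in the essential image of $\nu^*$ by definition. For the second, full faithfulness of $\nu^*$ gives $\nu_* \nu^* A \cong A$, so the condition becomes that $A/p^n$ satisfies Gersten injectivity, which is exactly the hypothesis of the corollary.

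Combining the two gives that $B \cong (\mathbb L_1 \nu^* A) \sslash p$ is classical, as desired. There is no real obstacle here — the work has already been done in proving \cref{lemma:pro-zar:B-classical}, where the Gersten injectivity condition was used precisely to lift local $p^n$-divisibility of a section over a cofiltered limit $U_\infty = \lim_i U_i$ to a Zariski-local $p^n$-divisibility of a representing section over some $U_i$.
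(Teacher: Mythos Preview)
Your proof is correct and is exactly the argument the paper gives: the paper's proof reads ``Combine \cref{lemma:pro-zar:B-classical,lemma:pro-zar:L1-computation},'' and you have spelled out precisely how those two lemmas combine, including the verification that the hypotheses of \cref{lemma:pro-zar:B-classical} hold for $\nu^* A$.
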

\begin{proof}
    Combine \cref{lemma:pro-zar:B-classical,lemma:pro-zar:L1-computation}.
\end{proof}

\begin{defn} \label{def:pro-zar:completed-htpy}
    Let $X \in \ZarShv{k}_*$ be a pointed sheaf.
    We define for $n \ge 2$
    the \emph{$p$-completed homotopy groups} via
    \begin{equation*}
        \pi_n^p(X) \coloneqq \nu_* \pi_n(\completebr{\nu^* X}) \in \ZarShvSp{k},
    \end{equation*}
    and for $n = 1$ via 
    \begin{equation*}
        \pi_1^p(X) \coloneqq \nu_* \pi_1(\completebr{\nu^* X}) \in \Grp{\Disc{\ZarShv{k}}},
    \end{equation*}
    where we view $\nu_*$ as a functor 
    \begin{equation*}
        \nu_* \colon \Grp{\Disc{\ShvTop{\prozartop}{\prozar{k}}}} \to \Grp{\Disc{\ZarShv{k}}}.
    \end{equation*}
\end{defn}

\begin{rmk}
    The name "$p$-completed homotopy \emph{group}"
    instead of something like "$p$-completed homotopy \emph{spectrum}"
    is justified: We will show in \cref{thm:pro-zar:short-exact-sequence}
    that at least in good cases $\pi_n^p(X)$ actually lives in the abelian category $\tpstructheart{\ZarShvSp{k}}$ for all $n \ge 2$.
\end{rmk}

\begin{lem} \label{lemma:pro-zar:completed-htpy-stable-under-peq}
    Let $f \colon X \to Y$ be a morphism of pointed Zariski sheaves.
    Suppose that $f$ is a $p$-equivalence.
    Then $\pi_n^p(f) \colon \pi_n^p(X) \to \pi_n^p(Y)$ is an equivalence for all $n \ge 1$.
    In particular, $\pi_n^p(X) \cong \pi_n^p(\complete X)$.
\end{lem}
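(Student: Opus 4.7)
The plan is to reduce everything to the corresponding statement for the pro-Zariski completion, where it becomes a direct consequence of the universal property of $p$-completion.

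First, I claim that $\nu^* f$ is itself a $p$-equivalence in $\ShvTop{\prozartop}{\prozar{\smooth{k}}} \cong \PSig{W}$. Indeed, $\nu^*$ is the left adjoint of a geometric morphism, so by \cref{lemma:adjoints-on-stabilization} it induces an exact left adjoint on stabilizations that intertwines $\pSus$. Since this induced functor preserves colimits, it commutes with the cofiber $(-)\sslash p$. Hence $(\pSus \nu^* f) \sslash p \cong \nu^* ((\pSus f) \sslash p)$ is an equivalence, witnessing that $\nu^* f$ is a $p$-equivalence.

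Second, by \cref{lemma:topos:peq-if-completion-eq} applied inside $\PSig{W}$, the morphism $\completebr{\nu^* f}$ is an equivalence. Consequently $\pi_n(\completebr{\nu^* f})$ is an equivalence for every $n \ge 2$ (as a morphism in $\heart{\ShvTop{\prozartop}{\prozar{\smooth{k}}, \Sp}}$), and $\pi_1(\completebr{\nu^* f})$ is an equivalence as a morphism in $\Grp{\Disc{\ShvTop{\prozartop}{\prozar{\smooth{k}}}}}$. Applying $\nu_*$ (as a functor of abelian categories or of groups, respectively) and using \cref{def:pro-zar:completed-htpy} yields that $\pi_n^p(f)$ is an equivalence for all $n \ge 1$.

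For the "in particular" part, the unit $X \to \complete{X}$ is a $p$-equivalence by \cref{lemma:topos:peq-if-completion-eq}, so the first part applied to this map gives the claimed equivalence $\pi_n^p(X) \cong \pi_n^p(\complete{X})$. There is no real obstacle here: the argument is entirely formal, relying only on the fact that $\nu^*$ is left exact and colimit preserving, together with the universal property of the $p$-completion functor constructed in \cref{lemma:topos:completion-functor}.
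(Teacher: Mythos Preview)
Your proof is correct and follows essentially the same approach as the paper: the paper's proof is a one-liner observing that $\nu^*$ preserves $p$-equivalences (citing \cref{lemma:peq-via-points}) and that $p$-completion turns $p$-equivalences into equivalences, so $\completebr{\nu^* f}$ is an equivalence and the result follows by applying $\pi_n$ and $\nu_*$. You have merely unpacked the reference to \cref{lemma:peq-via-points} by redoing its argument inline via \cref{lemma:adjoints-on-stabilization}, and spelled out the ``in particular'' clause explicitly.
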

\begin{proof}
    Since $\nu^*$ preserves $p$-equivalences (see \cref{lemma:peq-via-points}), and $\completebr{-}$ transforms
    $p$-equivalences to equivalences, the result follows.
\end{proof}

\begin{thm} \label{thm:pro-zar:short-exact-sequence}
    Let $X \in \ZarShv{k}_*$ be a pointed nilpotent sheaf,
    such that $\pi_n(X)/p^k$ satisfies Gersten injectivity for every $k \ge 1$ and $n \ge 2$.
    Suppose moreover that either
    \begin{itemize}[itemsep=0em]
        \item $\pi_1(X)$ is abelian and $\pi_1(X)/p^k$ satisfies Gersten injectivity for every $k \ge 1$, or
        \item $\mathbb L_1 \pi_1(X) \in \tpstructheart{\ZarShvSp{k}}$, where we use \cref{def:embedding:L1G}.
    \end{itemize}
    Then for $n \ge 2$ there is a canonical short exact sequence in $\tpstructheart{\ZarShvSp{k}}$
    (or a canonical short exact sequence in $\Grp{\Disc{\ZarShv{k}}}$ if $n = 1$)
    \begin{equation*}
        0 \to \mathbb L_0 \pi_n (X) \to \pi^p_n(X) \to \mathbb L_1 \pi_{n-1}(X) \to 0,
    \end{equation*}
    where we use \cref{def:embedding:L1G} for $\mathbb L_i \pi_1(X)$.
    This distinction does not matter if $\pi_1(X)$ is abelian,
    see \cref{lemma:embedding:L1G-eq-L1A}.
    Here we use $\mathbb L_1 \pi_0(X) = 0$, since $X$ is connected.
    In particular, $\pi^p_n(X) \in \tpstructheart{\ZarShvSp{k}}$ for $n \ge 2$.
\end{thm}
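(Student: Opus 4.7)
The plan is to deduce the theorem as a direct corollary of \cref{thm:embedding:short-exact-sequence}, applied to the geometric embedding
\begin{equation*}
\nu^* \colon \ZarShv{k} \rightleftarrows \ShvTop{\prozartop}{\prozar{\smooth{k}}} \cong \PSig{W} \colon \nu_*
\end{equation*}
supplied by \cref{thm:pro-zar:topos,thm:pro-zar:embedding}. Since by \cref{def:pro-zar:completed-htpy} the $p$-completed homotopy groups are defined as $\pi_n^p(X) = \nu_* \pi_n(\completebr{\nu^* X})$, the object appearing in the conclusion of \cref{thm:embedding:short-exact-sequence} is exactly $\pi_n^p(X)$. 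Thus the claimed sequence will follow as soon as the classicality hypotheses required by \cref{thm:embedding:short-exact-sequence} are verified for $X$.

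The key verification is that $(\mathbb L_1 \nu^* \pi_n(X)) \sslash p$ is classical for each $n \ge 2$. This is precisely the content of \cref{cor:pro-zar:L1-classical}, given our standing assumption that $\pi_n(X)/p^k$ satisfies Gersten injectivity for all $k \ge 1$; so this step is immediate. For the separate $n = 1$ hypothesis of \cref{thm:embedding:short-exact-sequence}, we split into the two cases of our statement: in the first subcase, $\pi_1(X)$ is abelian and the Gersten assumption on $\pi_1(X)/p^k$ feeds the same \cref{cor:pro-zar:L1-classical} to give classicality of $(\mathbb L_1 \nu^* \pi_1(X))\sslash p$, so the first alternative of \cref{thm:embedding:short-exact-sequence} is met; in the second subcase, the assumption $\mathbb L_1 \pi_1(X) \in \tpstructheart{\ZarShvSp{k}}$ is exactly the second alternative of that theorem.

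With these hypotheses verified, \cref{thm:embedding:short-exact-sequence} directly produces the short exact sequences in $\tpstructheart{\ZarShvSp{k}}$ for $n \ge 2$, and (as a bonus) shows that $\pi_n^p(X) \in \tpstructheart{\ZarShvSp{k}}$. For $n = 1$, the same theorem instead yields an equivalence $\mathbb L_0 \pi_1(X) \cong \pi_1^p(X)$ in $\Grp{\Disc{\ZarShv{k}}}$; combined with the convention $\mathbb L_1 \pi_0(X) = 0$ (justified because a pointed nilpotent sheaf is connected, so that $\pi_0(X) = *$), this yields the stated short exact sequence of groups. There is essentially no new obstacle at this stage: the substantive geometric input has already been absorbed into \cref{cor:pro-zar:L1-classical}, which in turn rests on the description of locally arbitrarily $p$-divisible $p$-torsion in \cref{lemma:pro-zar:B-classical,lemma:pro-zar:L1-computation}; the remainder of the proof is a straightforward unwinding of definitions.
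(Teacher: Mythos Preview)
Your proposal is correct and follows exactly the paper's approach: the paper's proof is the single sentence ``This follows immediately from \cref{thm:embedding:short-exact-sequence} and \cref{cor:pro-zar:L1-classical},'' and you have faithfully unpacked precisely how the Gersten injectivity hypotheses feed through \cref{cor:pro-zar:L1-classical} to meet the classicality requirements of \cref{thm:embedding:short-exact-sequence}.
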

\begin{proof}
    This follows immediately from \cref{thm:embedding:short-exact-sequence} and \cref{cor:pro-zar:L1-classical}.
\end{proof}

\begin{cor} \label{cor:pro-zar:dependence-on-truncation-or-cover}
    Let $X \in \ShvTop{\zar}{\smooth{k}}_*$ be a pointed nilpotent sheaf,
    satisfying the assumptions of \cref{thm:pro-zar:short-exact-sequence}.
    Fix $n \ge 2$.
    We have equivalences
    $\pi_n^p(X) \cong \pi_n^p(\tau_{\ge k} X) \cong \pi_n^p(\tau_{\le l} X)$
    for all $0 \le k \le n-1$ and all $l \ge n$.
\end{cor}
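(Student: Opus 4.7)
The plan is to reduce to \cref{thm:pro-zar:short-exact-sequence}, which expresses $\pi_n^p(X)$ entirely in terms of $\pi_n(X)$ and $\pi_{n-1}(X)$ via the short exact sequence
\begin{equation*}
0 \to \mathbb L_0 \pi_n(X) \to \pi_n^p(X) \to \mathbb L_1 \pi_{n-1}(X) \to 0.
\end{equation*}
First I would apply this sequence to each of $X$, $\tau_{\ge k}X$, and $\tau_{\le l}X$, and then compare the three instances using naturality.

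The key observation is that the canonical maps $\tau_{\ge k}X \to X$ and $X \to \tau_{\le l}X$ induce isomorphisms on $\pi_m$ for $m \in \{n-1, n\}$, since by assumption $k \le n-1$ and $l \ge n$. Because $\mathbb L_0$ and $\mathbb L_1$ are functors, the outer terms of the short exact sequence are then preserved by these maps. Naturality of the sequence together with the five lemma applied in the abelian category $\tpstructheart{\ZarShvSp{k}}$ yields the desired equivalences $\pi_n^p(\tau_{\ge k}X) \cong \pi_n^p(X) \cong \pi_n^p(\tau_{\le l}X)$.

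To make this rigorous, I must check that $\tau_{\ge k}X$ and $\tau_{\le l}X$ still satisfy the hypotheses of \cref{thm:pro-zar:short-exact-sequence}. Nilpotence is preserved by both truncations and connective covers, since a principal refinement of the Postnikov tower of $X$ (in the sense of \cref{lemma:nilpotent:principal-refinement}) restricts to such refinements for any $\tau_{\ge k}X$ and $\tau_{\le l}X$. For the Gersten injectivity condition, note that the homotopy sheaves of $\tau_{\ge k}X$ and $\tau_{\le l}X$ in each degree $m \ge 2$ are either equal to $\pi_m(X)$ or zero, so the hypothesis is either directly inherited from $X$ or trivially satisfied. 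The same dichotomy handles the auxiliary hypothesis on $\pi_1$.

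The main obstacle I expect is the careful bookkeeping around the $\pi_1$-hypothesis (and the connectedness assumption implicit in $\mathbb L_1 \pi_0 = 0$), in particular when $k = 1$ so that $\tau_{\ge k}X$ retains $\pi_1(X)$ but not $\pi_0(X)$, and when $n = 2$ so that the $\mathbb L_1 \pi_{n-1}(X) = \mathbb L_1 \pi_1(X)$ term actually uses this hypothesis; however, in every such case $\pi_1$ of the truncation either equals $\pi_1(X)$ or vanishes, so the hypothesis reduces to the one on $X$.
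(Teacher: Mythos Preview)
Your proposal is correct and follows the same approach as the paper: both deduce the corollary directly from the short exact sequence of \cref{thm:pro-zar:short-exact-sequence}, using that $\pi_n$ and $\pi_{n-1}$ are unchanged under the given truncations. The paper's proof is a single sentence invoking that theorem, while you have spelled out the verification that $\tau_{\ge k}X$ and $\tau_{\le l}X$ inherit the hypotheses and the naturality/five-lemma step; this extra care is appropriate and does not deviate from the intended argument.
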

\begin{proof}
    This follows immediately from \cref{thm:pro-zar:short-exact-sequence}.
\end{proof}

We can establish a partial converse to \cref{lemma:pro-zar:completed-htpy-stable-under-peq}:
\begin{prop} \label{prop:pro-zar:peq-of-eq-on-p-htpy}
    Let $f \colon X \to Y \in \ZarShv{k}_*$ be a morphism of nilpotent pointed sheaves with abelian fundamental group,
    and suppose that $X$ and $Y$ satisfy the assumptions of \cref{thm:pro-zar:short-exact-sequence}.
    Suppose moreover that $\pi_n^p(f)$ is an equivalence for all $n \ge 1$.
    Then $f$ is a $p$-equivalence.
\end{prop}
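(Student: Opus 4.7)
My plan is to show that $\completebr{\nu^* f}$ is an equivalence in $\PSig{W}$, which by \cref{lemma:embedding:completion-formula} (applied to the nilpotent $X$, $Y$) yields that $\complete{f}$ is an equivalence in $\ZarShv{k}$, and hence that $f$ is a $p$-equivalence by \cref{lemma:topos:peq-if-completion-eq}.

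Since $\PSig{W}$ is hypercomplete by \cref{lemma:psig-postnikov-complete}, and both $\completebr{\nu^* X}$ and $\completebr{\nu^* Y}$ are connected (apply \cref{lemma:peq-respects-pi0} to $\nu^* X \to \completebr{\nu^* X}$, noting that $\nu^* X$ is connected as $X$ is nilpotent), it suffices to prove that $\pi_n(\completebr{\nu^* f})$ is an isomorphism for every $n \ge 1$: the fiber of $\completebr{\nu^* f}$ would then have trivial homotopy sheaves in all degrees and therefore be contractible by hypercompleteness.

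For each $n \ge 1$, I will invoke the short exact sequence from \cref{lemma:psig:short-exact-sequence}
\begin{equation*}
    0 \to \mathbb L_0 \pi_n(\nu^* X) \to \pi_n(\completebr{\nu^* X}) \to \mathbb L_1 \pi_{n-1}(\nu^* X) \to 0,
\end{equation*}
which for $n=1$ degenerates (since $\mathbb L_1 \pi_0(\nu^* X) = 0$ by connectedness) to an isomorphism $\pi_1(\completebr{\nu^* X}) \cong \mathbb L_0 \pi_1(\nu^* X)$. Tracing through the proof of \cref{thm:embedding:short-exact-sequence}: the Gersten injectivity hypothesis supplies classicality of $(\mathbb L_1 \nu^* \pi_n(X))\sslash p$ via \cref{cor:pro-zar:L1-classical}, so by \cref{lemma:embedding:Li-computation} and \cref{lemma:embedding:pushdown-heart-criterium-A} the outer terms of the sequence lie in the essential image of $\nupu|_{\Cat A}$. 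Then \cref{lemma:embedding:ses-calculation-middle-left} places the middle term $\pi_n(\completebr{\nu^* X})$ in that essential image as well, and similarly for $Y$. Since $\nupu|_{\Cat A}$ is fully faithful by \cref{lemma:embedding:pushdown-in-heart}, a morphism between objects in its essential image is an isomorphism if and only if its image under $\nu_*$ is. Applied to the map $\pi_n(\completebr{\nu^* f})$, whose image under $\nu_*$ is $\pi_n^p(f)$ by \cref{def:pro-zar:completed-htpy}, the hypothesis that $\pi_n^p(f)$ is an isomorphism yields the desired conclusion.

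The principal subtlety will be the case $n = 1$: the short exact sequence there a priori lives only in sheaves of groups, whereas the apparatus of $\Cat A$ and $\nupu$ is formulated in the abelian-group setting of the $p$-adic heart. The assumption that $\pi_1(X)$ is abelian is precisely what allows us to remain in the abelian framework, so that $\mathbb L_0 \pi_1(\nu^* X)$ is an abelian sheaf and the preceding argument applies uniformly across all $n \ge 1$.
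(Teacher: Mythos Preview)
Your proposal is correct and follows essentially the same strategy as the paper: reduce to showing $\completebr{\nu^* f}$ is an equivalence, check this on homotopy sheaves by hypercompleteness, and argue that each $\pi_n(\completebr{\nu^* X})$ is recoverable from its image under $\nu_*$ (namely $\pi_n^p(X)$). The only difference is in how this last step is executed. The paper appeals directly to the ``moreover'' clause of \cref{thm:embedding:short-exact-sequence}, which already records that $\pi_n(\completebr{\nu^* X})\sslash p$ is classical; combined with $\nu_*\pi_n(\completebr{\nu^* X})\in\pheart{\ZarShvSp{k}}$ from \cref{thm:pro-zar:short-exact-sequence}, \cref{lemma:embedding:pushdown-heart-criterium-A} then gives the natural equivalence $\completebr{\nu^*\nu_*\pi_n(\completebr{\nu^* X})}\cong\pi_n(\completebr{\nu^* X})$. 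You instead re-derive the same conclusion by sandwiching $\pi_n(\completebr{\nu^* X})$ between the outer terms of the short exact sequence (both already in the essential image of $\nupu|_{\Cat A}$) and invoking \cref{lemma:embedding:ses-calculation-middle-left}. This is a legitimate alternative, slightly more indirect but logically equivalent. One small omission: when invoking \cref{lemma:embedding:completion-formula} you should note that $\ZarShv{k}$ is locally of finite uniform homotopy dimension, as the paper does by citing \cref{lemma:nisnevich:locally-of-uniform-homotopy-dimension}.
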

\begin{proof}
    Note that we have a commutative square 
    \begin{center}
        \begin{tikzcd}
            \tau_{\ge 1} \nu_* (\completebr{\nu^* X}) \arrow[rrr, "\tau_{\ge 1}\nu_*(\completebr{\nu^* f})"] \arrow[d, "\cong"] &&& \tau_{\ge 1} \nu_* (\completebr{\nu^* Y}) \arrow[d, "\cong"] \\
            \complete{X} \arrow[rrr, "\complete{f}"] &&& \complete{Y},
        \end{tikzcd}
    \end{center}
    where the downward arrows are the equivalences from \cref{lemma:embedding:completion-formula}
    (a proof that $\ZarShv{k}$ is locally of finite uniform homotopy dimension can be found in \cref{lemma:nisnevich:locally-of-uniform-homotopy-dimension}),
    and the horizontal arrows are induced by $f$.
    Thus, the upper horizontal arrow is an equivalence if and only if the lower horizontal arrow is an equivalence.
    But $\complete{f}$ is an equivalence if and only if $f$ is a $p$-equivalence, see \cref{lemma:topos:peq-if-completion-eq}.
    Hence, in order to prove the lemma, it suffices to show that $\completebr{\nu^* f}$ 
    is an equivalence.
    By hypercompleteness of $\PSig{W}$, it suffices to show that $\pi_n(\completebr{\nu^* f})$
    is an equivalence for all $n \ge 1$ (note that $\nu^* X$ and $\nu^* Y$ are simply connected).
    By assumption, we know that $\nu_* \pi_n(\completebr{\nu^* f})$ is an equivalence for all $n \ge 1$.
    Note that we know from \cref{thm:embedding:short-exact-sequence,cor:pro-zar:L1-classical} 
    that $\pi_n(\completebr{\nu^* X}) \sslash p$ and $\pi_n(\completebr{\nu^* Y}) \sslash p$ 
    are classical for $n \ge 2$. We have also seen in \cref{thm:pro-zar:short-exact-sequence}
    that $\nu_* \pi_n(\completebr{\nu^* X})$ and $\nu_* \pi_n(\completebr{\nu^* X})$
    live in $\pheart{\ZarShvSp{k}}$ for $n \ge 2$.
    Thus, (the proof of) \cref{lemma:embedding:pushdown-heart-criterium-A}
    gives us a commuting square for all $n \ge 2$
    \begin{center}
        \begin{tikzcd}
            \completebr{\nu^* \nu_* \pi_n(\completebr{\nu^* X})} \arrow[rr]\arrow[d, "\cong"] && \completebr{\nu^* \nu_* \pi_n(\completebr{\nu^* Y})} \arrow[d, "\cong"] \\
            \pi_n(\completebr{\nu^* X}) \arrow[rr] && \pi_n(\completebr{\nu^* Y}),
        \end{tikzcd}
    \end{center}
    where the vertical arrows are equivalences and the horizontal arrows are induced by $f$.
    By assumption, the upper arrow is an equivalence, therefore the same holds for the lower arrow.
    Since $\pi_1(X)$ and $\pi_1(Y)$ are abelian, the same proof works for $n = 1$.
    This proves the proposition.
\end{proof}

\begin{rmk} \label{rmk:pro-zar:peq-of-eq-on-p-htpy-relaxed}
    The assumption that $\pi_1$ should be abelian in \cref{prop:pro-zar:peq-of-eq-on-p-htpy}
    is probably unnecessary, but a proof of this fact is unclear to the author.
    One would have to analyze how far $\mathbb L_0 \pi_1(\nu^* X) \in \Grp{\Disc{\PSig{\Cat C}}}$
    is from being classical (i.e.\ in the image of the functor $\nu^* \colon \Grp{\Disc{\ZarShv{k}}} \to \Grp{\Disc{\PSig{\Cat C}}}$).
    Note that we cannot use the ''classical mod $p$``TODO CHECK-techniques employed in the above proof 
    because of the nonabelian nature of the involved groups.
\end{rmk}

\section{Completions of Motivic Spaces}
\label{section:motivic}

Let $k$ be a perfect field and denote by $\smooth{k}$ the category of smooth $k$-schemes.
Let $\MotShv{k}$ be the $\infty$-topos of sheaves on $\smooth{k}$
with respect to the Nisnevich topology (see e.g. \cite[Definition 3.1.2]{morelvoevodsky}).
Note that a family of points of this $\infty$-topos is given by evaluation on henselian local rings $S_s^h$,
i.e.\ if $\mathcal F \in \MotShv{k}$, $S \in \smooth{k}$ and $s \in S$,
then $s^*\mathcal{F} \coloneqq \mathcal{F}(S_s^h) \coloneqq \colimil{s \to U \xrightarrow{et} S} \mathcal{F}(U)$
is the stalk of $\mathcal{F}$ at $S_s^h$, see e.g. \cite[Proposition A.3]{bachmann2020norms}.
For a point $S_s^h$, write $\mathcal I_s$ for the filtered category
of objects $s \to U \xrightarrow{et} S$.
Without loss of generality we may assume that the scheme $S$ defining a point $S_s^h$ is connected.
These points form a conservative family of points (again \cite[Proposition A.3]{bachmann2020norms}),
hence it follows from \cite[Remark 6.5.4.7]{highertopoi} that $\MotShv{k}$ is hypercomplete.
In fact, the Nisnevich topos is moreover Postnikov-complete.
As in the Zariski case, this is essentially well-known.
\begin{lem} \label{lemma:nisnevich:postnikov-complete}
    $\MotShv{k}$ is Postnikov-complete.
\end{lem}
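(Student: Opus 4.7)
The plan is to adapt the proof of \cref{lemma:zar:postnikov-complete} almost verbatim, replacing the Zariski topology by the Nisnevich topology throughout. Fix $X \in \MotShv{k}$; we want to show that $X \to \limil{k} \tau_{\le k} X$ is an equivalence. For every $U \in \smooth{k}$ write $U_{\nis}$ for the small Nisnevich site on $U$ and let $f_U \colon \MotShv{k} \to \ShvTop{\nis}{U_{\nis}}$ be the restriction functor. I would establish: (a) the family $\{f_U\}_{U \in \smooth{k}}$ is jointly conservative; (b) each $f_U$ commutes with limits; (c) each $f_U$ commutes with truncations; and (d) each $\ShvTop{\nis}{U_{\nis}}$ is Postnikov-complete. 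Granted (a)--(d), the claim is formal: the canonical map $X \to \limil{k} \tau_{\le k} X$ becomes an equivalence after applying each $f_U$ by (b), (c), (d), and is therefore itself an equivalence by (a).

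For (d), the key input is that the small Nisnevich topos on a Noetherian scheme of finite Krull dimension is locally of homotopy dimension $\le \dim(U)$ (a standard consequence of the bound on Nisnevich cohomological dimension due to Morel--Voevodsky, together with the Clausen--Mathew style argument used in \cite[Corollary 7.2.4.17]{highertopoi}). Combined with \cite[Proposition 7.2.1.10]{highertopoi}, this forces Postnikov-completeness.

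Assertions (a) and (b) are immediate from the definition of $f_U$ as restriction (limits of sheaves are computed sectionwise, and $F \mapsto F(U)$ is recovered from $f_U F$ via global sections). For (c), I would argue as in the Zariski case: both $\MotShv{k}$ and $\ShvTop{\nis}{U_{\nis}}$ are hypercomplete, so truncations are detected on homotopy sheaves, and on each side $\pi_n(F)$ is the Nisnevich sheafification of the presheaf $V \mapsto \pi_n(F(V))$. The crucial point is that Nisnevich covers of an object $V \in U_{\nis}$ coming from the big site $\smooth{k}$ agree with Nisnevich covers of $V$ in the small site $U_{\nis}$; hence restriction to $U_{\nis}$ commutes with the relevant sheafification, and therefore with $\tau_{\le k}$.

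The main obstacle is the bookkeeping in (c), namely checking that the big and small Nisnevich topologies restrict compatibly so that sheafification of the homotopy presheaves is preserved by $f_U$. Once this compatibility is verified, the argument is a direct copy of the Zariski proof and the Postnikov-completeness of $\MotShv{k}$ follows.
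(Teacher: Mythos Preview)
Your proposal is correct and follows essentially the same approach as the paper: the paper also reduces to the Zariski argument via restriction functors $f_U$ to the small Nisnevich site (written there as $U_{et}$, the category of \'etale $U$-schemes with Nisnevich covers), checks that these commute with limits and truncations, and invokes the homotopy dimension bound for the small Nisnevich topos (citing \cite[Theorem 3.7.7.1]{sag} rather than Morel--Voevodsky plus \cite[Corollary 7.2.4.17]{highertopoi}). Your outline (a)--(d) is exactly the skeleton of the paper's proof.
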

\begin{proof}
    One argues exactly as in \cref{lemma:zar:postnikov-complete}.
    As geometric input, we use that for every $U \in \smooth{k}$ there is a functor
    $f_U \colon \MotShv{k} \to \ShvTop{\nis}{U_{et}}$ given by restriction,
    where $U_{et}$ is the category of étale $U$-schemes, with coverings given by Nisnevich coverings.
    As in the Zariski case, one argues that this functor commutes with limits and truncations.
    Then we use that $\ShvTop{\nis}{U_{et}}$ has homotopy dimension $\le \dim(U)$,
    which was proven in \cite[Theorem 3.7.7.1]{sag}.
\end{proof}

\subsection{Generalities on Motivic Spaces}

Recall the following definitions from \cite[Definition 0.7]{morel2012a1}:
\begin{defn} [$\AffSpc{1}$-invariance]
    \begin{enumerate}
        \item
            Let $X \in \MotShv{k}$ be a Nisnevich sheaf.
            We say that $X$ is \emph{$\AffSpc{1}$-invariant} if $X(S) \xrightarrow{\operatorname{pr}_S^*} X(S \times \AffSpc{1})$
            is an equivalence of anima for all $S \in \smooth{k}$.
        \item
            Similarly, we say that $E \in \MotShvSp{k}$
            is \emph{$\AffSpc{1}$-invariant} if $E(S) \xrightarrow{\operatorname{pr}_S^*} E(S \times \AffSpc{1})$
            is an equivalence of spectra for all $S$.
        \item
            If $G \in \Grp{\Disc{\MotShv{k}}}$ is a Nisnevich sheaf of groups,
            we say that $G$ is \emph{strongly $\AffSpc{1}$-invariant}
            if $H_{\nis}^n(X, A) \xrightarrow{\operatorname{pr}_S^*} H_{\nis}^n(X \times \AffSpc{1}, A)$
            is an isomorphism for all $A$ and $n = 0, 1$.
            Write $\GrpStr{k}$ for the full subcategory of strongly $\AffSpc{1}$-invariant Nisnevich sheaves of groups.
    \end{enumerate}
\end{defn}

\begin{defn}
    We write $\MotSpc{k} \subset \MotShv{k}$ for the full subcategory of $\AffSpc{1}$-invariant Nisnevich sheaves,
    and call this category the \emph{category of motivic spaces} (over $k$).

    We denote by $\SH{k} \coloneqq \Stab{\MotSpc{k}}$ the stabilization of the category of motivic spaces,
    and call this category the \emph{category of motivic $S^1$-spectra} (over $k$).
\end{defn}

\begin{lem} \label{lemma:motivic:motivic-spaces-adjunction}
    The inclusion functor $\iota_{\AffSpc{1}} \colon \MotSpc{k} \hookrightarrow \MotShv{k}$ 
    has a left adjoint $L_{\AffSpc{1}}$, and $\MotSpc{k}$ is presentable.
    
    We have an induced adjunction 
    \begin{equation*}
        L_{\AffSpc{1}} \colon \MotShvSp{k} \rightleftarrows \SH{k} \colon \iota_{\AffSpc{1}},
    \end{equation*}
    induced by the adjunction $L_{\AffSpc{1}} \dashv \iota_{\AffSpc{1}}$.
    The right adjoint $\iota_{\AffSpc{1}}$ is fully faithful, with essential image 
    those sheaves of spectra which are $\AffSpc{1}$-invariant.
\end{lem}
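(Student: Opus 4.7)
The plan is in three parts: first establish the unstable reflective localization, then bootstrap to the stable adjunction via the functoriality of stabilization, and finally identify the essential image with the $\AffSpc{1}$-invariant sheaves of spectra.

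To handle the unstable statement, I would observe that $X \in \MotShv{k}$ is $\AffSpc{1}$-invariant exactly when $X$ is local with respect to the small set of morphisms $\{U \times \AffSpc{1} \to U \mid U \in \smooth{k}\}$. The class of $\AffSpc{1}$-equivalences is therefore strongly saturated and of small generation. Since $\MotShv{k}$ is a presentable $\infty$-topos, \cite[Proposition 5.5.4.15]{highertopoi} yields at once that $\MotSpc{k}$ is presentable and that the inclusion $\iota_{\AffSpc{1}}$ admits a left adjoint $L_{\AffSpc{1}}$.

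For the stable statement, the inclusion $\iota_{\AffSpc{1}}$ is a right adjoint between presentable $\infty$-categories; in particular it preserves finite limits and the terminal object. Hence \cref{lemma:adjoints-on-stabilization} produces an induced adjunction on stabilizations, which under the canonical identification $\Stab{\MotShv{k}} \cong \MotShvSp{k}$ together with the definition $\SH{k} \coloneqq \Stab{\MotSpc{k}}$ yields the displayed adjunction. Since $\iota_{\AffSpc{1}}$ is unstably fully faithful, \cref{lemma:fully-faithful-on-stabilizations} implies that the induced right adjoint on stabilizations is again fully faithful.

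To identify the essential image, I would use that under the standard description of stabilization, an object $E \in \MotShvSp{k}$ corresponds to the $\Omega$-spectrum $(\pLoop \Sigma^n E)_{n \ge 0}$ of pointed sheaves in $\MotShv{k}_*$, and the induced functor $\iota_{\AffSpc{1}}$ is computed levelwise. Hence $E$ lies in the essential image precisely when each $\pLoop \Sigma^n E$ is $\AffSpc{1}$-invariant as a sheaf of anima. The only mildly substantive step is matching this with $\AffSpc{1}$-invariance of $E$ in the spectral sense: since the functors $\pLoop \Sigma^n$ and sectionwise evaluation commute in $\MotShvSp{k}$, and a morphism of spectra is an equivalence iff each of its $\pLoop \Sigma^n$-levels is, the morphism $E(S) \to E(S \times \AffSpc{1})$ is an equivalence of spectra for every $S$ iff each $\pLoop \Sigma^n E$ is levelwise $\AffSpc{1}$-invariant. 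No genuine obstacle is expected; the proof is primarily a matter of correctly invoking the appendix lemmas and the localization machinery of \cite{highertopoi}.
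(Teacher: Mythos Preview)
Your approach is essentially the same as the paper's: invoke \cite[Proposition 5.5.4.15]{highertopoi} for the unstable localization, then pass to stabilizations via the appendix lemmas, and identify the essential image (the paper simply cites \cite[Chapter 4.2]{moreltrieste} for this last part, while you spell out the $\Omega$-spectrum argument).

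One correction: you invoke \cref{lemma:fully-faithful-on-stabilizations} for the fully faithfulness of the stabilized $\iota_{\AffSpc{1}}$, but that lemma concerns the \emph{left} adjoint $f^*$ being fully faithful (and left exact). In your situation $\iota_{\AffSpc{1}}$ is the \emph{right} adjoint $f_*$, and $L_{\AffSpc{1}}$ is certainly not fully faithful. The lemma you want is \cref{lemma:stabilization:right-adjoint-fully-faithful}, which is exactly what the paper cites; its proof (postcomposition by a fully faithful functor is fully faithful on functor categories) applies directly to $\iota_{\AffSpc{1}}$ without any left-exactness hypothesis.
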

\begin{proof}
    The first statement is an application of \cite[Proposition 5.5.4.15]{highertopoi},
    noting that the $\AffSpc{1}$-invariant sheaves are the local objects for the 
    (small) set of morphisms $\set{\operatorname{pr}_X \colon \AffSpc{1}_X \to X}{X \in \smooth{k}}$.

    There is an induced adjunction on stabilizations with fully faithful right adjoint, 
    see \cref{lemma:adjoints-on-stabilization,lemma:stabilization:right-adjoint-fully-faithful}.
    For the statement about the essential image, see \cite[Chapter 4.2]{moreltrieste}.
\end{proof}

\begin{lem} \label{lemma:motivic:homotopy-t-structure}
    There is a t-structure on $\SH{k}$ (called the \emph{standard (or homotopy) t-structure}).
    This t-structure is uniquely characterized by the requirement that 
    $\iota_{\AffSpc{1}} \colon \SH{k} \to \MotShvSp{k}$
    is t-exact (for the standard t-structure on the second category).

    In particular, $\heart{\iota}_{\AffSpc{1}} \colon \heart{\SH{k}} \to \heart{\MotShvSp{k}}$
    is an exact fully faithful functor of abelian categories given by restriction of $\iota_{\AffSpc{1}}$.
    Its essential image is the intersection of $\SH{k}$ with $\tstructheart{\MotShvSp{k}}$.
    We will say that an element of $\heart{\MotShvSp{k}}$ which lies in the essential image 
    of $\iota_{\AffSpc{1}}$ is \emph{strictly $\AffSpc{1}$-invariant}.
\end{lem}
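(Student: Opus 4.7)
The plan is to construct the t-structure by pulling back the standard t-structure along the fully faithful embedding $\iota_{\AffSpc{1}}$. Concretely, I would set
\begin{equation*}
    \tcon{\SH{k}} \coloneqq \iota_{\AffSpc{1}}^{-1}(\tcon{\MotShvSp{k}}), \quad \tcocon{\SH{k}} \coloneqq \iota_{\AffSpc{1}}^{-1}(\tcocon{\MotShvSp{k}}).
\end{equation*}
The shift and orthogonality axioms are then inherited tautologically from those on $\MotShvSp{k}$, using the fully faithfulness of $\iota_{\AffSpc{1}}$ (so that mapping spaces in $\SH{k}$ coincide with the corresponding ones in $\MotShvSp{k}$). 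With this definition, t-exactness of $\iota_{\AffSpc{1}}$ is built in, and uniqueness is immediate: any t-structure making $\iota_{\AffSpc{1}}$ t-exact must have both halves equal to these pullbacks.

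The main obstacle is producing the truncation fiber sequences. Given $X \in \SH{k}$, one takes the truncation triangle
\begin{equation*}
    \tau_{\ge 0}^{\MotShvSp{k}} \iota_{\AffSpc{1}} X \to \iota_{\AffSpc{1}} X \to \tau_{\le -1}^{\MotShvSp{k}} \iota_{\AffSpc{1}} X
\end{equation*}
in $\MotShvSp{k}$, and must show that both outer terms again lie in the essential image of $\iota_{\AffSpc{1}}$, i.e.\ are themselves $\AffSpc{1}$-invariant. Equivalently, the homotopy sheaves of an $\AffSpc{1}$-invariant Nisnevich sheaf of spectra must remain $\AffSpc{1}$-invariant. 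Over a perfect field this is Morel's strict $\AffSpc{1}$-invariance theorem (in its stable form), which is where the perfectness assumption on $k$ enters; I would cite this rather than reprove it.

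Once the t-structure is established, the remaining assertions about the heart follow formally. By construction $\heart{\SH{k}} = \SH{k} \cap \heart{\MotShvSp{k}}$ via $\iota_{\AffSpc{1}}$, so $\heart{\iota}_{\AffSpc{1}}$ is just the evident restriction and fully faithfulness is inherited from $\iota_{\AffSpc{1}}$. Exactness comes from t-exactness: kernels and cokernels in the abelian heart are computed as $\pi_0$ of fibers and cofibers in the stable category, and both fibers/cofibers and the relevant truncations are preserved by $\iota_{\AffSpc{1}}$. The essential image description is then immediate, and we may take this as the definition of strictly $\AffSpc{1}$-invariant objects.
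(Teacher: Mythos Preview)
Your proposal is correct and matches the paper's approach: both define the t-structure as the pullback of the standard t-structure along the fully faithful embedding $\iota_{\AffSpc{1}}$, identify Morel's strict $\AffSpc{1}$-invariance theorem as the input needed to show that the truncations stay $\AffSpc{1}$-invariant (the paper cites \cite[Theorem 4.3.4 (2)]{moreltrieste} directly for the existence of the t-structure), and deduce the heart statements formally from t-exactness and fully faithfulness (the paper invokes \cite[Proposition 1.3.17(i)]{BeilinsonFaisceauxPerverse} for the exact embedding of hearts).
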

\begin{proof}
    Since $\iota_{\AffSpc{1}}$ is fully faithful, it is clear that t-exactness of
    this functor uniquely determines the t-structure (i.e.\ the t-structure must be given by 
    the intersection of $\SH{k}$ with the standard t-structure on $\MotShvSp{k}$).
    That this actually defines a t-structure is \cite[Theorem 4.3.4 (2)]{moreltrieste}.

    Since $\iota_{\AffSpc{1}}$ is fully faithful, exact and t-exact,
    it induces an exact embedding of the hearts \cite[Proposition 1.3.17(i)]{BeilinsonFaisceauxPerverse}.
    The description of the essential image is clear from the t-exactness of $\iota_{\AffSpc{1}}$.
\end{proof}

\begin{rmk} \label{rmk:motivic:strictly}
    Let $A \in \heart{\MotShvSp{k}}$.
    Then $A$ is strictly $\AffSpc{1}$-invariant,
    if and only if the underlying sheaf of abelian groups $\heart{\Gamma}(-, A)$
    is strictly $\AffSpc{1}$-invariant in the sense of \cite[Definition 0.7]{morel2012a1},
    i.e.\ the cohomology sheaves $H^{i}_{\nis}(-, \heart{\Gamma}(-, A)) \cong \pi_{-i}(\Gamma(-, A))$
    are $\AffSpc{1}$-invariant.
    Note that $\pi_{-i}(\Gamma(-, A))$ is clearly $\AffSpc{1}$-invariant because $A$ is.
\end{rmk}

\begin{rmk} \label{rmk:motivic:htpy-groups}
    Let $n \ge 2$.
    By \cite[Corollary 5.2]{morel2012a1} and \cref{rmk:motivic:strictly}, the functor $\pi_n \circ \iota_{\AffSpc{1}} \colon \MotSpc{k}_* \to \tstructheart{\MotShvSp{k}}$
    factors over the full subcategory of $\AffSpc{1}$-invariant sheaves of spectra.
    Thus, by \cref{lemma:motivic:homotopy-t-structure} it induces a functor $\pi_n \colon \MotSpc{k} \to \tstructheart{\SH{k}}$.
\end{rmk}

\begin{rmk} \label{rmk:motivic:htpy-group-1-abelian}
    We can also look at the case $n = 1$:
    By \cite[Corollary 5.2]{morel2012a1}, the functor $\pi_1 \circ \iota_{\AffSpc{1}} \colon \MotSpc{k}_* \to \Grp{\Disc{\MotShv{k}}}$
    factors through the category $\GrpStr{k}$.
    If $X$ is a motivic space with abelian $\pi_1(\iota_{\AffSpc{1}} X)$,
    then this group is moreover strictly $\AffSpc{1}$-invariant (see \cite[Theorem 4.46]{morel2012a1}).
    Therefore, we get a well-defined functor 
    $\pi_1 \colon \MotSpc{k}^{\operatorname{ab}} \to \heart{\SH{k}}$,
    where $\MotSpc{k}^{ab}$ is the category of motivic spaces with abelian fundamental group.
\end{rmk}

\begin{defn}
    We also define the adjunctions
    \begin{equation*}
        L_{\nis} \colon \ZarShv{k} \rightleftarrows \MotShv{k} \colon \iota_{\nis},
    \end{equation*}
    given by sheafification and inclusion (i.e.\ induced by the canonical morphism of sites), 
    and
    \begin{equation*}
        L_{\nis,\AffSpc{1}} \colon \ZarShv{k} \rightleftarrows \MotSpc{k} \colon \iota_{\nis,\AffSpc{1}},
    \end{equation*}
    given by $L_{\nis,\AffSpc{1}} \coloneqq L_{\AffSpc{1}} \circ L_{\nis}$ 
    and the fully faithful functor $\iota_{\nis, \AffSpc{1}} \coloneqq \iota_{\nis} \circ \iota_{\AffSpc{1}}$.
    Note that there are induced adjunctions (see \cref{lemma:adjoints-on-stabilization})
    \begin{align*}
        L_{\nis} \colon \ZarShvSp{k} &\rightleftarrows \MotShvSp{k} \colon \iota_{\nis}, \\
        L_{\nis,\AffSpc{1}} \colon \ZarShvSp{k} &\rightleftarrows \SH{k} \colon \iota_{\nis,\AffSpc{1}},
    \end{align*}
    where the right adjoints are again fully faithful (see \cref{lemma:stabilization:right-adjoint-fully-faithful}).
\end{defn}

We want to show now that $\iota_{\nis,\AffSpc{1}}$ is t-exact for the standard t-structures.
Note that this is rather surprising, as $\iota_{\nis,\AffSpc{1}}$ is defined as the composition 
of $\iota_{\nis}$ and $\iota_{\AffSpc{1}}$, and the former is not t-exact!
For this, we need the following general proposition:
\begin{prop} \label{prop:motivic:right-t}
    Let $\Cat D$ and $\Cat E$ be stable categories equipped with t-structures 
    $\tstruct{\Cat D}$ and $\tstruct{\Cat E}$,
    and let $F \colon \Cat D \to \Cat E$ be an exact functor.
    Assume moreover that
    \begin{enumerate}[label=(\arabic*),ref=assumption (\arabic*),itemsep=0em]
        \item \label{prop:motivic:right-t:limits} $F$ preserves limits,
        \item \label{prop:motivic:right-t:heart-right-exact} for all $X \in \heart{\Cat D}$ we have that $FX \in \heart{\Cat E}$,
        \item \label{prop:motivic:right-t:left-complete-D} the t-structure on $\Cat D$ is left-complete, and
        \item \label{prop:motivic:right-t:left-complete-E} the t-structure on $\Cat E$ is left-complete. 
    \end{enumerate}
    Then $F$ is right t-exact.
\end{prop}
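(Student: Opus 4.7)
The plan is to realize $X \in \tcon{\Cat D}$ as the limit of its truncation tower, carry this limit past $F$, and then control the resulting tower in $\Cat E$ inductively along the Postnikov stages, using \ref{prop:motivic:right-t:heart-right-exact} to stay connective at each stage.

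Concretely, I would first invoke \ref{prop:motivic:right-t:left-complete-D} to write $X \cong \limil{n \ge 0} \tau_{\le n} X$, and then \ref{prop:motivic:right-t:limits} to push $F$ through the limit, yielding $FX \cong \limil{n \ge 0} Y_n$ with $Y_n \coloneqq F(\tau_{\le n} X)$. An induction on $n \ge 0$, based on the fiber sequence $\Sigma^n \pi_n(X) \to \tau_{\le n} X \to \tau_{\le n-1} X$, shows $Y_n \in \tcon{\Cat E}$: the base case $Y_0 = F(\pi_0 X) \in \heart{\Cat E}$ is exactly \ref{prop:motivic:right-t:heart-right-exact}, and the inductive step combines this with exactness of $F$ to place the fiber $\Sigma^n F(\pi_n X)$ in $\tcon[n]{\Cat E}$; closure of $\tcon{\Cat E}$ under extensions then gives $Y_n \in \tcon{\Cat E}$. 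Running the long exact sequence for the same fiber sequences, one obtains that $\pi_j(Y_n) \to \pi_j(Y_{n-1})$ is an isomorphism for $j < n$, so each tower $\{\pi_j Y_n\}_n$ stabilizes with a value $L_j$; this stable value vanishes for $j < 0$ because already $\pi_j Y_0 = 0$ on the heart.

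The main obstacle is then to identify $\pi_j(FX) = \pi_j(\limil{n} Y_n)$ with $L_j$. Here \ref{prop:motivic:right-t:left-complete-E} enters: in any left-complete t-structure, a sequential limit fits into the Milnor short exact sequence
\begin{equation*}
    0 \to \limone{n} \pi_{j+1}(Y_n) \to \pi_j(FX) \to \limilheart{n} \pi_j(Y_n) \to 0,
\end{equation*}
and since $\{\pi_{j+1} Y_n\}_n$ is eventually constant, the $\limone$-term vanishes; hence $\pi_j(FX) \cong L_j$. For $j < 0$ this forces $\pi_j(FX) = 0$, and left-separation of $\Cat E$ (a consequence of left-completeness) then upgrades this to $\tau_{\le -1}(FX) = 0$, i.e.\ $FX \in \tcon{\Cat E}$, giving the claimed right t-exactness. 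The only delicate point is to spell out the Milnor sequence within the paper's conventions; this can be modeled on the anima statement \cref{lemma:topos:htpy-groups-of-sequential-limits-of-anima}, transported to the stable setting via $\pLoop$ and a suitable shift.
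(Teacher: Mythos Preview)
Your strategy matches the paper's up to the final step: write $X \cong \limil{n} \tau_{\le n} X$, push $F$ through the limit, and control the tower $Y_n \coloneqq F(\tau_{\le n} X)$ by induction. Your induction is correct and in fact yields $Y_n \in \tcon{\Cat E} \cap \tcocon[n]{\Cat E}$ with $\tau_{\le n-1} Y_n \cong Y_{n-1}$ (the fiber $\Sigma^n F(\pi_n X)$ lies in $\Sigma^n\heart{\Cat E}$, so the same extension argument bounds $Y_n$ above as well as below). This is exactly the paper's ``t-exactness on bounded objects'' step.

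The gap is the Milnor sequence. That sequence requires $\pi_j$ to commute with countable products in $\Cat E$, which is \emph{not} a consequence of left-completeness: the paper's own introduction notes that infinite products fail to be t-exact in $\SH{k}$, yet $\SH{k}$ is left-complete (as verified when this proposition is applied in \cref{lemma:motivic:iota-A1-nis-t-exact-std}). So in the motivating example the short exact sequence you write down need not exist, and the proposed patch via \cref{lemma:topos:htpy-groups-of-sequential-limits-of-anima} does not transport, since $\Cat E$ is an abstract stable category with no $\pLoop$. There is also a smaller slip at the very end: from $\pi_j(FX)=0$ for $j<0$ one would need \emph{right}-separation of $\Cat E$ to conclude $\tau_{\le -1}(FX)=0$, not left-separation, and right-separation is not among the hypotheses. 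The paper avoids both problems by using \ref{prop:motivic:right-t:left-complete-E} structurally: by \cite[Proposition~1.2.1.17]{higheralgebra}, left-completeness identifies $\Cat E$ with the category of compatible towers $\{W_n\in\tcocon[n]{\Cat E}\}$, so $\{Y_n\}$ is the Postnikov tower of its own limit and $\tau_{\le n}(FX)\cong Y_n$ on the nose; taking $n=0$ then gives $\tau_{\le -1}(FX)=\tau_{\le -1}Y_0=0$ directly, since $Y_0\in\heart{\Cat E}$. Your induction already supplies every input this argument needs; the fix is to replace the Milnor step by this direct appeal to the definition of left-completeness.
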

\begin{proof}
    We first show that $F$ is t-exact on bounded objects, 
    i.e.\ we show that for all $m, n \in \Z$ and all $X \in \tcon[m]{\Cat D} \cap \tcocon[n]{\Cat D}$ 
    we have $FX \in \tcon[m]{\Cat E} \cap \tcocon[n]{\Cat E}$.
    Note that by shifting, it suffices to consider the case $m = 0$ (and thus $n \ge 0$, for $n < 0$ the statement is vacuous).

    We proceed by induction on $n$, the case $n = 0$ follows from \ref{prop:motivic:right-t:heart-right-exact}.
    So suppose the statement is true for $n \ge 0$, and let $X \in \tcon{\Cat D} \cap \tcocon[n+1]{\Cat D}$.
    Consider the fiber sequence $\Sigma^{n+1} \pi_{n+1} X \to X \to \tau_{\le n} X$.
    Applying $F$ yields the fiber sequence $\Sigma^{n+1} F\pi_{n+1} X \to FX \to F\tau_{\le n} X$.
    By induction, we see that $F\tau_{\le n} X \in \tcon{\Cat E} \cap \tcocon[n]{\Cat E} \subset \tcon{\Cat E} \cap \tcocon[n+1]{\Cat E}$,
    and $\Sigma^{n+1} F\pi_{n+1} X \in \Sigma^{n+1} \heart{\Cat E} \subset \tcon{\Cat E} \cap \tcocon[n+1]{\Cat E}$ by \ref{prop:motivic:right-t:heart-right-exact}.
    Thus, since $\tcon{\Cat E}$ and $\tcocon[n+1]{\Cat E}$ are stable under extensions, we get that $FX \in \tcon{\Cat E} \cap \tcocon[n+1]{\Cat E}$.

    Now, let $X \in \tcon{\Cat D}$ be a general connective object.
    Then, since the t-structure on $\Cat D$ is left-complete by \ref{prop:motivic:right-t:left-complete-D}, we can write 
    $X \cong \limil{n} \tau_{\le n} X$. Since $F$ commutes with limits (\ref{prop:motivic:right-t:limits}), we can thus write 
    $FX \cong \limil{n} F \tau_{\le n} X$.

    Using \cite[Proposition 1.2.1.17 (2)]{higheralgebra} and the left-completeness of $\Cat E$ 
    (\ref{prop:motivic:right-t:left-complete-E}),
    it suffices to show that $F \tau_{\le n} X$ is connective for every $n$,
    and that $\tau_{\le n} F \tau_{\le n+1} X \cong F \tau_{\le n} X$;
    this then implies that $\limil{n} F \tau_{\le n} X$ is connective.
    We have seen above that $F \tau_{\le n} X$ is connective for every $n$.
    
    So suppose that $n \ge 0$.
    Consider the fiber sequence 
    \begin{equation*}
        \Sigma^{n+1} \pi_{n+1} F \tau_{\le n+1} X \to F \tau_{\le n+1} X \to \tau_{\le n} F \tau_{\le n+1} F.
    \end{equation*}
    Note that there is also a fiber sequence 
    \begin{equation*}
        \Sigma^{n+1} \pi_{n+1} X \to \tau_{\le n+1} X \to \tau_{\le n} X,
    \end{equation*}
    which after applying $F$ yields 
    \begin{equation*}
        F \Sigma^{n+1} \pi_{n+1} X \to F \tau_{\le n+1} X \to F\tau_{\le n} X.
    \end{equation*}
    Thus, in order to show that $\tau_{\le n} F\tau_{\le n+1} X \cong F \tau_{\le n} X$,
    it suffices to show that $F \pi_{n+1} X \cong \pi_{n+1} F \tau_{\le n+1} X$.
    This follows immediately from t-exactness on bounded objects,
    i.e. we get (since $\tau_{\le n+1} X$ is bounded) 
    $\pi_{n+1} F \tau_{\le n+1} X \cong \pi_{n+1} \tau_{\le n+1} FX \cong \pi_{n+1} FX$.
\end{proof}

\begin{lem} \label{lemma:motivic:iota-A1-nis-t-exact-std}
    The functor $\iota_{\nis,\AffSpc{1}}$ is t-exact for the 
    standard t-structures.

    In particular, $\heart{\iota_{\nis,\AffSpc{1}}} \colon \heart{\SH{k}} \to \tstructheart{\ShvTop{\zar}{\smooth{k}, \Sp}}$
    is an exact fully faithful functor of abelian categories and given by restriction of $\iota_{\nis, \AffSpc{1}}$.
\end{lem}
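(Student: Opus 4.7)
The plan is to apply \cref{prop:motivic:right-t} to $F = \iota_{\nis,\AffSpc{1}}$ for right t-exactness, and to obtain left t-exactness from the adjunction $L_{\nis,\AffSpc{1}} \dashv \iota_{\nis,\AffSpc{1}}$ by showing that the left adjoint is right t-exact.

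The main step is verifying that $\iota_{\nis,\AffSpc{1}}$ sends $\heart{\SH{k}}$ into $\heart{\ZarShvSp{k}}$. By \cref{lemma:motivic:homotopy-t-structure} and \cref{rmk:motivic:strictly}, objects of $\heart{\SH{k}}$ correspond to strictly $\AffSpc{1}$-invariant Nisnevich sheaves of abelian groups on $\smooth{k}$. Morel's theorem (that Nisnevich and Zariski cohomology agree on strictly $\AffSpc{1}$-invariant sheaves) then implies that for $A \in \heart{\SH{k}}$ the Zariski cohomology sheaves $H^i_{\zar}(-, \heart{\Gamma}(-, A)) \cong \pi_{-i}(\iota_{\nis,\AffSpc{1}}A)$ coincide with the corresponding Nisnevich cohomology sheaves, which vanish for $i \neq 0$ since $A$ lives in the Nisnevich heart. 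Hence $\iota_{\nis,\AffSpc{1}} A \in \heart{\ZarShvSp{k}}$.

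The remaining hypotheses of \cref{prop:motivic:right-t} are routine: $\iota_{\nis,\AffSpc{1}}$ preserves limits as a right adjoint; the standard t-structure on $\ZarShvSp{k}$ is left-complete because $\ZarShv{k}$ is Postnikov-complete (\cref{lemma:zar:postnikov-complete}); and the standard t-structure on $\SH{k}$ is left-complete because the fully faithful t-exact embedding $\iota_{\AffSpc{1}} \colon \SH{k} \to \MotShvSp{k}$ (\cref{lemma:motivic:homotopy-t-structure}) commutes with both limits and truncations, so left-completeness transfers from the left-complete ambient category $\MotShvSp{k}$ (which in turn comes from Postnikov-completeness of $\MotShv{k}$, \cref{lemma:nisnevich:postnikov-complete}). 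For left t-exactness, by \cite[Proposition 1.3.17 (iii)]{BeilinsonFaisceauxPerverse} it suffices that $L_{\nis,\AffSpc{1}} = L_{\AffSpc{1}} \circ L_{\nis}$ be right t-exact: the sheafification $L_{\nis}$ is t-exact as the left adjoint of a geometric morphism, and $L_{\AffSpc{1}}$ preserves connective objects since connective Nisnevich sheaves of spectra are closed under colimits and $L_{\AffSpc{1}}$ is a localization.

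The final assertion about $\heart{\iota_{\nis,\AffSpc{1}}}$ is then formal: t-exactness makes the truncation $\pi_0 \circ \iota_{\nis,\AffSpc{1}}|_{\heart{\SH{k}}}$ coincide with the restriction of $\iota_{\nis,\AffSpc{1}}$; fully faithfulness is inherited from fully faithfulness of the two composed functors (\cref{lemma:motivic:motivic-spaces-adjunction} for $\iota_{\AffSpc{1}}$); and exactness between abelian categories follows from (two-sided) t-exactness. The only non-formal ingredient is Morel's comparison theorem invoked in the heart-preservation step; this is where all the geometric content sits, and everything else is abstract nonsense.
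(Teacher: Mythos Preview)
Your proof is correct and follows essentially the same route as the paper: verify the heart-preservation hypothesis of \cref{prop:motivic:right-t} via the Nisnevich--Zariski cohomology comparison for strictly $\AffSpc{1}$-invariant sheaves (the paper cites \cite[Theorem~4.5]{AsokGersten} for this), apply the proposition for right t-exactness, and handle left t-exactness separately; the left-completeness verifications are identical.

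The one imprecision is your justification that $L_{\AffSpc{1}}$ is right t-exact. The sentence ``connective Nisnevich sheaves of spectra are closed under colimits and $L_{\AffSpc{1}}$ is a localization'' is not a valid argument as it stands: a stable Bousfield localization is generated by \emph{all} shifts of the defining maps, so the small-object construction is not confined to connective cells, and closure of connectives under colimits does not by itself force the localization to stay connective. The easy fix --- whose ingredients you already use --- is to observe that $\iota_{\AffSpc{1}}$ is t-exact by \cref{lemma:motivic:homotopy-t-structure} (you invoke this for left-completeness of $\SH{k}$), hence its left adjoint $L_{\AffSpc{1}}$ is right t-exact by \cite[Proposition 1.3.17(iii)]{BeilinsonFaisceauxPerverse}. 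The paper takes the dual but equivalent route: it argues directly that $\iota_{\nis,\AffSpc{1}} = \iota_{\nis}\circ\iota_{\AffSpc{1}}$ is left t-exact as the composite of the t-exact $\iota_{\AffSpc{1}}$ with the left t-exact $\iota_{\nis}$ (the latter being right adjoint to the t-exact $L_{\nis}$), avoiding any discussion of $L_{\AffSpc{1}}$.
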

\begin{proof}
    We see that $\iota_{\nis,\AffSpc{1}}$ is left t-exact as the composition of a t-exact functor (\cref{lemma:motivic:homotopy-t-structure})
    and a left t-exact functor (note that $\iota_{\nis}$ is right adjoint to the t-exact functor $L_{\nis}$
    (see \cref{lemma:stabilization:homotopy-objects} for the t-exactness),
    and use \cite[Proposition 1.3.17 (iii)]{BeilinsonFaisceauxPerverse}).
    Thus, it suffices to see that the functor is right t-exact.
    We first prove the following: If $A \in \heart{\SH{k}}$,
    then also $\iota_{\nis,\AffSpc{1}} A \in \heart{\ZarShvSp{k}}$.
    Write $H \colon \AbObj{\Disc{\MotShv{k}}} \cong \heart{\MotShvSp{k}}$
    (and similar for Zariski sheaves).
    Since this is an equivalence, we know that there is an $A' \in \AbObj{\Disc{\MotShv{k}}}$
    with $HA' \cong \iota_{\AffSpc{1}}A$.
    Note that since $\iota_{\AffSpc{1}} A$ is $\AffSpc{1}$-invariant, we know that $A'$ is strictly 
    $\AffSpc{1}$-invariant, see \cref{rmk:motivic:strictly}.
    It suffices to show that $\iota_{\nis} HA' \cong H\iota_{\nis} A'$,
    where $\iota_{\nis} A' \in \AbObj{\Disc{\MotShv{k}}}$ is the application of the \emph{underived}
    functor $\iota_{\nis} \colon \MotShv{k} \to \ZarShv{k}$ with the induced structure of an abelian group object.
    In order to prove this equivalence, by Whitehead's theorem it suffices to prove that 
    for all $n$ and all $U \in \smooth{k}$
    the canonical map $\pi_n((\iota_{\nis} HA')(U)) \to \pi_n((H\iota_{\nis} A')(U))$ is an equivalence.
    But note that we have equivalences 
    \begin{equation*}
        \pi_n((\iota_{\nis} HA')(U)) = \pi_n((HA')(U)) \cong H^{-n}_{\nis}(U, A')
    \end{equation*}
    and 
    \begin{equation*}
        \pi_n((H \iota_{\nis}A')(U)) \cong H^{-n}_{\zar}(U, \iota_{\nis} A').
    \end{equation*}
    But the right-hand sides agree by \cite[Theorem 4.5]{AsokGersten}
    (The reference uses that $k$ is an infinite field.
    If $k$ is a finite field, we can argue as in the above reference,
    using the Gabber presentation lemma for finite fields, see \cite[Theorem 1.1]{hogadi2020gabber}).

    Thus, we can apply \cref{prop:motivic:right-t} with $\iota_{\nis,\AffSpc{1}}$: 
    Note that $\iota_{\nis,\AffSpc{1}}$ preserves limits because it is a right adjoint, 
    and that the standard t-structure on $\ZarShvSp{k}$ is left-complete because $\ZarShv{k}$ is Postnikov-complete,
    see \cref{lemma:zar:postnikov-complete} and the proof of \cite[Corollary 1.3.3.11]{sag}.
    Note that also $\MotShvSp{k}$ is left-complete with respect to the standard t-structure, 
    because $\MotShv{k}$ is Postnikov-complete, see \cref{lemma:nisnevich:postnikov-complete}.
    Thus, it follows that also $\SH{k} \subset \MotShvSp{k}$ is left-complete,
    since the functor $\iota_{\AffSpc{1}}$ is an exact and t-exact fully faithful functor which commutes 
    with limits (as a right-adjoint): 
    Indeed, if $X \in \SH{k}$, then we have 
    \begin{equation*}
        \iota_{\AffSpc{1}} X \cong \limil{k} \tau_{\le k} \iota_{\AffSpc{1}} X \cong \iota_{\AffSpc{1}} \limil{k} \tau_{\le k} X.
    \end{equation*}
    Since $\iota_{\AffSpc{1}}$ is fully faithful, it is in particular conservative, i.e. $X \cong \limil{k} \tau_{\le k} X$,
    which is what we wanted to show.
    Hence, \cref{prop:motivic:right-t} implies that $\iota_{\nis,\AffSpc{1}}$ is right t-exact.
\end{proof}

\begin{lem} \label{lemma:motivic:iota-a1-EM}
    Let $A \in \heart{\SH{k}}$ and $n \ge 0$. 
    Then $\iota_{\AffSpc{1}} K(A, n) \cong K(\heart{\iota}_{\AffSpc{1}} A, n)$
    and $\iota_{\nis,\AffSpc{1}} K(A, n) \cong K(\heart{\iota}_{\nis,\AffSpc{1}} A, n)$ 
\end{lem}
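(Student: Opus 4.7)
The proof should essentially be an unwinding of definitions combined with the t-exactness results already established. The definition of an Eilenberg--MacLane object for $A \in \heart{\SH{k}}$ (viewed as an object of $\SH{k}$ via the heart inclusion) is $K(A, n) = \pLoop \Sigma^n A$, and the analogous definition makes sense in $\MotShv{k}$ and $\ZarShv{k}$ using their stabilizations. So the plan is to check that $\iota_{\AffSpc{1}}$ and $\iota_{\nis,\AffSpc{1}}$ commute with the operations $\Sigma^n$ and $\pLoop$, and that they send the heart object $A$ to the expected heart object.

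The first step is to recall that $\iota_{\AffSpc{1}} \colon \SH{k} \to \MotShvSp{k}$ is exact (as a right adjoint between stable $\infty$-categories) and is t-exact for the standard t-structures by \cref{lemma:motivic:homotopy-t-structure}. Since $\iota_{\AffSpc{1}}$ is a right adjoint, it commutes with all limits; in particular, passing to left adjoints, the square
\begin{equation*}
    \begin{tikzcd}
        \MotShv{k}_* \arrow[r, "L_{\AffSpc{1}}"] \arrow[d, "\Sus"'] & \MotSpc{k}_* \arrow[d, "\Sus"] \\
        \MotShvSp{k} \arrow[r, "L_{\AffSpc{1}}"] & \SH{k}
    \end{tikzcd}
\end{equation*}
commutes (both composites are left adjoint to $\iota_{\AffSpc{1}} \pLoop$), and so its right-adjoint square commutes as well, giving $\iota_{\AffSpc{1}} \pLoop \cong \pLoop \iota_{\AffSpc{1}}$ on stabilizations.

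The second step is the computation itself. Using these commutations together with $\iota_{\AffSpc{1}} \Sigma^n \cong \Sigma^n \iota_{\AffSpc{1}}$ (exactness), we obtain
\begin{equation*}
    \iota_{\AffSpc{1}} K(A, n) \cong \iota_{\AffSpc{1}} \pLoop \Sigma^n A \cong \pLoop \Sigma^n \iota_{\AffSpc{1}} A.
\end{equation*}
Since $\iota_{\AffSpc{1}}$ is t-exact and $A \in \heart{\SH{k}}$, we have $\iota_{\AffSpc{1}} A \in \heart{\MotShvSp{k}}$, so $\iota_{\AffSpc{1}} A = \heart{\iota}_{\AffSpc{1}} A$ under the derived convention, and the right-hand side is by definition $K(\heart{\iota}_{\AffSpc{1}} A, n)$.

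The third step is to repeat the argument with $\iota_{\nis,\AffSpc{1}}$ in place of $\iota_{\AffSpc{1}}$. This functor is again a right adjoint (so commutes with $\pLoop$ by the same adjunction argument) and is t-exact for the standard t-structures by \cref{lemma:motivic:iota-A1-nis-t-exact-std}, so the same chain of equivalences yields $\iota_{\nis,\AffSpc{1}} K(A, n) \cong K(\heart{\iota}_{\nis,\AffSpc{1}} A, n)$. There is no substantive obstacle here — every ingredient has been set up in the previous lemmas; the only point requiring a brief justification is the commutation of the inclusions with $\pLoop$, which is the standard right-adjoint argument sketched above.
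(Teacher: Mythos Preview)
Your proof is correct and matches the paper's approach essentially line for line: unwind $K(A,n) = \pLoop \Sigma^n A$, use t-exactness (\cref{lemma:motivic:homotopy-t-structure} and \cref{lemma:motivic:iota-A1-nis-t-exact-std}) to identify $\iota_{\AffSpc{1}} A \cong \heart{\iota}_{\AffSpc{1}} A$, and commute the inclusion past $\pLoop$ and $\Sigma^n$. The only cosmetic difference is that the paper cites \cref{lemma:adjoints-on-stabilization} directly for the $\pLoop$-commutation, whereas you spell out the underlying adjoint-square argument.
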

\begin{proof}
    We calculate 
    \begin{equation*}
        K(\heart{\iota}_{\AffSpc{1}}A, n) 
        = \pLoop \Sigma^n \heart{\iota}_{\AffSpc{1}}A 
        \cong \pLoop \Sigma^n \iota_{\AffSpc{1}} A 
        \cong \iota_{\AffSpc{1}} \pLoop \Sigma^n A 
        = \iota_{\AffSpc{1}} K(A, n),
    \end{equation*}
    where we used that $\heart{\iota}_{\AffSpc{1}}A \cong \iota_{\AffSpc{1}} A$
    (because $\iota_{\AffSpc{1}}$ is t-exact for the standard t-structures, see \cref{lemma:motivic:homotopy-t-structure}),
    and \cref{lemma:adjoints-on-stabilization}.

    The same proof works for the second statement, using t-exactness of $\iota_{\nis,\AffSpc{1}}$
    for the standard t-structures, see \cref{lemma:motivic:iota-A1-nis-t-exact-std}.
\end{proof}

\begin{lem} \label{lemma:motivic:connective-cover-descent-to-a1}
    For every $n \ge 0$ the functor 
    $\tau_{\ge n} \colon \MotShv{k}_* \to \MotShv{k}_*$
    restricts to a functor $\tau_{\ge n} \colon \MotSpc{k}_* \to \MotSpc{k}_*$.

    In other words, there is a functor $\tau_{\ge n}$ such that the following square commutes:
    \begin{center}
        \begin{tikzcd}
            \MotSpc{k}_* \arrow[d, hook, "{\iota_{\AffSpc{1}}}"] \arrow[r, "{\tau_{\ge n}}"] &\MotSpc{k}_* \arrow[d, hook, "{\iota_{\AffSpc{1}}}"] \\
            \MotShv{k}_* \arrow[r, "{\tau_{\ge n}}"] &\MotShv{k}_*.
        \end{tikzcd}
    \end{center}
\end{lem}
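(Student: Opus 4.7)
The plan is to show the equivalent claim that $\tau_{\le n-1}(\iota_{\AffSpc{1}} X)$ is $\AffSpc{1}$-invariant whenever $X \in \MotSpc{k}_*$ and $n \ge 1$; the conclusion for $\tau_{\ge n}$ then follows from the fiber sequence
\begin{equation*}
    \tau_{\ge n}(\iota_{\AffSpc{1}} X) \to \iota_{\AffSpc{1}} X \to \tau_{\le n-1}(\iota_{\AffSpc{1}} X)
\end{equation*}
together with the fact that $\AffSpc{1}$-invariant sheaves are closed under limits in $\MotShv{k}_*$, since $\iota_{\AffSpc{1}}$ is a right adjoint by \cref{lemma:motivic:motivic-spaces-adjunction}. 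The case $n = 0$ is trivial since pointed sheaves are already $0$-connective.

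I would prove the claim by induction on $n$. The base case is $\tau_{\le 0}(\iota_{\AffSpc{1}} X) \cong \pi_0(\iota_{\AffSpc{1}} X)$, which is $\AffSpc{1}$-invariant: its value on $U$ is $\pi_0(X(U))$, and the projection $U \times \AffSpc{1} \to U$ induces an equivalence on $X$ and hence on $\pi_0$. For the inductive step, I would split along the components of $\pi_0$ to reduce to the case where $X$ is pointed connected, and then use the Postnikov tower of $\iota_{\AffSpc{1}} X$ in $\MotShv{k}_*$. The key geometric input is Morel's theorem recorded in \cref{rmk:motivic:htpy-groups,rmk:motivic:htpy-group-1-abelian}: the homotopy sheaves $\pi_k(\iota_{\AffSpc{1}} X)$ are strictly $\AffSpc{1}$-invariant for $k \ge 2$, and $\pi_1(\iota_{\AffSpc{1}} X)$ is strongly $\AffSpc{1}$-invariant. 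By \cref{lemma:motivic:iota-a1-EM} (and the analogous fact for $K(G, 1)$ when $G$ is strongly $\AffSpc{1}$-invariant, which is essentially the definition), each Eilenberg-MacLane sheaf $K(\pi_k X, k)$ appearing as a fiber in the Postnikov tower is $\AffSpc{1}$-invariant. Applying the inductive hypothesis to $\tau_{\le n-1}(\iota_{\AffSpc{1}} X)$ and taking the appropriate limit, $\tau_{\le n}(\iota_{\AffSpc{1}} X)$ is then an iterated limit of $\AffSpc{1}$-invariant objects, hence $\AffSpc{1}$-invariant.

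The main obstacle will be executing the Postnikov tower step carefully when $X$ is not nilpotent, since the stages of the Postnikov tower are not assembled from principal $K(\pi_n, n+1)$-fibrations but from twisted ones classified by cohomology with coefficients in local systems on $\tau_{\le 1} X = B\pi_1 X$. To handle this, I would describe $\tau_{\le n}(\iota_{\AffSpc{1}} X)$ as the pullback of a diagram involving $\tau_{\le n-1}(\iota_{\AffSpc{1}} X)$ and an appropriate classifying sheaf built from $\pi_n X$, observing that even the twisted Eilenberg-MacLane constructions remain $\AffSpc{1}$-invariant because they are internal-hom/limit constructions in $\MotShv{k}$ out of $\AffSpc{1}$-invariant input. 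As a fallback, one may invoke Morel's unstable $\AffSpc{1}$-truncation theorem directly, which asserts that the localization $L_{\AffSpc{1}}$ commutes with $\tau_{\le n}$ and hence that $n$-truncated $\AffSpc{1}$-invariant sheaves form a reflective subcategory stable under the ambient truncation.
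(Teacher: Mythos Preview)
Your detour through $\tau_{\le n-1}$ is unnecessary and creates the very obstacle you then struggle with. The paper argues directly about $\tau_{\ge n}(\iota_{\AffSpc{1}} X)$: for $n \ge 1$ this is a pointed connected Nisnevich sheaf, so by Morel's criterion \cite[Corollary 5.3]{morel2012a1} it is $\AffSpc{1}$-invariant as soon as its $\pi_1$ is strongly $\AffSpc{1}$-invariant and its $\pi_k$ are strictly $\AffSpc{1}$-invariant for $k \ge 2$. But $\pi_k(\tau_{\ge n}\iota_{\AffSpc{1}} X)$ is either $0$ (for $k<n$) or $\pi_k(\iota_{\AffSpc{1}} X)$ (for $k\ge n$), and the latter are strongly/strictly $\AffSpc{1}$-invariant by \cite[Corollary 5.2]{morel2012a1}. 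That is the entire proof.

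By contrast, your inductive argument on $\tau_{\le n-1}$ forces you to reconstruct each Postnikov stage as a limit of $\AffSpc{1}$-invariant objects, and you correctly note that for non-nilpotent $X$ the fibrations are twisted, so the classifying object is not $K(\pi_n X, n+1)$ but a relative construction over $B\pi_1 X$. Your claim that this twisted object is $\AffSpc{1}$-invariant ``because it is an internal-hom/limit construction out of $\AffSpc{1}$-invariant input'' is not justified: the classifying object for a twisted Postnikov stage is a \emph{colimit} (a quotient by the $\pi_1$-action) rather than a limit, and $\AffSpc{1}$-invariant sheaves are not closed under colimits. Your fallback to an ``unstable $\AffSpc{1}$-truncation theorem'' is essentially appealing to the statement you are trying to prove (or its dual). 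The reduction ``split along components of $\pi_0$'' is also not available in a general topos. None of this is needed: Morel's Corollary 5.3 is exactly the tool that lets you bypass the Postnikov tower and read off $\AffSpc{1}$-invariance from the homotopy sheaves alone.
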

\begin{proof}
    Let $n \ge 0$, and fix a pointed motivic space $X \in \MotSpc{k}_*$.
    It suffices to show that $\tau_{\ge n} \iota_{\AffSpc{1}} X$ is again $\AffSpc{1}$-invariant.

    If $n = 0$ there is nothing to prove, so we can assume $n \ge 1$.
    Using \cite[Corollary 5.3]{morel2012a1},
    it suffices to prove that $\pi_1(\tau_{\ge n} \iota_{\AffSpc{1}}X)$ is strongly $\AffSpc{1}$-invariant
    and $\pi_k(\tau_{\ge n} \iota_{\AffSpc{1}}X)$ is strictly $\AffSpc{1}$-invariant for all $k \ge 2$.
    This is clear if $n > k$, since $0$ is strictly $\AffSpc{1}$-invariant.
    If $n \le k$, we use \cite[Corollary 5.2]{morel2012a1}
    to conclude that $\pi_k(\tau_{\ge n}\iota_{\AffSpc{1}}X) \cong \pi_k(\iota_{\AffSpc{1}}X)$ is strictly $\AffSpc{1}$-invariant.
    The same proof works for $\pi_1$ if $n = 1$, again using \cite[Corollary 5.2]{morel2012a1}.
\end{proof}

\begin{lem} \label{lemma:motivic:truncation-cover-iota-if-connected}
    Let $X \in \MotSpc{k}_*$ be a pointed connected motivic space, i.e.\ 
    it is in the image of $\tau_{\ge 1} \colon \MotSpc{k}_* \to \MotSpc{k}_*$ from \cref{lemma:motivic:connective-cover-descent-to-a1}.
    For all $n \ge 1$ there are equivalences 
    \begin{align*}
        \tau_{\le n} \iota_{\nis,\AffSpc{1}} X &\cong \iota_{\nis} \tau_{\le n} \iota_{\AffSpc{1}} X.
    \end{align*}
\end{lem}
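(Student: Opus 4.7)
Write $Y \coloneqq \iota_{\AffSpc{1}} X$; the task is to exhibit $\iota_{\nis} \tau_{\le n} Y$, together with the canonical map $\iota_{\nis} Y \to \iota_{\nis} \tau_{\le n} Y$, as the $n$-truncation of $\iota_{\nis} Y$ in $\ZarShv{k}$. Because $\iota_{\nis}$ is a right adjoint and hence preserves fibers, this decomposes into two claims: (a) $\iota_{\nis} \tau_{\le n} Y$ is $n$-truncated in $\ZarShv{k}$, and (b) its fiber $\iota_{\nis} \tau_{\ge n+1} Y$ is $n$-connective.

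For (a), since $\iota_{\nis} F$ has the same sections as $F$ on $\smooth{k}$, it suffices to show that $(\tau_{\le n} Y)(U)$ is $n$-truncated as an anima for every smooth $U$. I would prove this by induction along the Nisnevich Postnikov tower, using the fiber sequences $K(\pi_m Y, m) \to \tau_{\le m} Y \to \tau_{\le m-1} Y$ for $1 \le m \le n$ (with $\tau_{\le 0} Y = *$ since $Y$ is connected): the sections $K(\pi_m Y, m)(U)$ are $m$-truncated, their only possibly nonzero homotopy groups being the Nisnevich cohomologies $H^{m-k}_{\nis}(U, \pi_m Y)$ for $0 \le k \le m$, and being $m$-truncated is preserved under fibers with $(m-1)$-truncated base.

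For (b), I would check $n$-connectivity on Zariski stalks. At a point of $\ZarShv{k}$ corresponding to a local ring $R = \mathcal O_{U, x}$ of a smooth scheme, the stalk of $\iota_{\nis} \tau_{\ge n+1} Y$ is $(\tau_{\ge n+1} Y)(R)$. Building $\tau_{\ge n+1} Y$ from its Nisnevich Postnikov tower, the key input is that for $m \ge n+1 \ge 2$ the sheaf $\pi_m(Y) = \pi_m(\iota_{\AffSpc{1}} X)$ is strictly $\AffSpc{1}$-invariant by Morel's theorem \cite[Corollary 5.2]{morel2012a1}. The Asok--Gersten comparison \cite[Theorem 4.5]{AsokGersten}, already used in the proof of \cref{lemma:motivic:iota-A1-nis-t-exact-std}, then identifies $H^p_{\nis}(R, \pi_m Y)$ with $H^p_{\zar}(R, \iota_{\nis} \pi_m Y)$, and the latter vanishes for $p \ge 1$ because $R$ is Zariski local. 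A direct computation then shows that $K(\pi_m Y, m)(R)$ is concentrated in degree $m$ and hence $(m-1)$-connected; a sequential limit argument along the Postnikov tower (legitimate because $\ZarShv{k}$ is locally of finite uniform homotopy dimension, see \cref{lemma:nisnevich:locally-of-uniform-homotopy-dimension}) yields that $(\tau_{\ge n+1} Y)(R)$ is $n$-connected.

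The main obstacle is the appeal to Asok--Gersten: this requires the strict $\AffSpc{1}$-invariance granted by Morel's theorem only in degree $\ge 2$, which is precisely why the hypothesis $n \ge 1$ enters (so that the indices $m$ appearing in the fiber $\tau_{\ge n+1} Y$ satisfy $m \ge 2$). A minor subtlety is controlling the sequential Postnikov limit at the stalk, but the homotopy groups stabilize degreewise, so this causes no real trouble.
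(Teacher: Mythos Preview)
Your proposal is correct, but it organizes the argument differently from the paper.

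The paper does not split into ``truncated target'' plus ``connective fiber''. Instead, it applies $\iota_{\nis}$ to the entire Nisnevich Postnikov tower of $Y = \iota_{\AffSpc{1}} X$ and argues that the resulting tower $(\iota_{\nis}\tau_{\le k} Y)_k$ is again a Postnikov tower in $\ZarShv{k}$: each stage is $k$-truncated (since $\iota_{\nis}$ preserves truncated objects as the right adjoint of a geometric morphism), and the successive fibers $\iota_{\nis} K(\pi_k Y, k)$ are again Eilenberg--MacLane objects $K(\heart{\iota}_{\nis,\AffSpc{1}} \pi_k X, k)$ in $\ZarShv{k}$ by \cref{lemma:motivic:iota-a1-EM}. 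Since $\iota_{\nis}$ commutes with the inverse limit (giving $\iota_{\nis,\AffSpc{1}} X$), the tower must be the Zariski Postnikov tower of $\iota_{\nis,\AffSpc{1}} X$, and the claim follows for all $n$ simultaneously.

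Both arguments rest on the same geometric input --- the Asok--Gersten comparison of Nisnevich and Zariski cohomology for strictly $\AffSpc{1}$-invariant sheaves --- but the paper packages it once into \cref{lemma:motivic:iota-a1-EM} (via the t-exactness of $\iota_{\nis,\AffSpc{1}}$ in \cref{lemma:motivic:iota-A1-nis-t-exact-std}) rather than unpacking it at the stalk level as you do. The paper's route is a bit cleaner because it avoids your Postnikov-limit argument for the connective cover and gets all $n$ at once; your route is more self-contained and makes explicit where the hypothesis $n \ge 1$ is used. Two small remarks on your write-up: in (a) the induction is unnecessary, since $\iota_{\nis}$ preserves $n$-truncated objects directly; and in (b) you want ``$n$-connected'' (i.e.\ $(n+1)$-connective) rather than ``$n$-connective'', which your later computation in fact delivers.
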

\begin{proof}
    Let $k \ge 1$.
    Then there is a fiber sequence 
    \begin{equation*}
        K(\pi_k(\iota_{\AffSpc{1}} X), k) \to \tau_{\le k} \iota_{\AffSpc{1}} X \to \tau_{\le k-1} \iota_{\AffSpc{1}} X.
    \end{equation*}
    Thus, since $\iota_{\nis}$ preserves limits (it is a right adjoint), we get a fiber sequence
    \begin{equation*}
        \iota_{\nis} K(\pi_k(\iota_{\AffSpc{1}} X), k) \to \iota_{\nis} \tau_{\le k} \iota_{\AffSpc{1}} X \to \iota_{\nis} \tau_{\le k-1} \iota_{\AffSpc{1}} X.
    \end{equation*}
    By definition, we have $\iota_{\AffSpc{1}} \pi_k(X) = \pi_k(\iota_{\AffSpc{1}} X)$.
    \cref{lemma:motivic:iota-a1-EM} now gives us equivalences 
    \begin{equation*}
        \iota_{\nis} K(\pi_k(\iota_{\AffSpc{1}} X), k) 
        \cong \iota_{\nis, \AffSpc{1}} K(\pi_k(X), k) 
        \cong K(\heart{\iota}_{\nis, \AffSpc{1}} \pi_k(X), k).
    \end{equation*}

    Moreover, $\limil{k} \iota_{\nis} \tau_{\le k} \iota_{\AffSpc{1}} X \cong \iota_{\nis} \limil{k} \tau_{\le k} \iota_{\AffSpc{1}} X \cong \iota_{\nis,\AffSpc{1}} X$,
    since $\iota_{\nis}$ preserves limits and $\MotShv{k}$ is Postnikov-complete (\cref{lemma:nisnevich:postnikov-complete}).
    Since $\iota_{\nis} \tau_{\le k} \iota_{\AffSpc{1}} X$ is still $k$-truncated (as the right adjoint of a geometric
    morphism preserves truncated objects, see \cite[Proposition 6.3.1.9]{highertopoi}),
    and the fibers of $\iota_{\nis} \tau_{\le k} \iota_{\AffSpc{1}} X \to \iota_{\nis} \tau_{\le k-1} \iota_{\AffSpc{1}} X$ are Eilenberg-MacLane objects
    in degree $k$, we conclude by induction on $k$ that actually $(\iota_{\nis} \tau_{\le k} \iota_{\AffSpc{1}} X)_k$ is the Postnikov tower of $\iota_{\nis,\AffSpc{1}} X$,
    i.e.\ $\iota_{\nis} \tau_{\le k} \iota_{\AffSpc{1}} X \cong \tau_{\le k} \iota_{\nis,\AffSpc{1}} X$.
\end{proof}

\begin{lem} \label{lemma:motivic:connected-cover-iota}
    Let $X \in \MotSpc{k}_*$ be a pointed motivic space.
    Then $\tau_{\ge n} \iota_{\nis,\AffSpc{1}} X \cong \iota_{\nis} \tau_{\ge n} \iota_{\AffSpc{1}} X$ 
    for all $n \ge 0$.
\end{lem}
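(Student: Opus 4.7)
The plan is to prove the lemma by case analysis on $n$, using the Postnikov fiber sequence to reduce the claim about $\tau_{\ge n}$ to the corresponding claim about $\tau_{\le n-1}$, which is furnished (at least in the connected case) by \cref{lemma:motivic:truncation-cover-iota-if-connected}.

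For $n = 0$ both sides equal $\iota_{\nis,\AffSpc{1}} X$ and there is nothing to prove. For $n \ge 1$, I would apply $\iota_{\nis}$ to the fiber sequence
\begin{equation*}
    \tau_{\ge n} \iota_{\AffSpc{1}} X \to \iota_{\AffSpc{1}} X \to \tau_{\le n-1} \iota_{\AffSpc{1}} X
\end{equation*}
in $\MotShv{k}_*$. Since $\iota_{\nis}$ is a right adjoint, it preserves limits, and the resulting fiber sequence in $\ZarShv{k}_*$ has as its third term $\iota_{\nis}\tau_{\le n-1}\iota_{\AffSpc{1}} X$, which is $(n-1)$-truncated because right adjoints of geometric morphisms preserve truncated objects (see \cite[Proposition 6.3.1.9]{highertopoi}). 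By the uniqueness of the Postnikov fiber sequence $\tau_{\ge n} Y \to Y \to \tau_{\le n-1} Y$ attached to a pointed object $Y$, the lemma then reduces to producing an equivalence $\iota_{\nis}\tau_{\le n-1}\iota_{\AffSpc{1}} X \cong \tau_{\le n-1}\iota_{\nis,\AffSpc{1}} X$ compatible with the structure maps from $\iota_{\nis,\AffSpc{1}} X$.

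For $n \ge 2$ and $X$ connected, this last equivalence is exactly \cref{lemma:motivic:truncation-cover-iota-if-connected} (applied with $n-1 \ge 1$). To handle a general pointed $X$, I would first establish the case $n = 1$, which amounts to the statement that $\pi_0$ of a motivic space agrees with the Zariski $\pi_0$ of its image under $\iota_{\nis,\AffSpc{1}}$, i.e.\ that the Nisnevich and Zariski sheafifications of the presheaf $U \mapsto \pi_0(X(U))$ coincide. Granted this, applying the $n=1$ result to the fiber sequence $\tau_{\ge 1} X \to X \to \pi_0 X$ in $\MotSpc{k}_*$ (using \cref{lemma:motivic:connective-cover-descent-to-a1} so that $\tau_{\ge 1} X \in \MotSpc{k}_*$) shows that $\iota_{\nis,\AffSpc{1}}$ commutes with $\tau_{\ge 1}$, and then the identity $\tau_{\ge n} Y \cong \tau_{\ge n}\tau_{\ge 1} Y$ (valid for pointed $Y$ and $n \ge 1$) reduces the general case to the already-handled connected case.

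The main obstacle is thus the input for $n = 1$: the assertion that for a motivic space $X$, the Nisnevich and Zariski sheafifications of the presheaf $U \mapsto \pi_0(X(U))$ agree. This is the non-abelian counterpart of the Asok-Gersten comparison result used in the proof of \cref{lemma:motivic:iota-A1-nis-t-exact-std}, and I would establish it by the same strategy, namely by applying the Gabber presentation lemma (together with \cite[Theorem 1.1]{hogadi2020gabber} to cover the case of finite fields) to pointed $\AffSpc{1}$-invariant sheaves of sets, lifting a Nisnevich-local trivialization of a section to a Zariski-local one.
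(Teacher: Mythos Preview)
Your overall strategy---reducing to a comparison of truncations $\iota_{\nis}\tau_{\le n-1}\iota_{\AffSpc{1}} X \cong \tau_{\le n-1}\iota_{\nis,\AffSpc{1}} X$ and then invoking \cref{lemma:motivic:truncation-cover-iota-if-connected}---is fine for $n \ge 2$ when $X$ is connected, and your reduction of the general case to the connected case via the $n=1$ statement is logically coherent. The gap is in the $n=1$ step itself.

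There you need $\iota_{\nis}\pi_0(\iota_{\AffSpc{1}} X) \cong \pi_0(\iota_{\nis,\AffSpc{1}} X)$, i.e.\ that Nisnevich and Zariski $\pi_0$ of a motivic space agree. You propose to prove this via the Gabber presentation lemma, in analogy with the argument behind \cref{lemma:motivic:iota-A1-nis-t-exact-std}. But that argument, and the result \cite[Theorem~4.5]{AsokGersten} it relies on, are for \emph{strongly} $\AffSpc{1}$-invariant sheaves of \emph{groups}: the group structure is used essentially (one translates an arbitrary section to the identity so that ``trivial on a Nisnevich neighborhood'' becomes a vanishing statement amenable to Gabber presentation/purity). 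For an $\AffSpc{1}$-invariant Nisnevich sheaf of pointed \emph{sets}---which is all $\pi_0$ of a motivic space is---no such argument is available, and there is no result in the literature you can cite. Your proposed step ``lift a Nisnevich-local trivialization of a section to a Zariski-local one for pointed sets'' is therefore unjustified, and possibly false.

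The paper avoids this completely by working from the other end. Rather than comparing $\tau_{\le n-1}$, it first proves the formal identity $\tau_{\ge n}\iota_{\nis} Y \cong \tau_{\ge n}\iota_{\nis}\tau_{\ge n} Y$ for \emph{any} Nisnevich sheaf $Y$ (using only that $\iota_{\nis}\tau_{\le n-1}Y$ is $(n-1)$-truncated), so that it suffices to show $\iota_{\nis}\tau_{\ge n}\iota_{\AffSpc{1}} X = \iota_{\nis,\AffSpc{1}}(\tau_{\ge n} X)$ is already $n$-connective. Since $\tau_{\ge n} X$ is a \emph{connected} motivic space for all $n \ge 1$, \cref{lemma:motivic:truncation-cover-iota-if-connected} now applies directly and reduces the question to the case of an Eilenberg--MacLane object, where the connectivity is immediate from \cref{lemma:motivic:iota-a1-EM}. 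The $\pi_0$ comparison is never needed.
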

\begin{proof}
    If $n = 0$ then there is nothing to prove.
    So suppose that $n \ge 1$.
    Write $\ZarShv{k}_{\ge n,*} \subset \ZarShv{k}_*$ for the full subcategory of $n$-connective 
    pointed Zariski sheaves.
    We begin by showing that for every $Y \in \MotShv{k}$,
    the canonical map $\tau_{\ge n} \iota_{\nis} \tau_{\ge n} Y \to \tau_{\ge n} \iota_{\nis} Y$ is an equivalence.
    Note that there is a fiber sequence
    \begin{equation*}
        \tau_{\ge n} Y \to Y \to \tau_{\le n-1} Y.
    \end{equation*}
    Applying the right adjoint $\iota_{\nis}$ yields the fiber sequence 
    \begin{equation*}
        \iota_{\nis} \tau_{\ge n} Y \to \iota_{\nis} Y \to \iota_{\nis} \tau_{\le n-1} Y.
    \end{equation*}
    Note that if we view $\tau_{\ge n}$ as a functor $\ZarShv{k}_* \to \ZarShv{k}_{\ge n,*}$,
    then it preserves limits because it is right adjoint to the inclusion.
    Therefore, applying $\tau_{\ge n}$ yields a fiber sequence (in $\ZarShv{k}_{\ge n, *}$)
    \begin{equation*}
        \tau_{\ge n} \iota_{\nis} \tau_{\ge n} Y \to \tau_{\ge n} \iota_{\nis} Y \to \tau_{\ge n} \iota_{\nis} \tau_{\le n-1} Y.
    \end{equation*}
    Since $\iota_{\nis}$ preserves ($n-1$)-truncated objects (this is proven in \cite[Proposition 6.3.1.9]{highertopoi}, since $\iota_{\nis}$
    is the right adjoint of a geometric morphism), the right term vanishes.
    Therefore we have an equivalence $\tau_{\ge n} \iota_{\nis} \tau_{\ge n} Y \cong \tau_{\ge n} \iota_{\nis} Y$
    in $\ZarShv{k}_{\ge n,*}$, and therefore also in $\ZarShv{k}_*$.

    Therefore, it suffices to show that $\iota_{\nis} \tau_{\ge n} \iota_{\AffSpc{1}} X$ is already 
    $n$-connective for every $X \in \MotSpc{k}_*$.
    Note first that by \cref{lemma:motivic:connective-cover-descent-to-a1}, there is a pointed motivic space
    $Y \coloneqq \tau_{\ge n} X$ with $\tau_{\ge n} \iota_{\AffSpc{1}} X \cong \iota_{\AffSpc{1}} Y$,
    and $Y$ is a pointed, $n$-connective motivic space.
    Note that $\iota_{\nis,\AffSpc{1}} Y$ is $n$-connective if and only if 
    $\tau_{\le n} \iota_{\nis,\AffSpc{1}} Y$ is $n$-connective.
    We know from \cref{lemma:motivic:truncation-cover-iota-if-connected}, 
    that $\tau_{\le n} \iota_{\nis,\AffSpc{1}} Y \cong \iota_{\nis} \tau_{\le n} \iota_{\AffSpc{1}} Y$.
    Therefore, we may assume that $\iota_{\AffSpc{1}} Y$ is $n$-connective and $n$-truncated, 
    i.e.\ $\iota_{\AffSpc{1}} Y \cong \iota_{\AffSpc{1}} K(A, n)$ for some $A \in \heart{\SH{k}}$.
    But now we have that 
    $\iota_{\nis,\AffSpc{1}} Y \cong \iota_{\nis,\AffSpc{1}} K(A, n) \cong K(\heart{\iota}_{\nis,\AffSpc{1}}, n)$ 
    by \cref{lemma:motivic:iota-a1-EM},
    which is in particular $n$-connective.
    This proves the lemma.
\end{proof}

\begin{cor}\label{cor:motivic:iota-pin-if-connected}
    Let $X \in \MotSpc{k}_*$ be a pointed motivic space, i.e.\ 
    If $n \ge 2$, there are equivalences
    \begin{equation*}
        \pi_n (\iota_{\nis,\AffSpc{1}} X) \cong \heart{\iota}_{\nis} \pi_n(\iota_{\AffSpc{1}} X) \cong \heart{\iota_{\nis, \AffSpc{1}}} \pi_n(X),
    \end{equation*}
    and if $n = 1$, we have an isomorphism
    \begin{equation*}
        \pi_1 (\iota_{\nis,\AffSpc{1}} X) \cong \iota_{\nis} \pi_1(\iota_{\AffSpc{1}} X),
    \end{equation*}
    where we view $\iota_{\nis}$ as a functor 
    \begin{equation*}
        \Grp{\Disc{\MotShv{k}}} \to \Grp{\Disc{\ZarShv{k}}}.
    \end{equation*}
\end{cor}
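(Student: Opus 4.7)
The plan is to reduce to the case where $X$ is connected and then identify the $n$-th homotopy sheaf of $\iota_{\nis,\AffSpc{1}} X$ through the Postnikov tower. Note first that for any pointed motivic space $X$ and $n \ge 1$, both sides of the asserted equivalence depend only on $\tau_{\ge 1} X$: on the source side by construction of $\pi_n$ in \cref{rmk:motivic:htpy-groups,rmk:motivic:htpy-group-1-abelian}; on the target side because $\tau_{\ge 1} \iota_{\nis,\AffSpc{1}} X \cong \iota_{\nis,\AffSpc{1}} \tau_{\ge 1} X$ by \cref{lemma:motivic:connected-cover-iota} and $\pi_n$ of a pointed object agrees with $\pi_n$ of its $1$-connective cover. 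Hence we may assume $X$ is connected. Observe also that $\iota_{\nis,\AffSpc{1}} X$ is then connected, since $\iota_{\nis}$ preserves the terminal object and hence $\pi_0(\iota_{\nis,\AffSpc{1}} X) \cong \iota_{\nis} \pi_0(\iota_{\AffSpc{1}} X) \cong *$.

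For $X$ connected and $n \ge 1$, I would apply the limit-preserving functor $\iota_{\nis}$ to the Postnikov fiber sequence
\[
    K(\pi_n(\iota_{\AffSpc{1}} X), n) \to \tau_{\le n} \iota_{\AffSpc{1}} X \to \tau_{\le n-1} \iota_{\AffSpc{1}} X
\]
in $\MotShv{k}_*$ (where for $n = 1$ the right-hand term is $*$). By \cref{lemma:motivic:truncation-cover-iota-if-connected}, and by $\iota_{\nis}(*) \cong *$ in the boundary case $n-1 = 0$, the two right-hand terms become $\tau_{\le n} \iota_{\nis,\AffSpc{1}} X$ and $\tau_{\le n-1} \iota_{\nis,\AffSpc{1}} X$, respectively. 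Comparison with the Postnikov fiber sequence of $\iota_{\nis,\AffSpc{1}} X$ produces a canonical equivalence
\begin{equation*}
    K(\pi_n(\iota_{\nis,\AffSpc{1}} X), n) \cong \iota_{\nis} K(\pi_n(\iota_{\AffSpc{1}} X), n). \tag{$\ast$}
\end{equation*}

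For $n \ge 2$, t-exactness of $\iota_{\AffSpc{1}}$ (\cref{lemma:motivic:homotopy-t-structure}) gives $\pi_n(\iota_{\AffSpc{1}} X) \cong \heart{\iota}_{\AffSpc{1}} \pi_n(X)$, so \cref{lemma:motivic:iota-a1-EM} identifies the right-hand side of $(\ast)$ as
\[
    \iota_{\nis} K(\pi_n(\iota_{\AffSpc{1}} X), n) \cong \iota_{\nis,\AffSpc{1}} K(\pi_n X, n) \cong K(\heart{\iota_{\nis,\AffSpc{1}}} \pi_n(X), n).
\]
Applying $\pi_n$ yields $\pi_n(\iota_{\nis,\AffSpc{1}} X) \cong \heart{\iota_{\nis,\AffSpc{1}}} \pi_n(X)$. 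The remaining identification with $\heart{\iota}_{\nis} \pi_n(\iota_{\AffSpc{1}} X)$ follows by functoriality, since the heart convention combined with t-exactness of $\iota_{\AffSpc{1}}$ gives $\heart{\iota}_{\nis} \circ \heart{\iota}_{\AffSpc{1}} = \heart{\iota_{\nis,\AffSpc{1}}}$ on $\heart{\SH{k}}$.

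For $n = 1$, the equivalence $(\ast)$ specialises to $B\pi_1(\iota_{\nis,\AffSpc{1}} X) \cong \iota_{\nis} B\pi_1(\iota_{\AffSpc{1}} X)$. Applying the loop functor $\Omega$, which preserves limits and hence commutes with $\iota_{\nis}$, the right-hand side becomes $\iota_{\nis} \Omega B\pi_1(\iota_{\AffSpc{1}} X) \cong \iota_{\nis} \pi_1(\iota_{\AffSpc{1}} X)$ as a sheaf of groups, while the left-hand side is $\pi_1(\iota_{\nis,\AffSpc{1}} X)$. This yields the desired identification. The main subtlety is bookkeeping between the possibly non-abelian group $\pi_1$ and the abelian $\pi_n$ for $n \ge 2$, and ensuring \cref{lemma:motivic:truncation-cover-iota-if-connected} is applicable at both the level $n$ and $n-1$ (the case $n-1 = 0$ being handled separately via connectedness).
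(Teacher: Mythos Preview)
Your proof is correct and follows essentially the same route as the paper. Both arguments reduce to the connected case via \cref{lemma:motivic:connected-cover-iota} and then use \cref{lemma:motivic:truncation-cover-iota-if-connected} to identify Postnikov truncations; the only difference is that you spell out the Postnikov fiber-sequence comparison explicitly (reproducing the mechanism already inside the proof of \cref{lemma:motivic:truncation-cover-iota-if-connected}), whereas the paper simply cites \cref{lemma:motivic:truncation-cover-iota-if-connected,lemma:motivic:connected-cover-iota} together and reads off the equivalence $\pi_n(\iota_{\nis,\AffSpc{1}} X) \cong \iota_{\nis}\pi_n(\iota_{\AffSpc{1}} X)$ directly.
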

\begin{proof}
    From \cref{lemma:motivic:truncation-cover-iota-if-connected,lemma:motivic:connected-cover-iota} 
    we are immediately able to conclude that 
    $\pi_n(\iota_{\nis, \AffSpc{1}} X) \cong \iota_{\nis} \pi_n(\iota_{\AffSpc{1}} X)$.
    Moreover, by definition $\iota_{\AffSpc{1}} \pi_n(X) = \pi_n(\iota_{\AffSpc{1}} X)$,
    therefore we also get an equivalence $\pi_n(\iota_{\nis, \AffSpc{1}} X) \cong \iota_{\nis, \AffSpc{1}} \pi_n(X)$.
    Since everything is in the heart of the standard t-structure, we get the desired equivalences.
    If $n = 1$, then the same proof works, but we ignore the hearts and view 
    $\iota_{\nis}$ as a functor $\Grp{\Disc{\MotShv{k}}} \to \Grp{\Disc{\ZarShv{k}}}$.
\end{proof}

\begin{lem} \label{lemma:motivic:iota-A1-nis-t-exact-p-adic}
    The functor $\iota_{\nis,\AffSpc{1}} \colon \SH{k} \to \ZarShvSp{k}$ is t-exact 
    for the $p$-adic t-structures.

    In particular, it induces a fully faithful exact functor 
    \begin{equation*}
        \pheart{\iota}_{\nis,\AffSpc{1}} \colon \pheart{\SH{k}} \to \pheart{\ZarShvSp{k}}.
    \end{equation*}
\end{lem}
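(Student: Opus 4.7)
The plan is to derive both halves of t-exactness from the single machine lemma \cref{lemma:t-struct:right-t-exact}, which propagates right t-exactness from the standard t-structures to the $p$-adic ones (and simultaneously hands us left t-exactness of the right adjoint).

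First I would record that $\iota_{\nis,\AffSpc{1}}$ is exact as a functor between stable $\infty$-categories (it is a right adjoint, hence preserves finite limits, which is enough in the stable setting) and that it is t-exact for the standard t-structures by \cref{lemma:motivic:iota-A1-nis-t-exact-std}. In particular $\iota_{\nis,\AffSpc{1}}$ is right t-exact for the standard t-structures, so \cref{lemma:t-struct:right-t-exact}, applied to $L = \iota_{\nis,\AffSpc{1}}$, immediately gives right t-exactness of $\iota_{\nis,\AffSpc{1}}$ for the $p$-adic t-structures. Concretely, if $X \in \tpcon{\SH{k}}$ then $X \sslash p \in \tcon{\SH{k}}$ by \cref{lemma:t-struct:mod-p-in-con-implies-in-tpcon}, whence $\iota_{\nis,\AffSpc{1}}(X) \sslash p \cong \iota_{\nis,\AffSpc{1}}(X \sslash p) \in \tcon{\ZarShvSp{k}}$, and another application of \cref{lemma:t-struct:mod-p-in-con-implies-in-tpcon} yields $\iota_{\nis,\AffSpc{1}}(X) \in \tpcon{\ZarShvSp{k}}$.

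For the other half, I would invoke the adjunction $L_{\nis,\AffSpc{1}} \dashv \iota_{\nis,\AffSpc{1}}$ from \cref{lemma:motivic:motivic-spaces-adjunction}. Since $\iota_{\nis,\AffSpc{1}}$ is (in particular) left t-exact for the standard t-structures, its left adjoint $L_{\nis,\AffSpc{1}}$ is right t-exact for the standard t-structures. Applying \cref{lemma:t-struct:right-t-exact} now to $L = L_{\nis,\AffSpc{1}}$ promotes right t-exactness of $L_{\nis,\AffSpc{1}}$ to the $p$-adic t-structures, and the second half of the same lemma then gives left t-exactness of the right adjoint $\iota_{\nis,\AffSpc{1}}$ for the $p$-adic t-structures. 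Combined with the first paragraph, this establishes the t-exactness statement.

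Finally, for the induced functor on hearts: $\iota_{\nis,\AffSpc{1}}$ is fully faithful by \cref{lemma:motivic:motivic-spaces-adjunction}, and we have just shown it is exact and t-exact for the $p$-adic t-structures. A fully faithful t-exact functor between stable $\infty$-categories equipped with t-structures restricts to a fully faithful exact functor between the abelian hearts (see e.g.\ \cite[Proposition 1.3.17(i)]{BeilinsonFaisceauxPerverse}), giving the claimed $\pheart{\iota}_{\nis,\AffSpc{1}} \colon \pheart{\SH{k}} \to \pheart{\ZarShvSp{k}}$. There is no genuine obstacle here; the work has all been packaged into \cref{lemma:t-struct:right-t-exact} and \cref{lemma:motivic:iota-A1-nis-t-exact-std}, so the proof is essentially a two-line application of those results.
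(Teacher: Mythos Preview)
Your proof is correct and follows essentially the same route as the paper: both apply \cref{lemma:t-struct:right-t-exact} once with $L = \iota_{\nis,\AffSpc{1}}$ (using standard t-exactness from \cref{lemma:motivic:iota-A1-nis-t-exact-std}) to get right $p$-adic t-exactness, and once with $L = L_{\nis,\AffSpc{1}}$ to get left $p$-adic t-exactness of the right adjoint, then invoke \cite[Proposition 1.3.17(i)]{BeilinsonFaisceauxPerverse} for the heart statement. The only cosmetic difference is that the paper cites \cite[Proposition 1.3.17(iii)]{BeilinsonFaisceauxPerverse} explicitly for the passage from left t-exactness of $\iota_{\nis,\AffSpc{1}}$ to right t-exactness of $L_{\nis,\AffSpc{1}}$, whereas you state this directly.
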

\begin{proof}
    By \cref{lemma:motivic:iota-A1-nis-t-exact-std}, $\iota_{\nis, \AffSpc{1}}$ is t-exact for the standard t-structures.
    Therefore $L_{\nis, \AffSpc{1}}$ is right t-exact for the standard t-structures by \cite[Proposition 1.3.17(iii)]{BeilinsonFaisceauxPerverse}.
    Now \cref{lemma:t-struct:right-t-exact} applied to $L = \iota_{\nis,\AffSpc{1}}$ 
    implies that $\iota_{\nis,\AffSpc{1}}$ is right t-exact,
    whereas the same lemma applied to $L = L_{\nis,\AffSpc{1}}$ and $R = \iota_{\nis,\AffSpc{1}}$ 
    implies that $\iota_{\nis,\AffSpc{1}}$ is left t-exact.
    This proves the first part of the lemma.

    The last part is \cite[Proposition 1.3.17(i)]{BeilinsonFaisceauxPerverse}.
\end{proof}

\subsection{\texorpdfstring{$\AffSpc{1}$}{A-1}-Invariance of the \texorpdfstring{$p$}{p}-Completion}
The category of motivic spaces is not an $\infty$-topos.
Nonetheless, it is presentable (see \cref{lemma:motivic:motivic-spaces-adjunction}).
Therefore, \cref{section:presentable-completion}
applies and gives us a notion of $p$-equivalence, and a $p$-completion functor
$\completebr{-} \colon \MotSpc{k} \to \MotSpc{k}$.
In this section we prove that at least for nilpotent motivic spaces,
the $p$-completion of the underlying Nisnevich sheaf is still $\AffSpc{1}$-invariant,
and agrees with the $p$-completion of $X$ in the category of pointed connected motivic spaces, see \cref{thm:motivic:iota-a1-completion}.

\begin{rmk}
    We will also show in \cref{lemma:motivic:iota-completion}
    that the $p$-completion of a nilpotent motivic space agrees with the 
    $p$-completion of the underlying Zariski sheaf.
    This is unclear for arbitrary Nisnevich sheaves, even if we assume nilpotence.
\end{rmk}

Recall that Asok-Fasel-Hopkins defined in \cite[Definition 3.3.1]{asok2022localization}
what a nilpotent motivic space is.

\begin{lem} \label{lemma:motivic:nilpotent}
    A pointed motivic space $X \in \MotSpc{k}_*$
    is nilpotent if and only if $\iota_{\AffSpc{1}} X$ is nilpotent as a Nisnevich sheaf
    in the sense of \cref{def:nilpotent:defn}.
\end{lem}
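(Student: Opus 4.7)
The plan is to unfold both definitions and observe that they match term-by-term through the fully faithful embedding $\iota_{\AffSpc{1}}$. Nilpotence of a pointed connected object in both settings (motivic spaces and $\infty$-topoi) is a condition on $\pi_1$ together with its action on $\pi_n$ for $n \ge 2$, so it suffices to check that both the homotopy sheaves and the action are preserved and reflected by $\iota_{\AffSpc{1}}$.

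First I would identify the relevant homotopy sheaves. By \cref{rmk:motivic:htpy-groups} the motivic $\pi_n(X)$ for $n \ge 2$ is defined precisely so that $\iota_{\AffSpc{1}}\pi_n(X) \cong \pi_n(\iota_{\AffSpc{1}}X)$ inside $\heart{\MotShvSp{k}}$, and by \cref{rmk:motivic:htpy-group-1-abelian} together with \cref{lemma:motivic:connective-cover-descent-to-a1} the same holds for $\pi_1$ viewed in $\Grp{\Disc{\MotShv{k}}}$. In particular there is nothing to compare: the homotopy group sheaves in the two definitions are literally identified via the fully faithful inclusion.

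Next I would verify that the $\pi_1$-actions coincide. The canonical action of $\pi_1$ on $\pi_n$ is built from finite limits in the pointed $\infty$-category (loop concatenation and conjugation in $\Omega X$, or equivalently the Whitehead bracket), all of which are preserved by $\iota_{\AffSpc{1}}$ since it is a right adjoint. Because $\iota_{\AffSpc{1}}$ is moreover fully faithful, an action sheaf on the motivic side is nilpotent (has a finite $\pi_1$-equivariant filtration with trivial successive quotients) if and only if its image in $\MotShv{k}$ is; the lower-central-series filtration of $\pi_1(X)$ is preserved and reflected analogously.

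Finally, passing from the pointwise criterion to the principal-refinement formulation of \cref{lemma:nilpotent:principal-refinement}, I would check that a principal refinement of the Postnikov tower of $X$ in $\MotSpc{k}_*$ maps under $\iota_{\AffSpc{1}}$ to a principal refinement of the Postnikov tower of $\iota_{\AffSpc{1}}X$ in $\MotShv{k}_*$, using that $\iota_{\AffSpc{1}}$ commutes with Postnikov truncations for pointed connected motivic spaces (\cref{lemma:motivic:connective-cover-descent-to-a1,lemma:motivic:truncation-cover-iota-if-connected}) and takes Eilenberg-MacLane objects to Eilenberg-MacLane objects (\cref{lemma:motivic:iota-a1-EM}); conversely, a principal refinement on the Nisnevich side necessarily has all its stages landing in the image of $\iota_{\AffSpc{1}}$, by the same two lemmas, giving a refinement back downstairs. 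The main obstacle I anticipate is purely bookkeeping: matching the precise wording of the Asok--Fasel--Hopkins definition in \cite{asok2022localization} to the $\infty$-topos-theoretic \cref{def:nilpotent:defn}, and making sure the fundamental-group action on itself by conjugation (needed for the $\pi_1$ part of nilpotence) is treated consistently.
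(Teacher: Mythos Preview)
Your outline handles one direction correctly: if $X$ is nilpotent in the Asok--Fasel--Hopkins sense then $\iota_{\AffSpc{1}}X$ is nilpotent in the topos sense, because the homotopy sheaves and the $\pi_1$-action are literally computed on the underlying Nisnevich sheaf, and an $\AffSpc{1}$-central series is in particular an ordinary central series.

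The converse, however, is not just bookkeeping. The Asok--Fasel--Hopkins notion of nilpotence for motivic spaces is \emph{$\AffSpc{1}$-nilpotence}: the central series for $\pi_1$ (and the equivariant filtrations witnessing nilpotence of the $\pi_1$-action on $\pi_n$) must have subquotients that are themselves strongly (hence strictly) $\AffSpc{1}$-invariant. An arbitrary central series for a strictly $\AffSpc{1}$-invariant sheaf of groups need not have $\AffSpc{1}$-invariant terms, so knowing that $\iota_{\AffSpc{1}}X$ admits \emph{some} central series does not immediately produce one of the required form. This is exactly why the paper invokes \cite[Proposition~3.2.3]{asok2022localization}: that result shows that a strictly $\AffSpc{1}$-invariant sheaf of groups which is nilpotent in the ordinary sense is automatically $\AffSpc{1}$-nilpotent (and similarly for actions). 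Without this input your argument breaks at the step ``a principal refinement on the Nisnevich side necessarily has all its stages landing in the image of $\iota_{\AffSpc{1}}$'': the intermediate stages $X_{n,k}$ of an arbitrary refinement are built from Eilenberg--MacLane objects $K(A_{n,k},n{+}1)$ with $A_{n,k}$ not known to be strictly $\AffSpc{1}$-invariant, so neither \cref{lemma:motivic:iota-a1-EM} nor \cref{lemma:motivic:connective-cover-descent-to-a1} applies to them.
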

\begin{proof}
    One direction is clear from the definitions, since the homotopy groups (and the action of $\pi_1$) 
    of a motivic space are the same as the homotopy groups (and the action of $\pi_1$) of the underlying Nisnevich sheaf of anima.
    For the other direction one uses \cite[Proposition 3.2.3]{asok2022localization} (and its variant 
    for actions of $\pi_1$ on $\pi_n$) to conclude that every nilpotent Nisnevich sheaf of groups 
    which is strictly $\AffSpc{1}$-invariant is already $\AffSpc{1}$-nilpotent.
\end{proof}

\begin{lem} \label{lemma:nisnevich:locally-of-uniform-homotopy-dimension}
    $\MotShv{k}$ and $\ZarShv{k}$ are locally of finite uniform homotopy dimension.
\end{lem}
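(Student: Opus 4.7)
The plan is to verify the three ingredients of \cref{def:topos:locally-finite-uniform-htpy-dim} for each of the two topoi. For $\MotShv{k}$, I would take the conservative family $\Cat S$ of points indexed by pairs $(S, s)$ with $S \in \smooth{k}$ connected and $s \in S$, whose stalks are $s^*\mathcal F = \mathcal F(S_s^h) = \colimil{U \in \mathcal I_s} \mathcal F(U)$, where $\mathcal I_s$ is the filtered category of étale neighborhoods $s \to U \to S$; this family is conservative as recalled at the start of \cref{section:motivic}. For $\ZarShv{k}$, I would replace étale neighborhoods by Zariski neighborhoods of $s$ in $S$; the resulting family is again conservative. In both cases I would define $\htpydim(s) \coloneqq \dim(S)$, which is a well-defined nonnegative integer since $S$ is a quasi-compact smooth $k$-scheme, and every $U \in \mathcal I_s$ satisfies $\dim(U) \le \dim(S) = \htpydim(s)$.

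The core step is the connectivity bound: for every $U \in \mathcal I_s$ and every $k$-connective $\mathcal F$ in the ambient topos, I need $\mathcal F(U)$ to be $(k-\dim(S))$-connective. In the Nisnevich case I would use the restriction functor $f_U \colon \MotShv{k} \to \ShvTop{\nis}{U_{et}}$ to the small Nisnevich site, which (as already noted in the proof of \cref{lemma:nisnevich:postnikov-complete}) commutes with limits and truncations and hence with homotopy sheaves, so that $f_U$ is t-exact. By \cite[Theorem 3.7.7.1]{sag}, $\ShvTop{\nis}{U_{et}}$ has homotopy dimension at most $\dim(U)$, so global sections on $U_{et}$ preserve $(k-\dim(U))$-connectivity. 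Since the global sections of $f_U \mathcal F$ in $\ShvTop{\nis}{U_{et}}$ tautologically compute $\mathcal F(U)$, we obtain $\mathcal F(U) \in \tcon[k-\dim(U)]{\An} \subseteq \tcon[k-\dim(S)]{\An}$, which is the required bound.

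For $\ZarShv{k}$, the argument is entirely parallel: one uses the analogous restriction functor $f_U \colon \ZarShv{k} \to \ShvTop{\zar}{U_{\zar}}$ to the small Zariski site of $U$, which is t-exact by the same reasoning as in \cref{lemma:zar:postnikov-complete}, and then invokes the homotopy dimension bound $\dim(U)$ for $\ShvTop{\zar}{U_{\zar}}$ from \cite[Corollary 7.2.4.17]{highertopoi}.

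The only potentially subtle point is the passage from a bound on the homotopy dimension of the small site on $U$ to a bound on the connectivity of evaluation at $U$ in the big site, but this is purely formal: t-exactness of $f_U$ together with the identification $(f_U \mathcal F)(U) = \mathcal F(U)$ reduces one immediately to the other. No further geometric input is needed beyond Lurie's two theorems.
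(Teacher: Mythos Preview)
Your proposal is correct and follows essentially the same approach as the paper: choose the standard conservative families of points given by (Nisnevich, resp.\ Zariski) local schemes, set $\htpydim(s) = \dim(S)$, and deduce the required connectivity bound on $\mathcal F(U)$ by restricting to the small site over $U$ and invoking \cite[Theorem 3.7.7.1]{sag} (Nisnevich case) resp.\ \cite[Corollary 7.2.4.17]{highertopoi} (Zariski case), using that the restriction $f_U$ preserves homotopy sheaves. The only cosmetic difference is that the paper observes $\dim(U) = \dim(S)$ (since $S$ is connected and $U \to S$ is étale) rather than your $\dim(U) \le \dim(S)$, but either suffices.
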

\begin{proof}
    Let $\Cat S$ be the collection of all points of $\MotShv{k}$,
    and $\htpydim \colon \Cat S \to \N$ be the function
    $S_s^h \mapsto \dim(S)$.

    Let $F \in \MotShv{k}$ be $k$-connective,
    $S_s^h$ be a point and $U \in \mathcal I_s$.
    Then $U \to S$ is an étale neighborhood of $s$, and thus $\dim(U) = \dim(S)$ (by the assumption
    on the connectedness of $S$).
    Denote by $\topos{X}_U$ the category of sheaves
    on the site of étale morphisms over $U$ with Nisnevich covers.
    There is a functor $f_U \colon \MotShv{k} \to \topos{X}_U$
    given by restriction.
    Note that $F(U) \cong (f_U F)(U)$.
    Since by \cite[Theorem 3.7.7.1]{sag} $\topos{X}_U$ has homotopy dimension $\le \dim(S)$,
    we conclude that $F(U)$ is $k-\htpydim(s)$-connective (note that $f_UF$ is still $k$-connective,
    as $f_U$ commutes with homotopy objects, to prove this, one argues exactly as in the Zariski case,
    see the proof of \cref{lemma:zar:postnikov-complete}).

    For the Zariski $\infty$-topos one argues similar, noting that the points of the Zariski $\infty$-topos
    are given by the local schemes $S_s$.
    To see that the small Zariski $\infty$-topos over a smooth scheme $U$ has homotopy dimension $\le \dim(U)$,
    one uses \cite[Corollary 7.2.4.17]{highertopoi}.
\end{proof}

\begin{cor} \label{cor:nisnevich:postnikov-completion}
    Let $X \in \MotShv{k}_*$ or $X \in \ZarShv{k}_*$ be nilpotent.
    Then $\complete{X} = \limil{n} \completebr{\tau_{\le n}X}$.
\end{cor}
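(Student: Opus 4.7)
The plan is to simply combine the two ingredients already established. By \cref{lemma:nisnevich:locally-of-uniform-homotopy-dimension}, both $\MotShv{k}$ and $\ZarShv{k}$ are $\infty$-topoi locally of finite uniform homotopy dimension. In particular, they satisfy the hypotheses of \cref{thm:topos:p-comp-commutes-with-post-tower}.

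Since by assumption $X$ is a nilpotent sheaf in $\MotShv{k}_*$ or $\ZarShv{k}_*$, I can apply \cref{thm:topos:p-comp-commutes-with-post-tower} directly to $X$ in the respective $\infty$-topos. This yields the desired equivalence
\begin{equation*}
    \complete{X} \cong \limil{n} \completebr{\tau_{\le n} X}.
\end{equation*}

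There is no real obstacle here: both hypotheses (nilpotence of $X$, and local finiteness of uniform homotopy dimension of the ambient $\infty$-topos) are available, so the corollary is a direct specialization of the general topos-theoretic result. The only minor point to be aware of is that the Postnikov truncations in $\MotShv{k}$ and $\ZarShv{k}$ are taken in the respective $\infty$-topoi, which is consistent with the framework of \cref{thm:topos:p-comp-commutes-with-post-tower}.
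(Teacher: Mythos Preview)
Your proof is correct and follows the same approach as the paper's own proof. The only minor omission is that the paper additionally cites \cref{lemma:zar:postnikov-complete} and \cref{lemma:nisnevich:postnikov-complete} to record that both topoi are Postnikov-complete (hence hypercomplete), which is the standing hypothesis of the section containing \cref{thm:topos:p-comp-commutes-with-post-tower}; you may wish to mention this to make the verification of hypotheses fully explicit.
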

\begin{proof}
    This is \cref{thm:topos:p-comp-commutes-with-post-tower},
    together with \cref{lemma:nisnevich:locally-of-uniform-homotopy-dimension}.
    Here we use that the Zariski and Nisnevich topoi are Postnikov-complete,
    see \cref{lemma:zar:postnikov-complete} and \cref{lemma:nisnevich:postnikov-complete}.
\end{proof}

\begin{prop} \label{thm:motivic:completion-a1-invariant}
    Let $X \in \MotSpc{k}_*$ be nilpotent.
    Then the $p$-completion $\completebr{\iota_{\AffSpc{1}} X}$ is an $\AffSpc{1}$-invariant sheaf.
\end{prop}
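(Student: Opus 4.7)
The plan is to reduce via the Postnikov tower to the special case of Eilenberg-MacLane objects $K(A,n)$ with $A$ strictly $\AffSpc{1}$-invariant, and then for such objects to use the formula $\completebr{K(A,n)} \cong \tau_{\ge 1}\pLoop\completebr{\Sigma^n A}$ together with the fact that $\SH{k}$ is closed under limits in $\MotShvSp{k}$. Throughout we exploit that $\MotSpc{k}_* \subset \MotShv{k}_*$ is closed under limits, since it is the essential image of the right adjoint $\iota_{\AffSpc{1}}$ from \cref{lemma:motivic:motivic-spaces-adjunction}.

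First, by \cref{lemma:motivic:nilpotent} the sheaf $\iota_{\AffSpc{1}} X$ is nilpotent in $\MotShv{k}_*$, so \cref{lemma:nilpotent:principal-refinement} provides a principal refinement $(Y_{n,k}, A_{n,k+1})$ of its Postnikov tower with fiber sequences $Y_{n,k+1} \to Y_{n,k} \to K(A_{n,k+1}, n+1)$ in $\MotShv{k}_*$. The $A_{n,k+1}$ are subquotients of $\pi_n(\iota_{\AffSpc{1}} X) \cong \heart{\iota}_{\AffSpc{1}}\pi_n(X)$, and since $\heart{\iota}_{\AffSpc{1}} \colon \heart{\SH{k}} \hookrightarrow \heart{\MotShvSp{k}}$ is a fully faithful exact functor of abelian categories (\cref{lemma:motivic:homotopy-t-structure}), its essential image is closed under subquotients; hence $A_{n,k+1} \in \heart{\SH{k}}$.

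By \cref{cor:nisnevich:postnikov-completion} and closure of $\MotSpc{k}_*$ under limits, it suffices to prove that every $\completebr{\tau_{\le n}(\iota_{\AffSpc{1}}X)} = \completebr{Y_{n+1,0}}$ is $\AffSpc{1}$-invariant. \cref{lemma:postnikov-fiber-sequence-completion} yields
\begin{equation*}
\completebr{Y_{n,k+1}} \cong \tau_{\ge 1}\Fib{\completebr{Y_{n,k}} \to \completebr{K(A_{n,k+1}, n+1)}},
\end{equation*}
so by a double induction on $n$ and $k$, using that $\MotSpc{k}_*$ is closed under limits and, via \cref{lemma:motivic:connective-cover-descent-to-a1}, under $\tau_{\ge 1}$, we reduce to showing that $\completebr{K(A,m)}$ is $\AffSpc{1}$-invariant for every $A \in \heart{\SH{k}}$ and $m \ge 1$.

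For this last step, \cref{cor:completion-of-EM-space} gives $\completebr{K(A,m)} \cong \tau_{\ge 1}\pLoop\completebr{\Sigma^m A}$. Since $A \in \heart{\SH{k}}$ we have $\Sigma^m A \in \SH{k}$, and the $p$-completion $\completebr{\Sigma^m A} = \limil{k}(\Sigma^m A)\sslash p^k$ remains in $\SH{k}$ because $\SH{k}$ is closed under limits and cofibers in $\MotShvSp{k}$ (being the essential image of the right adjoint $\iota_{\AffSpc{1}}$ on stabilizations). Applying $\pLoop$ (which restricts to a functor $\SH{k} \to \MotSpc{k}_*$ by compatibility of stabilizations with the $\AffSpc{1}$-localization, using \cref{lemma:adjoints-on-stabilization}) and then $\tau_{\ge 1}$ keeps us inside $\MotSpc{k}_*$. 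The main subtlety of the argument is ensuring that the coefficients $A_{n,k+1}$ of the principal refinement are strictly $\AffSpc{1}$-invariant (so that the Eilenberg-MacLane objects involved actually lie in $\MotSpc{k}_*$); once this is arranged, the inductive assembly is routine.
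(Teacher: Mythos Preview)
Your overall strategy matches the paper's: reduce via the Postnikov tower and a principal refinement to Eilenberg--MacLane objects, then handle those using \cref{cor:completion-of-EM-space} and the closure of $\SH{k}$ under limits. However, there is a genuine gap at the point where you argue that the coefficients $A_{n,k+1}$ are strictly $\AffSpc{1}$-invariant.

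You take the principal refinement from \cref{lemma:nilpotent:principal-refinement} (the general topos statement) and then assert that because $\heart{\iota}_{\AffSpc{1}}$ is a fully faithful exact functor of abelian categories, its essential image is closed under subquotients. This implication is false in general: for instance, the forgetful functor from $\Q$-vector spaces to abelian groups is fully faithful and exact, yet $\Z \subset \Q$ is a subobject not in the essential image. In the present situation the same issue arises: an arbitrary Nisnevich subsheaf of a strictly $\AffSpc{1}$-invariant sheaf need not be strictly $\AffSpc{1}$-invariant, so a generic $\pi_1$-central series for $\pi_n(\iota_{\AffSpc{1}}X)$ gives subquotients $A_{n,k+1}$ that a priori live only in $\heart{\MotShvSp{k}}$.

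The paper circumvents this by invoking \cite[Theorem 3.3.13]{asok2022localization} directly: that result constructs a principal refinement \emph{in the category of motivic spaces}, with $A_{n,k+1} \in \heart{\SH{k}}$ by construction; applying $\iota_{\AffSpc{1}}$ (and \cref{lemma:motivic:iota-a1-EM}) then gives the refinement in $\MotShv{k}_*$ needed for \cref{lemma:postnikov-fiber-sequence-completion}. If you want to stay with your approach, you would need to replace the formal subquotient claim by the substantive input that a nilpotent $\pi_1$-action on a strictly $\AffSpc{1}$-invariant sheaf admits a central series with strictly $\AffSpc{1}$-invariant subquotients; this is the content of \cite[Proposition 3.2.3]{asok2022localization} (and its variant for higher $\pi_n$), which is what underlies the motivic refinement theorem.
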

\begin{proof}
    By \cref{cor:nisnevich:postnikov-completion} there are equivalences
    $\completebr{\iota_{\AffSpc{1}}X} \cong \completebr{\limil{n} \tau_{\le n}\iota_{\AffSpc{1}}X} \cong \limil{n} \completebr{\tau_{\le n}\iota_{\AffSpc{1}}X}$.
    Since the limit of $\AffSpc{1}$-invariant sheaves is $\AffSpc{1}$-invariant (as the inclusion $\iota_{\AffSpc{1}}$ is a right adjoint,
    i.e.\ commutes with limits),
    we can assume that $X$ is $n$-truncated (i.e. $\iota_{\AffSpc{1}} X$ is $n$-truncated).
    We proceed by induction on $n$, the case $n = 0$ being trivial.
    Using \cite[Theorem 3.3.13]{asok2022localization}
    the Postnikov tower of $X$ has a principal refinement consisting
    of (nilpotent) motivic spaces $X_{n, k}$,
    and sheaves of spectra $A_{n, k+1} \in \heart{\SH{k}}$,
    such that there are fiber sequences 
    \begin{equation*}
        X_{n, k+1} \to X_{n, k} \to K(A_{n, k+1}, n+1)
    \end{equation*}
    and equivalences $X_{n, 0} \cong \tau_{\le n} X$.
    Applying $\iota_{\AffSpc{1}}$ to the fiber sequences gives the fiber sequence 
    \begin{equation*}
        \iota_{\AffSpc{1}}X_{n, k+1} \to \iota_{\AffSpc{1}}X_{n, k} \to K(\heart{\iota}_{\AffSpc{1}}A_{n, k+1}, n+1),
    \end{equation*}
    where we used \cref{lemma:motivic:iota-a1-EM}.
    Note that by \cref{lemma:motivic:nilpotent}, all of those sheaves are nilpotent.

    We can thus proceed by induction on $0 \le k \le m_n$.
    We know that $\completebr{\iota_{\AffSpc{1}}X_{n, 0}} \cong \completebr{\tau_{\le n} \iota_{\AffSpc{1}}X}$
    is $\AffSpc{1}$-invariant by induction (on $n$).
    Thus suppose we have shown that $\completebr{\iota_{\AffSpc{1}}X_{n, k}}$ is $\AffSpc{1}$-invariant, $k < m_n$.
    Using the above fiber sequence, we can compute the $p$-completion using \cref{lemma:postnikov-fiber-sequence-completion}:
    \begin{equation*}
        \completebr{\iota_{\AffSpc{1}}X_{n,k+1}} = \tau_{\ge 1} \Fib{\completebr{\iota_{\AffSpc{1}}X_{n,k}} \to \completebr{K(\heart{\iota}_{\AffSpc{1}}A_{n,k}, n+1)}}.
    \end{equation*}
    Since fibers and connected covers (\cref{lemma:motivic:connective-cover-descent-to-a1}) of $\AffSpc{1}$-invariant sheaves are $\AffSpc{1}$-invariant,
    we can reduce  to the case $X = K(\heart{\iota}_{\AffSpc{1}}A, n)$ for some $A \in \heart{\SH{k}}$ and $n \ge 2$.
    
    But then $\complete{X} \cong \tau_{\ge 1} \pLoop \left(\completebr{\Sigma^n \heart{\iota}_{\AffSpc{1}}A}\right)$.
    Since connected covers of $\AffSpc{1}$-invariant sheaves are $\AffSpc{1}$-invaraint (again by \cref{lemma:motivic:connective-cover-descent-to-a1}),
    it suffices to show that $\completebr{\heart{\iota}_{\AffSpc{1}} A}$ is $\AffSpc{1}$-invariant.
    But this is just a limit of $\AffSpc{1}$-invariant sheaves of spectra,
    and therefore $\AffSpc{1}$-invariant (as $\iota_{\AffSpc{1}}$ is a right adjoint).
\end{proof}

\begin{rmk}
    We now want to show that the $p$-completion of a nilpotent motivic space is the same as the 
    $p$-completion of the underlying Nisnevich sheaf.
    In order to do this, 
    one needs to show that the motivic space $L_{\AffSpc{1}} (\completebr{\iota_{\AffSpc{1} X}})$
    is again $p$-complete.
    We would like to argue again using the principal refinement of the Postnikov tower,
    and write this motivic space as a repeated limit of $p$-completions of Eilenberg Mac-Lane spaces.
    Unfortunately, this approach has a major drawback: By calculating $p$-completions on the Postnikov tower,
    connective covers will appear. This introduces a problem: Since the category of motivic spaces is not 
    an $\infty$-topos, we cannot use the arguments from \cref{section:topos} to conclude
    that the connective cover of a $p$-complete space is again $p$-complete,
    since it is not at all clear that the $p$-completion of motivic spaces respects $\pi_0$.
    We can correct this error by working in the category of connected motivic spaces (in particular,
    every nilpotent motivic space is connected).
    This also leads to the following conjecture:
\end{rmk}

\begin{conj} \label{conj:motivic:conjecture-A}
    Let $X \in \MotSpc{k}_*$ be a pointed motivic space.
    If $X$ is $p$-complete, then also $\tau_{\ge 1} X$ is $p$-complete.
\end{conj}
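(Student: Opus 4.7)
The plan is to reduce the conjecture to the claim that every discrete motivic space is $p$-complete. Assuming the claim, given a $p$-complete $X \in \MotSpc{k}_*$, the fiber sequence $\tau_{\ge 1} X \to X \to \tau_{\le 0} X$ in $\MotSpc{k}_*$ exhibits $\tau_{\ge 1} X$ as a limit of two $p$-complete motivic spaces, namely $X$ (by hypothesis) and the discrete $\tau_{\le 0} X$ (by the claim). Since $\complete{\MotSpc{k}} \hookrightarrow \MotSpc{k}$ is a right adjoint and hence closed under limits, exactly as in \cref{lemma:topos:limit-of-p-complete}, $\tau_{\ge 1} X$ is then $p$-complete. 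The claim itself is equivalent to the assertion that $p$-equivalences in $\MotSpc{k}$ are preserved by $\pi_0$, the left adjoint to $\Disc{\MotSpc{k}} \hookrightarrow \MotSpc{k}$: for any discrete $D$ one has $\Map{\MotSpc{k}}(Y, D) \simeq \Map{\Disc{\MotSpc{k}}}(\pi_0(Y), D)$, so $D$ is $p$-complete if and only if $\pi_0$ inverts $p$-equivalences.

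To prove this motivic $\pi_0$-preservation, I would imitate the topos-theoretic argument of \cref{lemma:peq-respects-pi0}. Introduce the motivic free $\finfld{p}$-vector space functor
\begin{equation*}
    \finfld{p}_{\AffSpc{1}}[-] \colon \Disc{\MotSpc{k}} \to \heart{\SH{k}}
\end{equation*}
as the left adjoint to the forgetful functor from $\finfld{p}$-module objects in $\heart{\SH{k}}$ (i.e., $p$-torsion strictly $\AffSpc{1}$-invariant abelian sheaves) to their underlying discrete $\AffSpc{1}$-invariant sheaves of sets. By uniqueness of left adjoints, the composition $\tau_{\le 0} \circ (-)/p \circ \pSus \colon \MotSpc{k} \to \heart{\SH{k}}$ factors as $\finfld{p}_{\AffSpc{1}}[\pi_0(-)]$. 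A $p$-equivalence $f$ in $\MotSpc{k}$ is therefore sent to an isomorphism by $\finfld{p}_{\AffSpc{1}}[\pi_0(-)]$, and so conservativity of $\finfld{p}_{\AffSpc{1}}[-]$ would yield that $\pi_0(f)$ is an isomorphism, as required.

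The hard part will be the conservativity of $\finfld{p}_{\AffSpc{1}}[-]$. In the $\infty$-topos setting this is \cref{prop:free:conservativity}, whose proof uses topos-theoretic structure (and in particular the existence of enough points) not directly available in $\MotSpc{k}$, which is not an $\infty$-topos. A natural attempt is to compare $\finfld{p}_{\AffSpc{1}}[D]$ with its Nisnevich analogue $\finfld{p}_{\nis}[D]$ via the adjunction $L_{\AffSpc{1}} \dashv \iota_{\AffSpc{1}}$: if $\finfld{p}_{\nis}[D]$ happened to be strictly $\AffSpc{1}$-invariant for every discrete $\AffSpc{1}$-invariant sheaf $D$, then $\finfld{p}_{\AffSpc{1}}[D] \cong \finfld{p}_{\nis}[D]$ and conservativity would reduce to the Nisnevich case already handled by \cref{prop:free:conservativity}. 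This, however, is a substantive motivic statement about the $\AffSpc{1}$-invariance of free abelian sheaves generated by $\AffSpc{1}$-invariant sheaves of sets; it is plausibly approachable via Morel's strict $\AffSpc{1}$-invariance theorems \cite{morel2012a1} or via the Gabber presentation lemma (both of which already play a role elsewhere in the paper), but it is by no means formal, and this missing motivic input is precisely what keeps the statement a conjecture.
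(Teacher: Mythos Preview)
The statement is a \emph{conjecture} in the paper; there is no proof to compare against. The remark immediately preceding \cref{conj:motivic:conjecture-A} explains the obstruction in precisely your terms: since $\MotSpc{k}$ is not an $\infty$-topos, the argument of \cref{cor:connected-cover-of-complete-is-complete} does not apply, and ``it is not at all clear that the $p$-completion of motivic spaces respects $\pi_0$.'' Your reduction to exactly this $\pi_0$-preservation statement, and your further reduction to conservativity of a motivic free $\finfld{p}$-module functor modelled on \cref{lemma:peq-respects-pi0} and \cref{prop:free:conservativity}, is the natural line of attack and is fully consistent with the paper's own diagnosis. Your honest identification of the missing motivic input --- strict $\AffSpc{1}$-invariance of $\finfld{p}_{\nis}[D]$ for $\AffSpc{1}$-invariant discrete $D$, needed to identify $\finfld{p}_{\AffSpc{1}}[-]$ with its Nisnevich analogue --- is precisely why the statement is left open.

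One additional subtlety in your first reduction deserves a flag. You invoke a fiber sequence $\tau_{\ge 1} X \to X \to \tau_{\le 0} X$ inside $\MotSpc{k}_*$, but the paper's $\tau_{\ge 1}$ (\cref{lemma:motivic:connective-cover-descent-to-a1}) is the Nisnevich connective cover restricted to motivic spaces, so the third term is $\tau_{\le 0}^{\nis}(\iota_{\AffSpc{1}} X)$ in $\MotShv{k}_*$. For your limit-closure argument to run in $\MotSpc{k}_*$ you need this discrete Nisnevich sheaf to already be $\AffSpc{1}$-invariant, and that is not automatic: $\AffSpc{1}$-invariant sheaves are closed under limits (since $\iota_{\AffSpc{1}}$ is a right adjoint) but not under cofibers, so knowing that $\tau_{\ge 1}^{\nis}\iota_{\AffSpc{1}} X$ and $\iota_{\AffSpc{1}} X$ are $\AffSpc{1}$-invariant does not force the same for $\tau_{\le 0}^{\nis}\iota_{\AffSpc{1}} X$. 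If instead you use the categorical $0$-truncation internal to $\MotSpc{k}$, it is not clear that it fits into a fiber sequence with the Nisnevich $\tau_{\ge 1}$. So your outline really needs a second, probably milder, motivic input beyond the conservativity of $\finfld{p}_{\AffSpc{1}}[-]$.
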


We now introduce the category of pointed connected motivic spaces:
\begin{defn}
    Write $\MotSpc{k}_{\ge 1,*}$ for the category of \emph{pointed connected motivic spaces},
    i.e.\ the full subcategory of $\MotSpc{k}_*$ spanned by objects $X$
    such that the underlying Nisnevich sheaf $\iota_{\AffSpc{1}} X$ is connected (i.e.\ $\pi_0(\iota_{\AffSpc{1}} X) = *$).
\end{defn}

\begin{rmk}
    Note that we have homotopy sheaves $\pi_n \colon \MotSpc{k}_{\ge 1,*} \to \heart{\SH{k}}$
    for $n \ge 2$, and $\pi_1 \colon \MotSpc{k}_{\ge 1,*} \to \GrpStr{k}$.
\end{rmk}

\begin{rmk}
    Note that $\MotSpc{k}_{\ge 1,*}$ is presentable:
    It is stable under all colimits in $\MotSpc{k}_*$, and
    is the preimage of the terminal category $*$ under the accessible functor 
    $\pi_0 \circ \iota_{\AffSpc{1}} \colon \MotSpc{k}_* \to \Disc{\MotShv{k}}$,
    thus also accessible by \cite[Proposition 5.4.6.6]{highertopoi}.
    Hence, we can apply \cref{section:presentable-completion}
    and get a $p$-completion functor on this category.
\end{rmk}

Using the presentability of $\MotSpc{k}_{\ge 1,*}$ and the observation 
that the inclusion $\MotSpc{k}_{\ge 1,*} \to \MotSpc{k}_*$
preserves colimits (this follows from the fact that $L_{\AffSpc{1}}$ preserves connected objects),
the adjoint functor theorem gives us a right adjoint.
\begin{defn}
    Write $\iota_{\ge 1} \colon \MotSpc{k}_{\ge 1,*} \rightleftarrows \MotSpc{k}_* \colon \tau_{\ge 1}$
    for the canonical adjunction.
    We define as shorthand the following notations:
    \begin{align*}
        \iota_{\AffSpc{1},\ge 1} &\coloneqq \iota_{\AffSpc{1}} \iota_{\ge 1} \colon \MotSpc{k}_{\ge 1,*} \to \MotShv{k}_*, \text{ and} \\
        \iota_{\nis,\AffSpc{1},\ge 1} &\coloneqq \iota_{\nis} \iota_{\AffSpc{1}} \iota_{\ge 1} \colon \MotSpc{k}_{\ge 1,*} \to \ZarShv{k}_*.
    \end{align*}
\end{defn}

\begin{lem} \label{lemma:motivic:connected-stabilization}
    We have an equivalence of categories 
    $\SH{k} \cong \Stab{\MotSpc{k}_{\ge 1,*}}$.

    In particular, we have a commuting diagram 
    \begin{center}
        \begin{tikzcd}
            \MotSpc{k}_{\ge 1,*} \arrow[r, hook] \arrow[dr, "\Sus"] &\MotSpc{k}_* \arrow[d, "\Sus"] \\
            &\SH{k}.
        \end{tikzcd}
    \end{center}
    Thus, if $f \colon X \to Y$ is a morphism of connected pointed motivic spaces,
    then it is a $p$-equivalence if and only if the underlying morphism of pointed motivic spaces $\iota_{\ge 1} f$ is a $p$-equivalence.
\end{lem}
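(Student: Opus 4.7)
The plan is to construct an explicit equivalence between the $\Omega$-spectrum descriptions of $\Stab{\MotSpc{k}_*}$ and $\Stab{\MotSpc{k}_{\ge 1,*}}$, based on the key identity $\Omega X \simeq \Omega \tau_{\ge 1} X$ for every $X \in \MotSpc{k}_*$. To prove this identity, recall that $\tau_{\ge 1}$ restricts to a functor $\MotSpc{k}_* \to \MotSpc{k}_*$ by \cref{lemma:motivic:connective-cover-descent-to-a1}, and finite limits in $\MotSpc{k}_*$ are computed as in $\MotShv{k}_*$ via the inclusion $\iota_{\AffSpc{1}}$. The canonical fiber sequence $\tau_{\ge 1} X \to X \to \tau_{\le 0} X$ (in $\MotShv{k}_*$) yields on loops a fiber sequence $\Omega \tau_{\ge 1} X \to \Omega X \to \Omega \tau_{\le 0} X$, and $\Omega$ of a pointed discrete object is contractible. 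Since $\tau_{\ge 1}$ is right adjoint to $\iota_{\ge 1}$, the loop functor in $\MotSpc{k}_{\ge 1,*}$ is $\tau_{\ge 1}\Omega\iota_{\ge 1}$, and the identity above gives $\tau_{\ge 1} \Omega \iota_{\ge 1} \tau_{\ge 1} \simeq \tau_{\ge 1} \Omega$ as endofunctors of $\MotSpc{k}_*$.

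Given this, the equivalence is obtained from mutually inverse assignments on $\Omega$-spectrum sequences. An $\Omega$-spectrum $(E_n)$ in $\MotSpc{k}_*$ with $E_n \simeq \Omega E_{n+1}$ is sent to $(\tau_{\ge 1} E_n)$, which is an $\Omega$-spectrum in $\MotSpc{k}_{\ge 1,*}$ because $\tau_{\ge 1} \Omega \tau_{\ge 1} E_{n+1} \simeq \tau_{\ge 1} \Omega E_{n+1} \simeq \tau_{\ge 1} E_n$. Conversely, a spectrum $(F_n)$ in $\MotSpc{k}_{\ge 1,*}$ (satisfying $F_n \simeq \tau_{\ge 1} \Omega F_{n+1}$) is sent to $(\Omega F_{n+1})$, which is a spectrum in $\MotSpc{k}_*$ since $\Omega(\Omega F_{n+2}) \simeq \Omega F_{n+1}$, again by the identity. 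The round-trips give natural equivalences $(\tau_{\ge 1} E_n) \mapsto (\Omega \tau_{\ge 1} E_{n+1}) \simeq (E_n)$ and $(F_n) \mapsto (\tau_{\ge 1} \Omega F_{n+1}) = (F_n)$, establishing $\SH{k} \simeq \Stab{\MotSpc{k}_{\ge 1,*}}$.

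For the commuting triangle, observe that $\Sigma Y = * \sqcup_Y *$ is always connected (it is a colimit of pointed objects), so the suspension functor in $\MotSpc{k}_{\ge 1,*}$ agrees with the ambient suspension in $\MotSpc{k}_*$ applied to connected objects. Under the equivalence above, the suspension spectrum of $X \in \MotSpc{k}_{\ge 1,*}$ is thus identified with $\Sus(\iota_{\ge 1} X) \in \SH{k}$. The statement about $p$-equivalences then drops out: a map $f$ in $\MotSpc{k}_{\ge 1,*}$ is a $p$-equivalence iff $\Sus f$ is a $p$-equivalence in $\Stab{\MotSpc{k}_{\ge 1,*}} \simeq \SH{k}$, iff $\Sus(\iota_{\ge 1} f)$ is a $p$-equivalence, iff $\iota_{\ge 1} f$ is a $p$-equivalence in $\MotSpc{k}_*$ by definition.

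The principal difficulty is upgrading the object-level inverses to an actual equivalence of $\infty$-categories — assembling the assignments into functors carrying coherent natural equivalences. The cleanest way to avoid a manual verification is to invoke the description of $\Stab{-}$ as the sequential limit in $\Cat{Cat}_\infty$ along $\Omega$, and use the natural equivalence $\tau_{\ge 1}\Omega\iota_{\ge 1} \simeq \tau_{\ge 1}\Omega \simeq \Omega\tau_{\ge 1}$ to exhibit the two limit diagrams as mutually cofinal.
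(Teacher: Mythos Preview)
Your proposal is correct and follows essentially the same route as the paper: the paper's proof invokes the sequential limit description $\Stab{\Cat C} \simeq \limil{}(\dots \xrightarrow{\Omega} \Cat C \xrightarrow{\Omega} \Cat C)$ from \cite[Remark 1.4.2.25]{higheralgebra} and then appeals to a cofinality argument based on the identity $\Omega \tau_{\ge 1} X \cong \Omega X$, which is exactly the approach you arrive at in your final paragraph. Your additional detail (the proof of the key identity via the fiber sequence $\tau_{\ge 1} X \to X \to \tau_{\le 0} X$, and the explicit description of the mutually inverse assignments on $\Omega$-spectra) is a helpful unpacking of what the paper leaves implicit.
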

\begin{proof}
    Recall from \cite[Remark 1.4.2.25]{higheralgebra} that there are equivalences of $\infty$-categories 
    \begin{equation*}
        \SH{k} \cong \limil{} (\dots \xrightarrow{\Omega} \MotSpc{k}_* \xrightarrow{\Omega} \MotSpc{k}_*)
    \end{equation*}
    and 
    \begin{equation*}
        \Stab{\MotSpc{k}_{\ge 1,*}} \cong \limil{} (\dots \xrightarrow{\Omega} \MotSpc{k}_{\ge 1,*} \xrightarrow{\Omega} \MotSpc{k}_{\ge 1,*}).
    \end{equation*}
    The result follows by a cofinality argument, using that we have equivalences $\Omega \tau_{\ge 1} X \cong \Omega X$ 
    for every pointed motivic space $X$.
\end{proof}

Using the last lemma, from now on we will identify the stabilization of $\MotSpc{k}_{\ge 1,*}$ with $\SH{k}$.

\begin{defn}
    Let $X \in \MotSpc{k}_{\ge 1,*}$.
    We say that $X$ is \emph{nilpotent} if the underlying motivic space is nilpotent.
\end{defn}

\begin{thm} \label{thm:motivic:iota-a1-completion}
    Let $X \in \MotSpc{k}_{\ge 1,*}$ be a nilpotent pointed motivic space
    (note that every nilpotent space is connected).
    We have a canonical equivalence $\iota_{\AffSpc{1},\ge 1} (\complete{X}) \cong \completebr{\iota_{\AffSpc{1},\ge 1} X}$.
    In other words, the $p$-completion of a nilpotent pointed connected motivic space can be computed
    on the underlying Nisnevich sheaf. 
\end{thm}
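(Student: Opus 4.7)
The strategy is to construct a candidate $Y \in \MotSpc{k}_{\ge 1,*}$ with $\iota_{\AffSpc{1},\ge 1} Y \cong \completebr{\iota_{\AffSpc{1},\ge 1} X}$, lift the $p$-completion map to some $\varphi \colon X \to Y$, and prove that $\varphi$ is a $p$-equivalence and that $Y$ is $p$-complete, both inside $\MotSpc{k}_{\ge 1,*}$. Existence of $Y$ follows from \cref{thm:motivic:completion-a1-invariant} ($\completebr{\iota_{\AffSpc{1},\ge 1} X}$ is $\AffSpc{1}$-invariant) together with \cref{lemma:peq-respects-pi0} applied in $\MotShv{k}$ (it remains connected); full-faithfulness of $\iota_{\AffSpc{1},\ge 1}$ then produces $\varphi$. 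To see that $\varphi$ is a $p$-equivalence, I would pass the right-adjoint identity $\iota_{\AffSpc{1}} \pLoop \cong \pLoop \iota_{\AffSpc{1}}$ on stabilizations to left adjoints, obtaining a natural equivalence $L_{\AffSpc{1}} \Sus_{\MotShv{k}_*} \cong \Sus_{\MotSpc{k}_*} L_{\AffSpc{1}}$. Since $\iota_{\AffSpc{1},\ge 1} \varphi$ is a $p$-equivalence in $\MotShv{k}_*$ and $L_{\AffSpc{1}}$ is exact, applying $L_{\AffSpc{1}}$ and using $L_{\AffSpc{1}} \iota_{\AffSpc{1}} = \operatorname{id}$ yields that $\Sus_{\SH{k}} \varphi \sslash p$ is an equivalence, which by \cref{lemma:motivic:connected-stabilization} is precisely the condition for $\varphi$ to be a $p$-equivalence in $\MotSpc{k}_{\ge 1,*}$.

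The main work is showing that $Y$ is $p$-complete in $\MotSpc{k}_{\ge 1,*}$, and the key observation is that for any $A \in \heart{\SH{k}}$ and $n \ge 1$, the motivic space $K_{n+1}(A) \coloneqq \pLoop \completebr{\Sigma^{n+1} A} \in \MotSpc{k}_{\ge 1,*}$ is already $p$-complete in the ambient $\MotSpc{k}_*$. Indeed, \cref{lemma:t-struct:Li-zero-if-i-neq-zero-one} forces the standard homotopy of $\completebr{\Sigma^{n+1} A}$ to lie in degrees $n+1$ and $n+2$ (using that any uniquely $p$-divisible derived-$p$-complete object vanishes), so $\pLoop \completebr{\Sigma^{n+1} A}$ is $\ge 2$-connective (hence simply connected), and it inherits $p$-completeness from $\completebr{\Sigma^{n+1} A}$ because $\pLoop$ is right adjoint to the $p$-equivalence-preserving functor $\Sus$. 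Using that $\iota_{\AffSpc{1}}$ commutes with $\pLoop$ (right adjoints commute), with $p$-completion (\cref{lemma:t-struct:right-exact-commutes-with-completion}), and is t-exact (\cref{lemma:motivic:homotopy-t-structure}), together with \cref{cor:completion-of-EM-space}, one verifies $\iota_{\AffSpc{1},\ge 1} K_{n+1}(A) \cong \completebr{K(\heart{\iota}_{\AffSpc{1}} A, n+1)}$ in $\MotShv{k}_*$.

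To conclude, choose a principal refinement $X_{n, k}$ of the Postnikov tower of $X$ via \cite[Theorem 3.3.13]{asok2022localization}, and inductively lift each $\completebr{\iota_{\AffSpc{1},\ge 1} X_{n, k}}$ to some $Y_{n, k} \in \MotSpc{k}_{\ge 1,*}$ as in Step 1. Applying \cref{lemma:postnikov-fiber-sequence-completion} in $\MotShv{k}_*$ and using that $\iota_{\AffSpc{1},\ge 1}$ commutes with fibers, $Y_{n, k}$ is the fiber (computed in $\MotSpc{k}_*$) of a map into $K_{n+1}(A_{n, k})$, with the $\tau_{\ge 1}$ appearing in the formula being a no-op because $K_{n+1}(A_{n, k})$ is simply connected, so the long exact sequence automatically forces the fiber to be connected. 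By induction and because limits of $p$-complete objects are $p$-complete in any presentable category, each $Y_{n, k}$ is $p$-complete in $\MotSpc{k}_*$; passing to the limit over $n$ and invoking \cref{cor:nisnevich:postnikov-completion} on $\iota_{\AffSpc{1},\ge 1} X$ identifies $Y$ with this limit, so $Y$ is $p$-complete in $\MotSpc{k}_*$ and hence in $\MotSpc{k}_{\ge 1,*}$. The main obstacle being avoided is the open \cref{conj:motivic:conjecture-A} about $\tau_{\ge 1}$ preserving $p$-complete objects in $\MotSpc{k}_*$; the specific connectivity of the $p$-completed Eilenberg-MacLane layers allows one to sidestep $\tau_{\ge 1}$ entirely at every inductive step.
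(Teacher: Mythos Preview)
Your overall architecture mirrors the paper's, but the ``key observation'' on which your inductive step rests is false in $\SH{k}$. You claim that \cref{lemma:t-struct:Li-zero-if-i-neq-zero-one} forces the \emph{standard} homotopy of $\completebr{\Sigma^{n+1} A}$ to be concentrated in degrees $n+1$ and $n+2$, justifying this by ``any uniquely $p$-divisible derived-$p$-complete object vanishes''. But \cref{lemma:t-struct:Li-zero-if-i-neq-zero-one} is a statement about the \emph{$p$-adic} t-structure, not the standard one, and the individual standard homotopy objects $\pi_k(\completebr{\Sigma^{n+1}A})$ for $k<n+1$ have no reason to be $p$-complete in $\heart{\SH{k}}$. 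Indeed the paper's introduction explicitly warns that for $A \in \heart{\SH{k}}$ the completion $\complete{A}$ is \emph{not} concentrated in standard degrees $0$ and $1$: sequential limits are not t-exact in $\SH{k}$, so $\completebr{\Sigma^{n+1}A}$ can have nonzero (uniquely $p$-divisible) homotopy in arbitrarily low degrees. Consequently $\pLoop\completebr{\Sigma^{n+1}A}$ need not be connected, let alone simply connected; your $K_{n+1}(A)$ is not even an object of $\MotSpc{k}_{\ge 1,*}$ without an explicit $\tau_{\ge 1}$, and the ``$\tau_{\ge 1}$ is a no-op'' step in your induction collapses.

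The paper circumvents exactly this by never dropping the connective cover: it shows that $\tau_{\ge 1} L_{\AffSpc{1}}\bigl(\completebr{K(\heart{\iota}_{\AffSpc{1}}A,n)}\bigr) \cong \tau_{\ge 1}\pLoop\bigl(\completebr{\Sigma^n A}\bigr)$ and then observes that $\tau_{\ge 1}\pLoop \colon \SH{k} \to \MotSpc{k}_{\ge 1,*}$ is right adjoint to $\Sus \colon \MotSpc{k}_{\ge 1,*} \to \SH{k}$, whose left adjoint preserves $p$-equivalences by definition. Hence $\tau_{\ge 1}\pLoop$ preserves $p$-complete objects, and the induction (carried out entirely in $\MotSpc{k}_{\ge 1,*}$, with $\tau_{\ge 1}$ kept at every stage) goes through. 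In other words, the whole point of passing to connected motivic spaces is that $\tau_{\ge 1}$ becomes part of a right adjoint there, so one need not know whether it preserves $p$-complete objects in $\MotSpc{k}_*$ --- which is precisely the open \cref{conj:motivic:conjecture-A} you were trying to avoid.
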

\begin{proof}
    Let $\iota_{\AffSpc{1},\ge 1} X \to \completebr{\iota_{\AffSpc{1},\ge 1} X}$ be the canonical $p$-equivalence.
    Applying $L_{\AffSpc{1}}$ yields the $p$-equivalence (in $\MotSpc{k}_*$)
    \begin{equation*}
        \iota_{\ge 1} X \cong L_{\AffSpc{1}} \iota_{\AffSpc{1}, \ge 1} X \to L_{\AffSpc{1}} \left(\completebr{\iota_{\AffSpc{1}, \ge 1} X} \right).
    \end{equation*}
    Note that $\iota_{\ge 1}X$ is connected by assumption, and that the right-hand side is connected 
    because the $p$-completion in an $\infty$-topos preserves connected objects (see \cref{lemma:peq-respects-pi0}),
    and the same is true for $L_{\AffSpc{1}}$, see \cite[Corollary 3.2.5]{moreltrieste}.
    Thus, this is a morphism in $\MotSpc{k}_{\ge 1,*}$,
    and hence we have a $p$-equivalence $X \to \tau_{\ge 1} L_{\AffSpc{1}} (\completebr{\iota_{\AffSpc{1}, \ge 1} X})$,
    see \cref{lemma:motivic:connected-stabilization}.
    It suffices to show that the right object is $p$-complete:
    Then $p$-completion induces an equivalence 
    $\complete{X} \cong \tau_{\ge 1} L_{\AffSpc{1}} (\completebr{\iota_{\AffSpc{1},\ge 1} X})$.
    Applying $\iota_{\AffSpc{1},\ge 1}$ then induces an equivalence 
    \begin{equation*}
        \iota_{\AffSpc{1},\ge 1} (\complete{X}) \cong \iota_{\AffSpc{1}, \ge 1} \tau_{\ge 1} L_{\AffSpc{1}} \left(\completebr{\iota_{\AffSpc{1}, \ge 1} X}\right) \cong \completebr{\iota_{\AffSpc{1}, \ge 1} X},
    \end{equation*}
    where we used in the last equivalence that $\completebr{\iota_{\AffSpc{1}} X}$ is already connected (by the above discussion) 
    and $\AffSpc{1}$-invariant (see \cref{thm:motivic:completion-a1-invariant}).

    In order to see that $\tau_{\ge 1} L_{\AffSpc{1}} \completebr{\iota_{\AffSpc{1}, \ge 1} X}$ is $p$-complete,
    we first reduce to the case that $X$ is truncated:
    For this, we calculate 
    \begin{align*}
        \tau_{\ge 1} L_{\AffSpc{1}} (\completebr{\iota_{\AffSpc{1}, \ge 1} X})
        &\cong \tau_{\ge 1} L_{\AffSpc{1}} \limil{n} \completebr{\tau_{\le n} \iota_{\AffSpc{1}, \ge 1} X} \\
        &\cong \tau_{\ge 1} L_{\AffSpc{1}} \limil{n} \iota_{\AffSpc{1}} L_{\AffSpc{1}} (\completebr{\tau_{\le n} \iota_{\AffSpc{1}, \ge 1} X}) \\
        &\cong \tau_{\ge 1} L_{\AffSpc{1}} \iota_{\AffSpc{1}} \limil{n} L_{\AffSpc{1}} (\completebr{\tau_{\le n} \iota_{\AffSpc{1}, \ge 1} X}) \\
        &\cong \tau_{\ge 1} \limil{n} L_{\AffSpc{1}} (\completebr{\tau_{\le n} \iota_{\AffSpc{1}, \ge 1} X}) \\
        &\cong \limil{n} \tau_{\ge 1}  L_{\AffSpc{1}} (\completebr{\tau_{\le n} \iota_{\AffSpc{1}, \ge 1} X}),
    \end{align*}
    where we used \cref{cor:nisnevich:postnikov-completion} for the first equivalence,
    and that $\completebr{\tau_{\le n} \iota_{\AffSpc{1}, \ge 1} X}$ is $\AffSpc{1}$-invariant in the second 
    equivalence (see \cref{thm:motivic:completion-a1-invariant}, using that $\tau_{\le n} \iota_{\AffSpc{1}, \ge 1} X$ is nilpotent).
    The third equivalence holds because $\iota_{\AffSpc{1}}$ commutes with limits,
    the fourth equivalence is fully faithfulness of $\iota_{\AffSpc{1}}$,
    and the last equivalence uses that $\tau_{\ge 1}$ is a right adjoint.
    Since limits of $p$-complete objects are $p$-complete, it suffices to prove the statement for truncated nilpotent connected motivic spaces.
    
    Proceeding as in the proof of the last proposition, we choose a principal refinement of the Postnikov tower (note 
    that all the $X_{n, k}$ are automatically connected since they are nilpotent),
    and do double induction on $n$ and $k$ (with notation as in the proof of \cref{thm:motivic:completion-a1-invariant}).
    Therefore, we assume that the statement is true for $X_{n, k}$ (i.e.\ $\tau_{\ge 1} L_{\AffSpc{1}}(\completebr{\iota_{\AffSpc{1},\ge 1} X})$ 
    is $p$-complete), and that there is a fiber sequence
    \begin{equation*}
        \iota_{\AffSpc{1}, \ge 1}X_{n, k+1} \to \iota_{\AffSpc{1}, \ge 1}X_{n, k} \to K(\heart{\iota}_{\AffSpc{1}}A_{n, k+1}, n+1).
    \end{equation*}
    Using the above fiber sequence, we can compute the $p$-completion using \cref{lemma:postnikov-fiber-sequence-completion}.
    Applying $\tau_{\ge 1}L_{\AffSpc{1}}$, we calculate
    \begin{align*}
        &\tau_{\ge 1}L_{\AffSpc{1}} \left(\completebr{\iota_{\AffSpc{1}, \ge 1}X_{n,k+1}}\right) \\
        &\cong \tau_{\ge 1} L_{\AffSpc{1}} \tau_{\ge 1} \Fib{\completebr{\iota_{\AffSpc{1}, \ge 1}X_{n,k}} \to \completebr{K(\heart{\iota}_{\AffSpc{1}}A_{n,k}, n+1)}} \\
        &\cong \tau_{\ge 1} L_{\AffSpc{1}} \tau_{\ge 1} \Fib{\iota_{\AffSpc{1}} L_{\AffSpc{1}} \left(\completebr{\iota_{\AffSpc{1}, \ge 1}X_{n,k}}\right) \to \iota_{\AffSpc{1}} L_{\AffSpc{1}} \left(\completebr{K(\heart{\iota}_{\AffSpc{1}}A_{n,k}, n+1)}\right)} \\
        &\cong \tau_{\ge 1} L_{\AffSpc{1}} \tau_{\ge 1} \iota_{\AffSpc{1}} \Fib{L_{\AffSpc{1}} \left(\completebr{\iota_{\AffSpc{1}, \ge 1}X_{n,k}}\right) \to L_{\AffSpc{1}} \left(\completebr{K(\heart{\iota}_{\AffSpc{1}}A_{n,k}, n+1)}\right)} \\
        &\cong \tau_{\ge 1} L_{\AffSpc{1}} \iota_{\AffSpc{1},\ge 1} \tau_{\ge 1} \Fib{L_{\AffSpc{1}} \left(\completebr{\iota_{\AffSpc{1}, \ge 1}X_{n,k}}\right) \to L_{\AffSpc{1}} \left(\completebr{K(\heart{\iota}_{\AffSpc{1}}A_{n,k}, n+1)}\right)} \\
        &\cong \tau_{\ge 1} \iota_{\ge 1} \tau_{\ge 1} \Fib{L_{\AffSpc{1}} \left(\completebr{\iota_{\AffSpc{1}}X_{n,k}}\right) \to L_{\AffSpc{1}} \left(\completebr{K(\heart{\iota}_{\AffSpc{1}}A_{n,k}, n+1)}\right)} \\
        &\cong \tau_{\ge 1} \Fib{L_{\AffSpc{1}} \left(\completebr{\iota_{\AffSpc{1},\ge 1}X_{n,k}}\right) \to L_{\AffSpc{1}} \left(\completebr{K(\heart{\iota}_{\AffSpc{1}}A_{n,k}, n+1)}\right)} \\
        &\cong \Fib{\tau_{\ge 1} L_{\AffSpc{1}} \left(\completebr{\iota_{\AffSpc{1},\ge 1}X_{n,k}}\right) \to \tau_{\ge 1} L_{\AffSpc{1}} \left(\completebr{K(\heart{\iota}_{\AffSpc{1}}A_{n,k}, n+1)}\right)},
    \end{align*}
    Here, the second equivalence holds because both $p$-completions on the right 
    are actually $\AffSpc{1}$-invariant, see again \cref{thm:motivic:completion-a1-invariant}.
    The third, fourth and fifth equivalences hold because $\iota_{\AffSpc{1}}$ commutes with limits and
    the connective cover (\cref{lemma:motivic:connective-cover-descent-to-a1}),
    and is fully faithful.
    The sixth equivalence is fully faithfulness of $\iota_{\ge 1}$,
    and the last equivalence holds because $\tau_{\ge 1}$ commutes with limits.
    By induction, $\tau_{\ge 1} L_{\AffSpc{1}} \left(\completebr{\iota_{\AffSpc{1},\ge 1}X_{n,k}}\right)$ is $p$-complete.
    Since fibers of $p$-complete objects are $p$-complete,
    we have reduced to the case of an Eilenberg-MacLane space.

    So suppose that $n \ge 2$ and $A \in \heart{\SH{k}}$ is strictly $\AffSpc{1}$-invariant.
    We need to show that $\tau_{\ge 1} L_{\AffSpc{1}} \left(\completebr{K(\heart{\iota}_{\AffSpc{1}} A, n)}\right)$
    is $p$-complete (in connected motivic spaces).
    We compute 
    \begin{align*}
        \tau_{\ge 1} L_{\AffSpc{1}} \left(\completebr{K(\heart{\iota}_{\AffSpc{1}} A, n)}\right) 
        &\cong \tau_{\ge 1} L_{\AffSpc{1}} \tau_{\ge 1} \pLoop \left(\completebr{\Sigma^n \heart{\iota}_{\AffSpc{1}} A}\right) \\
        &\cong \tau_{\ge 1} L_{\AffSpc{1}} \tau_{\ge 1} \pLoop \left(\completebr{\Sigma^n \iota_{\AffSpc{1}} A}\right) \\
        &\cong \tau_{\ge 1} L_{\AffSpc{1}} \tau_{\ge 1} \pLoop \iota_{\AffSpc{1}} \left(\completebr{\Sigma^n A}\right) \\
        &\cong \tau_{\ge 1} L_{\AffSpc{1}} \tau_{\ge 1} \iota_{\AffSpc{1}} \pLoop \left(\completebr{\Sigma^n A}\right) \\
        &\cong \tau_{\ge 1} L_{\AffSpc{1}} \iota_{\AffSpc{1}, \ge 1} \tau_{\ge 1} \pLoop \left(\completebr{\Sigma^n A}\right) \\
        &\cong \tau_{\ge 1} \iota_{\ge 1} \tau_{\ge 1} \pLoop \left(\completebr{\Sigma^n A}\right) \\
        &\cong \tau_{\ge 1} \pLoop \left(\completebr{\Sigma^n A}\right),
    \end{align*}
    where we used \cref{cor:completion-of-EM-space} in the first equivalence,
    and t-exactness of $\iota_{\AffSpc{1}}$ (\cref{lemma:motivic:homotopy-t-structure}) in the second equivalence.
    The third equivalence holds because $\iota_{\AffSpc{1}}$ commutes with limits,
    the fourth equivalence is \cref{lemma:adjoints-on-stabilization},
    and the fifth is \cref{lemma:motivic:connective-cover-descent-to-a1}.
    The last two equivalences use fully faithfulness of $\iota_{\AffSpc{1}}$ 
    and $\iota_{\ge 1}$.
    The theorem follows because $\tau_{\ge 1}\pLoop$ preserves $p$-complete objects (as its left adjoint $\Sus \colon \MotSpc{k}_{\ge 1, *} \to \SH{k}$ preserves $p$-equivalences by definition).
\end{proof}

\begin{rmk}
    Note that if \cref{conj:motivic:conjecture-A} is true,
    then the same reasoning allows us to prove the following result:
    If $X \in \MotSpc{k}_*$ is a pointed nilpotent space,
    then $\completebr{\iota_{\AffSpc{1}} X} \cong \iota_{\AffSpc{1}} \complete{X}$. 
\end{rmk}

The same technique allows us to prove a related result: The $p$-completion of the underlying Nisnevich sheaf of a nilpotent motivic space 
is also the $p$-completion of the underlying Zariski sheaf.
For this, we need the following lemma:

\begin{lem} \label{lemma:motivic:completion-EM-iota}
    Let $A \in \heart{\SH{k}}$ and $n \ge 2$.

    There is an equivalence $\iota_{\nis}(\complete{K(\heart{\iota_{\AffSpc{1}}} A, n)}) \cong \complete{K(\heart{\iota_{\nis, \AffSpc{1}}} A, n)}$.
\end{lem}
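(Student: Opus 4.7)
The plan is to rewrite both sides using \cref{cor:completion-of-EM-space} and then push $\iota_{\nis}$ through the constituent operations, handling the connective cover as the only subtle point. Concretely, by \cref{cor:completion-of-EM-space}, we have
\begin{equation*}
    \complete{K(\heart{\iota_{\AffSpc{1}}} A, n)} \cong \tau_{\ge 1} \pLoop(\completebr{\Sigma^n \heart{\iota_{\AffSpc{1}}} A}),
\end{equation*}
and similarly for the right-hand side with $\heart{\iota_{\nis,\AffSpc{1}}} A$ in place of $\heart{\iota_{\AffSpc{1}}} A$. Since $\iota_{\AffSpc{1}}$ (\cref{lemma:motivic:homotopy-t-structure}) and $\iota_{\nis,\AffSpc{1}}$ (\cref{lemma:motivic:iota-A1-nis-t-exact-std}) are t-exact for the standard t-structures, the hearts agree with the underived functors, so $\iota_{\nis}(\heart{\iota_{\AffSpc{1}}} A) = \heart{\iota_{\nis,\AffSpc{1}}} A$.

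Next, I use that $\iota_{\nis}$ as the right adjoint of a geometric morphism commutes with limits (in particular with $\pLoop$ by \cref{lemma:adjoints-on-stabilization} and with $\completebr{-}$ by \cref{lemma:t-struct:right-exact-commutes-with-completion}) and is exact on stable categories (hence commutes with $\Sigma^n$). Combining these, I obtain
\begin{equation*}
    \iota_{\nis}(\pLoop \completebr{\Sigma^n \heart{\iota_{\AffSpc{1}}} A}) \cong \pLoop \completebr{\Sigma^n \heart{\iota_{\nis,\AffSpc{1}}} A}.
\end{equation*}
Thus I am reduced to showing that $\iota_{\nis}$ commutes with the outer $\tau_{\ge 1}$ in this specific situation, i.e.\ writing $M \coloneqq \pLoop \completebr{\Sigma^n \heart{\iota_{\AffSpc{1}}} A}$, that $\iota_{\nis} \tau_{\ge 1} M \cong \tau_{\ge 1} \iota_{\nis} M$.

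For this, the standard argument (apply $\iota_{\nis}$ to the fiber sequence $\tau_{\ge 1} M \to M \to \tau_{\le 0} M$ and use that $\iota_{\nis}$ preserves $0$-truncated objects as the right adjoint of a geometric morphism, as in \cref{lemma:embedding:connective-cover-idempotent-computation}) yields $\tau_{\ge 1} \iota_{\nis} \tau_{\ge 1} M \cong \tau_{\ge 1} \iota_{\nis} M$. It therefore suffices to verify that $\iota_{\nis} \tau_{\ge 1} M$ is connected. Now $\tau_{\ge 1} M \cong \complete{K(\heart{\iota_{\AffSpc{1}}} A, n)}$, which is $\AffSpc{1}$-invariant by \cref{thm:motivic:completion-a1-invariant}, hence of the form $\iota_{\AffSpc{1}} Y$ for a (connected, nilpotent) motivic space $Y$. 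Applying \cref{lemma:motivic:connected-cover-iota} to $Y$ together with \cref{lemma:motivic:connective-cover-descent-to-a1} gives $\iota_{\nis,\AffSpc{1}} Y \cong \iota_{\nis} \tau_{\ge 1} \iota_{\AffSpc{1}} Y \cong \tau_{\ge 1} \iota_{\nis,\AffSpc{1}} Y$, so $\iota_{\nis} \tau_{\ge 1} M = \iota_{\nis,\AffSpc{1}} Y$ is connected, as needed.

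The main obstacle is precisely this last point: $\iota_{\nis}$ does not in general commute with truncations, and the connected cover of a $p$-completed Eilenberg--MacLane object in the Nisnevich topos need not remain connected as a Zariski sheaf. The resolution is specific to our setting: $\AffSpc{1}$-invariance of the Nisnevich $p$-completion (\cref{thm:motivic:completion-a1-invariant}) places us inside the range where the comparison \cref{lemma:motivic:connected-cover-iota} between $\iota_{\AffSpc{1}}$, $\iota_{\nis}$, and $\tau_{\ge 1}$ applies.
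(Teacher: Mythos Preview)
Your proof is correct and follows the same overall structure as the paper's: rewrite both sides via \cref{cor:completion-of-EM-space} and push $\iota_{\nis}$ through $\pLoop$, the stable $p$-completion, and $\Sigma^n$, leaving only the connective cover to handle. The difference is in how that last step is done. You use the unstable form $\tau_{\ge 1}\pLoop(\dots)$ and then, to commute $\iota_{\nis}$ past the unstable $\tau_{\ge 1}$, bring in \cref{thm:motivic:completion-a1-invariant} to know the Nisnevich $p$-completion is $\AffSpc{1}$-invariant, so that \cref{lemma:motivic:connected-cover-iota} applies. The paper instead uses the stable form $\pLoop\tau_{\ge 1}(\dots)$: here $\tau_{\ge 1}$ is the connective cover of the spectrum $\completebr{\Sigma^n A}$ in $\SH{k}$, and commuting it with $\iota_{\nis}\iota_{\AffSpc{1}}$ follows directly from the t-exactness of $\iota_{\AffSpc{1}}$ and $\iota_{\nis,\AffSpc{1}}$ for the standard t-structures (\cref{lemma:motivic:homotopy-t-structure,lemma:motivic:iota-A1-nis-t-exact-std}; the paper cites \cref{lemma:motivic:connected-cover-iota}, whose proof encodes the same t-exactness). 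This route is shorter and, notably, avoids invoking \cref{thm:motivic:completion-a1-invariant} altogether.
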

\begin{proof}
    Note that since $\iota_{\AffSpc{1}}$ and $\iota_{\nis,\AffSpc{1}}$ are t-exact for the standard t-structures 
    (see \cref{lemma:motivic:homotopy-t-structure,lemma:motivic:iota-A1-nis-t-exact-std}),
    we see that $\heart{\iota_{\AffSpc{1}}} A \cong \iota_{\AffSpc{1}} A$,
    and similarly, $\heart{\iota_{\nis, \AffSpc{1}}} A \cong \iota_{\nis, \AffSpc{1}} A$.
    Therefore, we see that $K(\heart{\iota_{\AffSpc{1}}} A, n) \cong \pLoop \Sigma^n \iota_{\AffSpc{1}} A$,
    and $K(\heart{\iota_{\nis,\AffSpc{1}}} A, n) \cong \pLoop \Sigma^n \iota_{\nis,\AffSpc{1}} A$.
    Thus, it suffices to show that there is an equivalence 
    \begin{equation*}
        \iota_{\nis}(\completebr{\pLoop \Sigma^n \iota_{\AffSpc{1}} A}) \cong \completebr{\pLoop \Sigma^n \iota_{\nis, \AffSpc{1}} A}.
    \end{equation*}
    We now calculate
    \begin{align*}
        \iota_{\nis} \left(\completebr{\pLoop \Sigma^n\iota_{\AffSpc{1}} A}\right) 
        & \cong \iota_{\nis} \pLoop \tau_{\ge 1} \left(\completebr{\Sigma^n \iota_{\AffSpc{1}} A}\right) \\
        & \cong \pLoop \iota_{\nis} \tau_{\ge 1} \left(\completebr{\Sigma^n \iota_{\AffSpc{1}} A}\right) \\
        & \cong \pLoop \iota_{\nis} \tau_{\ge 1} \iota_{\AffSpc{1}} \left(\completebr{\Sigma^n A}\right) \\
        & \cong \pLoop \tau_{\ge 1} \iota_{\nis, \AffSpc{1}} \left(\completebr{\Sigma^n A}\right) \\
        & \cong \pLoop \tau_{\ge 1} \left(\completebr{\Sigma^n \iota_{\nis, \AffSpc{1}} A}\right) \\
        & \cong \completebr{\pLoop \Sigma^n \iota_{\nis, \AffSpc{1}} A}.
    \end{align*}
    Here, the first and last equivalences are \cref{cor:completion-of-EM-space},
    the second equivalence is \cref{lemma:adjoints-on-stabilization},
    the third and fifth equivalences follow from \cref{lemma:t-struct:right-exact-commutes-with-completion}
    and the exactness of $\iota_{\AffSpc{1}}$ and $\iota_{\nis,\AffSpc{1}}$,
    and the fourth equivalence is \cref{lemma:motivic:connected-cover-iota}.
\end{proof}

\begin{thm} \label{lemma:motivic:iota-completion}
    Let $X \in \MotSpc{k}_*$ be nilpotent.
    Then $\iota_{\nis}(\completebr{\iota_{\AffSpc{1}}X}) \cong \completebr{\iota_{\nis,\AffSpc{1}} X}$.

    In particular, if we regard $X$ as an object of $\MotSpc{k}_{\ge 1,*}$ we get an equivalence 
    $\iota_{\nis, \AffSpc{1},\ge 1} (\complete{X}) \cong \completebr{\iota_{\nis,\AffSpc{1},\ge 1} X}$
    by combining this result with \cref{thm:motivic:iota-a1-completion}.
\end{thm}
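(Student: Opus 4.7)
The plan is to compare both sides through the Postnikov tower, reducing to the truncated case and then inducting on a principal refinement of the motivic Postnikov tower of $X$. First I would reduce to the truncated case: since both $\iota_{\AffSpc{1}}X$ (by \cref{lemma:motivic:nilpotent}) and $\iota_{\nis,\AffSpc{1}}X$ are nilpotent (the latter inheriting nilpotence by applying $\iota_{\nis,\AffSpc{1}}$ to a principal refinement of the motivic Postnikov tower, using \cref{lemma:motivic:iota-a1-EM} to recognize the Eilenberg--MacLane fibers), \cref{cor:nisnevich:postnikov-completion} expresses both $p$-completions as limits over Postnikov sections. Using that $\iota_{\nis}$ preserves limits together with \cref{lemma:motivic:truncation-cover-iota-if-connected} (which identifies $\tau_{\le n}\iota_{\nis,\AffSpc{1}}X\cong\iota_{\nis}\tau_{\le n}\iota_{\AffSpc{1}}X$), it then suffices to show $\iota_{\nis}\completebr{\tau_{\le n}\iota_{\AffSpc{1}}X}\cong\completebr{\iota_{\nis}\tau_{\le n}\iota_{\AffSpc{1}}X}$ for each $n$.

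Next I would choose a principal refinement of the motivic Postnikov tower, yielding fiber sequences $X_{m,k+1}\to X_{m,k}\to K(A_{m,k+1},m+1)$ in $\MotSpc{k}_*$. Applying $\iota_{\AffSpc{1}}$ and $\iota_{\nis,\AffSpc{1}}$ together with \cref{lemma:motivic:iota-a1-EM} produces compatible fiber sequences in $\MotShv{k}$ and $\ZarShv{k}$. I would induct on the pairs $(m,k)$, with $X_{1,0}=*$ trivial. For the inductive step, \cref{lemma:postnikov-fiber-sequence-completion} gives
\[\completebr{\iota_{\AffSpc{1}}X_{m,k+1}}\cong\tau_{\ge 1}F,\qquad F\coloneqq\Fib{\completebr{\iota_{\AffSpc{1}}X_{m,k}}\to\completebr{K(\heart{\iota}_{\AffSpc{1}}A_{m,k+1},m+1)}},\]
and analogously in $\ZarShv{k}$. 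Since $\iota_{\nis}$ commutes with fibers, the inductive hypothesis for the middle term together with \cref{lemma:motivic:completion-EM-iota} for the Eilenberg--MacLane term identify $\iota_{\nis}F$ with the corresponding Zariski fiber.

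The main obstacle will be commuting $\iota_{\nis}$ with $\tau_{\ge 1}$, that is, establishing $\iota_{\nis}\tau_{\ge 1}F\cong\tau_{\ge 1}\iota_{\nis}F$. Following the argument of \cref{lemma:motivic:connected-cover-iota}, since $\iota_{\nis}$ preserves limits and $0$-truncated objects one always has $\tau_{\ge 1}\iota_{\nis}F\cong\tau_{\ge 1}\iota_{\nis}\tau_{\ge 1}F$, so it suffices to show that $\iota_{\nis}\tau_{\ge 1}F=\iota_{\nis}\completebr{\iota_{\AffSpc{1}}X_{m,k+1}}$ is $1$-connective. By \cref{thm:motivic:completion-a1-invariant} the completion $\completebr{\iota_{\AffSpc{1}}X_{m,k+1}}$ is $\AffSpc{1}$-invariant, hence has the form $\iota_{\AffSpc{1}}Z$ for some $Z\in\MotSpc{k}_*$; by \cref{lemma:peq-respects-pi0} it is connected, so $Z$ is a pointed connected motivic space. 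Then \cref{lemma:motivic:connected-cover-iota} applied to $Z$ yields $\iota_{\nis,\AffSpc{1}}Z\cong\tau_{\ge 1}\iota_{\nis,\AffSpc{1}}Z$, i.e.\ $\iota_{\nis}\tau_{\ge 1}F$ is $1$-connective. This closes the induction and proves the theorem.
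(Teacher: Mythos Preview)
Your proof is correct and follows essentially the same strategy as the paper: reduce to the truncated case via \cref{cor:nisnevich:postnikov-completion} and \cref{lemma:motivic:truncation-cover-iota-if-connected}, then induct along a principal refinement using \cref{lemma:postnikov-fiber-sequence-completion} and \cref{lemma:motivic:completion-EM-iota}. One small simplification: for the step where you commute $\iota_{\nis}$ with $\tau_{\ge 1}$, you can observe directly that the fiber $F$ itself is $\AffSpc{1}$-invariant (as a limit of $\AffSpc{1}$-invariant sheaves, by \cref{thm:motivic:completion-a1-invariant}), so \cref{lemma:motivic:connected-cover-iota} applies immediately to $F$ without the detour through $\tau_{\ge 1}F$ and the general identity $\tau_{\ge 1}\iota_{\nis}F\cong\tau_{\ge 1}\iota_{\nis}\tau_{\ge 1}F$.
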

\begin{proof}
    First, assume that $X$ is $n$-truncated for some $n$.
    As above, we choose a principal refinement of the Postnikov tower of $X$,
    with $X_{n,k} \in \MotSpc{k}_*$ and $A_{n, k} \in \heart{\SH{k}}$.
    We proceed by double induction on $n$ and $k$, the case $n = 0$ being trivial.
    As above, we have a fiber sequence
    \begin{equation*}
        \iota_{\AffSpc{1}} X_{n, k+1} \to \iota_{\AffSpc{1}} X_{n, k} \to K(\heart{\iota}_{\AffSpc{1}}A_{n, k+1}, n+1).
    \end{equation*}
    Applying $\iota_{\nis}$, we get a fiber sequence
    \begin{equation*}
        \iota_{\nis} \iota_{\AffSpc{1}} X_{n, k+1} \to \iota_{\nis} \iota_{\AffSpc{1}} X_{n, k} \to K(\heart{\iota}_{\nis}\heart{\iota}_{\AffSpc{1}} A_{n, k+1}, n+1),
    \end{equation*}
    where we used \cref{lemma:motivic:iota-a1-EM}.
    We now compute
    \begin{align*}
        \completebr{\iota_{\nis} \iota_{\AffSpc{1}} X_{n, k+1}}
         & \cong \tau_{\ge 1} \Fib{\completebr{\iota_{\nis} \iota_{\AffSpc{1}} X_{n, k}} \to \complete{K(\heart{\iota}_{\nis}\heart{\iota}_{\AffSpc{1}}A_{n, k+1}, n+1)}}     \\
         & \cong \tau_{\ge 1} \Fib{\iota_{\nis} (\completebr{\iota_{\AffSpc{1}}X_{n, k}}) \to \iota_{\nis} (\complete{K(\heart{\iota}_{\AffSpc{1}} A_{n, k+1}, n+1)})} \\
         & \cong \tau_{\ge 1} \iota_{\nis} \Fib {\completebr{\iota_{\AffSpc{1}}X_{n, k}} \to \complete{K(\heart{\iota}_{\AffSpc{1}}A_{n, k+1}, n+1)}}          \\
         & \cong \iota_{\nis} \tau_{\ge 1} \Fib {\completebr{\iota_{\AffSpc{1}}X_{n, k}} \to \complete{K(\heart{\iota}_{\AffSpc{1}}A_{n, k+1}, n+1)}}          \\
         & \cong \iota_{\nis} (\completebr{\iota_{\AffSpc{1}} X_{n, k+1}}).
    \end{align*}
    Here, the first and last equivalences are \cref{lemma:postnikov-fiber-sequence-completion},
    the second equivalence follows from induction and \cref{lemma:motivic:completion-EM-iota},
    the third equivalence exists because $\iota_{\nis}$ commutes with limits (as a right adjoint),
    and the fourth equivalence is \cref{lemma:motivic:connected-cover-iota} (noting that the fiber is $\AffSpc{1}$-invariant 
    as a limit of $\AffSpc{1}$-invariant sheaves).
    This proves the claim.

    We will now deduce the general case. We have the following chain of equivalences:
    \begin{align*}
        \iota_{\nis} \left(\completebr{\iota_{\AffSpc{1}}X}\right) 
        & \cong \iota_{\nis} \limil{n} \completebr{\tau_{\le n} \iota_{\AffSpc{1}}X} \\
        & \cong \limil{n} \iota_{\nis} \left(\completebr{\tau_{\le n} \iota_{\AffSpc{1}}X}\right) \\
        & \cong \limil{n} \completebr{\iota_{\nis} \tau_{\le n} \iota_{\AffSpc{1}} X} \\
        & \cong \limil{n} \completebr{\tau_{\le n} \iota_{\nis,\AffSpc{1}} X} \\
        & \cong \completebr{\iota_{\nis,\AffSpc{1}} X}.
    \end{align*}
    The first and last equivalences are \cref{cor:nisnevich:postnikov-completion}.
    The second equivalence holds because $\iota$ commutes with limits (as a right adjoint).
    The third equivalence was proven above, since $\tau_{\le n} X$ is $n$-truncated.
    The fourth equivalence is \cref{lemma:motivic:truncation-cover-iota-if-connected} (note that $X$ is connected because it is nilpotent).
    This proves the theorem.
\end{proof}

\begin{rmk}
    Again, if \cref{conj:motivic:conjecture-A} is true,
    then we get the following:
    If $X \in \MotSpc{k}_*$ is a pointed nilpotent space,
    then $\completebr{\iota_{\nis,\AffSpc{1}}X} \cong \iota_{\nis,\AffSpc{1}} \complete{X}$. 
\end{rmk}

\subsection{A Short Exact Sequence for Motivic Spaces}

We want to establish a short exact sequence for the homotopy objects of the $p$-completion of motivic spaces,
similar to the one for Zariski sheaves from \cref{thm:pro-zar:short-exact-sequence}.

\begin{lem} \label{lemma:motivic:gersten-of-strictly}
    Let $A \in \heart{\SH{k}}$.
    Then $\heart{\iota}_{\nis, \AffSpc{1}} A$ satisfies Gersten injectivity (\cref{def:pro-zar:gersten}).
\end{lem}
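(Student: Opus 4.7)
The plan is to reduce the claim to the classical Gersten conjecture for strictly $\AffSpc{1}$-invariant Nisnevich sheaves of abelian groups. First, by \cref{lemma:motivic:iota-A1-nis-t-exact-std} the functor $\iota_{\nis,\AffSpc{1}}$ is t-exact for the standard t-structures, so $\heart{\iota}_{\nis,\AffSpc{1}} A$ agrees with $\iota_{\nis,\AffSpc{1}} A$ viewed in the standard heart $\heart{\ZarShvSp{k}} \cong \AbObj{\Disc{\ZarShv{k}}}$. Under the identifications $\heart{\SH{k}} \cong \heart{\MotShvSp{k}} \cap \SH{k} \cong \AbObj{\Disc{\MotShv{k}}}^{\operatorname{str}}$ (see \cref{rmk:motivic:strictly}), this is precisely the underlying Zariski sheaf of abelian groups of the strictly $\AffSpc{1}$-invariant Nisnevich sheaf $F \coloneqq \heart{\Gamma}(-, A)$.

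Next, I would fix a connected $U \in \smooth{k}$ with generic point $\eta$ and spell out both sides of the Gersten injectivity map. The left-hand side $\heart{\Gamma}(U, \heart{\iota}_{\nis,\AffSpc{1}} A)$ computes $H^0_{\zar}(U, F)$, which, by the coincidence of Nisnevich and Zariski cohomology for strictly $\AffSpc{1}$-invariant sheaves (\cite[Theorem 4.5]{AsokGersten}, already invoked in the proof of \cref{lemma:motivic:iota-A1-nis-t-exact-std}), equals $H^0_{\nis}(U, F) = F(U)$. The right-hand side $\heart{\Gamma}(\eta, \heart{\iota}_{\nis,\AffSpc{1}} A)$ is by definition the stalk $\colimil{\eta \to V \to U} F(V)$, which for $F$ unramified is just $F(\eta)$.

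The mathematical content then is exactly Morel's Gersten resolution for strictly $\AffSpc{1}$-invariant Nisnevich sheaves: there is an exact sequence of Zariski sheaves on $U$
\begin{equation*}
    0 \to F|_U \to (i_\eta)_* F(\eta) \to \bigoplus_{x \in U^{(1)}} (i_x)_* F_{-1}(k(x)) \to \cdots,
\end{equation*}
and in particular the map $F(U) \hookrightarrow F(\eta)$ is injective. For $k$ infinite this is \cite[Chapter 5]{morel2012a1} (or the cited \cite[Theorem 4.5]{AsokGersten}); for $k$ finite one uses the Hogadi--Yadav version of the Gabber presentation lemma \cite[Theorem 1.1]{hogadi2020gabber} to run the same argument, exactly as in the parenthetical remark in the proof of \cref{lemma:motivic:iota-A1-nis-t-exact-std}.

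There is no substantial obstacle: the work is essentially bookkeeping, matching the categorical setup ($\heart{\iota}_{\nis,\AffSpc{1}}$, the $p$-adic conventions, the equivalence between $\heart{\SH{k}}$ and strictly $\AffSpc{1}$-invariant abelian sheaves) to the classical statement of the Gersten conjecture. The only mild subtlety is the finite-field case, which is handled by the Hogadi--Yadav presentation lemma already used earlier in the paper.
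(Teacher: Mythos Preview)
Your proposal is correct and follows essentially the same approach as the paper: both reduce to the classical Gersten injectivity for strictly $\AffSpc{1}$-invariant Nisnevich sheaves, citing the Asok--Haesemeyer result (the paper points to \cite[Lemma 4.6]{AsokGersten} directly, you unpack it via Morel's Gersten resolution and \cite[Theorem 4.5]{AsokGersten}), and both handle the finite-field case by invoking the Hogadi--Yadav presentation lemma. Your version is more explicit about the bookkeeping identifying $\heart{\iota}_{\nis,\AffSpc{1}} A$ with the underlying Zariski sheaf, but the mathematical content is the same.
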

\begin{proof}
    This is proven in \cite[Lemma 4.6]{AsokGersten}, if $k$ is an infinite field.
    If $k$ is a finite field, we can argue as in the above reference,
    using the Gabber presentation lemma for finite fields, see \cite[Theorem 1.1]{hogadi2020gabber}.
\end{proof}

\begin{lem} \label{lemma:motivic:iota-nis-a1-Li}
    Let $A \in \heart{\SH{k}}$.
    Then $\iota_{\nis,\AffSpc{1}} \mathbb L_i A \cong \mathbb L_i \heart{\iota_{\nis,\AffSpc{1}}} A$.
\end{lem}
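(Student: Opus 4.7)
The plan is to deduce this from the two t-exactness results for the functor $\iota_{\nis,\AffSpc{1}}$ that were proven earlier. Recall from \cref{lemma:motivic:iota-A1-nis-t-exact-std} that $\iota_{\nis,\AffSpc{1}}$ is t-exact for the standard t-structures, and from \cref{lemma:motivic:iota-A1-nis-t-exact-p-adic} that it is also t-exact for the $p$-adic t-structures. These two facts together should give the desired equivalence almost immediately.

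First, I would reduce to the case $i = 0, 1$. Indeed, by \cref{lemma:t-struct:Li-zero-if-i-neq-zero-one}, both sides vanish for $i \neq 0, 1$, so the statement is trivial in that range.

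For $i = 0, 1$, the strategy is to chase through the definitions. Unwinding the notation, $\mathbb L_i A = \pi_i^p(A)$, computed in $\SH{k}$, while $\mathbb L_i \heart{\iota_{\nis,\AffSpc{1}}} A = \pi_i^p(\heart{\iota_{\nis,\AffSpc{1}}} A)$, computed in $\ZarShvSp{k}$. Since $\iota_{\nis,\AffSpc{1}}$ is exact (it is a right adjoint between stable $\infty$-categories) and t-exact for the $p$-adic t-structures, it commutes with the truncations $\tau^p_{\ge i}$ and $\tau^p_{\le i}$, and with the loop functor $\Omega^i$, hence it commutes with the functor $\pi_i^p = \Omega^i \tau^p_{\le i} \tau^p_{\ge i}$. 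Therefore, we get a natural equivalence
\begin{equation*}
    \iota_{\nis,\AffSpc{1}} \pi_i^p(A) \cong \pi_i^p(\iota_{\nis,\AffSpc{1}} A).
\end{equation*}
Finally, since $A \in \heart{\SH{k}}$ and $\iota_{\nis,\AffSpc{1}}$ is t-exact for the standard t-structures, we have $\iota_{\nis,\AffSpc{1}} A \in \heart{\ZarShvSp{k}}$, and by the derived convention this object agrees with $\heart{\iota_{\nis,\AffSpc{1}}} A$. Stringing together these equivalences yields
\begin{equation*}
    \iota_{\nis,\AffSpc{1}} \mathbb L_i A \cong \pi_i^p(\iota_{\nis,\AffSpc{1}} A) \cong \pi_i^p(\heart{\iota_{\nis,\AffSpc{1}}} A) = \mathbb L_i \heart{\iota_{\nis,\AffSpc{1}}} A,
\end{equation*}
as desired.

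There is no real obstacle here; the entire content of the lemma is already packaged in the two t-exactness results cited above. The only minor care needed is to make sure one interprets $\mathbb L_i$ on the two sides correctly (once as the functor on $\heart{\SH{k}}$ landing in $\pheart{\SH{k}}$, once on $\heart{\ZarShvSp{k}}$ landing in $\pheart{\ZarShvSp{k}}$) and to invoke the paper's convention relating $\heart{\iota_{\nis,\AffSpc{1}}}$ to the restriction of $\iota_{\nis,\AffSpc{1}}$ to the heart.
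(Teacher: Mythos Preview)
Your proof is correct and follows essentially the same approach as the paper: both rely on the t-exactness of $\iota_{\nis,\AffSpc{1}}$ for the standard and for the $p$-adic t-structures (\cref{lemma:motivic:iota-A1-nis-t-exact-std,lemma:motivic:iota-A1-nis-t-exact-p-adic}) to commute $\iota_{\nis,\AffSpc{1}}$ with $\pi_i^p$ and to identify $\iota_{\nis,\AffSpc{1}} A$ with $\heart{\iota_{\nis,\AffSpc{1}}} A$. Your initial reduction to $i = 0, 1$ is harmless but unnecessary---the paper's argument works uniformly for all $i$.
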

\begin{proof}
    Since $\iota_{\nis,\AffSpc{1}}$ is t-exact for the standard t-structures (\cref{lemma:motivic:iota-A1-nis-t-exact-std}),
    we see that $\heart{\iota_{\nis,\AffSpc{1}}} A \cong \iota_{\nis, \AffSpc{1}} A$.
    Moreover, the same functor is also t-exact for the $p$-adic t-structures (\cref{lemma:motivic:iota-A1-nis-t-exact-p-adic}).
    Therefore, we compute 
    \begin{equation*}
        \iota_{\nis,\AffSpc{1}} \mathbb L_i A = \iota_{\nis,\AffSpc{1}} \pi_i^p A \cong \pi_i^p \iota_{\nis,\AffSpc{1}} A = \mathbb L_i \heart{\iota_{\nis,\AffSpc{1}}} A.
    \end{equation*}
    Note that $\mathbb L_i$ is just given by the functor $\pi_i^p$ restricted to the standard heart.
\end{proof}

\begin{cor} \label{cor:motivic:iota-li-pin}
    Let $X \in \MotSpc{k}_*$ be a pointed motivic space.
    We have canonical equivalences $\mathbb L_i \pi_n (\iota_{\nis,\AffSpc{1}} X) \cong \iota_{\nis,\AffSpc{1}} \mathbb L_i \pi_n(X)$
    and $L_{\nis,\AffSpc{1}} \mathbb L_i \pi_n (\iota_{\nis,\AffSpc{1}} X) \cong \mathbb L_i \pi_n(X)$
    for all $i$ and $n \ge 2$.
    If $\pi_1(X)$ is abelian, then the same is true for $n = 1$.
\end{cor}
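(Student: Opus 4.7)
The plan is to reduce this to the two ingredients \cref{cor:motivic:iota-pin-if-connected} (commutation of $\iota_{\nis,\AffSpc{1}}$ with $\pi_n$) and \cref{lemma:motivic:iota-nis-a1-Li} (commutation of $\iota_{\nis,\AffSpc{1}}$ with $\mathbb{L}_i$), composed together. Concretely, for $n \ge 2$ I would chain the equivalences
\begin{equation*}
    \mathbb{L}_i \pi_n(\iota_{\nis,\AffSpc{1}} X)
    \cong \mathbb{L}_i \heart{\iota_{\nis,\AffSpc{1}}} \pi_n(X)
    \cong \iota_{\nis,\AffSpc{1}} \mathbb{L}_i \pi_n(X),
\end{equation*}
where the first equivalence uses \cref{cor:motivic:iota-pin-if-connected} (so that $\pi_n(\iota_{\nis,\AffSpc{1}} X) \cong \heart{\iota_{\nis,\AffSpc{1}}} \pi_n(X)$ lives in $\heart{\ZarShvSp{k}}$) and the second applies \cref{lemma:motivic:iota-nis-a1-Li} to the object $\pi_n(X) \in \heart{\SH{k}}$. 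This proves the first claimed equivalence.

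For the second equivalence, I would simply apply $L_{\nis,\AffSpc{1}}$ to the first and invoke the fully faithfulness of $\iota_{\nis,\AffSpc{1}}$ (\cref{lemma:motivic:motivic-spaces-adjunction} combined with \cref{lemma:motivic:iota-A1-nis-t-exact-std}), which yields $L_{\nis,\AffSpc{1}} \iota_{\nis,\AffSpc{1}} \cong \operatorname{id}$ on $\heart{\SH{k}}$, hence
\begin{equation*}
    L_{\nis,\AffSpc{1}} \mathbb{L}_i \pi_n(\iota_{\nis,\AffSpc{1}} X) \cong L_{\nis,\AffSpc{1}} \iota_{\nis,\AffSpc{1}} \mathbb{L}_i \pi_n(X) \cong \mathbb{L}_i \pi_n(X).
\end{equation*}

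For the case $n = 1$ with $\pi_1(X)$ abelian, I would use \cref{rmk:motivic:htpy-group-1-abelian} to regard $\pi_1(X)$ as an object of $\heart{\SH{k}}$, so that the $\mathbb{L}_i$ and $\iota_{\nis,\AffSpc{1}}$ we need are defined on the same footing as for $n \ge 2$. The corresponding $n=1$ statement of \cref{cor:motivic:iota-pin-if-connected} is formulated in $\Grp{\Disc{\ZarShv{k}}}$, but under the abelianness assumption both sides canonically lift to $\heart{\ZarShvSp{k}}$ (via the equivalence $\heart{\ZarShvSp{k}} \cong \AbObj{\Disc{\ZarShv{k}}}$ and t-exactness of $\iota_{\nis}$ restricted to abelian group objects), and the same two-step chain goes through verbatim.

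I expect no real obstacle: the only subtle point is checking that for $n = 1$ abelian the identification $\pi_1(\iota_{\nis,\AffSpc{1}} X) \cong \heart{\iota_{\nis,\AffSpc{1}}} \pi_1(X)$ really does land in the heart $\heart{\ZarShvSp{k}}$ (rather than only in $\Grp{\Disc{\ZarShv{k}}}$), but this is immediate once one invokes \cref{rmk:motivic:htpy-group-1-abelian} to place $\pi_1(X) \in \heart{\SH{k}}$ and then uses the t-exactness of $\iota_{\nis,\AffSpc{1}}$ established in \cref{lemma:motivic:iota-A1-nis-t-exact-std}.
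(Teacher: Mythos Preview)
Your proposal is correct and follows essentially the same approach as the paper: chain \cref{cor:motivic:iota-pin-if-connected} with \cref{lemma:motivic:iota-nis-a1-Li} for the first equivalence, then apply $L_{\nis,\AffSpc{1}}$ and use fully faithfulness of $\iota_{\nis,\AffSpc{1}}$ for the second, and for $n=1$ abelian invoke \cref{rmk:motivic:htpy-group-1-abelian} to reduce to the same argument. The paper's proof is slightly terser about the $n=1$ case, simply stating that the same proof works, but your extra care there is harmless.
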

\begin{proof}
    We have the following sequence of equivalences:
    \begin{equation*}
        \mathbb L_i \pi_n (\iota_{\nis,\AffSpc{1}} X)
        \cong \mathbb L_i \heart{\iota_{\nis,\AffSpc{1}}} \pi_n (X)
        \cong \iota_{\nis,\AffSpc{1}} \mathbb L_i \pi_n (X),
    \end{equation*}
    where the first equivalence is given by \cref{cor:motivic:iota-pin-if-connected},
    and the second equivalence by \cref{lemma:motivic:iota-nis-a1-Li}.
    Applying $L_{\nis,\AffSpc{1}}$ we arrive at the equivalence 
    \begin{equation*}
        L_{\nis,\AffSpc{1}} \mathbb L_i \pi_n (\iota_{\nis,\AffSpc{1}} X) \cong L_{\nis,\AffSpc{1}} \iota_{\nis,\AffSpc{1}} \mathbb L_i \pi_n (X) \cong \mathbb L_i \pi_n (X),
    \end{equation*}
    where the second equivalence used the fully faithfulness of $\iota_{\nis,\AffSpc{1}}$.
    If $\pi_1(X)$ is abelian, then we can regard it as an object of $\heart{\SH{k}}$ (see \cref{rmk:motivic:htpy-group-1-abelian}).
    In this case, the same proof works.
\end{proof}

\begin{lem} \label{lemma:motivic:gersten-of-A1}
    Let $X \in \MotSpc{k}_*$ be a pointed motivic space.
    Then $\pi_n(\iota_{\nis, \AffSpc{1}} X)/p^k$ satisfies Gersten injectivity for all $k \ge 1$ and $n \ge 2$.
    If $\pi_1(X)$ is abelian, then the result also holds for $n = 1$.
\end{lem}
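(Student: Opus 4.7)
The plan is to reduce the statement to \cref{lemma:motivic:gersten-of-strictly}, which already establishes Gersten injectivity for any object of the form $\heart{\iota_{\nis,\AffSpc{1}}} A$ with $A \in \heart{\SH{k}}$. Concretely, the first step is to recognize $\pi_n(\iota_{\nis,\AffSpc{1}} X)$ as the image of a strictly $\AffSpc{1}$-invariant sheaf under $\heart{\iota_{\nis,\AffSpc{1}}}$: using \cref{cor:motivic:iota-pin-if-connected} we obtain an equivalence
\begin{equation*}
    \pi_n(\iota_{\nis,\AffSpc{1}} X) \cong \heart{\iota_{\nis,\AffSpc{1}}}\,\pi_n(X)
\end{equation*}
for $n \ge 2$ (and for $n = 1$ under the additional hypothesis that $\pi_1(X)$ is abelian, via \cref{rmk:motivic:htpy-group-1-abelian}, so that $\pi_1(X) \in \heart{\SH{k}}$).

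Next, I would move the reduction mod $p^k$ inside the functor $\heart{\iota_{\nis,\AffSpc{1}}}$. By \cref{lemma:motivic:iota-A1-nis-t-exact-std}, the functor $\heart{\iota_{\nis,\AffSpc{1}}} \colon \heart{\SH{k}} \to \heart{\ZarShvSp{k}}$ is an exact functor of abelian categories, so it commutes with cokernels of endomorphisms. In particular, writing $A \coloneqq \pi_n(X) \in \heart{\SH{k}}$, there is a natural isomorphism
\begin{equation*}
    \pi_n(\iota_{\nis,\AffSpc{1}} X)/p^k \cong \heart{\iota_{\nis,\AffSpc{1}}}(A)/p^k \cong \heart{\iota_{\nis,\AffSpc{1}}}(A/p^k),
\end{equation*}
and $A/p^k$ again lives in $\heart{\SH{k}}$ (it is the cokernel of $p^k \colon A \to A$ in this abelian category).

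Finally, I would invoke \cref{lemma:motivic:gersten-of-strictly} applied to $A/p^k \in \heart{\SH{k}}$, which immediately yields Gersten injectivity of $\heart{\iota_{\nis,\AffSpc{1}}}(A/p^k) \cong \pi_n(\iota_{\nis,\AffSpc{1}} X)/p^k$. I do not anticipate serious obstacles here: all three steps are short identifications, and the geometric input (the Gabber presentation lemma, both in the infinite and finite field cases) has already been absorbed into \cref{lemma:motivic:gersten-of-strictly}. The only subtlety is to make sure the identification in the first step is being used in the correct hypothesis regime, i.e.\ that one only invokes the $n = 1$ case when $\pi_1(X)$ is assumed abelian so that it defines an object of $\heart{\SH{k}}$.
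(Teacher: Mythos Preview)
Your proposal is correct and follows essentially the same approach as the paper: identify $\pi_n(\iota_{\nis,\AffSpc{1}} X)$ with $\heart{\iota_{\nis,\AffSpc{1}}}\pi_n(X)$ via \cref{cor:motivic:iota-pin-if-connected}, pass the quotient $/p^k$ inside using exactness of $\heart{\iota_{\nis,\AffSpc{1}}}$ from \cref{lemma:motivic:iota-A1-nis-t-exact-std}, and then apply \cref{lemma:motivic:gersten-of-strictly}; the $n=1$ case is handled exactly as you describe via \cref{rmk:motivic:htpy-group-1-abelian}.
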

\begin{proof}
    Fix $n \ge 2$ and $k \ge 1$.
    We have equivalences 
    \begin{equation*}
        \pi_n(\iota_{\nis, \AffSpc{1}} X) / p^k \cong (\heart{\iota_{\nis, \AffSpc{1}}} \pi_n(X)) / p^k \cong \heart{\iota_{\nis, \AffSpc{1}}} (\pi_n(X) / p^k),
    \end{equation*}
    where we used \cref{cor:motivic:iota-pin-if-connected} in the first equivalence and exactness of $\heart{\iota_{\nis, \AffSpc{1}}}$ in the second equivalence,
    see \cref{lemma:motivic:iota-A1-nis-t-exact-std}.
    Thus, we conclude by \cref{lemma:motivic:gersten-of-strictly} that $\pi_n(\iota_{\nis, \AffSpc{1}} X)/p^k$
    satisfies Gersten injectivity.

    If $\pi_1(X)$ is abelian, then we can regard it as an object of $\heart{\SH{k}}$ (see \cref{rmk:motivic:htpy-group-1-abelian}).
    In this case, the same proof works.
\end{proof}

\begin{lem} \label{lemma:motivic:abelian-Li-heart}
    Let $A \in \heart{\SH{k}}$.
    Then $\nu_* \mathbb L_i \nu^* \heart{\iota_{\nis, \AffSpc{1}}} A \cong \mathbb L_i \heart{\iota_{\nis, \AffSpc{1}}} A$ for all $i$.

    In particular, $\nu_* \mathbb L_i \nu^* \heart{\iota_{\nis, \AffSpc{1}}} A \in \tpstructheart{\ShvTop{\zar}{\smooth k, \Sp}}$ for all $i$.
    Moreover, we have that $\nu_* \mathbb L_i \nu^* \heart{\iota_{\nis, \AffSpc{1}}} A \in \Cat A$,
    where $\Cat A$ is the subcategory of $\tpstructheart{\ShvTop{\zar}{\smooth{k}, \Sp}}$
    from \cref{def:embedding:A}.
\end{lem}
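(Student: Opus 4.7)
The plan is to reduce everything to \cref{lemma:embedding:Li-computation}, which says that if $(\mathbb L_1 \nu^* E)\sslash p$ is classical then $\mathbb L_i E \cong \nu_* \mathbb L_i \nu^* E$ for all $i$. Thus the central task is to verify the classicality hypothesis for $E = \heart{\iota_{\nis,\AffSpc{1}}} A$, after which the remaining two assertions follow essentially formally.

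First, I would reduce classicality of $(\mathbb L_1 \nu^* \heart{\iota_{\nis,\AffSpc{1}}} A) \sslash p$ to Gersten injectivity via \cref{cor:pro-zar:L1-classical}. The required Gersten injectivity of $(\heart{\iota_{\nis,\AffSpc{1}}} A)/p^n$ for every $n \ge 1$ comes from noting that $\iota_{\nis,\AffSpc{1}}$ is t-exact for the standard t-structures (\cref{lemma:motivic:iota-A1-nis-t-exact-std}), so $\heart{\iota_{\nis,\AffSpc{1}}}$ is an exact functor of abelian categories. Thus
\begin{equation*}
    (\heart{\iota_{\nis,\AffSpc{1}}} A)/p^n \cong \heart{\iota_{\nis,\AffSpc{1}}}(A/p^n).
\end{equation*}
Since $A/p^n \in \heart{\SH{k}}$, \cref{lemma:motivic:gersten-of-strictly} applies to give Gersten injectivity.

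Combining this with \cref{cor:pro-zar:L1-classical} and \cref{lemma:embedding:Li-computation} yields the first equivalence $\nu_* \mathbb L_i \nu^* \heart{\iota_{\nis,\AffSpc{1}}} A \cong \mathbb L_i \heart{\iota_{\nis,\AffSpc{1}}} A$ for all $i$. The second claim (landing in $\pheart{\ZarShvSp{k}}$) is then immediate, since $\mathbb L_i$ by construction takes values in the $p$-adic heart of its target (indeed $\mathbb L_i = \pi_i^p$ restricted to the standard heart, cf.\ \cref{def:t-struct:Li}).

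Finally, for the $\Cat A$-statement I would invoke \cref{lemma:embedding:pushdown-heart-criterium-A} applied to $A' \coloneqq \mathbb L_i \nu^* \heart{\iota_{\nis,\AffSpc{1}}} A \in \pheart{\PSigVal{W}{\Sp}}$. Its hypotheses are: $A' \sslash p$ is classical (established above, as $\mathbb L_i \nu^* \heart{\iota_{\nis,\AffSpc{1}}} A \sslash p$ inherits classicality from $(\mathbb L_1 \nu^* \heart{\iota_{\nis,\AffSpc{1}}} A)\sslash p$ via the fiber sequence of \cref{lemma:t-struct:decomposition-via-Li} and a diagram chase as in \cref{lemma:embedding:Li-computation}), and $\nu_* A' \in \pheart{\spectra X}$ (which is exactly the second claim just proved). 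No obstacle is expected here — all the technical work was already done in the proofs of \cref{cor:pro-zar:L1-classical} and \cref{lemma:embedding:pushdown-heart-criterium-A}; the only subtlety is to check that classicality of $(\mathbb L_1 \nu^* \heart{\iota_{\nis,\AffSpc{1}}} A)\sslash p$ propagates to $(\mathbb L_0 \nu^* \heart{\iota_{\nis,\AffSpc{1}}} A)\sslash p$, but this is done inside the proof of \cref{lemma:embedding:Li-computation}.
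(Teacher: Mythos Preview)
Your proposal is correct and follows essentially the same route as the paper: establish Gersten injectivity of $(\heart{\iota_{\nis,\AffSpc{1}}}A)/p^n$ via exactness of $\heart{\iota_{\nis,\AffSpc{1}}}$ and \cref{lemma:motivic:gersten-of-strictly}, deduce classicality from \cref{cor:pro-zar:L1-classical}, apply \cref{lemma:embedding:Li-computation} for the main equivalence, and finish the $\Cat A$-claim with \cref{lemma:embedding:pushdown-heart-criterium-A}. Your remark that classicality of $(\mathbb L_0 \nu^* \heart{\iota_{\nis,\AffSpc{1}}} A)\sslash p$ is handled inside the proof of \cref{lemma:embedding:Li-computation} is exactly what the paper uses as well.
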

\begin{proof}
    By exactness of $\heart{\iota_{\nis, \AffSpc{1}}}$ (see \cref{lemma:motivic:iota-A1-nis-t-exact-std}),
    for every $k \ge 1$ there are equivalences $(\heart{\iota_{\nis, \AffSpc{1}}}A) / p^k \cong \heart{\iota_{\nis, \AffSpc{1}}}(A/p^k)$.
    Thus, by \cref{lemma:motivic:gersten-of-strictly}, $(\heart{\iota_{\nis, \AffSpc{1}}}A) / p^k$
    satisfies Gersten injectivity for all $k$.
    This implies that $(\mathbb L_1 \nu^* \heart{\iota_{\nis, \AffSpc{1}}} A)\sslash p$ is classical, see \cref{cor:pro-zar:L1-classical}.
    Thus, the equivalence is provided by \cref{lemma:embedding:Li-computation}.
    Note that the same lemma shows that also $(\mathbb L_1 \nu^* \heart{\iota_{\nis, \AffSpc{1}}} A)\sslash p$ 
    is classical for all $i$.
    Thus, the statement about $\Cat A$ follows immediately from \cref{lemma:embedding:pushdown-heart-criterium-A}.
\end{proof}

We will need a non-abelian variant of \cref{lemma:motivic:iota-a1-EM}:
\begin{lem} \label{lemma:motivic:classifying-space-iota-nis}
    Suppose $G \in \Grp{\Disc{\MotShv{k}}}$
    is strongly $\AffSpc{1}$-invariant.
    Then $B \iota_{\nis} G \cong \iota_{\nis} BG$.
\end{lem}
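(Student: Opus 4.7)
The plan is to verify that $\iota_{\nis}BG$ satisfies the universal characterization of the Zariski classifying space $B\iota_{\nis}G$, namely that it is a pointed, connected, $1$-truncated Zariski sheaf with fundamental group $\iota_{\nis}G$, from which the conclusion is automatic in any $\infty$-topos.

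First I would observe that, since $G$ is strongly $\AffSpc{1}$-invariant, its Nisnevich classifying space $BG$ is already $\AffSpc{1}$-invariant as a Nisnevich sheaf (this is essentially Morel's characterization of strong $\AffSpc{1}$-invariance, cf.\ \cite{morel2012a1}), so $BG$ defines a pointed motivic space with $\iota_{\AffSpc{1}}BG$ equal to the Nisnevich classifying space. In particular $\iota_{\nis}BG = \iota_{\nis,\AffSpc{1}}BG$, which allows me to apply the lemmas of the previous subsection to it.

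Next, I would check the three defining properties. For $1$-truncation: $BG$ is $1$-truncated in $\MotShv{k}$ and $\iota_{\nis}$ preserves truncated objects as the right adjoint of a geometric morphism \cite[Proposition 6.3.1.9]{highertopoi}. For connectedness: I apply \cref{lemma:motivic:connected-cover-iota} to $X = BG$ with $n = 1$ to get
\begin{equation*}
    \tau_{\ge 1}\iota_{\nis,\AffSpc{1}}BG \cong \iota_{\nis}\tau_{\ge 1}\iota_{\AffSpc{1}}BG;
\end{equation*}
since $\iota_{\AffSpc{1}}BG$ is connected as a Nisnevich sheaf (it is the Nisnevich classifying space of a group), the right-hand side equals $\iota_{\nis}BG$, so $\iota_{\nis}BG$ is connected. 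For the fundamental group: since $\iota_{\nis}$ is a right adjoint it commutes with $\Omega$, giving $\Omega(\iota_{\nis}BG) \cong \iota_{\nis}(\Omega BG) = \iota_{\nis}G$, and as $\iota_{\nis}G$ is discrete we conclude $\pi_1(\iota_{\nis}BG) \cong \iota_{\nis}G$ (alternatively, invoke the $n = 1$ case of \cref{cor:motivic:iota-pin-if-connected}).

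Finally, in any $\infty$-topos a pointed connected $1$-truncated object is canonically the classifying space of its fundamental group, so the three properties force $\iota_{\nis}BG \cong B\iota_{\nis}G$, with the equivalence induced by the universal property. The main obstacle is the connectedness step: unwound, it is the nontrivial geometric statement that every Nisnevich $G$-torsor on a smooth $k$-scheme becomes trivial Zariski-locally, which is packaged inside \cref{lemma:motivic:connected-cover-iota} (and ultimately relies on the Gersten-type results for strictly $\AffSpc{1}$-invariant sheaves over perfect fields via \cref{lemma:motivic:iota-A1-nis-t-exact-std}); every other step is formal.
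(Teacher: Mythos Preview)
Your overall strategy—verifying that $\iota_{\nis}BG$ is pointed, connected, $1$-truncated with $\pi_1 \cong \iota_{\nis}G$, and then invoking the uniqueness of classifying spaces—is correct, and the arguments for $1$-truncatedness and for $\pi_1$ (via $\Omega$) are fine. The problem is the connectedness step: your appeal to \cref{lemma:motivic:connected-cover-iota} with $n=1$ and $X = BG$ is circular. If you trace through the paper's proof of that lemma, the case $n=1$ reduces to showing that $\iota_{\nis,\AffSpc{1}}Y$ is $1$-connective for a $1$-connective $1$-truncated motivic space $Y$, and then writes $Y \cong K(A,1)$ for some $A \in \heart{\SH{k}}$ in order to apply \cref{lemma:motivic:iota-a1-EM}. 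That last step presumes $\pi_1(Y)$ is abelian. For a genuinely non-abelian $G$, what the proof of \cref{lemma:motivic:connected-cover-iota} would need at this point is exactly the statement $\iota_{\nis}BG \cong B\iota_{\nis}G$—the lemma you are proving. (The same circularity affects your alternative citation of the $n=1$ case of \cref{cor:motivic:iota-pin-if-connected}.)

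The paper's own proof sidesteps this by arguing directly at Zariski stalks $T = \Spec{\mathcal{O}_{U,u}}$: it identifies $\pi_k((B\iota_{\nis}G)(U),*)$ and $\pi_k((\iota_{\nis}BG)(U),*)$ with the non-abelian cohomology sets $H^{1-k}_{\zar}(U,\iota_{\nis}G)$ and $H^{1-k}_{\nis}(U,G)$ respectively, and then invokes the non-abelian Zariski/Nisnevich comparison of \cite[Theorem 4.5]{AsokGersten}, which is valid for strongly $\AffSpc{1}$-invariant sheaves of groups (not just abelian ones). Connectedness then comes for free since $H^1_{\zar}(T,\iota_{\nis}G) = 0$ at local $T$. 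You are right that the geometric content lives in a Gersten-type comparison, but it must be invoked in its non-abelian form; the earlier lemmas you cite only package the abelian version.
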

\begin{proof}
    Since both objects are Zariski sheaves,
    it suffices to prove that for all $T = \Spec{\Cat O_{U, u}}$ the spectra of the local rings of 
    a scheme $U \in \smooth{k}$ with point $u \in U$,
    the canonical map $(B \iota_{\nis} G)(T) \to (\iota_{\nis} BG)(T)$ is an equivalence.
    Here, for a Zariski sheaf $F$ we define $F(T) \coloneqq (\nu^*F)(T) \cong \colimil{T \to V \subset U} F(V)$,
    where the colimit runs over all open neighborhoods of $T$ in $U$.
    By Whitehead's theorem and the fact that both anima are $1$-truncated,
    we can reduce to showing that the canonical map induces an equivalence 
    $\pi_k((B \iota_{\nis} G)(T)) \cong \pi_k((\iota_{\nis} BG)(T))$ for $k = 0,1$ 
    and all choices of basepoints.
    Note that both sheaves have a canonical basepoint $*$, and that we have 
    $\pi_k((B \iota_{\nis} G)(U), *) \cong H^{1-k}(U, \iota_{\nis} G)$
    and $\pi_k((\iota_{\nis} BG)(U), *) = \pi_k((BG)(U)) \cong H^{1-k}(U, G)$ for all $U \in \smooth{k}$, see \cite[Proposition 4.1.16]{morelvoevodsky}.
    Note that we have isomorphisms of cohomology groups $H^{1-k}(U, \iota_{\nis} G) \cong H^{1-k}(U, G)$ for all $k$ and $U$ by \cite[Theorem 4.5]{AsokGersten}
    (The reference uses that $k$ is an infinite field.
    If $k$ is a finite field, we can argue as in the above reference,
    using the Gabber presentation lemma for finite fields, see \cite[Theorem 1.1]{hogadi2020gabber}).

    In particular, since homotopy groups and cohomology are compatible with filtered colimits,
    we get $\pi_0((B \iota_{\nis} G)(T)) \cong H^{1} (T, \iota_{\nis} G) = 0$, since Zariski cohomology is Zariski-locally trivial.

    Thus, we immediately see that both anima in question are connected, 
    and we have to prove the equivalence on $\pi_1$ only over the canonical basepoint, 
    which we have seen above.
\end{proof}

Recall the category $\Cat A$ from \cref{def:embedding:A}.
\begin{lem} \label{lemma:motivic:iota-p-adic-in-A}
    Let $C \in \pheart{\SH{k}}$.
    Then $\pheart{\iota}_{\nis,\AffSpc{1}} \in \Cat A \subset \pheart{\MotShvSp{k}}$.
\end{lem}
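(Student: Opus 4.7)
The plan is to realize $\pheart{\iota}_{\nis,\AffSpc{1}} C$ as an extension in $\pheart{\ZarShvSp{k}}$ of two objects already known to lie in $\Cat A$, and then verify that $\Cat A$ is closed under extensions. First, by the $p$-adic t-exactness of $\iota_{\nis,\AffSpc{1}}$ (\cref{lemma:motivic:iota-A1-nis-t-exact-p-adic}), the object $\pheart{\iota}_{\nis,\AffSpc{1}} C$ is simply $\iota_{\nis,\AffSpc{1}} C$, which lands in $\pheart{\ZarShvSp{k}}$.

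Next, I will apply \cref{lemma:short-exact-sequence-of-t-structure} to $C$ at $n=0$. Using $\pi_0^p(C) = C$ (because $C \in \pheart{\SH{k}}$), this yields a short exact sequence
\begin{equation*}
    0 \to \mathbb L_0 \pi_0(C) \to C \to \mathbb L_1 \pi_{-1}(C) \to 0
\end{equation*}
in $\pheart{\SH{k}}$. Applying the exact functor $\pheart{\iota}_{\nis,\AffSpc{1}}$ and invoking \cref{lemma:motivic:iota-nis-a1-Li} to identify $\iota_{\nis,\AffSpc{1}} \mathbb L_i \pi_j(C) \cong \mathbb L_i \heart{\iota}_{\nis,\AffSpc{1}} \pi_j(C)$, I obtain a short exact sequence in $\pheart{\ZarShvSp{k}}$
\begin{equation*}
    0 \to \mathbb L_0 \heart{\iota}_{\nis,\AffSpc{1}} \pi_0(C) \to \iota_{\nis,\AffSpc{1}} C \to \mathbb L_1 \heart{\iota}_{\nis,\AffSpc{1}} \pi_{-1}(C) \to 0,
\end{equation*}
whose outer terms both live in $\Cat A$ by \cref{lemma:motivic:abelian-Li-heart}.

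It then remains to show that $\Cat A \subset \pheart{\ZarShvSp{k}}$ is closed under extensions. Given any short exact sequence $0 \to A \to B \to D \to 0$ in $\pheart{\ZarShvSp{k}}$ with $A, D \in \Cat A$, I will view it as a fiber sequence $A \to B \to D$ in $\ZarShvSp{k}$ and apply the exact functor $\nu^*$ to obtain a fiber sequence in $\PSigVal{W}{\Sp}$. The associated long exact sequence in $\pi_n^p$ contains the segment $\pi_1^p(\nu^* A) \to \pi_1^p(\nu^* B) \to \pi_1^p(\nu^* D)$; if the flanking groups vanish (by the definition of $\Cat A$), then so does the middle one, so $B \in \Cat A$. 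Combining this extension closure with the previous short exact sequence gives $\iota_{\nis,\AffSpc{1}} C \in \Cat A$.

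No step appears to pose a real obstacle: the fundamental short exact sequence, the compatibility of $\mathbb L_i$ with $\iota_{\nis,\AffSpc{1}}$, and the fact that $\mathbb L_i$ of strictly $\AffSpc{1}$-invariant sheaves lies in $\Cat A$ are all already in hand. The closest thing to a subtlety is ensuring that the application of \cref{lemma:motivic:iota-nis-a1-Li} matches the abstract $\mathbb L_i$ of \cref{def:t-struct:Li} rather than the nonabelian \cref{def:embedding:L1G}; but here $\pi_0(C)$ and $\pi_{-1}(C)$ lie in the abelian heart $\heart{\SH{k}}$, so this distinction does not arise.
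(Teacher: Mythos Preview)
Your proof is correct and takes a genuinely different, cleaner route than the paper's.

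The paper works directly from the definition of $\Cat A$: it sets $C' \coloneqq \iota_{\nis,\AffSpc{1}} C$ and shows $\pi_1^p(\nu^* C') = 0$ by applying the fundamental short exact sequence (\cref{lemma:short-exact-sequence-of-t-structure}) to $\nu^* C'$ in the pro-Zariski category. This reduces to showing $\mathbb L_1 \nu^* \pi_0(C') = 0$, which the paper establishes by first proving $(\mathbb L_1 \nu^* \iota_{\nis,\AffSpc{1}} \pi_0(C)) \sslash p$ is classical (via Gersten injectivity), then pushing down and pulling back to reduce to $\mathbb L_1 \pi_0(\iota_{\nis,\AffSpc{1}} C) = 0$, and finally using the short exact sequence once more together with the vanishing $\pi_1^p(\iota_{\nis,\AffSpc{1}} C) \cong \iota_{\nis,\AffSpc{1}} \pi_1^p(C) = 0$ from $p$-adic t-exactness.

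Your argument instead applies the short exact sequence to $C$ already in $\pheart{\SH{k}}$, pushes the resulting extension down via the exact functor $\pheart{\iota}_{\nis,\AffSpc{1}}$, identifies the outer terms with $\mathbb L_i \heart{\iota}_{\nis,\AffSpc{1}}\pi_j(C)$ (which are in $\Cat A$ by \cref{lemma:motivic:abelian-Li-heart}), and concludes by the easy observation that $\Cat A$ is closed under extensions via the long exact sequence for $\pi_*^p$. This is shorter and avoids the detour through classicality and the $\nu^*\dashv\nu_*$ shuffle; the paper's approach, on the other hand, stays closer to the raw definition of $\Cat A$ and does not need to observe extension closure separately.
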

\begin{proof}
    Write $C' \coloneqq \pheart{\iota}_{\nis,\AffSpc{1}} C \cong \iota_{\nis,\AffSpc{1}} C$ (see \cref{lemma:motivic:iota-A1-nis-t-exact-p-adic}
    for the equivalence).
    We have to show that $\pi_1^p(\nu^* C') \cong 0$.
    Note that by \cref{lemma:short-exact-sequence-of-t-structure} there is a short exact sequence 
    \begin{equation*}
        0 \to \mathbb L_0 \pi_1(\nu^* C') \to \pi_1^p(\nu^* C') \to \mathbb L_1 \pi_0(\nu^* C') \to 0.
    \end{equation*}
    By \cref{lemma:t-struct:char-of-cocon}, we know that $C' \in \tcocon[0]{\ZarShvSp{k}}$.
    Thus, $\pi_1(\nu^* C') \cong \nu^* \pi_1(C') \cong 0$.
    Hence, it suffices to prove that $\mathbb L_1 \pi_0(\nu^* C') \cong \mathbb L_1 \nu^* \pi_0(C') = 0$.
    But note that $\pi_0(C') = \pi_0(\iota_{\nis,\AffSpc{1}} C) \cong \iota_{\nis,\AffSpc{1}} \pi_0(C)$
    by \cref{lemma:motivic:iota-A1-nis-t-exact-std}.
    Since $\mathbb L_1 \nu^* \iota_{\nis,\AffSpc{1}} \pi_0(C)$ is $p$-complete (e.g. by  \cref{lemma:t-struct:char-of-cocon}),
    it suffices to show that $(\mathbb L_1 \nu^* \iota_{\nis,\AffSpc{1}} \pi_0(C)) \sslash p = 0$.
    Note that this sheaf is classical by \cref{cor:pro-zar:L1-classical},
    where we used that $(\iota_{\nis,\AffSpc{1}} \pi_0(C)) / p^n \cong \iota_{\nis,\AffSpc{1}} (\pi_0(C) / p^n)$
    satisfies Gersten injectivity (see \cref{lemma:motivic:iota-A1-nis-t-exact-std} for the first equivalence,
    and \cref{lemma:motivic:gersten-of-strictly} for the claim about the Gersten injectivity).
    Thus, we calculate 
    \begin{align*}
        (\mathbb L_1 \nu^* \iota_{\nis,\AffSpc{1}} \pi_0(C)) \sslash p 
        &\cong \nu^* \nu_* ((\mathbb L_1 \nu^* \iota_{\nis,\AffSpc{1}} \pi_0(C)) \sslash p) \\
        &\cong \nu^* ((\nu_* \mathbb L_1 \nu^* \iota_{\nis,\AffSpc{1}} \pi_0(C)) \sslash p) \\
        &\cong \nu^* ((\mathbb L_1 \iota_{\nis,\AffSpc{1}} \pi_0(C)) \sslash p) \\
        &\cong \nu^* ((\mathbb L_1 \pi_0(\iota_{\nis,\AffSpc{1}} C)) \sslash p),
    \end{align*}
    where we used that the sheaf is classical in the first equivalence,
    exactness of $\nu_*$ in the second equivalence,
    \cref{lemma:embedding:Li-computation} in the third equivalence,
    and \cref{lemma:motivic:iota-A1-nis-t-exact-std} in the last equivalence.
    Therefore, it suffices to prove that $\mathbb L_1 \pi_0(\iota_{\nis,\AffSpc{1}} C) = 0$.
    Again, \cref{lemma:short-exact-sequence-of-t-structure} supplies us with a short exact sequence 
    \begin{equation*}
        0 \to \mathbb L_0 \pi_1(\iota_{\nis,\AffSpc{1}} C) \to \pi_1^p(\iota_{\nis,\AffSpc{1}} C) \to \mathbb L_1 \pi_0(\iota_{\nis,\AffSpc{1}} C) \to 0.
    \end{equation*}
    But we have $\pi_1^p(\iota_{\nis,\AffSpc{1}} C) \cong \iota_{\nis,\AffSpc{1}} \pi_1^p(C) \cong 0$,
    where we used \cref{lemma:motivic:iota-A1-nis-t-exact-p-adic} 
    in the first equivalence and the assumption that $C \in \pheart{\SH{k}}$ in the second equivalence.
    This proves the lemma.
\end{proof}

\begin{lem} \label{lemma:motivic:pi1-in-heart}
    Let $G \in \Grp{\Disc{\MotShv{k}}}$ be a nilpotent sheaf of groups,
    which is strongly $\AffSpc{1}$-invariant.
    Then $\mathbb L_1 \iota_{\nis} G \in \tpstructheart{\ZarShvSp{k}}$,
    where we use \cref{def:embedding:L1G}.
\end{lem}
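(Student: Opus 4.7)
The plan is to proceed by induction on the length of a principal refinement of the Postnikov tower of $BG \in \MotSpc{k}_*$. Since $BG$ is a nilpotent motivic space, \cite[Theorem 3.3.13]{asok2022localization} provides a sequence of central extensions $1 \to A_k \to G_k \to G_{k-1} \to 1$ of strongly $\AffSpc{1}$-invariant sheaves of groups with $G_0 = 1$, $G_m = G$, and each $A_k$ the abelian sheaf underlying a strictly $\AffSpc{1}$-invariant $\tilde A_k \in \heart{\SH{k}}$, corresponding to fiber sequences $BG_k \to BG_{k-1} \to K(\tilde A_k, 2)$ in $\MotSpc{k}_*$. The stronger inductive statement I prove is that $(\mathbb L_1 \nu^* \iota_{\nis} G_k) \sslash p$ is classical; this implies $\mathbb L_1 \iota_{\nis} G_k \in \pheart{\ZarShvSp{k}}$ (in fact in $\Cat A$) by \cref{lemma:embedding:pushdown-heart,lemma:embedding:pushdown-heart-criterium-A}. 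The base case $G_0 = 1$ is trivial.

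For the inductive step, applying $\iota_{\nis}$ (using \cref{lemma:motivic:classifying-space-iota-nis}, valid thanks to strong $\AffSpc{1}$-invariance) and $\nu^*$ to the fiber sequence, together with \cref{lemma:motivic:iota-a1-EM}, yields a fiber sequence in $\PSig{W}_*$:
\[
B\nu^* \iota_{\nis} G_k \to B\nu^* \iota_{\nis} G_{k-1} \to K(\nu^* A_k^{\zar}, 2),
\]
where $A_k^{\zar} \coloneqq \heart{\iota_{\nis,\AffSpc{1}}} \tilde A_k$. By \cref{lemma:fiber-lemma}, $p$-completion is compatible with this fiber sequence up to connected cover. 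Combining the associated long exact sequence with $\pi_3(\completebr{BG_{k-1}}) = 0$ (from \cref{cor:topos:truncation-of-completion}, since $BG_{k-1}$ is $1$-truncated) and \cref{cor:completion-of-EM-space} (computing $\pi_2 \cong \mathbb L_0$ and $\pi_3 \cong \mathbb L_1$ of $\completebr{K(A,2)}$) extracts the four-term exact sequence in $\pheart{\PSig{W, \Sp}}$:
\[
0 \to \mathbb L_1 \nu^* A_k^{\zar} \to \mathbb L_1 \nu^* \iota_{\nis} G_k \to \mathbb L_1 \nu^* \iota_{\nis} G_{k-1} \xrightarrow{\gamma} \mathbb L_0 \nu^* A_k^{\zar}.
\]
I then apply \cref{prop:embedding:heart-exact-sequence} to this sequence: the boundary terms $\mathbb L_1 \nu^* A_k^{\zar}$ and $\mathbb L_0 \nu^* A_k^{\zar}$ lie in the essential image of $\nupu|_{\Cat A}$ by \cref{lemma:motivic:abelian-Li-heart} (combined with \cref{lemma:embedding:Li-computation} to obtain the $\mathbb L_0$-classicality from the $\mathbb L_1$-classicality), and $\mathbb L_1 \nu^* \iota_{\nis} G_{k-1}$ does by the induction hypothesis combined with \cref{cor:embedding:classical-char}. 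The remaining hypothesis to check is that $\coker(\nupl \gamma) \in \Cat A$.

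The hard part will be verifying this cokernel condition. To address it, I extend the long exact sequence one further step through $\mathbb L_0 \nu^* A_k^{\zar} \to \mathbb L_0 \nu^* \iota_{\nis} G_k \to \mathbb L_0 \nu^* \iota_{\nis} G_{k-1} \to 1$; exactness at $\mathbb L_0 \nu^* A_k^{\zar}$ identifies $\coker(\gamma) \cong \operatorname{im}(\mathbb L_0 \nu^* A_k^{\zar} \to \mathbb L_0 \nu^* \iota_{\nis} G_k)$ as a central abelian subsheaf of the $p$-completed central extension. Classicality of $\coker(\gamma) \sslash p$ is then to be established by iteratively applying \cref{cor:embedding:pushdown-ses} to the three short exact sequences
\begin{align*}
    &0 \to \mathbb L_1 \nu^* A_k^{\zar} \to \mathbb L_1 \nu^* \iota_{\nis} G_k \to \ker(\gamma) \to 0, \\
    &0 \to \ker(\gamma) \to \mathbb L_1 \nu^* \iota_{\nis} G_{k-1} \to \operatorname{im}(\gamma) \to 0, \\
    &0 \to \operatorname{im}(\gamma) \to \mathbb L_0 \nu^* A_k^{\zar} \to \coker(\gamma) \to 0
\end{align*}
obtained from the extended long exact sequence, together with the known classicalities of $\mathbb L_i \nu^* A_k^{\zar} \sslash p$ from \cref{lemma:motivic:abelian-Li-heart} and of $\mathbb L_1 \nu^* \iota_{\nis} G_{k-1} \sslash p$ from the induction hypothesis. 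The key subtlety I expect is consistently breaking the mutual dependency among the four unknown classicality conditions, which will likely require a direct structural analysis of $\coker(\gamma)$ as a central abelian subquotient of $\mathbb L_0 \nu^* A_k^{\zar}$ via Gersten injectivity for strictly $\AffSpc{1}$-invariant sheaves (\cref{lemma:motivic:gersten-of-strictly}), exploiting an explicit description of the mod-$p$ reduction analogous to \cref{lemma:pro-zar:L1-computation}.
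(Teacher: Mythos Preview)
Your overall strategy matches the paper's: induct along the central series, extract the four-term exact sequence in $\pheart{\PSig{W,\Sp}}$, and apply \cref{prop:embedding:heart-exact-sequence}. The gap is exactly where you yourself flag it: verifying the hypothesis $\coker(\nupl\gamma)\in\Cat A$.

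Your proposed route through \cref{cor:embedding:pushdown-ses} and the three short exact sequences is circular and cannot be closed by Gersten-style arguments. To use the first sequence $0\to\mathbb L_1\nu^*A_k^{\zar}\to\mathbb L_1\nu^*\iota_{\nis}G_k\to\ker(\gamma)\to 0$ you would need $(\mathbb L_1\nu^*\iota_{\nis}G_k)\sslash p$ classical, which is the very thing you are proving; starting instead from the third sequence requires $\coker(\gamma)\sslash p$ classical, which is what you want for the hypothesis. There is no entry point. The Gersten-injectivity machinery of \cref{lemma:pro-zar:L1-computation} only controls $\mathbb L_1$ of an \emph{abelian} sheaf in the standard heart, not of a nilpotent group object, so it gives no direct handle on $\coker(\gamma)$ either.

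The paper resolves this by strengthening the inductive hypothesis: one proves not only that $\mathbb L_1\iota_{\nis}G_{k-1}\in\Cat A$, but in addition that it lies in the essential image of $\pheart{\iota}_{\nis,\AffSpc{1}}\colon\pheart{\SH{k}}\hookrightarrow\pheart{\ZarShvSp{k}}$. Since $\mathbb L_0\heart{\iota}_{\nis,\AffSpc{1}}\tilde A_k\cong\pheart{\iota}_{\nis,\AffSpc{1}}\pi_0^p(\tilde A_k)$ also lies in this image, and $\pheart{\iota}_{\nis,\AffSpc{1}}$ is an exact fully faithful functor of abelian categories (\cref{lemma:motivic:iota-A1-nis-t-exact-p-adic}), the cokernel $\coker(\nupl\gamma)$ is again of the form $\pheart{\iota}_{\nis,\AffSpc{1}}C$ for some $C\in\pheart{\SH{k}}$. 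Then \cref{lemma:motivic:iota-p-adic-in-A} gives $\coker(\nupl\gamma)\in\Cat A$ directly. After applying \cref{prop:embedding:heart-exact-sequence}, one must also propagate the strengthened hypothesis to $\mathbb L_1\iota_{\nis}G_k$; this follows from the induced short exact sequence $0\to\mathbb L_1\heart{\iota}_{\nis,\AffSpc{1}}\tilde A_k\to\mathbb L_1\iota_{\nis}G_k\to K\to 0$ with $K=\ker(\nupl\gamma)$, since both outer terms are in the image of $\pheart{\iota}_{\nis,\AffSpc{1}}$ and this image is closed under extensions.
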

\begin{proof}
    Using \cite[Proposition 3.2.3]{asok2022localization},
    we see that $G$ is in particular $\AffSpc{1}$-nilpotent, in the sense
    of \cite[Definition 3.2.1 (3)]{asok2022localization}.
    Thus, there is a $G$-central series $G = G_0 \supset G_{1} \supset \dots \supset G_n = 1$
    (i.e.\ the $G_i$ are sheaves of normal subgroups and the quotients $A_i \coloneqq G_{i} /G_{i+1}$ have trivial $G$ action (via conjugation)),
    such that the $A_i$ are again strongly $\AffSpc{1}$-invariant.
    Moreover, the $G_i$ are strongly $\AffSpc{1}$-invariant (\cite[Remark 3.2.2 (1)]{asok2022localization}).
    Since the $A_i$ are abelian (\cite[Remark 3.2.2 (3)]{asok2022localization}) and
    strongly $\AffSpc{1}$-invariant, they are strictly $\AffSpc{1}$-invariant by \cite[Theorem 4.46]{morel2012a1}.
    Note that we have central extensions of groups
    \begin{equation*}
        1 \to A_i \to G/G_{i+1} \to G/G_{i} \to 1,
    \end{equation*}
    see \cite[Remark 3.2.2 (3)]{asok2022localization}.
    This extension is classified by a fiber sequence
    \begin{equation*}
        B(G/G_{i+1}) \to B(G/G_{i}) \to K(\heart{\iota}_{\AffSpc{1}} \tilde{A}_i, 2),
    \end{equation*}
    where $\tilde{A_i} \in \heart{\SH{k}}$ corresponds to the strictly $\AffSpc{1}$-invariant 
    sheaf of abelian groups $A_i$.
    Thus, we can proceed by induction.
    Recall the definition of the full subcategory $\Cat A \subset \pheart{\ZarShvSp{k}}$ from \cref{def:embedding:A}.
    We will inductively prove that $\mathbb L_1 \iota_{\nis} (G/G_i) \in \Cat A \subset \tpstructheart{\ZarShvSp{k}}$,
    and that $\mathbb L_1 \iota_{\nis} (G/G_i)$ is actually an $\AffSpc{1}$-invariant Nisnevich sheaf of spectra 
    living in the $p$-adic heart, i.e.\ there is a $B \in \pheart{\SH{k}}$
    with $\iota_{\nis,\AffSpc{1}} B \cong \mathbb L_1 \iota_{\nis} (G/G_i)$.
    The base case $G/G_0 = G/G = 1$ is trivial.

    So suppose the statement holds for $G/G_{i}$.
    Since $\iota_{\nis}$ preserves limits (as a right adjoint) and $\nu^*$
    preserves finite limits (as the left adjoint of a geometric morphism),
    we get a fiber sequence 
    \begin{equation*}
        \nu^* \iota_{\nis} B(G/G_{i+1}) \to \nu^* \iota_{\nis} B(G/G_{i}) \to \nu^* \iota_{\nis} K(\heart{\iota}_{\AffSpc{1}} \tilde{A}_i, 2).
    \end{equation*}
    Since all involved groups are strongly $\AffSpc{1}$-invariant and nilpotent,
    this fiber sequence is equivalent to the fiber sequence 
    \begin{equation*}
        \nu^* B(\iota_{\nis} (G/G_{i+1})) \to \nu^* B(\iota_{\nis} (G/G_i)) \to \nu^* K(\heart{\iota}_{\nis,\AffSpc{1}} \tilde{A}_i, 2),
    \end{equation*}
    see \cref{lemma:motivic:iota-a1-EM,lemma:motivic:classifying-space-iota-nis}. Now \cref{lemma:fiber-lemma} implies that we have a fiber sequence 
    \begin{equation*}
        \tau_{\ge 1} \completebr{\nu^* B(\iota_{\nis} (G/G_{i+1}))} \to \completebr{\nu^* B(\iota_{\nis} (G/G_i))} \to \completebr{\nu^* K(\heart{\iota}_{\nis,\AffSpc{1}} \tilde{A}_i, 2)}.
    \end{equation*}
    But $\completebr{\nu^* B(\iota_{\nis} (G/G_{i+1}))}$ is already connected, see \cref{lemma:psig-connectivity-of-completion}.
    Thus, we arrive at the fiber sequence
    \begin{equation*}
        \completebr{\nu^* B\iota_{\nis}(G/G_{i+1})} \to \completebr{\nu^* B\iota_{\nis}(G/G_{i})} \to \completebr{\nu^*K(\heart{\iota}_{\nis,\AffSpc{1}} \tilde{A}_i, 2)}.
    \end{equation*}
    Thus, using the long exact sequence and the fact that the $p$-completion of a $k$-truncated object is
    $(k+1)$-truncated (see \cref{cor:topos:truncation-of-completion}), we get an exact sequence in $\tstructheart{\PSigVal{W}{\Sp}}$
    \begin{equation*}
        0 \to \pi_3\completebr{\nu^*K(\heart{\iota}_{\nis,\AffSpc{1}} \tilde{A}_i, 2)} \to \mathbb L_1 \nu^* \iota_{\nis}(G/G_{i+1}) \to \mathbb L_1 \nu^* \iota_{\nis}(G/G_{i}) \to \pi_2 \completebr{\nu^*K(\heart{\iota}_{\nis,\AffSpc{1}} \tilde{A}_i, 2)},
    \end{equation*}
    where we use \cref{def:psig:L1G} for $\mathbb L_1$.
    Using \cref{lemma:psig:short-exact-sequence}, we can identify 
    \begin{equation*}
        \pi_k \left(\completebr{\nu^*K(\heart{\iota}_{\nis,\AffSpc{1}} \tilde{A}_i, 2)}\right) \cong \mathbb L_{k-2} \nu^* \heart{\iota}_{\nis,\AffSpc{1}} \tilde{A}_i
    \end{equation*}
    for $k = 2,3$.
    Thus, we arrive at the exact sequence in $\tpstructheart{\PSigVal{W}{\Sp}}$:
    \begin{equation*}
        0 \to \mathbb L_1 \nu^* \heart{\iota}_{\nis,\AffSpc{1}} \tilde{A}_i \to \mathbb L_1 \nu^* \iota_{\nis} (G/G_{i+1}) \to \mathbb L_1 \nu^* \iota_{\nis} (G/G_{i}) \to \mathbb L_0 \nu^* \heart{\iota}_{\nis,\AffSpc{1}} \tilde{A}_{i}.
    \end{equation*}

    We want to apply \cref{prop:embedding:heart-exact-sequence} to this exact sequence.
    We first check the assumptions on the outer two terms involving $\tilde{A}_i$.
    We know that $\nu_* \mathbb L_k \nu^* \heart{\iota}_{\nis, \AffSpc{1}} \tilde{A}_i \cong \mathbb L_k \heart{\iota}_{\nis,\AffSpc{1}} \tilde{A}_i$
    for all $k$,
    and that it lives in $\Cat A$, see \cref{lemma:motivic:abelian-Li-heart}
    Therefore, we also get $\nupu \mathbb L_k \heart{\iota}_{\nis,\AffSpc{1}} \tilde{A}_i \cong \nupu \nu_* \mathbb L_k \nu^* \heart{\iota}_{\nis,\AffSpc{1}} \tilde{A}_i \cong \mathbb L_k \nu^* \heart{\iota}_{\nis, \AffSpc{1}} \tilde{A}_i$
    for all $k$, see \cref{cor:embedding:classical-char} for the second equivalence.

    By induction, $\mathbb L_1 \iota_{\nis} (G/G_i) \cong \nu_* \mathbb L_1 \nu^* \iota_{\nis} (G/G_i) \in \Cat A \subset \tpstructheart{\ZarShvSp{k}}$.
    In particular, $\nupu \mathbb L_1 \iota_{\nis} (G/G_i) \cong \nupu \nu_* \mathbb L_1 \nu^* \iota_{\nis} (G/G_i) \cong \mathbb L_1 \nu^* \iota_{\nis} (G/G_i)$,
    where we also used \cref{cor:embedding:classical-char} for the second equivalence.

    Thus, we are left to show that $\coker(\mathbb L_1 \iota_{\nis} (G/G_{i}) \to \mathbb L_0 \heart{\iota}_{\nis,\AffSpc{1}} \tilde{A}_{i}) \in \Cat A$:
    First note that $\mathbb L_0 \heart{\iota}_{\nis,\AffSpc{1}} \tilde{A}_{i} \cong \pi_0^p(\iota_{\nis, \AffSpc{1}} \tilde{A}_i) \cong \iota_{\nis,\AffSpc{1}} \pi_0^p (\tilde{A}_i) \cong \pheart{\iota}_{\nis,\AffSpc{1}} \pi_0^p (\tilde{A}_i)$
    by t-exactness for the standard and $p$-adic t-structures of $\iota_{\nis,\AffSpc{1}}$,
    see \cref{lemma:motivic:iota-A1-nis-t-exact-std,lemma:motivic:iota-A1-nis-t-exact-p-adic}.
    By induction, there is a $B \in \pheart{\SH{k}}$ with $\pheart{\iota}_{\nis,\AffSpc{1}} B \cong \mathbb L_1 \iota_{\nis} (G/G_{i})$.
    Therefore, again by exactness and fully faithfulness of $\pheart{\iota}_{\nis,\AffSpc{1}}$, the cokernel is also of the form $\pheart{\iota}_{\nis,\AffSpc{1}} C$ for some $C \in \pheart{\SH{k}}$.
    Thus, we immediately get that the cokernel is in $\Cat A$, see \cref{lemma:motivic:iota-p-adic-in-A}.

    We can now apply \cref{prop:embedding:heart-exact-sequence},
    which allows us to deduce that also 
    $\mathbb L_1 \iota_{\nis} (G/G_{i+1}) = \nu_* \mathbb L_1 \nu^* \iota_{\nis} (G/G_{i+1}) \in \Cat A \subset \heart{\ZarShvSp{k}}$.

    Moreover, there is now an exact sequence 
    \begin{equation*}
        0 \to \mathbb L_1 \heart{\iota}_{\nis,\AffSpc{1}} \tilde{A_i} \to \mathbb L_1 \iota_{\nis}(G/G_{i+1}) \to K \to 0,
    \end{equation*}
    where $K \coloneqq \ker(\mathbb L_1 \iota_{\nis}(G/G_i) \to \mathbb L_0 \heart{\iota_{\nis,\AffSpc{1}}} \tilde{A_i})$.
    We have seen above that $\mathbb L_k \heart{\iota_{\nis,\AffSpc{1}}} \tilde{A_i}$
    is in fact an $\AffSpc{1}$-invariant Nisnevich sheaf of spectra, living in the $p$-adic heart (for $k = 0,1$).
    By induction, the same is true for $\mathbb L_1 \iota_{\nis}(G/G_i)$.
    Exactness of $\pheart{\iota}_{\nis,\AffSpc{1}}$ implies that this also holds for the kernel $K$.
    Thus, $\mathbb L_1 \iota_{\nis}(G/G_{i+1})$ sits in a short exact sequence where 
    the outer terms are $\AffSpc{1}$-invariant Nisnevich sheaves of spectra, living in the $p$-adic heart.
    From this we deduce immediately that the same is true for $\mathbb L_1 \iota_{\nis}(G/G_{i+1})$.
    This concludes the induction.
\end{proof}

\begin{defn} \label{def:motivic:completed-htpy}
    Let $X \in \MotSpc{k}_*$ be a pointed motivic space.
    For every $n \ge 2$ we define the \emph{$p$-completed homotopy groups} of $X$
    via 
    \begin{equation*}
        \pi^p_n(X) \coloneqq L_{\nis, \AffSpc{1}} \pi^p_n (\iota_{\nis, \AffSpc{1}} X) \in \SH{k},
    \end{equation*}
    and for $n = 1$ via 
    \begin{equation*}
        \pi^p_1(X) \coloneqq L_{\nis} \pi^p_1(\iota_{\nis, \AffSpc{1}} X) \in \Grp{\Disc{\MotShv{k}}}.
    \end{equation*}
    (Recall \cref{def:pro-zar:completed-htpy} for the $p$-completed homotopy groups
    of $\iota_{\nis,\AffSpc{1}} X$.)
\end{defn}

\begin{rmk}
    Let $X \in \MotSpc{k}_*$ be a pointed space.
    We will show in \cref{thm:motivic:ses}
    that if $X$ is nilpotent, then actually
    $\pi^p_n(X) \in \tpstructheart{\SH{k}}$ if $n \ge 2$.
    Thus, the name $p$-completed homotopy \emph{group} is justified.
\end{rmk}

\begin{lem} \label{lemma:motivic:p-complete-htpy-ios-to-completion}
    Let $X \in \MotSpc{k}_*$ be nilpotent.
    Then the canonical map induces an equivalence $\pi_n^p(X) \to \pi_n^p(\iota_{\ge 1} (\completebr{\tau_{\ge 1} X}))$
    for all $n \ge 1$.
\end{lem}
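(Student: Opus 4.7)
The canonical map in question arises as follows: since $X$ is nilpotent and thus connected, the unit of the adjunction $\iota_{\ge 1} \dashv \tau_{\ge 1}$ gives an equivalence $X \cong \iota_{\ge 1} \tau_{\ge 1} X$, and composing this with $\iota_{\ge 1}$ applied to the $p$-completion unit $\tau_{\ge 1} X \to \completebr{\tau_{\ge 1} X}$ yields a morphism $X \to \iota_{\ge 1}(\completebr{\tau_{\ge 1} X})$, whose image under $\pi_n^p$ is the map we must show is an equivalence. My plan is to unwind the definition of $\pi_n^p$ on motivic spaces as $L_{\nis,\AffSpc{1}} \pi_n^p(\iota_{\nis,\AffSpc{1}}(-))$ (and analogously $L_{\nis} \pi_n^p(\iota_{\nis,\AffSpc{1}}(-))$ for $n = 1$), and then reduce to showing that applying $\iota_{\nis,\AffSpc{1}}$ to the canonical map yields (up to the identifications provided by the earlier results) the $p$-completion unit of the underlying Zariski sheaf, where the result is already known.

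The central computation is the identification of the underlying Zariski sheaf of $\iota_{\ge 1}(\completebr{\tau_{\ge 1} X})$. Applying $\iota_{\AffSpc{1},\ge 1}$ and using \cref{thm:motivic:iota-a1-completion} (whose hypotheses are satisfied since $\tau_{\ge 1} X$ is nilpotent in $\MotSpc{k}_{\ge 1,*}$ by \cref{lemma:motivic:nilpotent}) gives an equivalence
\begin{equation*}
    \iota_{\AffSpc{1},\ge 1}(\completebr{\tau_{\ge 1} X}) \cong \completebr{\iota_{\AffSpc{1},\ge 1} \tau_{\ge 1} X} \cong \completebr{\iota_{\AffSpc{1}} X},
\end{equation*}
where the second equivalence uses $\iota_{\ge 1} \tau_{\ge 1} X \cong X$. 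Applying $\iota_{\nis}$ and invoking \cref{lemma:motivic:iota-completion}, we obtain
\begin{equation*}
    \iota_{\nis,\AffSpc{1}}\bigl(\iota_{\ge 1}(\completebr{\tau_{\ge 1} X})\bigr) \cong \iota_{\nis}(\completebr{\iota_{\AffSpc{1}} X}) \cong \completebr{\iota_{\nis,\AffSpc{1}} X}.
\end{equation*}
A diagram chase (tracking units of adjunctions and the equivalences in the cited lemmas) should show that under this identification, $\iota_{\nis,\AffSpc{1}}$ applied to the canonical map $X \to \iota_{\ge 1}(\completebr{\tau_{\ge 1} X})$ coincides with the $p$-completion unit $\iota_{\nis,\AffSpc{1}} X \to \completebr{\iota_{\nis,\AffSpc{1}} X}$.

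Granting this naturality, the conclusion is immediate: by \cref{lemma:pro-zar:completed-htpy-stable-under-peq}, the $p$-completion unit induces an equivalence
\begin{equation*}
    \pi_n^p(\iota_{\nis,\AffSpc{1}} X) \xrightarrow{\simeq} \pi_n^p(\completebr{\iota_{\nis,\AffSpc{1}} X}) \cong \pi_n^p\bigl(\iota_{\nis,\AffSpc{1}}(\iota_{\ge 1}(\completebr{\tau_{\ge 1} X}))\bigr)
\end{equation*}
for all $n \ge 1$. Applying $L_{\nis,\AffSpc{1}}$ (respectively $L_{\nis}$ when $n = 1$) and using \cref{def:motivic:completed-htpy} then produces the desired equivalence $\pi_n^p(X) \cong \pi_n^p(\iota_{\ge 1}(\completebr{\tau_{\ge 1} X}))$. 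The main obstacle I anticipate is not the chain of identifications itself, which follows mechanically from the earlier structural results, but rather confirming that the naturality square identifying the two maps (the image of the canonical map under $\iota_{\nis,\AffSpc{1}}$ versus the $p$-completion unit of $\iota_{\nis,\AffSpc{1}} X$) actually commutes; this requires unwinding the construction of the equivalences in \cref{thm:motivic:iota-a1-completion} and \cref{lemma:motivic:iota-completion} carefully enough to see that they are the expected ones obtained from the universal property of $p$-completion.
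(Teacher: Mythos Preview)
Your proposal is correct and follows essentially the same route as the paper's proof: identify $\iota_{\nis,\AffSpc{1}}\bigl(\iota_{\ge 1}(\completebr{\tau_{\ge 1} X})\bigr)$ with $\completebr{\iota_{\nis,\AffSpc{1}} X}$ via \cref{thm:motivic:iota-a1-completion} and \cref{lemma:motivic:iota-completion}, observe that the canonical map thereby becomes a $p$-equivalence of Zariski sheaves, and conclude via \cref{lemma:pro-zar:completed-htpy-stable-under-peq}. The only difference is cosmetic: the paper cites the ``In particular'' clause of \cref{lemma:motivic:iota-completion} directly (which already packages the combination with \cref{thm:motivic:iota-a1-completion}), and it does not pause over the naturality issue you flag, simply asserting that the map is a $p$-equivalence once the target has been identified with the $p$-completion of the source. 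Your caution here is reasonable, but the compatibility does follow from how those equivalences are constructed (they are built from unit maps and their $p$-completions), so the paper's terser treatment is justified.
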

\begin{proof}
    We know $\iota_{\nis, \AffSpc{1},\ge 1} (\completebr{\tau_{\ge 1} X}) \cong \completebr{\iota_{\nis,\AffSpc{1},\ge 1} \tau_{\ge 1}X} \cong \completebr{\iota_{\nis,\AffSpc{1}} X}$
    from \cref{lemma:motivic:iota-completion} and the fact that $X$ is connected because it is nilpotent.
    Therefore, the map $\iota_{\nis, \AffSpc{1}} X \to \iota_{\nis, \AffSpc{1},\ge 1}(\completebr{\tau_{\ge 1}X})$ is a $p$-equivalence.
    Thus, we conclude from \cref{lemma:pro-zar:completed-htpy-stable-under-peq}
    that $\pi_n^p(X) \to \pi_n^p(\iota_{\ge 1}(\completebr{\tau_{\ge 1}X}))$ is an equivalence 
    (note that by definition $\pi_n^p(X) = L_{\nis, \AffSpc{1}}\pi_n^p(\iota_{\nis, \AffSpc{1}} X)$
    and $\pi_n^p(\iota_{\ge 1}(\completebr{\tau_{\ge 1}X})) = L_{\nis, \AffSpc{1}}\pi_n^p(\iota_{\nis, \AffSpc{1},\ge 1}(\completebr{\tau_{\ge 1}X}))$
    if $n \ge 2$, and similarly for $n = 1$).
\end{proof}

\begin{prop} \label{lemma:motivic:p-complete-htpy-iso-if-peq}
    Let $f \colon X \to Y$ be a morphism in $\MotSpc{k}_*$ of pointed nilpotent spaces,
    and $n \ge 1$.
    If $f$ is a $p$-equivalence, then $\pi_n^p(f)$ is an equivalence.
\end{prop}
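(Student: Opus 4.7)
The plan is to reduce the statement to the corresponding fact for Zariski sheaves, namely \cref{lemma:pro-zar:completed-htpy-stable-under-peq}. The key intermediate claim will be that $\iota_{\nis,\AffSpc{1}} f$ is itself a $p$-equivalence in $\ZarShv{k}_*$. Once this is established, \cref{lemma:pro-zar:completed-htpy-stable-under-peq} gives that $\pi_n^p(\iota_{\nis,\AffSpc{1}} f)$ is an equivalence for all $n \ge 1$, and applying $L_{\nis,\AffSpc{1}}$ (for $n \ge 2$) or $L_{\nis}$ (for $n = 1$) to the defining formulas in \cref{def:motivic:completed-htpy} yields the conclusion that $\pi_n^p(f)$ is an equivalence.

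To produce this intermediate claim I would proceed in two stages. First, since $X$ and $Y$ are nilpotent they are connected, so we can view $f$ as a morphism in $\MotSpc{k}_{\ge 1,*}$; by \cref{lemma:motivic:connected-stabilization}, $f$ is still a $p$-equivalence there. The presentable category $\MotSpc{k}_{\ge 1,*}$ admits its own $p$-completion functor, and \cref{lemma:topos:peq-if-completion-eq} tells us that $\complete{f}$ (formed in $\MotSpc{k}_{\ge 1,*}$) is an equivalence. Applying the fully faithful functor $\iota_{\AffSpc{1},\ge 1}$ and invoking \cref{thm:motivic:iota-a1-completion} to identify $\iota_{\AffSpc{1},\ge 1}(\complete{X}) \cong \completebr{\iota_{\AffSpc{1},\ge 1} X}$ (and similarly for $Y$), we conclude that $\completebr{\iota_{\AffSpc{1}} f}$ is an equivalence in $\MotShv{k}_*$; by \cref{lemma:topos:peq-if-completion-eq} applied to the $\infty$-topos $\MotShv{k}$, this means $\iota_{\AffSpc{1}} f$ is a $p$-equivalence of Nisnevich sheaves.

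Second, I would apply $\iota_{\nis}$ to this and use \cref{lemma:motivic:iota-completion} to transfer the identification, writing $\iota_{\nis}(\completebr{\iota_{\AffSpc{1}} f}) \cong \completebr{\iota_{\nis,\AffSpc{1}} f}$. Hence $\completebr{\iota_{\nis,\AffSpc{1}} f}$ is an equivalence in $\ZarShv{k}_*$, and a final application of \cref{lemma:topos:peq-if-completion-eq} — now in the $\infty$-topos $\ZarShv{k}$ — shows that $\iota_{\nis,\AffSpc{1}} f$ is a $p$-equivalence of pointed Zariski sheaves. This completes the reduction and the remaining step is the aforementioned invocation of \cref{lemma:pro-zar:completed-htpy-stable-under-peq} followed by applying the localization $L_{\nis,\AffSpc{1}}$ (or $L_{\nis}$ for $n = 1$).

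I do not anticipate a genuine obstacle: all the compatibility between $p$-completion and the embeddings $\iota_{\AffSpc{1}}$ and $\iota_{\nis}$ has already been established, and nilpotence is hypothesized precisely so that both \cref{thm:motivic:iota-a1-completion} and \cref{lemma:motivic:iota-completion} apply. The only mildly delicate bookkeeping is to momentarily switch to $\MotSpc{k}_{\ge 1,*}$ in order to invoke \cref{thm:motivic:iota-a1-completion}, whose statement requires a connected motivic space as input; this is harmless because nilpotent objects are connected and \cref{lemma:motivic:connected-stabilization} ensures that $p$-equivalences do not change under this passage.
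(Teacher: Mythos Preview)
Your argument is correct and uses the same ingredients as the paper (\cref{thm:motivic:iota-a1-completion}, \cref{lemma:motivic:iota-completion}, and \cref{lemma:pro-zar:completed-htpy-stable-under-peq}), just unpacked a little differently: the paper first records the consequence that $\pi_n^p(X)\to\pi_n^p(\iota_{\ge 1}(\complete{X}))$ is an equivalence as \cref{lemma:motivic:p-complete-htpy-ios-to-completion}, and then finishes with a two-out-of-three in the obvious commutative square comparing $f$ and $\complete{f}$. Your route instead deduces directly that $\iota_{\nis,\AffSpc{1}} f$ is a $p$-equivalence and then applies \cref{lemma:pro-zar:completed-htpy-stable-under-peq}; this is equivalent, and the detour through the Nisnevich topos in your step~3 can even be skipped by applying the ``in particular'' clause of \cref{lemma:motivic:iota-completion} directly.
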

\begin{proof}
    Note that we can regard $f$ as a morphism in $\MotSpc{k}_{\ge 1, *}$ 
    since nilpotent spaces are connected.
    In particular, it is also a $p$-equivalence in this category, see \cref{lemma:motivic:connected-stabilization}.
    Thus, we can assume that $f$ is a $p$-equivalence in $\MotSpc{k}_{\ge 1, *}$,
    and we want to prove that $\pi_n^p(\iota_{\ge 1}(f))$ is an equivalence.

    By $p$-completing, we get a commutative square
    \begin{center}
        \begin{tikzcd}
            X \arrow[r, "f"] \arrow[d] &Y \arrow[d] \\
            \complete{X} \arrow[r, "\complete{f}"] & \complete{Y}
        \end{tikzcd}
    \end{center}
    where the downwards arrows are the canonical $p$-equivalences.
    Applying the functor $\pi_n^p(\iota_{\ge 1}(-))$ for $n \ge 1$, we arrive at the square
    \begin{center}
        \begin{tikzcd}
            \pi_n^p(\iota_{\ge 1}(X)) \arrow[r, "\pi_n^p(\iota_{\ge 1}(f))"] \arrow[d] &\pi_n^p(\iota_{\ge 1}(Y)) \arrow[d] \\
            \pi_n^p(\iota_{\ge 1}(\complete{X})) \arrow[r, "\pi_n^p(\iota_{\ge 1}(\complete{f}))"] & \pi_n^p(\iota_{\ge 1}(\complete{Y})).
        \end{tikzcd}
    \end{center}
    Since $f$ is a $p$-equivalence, we know that $\complete{f}$ is an equivalence.
    In particular, $\pi_n^p(\iota_{\ge 1}(\complete{f}))$ is an equivalence.
    The two vertical maps are equivalences by \cref{lemma:motivic:p-complete-htpy-ios-to-completion}.
    From this we conclude that also $\pi_n^p(\iota_{\ge 1}(f))$ is an equivalence.
\end{proof}

\begin{defn} \label{def:motivic:L1G}
    Let $G \in \GrpStr{\Disc{\MotShv{k}}}$ be a strictly $\AffSpc{1}$-invariant nilpotent sheaf of groups.
    We define 
    \begin{equation*}
        \mathbb L_1 G \coloneqq L_{\nis} \mathbb L_1 \iota_{\nis} G \in \MotShvSp{k},
    \end{equation*}
    where we use \cref{def:embedding:L1G}, and 
    \begin{equation*}
        \mathbb L_0 G \coloneqq L_{\nis} \mathbb L_0 \iota_{\nis} G \in \Grp{\MotShv{k}}.
    \end{equation*}
\end{defn}

\begin{thm} \label{thm:motivic:ses}
    Let $X \in \MotSpc{k}_*$ be a pointed nilpotent motivic space.
    Then for every $n \ge 2$,
    there is a canonical short exact sequence in $\pheart{\SH{k}}$
    (or a short exact sequence in $\GrpStr{\Disc{\ShvNis{k}}}$ if $n = 1$)
    \begin{equation*}
        0 \to \mathbb L_0 \pi_n (X) \to \pi_n^p (X) \to \mathbb L_1 \pi_{n-1}(X) \to 0,
    \end{equation*}
    where we use \cref{def:motivic:L1G} for $\mathbb L_i \pi_1(X)$.
    In particular, $\pi_n^p(X) \in \pheart{\SH{k}}$.
    Here we set $\mathbb L_1 \pi_0(X) = 0$ since $X$ is connected.

    Moreover, for $n \ge 2$ the unit map induces an equivalence
    \begin{equation*}
        \iota_{\nis,\AffSpc{1}} \pi_n^p(X) = \iota_{\nis, \AffSpc{1}} L_{\nis, \AffSpc{1}} \pi_n^p(\iota_{\nis, \AffSpc{1}} X) \cong \pi_n^p(\iota_{\nis, \AffSpc{1}} X),
    \end{equation*}
    i.e.\ $\pi_n^p(\iota_{\nis, \AffSpc{1}} X)$ is already an $\AffSpc{1}$-invariant Nisnevich sheaf of spectra.
    If $\pi_1(X)$ is abelian, the same is true for $\pi_1^p(\iota_{\nis,\AffSpc{1}} X)$.
\end{thm}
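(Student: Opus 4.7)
The plan is to apply the Zariski-level short exact sequence of \cref{thm:pro-zar:short-exact-sequence} to the Zariski sheaf $Y \coloneqq \iota_{\nis,\AffSpc{1}} X$, and then descend the resulting sequence to $\SH{k}$ using the t-exact fully faithful embedding $\pheart{\iota}_{\nis,\AffSpc{1}} \colon \pheart{\SH{k}} \hookrightarrow \pheart{\ZarShvSp{k}}$ provided by \cref{lemma:motivic:iota-A1-nis-t-exact-p-adic}.

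First I would verify the hypotheses of \cref{thm:pro-zar:short-exact-sequence} for $Y$. Nilpotence of $Y$ follows from \cref{lemma:motivic:nilpotent} together with \cref{cor:motivic:iota-pin-if-connected}, which identifies the relevant homotopy sheaves under $\iota_{\nis}$. The Gersten injectivity of $\pi_n(Y)/p^k$ for $n \ge 2$ and $k \ge 1$ is precisely \cref{lemma:motivic:gersten-of-A1}. For the $\pi_1$ hypothesis, if $\pi_1(X)$ is abelian the same lemma applies; otherwise, Morel's theorem presents $\pi_1(X)$ as a nilpotent, strongly $\AffSpc{1}$-invariant Nisnevich sheaf of groups, so \cref{lemma:motivic:pi1-in-heart} yields $\mathbb L_1 \pi_1(Y) \cong \mathbb L_1 \iota_{\nis} \pi_1(\iota_{\AffSpc{1}} X) \in \pheart{\ZarShvSp{k}}$, which is the second option allowed by the hypotheses.

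For $n \ge 2$, \cref{thm:pro-zar:short-exact-sequence} then produces a short exact sequence in $\pheart{\ZarShvSp{k}}$
\begin{equation*}
    0 \to \mathbb L_0 \pi_n(Y) \to \pi_n^p(Y) \to \mathbb L_1 \pi_{n-1}(Y) \to 0,
\end{equation*}
whose outer terms I would identify with $\iota_{\nis,\AffSpc{1}} \mathbb L_0 \pi_n(X)$ and $\iota_{\nis,\AffSpc{1}} \mathbb L_1 \pi_{n-1}(X)$ via \cref{cor:motivic:iota-li-pin}. To establish the ``moreover'' statement, I would view the sequence as a fiber sequence in $\ZarShvSp{k}$ and apply the exact functor $F(-) \coloneqq \Fib{\iota_{\nis,\AffSpc{1}} L_{\nis,\AffSpc{1}}(-) \to (-)}$; since $L_{\nis,\AffSpc{1}} \iota_{\nis,\AffSpc{1}} \simeq \mathrm{id}$, the outer terms of the resulting fiber sequence vanish, forcing $F(\pi_n^p(Y)) \simeq 0$ and giving the equivalence $\iota_{\nis,\AffSpc{1}} \pi_n^p(X) \simeq \pi_n^p(Y)$. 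In particular $\pi_n^p(X) \in \pheart{\SH{k}}$. Applying the exact $L_{\nis,\AffSpc{1}}$ to the SES and again using $L_{\nis,\AffSpc{1}} \iota_{\nis,\AffSpc{1}} \simeq \mathrm{id}$ then produces the desired sequence in $\pheart{\SH{k}}$.

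For $n = 1$ the same strategy works with non-abelian groups in place of abelian ones: the Zariski sequence of \cref{thm:pro-zar:short-exact-sequence} degenerates to an isomorphism of groups $\mathbb L_0 \pi_1(Y) \cong \pi_1^p(Y)$, to which I would apply $L_{\nis}$; the resulting object lies in $\GrpStr{\Disc{\MotShv{k}}}$ because $\pi_1$ of a motivic space is strongly $\AffSpc{1}$-invariant by Morel. I expect the main obstacle to be precisely this non-abelian $\pi_1$ case: one cannot invoke the standard five-lemma/t-structure machinery, and the whole argument hinges on having set up \cref{def:motivic:L1G} through $\iota_{\nis}$ rather than $\iota_{\nis,\AffSpc{1}}$ and propagating this choice correctly through the delicate induction of \cref{lemma:motivic:pi1-in-heart} over an $\AffSpc{1}$-nilpotent central series.
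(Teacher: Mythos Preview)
Your proposal is correct and follows essentially the same route as the paper: verify the hypotheses of \cref{thm:pro-zar:short-exact-sequence} for $Y=\iota_{\nis,\AffSpc{1}}X$ via \cref{lemma:motivic:gersten-of-A1} and \cref{lemma:motivic:pi1-in-heart}, identify the outer terms of the resulting Zariski sequence with $\iota_{\nis,\AffSpc{1}}\mathbb L_i\pi_k(X)$ via \cref{cor:motivic:iota-li-pin}, and then descend along $L_{\nis,\AffSpc{1}}$. One small slip: the natural transformation is the unit $(-)\to\iota_{\nis,\AffSpc{1}}L_{\nis,\AffSpc{1}}(-)$, not the other direction, so your auxiliary functor should be the (co)fiber of the unit; with that correction your argument for the ``moreover'' clause is exactly the paper's observation that the essential image of $\iota_{\nis,\AffSpc{1}}$ is a stable subcategory and hence closed under extensions.
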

\begin{proof}
    Note that $\pi_n(\iota_{\nis,\AffSpc{1}} X)/p^k$ satisfies Gersten injectivity for all $n \ge 2$ 
    and all $k \ge 1$, see \cref{lemma:motivic:gersten-of-A1}.
    By this lemma, the same is true if $\pi_1(X)$ is abelian.
    If not, then we can still conclude by \cref{lemma:motivic:pi1-in-heart}
    that $\mathbb L_1 \pi_1(\iota_{\nis, \AffSpc{1}} X) \in \tpstructheart{\ZarShvSp{k}}$
    (note that $\pi_1(\iota_\AffSpc{1}X)$ is strongly $\AffSpc{1}$-invariant by \cite[Corollary 5.2]{morel2012a1},
    and that $\pi_1(\iota_{\nis, \AffSpc{1}} X) = \iota_{\nis} \pi_1(\iota_{\AffSpc{1}}X)$ by \cref{cor:motivic:iota-pin-if-connected}).

    Thus, for $n \ge 2$ we have a short exact sequence
    \begin{equation*}
        0 \to \mathbb L_0 \pi_n (\iota_{\nis,\AffSpc{1}} X) \to \pi_n^p (\iota_{\nis,\AffSpc{1}} X) \to \mathbb L_1 \pi_{n-1} (\iota_{\nis,\AffSpc{1}} X) \to 0
    \end{equation*}
    in $\tpstructheart{\ShvTop{\zar}{\smooth k,\Sp}}$ by \cref{thm:pro-zar:short-exact-sequence}.
    Applying $L_{\nis,\AffSpc{1}}$ we get a fiber sequence 
    \begin{equation*}
        L_{\nis,\AffSpc{1}} \mathbb L_0 \pi_n (\iota_{\nis,\AffSpc{1}} X) \to L_{\nis,\AffSpc{1}} \pi_n^p (\iota_{\nis,\AffSpc{1}} X) \to L_{\nis,\AffSpc{1}} \mathbb L_1 \pi_{n-1} (\iota_{\nis,\AffSpc{1}} X).
    \end{equation*}
    Using \cref{cor:motivic:iota-li-pin}, we compute that $L_{\nis,\AffSpc{1}} \mathbb L_i \pi_k (\iota_{\nis,\AffSpc{1}} X) \cong \mathbb L_i \pi_k(X)$
    (if $k = 1$, then this is just the definition).
    Moreover, $L_{\nis,\AffSpc{1}} \pi_n^p(\iota_{\nis,\AffSpc{1}} X) = \pi_n^p(X)$ by \cref{def:motivic:completed-htpy}.
    Thus, we get a fiber sequence
    \begin{equation*}
        \mathbb L_0 \pi_n (X) \to \pi_n^p (X) \to \mathbb L_1 \pi_{n-1} (X).
    \end{equation*}
    Note that the outer terms are in $\tpstructheart{\MotShvSp{k}}$ by definition.
    Thus, using the long exact sequence, we conclude that also $\pi^p_n(X) \in \tpstructheart{\MotShvSp{k}}$
    and the fiber sequence yields an exact sequence
    \begin{equation*}
        0 \to \mathbb L_0 \pi_n (X) \to \pi_n^p (X) \to \mathbb L_1 \pi_{n-1} (X) \to 0.
    \end{equation*}

    The last statement follows since we have (again by \cref{cor:motivic:iota-li-pin})
    \begin{equation*}
        \mathbb L_i \pi_k(\iota_{\nis,\AffSpc{1}} X) \cong \iota_{\nis,\AffSpc{1}} \mathbb L_i \pi_k(X),
    \end{equation*}
    i.e.\ the $\mathbb L_i \pi_k(\iota_{\nis,\AffSpc{1}} X)$ are $\AffSpc{1}$-invariant Nisnevich sheaves (of spectra),
    and thus $\pi_n^p (\iota_{\nis,\AffSpc{1}} X)$ sits in the middle of an exact sequence,
    where the outer terms are in $\pheart{\SH{k}} \subset \SH{k} \hookrightarrow \ShvTop{\zar}{\smooth k,\Sp}$.
    Thus, since stable subcategories are stable under extensions, the result follows.
    If $\pi_1(X)$ is abelian, the same proof works.

    If $n = 1$, \cref{thm:pro-zar:short-exact-sequence} instead supplies us with a short exact sequence 
    \begin{equation*}
        0 \to \mathbb L_0 \pi_1 (\iota_{\nis,\AffSpc{1}} X) \to \pi_1^p (\iota_{\nis,\AffSpc{1}} X) \to \mathbb L_1 \pi_{0} (\iota_{\nis,\AffSpc{1}} X) \to 0
    \end{equation*}
    in $\Grp{\Disc{\ZarShv{k}}}$,
    where $\mathbb L_1 \pi_0 (\iota_{\nis, \AffSpc{1}} X) = 0$,
    i.e.\ this is an equivalence 
    \begin{equation*}
        \mathbb L_0 \pi_1 (\iota_{\nis,\AffSpc{1}} X) \cong \pi_1^p (\iota_{\nis,\AffSpc{1}} X).
    \end{equation*}
    Applying $L_{\nis}$, we get an equivalence 
    \begin{equation*}
        L_{\nis} \mathbb L_0 \pi_1 (\iota_{\nis,\AffSpc{1}} X) \cong L_{\nis} \pi_1^p (\iota_{\nis,\AffSpc{1}} X)
    \end{equation*}
    in $\Grp{\Disc{\MotShv{k}}}$.
    Note that by definition, the right-hand side is $\pi_1^p(X)$,
    and the left-hand side is $\mathbb L_0 \pi_1(X)$.
    We thus get the required short exact sequence.
\end{proof}

\begin{cor} \label{cor:motivic:ses-with-p-completion}
    Let $X \in \MotSpc{k}_*$ be nilpotent.
    Then for $n \ge 2$ there is a canonical short exact sequence in $\pheart{\SH{k}}$ 
    (or in $\Grp{\MotShv{k}}$ if $n = 1$)
    \begin{equation*}
        0 \to \mathbb L_0 \pi_n(X) \to \pi_n^p (\iota_{\ge 1} (\completebr{\tau_{\ge 1} X})) \to \mathbb L_1 \pi_{n-1}(X) \to 0.
    \end{equation*}
\end{cor}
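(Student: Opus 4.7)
The plan is to simply combine the two main results from this subsection. First, I would apply \cref{thm:motivic:ses} to the pointed nilpotent motivic space $X$. For $n \ge 2$ this directly gives the canonical short exact sequence in $\pheart{\SH{k}}$
\begin{equation*}
    0 \to \mathbb L_0 \pi_n(X) \to \pi_n^p(X) \to \mathbb L_1 \pi_{n-1}(X) \to 0,
\end{equation*}
and the analogous short exact sequence in $\Grp{\MotShv{k}}$ for $n = 1$ (where the right-hand term vanishes, giving an isomorphism).

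Next, I would rewrite the middle term using \cref{lemma:motivic:p-complete-htpy-ios-to-completion}, which supplies a canonical equivalence $\pi_n^p(X) \cong \pi_n^p(\iota_{\ge 1}(\completebr{\tau_{\ge 1} X}))$ for every $n \ge 1$ (note that since $X$ is nilpotent it is connected, so $\tau_{\ge 1} X$ is defined as an object of $\MotSpc{k}_{\ge 1,*}$, and its $p$-completion in connected motivic spaces makes sense). Substituting this equivalence into the middle term of the short exact sequence yields the claimed sequence
\begin{equation*}
    0 \to \mathbb L_0 \pi_n(X) \to \pi_n^p(\iota_{\ge 1}(\completebr{\tau_{\ge 1} X})) \to \mathbb L_1 \pi_{n-1}(X) \to 0.
\end{equation*}

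Since both ingredients are already established and functorial, there is no real obstacle to overcome here; the only thing to check is that the substitution is canonical, which is automatic because the equivalence in \cref{lemma:motivic:p-complete-htpy-ios-to-completion} is induced by the canonical $p$-equivalence $X \to \iota_{\ge 1}(\completebr{\tau_{\ge 1} X})$ (combined with \cref{lemma:pro-zar:completed-htpy-stable-under-peq}). The case $n = 1$ requires no change of argument, as both \cref{thm:motivic:ses} and \cref{lemma:motivic:p-complete-htpy-ios-to-completion} treat it in parallel to the $n \ge 2$ case.
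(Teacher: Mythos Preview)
Your proposal is correct and follows exactly the same approach as the paper, which simply states that the corollary follows immediately from \cref{thm:motivic:ses} and \cref{lemma:motivic:p-complete-htpy-ios-to-completion}. Your additional remarks about canonicity and the $n=1$ case are accurate elaborations of this one-line proof.
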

\begin{proof}
    This follows immediately from \cref{thm:motivic:ses} and \cref{lemma:motivic:p-complete-htpy-ios-to-completion}.
\end{proof}

\begin{rmk}
    Let $X \in \MotSpc{k}_*$ be nilpotent and $n \ge 1$.
    If \cref{conj:motivic:conjecture-A} is true,
    then we get moreover a short exact sequence
    \begin{equation*}
        0 \to \mathbb L_0 \pi_n(X) \to \pi_n^p (\complete{X}) \to \mathbb L_1 \pi_{n-1}(X) \to 0.
    \end{equation*}
\end{rmk}

We can now also establish a (partial) converse to \cref{lemma:motivic:p-complete-htpy-iso-if-peq}:
\begin{prop} \label{prop:motivic:peq-of-iso-in-phtpy}
    Let $f \colon X \to Y \in \MotSpc{k}_*$ be a morphism of pointed nilpotent spaces with abelian fundamental groups.
    Assume that $\pi_n^p(f)$ is an isomorphism for all $n \ge 1$.
    Then $f$ is a $p$-equivalence.
\end{prop}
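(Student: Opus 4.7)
The plan is to reduce the statement to its analogue for Zariski sheaves, Proposition \ref{prop:pro-zar:peq-of-eq-on-p-htpy}, via the fully faithful embedding $\iota_{\nis,\AffSpc{1},\ge 1}$. Since $X$ and $Y$ are nilpotent they are connected, so we may view $f$ as a morphism in $\MotSpc{k}_{\ge 1,*}$; by Lemma \ref{lemma:motivic:connected-stabilization} it suffices to prove that $f$ is a $p$-equivalence in $\MotSpc{k}_{\ge 1,*}$. Combining Theorems \ref{thm:motivic:iota-a1-completion} and \ref{lemma:motivic:iota-completion} yields $\iota_{\nis,\AffSpc{1},\ge 1}(\complete{X}) \cong \completebr{\iota_{\nis,\AffSpc{1},\ge 1} X}$ for nilpotent $X$, and similarly for $Y$. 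Therefore, using Lemma \ref{lemma:topos:peq-if-completion-eq} and the fact that $\iota_{\nis,\AffSpc{1},\ge 1}$ is fully faithful (hence conservative), it is enough to show that $\iota_{\nis,\AffSpc{1},\ge 1} f$ is a $p$-equivalence in $\ZarShv{k}_*$.

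To apply Proposition \ref{prop:pro-zar:peq-of-eq-on-p-htpy} to $\iota_{\nis,\AffSpc{1},\ge 1} f$ I would verify three things. First, that $\iota_{\nis,\AffSpc{1},\ge 1} X$ and $\iota_{\nis,\AffSpc{1},\ge 1} Y$ are pointed nilpotent Zariski sheaves with abelian fundamental group: abelianity of $\pi_1(\iota_{\nis,\AffSpc{1},\ge 1} X)$ follows from Corollary \ref{cor:motivic:iota-pin-if-connected} and exactness of $\iota_{\nis}$, while nilpotence is transported by induction along a principal refinement of the Postnikov tower of $X$, using that $\iota_{\nis,\AffSpc{1},\ge 1}$ commutes with truncations (Lemma \ref{lemma:motivic:truncation-cover-iota-if-connected}), with fibers (as a right adjoint), and sends the Eilenberg-MacLane objects appearing in the refinement to Eilenberg-MacLane objects (Lemma \ref{lemma:motivic:iota-a1-EM}). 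Second, the Gersten injectivity hypothesis on the homotopy groups of $\iota_{\nis,\AffSpc{1},\ge 1} X$ and $\iota_{\nis,\AffSpc{1},\ge 1} Y$ required by Theorem \ref{thm:pro-zar:short-exact-sequence} is exactly Lemma \ref{lemma:motivic:gersten-of-A1}. Third, the needed fact that $\pi_n^p(\iota_{\nis,\AffSpc{1},\ge 1} f)$ is an equivalence for every $n \ge 1$ follows from the hypothesis that $\pi_n^p(f)$ is an isomorphism together with the identifications $\iota_{\nis,\AffSpc{1}} \pi_n^p(X) \cong \pi_n^p(\iota_{\nis,\AffSpc{1}} X)$ provided by Theorem \ref{thm:motivic:ses}, which are valid for all $n \ge 1$ precisely because $\pi_1(X)$ and $\pi_1(Y)$ are abelian.

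The main obstacle, and the only part that is not a direct invocation of earlier results, is the book-keeping: one has to chain together the compatibility statements between $\iota_{\nis,\AffSpc{1},\ge 1}$ and the constructions $\complete{(-)}$, $\pi_n$, $\pi_n^p$, $\tau_{\ge 1}$, $\tau_{\le n}$, and principal refinements of the Postnikov tower. The abelianity assumption on the $\pi_1$'s is essential at two places: it makes \ref{thm:motivic:ses} applicable for $n = 1$ so that the assumed isomorphism $\pi_1^p(f)$ transports across $\iota_{\nis,\AffSpc{1},\ge 1}$, and it exactly matches the hypothesis of Proposition \ref{prop:pro-zar:peq-of-eq-on-p-htpy}, see also Remark \ref{rmk:pro-zar:peq-of-eq-on-p-htpy-relaxed}.
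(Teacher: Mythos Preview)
Your proposal is correct and follows the same core strategy as the paper: reduce to the Zariski setting and invoke \cref{prop:pro-zar:peq-of-eq-on-p-htpy}. The one notable difference is in the reduction step itself. You route through the completion comparison \cref{thm:motivic:iota-a1-completion,lemma:motivic:iota-completion} and conservativity of $\iota_{\nis,\AffSpc{1},\ge 1}$ to argue that a $p$-equivalence on the Zariski side forces a $p$-equivalence of connected motivic spaces. The paper instead observes that once $\iota_{\nis,\AffSpc{1}} f$ is a $p$-equivalence, applying the left adjoint $L_{\nis,\AffSpc{1}}$ (which preserves $p$-equivalences by \cref{lemma:peq-via-points}) and using $L_{\nis,\AffSpc{1}} \iota_{\nis,\AffSpc{1}} \cong \operatorname{id}$ immediately gives that $f$ is a $p$-equivalence---no completion theorems required. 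Your route is valid but invokes heavier machinery than necessary; the paper's is a one-liner. On the other hand, your more explicit verification that $\iota_{\nis,\AffSpc{1}} X$ is nilpotent (via the principal refinement and \cref{lemma:motivic:iota-a1-EM,lemma:motivic:truncation-cover-iota-if-connected}) fills in a detail the paper leaves implicit.
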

\begin{proof}
    It follows from \cref{thm:pro-zar:short-exact-sequence} 
    that $\pi_n^p(\iota_{\nis,\AffSpc{1}} X)$ and $\pi_n^p(\iota_{\nis, \AffSpc{1}} Y)$
    are already $\AffSpc{1}$-invariant Nisnevich sheaves for all $n \ge 1$.
    Therefore, we conclude that $\pi_n^p(\iota_{\nis,\AffSpc{1}} f)$ 
    is an isomorphism for all $n \ge 1$ (note that $L_{\nis,\AffSpc{1}} \pi_n^p(\iota_{\nis,\AffSpc{1}} f) = \pi_n^p(f)$
    are isomorphisms by assumption).
    
    Note that by the proof of \cref{thm:motivic:ses} we conclude that $\iota_{\nis,\AffSpc{1}} X$
    and $\iota_{\nis,\AffSpc{1}} Y$ satisfy the conditions of \cref{thm:pro-zar:short-exact-sequence}.
    Therefore, \cref{prop:pro-zar:peq-of-eq-on-p-htpy} 
    implies that $\iota_{\nis,\AffSpc{1}} f$ is a $p$-equivalence.
    Hence, also $f \cong L_{\nis,\AffSpc{1}} \iota_{\nis,\AffSpc{1}} f$ is a $p$-equivalence.
    Here, we used that $\iota_{\nis,\AffSpc{1}}$ is fully faithful and \cref{lemma:peq-via-points}.
    This proves the proposition.
\end{proof}

\begin{rmk} 
    As in the case of \cref{prop:pro-zar:peq-of-eq-on-p-htpy},
    the assumptions that $\pi_1(X)$ and $\pi_1(Y)$ are abelian can probably be relaxed,
    but a proof of this statement is unclear to the author,
    see also \cref{rmk:pro-zar:peq-of-eq-on-p-htpy-relaxed}
\end{rmk}

\newpage
\appendix
\section{Background Material}
\subsection{Stabilization}
\label{appendix:stab}

We prove some basic results about the stabilization of adjoint functors of presentable $\infty$-categories.
All the results are well-known, but hard to track down in the literature.

\begin{lem} \label{lemma:adjoints-on-stabilization}
    Let $f^* \colon \topos X \rightleftarrows \topos Y \colon f_*$
    be an adjunction of presentable $\infty$-categories.

    Then $f^*$ and $f_*$ induce an adjunction
    \begin{equation*}
        f^* \colon \spectra{X} \rightleftarrows \spectra{Y} \colon f_*
    \end{equation*}
    of exact functors such that the following diagrams of functors commute (up to homotopy):
    \begin{center}
        \begin{tikzcd}
            \spectra{X} \arrow[r, "f^*"] & \spectra{Y} \\
            \topos X_* \arrow[u, "\Sus"] \arrow[r, "f^*"] &\topos Y_* \arrow[u, "\Sus"]
        \end{tikzcd}
        \begin{tikzcd}
            \spectra{X} \arrow[d, "\pLoop"] & \spectra{Y} \arrow[d, "\pLoop"]\arrow[l, "f_*"] \\
            \topos X_* &\topos Y_*. \arrow[l, "f_*"]
        \end{tikzcd}
    \end{center}
\end{lem}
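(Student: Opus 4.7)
The plan is to bootstrap the adjunction from $\topos X$ up to $\Sp(\topos X)$ in two stages, using the functoriality of pointed objects and the universal property of stabilization. First, I would lift $f^* \dashv f_*$ to an adjunction on pointed objects $\topos X_* \rightleftarrows \topos Y_*$. The right adjoint $f_*$ preserves the terminal object (being a right adjoint it preserves all limits), so composition with $f_*$ descends canonically to the coslices $\topos Y_{*/} \to \topos X_{*/}$. Both $\topos X_*$ and $\topos Y_*$ are presentable (see \cite[Proposition 5.5.3.11]{highertopoi}), and the induced $f_*$ on pointed categories continues to preserve limits and filtered colimits, so the adjoint functor theorem produces a left adjoint, which one can check is given by the formula $(X, x) \mapsto f^*(X) \sqcup_{f^*(*)} *$.

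Second, I would apply the stabilization functor to obtain the adjunction on $\Sp$. The stabilization is a functor $\Sp \colon \Pr^L \to \Pr^L_{\mathrm{st}}$ which is left adjoint to the forgetful functor from stable presentable $\infty$-categories (this is the universal property of stabilization discussed in \cite[Section 1.4]{higheralgebra}). Applying $\Sp$ to the left adjoint $f^* \colon \topos X_* \to \topos Y_*$, which is a morphism in $\Pr^L$, produces a colimit-preserving functor $\Sp(f^*) \colon \Sp(\topos X) \to \Sp(\topos Y)$, which I will denote again by $f^*$. The adjoint functor theorem applied to this colimit-preserving functor between presentable $\infty$-categories supplies a right adjoint, which I denote $f_*$. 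Compatibility of the first diagram with $\Sigma^\infty$ is then a formal consequence of the unit of the stabilization adjunction $\topos X_* \to \Sp(\topos X)$ being natural in the presentable input, i.e.\ functoriality of the unit transformation.

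Third, I would deduce the commutativity of the second diagram from that of the first by the mate correspondence and uniqueness of adjoints. The diagram
\begin{equation*}
    \begin{tikzcd}
        \topos X_* \arrow[r, "f^*"] \arrow[d, "\Sus"] & \topos Y_* \arrow[d, "\Sus"] \\
        \spectra{X} \arrow[r, "f^*"] & \spectra{Y}
    \end{tikzcd}
\end{equation*}
consists entirely of left adjoints, so passing to right adjoints yields a natural equivalence $\pLoop f_* \simeq f_* \pLoop$, which is exactly the content of the second diagram. Finally, exactness of both $f^*$ and $f_*$ on the stabilizations is automatic: they are functors between stable $\infty$-categories, and a left (resp.\ right) adjoint between stable categories preserves finite colimits (resp.\ limits), which coincide with finite limits (resp.\ colimits) in the stable setting, so both functors are exact.

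The main conceptual point is the invocation of the functoriality of $\Sp$ as a left adjoint on $\Pr^L$, which packages the construction of $\Sp(f^*)$ together with its automatic colimit-preservation into a single step; once this is in hand, every remaining assertion reduces to standard adjunction bookkeeping and the mate correspondence.
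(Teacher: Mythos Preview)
Your argument is correct, but it proceeds dually to the paper's. The paper builds the stabilized right adjoint first: invoking \cite[Propositions 1.4.2.22 and 1.4.4.4]{higheralgebra}, it uses that $f_* \colon \topos Y_* \to \topos X_*$ preserves limits (hence commutes with $\Omega$) to obtain an induced limit-preserving exact functor $f_* \colon \spectra{Y} \to \spectra{X}$ fitting into the $\pLoop$-square directly; the left adjoint then exists by \cite[Proposition 1.4.4.4(3)]{higheralgebra}, and the $\Sus$-square follows by uniqueness of adjoints. You instead start on the left, using that $\Sp$ is functorial in $\Pr^L$ to produce $f^*$ on stabilizations together with the $\Sus$-square, obtain $f_*$ by the adjoint functor theorem, and then recover the $\pLoop$-square via the mate correspondence. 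Both routes are short and formal; the paper's has the slight advantage of avoiding the intermediate step of explicitly lifting the adjunction to pointed objects (since the cited results in \cite{higheralgebra} handle this implicitly), while your approach makes the role of the universal property of stabilization in $\Pr^L$ more transparent.
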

\begin{proof}
    \cite[Propositions 1.4.2.22 and 1.4.4.4]{higheralgebra} imply the
    existence of a limit-preserving exact functor $f_* \colon \spectra Y \to \spectra X$
    that fits into the right diagram (see also the proof of \cite[Corollary 1.4.4.5]{higheralgebra}).
    Using \cite[Proposition 1.4.4.4 (3)]{higheralgebra},
    we see that this functor admits a left adjoint $f^*$.
    By uniqueness of adjoints, we conclude that the left diagram is commutative.
\end{proof}

\begin{lem} \label{lemma:stabilization:right-adjoint-fully-faithful}
    In the situation of \cref{lemma:adjoints-on-stabilization},
    assume moreover that $f_* \colon \topos Y \to \topos X$ is fully faithful.
    Then also $f_* \colon \spectra Y \to \spectra X$ is fully faithful.
\end{lem}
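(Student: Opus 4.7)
The plan is to reduce fully faithfulness at the level of stabilizations to fully faithfulness at the level of pointed objects, and then propagate this through the limit description of $\spectra{Y}$.

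First I would observe that $f_*$ induces a fully faithful functor $f_* \colon \topos Y_* \to \topos X_*$. Indeed, $f_*$ is a right adjoint so it preserves terminal objects, hence descends to the categories of pointed objects. For $(Y, y), (Y', y') \in \topos Y_*$, the mapping space $\Map{\topos Y_*}((Y, y), (Y', y'))$ is the pullback $\Map{\topos Y}(Y, Y') \times_{\Map{\topos Y}(*, Y')} \{y'_*\}$; since $f_*$ preserves limits and is fully faithful on $\topos Y$, applying $f_*$ produces an equivalent pullback computing $\Map{\topos X_*}(f_*Y, f_*Y')$.

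Next, I would use the standard description of the stabilization as a limit
\begin{equation*}
    \spectra{Y} \cong \limil{n \in \N^{\op}} \topos Y_*,
\end{equation*}
with transition maps $\Omega$ (see \cite[Remark 1.4.2.25]{higheralgebra}), so that a spectrum $E \in \spectra{Y}$ is a compatible tower $(E_n)_n$ of pointed objects with $E_n \cong \Omega E_{n+1}$. By \cref{lemma:adjoints-on-stabilization}, the functor $f_* \colon \spectra{Y} \to \spectra{X}$ commutes with $\pLoop$ and with the projections to $\topos Y_*$ and $\topos X_*$, which means that under this identification it is given levelwise by $E \mapsto (f_* E_n)_n$.

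Now mapping spaces in a limit of $\infty$-categories are computed as the limit of mapping spaces. Hence for $E, F \in \spectra Y$ there is a chain of natural equivalences
\begin{equation*}
    \Map{\spectra{Y}}(E, F) \cong \limil{n} \Map{\topos Y_*}(E_n, F_n) \cong \limil{n} \Map{\topos X_*}(f_*E_n, f_*F_n) \cong \Map{\spectra{X}}(f_*E, f_*F),
\end{equation*}
where the middle equivalence uses fully faithfulness of $f_* \colon \topos Y_* \to \topos X_*$ established in the first step. This gives the desired fully faithfulness. The only mildly delicate point is verifying the levelwise description of $f_*$ on spectra and the compatibility of mapping spaces with the limit presentation; both follow formally from \cref{lemma:adjoints-on-stabilization} and the universal property of limits of $\infty$-categories.
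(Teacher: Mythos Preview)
Your argument is correct. The paper takes a slightly different but closely related route: it uses the model $\spectra{Y} \cong \operatorname{Exc}_*(\An^{\mathrm{fin}}_*, \topos Y)$ of reduced excisive functors (\cite[Definition 1.4.2.8]{higheralgebra}), observes that under this identification $f_*$ on spectra is simply postcomposition with $f_* \colon \topos Y \to \topos X$, and concludes because postcomposition with a fully faithful functor is fully faithful on functor categories. Your approach instead passes through the sequential $\Omega$-tower model, which costs you the extra step of checking fully faithfulness on pointed objects and the levelwise description of $f_*$; the paper's version avoids both by working with the functor-category model directly. Conceptually the two proofs are the same idea---fully faithfulness is preserved under the limit/functor-category constructions that build the stabilization---just implemented with different presentations of $\Sp(-)$.
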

\begin{proof}
    The category $\spectra{X}$ can be defined as the category 
    of excisive functors from finite anima to $\topos X$, see \cite[Definition 1.4.2.8]{higheralgebra}.
    Note that the functor $f_*$ is given by postcomposing with the functor $f_* \colon \topos Y \to \topos X$.
    Thus, the result follows, since postcomposition with 
    a fully faithful functor is already fully faithful on functor categories.
\end{proof}

\begin{lem} \label{lemma:stabilization:geometric-morphism-commutes-with-loopspace}
    In the situation of \cref{lemma:adjoints-on-stabilization},
    assume moreover that $f^*$ is left exact.
    Then we have canonical equivalences
    \begin{equation*}
        f^* \Loop (-) \cong \Loop f^* (-)
    \end{equation*}
    and
    \begin{equation*}
        f^* \pLoop (-) \cong \pLoop f^* (-).
    \end{equation*}
    Moreover, if $f^* \colon \topos X \to \topos Y$ is conservative,
    so is $f^* \colon \spectra{X} \to \spectra{Y}$.
\end{lem}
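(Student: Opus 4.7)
The plan is to exploit the presentation $\spectra X \cong \lim(\dots \xrightarrow{\Omega} \topos X_* \xrightarrow{\Omega} \topos X_*)$ from \cite[Remark 1.4.2.25]{higheralgebra}, under which a spectrum $E$ corresponds to a compatible sequence $(E_n)_{n \geq 0}$ of pointed objects with $E_n \simeq \Omega E_{n+1}$, and $\pLoop E = E_0$.

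First I would establish $f^* \Omega(-) \cong \Omega f^*(-)$ on pointed objects. Since $\Omega X \cong * \times_X *$ is a finite limit and $f^*$ is left exact by assumption, this is immediate. In particular, $f^*$ respects the transition maps in the above tower, so the induced functor $f^* \colon \spectra X \to \spectra Y$ from \cref{lemma:adjoints-on-stabilization} is given levelwise as $(E_n) \mapsto (f^* E_n)$ (using uniqueness of adjoints to identify the two descriptions). Consequently $\pLoop f^* E = (f^* E)_0 = f^* E_0 = f^* \pLoop E$. The forgetful functor $U \colon \topos X_* \to \topos X$ satisfies $U f^* \cong f^* U$ because $f^*$ preserves the terminal object, so the relation $\Loop = U \pLoop$ yields the corresponding equivalence $f^* \Loop \cong \Loop f^*$.

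For conservativity, let $g \colon E \to F$ be a morphism in $\spectra X$ with $f^* g$ an equivalence. By exactness of $f^*$ on stabilizations (\cref{lemma:adjoints-on-stabilization}), $f^* \Fib{g} \cong \Fib{f^* g} \cong 0$, so it suffices to show that $f^*$ detects zero objects. If $f^* E \cong 0$, then by the levelwise description $f^* E_n \cong *$ in $\topos Y_*$ for every $n \ge 0$; since $f^* \colon \topos X \to \topos Y$ is conservative and preserves terminal objects (being left exact), this forces $E_n \cong *$ for all $n$, i.e.\ $E \cong 0$. No significant obstacle is anticipated; the only subtle point is identifying the induced $f^*$ on stabilizations with the levelwise construction, which is essentially built into the proof of \cref{lemma:adjoints-on-stabilization}.
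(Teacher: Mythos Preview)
Your proposal is correct and follows essentially the same idea as the paper: model $\spectra{X}$ concretely so that $f^*$ acts ``componentwise'' and $\pLoop$ is a projection, making the commutation immediate. The only cosmetic difference is the choice of model---the paper uses the description of $\spectra{X}$ as reduced excisive functors $\mathcal{S}^{\mathrm{fin}}_* \to \topos X$ (so $f^*$ is postcomposition and $\Loop$ is evaluation at $S^0$), while you use the equivalent sequential $\Omega$-spectrum model from \cite[Remark 1.4.2.25]{higheralgebra}; for conservativity the paper checks that $\pLoop \Sigma^n g$ is an equivalence for all $n$ directly, whereas you reduce to detecting zero objects via the fiber, but these are interchangeable.
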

\begin{proof}
    The category $\spectra{X}$ can be defined as the category 
    of excisive functors from finite anima to $\topos X$, see \cite[Definition 1.4.2.8]{higheralgebra}.
    Note that the functor $\Loop$ is given by evaluating at the finite anima $S^0$, see \cite[Notation 1.4.2.20]{higheralgebra}.
    In contrast, the functor $f^*$ is given by postcomposing excisive functors with $f^* \colon \topos X \to \topos Y$.
    It is therefore clear that $f^* \Loop (-) \cong \Loop f^* (-)$.

    Suppose now that $f^* \colon \topos X \to \topos Y$ is conservative.
    Let $g \colon E \to F$ be a morphism in $\spectra X$ such that $f^*g$ is an equivalence.
    In order to show that $g$ is an equivalence, it suffices to show that $\pLoop\Sigma^n g$ is
    an equivalence for all $n$.
    Since $f^* \colon \topos X \to \topos Y$ is conservative,
    it thus suffices to show that $f^* \pLoop \Sigma^n g$ is an equivalence.
    But we have
    \begin{equation*}
        f^* \pLoop \Sigma^n g \cong \pLoop f^* \Sigma^n g \cong \pLoop\Sigma^n f^*g,
    \end{equation*}
    which is an equivalence by assumption.
\end{proof}

\begin{lem} \label{lemma:fully-faithful-on-stabilizations}
    In the situation of \cref{lemma:adjoints-on-stabilization},
    assume moreover that $f^* \colon \topos X \to \topos Y$ is fully faithful and left exact.
    Then $f^* \colon \spectra{X} \to \spectra{Y}$ is fully faithful.
\end{lem}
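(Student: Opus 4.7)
My plan is to show that the unit $\eta \colon \operatorname{id}_{\spectra X} \to f_* f^*$ of the adjunction on stabilizations is an equivalence; this is equivalent to fully faithfulness of $f^*$ on $\spectra X$. The key input is that the corresponding unit on $\topos X_*$ is already an equivalence: if $f^* \colon \topos X \to \topos Y$ is fully faithful, then so is the induced functor on pointed objects $\topos X_* = (\topos X)_{*/} \to (\topos Y)_{*/} = \topos Y_*$, since $f^*$ preserves the terminal object (as it is left exact) and the slice adjunction is compatible with $f^* \dashv f_*$.

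Now I would use the model for stabilization as the limit of the tower, namely $\spectra X \cong \lim\bigl(\cdots \xrightarrow{\Omega} \topos X_* \xrightarrow{\Omega} \topos X_*\bigr)$ (and similarly for $\topos Y$), as recorded in \cite[Remark 1.4.2.25]{higheralgebra}. Under this identification, a spectrum $E \in \spectra X$ corresponds to a sequence $(E_n)_{n \ge 0}$ of pointed objects together with equivalences $E_n \cong \Omega E_{n+1}$, and the functor $\pLoop$ picks out $E_0$. The induced functor $f^*$ on stabilizations is computed termwise, since $f^*$ commutes with $\Omega$ by \cref{lemma:stabilization:geometric-morphism-commutes-with-loopspace} (using left-exactness). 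The functor $f_*$ also commutes with $\Omega$ since it is a right adjoint. Therefore both $f^*$ and $f_*$ act termwise on the tower presentation, and the unit on $\spectra X$ is the limit of the termwise units on $\topos X_*$.

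Since the unit $\operatorname{id}_{\topos X_*} \to f_* f^*$ is an equivalence termwise by fully faithfulness on pointed objects, and limits of equivalences are equivalences, the unit $\operatorname{id}_{\spectra X} \to f_* f^*$ is an equivalence. This shows that $f^*$ on stabilizations is fully faithful.

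The only mildly subtle point is the termwise description of $f_*$ on the tower: one must check that the right adjoint to the termwise $f^*$ on $\lim(\topos X_* \xleftarrow{\Omega} \cdots)$ agrees with the functor $f_*$ of \cref{lemma:adjoints-on-stabilization}. This follows by uniqueness of adjoints, since the termwise $f_*$ is right adjoint to the termwise $f^*$ and both presentations of $\spectra X$ agree, so I do not anticipate any serious obstacle. (Alternatively, one can argue directly from the excisive-functor model as in the proof of \cref{lemma:stabilization:right-adjoint-fully-faithful}: $f^*$ acts by postcomposition with the fully faithful $f^* \colon \topos X \to \topos Y$, but one must first restrict to the full subcategory of excisive functors landing in $\topos X_*$, which is where left-exactness of $f^*$ is used to see that postcomposition preserves excisiveness.)
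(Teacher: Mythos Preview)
Your proposal is correct and follows essentially the same strategy as the paper: both show that the unit $\operatorname{id}_{\spectra X} \to f_* f^*$ is an equivalence by reducing to the corresponding statement on $\topos X_*$ via the commutation of $f^*$ with $\pLoop$ from \cref{lemma:stabilization:geometric-morphism-commutes-with-loopspace}. The paper's presentation is slightly more direct: rather than invoking the tower model and arguing termwise, it uses that the functors $\Loop \Sigma^n$ are jointly conservative on $\spectra X$ and computes $\Loop \Sigma^n f_* f^* E \cong f_* f^* \Loop \Sigma^n E \cong \Loop \Sigma^n E$, which sidesteps the need to identify $f_*$ on the tower with the $f_*$ of \cref{lemma:adjoints-on-stabilization}.
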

\begin{proof}
    We need to show that $f_*f^* \cong \operatorname{id}_{\spectra{X}}$.
    So let $E \in \spectra{X}$.
    In order to show that $f_*f^* E \cong E$,
    it suffices to show that for all $n$,
    $\Loop \Sigma^n f_*f^*E \cong \Loop \Sigma^n E$.
    But we have
    \begin{align*}
        \Loop \Sigma^n f_*f^* E & \cong \Loop f_* f^* \Sigma^n E \\
                                & \cong f_* f^* \Loop \Sigma^n E \\
                                & \cong \Loop \Sigma^n E.
    \end{align*}
    The first equivalence is clear because $f_*$ and $f^*$ are exact.
    The second equivalence uses \cref{lemma:stabilization:geometric-morphism-commutes-with-loopspace}.
    The last equivalence follows because $f^* \colon \topos X \to \topos Y$
    is fully faithful.
\end{proof}

The stabilization of a presentable $\infty$-category $\topos X$ has a canonical t-structure,
which we call the \emph{standard t-structure}:
\begin{lem} \label{lemma:stabilization:t-structure}
    The category $\spectra{X}$ has an accessible t-structure (the \emph{standard (or homotopy) t-structure}),
    given by $\tcocon[-1]{\spectra X} = \set{E \in \spectra{X}}{\Loop E \cong *}$.
    This t-structure is right-separated (i.e.\ $\bigcap_n \tcocon[n]{\spectra{X}} = 0$).
\end{lem}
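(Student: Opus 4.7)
The plan is to construct $\tcon{\spectra X}$ by hand from a small set of generators, invoke \cite[Proposition 1.4.4.11]{higheralgebra} to upgrade it to an accessible t-structure, and then separately identify the coconnective part and verify right-separation.

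First, since $\topos X$ is presentable, I would pick a small set $S \subset \topos X$ generating $\topos X$ under colimits and define $\Cat C_0 \subset \spectra X$ to be the smallest full subcategory closed under small colimits and extensions and containing $\pSus s$ for every $s \in S$. Closing under colimits and extensions over a small generating set produces a presentable subcategory (by a transfinite small object argument), and the inclusion preserves small colimits. Then \cite[Proposition 1.4.4.11]{higheralgebra} produces an accessible t-structure $\tstruct{\spectra X}$ on $\spectra X$ with $\tcon{\spectra X} = \Cat C_0$.

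Next I would identify $\tcocon[-1]{\spectra X}$. By construction it is the right orthogonal of $\Cat C_0$, equivalently the right orthogonal of $\{\Sigma^k \pSus s \mid s \in S,\, k \ge 0\}$ (these generate $\Cat C_0$ under colimits and extensions, and right orthogonals are automatically closed under both operations). Using the adjunction $\pSus \dashv \Loop$, an object $E$ lies in $\tcocon[-1]{\spectra X}$ iff $\Map{\topos X}(s, \Omega^k \Loop E) \cong *$ for all $s \in S$ and all $k \ge 0$. Taking $k = 0$ and using that $S$ detects terminal objects (since $S$ generates $\topos X$ under colimits) yields $\Loop E \cong *$; the converse implication is trivial. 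Hence $\tcocon[-1]{\spectra X} = \set{E \in \spectra X}{\Loop E \cong *}$.

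Finally, for right-separation: if $E \in \bigcap_n \tcocon[n]{\spectra X}$, then $\Sigma^m E \in \tcocon[-1]{\spectra X}$ for every $m \in \Z$, hence $\Loop \Sigma^m E \cong *$ for all $m$, and in particular $\pLoop \Sigma^m E \cong *$ for every $m \ge 0$. Realizing $\spectra X$ as the sequential limit of the tower $\cdots \xrightarrow{\Omega} \topos X_* \xrightarrow{\Omega} \topos X_*$ from \cite[Remark 1.4.2.25]{higheralgebra}, the object $E$ corresponds to the sequence $(\pLoop \Sigma^m E)_{m \ge 0}$, whose components are all zero objects of $\topos X_*$; hence $E \cong 0$. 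The main obstacle in the plan is the first step: verifying that $\Cat C_0$ is actually presentable requires a careful transfinite construction, since closure under extensions does not come automatically from closure under colimits in a stable category (the cofiber sequence expressing the middle term involves a desuspension of an endpoint, which need not lie in a connective subcategory).
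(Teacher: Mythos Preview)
Your argument is correct, but it reproves what the paper simply cites. The paper's proof is two lines: it invokes \cite[Proposition 1.4.3.4]{higheralgebra} as a black box for the existence of the accessible t-structure together with the description $\tcocon[-1]{\spectra X} = \set{E}{\Loop E \cong *}$, and then argues right-separation exactly as you do (joint conservativity of the functors $\pLoop\Sigma^n$ is the same statement as your observation that an object of $\spectra X$ is determined by its components in the sequential model from \cite[Remark 1.4.2.25]{higheralgebra}).

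Your route instead rebuilds the content of \cite[Proposition 1.4.3.4]{higheralgebra} from \cite[Proposition 1.4.4.11]{higheralgebra}: you pick colimit generators, form the smallest subcategory closed under colimits and extensions, and then identify the right orthogonal via the adjunction $\pSus \dashv \Loop$. This is a legitimate and instructive unpacking, and your verification that $S$ detects terminal objects is fine (the full subcategory of $Z$ with $\Map{}(Z,f)$ an equivalence is closed under colimits and contains $S$, hence is everything). Your stated ``main obstacle''—presentability of $\Cat C_0$—is not actually an obstacle: \cite[Proposition 1.4.4.11]{higheralgebra} only asks that the subcategory be closed under small colimits and extensions in a presentable stable $\infty$-category, and the transfinite construction you allude to is precisely what Lurie carries out in its proof. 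So your plan goes through without further work; it just duplicates an argument that the paper is content to quote.
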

\begin{proof}
    The existence of the t-structure is \cite[Proposition 1.4.3.4]{higheralgebra}.

    For the other statement, we essentially copy the proof of \cite[Proposition 1.3.2.7 (3)]{sag}.
    Let $F \in \bigcap_n \tcocon[n]{\spectra{X}}$.
    By definition, this says that $\pLoop \Sigma^n F \cong *$ for every $n$.
    Since the functors $\pLoop \Sigma^n$ are jointly conservative and preserve final objects (as they 
    commute with limits), it follows that $F = 0$, i.e.\ the t-structure is right-separated.
\end{proof}

\begin{lem} \label{lemma:stabilization:homotopy-objects}
    In the situation of \cref{lemma:adjoints-on-stabilization},
    assume moreover that $\topos X$ and $\topos Y$ are $\infty$-topoi,
    and that $f^*$ is left exact (i.e.\ $(f^*, f_*)$ is a geometric morphism).
    Let $A \in \tstructheart{\spectra{X}}$ be in the heart of the standard t-structure.
    Then $f^*A \cong f^{*,\heartsuit}A$.
    Similarly, if $E \in \spectra X$, then
    $\pi_n(f^*E) \cong f^* \pi_n(E)$.
\end{lem}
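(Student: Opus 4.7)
The plan is to prove that $f^* \colon \spectra{X} \to \spectra{Y}$ is t-exact for the standard t-structures. Once t-exactness is established, the first statement is immediate: if $A \in \heart{\spectra{X}}$, then $f^* A \in \heart{\spectra{Y}}$, whence $f^* A \cong \pi_0(f^* A) = \heart{f^*} A$ by the paper's conventions. The second statement follows by applying $f^*$ to the canonical fiber sequences $\tau_{\ge n+1} E \to \tau_{\ge n} E \to \Sigma^n \pi_n(E)$ and using that a t-exact exact functor commutes with the truncations $\tau_{\ge n}$ and $\tau_{\le n}$; this yields $\Sigma^n \pi_n(f^* E) \cong f^* \Sigma^n \pi_n(E) \cong \Sigma^n f^* \pi_n(E)$ and hence $\pi_n(f^* E) \cong f^* \pi_n(E)$.

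For right t-exactness of $f^*$, I will show that $f_* \colon \spectra{Y} \to \spectra{X}$ is left t-exact and then appeal to \cite[Proposition 1.3.17(iii)]{BeilinsonFaisceauxPerverse}. To see that $f_*$ is left t-exact, note that by \cref{lemma:adjoints-on-stabilization} the functor $\pLoop$ commutes with the right adjoint $f_*$, so for $E \in \tcocon[-1]{\spectra{Y}}$ (i.e., $\pLoop E \cong *$) we get $\pLoop f_* E \cong f_* \pLoop E \cong f_*(*) \cong *$, whence $f_* E \in \tcocon[-1]{\spectra{X}}$. Since $f_*$ is exact and in particular commutes with $\Sigma$, this implies $f_* \tcocon[n]{\spectra{Y}} \subseteq \tcocon[n]{\spectra{X}}$ for all $n$, giving left t-exactness.

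For left t-exactness of $f^*$, I will directly apply \cref{lemma:stabilization:geometric-morphism-commutes-with-loopspace}, whose hypothesis is precisely the left exactness of $f^*$ assumed here. For $E \in \tcocon[-1]{\spectra{X}}$ we compute $\pLoop f^* E \cong f^* \pLoop E \cong f^*(*) \cong *$, using that $f^*$ preserves the terminal object because it is left exact. Therefore $f^* E \in \tcocon[-1]{\spectra{Y}}$, and shifting via $\Sigma$-compatibility gives $f^* \tcocon[n]{\spectra{X}} \subseteq \tcocon[n]{\spectra{Y}}$ for all $n$.

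No serious obstacle arises: the whole argument reduces to the formal fact that both $f^*$ and $f_*$ commute with $\pLoop$, combined with the characterization of the standard t-structure on stabilizations through $\pLoop$ from \cref{lemma:stabilization:t-structure}. The only potential subtlety is the two meanings of $\heart{f^*}$ (as the truncation $\pi_0 \circ f^*$ restricted to the heart, or as the induced functor on $\AbObj{\Disc{\topos{X}}}$), but this is harmless: the equivalence $\heart{\spectra{X}} \cong \AbObj{\Disc{\topos{X}}}$ is realized by $\pLoop$, and $f^*$ commutes with $\pLoop$ and with finite products (by left exactness), so both constructions yield the same functor.
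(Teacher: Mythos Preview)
Your proof is correct and follows the same strategy as the paper: reduce both conclusions to the t-exactness of $f^*$ for the standard t-structures. The paper's proof is a one-line citation of \cite[Remark 1.3.2.8]{sag}, whereas you spell out the argument directly from the description $\tcocon[-1]{\spectra{X}} = \{E : \pLoop E \cong *\}$ together with the compatibility of $f^*$ and $f_*$ with $\pLoop$ established in \cref{lemma:adjoints-on-stabilization,lemma:stabilization:geometric-morphism-commutes-with-loopspace}; this makes your argument self-contained within the paper at the cost of a few extra lines.
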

\begin{proof}
    \cite[Remark 1.3.2.8]{sag} shows that $f^*$ is t-exact with respect to the standard t-structures.
\end{proof}

\begin{lem} \label{lemma:stabilization:stab-of-grothendieck-topos}
    Let $(\Cat C, \tau)$ be a site.
    Write $\ShvTop{\tau}{\Cat C, \Cat V}$ for the category of 
    sheaves on $\Cat C$ (in the $\tau$-topology) with values in a presentable $\infty$-category $\Cat V$.
    Then there is an equivalence 
    \begin{equation*}
        \Stab{\ShvTop{\tau}{\Cat C, \An}} \cong \ShvTop{\tau}{\Cat C, \Sp}.
    \end{equation*}
\end{lem}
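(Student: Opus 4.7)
The plan is to apply the general machinery from Appendix A to the sheafification--inclusion adjunction $L_\tau \colon \PrShv{\Cat C} \rightleftarrows \ShvTop{\tau}{\Cat C, \An} \colon \iota_\tau$, which is a geometric morphism (the right adjoint is fully faithful and the left adjoint is left exact). By \cref{lemma:adjoints-on-stabilization} we obtain an induced adjunction on stabilizations, and by \cref{lemma:stabilization:right-adjoint-fully-faithful} (or \cref{lemma:fully-faithful-on-stabilizations}) the induced right adjoint
\begin{equation*}
    \iota_\tau \colon \Stab{\ShvTop{\tau}{\Cat C, \An}} \hookrightarrow \Stab{\PrShv{\Cat C}} \cong \PrShvVal{\Cat C}{\Sp}
\end{equation*}
is fully faithful. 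Therefore the task reduces to identifying the essential image of this inclusion with $\ShvTop{\tau}{\Cat C, \Sp} \subset \PrShvVal{\Cat C}{\Sp}$.

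To carry out this identification, I would characterize both subcategories via the infinite loop space functors. Using the description of the stabilization as $\Sp \cong \lim(\dots \xrightarrow{\Omega} \An_* \xrightarrow{\Omega} \An_*)$ (see \cite[Remark 1.4.2.25]{higheralgebra}) and the commuting squares of \cref{lemma:adjoints-on-stabilization}, an object $E \in \PrShvVal{\Cat C}{\Sp}$ lies in the essential image of $\iota_\tau$ if and only if each pointed presheaf $\pLoop \Sigma^n E \in \PrShv{\Cat C}_*$ is a $\tau$-sheaf. On the other hand, $E$ is a sheaf of spectra by definition iff for every covering sieve $S \to U$ the canonical map $E(U) \to \lim_{V \in S} E(V)$ is an equivalence in $\Sp$. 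Since $\Sp$ is the aforementioned sequential limit, the projection functors $\pLoop \Sigma^n \colon \Sp \to \An_*$ are jointly conservative and preserve all limits, so this equivalence in $\Sp$ can be tested after applying each $\pLoop \Sigma^n$; commuting $\pLoop \Sigma^n$ past the limit $\lim_{V \in S}$ (valid since it preserves limits) shows that $E$ is a sheaf of spectra iff each $\pLoop \Sigma^n E$ is a sheaf of pointed anima, which matches the previous condition (the forgetful $\An_* \to \An$ preserves limits, so the sheaf condition on pointed anima is the underlying sheaf condition).

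The main obstacle is merely bookkeeping: making sure that the fully faithful embedding from \cref{lemma:fully-faithful-on-stabilizations} literally identifies objects with those presheaves of spectra whose every $\pLoop \Sigma^n$ is a sheaf, rather than just showing set-theoretic containment. This is taken care of by the commutativity of the second diagram in \cref{lemma:adjoints-on-stabilization}, which ensures $\pLoop \Sigma^n \iota_\tau \cong \iota_\tau \pLoop \Sigma^n$, so that the levels of a sheaf of spectra in the stabilization of the sheaf topos are indeed their underlying presheaves of anima, which are by construction sheaves. Everything else is a direct translation.
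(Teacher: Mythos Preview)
Your argument is correct. The paper's own proof simply cites \cite[Remark 1.3.2.2]{sag} together with \cite[Proposition 1.3.1.7]{sag}, so what you have done is essentially unwind those references by hand using the lemmas of Appendix~A: stabilize the sheafification adjunction, observe that the inclusion remains fully faithful on stabilizations, and then identify the essential image via the jointly conservative, limit-preserving family $\pLoop\Sigma^n$. This is genuinely the same mathematics, just made explicit rather than outsourced to Lurie. One small correction: your parenthetical alternative \cref{lemma:fully-faithful-on-stabilizations} does not apply here, since that lemma concerns a fully faithful \emph{left} adjoint, whereas in the sheafification adjunction it is the right adjoint $\iota_\tau$ that is fully faithful; the correct reference is the one you cite first, \cref{lemma:stabilization:right-adjoint-fully-faithful}.
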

\begin{proof}
    This is \cite[Remark 1.3.2.2]{sag}, together with \cite[Proposition 1.3.1.7]{sag}.
\end{proof}

\subsection{Nilpotent Objects} \label{appendix:nilpotent}
Let $\topos X$ be a hypercomplete $\infty$-topos.
Recall the following definition:
\begin{defn} \label{def:nilpotent:nilpotent-action}
    Let $G$ and $H$ be group objects in $\Disc{\topos X}$,
    with an action of $G$ on $H$.
    A \emph{$G$-central series} is a finite decreasing filtration $H = H_0 \supset \dots \supset H_n = 1$
    such that $H_i$ is normal and $G$-stable, $H_i / H_{i+1}$ is abelian
    and the induced action of $G$ on $H_i / H_{i+1}$ is trivial.

    The action of $G$ on $H$ is called \emph{nilpotent}
    if there exists a $G$-central series of $H$.

    We say that $G$ is \emph{nilpotent}
    if the action of $G$ on itself via conjugation is nilpotent.
\end{defn}

\begin{lem} \label{lemma:nilpotent:abelian-implies-nilpotent}
    Let $G$ be a group object in $\Disc{\topos X}$.
    If $G$ is abelian then $G$ is nilpotent.
\end{lem}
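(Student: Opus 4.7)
The plan is to observe that being nilpotent in the sense of \cref{def:nilpotent:nilpotent-action} requires the existence of a $G$-central series for the conjugation action of $G$ on itself, and for an abelian $G$ we can simply exhibit the trivial two-step filtration $G = G_0 \supset G_1 = 1$.

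First I would check that this filtration satisfies all the conditions in \cref{def:nilpotent:nilpotent-action}. The subgroup $G_0 = G$ is of course normal and $G$-stable, and $G_1 = 1$ is normal and $G$-stable by definition. The successive quotient $G_0/G_1 \cong G$ is abelian by assumption. Finally, since $G$ is abelian, the conjugation action of $G$ on itself is trivial (as $ghg^{-1} = h$ holds internally, which one can verify either by an explicit Yoneda-type argument on generalized elements or by noting that the conjugation map $G \times G \to G$, $(g, h) \mapsto ghg^{-1}$, factors through the projection to the second factor for abelian $G$, since this identity holds in $\operatorname{Ab}$ and both sides are natural transformations of representable functors on $\Disc{\topos X}$). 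Hence the induced action of $G$ on $G_0/G_1 \cong G$ is trivial.

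There is essentially no obstacle here: the statement is a direct unwinding of definitions, and the only point that requires a moment of thought is that the conjugation action of an abelian group object on itself is internally trivial, which follows formally because the identity $ghg^{-1} = h$ is a universally valid equation of group-theoretic terms in the theory of abelian groups.
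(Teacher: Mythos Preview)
Your proof is correct and takes essentially the same approach as the paper: exhibit the two-step series $G \supset 1$ and note that the conjugation action of an abelian group on itself is trivial. The paper's proof is a one-liner to this effect; your additional remarks on why conjugation is internally trivial are fine but not strictly necessary.
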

\begin{proof}
    One can choose the $G$-central series $G \supset 1$,
    since the conjugation action is trivial.
\end{proof}

\begin{defn} \label{def:nilpotent:defn}
    Let $X \in \topos X_*$ be a pointed object.
    We say that $X$ is \emph{nilpotent} if $X$ is connected,
    $\pi_1(X)$ is a nilpotent group object
    and the action of $\pi_1(X)$ on $\pi_n(X)$
    is nilpotent for all $n \ge 2$.
\end{defn}

\begin{lem} \label{lemma:nilpotent:loopspace}
    Let $X \in \topos X_*$ be a pointed object.
    Then $\tau_{\ge 1} \Omega X$ is nilpotent.
\end{lem}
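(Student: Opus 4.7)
The plan is to verify the three defining conditions of Definition \ref{def:nilpotent:defn} for $Y \coloneqq \tau_{\ge 1}\Omega X$: connectedness, nilpotence of the fundamental group $\pi_1(Y)$, and nilpotence of the action of $\pi_1(Y)$ on $\pi_n(Y)$ for each $n \ge 2$.

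Connectedness is immediate by construction of $\tau_{\ge 1}$. Since the connective cover does not alter homotopy objects in positive degrees, we have $\pi_n(Y) \cong \pi_n(\Omega X) \cong \pi_{n+1}(X)$ for all $n \ge 1$. In particular $\pi_1(Y) \cong \pi_2(X)$, which under the paper's conventions takes values in $\heart{\spectra X} \cong \AbObj{\Disc{\topos X}}$ and is therefore an abelian group object. By Lemma \ref{lemma:nilpotent:abelian-implies-nilpotent}, it is nilpotent.

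It remains to verify that the action of $\pi_1(Y)$ on $\pi_n(Y)$ is nilpotent for each $n \ge 2$. My plan is to prove the stronger statement that this action is trivial; then the one-step filtration $\pi_n(Y) \supset 1$ constitutes a $\pi_1(Y)$-central series in the sense of Definition \ref{def:nilpotent:nilpotent-action}, and nilpotence follows at once. The key structural input is that $\Omega X$ is a group object in $\topos X_*$ (representing loops), and hence so is its identity component $Y$; equivalently, $Y$ is a grouplike $E_1$-algebra. For such objects, classical topology tells us that $\pi_1$ acts trivially on every higher $\pi_n$.

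The main obstacle is to execute this H-space argument internally in an arbitrary hypercomplete $\infty$-topos, where one cannot simply pass to points. The plan is to adapt the classical Whitehead-product argument: given internal morphisms $\alpha \colon S^1 \to Y$ and $\beta \colon S^n \to Y$, the multiplication $\mu \colon Y \times Y \to Y$ produces an extension of the wedge map $\alpha \vee \beta \colon S^1 \vee S^n \to Y$ to the composite $\mu \circ (\alpha \times \beta) \colon S^1 \times S^n \to Y$, forcing the Whitehead bracket to vanish. Using hypercompleteness, the relevant statement is detected on the homotopy sheaves $\pi_1(Y), \pi_n(Y) \in \AbObj{\Disc{\topos X}}$, and the vanishing Whitehead bracket translates into triviality of the $\pi_1$-action. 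If this becomes cumbersome to formalize directly, a fallback is an inductive argument up the Postnikov tower of $Y$: truncations of group objects remain group objects (in particular each Postnikov stage is an H-space), so each $k$-invariant lives in cohomology with trivially-acted-upon abelian coefficients, and triviality of the action at every stage follows by induction. Either route concludes the verification of the three conditions and establishes the lemma.
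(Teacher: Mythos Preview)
Your approach is sound but takes a genuinely different route from the paper. The paper's proof is a two-line reduction: it rewrites $\tau_{\ge 1}\Omega X \cong \Omega\,\tau_{\ge 2}X = \Fib{* \to \tau_{\ge 2}X}$, observes that $\tau_{\ge 2}X$ is simply connected and hence nilpotent, and then invokes \cite[Proposition 2.2.4]{asok2022localization} (the fiber of a map between nilpotent objects is nilpotent) to conclude. No direct analysis of the $\pi_1$-action is carried out.

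Your argument instead unpacks the definition and aims to show that the $\pi_1$-action is actually \emph{trivial}, using the loop-space/H-space structure of $\Omega X$. This is the classical route and yields a sharper conclusion, but as you yourself flag, making the Whitehead-bracket or Postnikov-tower argument rigorous in an arbitrary hypercomplete $\infty$-topos is real work that you only sketch; neither fallback is quite ``routine'' without a precise internal definition of the $\pi_1$-action to check against. The paper trades that internal labor for a citation: the key move you are missing is the identification $\tau_{\ge 1}\Omega X \cong \Omega\,\tau_{\ge 2}X$, which places the problem squarely in the scope of the fiber lemma from \cite{asok2022localization} and finishes in one stroke.
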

\begin{proof}
    Note that $\tau_{\ge 1} \Omega X \cong \Omega \tau_{\ge 2} X$.
    Since $\tau_{\ge 2} X$ is simply connected, it is in particular nilpotent.
    Now note that $\Omega \tau_{\ge 2} X = \Fib{* \to \tau_{\ge 2} X}$.
    Thus, we conclude by \cite[Proposition 2.2.4]{asok2022localization}
    that $\tau_{\ge 1} \Omega X$ is nilpotent.
\end{proof}

\begin{lem} \label{lemma:nilpotent:fiber}
    Let $f \colon X \to Y$ be a morphism of pointed nilpotent objects in $\topos X_*$.
    Then $\tau_{\ge 1} \Fib{f}$ is nilpotent.
\end{lem}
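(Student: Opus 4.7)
The strategy is to follow the classical proof in the setting of a hypercomplete $\infty$-topos, using the long exact sequence of homotopy objects of the fiber sequence $\Fib{f} \to X \to Y$ together with the standard action of $\pi_1(X)$ on the homotopy objects of the fiber. Write $F \coloneqq \Fib{f}$. Since $\topos X$ is hypercomplete, this fiber sequence gives rise to a long exact sequence of homotopy objects in $\Disc{\topos X}$, i.e. of sheaves of groups (with $\pi_0$ considered as a pointed sheaf). Nothing is lost by replacing $F$ with $\tau_{\ge 1} F$: we only care about the homotopy groups in positive degrees, which coincide with those of $F$, and $\tau_{\ge 1} F$ is connected by construction.

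For $n \ge 2$, I would argue that $\pi_1(F)$ acts nilpotently on $\pi_n(F)$ as follows. The connecting map provides a short exact sequence
\begin{equation*}
    0 \to \operatorname{im}\!\left(\pi_{n+1}(Y) \xrightarrow{\partial} \pi_n(F)\right) \to \pi_n(F) \to \operatorname{im}\!\left(\pi_n(F) \to \pi_n(X)\right) \to 0
\end{equation*}
of $\pi_1(X)$-modules, using the classical fact that the map $\pi_n(F) \to \pi_n(X)$ is $\pi_1(X)$-equivariant and that $\partial$ is equivariant with respect to the pullback action along $\pi_1(X) \to \pi_1(Y)$. The right-hand term is a subobject of $\pi_n(X)$, on which $\pi_1(X)$ acts nilpotently by assumption; the left-hand term is a quotient of $\pi_{n+1}(Y)$, on which $\pi_1(Y)$, and hence $\pi_1(X)$, acts nilpotently. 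Using that an extension of nilpotent $\pi_1(X)$-actions is again nilpotent (this is a purely algebraic/diagrammatic statement inside $\Disc{\topos X}$: one can just compose the two $G$-central series), it follows that $\pi_1(X)$ acts nilpotently on $\pi_n(F)$. Since the action of $\pi_1(F)$ on $\pi_n(F)$ factors through the homomorphism $\pi_1(F) \to \pi_1(X)$, it is also nilpotent.

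The same argument applied to $n = 1$ shows $\pi_1(F)$ is nilpotent, except one needs to be careful that the relevant extension is an extension of groups, not of modules. Concretely, the image of $\partial \colon \pi_2(Y) \to \pi_1(F)$ is a normal subgroup on which $\pi_1(F)$ acts by conjugation; this action factors through $\pi_1(F) \to \pi_1(Y)$ and equals the restriction of the $\pi_1(Y)$-action on $\pi_2(Y)$ along $\partial$, hence is nilpotent. The quotient $\pi_1(F)/\operatorname{im}\partial$ injects into $\pi_1(X)$, which is nilpotent, and the $\pi_1(F)$-conjugation action on this quotient factors through the nilpotent group $\pi_1(X)$, hence is nilpotent. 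A standard argument (again a purely formal manipulation of central series, internal to $\Disc{\topos X}$) shows that if a normal subgroup $N \trianglelefteq G$ admits a central series under the conjugation action of $G$, and $G/N$ is nilpotent, then $G$ itself is nilpotent. Applied here, this gives nilpotence of $\pi_1(F)$.

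The main obstacle is the verification that the above-used $\pi_1(X)$-equivariance of $\pi_n(F) \to \pi_n(X)$ and of $\partial$, as well as the fact that $\operatorname{im}\partial$ lies in $\pi_1(F)$ with the correct conjugation behaviour, all carry over to the setting of sheaves in a hypercomplete $\infty$-topos. The cleanest way is to observe that the path-loop fibration construction is internal to $\topos X$ (it is given by a pullback in $\topos X_*$), so that the action of $\Omega X$ on $F$ — hence of $\pi_1(X)$ on each $\pi_n(F)$ — is defined entirely within $\topos X$, and the corresponding equivariance statements reduce to the manipulation of the associated simplicial/cosimplicial diagrams, which are already exploited in the cited \cite[Proposition 2.2.4]{asok2022localization}. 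Once these structural facts are in hand, the short exact sequences above are exact sequences of sheaves of groups by hypercompleteness, and the proof proceeds as sketched.
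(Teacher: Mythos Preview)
Your proposal is correct and takes essentially the same approach as the paper: both rely on the argument of \cite[Proposition 2.2.4]{asok2022localization}, using the long exact sequence of the fiber sequence and the $\pi_1(X)$-equivariance of its maps to exhibit $\pi_n(\Fib{f})$ as an extension of nilpotent $\pi_1(X)$-modules. The paper simply cites that reference, while you have written out the argument in detail.
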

\begin{proof}
    Following the proof in \cite[Proposition 2.2.4]{asok2022localization},
    we see that $\pi_1(\Fib{f})$ is a nilpotent group,
    with a nilpotent action on $\pi_n(\Fib{f})$ for all $n \ge 2$.
    Thus, $\tau_{\ge 1} \Fib{f}$ is nilpotent.
\end{proof}

\begin{lem} \label{lemma:nilpotent:posttower}
    Let $X \in \topos X_*$ be a pointed object.
    Suppose that $\tau_{\le n} X$ is nilpotent for every $n$.
    Then $X$ is nilpotent.
\end{lem}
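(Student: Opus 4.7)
The plan is to reduce the three defining properties of a nilpotent object (connectedness, nilpotence of $\pi_1$, and nilpotence of the $\pi_1$-action on $\pi_n$) directly to the corresponding properties of the truncations, which we already control by hypothesis. In other words, the statement should follow purely from the fact that sufficiently high Postnikov truncations see all of the relevant homotopy data.

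First I would verify that $X$ is connected: the unit map $X \to \tau_{\le 0} X$ induces an equivalence on $\pi_0$, and $\tau_{\le 0} X$ is connected because it is nilpotent (hence connected) by assumption. Next, for every $n \ge 1$, the Postnikov truncation $X \to \tau_{\le n} X$ induces an isomorphism on $\pi_k$ for all $k \le n$, by the standard characterization of the Postnikov tower in a hypercomplete $\infty$-topos. In particular $\pi_1(X) \cong \pi_1(\tau_{\le 1} X)$ as group objects in $\Disc{\topos X}$, and the latter is nilpotent by the assumed nilpotence of $\tau_{\le 1} X$. This establishes that $\pi_1(X)$ is a nilpotent group object.

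For the action condition, fix $n \ge 2$. The truncation $X \to \tau_{\le n} X$ induces an isomorphism $\pi_1(X) \cong \pi_1(\tau_{\le n} X)$ and an isomorphism $\pi_n(X) \cong \pi_n(\tau_{\le n} X)$, and by functoriality of the Postnikov tower in pointed objects these isomorphisms intertwine the two $\pi_1$-actions. Since $\tau_{\le n} X$ is nilpotent by hypothesis, the action of $\pi_1(\tau_{\le n} X)$ on $\pi_n(\tau_{\le n} X)$ admits a central series, and transporting this central series along the above isomorphisms gives a central series witnessing nilpotence of the $\pi_1(X)$-action on $\pi_n(X)$.

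The only substantive point — and it is not really an obstacle, but the step worth being careful about — is the last one: that the identification $\pi_1(X) \cong \pi_1(\tau_{\le n} X)$ actually matches the $\pi_1$-actions on $\pi_n$. This is a formal consequence of naturality of the whiskering action along the Postnikov map, but it is the only place where one uses more than the definition of the homotopy objects. Once this is granted, assembling the three parts yields nilpotence of $X$ in the sense of \cref{def:nilpotent:defn}.
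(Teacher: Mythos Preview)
Your argument is correct and is essentially the same as the paper's: both reduce each condition in the definition of nilpotence to the corresponding condition on a sufficiently high truncation, using that $X \to \tau_{\le n} X$ induces isomorphisms on $\pi_k$ for $k \le n$ and that these isomorphisms are compatible with the $\pi_1$-actions. The paper's version is simply a terse one-line statement of the same reduction, whereas you have spelled out the naturality step explicitly.
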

\begin{proof}
    Since $\tau_{\le 1} X$ is connected, also $X$ is connected.
    Since the action of $\pi_1(X)$ on $\pi_n(X)$
    is the same as the action of $\pi_1(\tau_{\le n} X)$ on $\pi_n(\tau_{\le n} X)$,
    the lemma follows.
\end{proof}

\begin{defn} \label{def:nilpotent:principal-refinement}
    Let $X \in \topos X_*$ be a connected space.
    Consider the Postnikov tower of $X$
    given by
    \begin{equation*}
        \dots \to \tau_{\le n} X \xrightarrow{p_n} \tau_{\le n-1} X \to \dots \to \tau_{\le 0} X = *.
    \end{equation*}
    We say that the Postnikov tower of $X$ admits a \emph{principal refinement}
    if for each $n \ge 1$ there exists a factorization of $p_n$
    as
    \begin{equation*}
        \tau_{\le n} X = X_{n, m_n} \xrightarrow{p_{n, m_n}} X_{n, m_n-1} \to \dots \to X_{n, 1} \xrightarrow{p_{n,1}} X_{n, 0} = \tau_{\le n-1} X,
    \end{equation*}
    with $m_n \ge 1$, such that each $p_{n, k}$ fits into a fiber sequence
    \begin{equation*}
        X_{n, k} \xrightarrow{p_{n, k}} X_{n, k-1} \to K(A_{n, k}, n + 1)
    \end{equation*}
    with $A_{n, k}$ an abelian group object in $\Disc{\topos X}$.
\end{defn}

\begin{lem} \label{lemma:nilpotent:principal-refinement}
    Let $X \in \topos X_*$ be a pointed object.
    Then $X$ is nilpotent if and only if the Postnikov tower of $X$ admits a principal refinement.
\end{lem}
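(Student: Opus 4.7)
The plan is to prove the two implications separately: the reverse direction is a direct calculation with long exact sequences, while the forward direction requires an inductive construction.

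For the direction assuming a principal refinement exists, I would first observe that $X_{1,0} = \tau_{\le 0} X = *$, so $X$ is connected. Then for each $n \ge 1$ and $1 \le k \le m_n$, the principal fibration $X_{n,k} \to X_{n,k-1} \to K(A_{n,k}, n+1)$ has fiber $K(A_{n,k}, n)$, and because it is pulled back from the path-loop fibration over $K(A_{n,k}, n+1)$, the monodromy action of $\pi_1(X_{n,k-1})$ on $\pi_n$ of the fiber is trivial. Running the long exact sequence of homotopy objects along $k = 1, \ldots, m_n$ assembles into a filtration of $\pi_n(X) = \pi_n(X_{n, m_n})$ whose successive subquotients are the $A_{n,k}$, each fixed by $\pi_1(X)$ (and central in $\pi_1(X)$ in the case $n=1$). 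This produces the required $\pi_1(X)$-central series, establishing nilpotence.

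For the direction assuming $X$ is nilpotent, I would construct the refinement by induction on $n$, starting from the connectedness $\tau_{\le 0} X = * = X_{1,0}$. Suppose the refinement is built up to level $n-1$ so that $X_{n,0} = \tau_{\le n-1} X$. By nilpotence, the $\pi_1(X)$-action on $\pi_n(X)$ (for $n \ge 2$) or the conjugation action on $\pi_1(X)$ itself (for $n=1$) admits a $\pi_1(X)$-central series $\pi_n(X) = H_0 \supset H_1 \supset \dots \supset H_{m_n} = 0$ with abelian subquotients $A_{n,k} \coloneqq H_{k-1}/H_k$ carrying trivial $\pi_1(X)$-action. I would then construct $X_{n,k}$ inductively from $X_{n,0}$ as follows: having built $X_{n,k-1}$ with $\pi_n(X_{n,k-1}) \cong \pi_n(X)/H_{k-1}$ and matching lower homotopy, I exhibit a classifying map $X_{n,k-1} \to K(A_{n,k}, n+1)$ whose homotopy fiber $X_{n,k}$ has $\pi_n(X_{n,k}) \cong \pi_n(X)/H_k$, obtained from the extension $0 \to A_{n,k} \to \pi_n(X)/H_k \to \pi_n(X)/H_{k-1} \to 0$. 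After $m_n$ such steps one reaches $X_{n, m_n} = \tau_{\le n} X$.

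The main obstacle is producing these classifying maps into the Eilenberg-MacLane objects $K(A_{n,k}, n+1)$ with the prescribed homotopy-theoretic effect. In classical topology this is obstruction theory: one successively refines the ordinary Postnikov $k$-invariant (a twisted cohomology class) into untwisted pieces using the central series. To transport this into a general hypercomplete $\infty$-topos $\topos X$, I would rely on the equivalence $\heart{\spectra{X}} \cong \AbObj{\Disc{\topos X}}$ and the fact that principal $K(A, n)$-fibrations over $Y$ are classified by $\pi_0 \Map{\topos X_*}(Y, K(A, n+1))$, so that the needed classifying maps correspond to elements of $H^{n+1}(Y; A)$ in the internal cohomology of $\topos X$. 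The centrality of $A_{n,k}$ inside the $\pi_1(X)$-action is exactly what ensures the twisted coefficients reduce to untwisted ones, which is what allows the obstruction class to live in ordinary $(n+1)$-st cohomology with values in an abelian group object. Once this topos-internal obstruction theory is set up, the induction goes through formally, the hypercompleteness of $\topos X$ guaranteeing that the constructed Postnikov-refined tower really recovers $X$ via $X \cong \limil{n} \tau_{\le n} X$.
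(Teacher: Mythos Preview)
Your outline is essentially correct and in fact considerably more detailed than the paper's own proof, which consists of a single sentence deferring to \cite[Theorem 3.3.13]{asok2022localization} applied to the map $f \colon X \to *$. That reference establishes Moore--Postnikov factorizations of maps in the motivic (and more generally $\infty$-topos) setting, and specializing to $X \to *$ yields precisely the equivalence between nilpotence of $X$ and the existence of a principal refinement of its Postnikov tower. Your sketch is the content of that argument unpacked by hand: the reverse direction via long exact sequences, and the forward direction via successively peeling off the $k$-invariant using the central series.

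Two small comments. First, your closing remark about hypercompleteness ensuring $X \cong \limil{n} \tau_{\le n} X$ is a red herring here: the statement only asks for a factorization of each individual $p_n \colon \tau_{\le n} X \to \tau_{\le n-1} X$, not for a reconstruction of $X$ from the tower, so convergence plays no role. Second, in your bottom-up construction the phrase ``exhibit a classifying map $X_{n,k-1} \to K(A_{n,k}, n+1)$ \ldots\ obtained from the extension'' hides the real work: one must know \emph{which} classifying map to take so that the resulting fibers assemble into a tower under $\tau_{\le n} X$. The clean way to do this is to start from the (twisted) $k$-invariant $\tau_{\le n-1} X \to K_{\pi_1}(\pi_n X, n+1)$ and successively postcompose with the $\pi_1$-equivariant quotients $\pi_n X \twoheadrightarrow \pi_n X / H_1 \twoheadrightarrow \cdots$, the triviality of the action on each subquotient being what untwists the target at each stage. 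This is exactly the mechanism in the Asok--Fasel--Hopkins proof, so your approach and the paper's citation amount to the same thing.
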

\begin{proof}
    The proof is analogous to the proof of \cite[Theorem 3.3.13]{asok2022localization},
    applied to the morphism $f \colon X \to *$.
\end{proof}

\subsection{Completions of Anima}

In this section, we collect some results about the $p$-completion of anima.
Essentially everything in this section already appeared in \cite{bousfield2009homotopy}.

\begin{defn}
    Let $f \colon X \to Y$ be a morphism of anima.
    We say that $f$ is an \emph{$\finfld{p}$-equivalence} if
    $f$ induces an isomorphism of homology
    $f_* \colon H_*(X, \finfld{p}) \xrightarrow{\simeq} H_*(Y, \finfld{p})$.
\end{defn}

\begin{lem} \label{lemma:anima:peq-iff-fpeq}
    Let $f \colon X \to Y$ be a morphism of anima.
    Then $f$ is a $p$-equivalence if and only if $f$ is an $\finfld{p}$-equivalence.
\end{lem}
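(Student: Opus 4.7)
I would set $C \coloneqq \Cofib{\pSus f}$, which is a connective spectrum as the cofiber of a map between connective spectra. Using the identification $H_n(-; \finfld{p}) \cong \pi_n(\pSus (-) \wedge H\finfld{p})$, $f$ is an $\finfld{p}$-equivalence exactly when $C \wedge H\finfld{p} \cong 0$; by construction, $f$ is a $p$-equivalence exactly when $C \sslash p \cong 0$. The task is thus to show that, for a connective spectrum $C \in \Sp$, one has $C \sslash p \cong 0$ if and only if $C \wedge H\finfld{p} \cong 0$.

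The key observation is the identity $H\finfld{p} \cong H\Z \wedge \mathbb{S}/p$, which follows by smashing the cofiber sequence $\mathbb{S} \xrightarrow{p} \mathbb{S} \to \mathbb{S}/p$ with $H\Z$ and recognizing the resulting cofiber $H\Z \sslash p$ as $H\finfld{p}$ (using $\Z[p] = 0$, so only $\pi_0$ survives). Reassociating the smash product gives
\begin{equation*}
    C \wedge H\finfld{p} \;\simeq\; C \wedge H\Z \wedge \mathbb{S}/p \;\simeq\; (C \sslash p) \wedge H\Z.
\end{equation*}

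The forward implication of the desired equivalence is immediate from this identification: if $C \sslash p \cong 0$ then $(C \sslash p) \wedge H\Z \cong 0$, so $C \wedge H\finfld{p} \cong 0$. For the converse, suppose $(C \sslash p) \wedge H\Z \cong 0$; since $C$ is connective, so is $D \coloneqq C \sslash p$. Then $\pi_n(D \wedge H\Z) \cong H_n(D; \Z)$ is ordinary integral homology, and the stable Hurewicz theorem (applied inductively on the connectivity of $D$: if $D$ is $(n-1)$-connective and $D \wedge H\Z \cong 0$, then $\pi_n(D) \cong H_n(D; \Z) = 0$, so $D$ is $n$-connective) shows that vanishing of $H_\ast(D; \Z)$ together with connectivity of $D$ forces $\pi_\ast(D) = 0$; by right-separatedness of the standard $t$-structure on $\Sp$, this gives $D \cong 0$.

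The main obstacle is recognizing the factorization $H\finfld{p} \cong H\Z \wedge \mathbb{S}/p$, which is the bridge between acyclicity against $H\finfld{p}$ and acyclicity of the mod-$p$ reduction against $H\Z$. Once this identification is in place, the rest of the argument becomes a clean application of the stable Hurewicz theorem to the connective spectrum $C \sslash p$.
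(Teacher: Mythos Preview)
Your argument is correct. The paper's own proof is simply a citation to \cite[Theorem 2.6]{barthelbousfield} together with the remark that $\pSus f$ is a map of connective spectra; you have instead supplied a self-contained proof of precisely the cited fact. Your route---reducing to the vanishing of the cofiber $C = \Cofib{\pSus f}$, identifying $C \wedge H\finfld{p} \simeq (C\sslash p)\wedge H\Z$ via the factorization $H\finfld{p}\simeq H\Z\wedge \mathbb S/p$, and then invoking the stable Hurewicz theorem to show that a connective spectrum with vanishing integral homology is zero---is the standard elementary argument that underlies the reference. One minor remark: your inductive Hurewicz step carries an index shift relative to the paper's conventions (in which ``$n$-connective'' means $\pi_k=0$ for $k<n$, so Hurewicz for an $n$-connective spectrum identifies $\pi_n$ with $H_n$; starting from $D$ being $0$-connective you get $\pi_0(D)\cong H_0(D)=0$, hence $D$ is $1$-connective, and so on), but the induction is of course valid under either indexing convention.
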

\begin{proof}
    See e.g. \cite[Theorem 2.6]{barthelbousfield}.
    Note that $\pSus f$ is a morphism of connective spectra.
\end{proof}

The following results are from \cite{MCAT}.
We will use without comment that a $p$-equivalence
is the same as an $\finfld{p}$-equivalence, see \cref{lemma:anima:peq-iff-fpeq}.

\begin{lem} \label{lemma:anima:connectivenes-of-completion}
    Let $X$ be an $n$-connective pointed anima for some $n \ge 0$.
    Then $\complete{X}$ is $n$-connective.
\end{lem}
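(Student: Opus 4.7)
The case $n = 0$ is immediate, and for $n = 1$ the unit $X \to \complete{X}$ is a $p$-equivalence, so \cref{lemma:peq-respects-pi0} applied to the $\infty$-topos $\An$ yields $\pi_0(\complete{X}) \cong \pi_0(X) = *$. Henceforth we assume $n \ge 2$, so that $X$ is simply connected and therefore nilpotent.

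The plan is to reduce to the case of Eilenberg--MacLane spaces, induct along the principal refinement of the Postnikov tower supplied by \cref{lemma:nilpotent:principal-refinement}, and then pass to the limit via \cref{thm:topos:p-comp-commutes-with-post-tower}. The key auxiliary claim is that $\complete{K(A, m)}$ is $m$-connective in $\An$ for every abelian group $A$ and every $m \ge 1$. By \cref{cor:completion-of-EM-space} this reduces to showing that $\complete{\Sigma^m HA} \in \Sp$ is $m$-connective, since $\pLoop$ of a connective $m$-connective spectrum is an $m$-connective anima once $m \ge 1$. Each term of the sequential limit $\complete{\Sigma^m HA} = \limil{k} \Sigma^m HA \sslash p^k$ is $m$-connective, and the transition maps on $\pi_m$ are surjections $A/p^{k+1} \twoheadrightarrow A/p^k$, so the corresponding $\lim^1$ in degree $m-1$ vanishes.

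Next I double-induct along the principal refinement. Since $X$ is $n$-connective, $X_{n, 0} = \tau_{\le n-1} X = *$, giving the base case. For the inductive step, \cref{lemma:postnikov-fiber-sequence-completion} yields
\begin{equation*}
\complete{X_{m, k+1}} \cong \tau_{\ge 1} \Fib{\complete{X_{m, k}} \to \complete{K(A_{m, k+1}, m+1)}}.
\end{equation*}
The target is $(m+1)$-connective by the auxiliary claim (with $m+1 > n$), the source is $n$-connective by induction, and the long exact sequence of the fibration then shows the fiber is $n$-connective (so $\tau_{\ge 1}$ is redundant here). This propagates through both indices to conclude that $\complete{\tau_{\le m} X}$ is $n$-connective for every $m$.

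Finally, \cref{thm:topos:p-comp-commutes-with-post-tower} identifies $\complete X$ with the sequential limit $\limil{m} \complete{\tau_{\le m} X}$. To see that this limit remains $n$-connective, apply \cref{lemma:fiber-lemma} to the Postnikov fiber sequence $K(\pi_{m+1} X, m+1) \to \tau_{\le m+1} X \to \tau_{\le m} X$: the transition map $\complete{\tau_{\le m+1} X} \to \complete{\tau_{\le m} X}$ has (connected cover of the) homotopy fiber equal to $\complete{K(\pi_{m+1} X, m+1)}$, which is $(m+1)$-connective. Consequently, for each fixed $j$ the tower $\{\pi_j \complete{\tau_{\le m} X}\}_m$ becomes constant once $m \ge j$, so the relevant $\lim^1$ vanishes and $\pi_j(\complete X) = 0$ for $j < n$. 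The main technical point is precisely this last $\lim^1$-vanishing argument at the Postnikov limit; everything prior is a clean reduction to the Eilenberg--MacLane computation.
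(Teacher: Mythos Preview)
Your proof is correct, but it takes a considerably more elaborate route than the paper. For $n \ge 2$ the paper's argument is a single line: since $X$ is simply connected and hence nilpotent, the classical short exact sequence
\[
0 \to L_0 \pi_j(X) \to \pi_j(\complete{X}) \to L_1 \pi_{j-1}(X) \to 0
\]
from \cite[Theorem 11.1.2(ii)]{MCAT} immediately forces $\pi_j(\complete{X}) = 0$ for $j < n$.

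You instead invoke the Postnikov-tower machinery of Section~3 (applied to the $\infty$-topos $\An$, which is indeed locally of finite uniform homotopy dimension), reduce to Eilenberg--MacLane spaces, compute their completions' connectivity via a Milnor $\lim^1$ argument, and then handle the passage to the limit. There is no circularity: the Section~3 results you cite ultimately rest on \cref{lemma:anima:short-exact-sequence} and \cref{lemma:anima:fiber-lemma}, not on the present lemma. But note that \cref{lemma:anima:short-exact-sequence} \emph{is} the Bousfield--Kan short exact sequence, so your argument routes through exactly the same classical input as the paper's one-liner, with several extra layers on top. Since this lemma sits in the appendix as background input to the general theory rather than as an application of it, the direct citation is the cleaner choice.
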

\begin{proof}
    For $n = 0$ the result is vacuous,
    and for $n = 1$ the result directly follows from \cref{lemma:peq-respects-pi0}.
    If $n > 1$ then $X$ is simply connected and thus nilpotent.
    We conclude by using the short exact sequence from \cite[Theorem 11.1.2 (ii)]{MCAT}.
\end{proof}

\begin{lem} \label{lemma:anima:fiber-lemma}
    Let $F \to X \to Y$ be a fiber sequence of pointed anima, with $X$ and $Y$ nilpotent.
    Then $\completebr{\tau_{\ge 1} F} = \tau_{\ge 1} \Fib{\complete X \to \complete Y}$.
\end{lem}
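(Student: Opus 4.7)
The plan is to construct a canonical comparison map and then verify it is an equivalence by invoking the classical Bousfield--Kan fiber lemma. First, applying \cref{lemma:nilpotent:fiber} to the morphism $X \to Y$, we see that $\tau_{\ge 1} F$ is nilpotent, so all three spaces involved in the putative equivalence are nilpotent and connected. The composition $\tau_{\ge 1} F \to F \to X \to Y$ is null-homotopic (it factors through a fiber), so functoriality of $p$-completion together with the universal property of $\Fib{-}$ yields a canonical map
\[
    \varphi \colon \completebr{\tau_{\ge 1} F} \longrightarrow \Fib{\complete X \to \complete Y}.
\]
Since $\tau_{\ge 1} F$ is connected and $p$-completion preserves $\pi_0$ (\cref{lemma:peq-respects-pi0} applied to $\topos X = \An$), $\completebr{\tau_{\ge 1} F}$ is connected, so $\varphi$ factors through $\tau_{\ge 1} \Fib{\complete X \to \complete Y}$. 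We obtain the candidate equivalence
$\bar\varphi \colon \completebr{\tau_{\ge 1} F} \to \tau_{\ge 1} \Fib{\complete X \to \complete Y}$.

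To verify that $\bar\varphi$ is an equivalence, I would invoke the classical Bousfield--Kan fiber lemma for pointed nilpotent anima, as in \cite[Theorem II.5.1]{bousfield2009homotopy} or the analogous result in \cite{MCAT}: given a fiber sequence of pointed nilpotent spaces with connected base, the $p$-completion of the connected cover of the fiber agrees with the connected cover of the fiber of the $p$-completed map. This is exactly the statement we need.

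If an internal proof is preferred, the strategy is as follows. Since $\An$ is Postnikov-complete and $p$-completion of nilpotent spaces commutes with the Postnikov limit (the anima analogue of \cref{thm:topos:p-comp-commutes-with-post-tower}), one may reduce to the case that $X$ and $Y$ are truncated. Choosing principal refinements of the Postnikov towers of $X$ and $Y$ as in \cref{lemma:nilpotent:principal-refinement}, an induction over the tower further reduces to the case in which $X \to Y$ is a principal fibration with fiber an Eilenberg--MacLane space $K(A, n)$; here one compares the two sides by means of the long exact sequence in $p$-completed homotopy of \cite[Theorem 11.1.2]{MCAT} and the explicit description of the $p$-completion of $K(A, n)$ already recorded in \cref{cor:completion-of-EM-space}.

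The main obstacle is the non-simply-connected regime: when $\pi_1 X$ or $\pi_1 Y$ is nontrivial, the $\pi_1$-action on higher homotopy must be tracked carefully throughout the reduction, and the short exact sequences relating the $L_i$ of the homotopy groups of $F$ to those of $X$ and $Y$ are not split. This is precisely where nilpotence is essential, since it guarantees the principal refinements along which the induction is carried out; everything else is a routine unwinding via the long exact sequences in $p$-completed homotopy.
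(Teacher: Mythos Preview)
Your proposal is correct and takes essentially the same approach as the paper: the paper's proof is nothing more than a citation to the classical Bousfield--Kan fiber lemma, namely \cite[Lemma 4.8]{bousfield2009homotopy} (with \cite[Proposition 11.2.5]{MCAT} noted as a reference under finiteness hypotheses), which is precisely the result you invoke. Your explicit construction of the comparison map and the sketched internal reduction via Postnikov towers go beyond what the paper records, but the core argument is the same appeal to the literature.
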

\begin{proof}
    This was proven in \cite[Proposition 11.2.5]{MCAT},
    under the additional assumption that the involved spaces
    have finitely generated homotopy groups.
    The original reference, without the finiteness assumptions,
    is \cite[Lemma 4.8]{bousfield2009homotopy}.
\end{proof}

\begin{lem}\label{lemma:anima:refined-fiber-lemma}
    Suppose that there is a commutative diagram of fiber sequences of pointed anima
    \begin{center}
        \begin{tikzcd}
            F \arrow[r] \arrow[d, "f_F"] &X \arrow[r] \arrow[d, "f_X"] &Y \arrow[d, "f_Y"] \\
            F' \arrow[r] &X' \arrow[r] &Y',
        \end{tikzcd}
    \end{center}
    such that $X, Y, X'$ and $Y'$ are nilpotent
    and $f_X$ and $f_Y$ are $p$-equivalences.
    Then $\tau_{\ge 1} F \xrightarrow{\tau_{\ge 1} f_F} \tau_{\ge 1} F'$
    is a $p$-equivalence.
\end{lem}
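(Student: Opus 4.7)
The plan is to deduce this refined statement as a direct corollary of the previous fiber lemma (\cref{lemma:anima:fiber-lemma}), using naturality and the characterization of $p$-equivalences via $p$-completion.

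First, I would apply \cref{lemma:anima:fiber-lemma} separately to both rows of the given diagram. Since $X, Y, X', Y'$ are all nilpotent, this yields natural equivalences
\begin{equation*}
    \completebr{\tau_{\ge 1} F} \cong \tau_{\ge 1} \Fib{\complete{X} \to \complete{Y}}
\end{equation*}
and
\begin{equation*}
    \completebr{\tau_{\ge 1} F'} \cong \tau_{\ge 1} \Fib{\complete{X'} \to \complete{Y'}}.
\end{equation*}
By naturality of the $p$-completion unit and of the (connective cover of the) fiber construction, the map $\completebr{\tau_{\ge 1} f_F}$ is identified with the map $\tau_{\ge 1}\Fib{\complete{f_X} \to \complete{f_Y}}$ induced from the morphism of fiber sequences obtained by $p$-completing rows.

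Next, since $f_X$ and $f_Y$ are $p$-equivalences, the characterization of $p$-equivalences via $p$-completion (see \cref{lemma:topos:peq-if-completion-eq}, applied in the category of anima) implies that $\complete{f_X}$ and $\complete{f_Y}$ are equivalences. The induced map on fibers is therefore an equivalence, and applying $\tau_{\ge 1}$ preserves this. Hence $\completebr{\tau_{\ge 1} f_F}$ is an equivalence, which by the same criterion means that $\tau_{\ge 1} f_F$ is a $p$-equivalence.

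There is essentially no obstacle here beyond bookkeeping: the content is packaged in \cref{lemma:anima:fiber-lemma}, and all that is required is to verify that the equivalences there are natural in maps of fiber sequences of nilpotent spaces. This naturality is built into the construction (both the fiber and the $p$-completion are functorial constructions), so the proof reduces to the short chain of implications above.
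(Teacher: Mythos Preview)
Your proposal is correct and matches the paper's proof essentially line for line: apply \cref{lemma:anima:fiber-lemma} to each row, use that $p$-equivalences become equivalences after $p$-completion, and conclude that $\completebr{\tau_{\ge 1} f_F}$ is an equivalence. The only difference is that you make the naturality step explicit, whereas the paper leaves it implicit.
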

\begin{proof}
    By \cref{lemma:anima:fiber-lemma},
    we conclude that $\completebr{\tau_{\ge 1} F} \cong \tau_{\ge 1} \Fib{\complete{X} \to \complete{Y}}$,
    and similarly $\completebr{\tau_{\ge 1} F'} \cong \tau_{\ge 1} \Fib{\complete{X'} \to \complete{Y'}}$.
    Since $f_X$ and $f_Y$ are $p$-equivalences,
    it follows that $\complete{X} \cong \complete{X'}$ and $\complete{Y} \cong \complete{Y'}$.
    Thus, we have $\completebr{\tau_{\ge 1} F} \cong \completebr{\tau_{\ge 1} F'}$,
    i.e.\ $\tau_{\ge 1} f_F$ is a $p$-equivalence.
\end{proof}

\begin{defn}
    For each $i$ write
    \begin{equation*}
        L_i \colon \Ab \xrightarrow{(-)[0]} \Cat{D}(\Z) \xrightarrow{\limil{n} (-) \sslash p^n} \Cat{D}(\Z) \xrightarrow{\pi_i(-)} \Ab.
    \end{equation*}
    We call these functors the \emph{derived $p$-completion functors}
    on the category of abelian groups.
\end{defn}

\begin{lem} \label{lemma:anima:t-struct-description}
    Recall the $p$-adic t-structure from \cref{def:t-struct:defn}, now applied to the category
    of spectra.
    Then
    \begin{enumerate}[label=(\arabic*),ref=(\arabic*),itemsep=0em]
        \item $\tpstructheart{\Sp} \subset \tstructheart{\Sp}$,
        \item if $E$ is a $p$-complete spectrum, then $\pi_n(E) = \pi_n^p(E)$, and
        \item there are canonical isomorphisms $\mathbb L_i \cong L_i$
    \end{enumerate}
\end{lem}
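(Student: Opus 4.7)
The strategy is to prove all three parts in sequence, leveraging two classical Bousfield--Kan facts about $L$-complete abelian groups and the machinery of the $p$-adic t-structure already developed in the paper.

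First I would record the two classical inputs. For any spectrum $E$, the Milnor exact sequence
\[
0 \to {\lim}^{1}_{k}\, \pi_{n+1}(E \sslash p^{k}) \to \pi_{n}(\complete{E}) \to \lim_{k}\, \pi_{n}(E \sslash p^{k}) \to 0
\]
expresses $\pi_{n}(\complete{E})$ as a combination of $\lim$ and $\lim^{1}$ of abelian groups annihilated by some $p^{k}$. Such bounded $p$-power torsion groups are $L$-complete (Ext-$p$-complete), and the class of $L$-complete abelian groups is closed under $\lim$ and $\lim^{1}$, so each $\pi_{n}(\complete{E})$ is $L$-complete. In particular, $L_{0}A = \pi_{0}(\complete{HA})$ and $L_{1}A = \pi_{1}(\complete{HA})$ are always $L$-complete, while $L_{i}A = 0$ for $i \neq 0,1$. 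The second classical input (Bousfield--Kan, Ch.~VI) is that every $L$-complete abelian group contains no nonzero $p$-divisible subgroup; consequently, any $L$-complete $p$-divisible abelian group vanishes, and every $L$-complete group has bounded $p$-divisibility in the sense of the paper.

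For (1), let $E \in \pheart{\Sp}$. Lemma~\ref{lemma:t-struct:char-of-cocon} already gives $E = \tau_{\le 0}E$ (so $\pi_{n}(E) = 0$ for $n > 0$) and $E = \complete{E}$, hence each $\pi_{n}(E)$ is $L$-complete by the first classical input. Since $E \in \tpcon{\Sp}$, $\pi_{n}(E)$ is $p$-divisible for every $n < 0$ (uniquely so for $n < -1$), and the second classical input forces $\pi_{n}(E) = 0$ for all $n < 0$. Therefore $E \in \heart{\Sp}$.

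For (3), both sides vanish outside $i \in \{0,1\}$: $L_{i}A = 0$ is classical, and $\mathbb{L}_{i}A = 0$ follows from Lemma~\ref{lemma:t-struct:Li-zero-if-i-neq-zero-one}. For $i = 0,1$, the standard-$t$-structure Postnikov fiber sequence of the $1$-truncated spectrum $\complete{HA}$ (Lemma~\ref{lemma:stable:truncation-of-completion}) reads
\[
\Sigma H L_{1}A \to \complete{HA} \to H L_{0}A.
\]
Using part (1) and the classical inputs, I check that $HL_{0}A$ and $HL_{1}A$ already lie in $\pheart{\Sp}$: each $L_{i}A$ is $L$-complete so $HL_{i}A$ is $p$-complete with bounded $p$-divisibility, while the standard-heart condition places them in $\tpcon{\Sp}$ via Lemma~\ref{lemma:t-struct:mod-p-in-con-implies-in-tpcon}. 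Hence $HL_{0}A \in \tpcocon{\Sp}$ and $\Sigma HL_{1}A \in \tpcon[1]{\Sp}$, so the displayed sequence is simultaneously the $p$-adic Postnikov decomposition of $\complete{HA}$. Combining with the fact that $HA \to \complete{HA}$ is a $p$-equivalence (hence $\pi_{i}^{p}(HA) \cong \pi_{i}^{p}(\complete{HA})$) yields $\mathbb{L}_{i}A \cong L_{i}A$. Finally, for (2), the first classical input again implies that $\pi_{n}(E)$ is $L$-complete for $p$-complete $E$, so by (3) we have $\mathbb{L}_{0}\pi_{n}(E) = L_{0}\pi_{n}(E) = \pi_{n}(E)$ and $\mathbb{L}_{1}\pi_{n-1}(E) = L_{1}\pi_{n-1}(E) = 0$; the short exact sequence of Lemma~\ref{lemma:short-exact-sequence-of-t-structure} then collapses to $\pi_{n}^{p}(E) \cong \pi_{n}(E)$.

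The main obstacle is the Bousfield--Kan fact that $L$-complete abelian groups contain no nonzero $p$-divisible subgroup; it is precisely what absorbs the asymmetry in the definition of $\tpcon{\Sp}$ (where the $(i{-}1)$-st homotopy group is only required to be $p$-divisible, not uniquely $p$-divisible), and without it (1) fails. One can either cite Bousfield--Kan directly or verify it from scratch using the long exact sequence induced by $0 \to \Z \to \Z[1/p] \to \Z/p^{\infty} \to 0$ together with the characterization of $L$-completeness as $A \cong \operatorname{Ext}(\Z/p^{\infty}, A)$ and $\operatorname{Hom}(\Z/p^{\infty}, A) = 0$.
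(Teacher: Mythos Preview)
Your proof is correct. The approach differs from the paper's mainly in the order of deduction and in which classical facts are invoked explicitly.

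The paper proves the three parts in the order (1) $\to$ (2) $\to$ (3). For (1) it cites the classical Bousfield--Kan short exact sequence $0 \to L_0\pi_n(E) \to \pi_n(\complete{E}) \to L_1\pi_{n-1}(E) \to 0$ as a black box and then uses the vanishing of $L_0$ on $p$-divisible groups and of $L_1$ on groups without $p$-torsion to force connectivity. For (2) it argues directly that for $p$-complete $E$ the fiber sequence $\tau^p_{\ge n}E \to E \to \tau^p_{\le n-1}E$ has an $n$-connective left term (by the same argument as in (1), since $\tau^p_{\ge n}E$ is again $p$-complete) and an $(n{-}1)$-truncated right term, so by uniqueness of such a decomposition it coincides with the standard Postnikov decomposition. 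Part (3) then follows immediately from (2) applied to $\complete{HA}$.

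Your route is (1) $\to$ (3) $\to$ (2): you phrase the key classical input as ``$L$-complete abelian groups contain no nonzero $p$-divisible subgroup'' (extracted via the Milnor sequence rather than quoting the Bousfield--Kan SES), then for (3) you directly verify that the standard Postnikov decomposition of $\complete{HA}$ already satisfies the defining properties of the $p$-adic decomposition, and finally obtain (2) by collapsing the short exact sequence of \cref{lemma:short-exact-sequence-of-t-structure}. The two arguments use equivalent classical facts (the vanishing of $L_0$ on $p$-divisible groups is exactly the statement that an $L$-complete $p$-divisible group is zero), so the difference is organizational: the paper's proof of (2) via uniqueness of decompositions is slightly more direct and avoids the forward reference to \cref{lemma:short-exact-sequence-of-t-structure}, whereas your treatment of (3) is more self-contained and makes the role of $L$-completeness more transparent.
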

\begin{proof}
    We first prove (1).
    By definition and \cref{lemma:t-struct:char-of-cocon},
    we see that $E \in \tpstructheart{\Sp}$
    if and only if $\pi_i(E)$ is uniquely $p$-divisible for all $i < -1$,
    $\pi_{-1}(E)$ is $p$-divisible,
    $\pi_0(E)$ has bounded $p$-divisibility,
    and $E = \complete{E} = \tau_{\le 0}E$.
    The conditions on the negative homotopy groups imply
    that $\complete E$ is connective:
    Indeed, from \cite[Theorem 2.6]{barthelbousfield},
    we have for every $n \in \Z$ the following short exact sequence:
    \begin{equation*}
        0 \to L_0 \pi_n(E) \to \pi_n (\complete E) \to L_1 \pi_{n-1}(E) \to 0.
    \end{equation*}
    If $\pi_{n-1}(E)$ is uniquely $p$-divisible, it has in particular no $p$-torsion.
    Thus, following \cite[Corollary 10.1.15]{MCAT}
    (using that $\mathbb H_p \cong L_1$, see \cite[Proposition 10.1.17]{MCAT}),
    we see that $L_1 \pi_{n-1}(E) = 0$.
    On the other hand, if $\pi_{n}(E)$ is $p$-divisible, we
    see that $L_0(\pi_n(E)) = 0$ following (the proof of the abelian case of) \cite[Proposition 10.4.7 (iii)]{MCAT}
    (using that $\mathbb E_p \cong L_0$, see \cite[Proposition 10.1.17]{MCAT}).
    Thus, $E = \complete E$ is connective.
    Hence, $E = \pi_0(E)$ is in $\tstructheart{\Sp}$.

    In order to prove (2), suppose now that $E$ is $p$-complete. Let $n \in \Z$ be arbitrary.
    There is a fiber sequence
    \begin{equation*}
        \tau^p_{\ge n} E \to E \to \tau^p_{\le n-1} E.
    \end{equation*}
    From the discussion directly above, we see that $\tau^p_{\ge n} E$
    is in fact $n$-connective.
    On the other hand, it is immediate from \cref{lemma:t-struct:char-of-cocon}
    that $\tau^p_{\le n-1} E$ is actually $(n-1)$-truncated.
    Thus, by the uniqueness of a decomposition in $n$-connective and $(n-1)$-coconnective parts
    in a t-structure, we see that actually
    $\tau^p_{\ge n} E \cong \tau_{\ge n} E$ and $\tau^p_{\le n-1} E \cong \tau_{\le n-1} E$ for all $n \in \Z$.
    This immediately implies that $\pi_n^p(E) \cong \pi_n(E)$ for all $n \in \Z$.

    It remains to show (3). This follows directly from the fact that
    $\mathcal D(\Ab) \cong \operatorname{Mod}_{H\Z} \to \Sp$ is a limit-preserving
    exact and t-exact functor,
    and that $\mathbb L_i A = \pi_n^p(A) \cong \pi_n^p(\complete{A}) \cong \pi_n(\complete{A})$ (using (2), since $\complete{A}$ is $p$-complete).
\end{proof}

\begin{defn} \label{def:anima:L1G}
    Let $G$ be a nilpotent group.
    We define $\mathbb L_i G \coloneqq \pi_{i+1} (\completebr{BG})$.
\end{defn}

\begin{lem} \label{lemma:anima:L1g-eq-L1A}
    Let $A$ be an abelian group,
    and let $G$ be the underlying nilpotent group (i.e.\ we forget that $A$ is abelian).
    Then $\mathbb L_i A \cong \mathbb L_i G$ for all $i \ge 0$.
\end{lem}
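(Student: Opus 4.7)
The plan is to reduce the computation of $\mathbb L_i G$ for $G = A$ abelian directly to the stable derived completion via the formula for $p$-completions of Eilenberg--MacLane spaces. Since $A$ is abelian, $BA \cong K(A,1)$, and this is an honest Eilenberg--MacLane anima in the sense of the paper, so \cref{cor:completion-of-EM-space} (applied in the $\infty$-topos $\topos X = \An$) is directly available.

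First I would invoke \cref{cor:completion-of-EM-space} with $n = 1$ and $A \in \heart{\Sp}$ (viewed as an abelian group) to write
\begin{equation*}
    \completebr{BA} \;\cong\; \completebr{K(A,1)} \;\cong\; \tau_{\ge 1} \pLoop\!\left(\completebr{\Sigma HA}\right),
\end{equation*}
where $HA$ denotes the Eilenberg--MacLane spectrum associated to $A$. Then I would take $\pi_{i+1}$ of both sides. For $i \ge 0$ we have $i+1 \ge 1$, so the connective cover $\tau_{\ge 1}$ does not affect the computation, and
\begin{equation*}
    \pi_{i+1}\!\left(\tau_{\ge 1}\pLoop \completebr{\Sigma HA}\right) \;\cong\; \pi_{i+1}\!\left(\pLoop \completebr{\Sigma HA}\right).
\end{equation*}

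Next, using that for any spectrum $E$ and $n \ge 0$ one has $\pi_n(\pLoop E) \cong \pi_n(E)$, together with the suspension shift $\pi_{i+1}(\Sigma F) \cong \pi_i(F)$ (which commutes with the $p$-completion functor since $\completebr{-}$ is exact), I would rewrite
\begin{equation*}
    \pi_{i+1}\!\left(\pLoop \completebr{\Sigma HA}\right) \;\cong\; \pi_{i+1}\!\left(\completebr{\Sigma HA}\right) \;\cong\; \pi_i\!\left(\completebr{HA}\right).
\end{equation*}
Finally, by \cref{lemma:anima:t-struct-description} the right-hand side equals $\mathbb L_i A$ (and also $L_i A$), while by \cref{def:anima:L1G} the left-hand side is $\mathbb L_i G$. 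Combining gives $\mathbb L_i G \cong \mathbb L_i A$.

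I do not expect any real obstacle here; the main care is just bookkeeping: verifying that \cref{cor:completion-of-EM-space} applies (it does, since $A \in \heart{\Sp}$ and $n = 1 \ge 1$), and that the $\tau_{\ge 1}$ can be dropped when computing $\pi_{i+1}$ for $i \ge 0$. All naturality statements needed (for $\mathbb L_i G$ and $\mathbb L_i A$ to be compared as functors of $A$) are automatic from the naturality of $\Sigma HA$, $\pLoop$, $\completebr{-}$ and $\tau_{\ge 1}$.
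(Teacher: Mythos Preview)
Your argument is correct and actually more self-contained than the paper's own proof, which simply cites an external reference (\cite[Theorem 10.3.2]{MCAT}) without further explanation. You instead unwind everything using machinery already established in the paper: \cref{cor:completion-of-EM-space} (valid in $\topos X = \An$, which trivially has enough points) identifies $\completebr{BA}$ with $\tau_{\ge 1}\pLoop(\completebr{\Sigma HA})$, and then \cref{lemma:anima:t-struct-description} identifies $\pi_i(\completebr{HA})$ with $\mathbb L_i A$. The only steps to verify are the ones you flagged yourself---that $\tau_{\ge 1}$ is invisible to $\pi_{i+1}$ for $i \ge 0$, and that $p$-completion commutes with $\Sigma$---both of which are immediate. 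Your route has the advantage of avoiding any dependence on the specific formulation in \cite{MCAT}, at the cost of requiring \cref{cor:completion-of-EM-space}, which is a slightly heavier internal input; the paper's citation, by contrast, treats the result as a black box from classical homotopy theory.
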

\begin{proof}
    This follows for example from \cite[Theorem 10.3.2]{MCAT}.
\end{proof}

\begin{lem} \label{lemma:anima:short-exact-sequence}
    Let $X$ be a nilpotent, pointed anima.
    For every $n \ge 1$ there is a short exact sequence (functorial in $X$)
    \begin{equation*}
        0 \to \mathbb L_0 \pi_n X \to \pi_n \complete{X} \to \mathbb L_1 \pi_{n-1} X \to 0,
    \end{equation*}
    where we use \cref{def:anima:L1G} for $\mathbb L_i \pi_1(X)$.
    Note that this distinction does not matter if $\pi_1(X)$ is abelian,
    see \cref{lemma:anima:L1g-eq-L1A}.
    Note that we use the definition $\mathbb L_1 \pi_{0} X = 0$.
\end{lem}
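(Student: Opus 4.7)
The strategy is the classical Bousfield--Kan induction on a principal refinement of the Postnikov tower, carried out within the framework developed in \cref{section:topos}. First I would reduce to the case where $X$ is $n$-truncated for some $n$. Since $\An$ has homotopy dimension $0$, \cref{thm:topos:p-comp-commutes-with-post-tower} gives $\complete{X} \cong \limil{k} \completebr{\tau_{\le k} X}$, and combined with Postnikov completeness of $\An$ the statement for general nilpotent $X$ will follow from the statement for each $\tau_{\le k} X$ by passing to the (Mittag-Leffler--controlled) limit, using that the relevant transition maps on homotopy groups become eventually constant by the Eilenberg--MacLane calculation below.

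Assuming $X$ is truncated and nilpotent, I pick a principal refinement of the Postnikov tower from \cref{lemma:nilpotent:principal-refinement}, yielding fiber sequences $X_{m, k+1} \to X_{m,k} \to K(A_{m,k+1}, m+1)$ refining the Postnikov tower; \cref{lemma:postnikov-fiber-sequence-completion} gives the corresponding $p$-completed fiber sequences
\begin{equation*}
\completebr{X_{m,k+1}} \cong \tau_{\ge 1} \Fib{\completebr{X_{m,k}} \to \completebr{K(A_{m,k+1}, m+1)}},
\end{equation*}
and I would run a double induction on $(m,k)$ using the long exact sequence in homotopy.

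The base case amounts to computing $\pi_*(\complete{K(A,n)})$ for $A$ abelian and $n \ge 1$. By \cref{cor:completion-of-EM-space}, $\complete{K(A,n)} \cong \tau_{\ge 1} \pLoop (\completebr{\Sigma^n A})$. Since $(-)\sslash p$ and sequential limits both commute with $\Sigma$ in $\Sp$, we have $\completebr{\Sigma^n A} \cong \Sigma^n \complete{A}$, so $\pi_k(\complete{K(A,n)}) \cong \pi_{k-n}(\complete{A})$ for $k \ge 1$. By \cref{lemma:anima:t-struct-description}, $\pi_0(\complete A) = \mathbb L_0 A$ and $\pi_1(\complete A) = \mathbb L_1 A$, with all other homotopy groups vanishing; hence $\complete{K(A,n)}$ has exactly two nontrivial homotopy groups ($\mathbb L_0 A$ in degree $n$, $\mathbb L_1 A$ in degree $n+1$), which immediately yields the asserted SES for $X = K(A,n)$.

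The induction step is a diagram chase in the long exact sequence associated to the fiber sequence above, exploiting that $\completebr{K(A_{m,k+1}, m+1)}$ has homotopy concentrated in the two consecutive degrees $m+1$ and $m+2$, so that for each fixed $j$ only finitely many terms contribute. The main obstacle will be handling $n = 1$ when $\pi_1(X)$ is not abelian: there the SES must be read in $\Grp{}$ rather than $\Ab$, $\mathbb L_i \pi_1(X)$ is defined via the classifying space (\cref{def:anima:L1G}), and one has to inspect the very bottom stratum $m=1$ of the refinement directly, where $X_{1,0} = *$ and the refinement filtration of $X_{1,m_1} = \tau_{\le 1} X$ realizes a central series of $\pi_1 X$; the compatibility with the abelian definition is guaranteed by \cref{lemma:anima:L1g-eq-L1A}, and functoriality in $X$ is automatic since every construction used (the refinement can be made functorial at the level of morphisms after standard choices, or one bypasses this by noting that both outer terms of the SES are already functorial in $X$ and the middle is determined up to unique isomorphism).
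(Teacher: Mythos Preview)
Your strategy is sound, but it is a substantial detour compared to the paper. The paper's proof is a one-line citation: it invokes \cite[Theorem 11.1.2 (ii)]{MCAT} for the classical Bousfield--Kan short exact sequence with $L_0, L_1$ in place of $\mathbb L_0, \mathbb L_1$, and then uses \cref{lemma:anima:t-struct-description} to identify the paper's $\mathbb L_i$ with the classical $L_i$ (and likewise for the nonabelian $\pi_1$ case via the definition in \cite[Section 10.4]{MCAT}). In other words, this lemma is meant as a translation of known results into the paper's notation, not as a new argument.

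What you outline is essentially a reconstruction of the Bousfield--Kan proof itself, rephrased in the language of \cref{section:topos}. That is a legitimate and self-contained route, and it has the virtue of staying inside the paper's framework rather than importing an external theorem. Two caveats, however. First, the induction step is not the triviality your sketch suggests: in the degrees $j \in \{m, m+1, m+2\}$ you must intertwine the six-term exact sequence in $L_0,L_1$ coming from the short exact sequences on the original homotopy groups with the long exact sequence of the completed fiber sequence, and this diagram chase (while standard) is the actual content of the classical argument. Second, your parenthetical on functoriality is misleading: that the outer terms are functorial and the middle term is $\pi_n(\complete X)$ does not by itself pin down the \emph{maps} of the sequence; functoriality really comes from the naturality of all the fiber sequences and connecting maps used in the induction, not from any uniqueness-of-extension principle.
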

\begin{proof}
    \cite[Theorem 11.1.2 (ii)]{MCAT} provides a short exact sequence
    \begin{equation*}
        0 \to L_0 \pi_n (X) \to \pi_n (\complete{X}) \to L_1 \pi_{n-1} (X) \to 0.
    \end{equation*}
    The lemma follows from \cref{lemma:anima:t-struct-description},
    and the fact that our definition of $\mathbb L_i G$ is the same as the
    definition of $L_i G$ in \cite[Section 10.4]{MCAT} for nilpotent groups $G$
    (note that they use the notation $\mathbb E_p$ and $\mathbb H_p$ for what we
    call $L_0$ and $L_1$, see \cite[Proposition 10.1.17]{MCAT}).
\end{proof}

\begin{lem} \label{lemma:anima:completion-of-infinite-loop-space}
    Let $E$ be a 1-connective spectrum.
    Then $\pLoop (\complete E) = \completebr{\pLoop E}$.
\end{lem}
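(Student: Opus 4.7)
The plan is to reduce the statement to Lemma \ref{lemma:connective-inf-loop-peq-to-connective-cover} (already applied to the $\infty$-topos $\An$), which tells us that for any $k$-connective spectrum $E$ with $k \geq 1$ one has $\completebr{\pLoop E} \cong \pLoop \tau_{\ge 1}(\complete{E})$. Thus the only thing left to verify is that when $E$ is $1$-connective, the completion $\complete{E}$ is itself $1$-connective, so that the connective cover $\tau_{\ge 1}$ is superfluous and $\pLoop \tau_{\ge 1}(\complete{E}) \cong \pLoop(\complete{E})$.

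First I would show that $\complete{E}$ is $1$-connective. Since $E$ is $1$-connective, $\pi_{-1}(E) = 0$ and $\pi_0(E) = 0$. Combining Lemma \ref{lemma:short-exact-sequence-of-t-structure} with Lemma \ref{lemma:anima:t-struct-description} (which identifies $\pi_n^p$ with $\pi_n$ on the $p$-complete spectrum $\complete{E}$ and identifies $\mathbb L_i$ with the classical $L_i$), one obtains for each $n \in \Z$ a short exact sequence
\begin{equation*}
    0 \to L_0 \pi_n(E) \to \pi_n(\complete{E}) \to L_1 \pi_{n-1}(E) \to 0.
\end{equation*}
For $n \le 0$ both outer terms vanish, so $\pi_n(\complete{E}) = 0$. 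This shows $\complete{E} \in \tcon[1]{\Sp}$.

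The main step is then to assemble the equivalences: Lemma \ref{lemma:connective-inf-loop-peq-to-connective-cover} yields $\completebr{\pLoop E} \cong \pLoop \tau_{\ge 1}(\complete{E})$, and the $1$-connectivity of $\complete{E}$ just established gives $\tau_{\ge 1}(\complete{E}) \cong \complete{E}$, so $\pLoop \tau_{\ge 1}(\complete{E}) \cong \pLoop(\complete{E})$, completing the proof.

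There is no real obstacle here: the only subtle point is that $\pLoop$ of a spectrum only sees non-negative homotopy groups, so \emph{a priori} $\pLoop(\complete{E})$ might ignore negative homotopy of $\complete{E}$; what prevents any mismatch between $\pLoop(\complete{E})$ and $\pLoop\tau_{\ge 1}(\complete{E})$ is precisely the vanishing of $\pi_0(\complete{E})$, which is exactly what the short exact sequence above provides. The whole argument is therefore a direct consequence of results already established earlier in the paper.
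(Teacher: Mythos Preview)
Your argument is circular. \Cref{lemma:connective-inf-loop-peq-to-connective-cover} is proved using \cref{lemma:loop-peq-iff-peq}, whose proof reduces to stalks and then invokes \cref{lemma:anima:infinite-loop-spaces-preserves-peq}; and the proof of that last lemma \emph{uses \cref{lemma:anima:completion-of-infinite-loop-space} itself}. So the appendix result you are trying to prove is logically prior to the lemma you invoke in its proof. In the paper's architecture, the anima results in the appendix are the base case to which the general topos statements are reduced; you cannot quote those general statements back to establish the base case.

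The part of your argument that does work is the computation showing $\complete{E}$ is $1$-connective via the short exact sequences and \cref{lemma:anima:t-struct-description}. The paper's proof runs this same computation on both sides: it writes down the Bousfield--Kan short exact sequence for $\pi_n(\completebr{\pLoop E})$ from \cref{lemma:anima:short-exact-sequence}, and the $p$-adic short exact sequence for $\pi_n^p(\complete{E}) \cong \pi_n(\complete{E})$ from \cref{lemma:short-exact-sequence-of-t-structure} together with \cref{lemma:anima:t-struct-description}. Since $\pi_n(\pLoop E) \cong \pi_n(E)$ for $n \ge 0$, the two sequences have matching outer terms, so $\pi_n(\completebr{\pLoop E}) \cong \pi_n(\pLoop(\complete{E}))$ for all $n$, and Whitehead finishes. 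This direct comparison of homotopy groups avoids any appeal to the topos-level lemmas and is what you should do instead.
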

\begin{proof}
    Using the short exact sequence from \cref{lemma:anima:short-exact-sequence},
    we conclude that the homotopy groups of $\completebr{\pLoop E}$
    fit into short exact sequences
    \begin{equation*}
        0 \to \mathbb L_0 \pi_n(\pLoop E) \to \pi_n(\completebr{\pLoop E}) \to \mathbb L_1 \pi_{n-1}(\pLoop E) \to 0.
    \end{equation*}
    By \cref{lemma:short-exact-sequence-of-t-structure},
    the homotopy groups of $\complete E$ fit into a short exact sequence
    \begin{equation*}
        0 \to \mathbb L_0  \pi_n(E) \to \pi_n^p(\complete E) \to \mathbb L_1 \pi_{n-1}(E) \to 0.
    \end{equation*}
    Thus, the lemma follows from Whitehead's theorem and the fact that
    $\pi_n(\pLoop E) \cong \pi_n(E)$ and $\pi_n^p(\complete E) \cong \pi_n(\complete E) \cong \pi_n(\pLoop (\complete E))$ (see \cref{lemma:anima:t-struct-description}).
\end{proof}

\begin{lem} \label{lemma:anima:infinite-loop-spaces-preserves-peq}
    Let $E \to F$ be a $p$-equivalence of 1-connective spectra.
    Then $\pLoop E \to \pLoop F$
    is a $p$-equivalence.
\end{lem}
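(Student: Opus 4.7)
The plan is to reduce the statement about $p$-equivalences of infinite loop spaces to the statement about $p$-equivalences of the underlying spectra via the compatibility of $p$-completion with $\pLoop$ on $1$-connective spectra, which is precisely \cref{lemma:anima:completion-of-infinite-loop-space}.

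Concretely, let $f \colon E \to F$ be a $p$-equivalence of $1$-connective spectra. First I would use \cref{lemma:stable:peq-description} to deduce that $\completebr{f} \colon \complete{E} \to \complete{F}$ is an equivalence in $\Sp$. Applying the functor $\pLoop$ (which, as an equivalence-preserving functor, sends equivalences to equivalences) yields an equivalence $\pLoop(\complete{E}) \to \pLoop(\complete{F})$ in $\An_*$. Now I would invoke \cref{lemma:anima:completion-of-infinite-loop-space}, using that $E$ and $F$ are $1$-connective, to rewrite both sides and obtain a canonical equivalence $\completebr{\pLoop E} \to \completebr{\pLoop F}$ that is identified with $\completebr{\pLoop f}$.

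Finally, I would conclude by appealing to the unstable analog of \cref{lemma:stable:peq-description}, namely \cref{lemma:topos:peq-if-completion-eq}: a morphism in $\An_*$ is a $p$-equivalence precisely when its $p$-completion is an equivalence. Hence $\pLoop f \colon \pLoop E \to \pLoop F$ is a $p$-equivalence, as claimed.

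The argument is essentially formal once \cref{lemma:anima:completion-of-infinite-loop-space} is in hand; there is no real obstacle here, since the $1$-connectivity assumption is exactly what is needed to invoke that lemma (both to ensure $\pLoop E$ is connected so that the unstable $p$-completion behaves well, and because the short-exact-sequence computation of homotopy groups used in that lemma requires $1$-connectivity). The only subtlety worth emphasizing is the need to check that the identification $\completebr{\pLoop f} \cong \pLoop(\completebr{f})$ is natural in $f$, which follows from the functoriality of both sides of the equivalence in \cref{lemma:anima:completion-of-infinite-loop-space}.
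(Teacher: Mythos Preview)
Your proposal is correct and follows essentially the same approach as the paper's proof: both use that a $p$-equivalence of spectra becomes an equivalence after $p$-completion, then apply \cref{lemma:anima:completion-of-infinite-loop-space} to identify $\completebr{\pLoop(-)}$ with $\pLoop(\completebr{-})$ on $1$-connective spectra, and conclude via the characterization of $p$-equivalences in terms of $p$-completion. The paper's version is simply more terse, omitting the explicit citations to \cref{lemma:stable:peq-description} and \cref{lemma:topos:peq-if-completion-eq} that you spell out.
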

\begin{proof}
    Since $\complete E \cong \complete F$ is an equivalence by assumption,
    we conclude by the last \cref{lemma:anima:completion-of-infinite-loop-space}
    that also $\completebr{\pLoop E} \cong \completebr{\pLoop F}$,
    i.e. that $\pLoop E \to \pLoop F$ is a $p$-equivalence.
\end{proof}

\begin{defn}
    Let $X_k$ be an $\N$-indexed inverse system of pointed connected anima.
    We say that it is a \emph{weak Postnikov tower} of anima
    if $\tau_{\le k} X_{k+1} \cong \tau_{\le k} X_{k}$ for all $k \ge 0$
    (i.e.\ the maps $X_{k+1} \to X_{k}$ are $k$-connective for all $k$).
\end{defn}
We want to prove that the suspension spectrum commutes with the limit of weak Postnikov towers.
For this, we need the following well-known statement:
\begin{lem} \label{lemma:anima:BM-connectivity}
    Let $f \colon X \to Y$ be a morphism of pointed anima.
    Suppose that $f$ is $k$-connective for some $k$.
    Then $\Sus f \colon \Sus X \to \Sus Y$ is $k$-connective.
\end{lem}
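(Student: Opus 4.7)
My plan is to reduce the statement about morphisms to a statement about connectivity of a single object, and then apply a connectivity bound for $\Sus$.

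First I would observe that since $\Sus \colon \An_* \to \Sp$ is a left adjoint, it preserves cofibers. In particular, if $C_f$ denotes the cofiber of $f$ in $\An_*$, then $\Cofib{\Sus f} \cong \Sus C_f$ in $\Sp$. Since $\Sp$ is stable, the fiber of $\Sus f$ is canonically equivalent to $\Omega \Sus C_f \cong \Sigma^{-1}\Sus C_f$. Thus $\Sus f$ is $k$-connective (i.e.\ its fiber is $k$-connective) if and only if $\Sus C_f$ is $(k+1)$-connective. This reduces the lemma to the following two assertions:
\begin{enumerate}[label=(\alph*), itemsep=0em]
    \item $C_f$ is $(k+1)$-connective as a pointed anima.
    \item For any $n$-connective pointed anima $Z$, the spectrum $\Sus Z$ is $n$-connective.
\end{enumerate}

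Assertion (b) is straightforward from the model of $\Sus$ as sequential stabilization: we have $\pi_i(\Sus Z) \cong \colimil{m} \pi_{i+m}(\Sigma^m Z)$, and since $\Sigma$ in $\An_*$ increases connectivity by one, $\Sigma^m Z$ is $(n+m)$-connective, so every term of the colimit vanishes for $i < n$.

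For assertion (a), the main obstacle is ensuring it holds without additional hypotheses on $X$ and $Y$ (such as simple connectivity). My plan is to apply the homotopy excision (Blakers--Massey) theorem to the cofiber square
\begin{equation*}
    \begin{tikzcd}
        X \arrow[r, "f"] \arrow[d] & Y \arrow[d] \\
        * \arrow[r] & C_f,
    \end{tikzcd}
\end{equation*}
which shows that the canonical comparison map $\Sigma \Fib{f} \to C_f$ (coming from the Puppe sequence) is highly connective. Since $\Fib{f}$ is $k$-connective by hypothesis, $\Sigma \Fib{f}$ is $(k+1)$-connective, and homotopy excision ensures that $C_f$ is also at least $(k+1)$-connective. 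Alternatively, one can argue via a CW approximation: replace $f$ by a cellular inclusion whose relative cells are all of dimension $\ge k+1$, then $C_f$ is a CW-complex with all cells in dimension $\ge k+1$ and so is automatically $(k+1)$-connective. Both approaches are standard and the result appears for instance in \cite[Proposition 6.5.1.20]{highertopoi} (or can be extracted from the effective-epimorphism formulation of connectivity in $\An$).
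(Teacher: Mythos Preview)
Your proposal is correct and follows essentially the same route as the paper's proof: both use Blakers--Massey to show that the cofiber $C_f$ is $(k+1)$-connective, then use that $\Sus$ preserves cofibers and stability of $\Sp$ to conclude that $\Fib{\Sus f}\cong\Omega\Sus C_f$ is $k$-connective. The only cosmetic difference is that for the step ``$\Sus C_f$ is $(k+1)$-connective'' the paper invokes Freudenthal, whereas you argue directly via the colimit formula for stable homotopy groups.
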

\begin{proof}
    Let $F \coloneqq \Fib{f}$ be the fiber.
    By assumption, we have that $F$ is $k$-connective.
    Let $C \coloneqq \Cofib{f}$ be the cofiber.
    By the Blakers-Massy Theorem (see e.g. \cite[Theorem 6.4.1]{tomdieck2008algebraic})
    that $C$ is $(k+1)$-connective.
    Since $\Sus$ preserves colimits (as it is a left adjoint),
    we get a cofiber sequence of spectra $\Sus X \to \Sus Y \to \Sus C$.
    Again by Blakers-Massey (or it's corollary, the Freudenthal Suspension Theorem),
    we conclude that $\Sus C$ is $(k+1)$-connective.
    Thus, since $\Sp$ is stable, we see that there is a fiber sequence 
    $\Omega \Sus C \to \Sus X \to \Sus Y$.
    Note that $\Omega \Sus C$ is $k$-connective.
    This proves that $\Sus f$ is $k$-connective.
\end{proof}

\begin{lem} \label{lemma:anima:suspension-spectrum-of-weak-postnikov-tower}
    Let $X_k$ be a weak Postnikov tower of anima.
    Then
    \begin{equation*}
        \Sus \limil{k} X_k \cong \limil{k} \Sus X_k.
    \end{equation*}
\end{lem}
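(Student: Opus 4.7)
The plan is to compare both sides by computing their homotopy groups, exploiting the fact that in a weak Postnikov tower the transition maps become increasingly connected. First I would observe that since the map $X_{k+1} \to X_k$ is $k$-connective for each $k$ (this is the definition of a weak Postnikov tower, rephrased: both fiber and cofiber live in high degrees), \cref{lemma:anima:BM-connectivity} implies that the induced map $\Sus X_{k+1} \to \Sus X_k$ is also $k$-connective. Hence the tower $\{\Sus X_k\}$ is eventually constant on any fixed $\pi_n$: for $k > n+1$, the map $\pi_n(\Sus X_{k+1}) \to \pi_n(\Sus X_k)$ is an isomorphism.

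Next I would compute $\pi_n(\limil{k} \Sus X_k)$ via the Milnor short exact sequence
\begin{equation*}
    0 \to \limone{k} \pi_{n+1}(\Sus X_k) \to \pi_n(\limil{k} \Sus X_k) \to \limilheart{k} \pi_n(\Sus X_k) \to 0.
\end{equation*}
By the stabilization just observed, the $\limone{}$-term vanishes (the system satisfies Mittag-Leffler because it is eventually constant) and the $\limil{}$-term is simply $\pi_n(\Sus X_N)$ for any sufficiently large $N$. Thus $\pi_n(\limil{k} \Sus X_k) \cong \pi_n(\Sus X_N)$ for $N \gg n$.

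For the left-hand side, the weak Postnikov tower property gives that the canonical map $\limil{k} X_k \to X_N$ is $(N{+}1)$-connective (since all further maps $X_{k+1} \to X_k$ for $k \ge N$ are at least $N$-connective, hence so is the limit-to-$X_N$ map). Applying \cref{lemma:anima:BM-connectivity} once more, the map $\Sus(\limil{k} X_k) \to \Sus X_N$ is $(N{+}1)$-connective, so it induces an isomorphism $\pi_n(\Sus \limil{k} X_k) \cong \pi_n(\Sus X_N)$ for $N$ large enough relative to $n$. Finally, the canonical comparison map $\Sus \limil{k} X_k \to \limil{k} \Sus X_k$ (induced by the projections $\limil{k} X_k \to X_m$ and the universal property of the limit) fits into a compatible triangle with the two maps to $\Sus X_N$, so it is an isomorphism on every $\pi_n$ and hence an equivalence by Whitehead's theorem in $\Sp$.

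The main obstacle is bookkeeping the connectivities precisely so that the Mittag-Leffler vanishing of $\limone{}$ and the connectivity of $\limil{k} X_k \to X_N$ kick in for the same (sufficiently large) $N$; nothing here is deep, but the Freudenthal/Blakers-Massey input via \cref{lemma:anima:BM-connectivity} is what makes the unstable-to-stable passage work, so one should be careful to invoke it with the correct connectivity bound and remember that $\Sus$ preserves $k$-connectivity but not arbitrary limits.
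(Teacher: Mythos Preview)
Your proof is correct and follows essentially the same route as the paper: both arguments use \cref{lemma:anima:BM-connectivity} to propagate the increasing connectivity of the tower maps to the suspended tower, identify $\pi_n$ of each side with $\pi_n(\Sus X_N)$ for $N$ large, and conclude by Whitehead. The paper is slightly terser (it asserts the connectivity of the projection maps directly rather than via the Milnor sequence, citing \cite[Proposition 2.2.9]{MCAT} for the unstable limit), while you spell out the Mittag--Leffler vanishing; also note that your parenthetical justification only yields $N$-connectivity of $\limil{k} X_k \to X_N$, not $(N{+}1)$-connectivity, but this is harmless since $N$-connectivity already gives the needed isomorphism on $\pi_n$ for $n < N$.
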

\begin{proof}
    By assumption, each of the morphisms $X_{k+1} \to X_k$ is $k$-connective.
    By \cref{lemma:anima:BM-connectivity} we see that $\Sus X_{k+1} \to \Sus X_k$ is $k$-connective.

    Since by assumption the homotopy groups of the system $X_k$ stabilize, 
    we see by \cite[Proposition 2.2.9]{MCAT} that $\limil{n} X_n \to X_k$ is $k$-connective for every $k$.
    Therefore, again by \cref{lemma:anima:BM-connectivity} 
    also the morphism $\Sus \limil{n} X_n \to \Sus X_k$ is $k$-connective.
    
    Note that the $k$-connectivity of $\Sus X_{k+1} \to \Sus X_k$ implies that the projection 
    $\limil{n} \Sus X_n \to \Sus X_k$ is $k$-connective for every $k$.

    Thus, we see that $\pi_k(\limil{n} \Sus X_n) \cong \pi_k (\Sus X_{k}) \cong \pi_k (\Sus \limil{n} X_n)$.
    We conclude by Whitehead's theorem.
\end{proof}

The above statement about weak Postnikov towers now allows us to conclude that $p$-equivalences 
of weak Postnikov towers induce $p$-equivalences on the limits of the towers:
\begin{lem} \label{lemma:anima:seq-limit-of-peq-of-anima}
    Suppose there are $\N$-indexed inverse systems of pointed connected anima $X_k$ and $Y_k$,
    and for any $n \ge 0$ there exists a $k_n \ge 0$ such that
    $\pi_n(X_k) \cong \pi_n(X_{k_n})$ and $\pi_n(Y_k) \cong \pi_n(Y_{k_n})$ for all $k \ge k_n$.
    Suppose further that there is a morphism of systems $f_k \colon X_k \to Y_k$
    such that each $f_k$ is a $p$-equivalence.
    Then $f \colon \limil{k} X_k \to \limil{k} Y_k$ is a $p$-equivalence.
\end{lem}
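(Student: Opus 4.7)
The plan is to reduce to the situation of \cref{lemma:anima:suspension-spectrum-of-weak-postnikov-tower} by passing to a cofinal subsequence, and then use formal manipulations in the stable category $\Sp$. Recall that $f$ is a $p$-equivalence exactly when $(\Sus f) \sslash p$ is an equivalence of spectra. Since each $f_k$ is a $p$-equivalence, we already know each $(\Sus f_k) \sslash p$ is an equivalence, so the only real task is to show that this property is preserved by the limit.

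First, I would pass to a cofinal subsystem to obtain weak Postnikov towers on both sides. Let $k_n^X$ and $k_n^Y$ denote the stabilizing indices supplied by the hypothesis, and set
\begin{equation*}
    K_n \coloneqq \max(n, k_0^X, \dots, k_n^X, k_0^Y, \dots, k_n^Y).
\end{equation*}
The sequence $(K_n)_n$ is non-decreasing and cofinal in $\N$, so $\limil{n} X_{K_n} \cong \limil{k} X_k$, and likewise for $Y$. For every $m \le n$, both $K_n$ and $K_{n+1}$ are at least $k_m^X$, so the transition map $X_{K_{n+1}} \to X_{K_n}$ is an isomorphism on $\pi_m$ for all $m \le n$. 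Equivalently, $\tau_{\le n} X_{K_{n+1}} \cong \tau_{\le n} X_{K_n}$, so the reindexed system is a weak Postnikov tower; the same holds for $(Y_{K_n})_n$.

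Second, I would apply \cref{lemma:anima:suspension-spectrum-of-weak-postnikov-tower} to obtain
\begin{equation*}
    \Sus \limil{k} X_k \cong \Sus \limil{n} X_{K_n} \cong \limil{n} \Sus X_{K_n},
\end{equation*}
and analogously for $Y$, which together identify $\Sus f$ with $\limil{n} \Sus f_{K_n}$ in $\Sp$. Since $\Sp$ is stable, the functor $(-) \sslash p \cong \Sigma \Fib{- \xrightarrow{p} -}$ is a composite of an equivalence ($\Sigma$) and a limit ($\Fib$), hence commutes with arbitrary limits. Therefore
\begin{equation*}
    (\Sus f) \sslash p \cong \limil{n} (\Sus f_{K_n}) \sslash p,
\end{equation*}
and each term on the right is an equivalence because $f_{K_n}$ is a $p$-equivalence. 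A limit of equivalences is an equivalence, so $(\Sus f) \sslash p$ is an equivalence, proving that $f$ is a $p$-equivalence.

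The substantive work has already been done in \cref{lemma:anima:suspension-spectrum-of-weak-postnikov-tower}, where Blakers--Massey and a Whitehead-type argument are used to interchange $\Sus$ with the sequential limit. Once that lemma is granted, the only step requiring care is the cofinal reindexing, which must stabilize both towers simultaneously; this is handled by taking a common threshold as above. All other manipulations are formal properties of limits in the stable category $\Sp$.
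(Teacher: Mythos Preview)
Your proof is correct and follows essentially the same approach as the paper: pass to a cofinal subsequence to obtain weak Postnikov towers, invoke \cref{lemma:anima:suspension-spectrum-of-weak-postnikov-tower} to commute $\Sus$ past the limit, and then use that $(-)\sslash p$ commutes with limits in $\Sp$. Your treatment of the cofinal reindexing is in fact slightly more explicit than the paper's, which simply asserts that one may assume $k_n = n$.
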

\begin{proof}
    Up to replacing $\N$ by a cofinal subset, we may assume that $k_n = n$ for each $n$.
    Note that we have equivalences $\tau_{\le n-1} X_n \cong \tau_{\le n-1} X_{n-1}$
    by assumption. Thus, the system $X_k$ is a weak Postnikov tower.
    This allows us to conclude from \cref{lemma:anima:suspension-spectrum-of-weak-postnikov-tower},
    that $\Sus \limil{k} X_k \cong \limil{k} \Sus X_k$
    and $\Sus \limil{k} Y_k \cong \limil{k} \Sus Y_k$.
    Thus, $\Sus f \cong \Sus \limil{k} f_k \cong \limil{k} \Sus f_k$.
    We now conclude that $f$ is a $p$-equivalence
    because $\Sus f\sslash p \cong (\limil{k} \Sus f_k)\sslash p \cong \limil{k} ((\Sus f_k)\sslash p)$
    is a limit of equivalences.
\end{proof}

\subsection{Conservativity of the Free Sheaf Functor}
Let $\topos X$ be a 1-topos, i.e.\ the category of sheaves of sets on some site $(\Cat C, \tau)$.
Let $R$ be a ring, this defines a presentable 1-category $\Mod{R,\topos X}$ of $R$-modules internal to $\topos X$, 
together with a conservative forgetful functor $\iota \colon \Mod{R,\topos X} \to \topos X$.
This forgetful functor commutes with limits and filtered colimits, and thus has a left adjoint $R[-] \colon \topos X \to \Mod{R, \topos X}$ 
by presentability.
Note that for $X \in \topos X$, the value $R[X]$ is given by the sheafification of the presheaf of $R$-modules $U \mapsto R[X(U)]$,
where $R[X(U)]$ is the free $R$-module on generators $X(U)$.
This can be seen by comparing right adjoints.
Our goal in this section is to prove that $R[-]$ is conservative.

\begin{defn}
    Let $\Cat C$ be a 1-category, and $f \colon X \to Y$ a morphism in $\Cat C$.
    Then $f$ is called an \emph{extremal monomorphism}
    if $f$ is a monomorphism and for all factorizations $f = i \circ p$ with $p$ an epimorphism,
    we already have that $p$ is an isomorphism.
\end{defn}

The following is (the dual of) a well-known result in category theory: 
\begin{lem} \label{lem:free:adjunction-conservative-extremal}
    Let $L \colon \Cat C \rightleftarrows \Cat D \colon R$ be an adjunction of 1-categories,
    and write $\eta \colon \operatorname{id} \to RL$ for the unit map.
    Suppose moreover that
    $\eta_X \colon X \to RLX$ is an extremal monomorphism for all $X \in \Cat C$.
    Then $L$ is conservative.
\end{lem}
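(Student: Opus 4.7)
The approach is to show that any $f \colon X \to Y$ with $Lf$ an isomorphism must itself be an isomorphism, by exhibiting $\eta_X$ as a factorization through $f$ and then applying the extremal monomorphism property of $\eta_X$.

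First I would check that such an $f$ is necessarily an epimorphism. Given $h_1, h_2 \colon Y \to Z$ with $h_1 \circ f = h_2 \circ f$, applying $L$ and using that $Lf$ is invertible forces $Lh_1 = Lh_2$, hence $RLh_1 = RLh_2$. Naturality of $\eta$ at $h_i$ then yields $\eta_Z \circ h_1 = RLh_1 \circ \eta_Y = RLh_2 \circ \eta_Y = \eta_Z \circ h_2$, and since $\eta_Z$ is monic (any extremal monomorphism is in particular monic), $h_1 = h_2$ follows.

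Next I would produce the factorization. Setting $g \coloneqq R((Lf)^{-1}) \circ \eta_Y \colon Y \to RLX$, naturality of $\eta$ at $f$ (namely $\eta_Y \circ f = RLf \circ \eta_X$) immediately gives $g \circ f = \eta_X$. So the extremal monomorphism $\eta_X$ is written as a composite $g \circ f$ in which $f$ is an epimorphism, and the defining property of extremal monomorphisms forces $f$ to be an isomorphism.

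No substantive obstacle is expected; every step is essentially forced by the hypotheses. The only mildly delicate point is the epimorphism step, and it rests solely on $\eta$ being componentwise monic, which is automatic from the extremal mono assumption.
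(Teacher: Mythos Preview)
Your proposal is correct and follows essentially the same route as the paper's proof: both establish that $f$ is an epimorphism via naturality of $\eta$ and the fact that each $\eta_Z$ is monic, then use the factorization $\eta_X = \bigl(R(Lf)^{-1}\circ \eta_Y\bigr)\circ f$ to invoke the extremal monomorphism property of $\eta_X$. The only cosmetic difference is that the paper leaves the factorization implicit in the naturality square and reverses the order of presentation.
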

\begin{proof}
    Let $f \colon X \to Y$ be a morphism in $\Cat C$ such that $Lf$ is an isomorphism.
    We have to show that $f$ is an isomorphism.
    By naturality of $\eta$, we get a commutative square 
    \begin{center}
        \begin{tikzcd}
            X \arrow[d, "f"] \arrow[r, "\eta_X"] &RLX \arrow[d, "RLf"] \\
            Y \arrow[r, "\eta_Y"] &RLY.
        \end{tikzcd}
    \end{center}
    Note that the right vertical map is an isomorphism,
    and the horizontal maps are extremal monomorphisms.
    Thus, by the definition of extremal monomorphism,
    it suffices to show that $f$ is an epimorphism.

    So suppose that there is $T \in \Cat C$ and $h_1, h_2 \colon Y \to T$ such that 
    $h_1 f = h_2 f$. We need to show that $h_1 = h_2$.
    By functoriality, we have $RLh_1 \circ RLf = RLh_2 \circ RLf$.
    Since $RLf$ is an isomorphism, we conclude that $RLh_1 = RLh_2$.
    By naturality of $\eta$, we thus get the following equality:
    \begin{equation*}
        \eta_T \circ h_1 = RLh_1 \circ \eta_Y = RLh_2 \circ \eta_Y = \eta_T \circ h_2.
    \end{equation*}
    We conclude that $h_1 = h_2$ because $\eta_T$ is a monomorphism by assumption.
\end{proof}

In order to apply the above, we need the following two lemmas:
\begin{lem} \label{lem:free:balanced-mono-is-extremal}
    Suppose that $\Cat C$ is a balanced category 
    (i.e.\ every morphism $f$ which is both monic and epic is already an isomorphism), 
    and that $f \colon X \to Y$ in $\Cat C$ is a monomorphism.
    Then $f$ is an extremal monomorphism.
\end{lem}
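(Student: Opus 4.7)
The plan is to use the defining property of a balanced category directly. Given a monomorphism $f \colon X \to Y$ and a factorization $f = i \circ p$ where $p$ is an epimorphism, I want to conclude that $p$ is an isomorphism. Since $\Cat C$ is balanced, it suffices to show that $p$ is simultaneously a monomorphism and an epimorphism.

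The epimorphism part is given by assumption, so the only thing to verify is that $p$ is a monomorphism. This follows from a standard cancellation argument: if $a, b$ are parallel morphisms into $X$ (the source of $p$) with $p \circ a = p \circ b$, then postcomposing with $i$ yields $f \circ a = i \circ p \circ a = i \circ p \circ b = f \circ b$, and since $f$ is monic by hypothesis, we conclude $a = b$.

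Combining these two facts, $p$ is both monic and epic, hence an isomorphism by the balanced assumption on $\Cat C$. This is exactly the statement that $f$ is an extremal monomorphism. There is no real obstacle here — the proof is a one-line cancellation argument followed by an appeal to the definition of balanced — so it should fit in just a few lines of LaTeX.
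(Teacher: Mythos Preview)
Your proposal is correct and follows exactly the same approach as the paper: factor $f = i \circ p$, reduce to showing $p$ is a monomorphism, and use that $f$ monic forces $p$ monic. The only difference is that you spell out the cancellation argument explicitly, whereas the paper just says this ``follows immediately from the assumption that $f$ is a monomorphism.''
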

\begin{proof}
    Suppose that we have a factorization $f = i \circ p$ with $p$ an epimorphism.
    We need to show that $p$ is an isomorphism.
    Since $\Cat C$ is balanced, it suffices to show that $p$ is a monomorphism,
    which follows immediately from the assumption that $f$ is a monomorphism.
\end{proof}

\begin{lem} \label{lem:free:unit-is-mono}
    For every $X \in \topos X$, the unit $X \to \iota R[X]$ is a monomorphism.
\end{lem}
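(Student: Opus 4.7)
The plan is to factor the unit $\eta_X \colon X \to \iota R[X]$ through the presheaf-level free module on $X$ and then invoke left exactness of sheafification.

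First, I would identify $R[X]$ explicitly as the sheafification of the presheaf of $R$-modules $R^{\mathrm{pre}}[X] \colon U \mapsto R[X(U)]$, where $R[S]$ denotes the free $R$-module on a set $S$. This follows by comparing right adjoints: the forgetful functor $\iota \colon \Mod{R,\topos X} \to \topos X \hookrightarrow \PrShv{\Cat C}$ factors through $\PrShvVal{\Cat C}{\Mod{R}}$, and the left adjoint of this composite is visibly given by the presheaf-level free module functor $R^{\mathrm{pre}}[-]$ followed by sheafification of $R$-modules; moreover, sheafification of $R$-modules is computed as sheafification of the underlying presheaves of sets. Under this identification, $\eta_X$ is the composite
\begin{equation*}
    X \xrightarrow{\eta_X^{\mathrm{pre}}} \iota R^{\mathrm{pre}}[X] \xrightarrow{a} \iota R[X],
\end{equation*}
where $a$ is sheafification and $\eta_X^{\mathrm{pre}}$ is the presheaf-level unit, given on sections by the injection $X(U) \hookrightarrow R[X(U)]$, $x \mapsto 1 \cdot x$.

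Second, monomorphisms of presheaves are detected on sections, and each inclusion $X(U) \hookrightarrow R[X(U)]$ is injective (the basis embeds into the free module), so $\eta_X^{\mathrm{pre}}$ is a monomorphism in $\PrShv{\Cat C}$. Third, sheafification is a left adjoint that preserves finite limits, hence preserves monomorphisms; and since $X$ is already a sheaf, $a(X) = X$. Applying $a$ to the monomorphism $\eta_X^{\mathrm{pre}}$ therefore produces a monomorphism $\eta_X = a(\eta_X^{\mathrm{pre}}) \colon X \to \iota R[X]$, as desired.

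No serious obstacle arises; the only point that requires care is the concrete identification in the first step, but it is standard. As an alternative, when $\topos X$ has enough points, one can check injectivity of $\eta_X$ on stalks, using that $R[-]$ on sets commutes with filtered colimits so that $(R[X])_x \cong R[X_x]$ and the inclusion $X_x \hookrightarrow R[X_x]$ is manifestly injective; but the sheafification argument works in full generality.
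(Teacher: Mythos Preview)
Your proof is correct and follows essentially the same approach as the paper: identify $R[X]$ as the sheafification of the presheaf $U \mapsto R[X(U)]$, observe that the presheaf-level unit is a sectionwise injection and hence a monomorphism, and then use left exactness of sheafification (together with $X$ already being a sheaf) to conclude. The paper makes the preservation of monomorphisms explicit via the pullback-square characterization, but this is exactly your left-exactness argument.
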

\begin{proof}
    Write $F$ for the presheaf (of $R$-modules) $U \mapsto R[X(U)]$,
    such that $R[X]$ is the sheafification of $F$.
    Note that the map $X \to F$ is clearly a monomorphism,
    because on each level it is just the canonical map $X(U) \to R[X(U)]$,
    which maps an element $x \in X(U)$ to the corresponding basis element of $R[X(U)]$.
    Now observe that sheafification preserves monomorphisms: 
    Indeed, monomorphisms 
    $f \colon A \to B$ can be characterized as 
    the existence of pullback squares of the form
    \begin{center}
        \begin{tikzcd}
            A \arrow[r, equal] \arrow[d, equal] &A \arrow[d, "f"] \\
            A \arrow[r, "f"] &B,
        \end{tikzcd}
    \end{center}
    which are preserved because sheafification is left exact.
    But since $X$ is already a sheaf by assumption,
    we conclude that $X \to R[X]$ is a monomorphism.
\end{proof}

This allows us to conclude:
\begin{prop} \label{prop:free:conservativity}
    The functor $R[-] \colon \topos X \to \Mod{R, \topos X}$
    is conservative.
\end{prop}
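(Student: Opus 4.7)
The plan is to combine the three preceding lemmas via the abstract conservativity criterion \cref{lem:free:adjunction-conservative-extremal}. We apply that lemma to the adjunction $R[-] \dashv \iota$, so it suffices to show that for every $X \in \topos X$ the unit $\eta_X \colon X \to \iota R[X]$ is an extremal monomorphism in $\topos X$.

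First, I would invoke \cref{lem:free:unit-is-mono} to see that $\eta_X$ is a monomorphism. Then, to upgrade this to an extremal monomorphism via \cref{lem:free:balanced-mono-is-extremal}, I need to know that $\topos X$ is a balanced 1-category. This is a standard fact about (1-)topoi: any morphism in a topos which is both a monomorphism and an epimorphism is an isomorphism. I would either cite this as well-known or sketch the argument briefly (for instance, in a Grothendieck topos, a morphism is epic iff it is a local surjection on sections and monic iff it is injective on sections, and a morphism that is both is invertible by applying the inverse levelwise after sheafification; alternatively, one can use that epimorphisms in a topos are effective and that an effective epimorphism which is also a monomorphism is automatically an isomorphism since it equals the coequalizer of its kernel pair, which is a pair of equal maps).

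Putting the pieces together: $\eta_X$ is a monomorphism by \cref{lem:free:unit-is-mono}; since $\topos X$ is balanced, $\eta_X$ is an extremal monomorphism by \cref{lem:free:balanced-mono-is-extremal}; therefore $R[-]$ is conservative by \cref{lem:free:adjunction-conservative-extremal}.

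The main (minor) obstacle is really just citing or verifying that $\topos X$ is balanced; everything else is immediate from the lemmas that have been set up. There are no computational subtleties — the whole point of the preceding three lemmas is exactly to reduce the statement to this observation.
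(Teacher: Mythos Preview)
Your proposal is correct and follows exactly the same approach as the paper: use that every $1$-topos is balanced, combine \cref{lem:free:unit-is-mono} with \cref{lem:free:balanced-mono-is-extremal} to see that the unit is an extremal monomorphism, and conclude via \cref{lem:free:adjunction-conservative-extremal}. The paper's proof is just a terser version of what you wrote.
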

\begin{proof}
    Since every 1-topos is a balanced category,
    it follows from \cref{lem:free:balanced-mono-is-extremal,lem:free:unit-is-mono}
    that the unit $X \to \iota R[X]$ is an extremal monomorphism for all $X \in \topos X$.
    Thus, we conclude from \cref{lem:free:adjunction-conservative-extremal}
    that $R[-]$ is conservative.
\end{proof}

\section{The Pro-Zariski Topology}
\label{appendix:pro-zar}
Let $k$ be a field and denote by $\smooth{k}$ the category of quasi-compact smooth $k$-schemes.
Let $\ShvTop{\zar}{\smooth{k}}$ be the $\infty$-topos of sheaves on $\smooth{k}$
with respect to the Zariski topology,
i.e.\ covers are given by fpqc covers $\{ U_i \to U \}_i$
such that each $U_i \to U$ can be written as $\sqcup_j U_{i, j} \to U$
such that each $U_{i, j} \to U$ is an open immersion.
In this section, we develop an analog of the pro-étale topology from \cite{bhatt2014proetale},
adapted for the Zariski topology.
We use this pro-Zariski topology to show that $\ShvTop{\zar}{\smooth{k}}$
can be embedded into a topos of the form $\PSig{W}$,
where the category $W$ will be realized by zw-contractible rings,
an analog of w-contractible rings from \cite{bhatt2014proetale}.
We will begin with a general discussion with categories of sheaves on locally weakly contractible sites,
and then specialize this discussion to the pro-Zariski topos.

\subsection{Locally Weakly Contractible \texorpdfstring{$\infty$}{Infinity}-Topoi}
\label{section:weakly-contractible}
The goal of this section is to prove that the topos of hypercomplete sheaves
on a locally weakly contractible site $(\Cat C, \tau)$ (see \cref{def:weakly-contractible:lwc})
is always of the form $\PSig{W}$ for a suitable subcategory $W \subset \Cat C$
of weakly contractible objects.
Since we will deal with hypercomplete and non-hypercomplete sheaves,
if $(\Cat C, \tau)$ is a site, then denote the categories of
sheaves on this site (resp. hypercomplete sheaves on this site)
by $\ShvTopNH{\tau}{\Cat C}$ (resp. $\ShvTopH{\tau}{\Cat C}$).
Moreover, denote the sheafification adjunction by
\begin{equation*}
    L_{nh} \colon \PrShv{\Cat C} \rightleftarrows \ShvTopNH{\tau}{\Cat C} \colon \iota_{nh}
\end{equation*}
and
\begin{equation*}
    L_{h} \colon \PrShv{\Cat C} \rightleftarrows \ShvTopH{\tau}{\Cat C} \colon \iota_{h},
\end{equation*}
respectively. Note that $L_h$ factors over $L_{nh}$,
write
\begin{equation*}
    L_{hyp} \colon \ShvTopNH{\tau}{\Cat C} \rightleftarrows \ShvTopH{\tau}{\Cat C} \colon \iota_{hyp}
\end{equation*}
for the geometric morphism corresponding to this factorization.

\begin{defn} \label{def:weakly-contractible:sigma}
    Let $(\Cat C, \tau)$ be a site which admits finite coproducts.
    We say that the topology $\tau$ is a \emph{$\Sigma$-topology}
    if every finite collection of morphisms $\{ U_i \to U \}_i$
    such that $\sqcup_i U_i \to U$ is an isomorphism
    is a cover in the $\tau$-topology.
\end{defn}

\begin{defn} \label{def:weakly-contractible:wc}
    Let $(\Cat C, \tau)$ be a site.
    We say that an object $w \in \Cat C$ is \emph{weakly contractible}
    if every cover by a single morphism $U \twoheadrightarrow w$
    has a splitting.
\end{defn}

\begin{defn} \label{def:weakly-contractible:lwc}
    Let $(\Cat C, \tau)$ be a site.
    We say that $\Cat C$ is \emph{locally weakly contractible}, if
    there is a subcategory $W \subset \Cat C$ such that
    \begin{enumerate}[label=(\textbf{LWC \arabic*}),ref=(LWC \arabic*),itemsep=0em,left=1em]
        \item \label{def:weakly-contractible:lwc:coproducts-pullbacks} $\Cat C$ has finite coproducts, and 
              finite coproducts distribute over all pullbacks that exist in $\Cat C$, i.e.\ if $(U_i)_i$ is a family of objects in $\Cat C$, $f \colon X \to Y$ a morphism in $\Cat C$,
              and $g_i \colon U_i \to Y$ morphisms, then $(\sqcup_i U_i) \times_Y X \cong \sqcup_i (U_i \times_Y X)$,
        \item \label{def:weakly-contractible:lwc:wc} every object $w \in W$ is weakly contractible (\cref{def:weakly-contractible:wc}),
        \item \label{def:weakly-contractible:lwc:w-coproducts} $W$ is closed under finite coproducts in $\Cat C$,
        \item \label{def:weakly-contractible:lwc:quasicompact} every object $w \in W$ is quasi-compact, i.e.\ every cover of $w$ can be refined by a finite cover,
        \item \label{def:weakly-contractible:lwc:sigma} the topology is a $\Sigma$-topology (\cref{def:weakly-contractible:sigma}),
        \item \label{def:weakly-contractible:lwc:wc-cover} every object $X \in \Cat C$ has a cover $w \twoheadrightarrow X$
              by a weakly contractible object $w \in W$, and
        \item \label{def:weakly-contractible:lwc:extensive} the category $W$ is extensive, see \cref{def:psig:extensive}.
    \end{enumerate}
\end{defn}

Suppose from now on that $(\Cat C, \tau)$ is a locally weakly contractible site.
Since by assumption \ref{def:weakly-contractible:lwc:extensive} the category $W$ 
is extensive, we see that $\PSig{W}$ is an $\infty$-topos, see \cref{lemma:psig:extensive-topos}.
In particular, we have a geometric morphism
\begin{equation*}
    L_\Sigma \colon \PrShv{W} \rightleftarrows \PSig{W} \colon \iota_\Sigma.
\end{equation*}
The fully faithful inclusion $W \to \Cat C$
induces an adjunction of presheaf categories
\begin{equation*}
    j^* \colon \PrShv{W} \rightleftarrows \PrShv{\Cat C} \colon j_*,
\end{equation*}
where $j_*$ is given by restriction, and $j^*$ is given by left Kan extension
(see \cite[Corollary 4.3.2.14]{highertopoi} for the existence of left Kan extensions of presheaves,
and \cite[Corollary 4.3.2.16 and Proposition 4.3.2.17]{highertopoi} for a proof
that the left Kan extension functor exists and is left adjoint to the restriction functor).
Write $\pi_n^{pre}$ for the homotopy objects in a presheaf category, i.e.\ the functor
given by postcomposing with the functor $\pi_n \colon \An \to \operatorname{Set}$.

\begin{lem} \label{lemma:weakly-contractible:discrete-recovery}
    Let $F \in \Disc{\PrShv{\Cat C}}$ be a $0$-truncated presheaf (i.e.\ a presheaf of sets),
    such that $j_* F = \iota_\Sigma L_\Sigma j_* F \in \PrShv{W}$.
    Then the canonical map $L_{nh}j^*j_* F \to L_{nh} F$ is an equivalence,
    and for all $w \in W$ we have an equivalence $(L_{nh} F)(w) \cong F(w)$.
\end{lem}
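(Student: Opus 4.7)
The claim has two parts, which I handle in sequence.

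First, for $L_{nh}j^*j_*F \xrightarrow{\sim} L_{nh}F$: the canonical map is induced by the counit $\varepsilon \colon j^*j_*F \to F$ of the adjunction $j^* \dashv j_*$, and I will argue $\varepsilon$ is a local isomorphism of presheaves of sets, which suffices since sheafification inverts local isomorphisms. Because $j \colon W \hookrightarrow \Cat C$ is fully faithful, the standard formula for left Kan extension along a fully faithful inclusion gives $(j^*j_*F)(w) \cong F(w)$ for every $w \in W$, so $\varepsilon(w)$ is already an isomorphism at each $W$-object. Condition \ref{def:weakly-contractible:lwc:wc-cover} supplies every $U \in \Cat C$ with a cover $w \twoheadrightarrow U$ by some $w \in W$, witnessing $\varepsilon$ as locally iso.

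Second, for $(L_{nh}F)(w) \cong F(w)$ at $w \in W$: I compute using the iterated plus construction $L_{nh}F = F^{++}$, with $F^+(w) = \operatorname{colim}_{\mathcal U} \check{H}^0(\mathcal U, F)$ a filtered colimit over covers of $w$ ordered by refinement. By \ref{def:weakly-contractible:lwc:wc-cover} covers by $W$-objects are cofinal, and by quasi-compactness of $w$ (\ref{def:weakly-contractible:lwc:quasicompact}) finite such covers are cofinal; so it suffices to analyze $\check{H}^0(\mathcal W, F)$ for finite $\mathcal W = \{w_1, \dots, w_n\}$ with $w_i \in W$. Setting $V \coloneqq \sqcup_i w_i \in W$ (by \ref{def:weakly-contractible:lwc:w-coproducts}), the singleton $\{V \to w\}$ is itself a cover since its associated sieve contains that of $\mathcal W$, and by weak contractibility of $w$ (\ref{def:weakly-contractible:lwc:wc}) the morphism $V \to w$ admits a section $s \colon w \to V$.

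Using the $\Sigma$-sheaf hypothesis on $F|_W$ to identify $F(V) \cong \prod_i F(w_i)$, I define a candidate retraction $\check{H}^0(\mathcal W, F) \to F(w)$ sending a cocycle $(x_i)_i$ to $s^*(x_i)$, and verify that it is a two-sided inverse of the natural map $F(w) \to \check{H}^0(\mathcal W, F)$. The nontrivial direction checks $(s^*x)|_{w_i} = x_i$, which unfolds as $(s \circ g_i)^* x$ (with $g_i \colon w_i \to w$ the cover map) and reduces, after the decomposition $w_i = \sqcup_k (w_i \times_V w_k)$ afforded by \ref{def:weakly-contractible:lwc:coproducts-pullbacks}, to the cocycle identity $x_i|_{w_i \times_w w_k} = x_k|_{w_i \times_w w_k}$ pulled back along the natural comparison map $w_i \times_V w_k \to w_i \times_w w_k$. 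This yields $\check{H}^0(\mathcal W, F) = F(w)$ and hence $F^+(w) = F(w)$; since $F^+|_W = F|_W$ remains a $\Sigma$-sheaf, the same argument applied to $F^+$ gives $L_{nh}F(w) = F^{++}(w) = F(w)$. The main obstacle is this componentwise verification of the two-sided inverse, which is where both the $\Sigma$-sheaf hypothesis and the distributivity of coproducts over pullbacks enter essentially, since $V \times_w V$ need not lie in $W$ and must be handled via the decomposition above.
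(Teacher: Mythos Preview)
Your proof is correct, but it diverges from the paper's in both parts.

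For the first claim, the paper does not argue that $\varepsilon$ is a local isomorphism; instead it observes that the $w\in W$ generate the sheaf topos and reduces to showing $(L_{nh}j^*j_*F)(w)\to (L_{nh}F)(w)$ is an isomorphism, which then follows from the second claim applied to both $F$ and $j^*j_*F$ (noting $j_*(j^*j_*F)=j_*F$ still satisfies the $\Sigma$-sheaf hypothesis). Your direct local-isomorphism argument is cleaner and does not even use the $\Sigma$-sheaf hypothesis for this half.

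For the second claim, the paper takes a shorter path: it first uses the $\Sigma$-sheaf property together with \ref{def:weakly-contractible:lwc:coproducts-pullbacks} to identify the \v{C}ech nerves of $\{w_i\to w\}_i$ and of the single-morphism cover $\{V\to w\}$, and then invokes the standard fact (Stacks \textsf{019Z}) that the \v{C}ech nerve of a split epimorphism is simplicially homotopy equivalent to the constant object $w$, giving $H^0(\{V\},F)=F(w)$ in one stroke. Your hands-on verification of the two-sided inverse is correct but longer; note that the decomposition $w_i=\sqcup_k(w_i\times_V w_k)$ and the fact that each summand lies in $W$ (needed so that the $\Sigma$-sheaf hypothesis applies to $F(w_i)=\prod_k F(w_i\times_V w_k)$) really comes from extensivity of $W$ \ref{def:weakly-contractible:lwc:extensive}, not from \ref{def:weakly-contractible:lwc:coproducts-pullbacks} alone. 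Both approaches are valid; the paper's is slicker, while yours avoids the external citation and makes the role of the section $s$ explicit.
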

\begin{proof}
    Since everything is $0$-truncated, this is a statement about sheaves of sets.
    In particular, $L_{nh} G \cong G^{++}$,
    where $(-)^+$ is the plus construction, see e.g.\ \cite[\href{https://stacks.math.columbia.edu/tag/00W4}{Tag 00W1}]{stacks-project}.
    Since the $w \in W$ generate the topos $\Disc{\ShvTopNH{\tau}{\Cat C}}$ (this follows from assumption \ref{def:weakly-contractible:lwc:wc-cover}),
    it suffices to prove that $(L_{nh} j^* j_* F)(w) \to (L_{nh} F)(w)$
    is an equivalence for all $w \in W$.
    Moreover, since $(j^* j_* F)(w) \cong (j_* F)(w) \cong F(w)$,
    it suffices to prove that $(L_{nh} G)(w) = G(w)$
    for every presheaf $G \in \Disc{\PrShv{\Cat C}}$
    with $j_* G \cong \iota_{\Sigma} L_\Sigma j_* G$ and $w \in W$.
    Thus, it suffices to show that $G^+(w) = G(w)$.
    Let $\{ U_i \to w \}_i$ be a cover of $w$.
    We can refine this cover by a cover $\{ w_i \to w \}_i$
    with $w_i \in W$, by assumption \ref{def:weakly-contractible:lwc:wc-cover}.
    We may assume that this cover is finite since $w$ is quasi-compact, see assumption \ref{def:weakly-contractible:lwc:quasicompact}.
    Thus, by the definition of $G^+(w) = \colimil{\Cat U \in J_w^{\operatorname{op}}} H^0(\mathcal U, G)$
    (see the discussion right before \cite[\href{https://stacks.math.columbia.edu/tag/00W4}{Tag 00W4}]{stacks-project}
    for the notation),
    we can run the colimit only over covers $\{w_i \to w\}$ with $w_i \in W$.
    But now since $j_* G \cong \iota_\Sigma L_\Sigma j_* G$,
    we know that $\prod_i G(w_i) \cong G(\sqcup_i w_i)$.
    Thus, since coproducts distribute over pullbacks in $\Cat C$ by assumption \ref{def:weakly-contractible:lwc:coproducts-pullbacks}, we see that the \v{C}ech-nerves 
    of $\{w_i \to w\}_i$ and $\{ \sqcup_i w_i \to w \}$ agree.
    Therefore, we may assume that the cover is in fact a single morphism $\mathcal U = \{ v \to w \}$,
    with $v = \sqcup_i w_i \in W$ because objects in $W$ are stable under coproducts by assumption \ref{def:weakly-contractible:lwc:w-coproducts}.
    This morphism has a split by assumption \ref{def:weakly-contractible:lwc:wc}.
    Hence, the \v{C}ech nerve is homotopy equivalent to (the constant simplicial object) $w$,
    see (the dual version of) \cite[\href{https://stacks.math.columbia.edu/tag/019Z}{Tag 019Z}]{stacks-project}.
    Thus, $H^0(\mathcal U, G) = G(w)$.
    Since this is true for a cofinal family of covers, we conclude $G^+(w) = G(w)$.
\end{proof}

\begin{lem} \label{lemma:weakly-contractible:recovery}
    Let $F \in \PrShv{\Cat C}$ be a presheaf,
    such that $j_* F = \iota_\Sigma L_\Sigma j_* F \in \PrShv{W}$.
    Then the canonical map $L_{h}j^*j_* F \to L_{h} F$ is an equivalence,
    and for all $w \in W$, we have an equivalence $(L_{h} F)(w) \cong F(w)$.
\end{lem}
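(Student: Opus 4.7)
I will prove the second assertion first, that $(L_h F)(w) \cong F(w)$ for every $w \in W$; the first then follows. Indeed, the morphism $L_h j^* j_* F \to L_h F$, evaluated at $w \in W$, is the identity on $F(w)$: both sides are identified with $F(w)$ via the second assertion, using $(j^* j_* F)(w) = F(w)$. Since $W$ generates $\ShvTopH{\tau}{\Cat C}$ (\ref{def:weakly-contractible:lwc:wc-cover}), and every object of $\Cat C$ admits an iterated hypercover by $W$-objects (refining $W$-covers recursively using \ref{def:weakly-contractible:lwc:w-coproducts}), hyperdescent implies that equivalences between hypercomplete sheaves can be detected on $W$-sections.

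To prove the second assertion, I first observe that $\ShvTopH{\tau}{\Cat C}$ is Postnikov-complete: each $w \in W$ is weakly contractible and quasi-compact, so $\Gamma(w,-)$ preserves effective epimorphisms, making $\ShvTopH{\tau}{\Cat C}$ locally of homotopy dimension $\le 0$, and \cite[Proposition 7.2.1.10]{highertopoi} applies. Combined with the fact that $L_h$ commutes with truncation (by uniqueness of adjoints) and that evaluation at $w$ preserves limits, this reduces the problem to showing $(L_h \tau_{\le n} F)(w) \cong (\tau_{\le n} F)(w)$ for every $n$—noting that the hypothesis on $F$ is preserved by $\tau_{\le n}$, since $\tau_{\le n}$ commutes with finite products.

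I would then proceed by induction on $n$. The base case $n = 0$ is exactly \cref{lemma:weakly-contractible:discrete-recovery}: $L_h$ agrees with $L_{nh}$ on discrete presheaves (discrete sheaves being automatically hypercomplete). For the inductive step, I apply the left-exact functor $L_h$ to the (possibly twisted) Postnikov fiber sequence
\begin{equation*}
    K(\pi_n^{pre}(F), n) \to \tau_{\le n} F \to \tau_{\le n-1} F,
\end{equation*}
and then evaluate at $w$; both operations preserve fiber sequences, and induction handles the right term. What remains is the Eilenberg-MacLane case: for a presheaf $A$ of abelian groups (or, if $n = 1$, of groups) satisfying the $\Sigma$-hypothesis, show $(L_h K(A, n))(w) \cong K(A(w), n)$.

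The key Eilenberg-MacLane computation splits into two facts. First, $L_h K(A, n) \cong K(L_h A, n)$ in $\ShvTopH{\tau}{\Cat C}$, obtained by comparing homotopy sheaves. Second, for a hypercomplete sheaf $B$ and weakly contractible quasi-compact $w$, the sheaf cohomology $H^i(w, B)$ vanishes for $i > 0$ (every torsor over $w$ splits, and in the abelian case higher cohomology is computed via acyclic covers by $w$ itself), so $K(B, n)(w) \cong K(B(w), n)$. Applying this with $B = L_h A$ and using \cref{lemma:weakly-contractible:discrete-recovery} to identify $(L_h A)(w) = A(w)$ finishes the argument. The main subtlety I expect to encounter is the twisted coefficient system in the Postnikov fiber sequence when $F$ has nontrivial $\pi_1^{pre}(F)$, but this is handled by a standard local-to-global reduction over $\pi_1^{pre}(F)$-torsors.
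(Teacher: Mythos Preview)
Your overall strategy—reduce to the discrete case via a homotopy-theoretic decomposition—is in the same spirit as the paper's, but your execution has a genuine gap at the Postnikov step. The fiber sequence
\[
K(\pi_n^{pre}(F), n) \to \tau_{\le n} F \to \tau_{\le n-1} F
\]
does not exist for a general presheaf $F$: the object $\pi_n^{pre}(F)$ is not a presheaf of abelian groups on $\Cat C$ unless a basepoint is fixed, and even then the displayed sequence is only a fiber sequence at that basepoint. Globally, the fiber of $\tau_{\le n} F \to \tau_{\le n-1} F$ is a gerbe banded by a local system over $\tau_{\le n-1} F$, not an untwisted Eilenberg--MacLane object, so your ``Eilenberg--MacLane case'' computation does not apply as stated. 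Your final sentence (``handled by a standard local-to-global reduction over $\pi_1^{pre}(F)$-torsors'') is precisely where the content lies and you have not supplied it; moreover the issue already appears when $\pi_0^{pre}(F)$ is nontrivial, not only when $\pi_1^{pre}$ acts.

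The paper avoids this problem by not doing Postnikov induction at all. Instead it checks the map on homotopy sheaves $\pi_n$ directly, but over \emph{every} object $U$ and \emph{every} basepoint $x \colon U \to L_h j^* j_* F$, working in the slice $\ShvTopH{\tau}{\Cat C}_{/U}$. Concretely, it uses $\pi_n((L_h G)|_U, x) \cong L_{nh}\,\pi_n^{pre}(G|_{\iota_h U}, \iota_h x)$, identifies $\pi_n^{pre}((j^* j_* F)|_{\iota_h U},\,\cdot\,) \cong j^* j_*\,\pi_n^{pre}(F|_{\iota_h U},\,\cdot\,)$, and then feeds each of these $0$-truncated presheaves into \cref{lemma:weakly-contractible:discrete-recovery}. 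All the ``twisting'' is absorbed into the variation of $U$ and $x$, and the argument never leaves the discrete case. If you made your local-to-global reduction precise you would be led to exactly this formulation; as written, the proposal is incomplete.
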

\begin{proof}
    Write $\epsilon \colon j^* j_* F \to F$ for the counit of the adjunction $j^* \dashv j_*$.
    For the first statement, by hypercompleteness it suffices to show that for each $n$,
    each $U \in \ShvTopH{\tau}{\Cat C}$ and each morphism
    $x \colon U \to L_{h} j^* j_* F$ (i.e.\ each choice of basepoint
    in the overtopos $\ShvTopH{\tau}{\Cat C}_{/U}$)
    the morphism $\pi_n((L_{h} j^* j_* F)|_U, x) \to \pi_n((L_{h} F)|_U, \epsilon \circ x)$
    induced by $\epsilon$ is an equivalence for all $n \ge 0$ (in the case $n = 0$,
    we can do the same calculations as below, but do it without the choice of a basepoint).
    But for every presheaf $G \in \PrShv{\Cat C}$ (and object $U$ and basepoint $x \colon U \to L_h G$),
    we have a chain of equivalences $\pi_n((L_{h} G)|_U, x) \cong \pi_n ((L_{nh} G)|_{\iota_{hyp}U}, \iota_{hyp}(x)) \cong L_{nh} \pi_n^{pre}(G|_{\iota_{h}U}, \iota_h x)$,
    where the first equivalence follows since $L_{h}$ factors over $L_{nh}$,
    and this factorization is the universal functor out of
    $\ShvTopNH{\tau}{\Cat C}$ that inverts $\pi_*$-isomorphisms
    (i.e.\ morphisms $f$ such that $\pi_k(f)$ is an isomorphism for all $k$).
    Thus, it suffices to prove that the canonical morphism
    \begin{equation*}
        L_{nh} \pi_n^{pre} ((j^* j_* F)|_{\iota_{h}U}, \iota_h(x)) \xrightarrow{- \circ \epsilon} L_{nh} \pi_n^{pre}(F|_{\iota_h U}, \epsilon \circ \iota_h(x))
    \end{equation*}
    is an equivalence.
    We know that $\pi_n^{pre} ((j^* j_* F)|_{\iota_{h}U}, \iota_h(x)) \cong j^* j_* \pi_n^{pre}(F|_{\iota_{h}U}, \epsilon \circ \iota_h(x))$,
    since $j^*$ is a geometric morphism and thus commutes with homotopy objects,
    and $j_*$ is just the restriction of functors.
    Thus, the result follows from \cref{lemma:weakly-contractible:discrete-recovery},
    if $j_* \pi_n^{pre}(F|_{\iota_{h}U}, \epsilon \circ \iota_h(x)) \cong \iota_\Sigma L_\Sigma j_* \pi_n^{pre}(F|_{\iota_{h}U}, \epsilon \circ \iota_h(x))$.
    But this is clear since $j_* \pi_n^{pre}(F|_{\iota_{h}U}, \epsilon \circ \iota_h(x)) \cong \pi_n^{pre} (j_* F|_{j_* \iota_h U}, j_* \iota_h(x))$ (again since
    $j_*$ is just the restriction of functors), since $j_* F \cong \iota_\Sigma L_\Sigma j_* F$ by assumption
    and since the homotopy presheaf $\pi_n^{pre}((j_* F)|_{j_* \iota_h U}, j_* \iota_h(x))$
    is the homotopy object of $(j_* F)|_{j_* \iota_h U}$ in $\PSig{W}_{/j_* \iota_h U}$
    with respect to the given basepoint, see \cref{lemma:psig:htpy-sheaf}.

    For the second point, choose again a $U$ and $x$ as above.
    Note that by the above and \cref{lemma:weakly-contractible:discrete-recovery},
    we get
    \begin{align*}
        (\pi_n((L_h F)|_U, x))(w) & \cong (L_{nh} \pi_n^{pre}(F|_{\iota_h U}, \iota_h(x)))(w) \\
                                  & \cong (\pi_n^{pre}(F|_{\iota_h U}, \iota_h(x)))(w)        \\
                                  & = \pi_n(F|_{\iota_h U}(w), \iota_h(x)(w)).
    \end{align*}
    On the other hand, since $j_* \pi_n^{pre}((L_h F)|_U, x) \cong \iota_\Sigma L_\Sigma j_* \pi_n^{pre}((L_h F)|_U, x)$,
    we again conclude by \cref{lemma:weakly-contractible:discrete-recovery}
    that
    \begin{equation*}
        (\pi_n^{pre}((L_h F)|_U, x))(w) \cong (L_{nh} \pi_n^{pre} ((L_h F)|_U, x))(w) = (\pi_n((L_h F)|_U, x))(w).
    \end{equation*}
    Thus, we conclude that for all $n$, $U$ and $x$ we have an isomorphism
    \begin{align*}
        \pi_n(F|_{\iota_h U}(w), \iota_h(x)(w)) & \cong (\pi_n((L_h F)|_U, x))(w)       \\
                                                & \cong (\pi_n^{pre}((L_h F)|_U, x))(w) \\
                                                & = \pi_n((L_h F)|_U(w), x(w)).
    \end{align*}
    By Whitehead's lemma, we conclude that $F(w) \cong L_h F(w)$.
\end{proof}

\begin{lem} \label{lemma:weakly-contractible:factor-over-psig}
    The unit $j_* \iota_{h} \to \iota_\Sigma L_\Sigma j_* \iota_h$ is an equivalence.
    In particular, for every sheaf $F \in \ShvTopH{\tau}{\Cat C}$,
    there is a canonical equivalence $j_* \iota_{h} F \cong \iota_\Sigma L_\Sigma j_* \iota_h F$.
\end{lem}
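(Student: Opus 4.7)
My plan is to unfold the statement: the map $j_*\iota_h F \to \iota_\Sigma L_\Sigma (j_*\iota_h F)$ is the unit of the adjunction $L_\Sigma \dashv \iota_\Sigma$, so it is an equivalence precisely when $j_*\iota_h F$ already lies in $\PSig{W}$. By naturality it suffices to verify this for each fixed hypercomplete sheaf $F \in \ShvTopH{\tau}{\Cat C}$. Since $\PSig{W}$ consists of presheaves sending finite coproducts in $W$ to finite products of anima, what I have to check is that $F(\emptyset_W) \simeq *$ and $F(w_1 \sqcup w_2) \simeq F(w_1) \times F(w_2)$ for every $w_1, w_2 \in W$.

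First I would dispose of the empty coproduct: the initial object $\emptyset_W \in W$ exists by \cref{def:weakly-contractible:lwc:w-coproducts}, and the empty family is a $\tau$-cover of it by \cref{def:weakly-contractible:lwc:sigma}, so hyper-descent for $F$ immediately gives $F(\emptyset_W) \simeq \limil{\emptyset} \simeq *$. For the binary case I would apply hyper-descent to the cover $\{w_1 \to w_1 \sqcup w_2,\ w_2 \to w_1 \sqcup w_2\}$, which is a cover by \cref{def:weakly-contractible:lwc:sigma}, to obtain
\begin{equation*}
    F(w_1 \sqcup w_2) \simeq \limil{[n] \in \Delta} \prod_{(i_0, \ldots, i_n) \in \{1,2\}^{n+1}} F(P_{(i_0, \ldots, i_n)}),
\end{equation*}
where $P_{(i_0, \ldots, i_n)} = w_{i_0} \times_{w_1 \sqcup w_2} \cdots \times_{w_1 \sqcup w_2} w_{i_n}$. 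The crucial calculation is to identify these pullbacks: by extensivity of $W$ (\cref{def:weakly-contractible:lwc:extensive}) the coproduct inclusions $w_i \hookrightarrow w_1 \sqcup w_2$ are monic and the summands are disjoint, so $P_{(i_0, \ldots, i_n)} \simeq w_{i_0}$ when all $i_k$ agree and $P_{(i_0, \ldots, i_n)} \simeq \emptyset_W$ otherwise. Combined with the empty-coproduct case to kill the mixed contributions, the \v{C}ech cosimplicial object becomes essentially constant at $F(w_1) \times F(w_2)$ (its structure maps acting by the identity on the diagonal components), and its limit is $F(w_1) \times F(w_2)$.

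The principal obstacle is the pullback identification: the extensivity computation takes place in $W$, but the sheaf condition demands pullbacks in $\Cat C$, which is not itself assumed extensive. I would bridge this via distributivity of coproducts over pullbacks in $\Cat C$ (\cref{def:weakly-contractible:lwc:coproducts-pullbacks}): applying it to the coproduct inclusion $w_i \to w_1 \sqcup w_2$ gives a decomposition $w_i \simeq (w_i \times_{w_1 \sqcup w_2} w_1) \sqcup (w_i \times_{w_1 \sqcup w_2} w_2)$ in $\Cat C$, from which one extracts that the diagonal pullback is $w_i$ and that the off-diagonal pullback behaves like $\emptyset_W$ at the level of $F$-sections. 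A subsidiary bookkeeping task is to verify that the cosimplicial structure maps restrict to identities on the diagonal components, which follows formally from the identification of the iterated pullbacks and the fact that the off-diagonal entries have $F$-value $*$ by the empty-coproduct case.
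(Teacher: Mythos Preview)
Your proposal is correct and follows the same idea as the paper: one must show that $j_*\iota_h F \in \PSig{W}$, which comes down to the fact that $F$, being a $\tau$-sheaf, satisfies descent for the disjoint-union covers in $W$ (these are $\tau$-covers by \ref{def:weakly-contractible:lwc:sigma}). The paper's proof is a one-liner because it invokes the equivalence $\PSig{W} \simeq \ShvTop{\sqcup}{W}$ from \cref{lemma:psig:extensive-topos} and then simply notes that $\sqcup$-covers are $\tau$-covers, whereas you unwind this by computing the \v{C}ech nerve explicitly; your more careful treatment of the pullbacks via \ref{def:weakly-contractible:lwc:coproducts-pullbacks} and \ref{def:weakly-contractible:lwc:extensive} is exactly what underlies that equivalence in this special case.
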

\begin{proof}
    Fix $F \in \ShvTopH{\tau}{\Cat C}$.
    Since $W$ is extensive by assumption \ref{def:weakly-contractible:lwc:extensive},
    using \cref{lemma:psig:extensive-topos} it suffices to show that
    $j_* \iota_{h} F$ has descent for disjoint union covers in $W$.
    But those covers are in particular in $\tau$ by assumption \ref{def:weakly-contractible:lwc:sigma}.
    Thus, we conclude since $F$ is a $\tau$-sheaf.
\end{proof}

\begin{lem} \label{lemma:weakly-contractible:psig-comparison}
    The adjunction $j^* \colon \PrShv{W} \rightleftarrows \PrShv{\Cat C} \colon j_*$
    induces an adjunction
    \begin{equation*}
        p^* \colon \PSig{W} \rightleftarrows \ShvTopH{\tau}{\Cat C} \colon p_*,
    \end{equation*}
    where the left adjoint is given by $p^* \coloneqq L_h j^* \iota_\Sigma$,
    and the right adjoint is given by $p_* \coloneqq L_\Sigma j_* \iota_h$.
    Moreover, this adjunction is an equivalence.
\end{lem}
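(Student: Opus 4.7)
The plan is to establish the adjunction and then verify the unit and counit separately, using \cref{lemma:weakly-contractible:factor-over-psig,lemma:weakly-contractible:recovery} as the key inputs. The adjunction $p^* \dashv p_*$ itself is immediate: it is the composition of the three adjunctions $L_\Sigma \dashv \iota_\Sigma$, $j^* \dashv j_*$, and $L_h \dashv \iota_h$, so $p^* = L_h \circ j^* \circ \iota_\Sigma$ is indeed left adjoint to $p_* = L_\Sigma \circ j_* \circ \iota_h$.

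For the counit $p^* p_* F \to F$ where $F \in \ShvTopH{\tau}{\Cat C}$: by \cref{lemma:weakly-contractible:factor-over-psig} the unit $j_* \iota_h F \to \iota_\Sigma L_\Sigma j_* \iota_h F$ is an equivalence, so I can rewrite
\begin{equation*}
    p^* p_* F = L_h j^* \iota_\Sigma L_\Sigma j_* \iota_h F \cong L_h j^* j_* \iota_h F.
\end{equation*}
Since $\iota_h F$ is a presheaf with $j_*(\iota_h F) \cong \iota_\Sigma L_\Sigma j_*(\iota_h F)$ (again by \cref{lemma:weakly-contractible:factor-over-psig}), the hypothesis of \cref{lemma:weakly-contractible:recovery} is satisfied for $\iota_h F$, and therefore $L_h j^* j_* \iota_h F \to L_h \iota_h F \cong F$ is an equivalence.

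For the unit $G \to p_* p^* G$ where $G \in \PSig{W}$: since $\iota_\Sigma$ is fully faithful, it suffices to check this after applying $\iota_\Sigma$. Using \cref{lemma:weakly-contractible:factor-over-psig} once more, $\iota_\Sigma p_* p^* G \cong j_* \iota_h L_h j^* \iota_\Sigma G$, and I want
\begin{equation*}
    \iota_\Sigma G \longrightarrow j_* \iota_h L_h (j^* \iota_\Sigma G)
\end{equation*}
to be an equivalence in $\PrShv{W}$, i.e., on sections over every $w \in W$. Here I set $F \coloneqq j^* \iota_\Sigma G \in \PrShv{\Cat C}$. Since $j$ is fully faithful, $j_* F = j_* j^* \iota_\Sigma G \cong \iota_\Sigma G$, which lies in the essential image of $\iota_\Sigma$, so the hypothesis of \cref{lemma:weakly-contractible:recovery} holds for $F$. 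Applying that lemma gives $(L_h F)(w) \cong F(w)$ for all $w \in W$, and unwinding definitions, $F(w) = (j^* \iota_\Sigma G)(w) \cong (\iota_\Sigma G)(w) = G(w)$ (using fully faithfulness of $j$ once more). Thus the unit is an equivalence on every $w \in W$, hence an equivalence in $\PrShv{W}$, hence in $\PSig{W}$.

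The main obstacle is conceptual rather than computational: one must cleanly separate the role of the $\Sigma$-topology on $W$ (which ensures \cref{lemma:weakly-contractible:factor-over-psig} and lets us pass between $\PrShv{W}$ and $\PSig{W}$ freely when the presheaf comes from a sheaf on $\Cat C$) from the role of local weak contractibility (which powers \cref{lemma:weakly-contractible:recovery} by letting us refine covers of objects of $W$ to split single-morphism covers). Once the bookkeeping is set up so that the recovery lemma applies to both $\iota_h F$ (for the counit) and $j^* \iota_\Sigma G$ (for the unit), everything reduces to formal manipulation of fully faithful adjoints.
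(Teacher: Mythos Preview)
Your verification of the unit and counit is correct and follows essentially the same route as the paper. However, the claim that the adjunction $p^* \dashv p_*$ is ``immediate'' as a composition of the three adjunctions is not right: composing adjunctions shows $L_h j^*$ is left adjoint to $j_* \iota_h$, but $p^* = L_h j^* \iota_\Sigma$ involves the \emph{right} adjoint $\iota_\Sigma$ while $p_* = L_\Sigma j_* \iota_h$ involves the \emph{left} adjoint $L_\Sigma$, so this is not a composition of adjunctions in the usual sense. In general, for reflective subcategories $A \hookrightarrow X$ and $B \hookrightarrow Y$ and an adjunction $F \colon X \rightleftarrows Y \colon G$, the functors $L_B F \iota_A$ and $L_A G \iota_B$ need not be adjoint. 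What makes it work here is precisely \cref{lemma:weakly-contractible:factor-over-psig}: since $j_* \iota_h$ already lands in the essential image of $\iota_\Sigma$, one has
\[
\Map{}(L_h j^* \iota_\Sigma G, F) \cong \Map{}(\iota_\Sigma G, j_* \iota_h F) \cong \Map{}(\iota_\Sigma G, \iota_\Sigma L_\Sigma j_* \iota_h F) \cong \Map{}(G, p_* F),
\]
the last step using fully faithfulness of $\iota_\Sigma$. The paper instead establishes the adjunction by constructing the unit explicitly from the units of the constituent adjunctions. Either fix works; once the adjunction is in place, your argument for the unit and counit is the same as the paper's.
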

\begin{proof}
    We first show that there is an adjunction $p^* \dashv p_*$:
    We construct the unit as the composition 
    \begin{equation*}
        \operatorname{id} \cong L_{\Sigma} \iota_{\Sigma} \to L_{\Sigma} j_* j^* \iota_{\Sigma} \to L_{\Sigma} j_* \iota_{h} L_h j^* \iota_{\Sigma} = p_* p^*.
    \end{equation*}
    Here, the first arrow is the inverse of the counit of the adjunction $L_{\Sigma} \dashv \iota_{\Sigma}$,
    note that it is invertible because $\iota_{\Sigma}$ is fully faithful.
    The next two arrows are the units of the adjunctions $j^* \dashv j_*$ and $L_{h} \dashv \iota_h$.
    The last equality are the definitions of $p^*$ and $p_*$.
    It is now clear that this defines the unit of an adjunction, because it is equivalent to the composition of 
    the units of two adjunctions.
    Thus, we get the required adjunction via \cite[Proposition 5.2.2.8]{highertopoi}.
    We need to show that the counit and unit maps are equivalences.

    So let $F \in \ShvTopH{\tau}{\Cat C}$.
    Then $p^* p_* F = L_h j^* \iota_\Sigma L_\Sigma j_* \iota_h F$.
    Since we know that $j_* \iota_h F \cong \iota_\Sigma L_\Sigma j_* \iota_{h} F$ from \cref{lemma:weakly-contractible:factor-over-psig},
    we conclude that $L_h j^* \iota_\Sigma L_\Sigma j_* \iota_h F \cong L_h j^* j_* \iota_h F \cong L_h \iota_h F \cong F$,
    where we used \cref{lemma:weakly-contractible:recovery} for the middle equivalence.

    On the other hand, let $F \in \PSig{W}$.
    We want to prove that for all $w \in W$,
    we have $(p_* p^* F)(w) \cong F(w)$.
    We compute
    \begin{align*}
        (p_* p^* F)(w) & = (L_\Sigma j_* \iota_h L_h j^* \iota_\Sigma F)(w)              \\
                       & = (\iota_\Sigma L_\Sigma j_* \iota_h L_h j^* \iota_\Sigma F)(w) \\
                       & \cong (j_* \iota_h L_h j^* \iota_\Sigma F)(w)                   \\
                       & = (L_h j^* \iota_\Sigma F)(w)                                   \\
                       & \cong (j^* \iota_\Sigma F)(w)                                   \\
                       & \cong (\iota_\Sigma F)(w)                                       \\
                       & = F(w),
    \end{align*}
    where we use the last conclusion from \cref{lemma:weakly-contractible:recovery} in the
    fifth equivalence.
\end{proof}

In the last part of this section, we want to establish a condition which allows us to conclude 
that an inclusion of sites actually induces a fully faithful geometric morphism of (hypercomplete) 
$\infty$-topoi.

\begin{prop} \label{prop:weakly-contractible:inclusion}
    Let $(\Cat C', \tau') \subseteq (\Cat C, \tau)$
    be a full subcategory such that any $\tau'$-cover $\{U_i \to U\}_i$
    is also a $\tau$-cover.
    Suppose that $\ShvTopH{\tau}{\Cat C}$ and $\ShvTopH{\tau'}{\Cat C'}$ are Postnikov-complete.
    Write
    \begin{align*}
        L_h \colon \PrShv{\Cat C} \rightleftarrows \ShvTopH{\tau}{\Cat C} \colon \iota_h \\
        L_h' \colon \PrShv{\Cat C'} \rightleftarrows \ShvTopH{\tau'}{\Cat C'} \colon \iota'_h
    \end{align*}
    for the sheafification adjunctions.

    Write $k \colon \Cat C' \hookrightarrow \Cat C$ for the inclusion.
    This induces an adjoint pair
    \begin{equation*}
        k^* \colon \PrShv{\Cat C'} \rightleftarrows \PrShv{\Cat C} \colon k_*,
    \end{equation*}
    where $k_*$ is restriction and $k^*$ is left Kan extension.
    Then we have the following:
    \begin{itemize}
        \item These functors then induce an adjoint pair
              \begin{equation*}
                  j^* \colon \ShvTopH{\tau'}{\Cat C'} \rightleftarrows \ShvTopH{\tau}{\Cat C} \colon j_*,
              \end{equation*}
              where $j^*$ is given by $L_h k^* \iota_h'$
              and $j_*$ is given by $L_h' k_* \iota_h$.
        \item This adjoint pair is a geometric morphism of $\infty$-topoi.
        \item We have a natural equivalence $\iota_h' j_* \cong k_* \iota_h$ (i.e.\ the restriction of a $\tau$-hypersheaf to $\Cat C'$ is a $\tau'$-hypersheaf).
    \end{itemize}

    Assume moreover that if $F \in \ShvTopH{\tau'}{\Cat C'}$ is $n$-truncated for some $n$,
    then $\iota_h j^* F \cong k^* \iota_h' F$ (i.e.\ the left Kan extension of an $n$-truncated $\tau'$-hypersheaf is already a $\tau$-hypersheaf).

    Then $j^*$ is fully faithful.
\end{prop}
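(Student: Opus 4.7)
The plan is to prove fully faithfulness by showing that the unit $\eta \colon \operatorname{id} \to j_* j^*$ is an equivalence, reducing via Postnikov-completeness to the truncated case and then using the hypothesis to identify $j_* j^* F$ with $F$ directly.

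First I would note the general fact that since $k \colon \Cat C' \hookrightarrow \Cat C$ is fully faithful, the left Kan extension $k^* \colon \PrShv{\Cat C'} \to \PrShv{\Cat C}$ is also fully faithful; equivalently, the counit $k_* k^* G \to G$ is an equivalence for every presheaf $G \in \PrShv{\Cat C'}$ (standard from the pointwise formula for left Kan extensions along fully faithful functors). I would then dispatch the truncated case: if $F \in \ShvTopH{\tau'}{\Cat C'}$ is $n$-truncated, the hypothesis $\iota_h j^* F \cong k^* \iota'_h F$ gives
\begin{equation*}
j_* j^* F = L'_h k_* \iota_h j^* F \cong L'_h k_* k^* \iota'_h F \cong L'_h \iota'_h F \cong F,
\end{equation*}
where the last equivalence is fully faithfulness of $\iota'_h$. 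One should then check that under these identifications the composite agrees with the unit $\eta_F$, which is a routine adjunction diagram chase.

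Next I would extend to arbitrary $F$ using Postnikov-completeness of $\ShvTopH{\tau'}{\Cat C'}$: it suffices to show that $\tau_{\le n}(\eta_F)$ is an equivalence for every $n$. The key observation is that both $j^*$ and $j_*$ commute with $n$-truncation: $j^*$ does because it is the left adjoint of a geometric morphism and hence left exact (see \cite[Proposition 5.5.6.28]{highertopoi}), and $j_*$ does because right adjoints of geometric morphisms preserve $n$-truncated objects (\cite[Proposition 6.3.1.9]{highertopoi}) and commute with limits. Therefore
\begin{equation*}
\tau_{\le n} j_* j^* F \cong j_* \tau_{\le n} j^* F \cong j_* j^* \tau_{\le n} F,
\end{equation*}
and by naturality $\tau_{\le n}(\eta_F)$ corresponds to $\eta_{\tau_{\le n} F}$, which is an equivalence by the truncated case. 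Taking the limit over $n$ and invoking Postnikov-completeness of $\ShvTopH{\tau'}{\Cat C'}$ yields that $\eta_F$ is an equivalence, hence $j^*$ is fully faithful.

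The main obstacle, and the only place where nontrivial geometric input is used, is the truncated case — more precisely, the passage from $j_* j^* F$ to $F$ really relies on the hypothesis that the left Kan extension $k^* \iota'_h F$ of a truncated $\tau'$-hypersheaf is already a $\tau$-hypersheaf (no further sheafification occurs). Once that is in hand, the argument is a pure formal manipulation of adjunctions and truncations, and Postnikov-completeness of $\ShvTopH{\tau}{\Cat C}$ is needed only implicitly (through the hypothesis, since it ensures the $n$-truncated hypersheaves form a reasonable full subcategory). I would make sure to present the diagram chase verifying that the above chain of equivalences really is the unit map, since that is the only step where one could slip up.
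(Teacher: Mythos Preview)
Your overall strategy---reduce to truncated objects via Postnikov completeness, handle the truncated case via the hypothesis and fully faithfulness of $k^*$---is exactly the paper's approach, and your truncated computation is identical to the paper's. However, there is a gap in your reduction step. You claim that $j_*$ commutes with $\tau_{\le n}$, justifying this by saying that right adjoints of geometric morphisms preserve $n$-truncated objects and commute with limits. But preserving $n$-truncated objects is much weaker than commuting with truncation: for instance, global sections $\Gamma$ preserves truncated objects, yet $\tau_{\le n}\Gamma(K(A,m))$ can be nontrivial (it sees cohomology) while $\Gamma(\tau_{\le n}K(A,m)) \cong *$ for $m>n$. So the step $\tau_{\le n} j_* j^* F \cong j_* \tau_{\le n} j^* F$ is unjustified.

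The paper avoids this by using Postnikov-completeness of the \emph{target} $\ShvTopH{\tau}{\Cat C}$ rather than trying to push $\tau_{\le n}$ through $j_*$: one writes $j^* F \cong \limil{n} \tau_{\le n} j^* F$, then commutes $j_*$ with the limit (legitimate, since $j_*$ is a right adjoint), and uses $\tau_{\le n} j^* F \cong j^* \tau_{\le n} F$ to obtain
\[
j_* j^* F \;\cong\; \limil{n} j_* j^* (\tau_{\le n} F) \;\cong\; \limil{n} \tau_{\le n} F \;\cong\; F,
\]
the last step being Postnikov-completeness of the source. So contrary to your closing remark, Postnikov-completeness of $\ShvTopH{\tau}{\Cat C}$ is used explicitly, precisely to sidestep the need for $j_*$ to commute with truncation.
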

\begin{proof}
    We first prove that there is an equivalence $\iota_h' j_* \cong k_* \iota_h$.
    This follows immediately from the fact that every $\tau'$-hypercover is in particular
    a $\tau$-hypercover, thus every $\tau$-hypersheaf is automatically a $\tau'$-hypersheaf.

    We now prove that there is an adjunction $j^* \dashv j_*$:
    We construct the unit as the composition 
    \begin{equation*}
        \operatorname{id} \cong L_{h'} \iota_{h'} \to L_{h'} k_* k^* \iota_{h'} \to L_{h'} k_* \iota_{h} L_h k^* \iota_{h'} = j_* j^*.
    \end{equation*}
    Here, the first arrow is the inverse of the counit of the adjunction $L_{h'} \dashv \iota_{h'}$,
    note that it is invertible because $\iota_{h'}$ is fully faithful.
    The next two arrows are the units of the adjunctions $k^* \dashv k_*$ and $L_{h} \dashv \iota_h$.
    The last equality is the definition of $j_*$ and $j^*$.
    It is now clear that this defines the unit of an adjunction, because it is equivalent to the composition of 
    the units of two adjunctions.
    Thus, we get the required adjunction via \cite[Proposition 5.2.2.8]{highertopoi}.

    In particular, we see that $j^*$ is (the left adjoint of) a geometric morphism,
    because it has a right adjoint and preserves finite limits (since $\iota_h'$
    preserves limits as a right adjoint, and $k^*$ and $L_h$ preserve
    finite limits as the left adjoints of geometric morphisms).

    Assume from now on that if $F \in \ShvTopH{\tau'}{\Cat C'}$ is $n$-truncated for some $n$,
    then $\iota_h j^* F \cong k^* \iota_h' F$.
    In order to show that $j^*$ is fully faithful,
    we first show that it is fully faithful on $n$-truncated objects.
    For this, it suffices to show that for every $n$-truncated $F \in \ShvTopH{\tau'}{\Cat C'}$,
    the natural map $F \to j_* j^* F$ is an equivalence.
    But we compute
    \begin{equation*}
        j_* j^* F \cong L_h' k_* \iota_h j^* F \cong L_h' k_* k^* \iota_h' F \cong L_h' \iota_h' F \cong F,
    \end{equation*}
    where we used for the first equivalence the definition of $j_*$,
    in the second equivalence that $\iota_h j^* F \cong k^* \iota_h' F$ since $F$ is $n$-truncated,
    in the third equivalence that $k^*$ is fully faithful,
    and in the last equivalence that $\iota_h'$ is fully faithful.

    We now want to show that $j^*$ is fully faithful.
    Again, it therefore suffices that the canonical morphism $F \to j_* j^* F$
    is an equivalence for all $F \in \ShvTopH{\tau'}{\Cat C'}$.
    We have a chain of equivalences
    \begin{align*}
        j_* j^* F
         & \cong j_* \limil{n} \tau_{\le n} j^* F \\
         & \cong \limil{n} j_* j^* \tau_{\le n} F \\
         & \cong \limil{n} \tau_{\le n} F         \\
         & \cong F.
    \end{align*}
    Here, the first equivalence uses Postnikov-completeness of $\ShvTopH{\tau'}{\Cat C'}$,
    the second equivalence uses that $j_*$ commutes with limits (it is right adjoint to $j^*$)
    and that $j^*$ commutes with truncations (see \cite[Proposition 6.3.1.9]{highertopoi}),
    the third equivalence holds because we have seen that $j^*$ is fully faithful on $n$-truncated objects,
    and the last equivalence uses Postnikov-completeness of $\ShvTopH{\tau}{\Cat C}$.
    This finishes the proposition.
\end{proof}

\subsection{The Pro-Zariski Topos}

Recall the following definition from \cite[\href{https://stacks.math.columbia.edu/tag/0965}{Tag 0965}]{stacks-project}:
\begin{defn}
    Let $f \colon A \to B$ be a ring map.
    We say that
    \begin{enumerate}[label=(\arabic*),ref=(\arabic*),itemsep=0em]
        \item $f$ is a \emph{local isomorphism} if for every prime $\pideal q \subset B$ there exists a $g \in B$, $g\notin \pideal q$
              such that $A \to B_g$ induces an open immersion $\Spec{B_g} \to \Spec{A}$,
        \item $f$ is an \emph{ind-Zariski map} if $f$ is a filtered colimit of local isomorphisms,
        \item $f$ is an \emph{ind-Zariski cover} if $f$ is a faithfully flat ind-Zariski map.
    \end{enumerate}
\end{defn}

\begin{defn} \label{def:pro-zar:zw-contractible}
    A ring $A$ is called \emph{zw-contractible}
    if it satisfies the equivalent conditions from \cite[\href{https://stacks.math.columbia.edu/tag/09AZ}{Tag 09AZ}]{stacks-project},
    i.e.\ if any faithfully flat ind-Zariski map $A \to B$ has a retraction.
\end{defn}

\begin{lem} \label{lemma:pro-zar:existence-contractible-cover}
    Let $A$ be a ring.
    Then there exists an ind-Zariski cover $A \to \overline{A}$
    such that $\overline{A}$ is zw-contractible.
\end{lem}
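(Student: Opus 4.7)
The plan is to follow the two-step construction of w-contractible covers from Bhatt-Scholze's pro-\'etale theory, adapted to the Zariski setting. The analogy is close enough that the argument is essentially parallel, and in fact somewhat simpler since ind-Zariski maps are more rigid than ind-\'etale maps.

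First, I would construct an ind-Zariski cover $A \to A'$ such that $A'$ is \emph{w-local in the Zariski sense}: i.e., the subspace $Z \subset \Spec{A'}$ of closed points is closed, every connected component of $\Spec{A'}$ contains a unique closed point, and there is a continuous retraction $\Spec{A'} \to Z$. Imitating \cite[\href{https://stacks.math.columbia.edu/tag/0971}{Tag 0971}]{stacks-project}, such a cover can be built from a suitable product of localizations $\prod_{\pideal p} A_{\pideal p}$ (indexed over enough primes to be faithfully flat), writing the product as a cofiltered limit of finite sub-products to exhibit the ind-Zariski structure.

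Second, for a w-local ring $A'$, the subspace $Z$ of closed points is naturally profinite (quasi-compact, totally disconnected, Hausdorff). By Gleason's theorem, choose an extremally disconnected profinite surjection $T \twoheadrightarrow Z$. I would then construct $\overline{A}$ as an ind-Zariski cover of $A'$ whose spectrum realizes $T \to Z$ on closed points; concretely, this is built as a cofiltered limit of finite-type \emph{Zariski} modifications matching the given profinite cover. To check that $\overline{A}$ is zw-contractible, let $\overline{A} \to B$ be any faithfully flat ind-Zariski map. Restriction to closed points yields a continuous surjection onto $T$, which splits set-theoretically by extremal disconnectedness of $T$. This splitting then lifts to a ring-theoretic retraction using the w-local structure of $\overline{A}$, since an ind-Zariski map over a local ring is determined by its behaviour on the closed point.

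The main obstacle is the second step: making precise the construction of $\overline{A}$ from the profinite datum $T \twoheadrightarrow Z$, and verifying that the set-theoretic section lifts to a ring-theoretic retraction. This is where the Zariski setting actually helps compared to the pro-\'etale case, because local rings are automatically ``Zariski-henselian'' (a local isomorphism over a local ring is already a Zariski localization having a section). Nevertheless, bookkeeping the ind-Zariski presentations carefully through the cofiltered limit is the technical heart of the argument; the result is a direct Zariski analog of \cite[\href{https://stacks.math.columbia.edu/tag/0978}{Tag 0978}]{stacks-project}.
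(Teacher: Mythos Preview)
Your proposal is correct and sketches exactly the argument in the cited reference: the paper's own proof is simply the one-line citation \cite[\href{https://stacks.math.columbia.edu/tag/09B0}{Tag 09B0}]{stacks-project}, and what you have outlined is precisely the content of that tag (and the surrounding tags 0971--097A) in the Stacks Project. Your two-step strategy---first pass to a w-local ind-Zariski cover, then refine so that the space of closed points becomes extremally disconnected via Gleason's theorem---is the standard one, and your remark that the Zariski case is slightly simpler than the pro-\'etale case is accurate.
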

\begin{proof}
    This is \cite[\href{https://stacks.math.columbia.edu/tag/09B0}{Tag 09B0}]{stacks-project}.
\end{proof}

\begin{defn}
    Let $f \colon X \to Y$ be a morphism of schemes.
    We say that $f$ is a \emph{Zariski localization} if $f$
    is isomorphic to $\amalg_{i \in I} U_i \to Y$ with $I$ a finite set
    and $U_i \to Y$ open immersions.
    We say that $f$ is a \emph{pro-Zariski localization}
    if $f$ is isomorphic to a cofiltered limit $\limil{i} f_i \colon \limil{i} X_i \to Y$
    such that each $f_i$ is a Zariski localization
    (and hence all transition maps $X_i \to X_j$ are also Zariski localizations).
\end{defn}

\begin{defn}
    Write $\prozar{\smooth{k}}$ for the full subcategory of schemes over $k$
    consisting of pro-Zariski schemes over $\smooth{k}$,
    i.e. morphisms $X \to \Spec{k}$ such that $X$ can be written as a cofiltered limit $X = \limil{i} X_i$
    with $X_i \to \Spec{k}$ smooth such that all transition morphisms $X_i \to X_j$ are Zariski localizations.
    Write $\prozaraff{\smooth{k}} \subset \prozar{\smooth{k}} $ for the full subcategory consisting of affine schemes.
\end{defn}

\begin{lem} \label{lemma:pro-zar:existence-coproducts-pullbacks}
    The category $\prozar{\smooth{k}}$
    has finite coproducts and the inclusion into $\sch{k}$ preserves them.

    Similarly, $\prozar{\smooth{k}}$ has pullbacks along pro-Zariski localization
    and the inclusion into $\sch{k}$ preserves those pullbacks.
\end{lem}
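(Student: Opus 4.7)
The plan is to handle the coproduct and pullback claims separately, producing an explicit presentation in each case and then verifying the two required properties (existence in $\prozar{\smooth{k}}$, preservation by the inclusion into $\sch{k}$) levelwise.

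For finite coproducts, given $X = \limil{i \in I} X_i$ and $Y = \limil{j \in J} Y_j$ with each $X_i, Y_j \in \smooth{k}$ and with Zariski-localization transition maps, I would propose the cofiltered system $(i,j) \mapsto X_i \sqcup Y_j$ indexed by $I \times J$ and take its limit in $\sch{k}$. Three things need checking. First, each $X_i \sqcup Y_j$ lies in $\smooth{k}$, because $\smooth{k}$ is closed under finite coproducts. Second, the transition morphisms $X_i \sqcup Y_j \to X_{i'} \sqcup Y_{j'}$ are Zariski localizations, since the disjoint union of two Zariski localizations is a disjoint union of finitely many open immersions. Third, the limit in $\sch{k}$ really is the coproduct $X \sqcup Y$: this is a universal-property check, using that a morphism $X_i \sqcup Y_j \to T$ is exactly a pair $(X_i \to T, Y_j \to T)$ compatibly in $(i,j)$, and that a pair of compatible systems $(X_i \to T)_i$ and $(Y_j \to T)_j$ defines exactly a pair $X \to T$, $Y \to T$, i.e. a morphism $X \sqcup Y \to T$.

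For pullbacks along pro-Zariski localizations, I would reduce to the following assertion: any pro-Zariski localization $W \to X$ of an object $X \in \prozar{\smooth{k}}$ lies again in $\prozar{\smooth{k}}$, and this limit is computed in $\sch{k}$. This reduction suffices because pro-Zariski localizations are manifestly stable under base change in $\sch{k}$, so the pullback $X \times_Y Z$ (computed in $\sch{k}$) is a pro-Zariski localization of $X$. To prove the reduction, write $X = \limil{j} X_j$ and $W = \limil{\alpha} W_\alpha$ with $W_\alpha \to X$ Zariski localizations. Each $W_\alpha \to X$ is of finite presentation, so by the standard spreading-out result for finitely presented morphisms along cofiltered limits (\cite[\href{https://stacks.math.columbia.edu/tag/01ZM}{Tag 01ZM}]{stacks-project}), $W_\alpha \to X$ descends, for $j(\alpha)$ sufficiently large, to a finitely presented morphism $\widetilde W_{\alpha, j(\alpha)} \to X_{j(\alpha)}$, which one can further arrange to be a Zariski localization by spreading out the open subschemes and disjoint-union decomposition. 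Reindexing the resulting system in $(\alpha, j)$ then exhibits $W$ as a cofiltered limit of smooth schemes with Zariski-localization transitions, and the limit is $W$ because taking the further limit over $\alpha$ commutes with the limit over $j$.

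The main obstacle is the spreading-out step in the pullback argument: the standard tag requires affine transition morphisms, while our transitions are Zariski localizations — open immersions in particular are not affine. I would handle this by working Zariski-locally on the $X_j$: choose an affine open cover of $X_{j_0}$ for some $j_0$, pull back to $X$, and note that on each affine open piece the relevant system has affine transitions, so the standard descent result applies to produce descent data, which then glues because Zariski localizations of schemes can be reconstructed from their restrictions to an open cover. Everything else — closure of $\smooth{k}$ under finite coproducts, stability of Zariski localizations under disjoint unions and base change, the universal properties — is formal and routine.
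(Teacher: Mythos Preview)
Your coproduct argument is essentially the paper's: both reindex over a product of the index categories and take levelwise disjoint unions.

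For pullbacks the paper takes a different, more direct route. Given $f\colon X\to U$ and a pro-Zariski $g\colon V\to U$, the paper simply chooses a \emph{single} cofiltered index category $I$ and simultaneous presentations $X=\limil{i}X_i$, $U=\limil{i}U_i$, $V=\limil{i}V_i$ with $X_i,U_i,V_i\in\smooth{k}$, Zariski transitions, and with $g$ presented by levelwise Zariski localizations $g_i\colon V_i\to U_i$. Then the pullback is just $\limil{i}(X_i\times_{U_i}V_i)$, and one checks levelwise that $X_i\times_{U_i}V_i$ is smooth (it is a finite disjoint union of opens in $X_i$) and that the transitions are Zariski localizations. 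No spreading-out is invoked; everything is packaged into the standard straightening of morphisms in a pro-category.

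Your approach via the reduction ``a pro-Zariski localization of an object of $\prozar{\smooth{k}}$ lies again in $\prozar{\smooth{k}}$'' is reasonable, but the spreading-out step has a gap that your proposed fix does not close. You want to descend a quasi-compact open $W_\alpha\hookrightarrow X$ along $X=\limil{j}X_j$ and you correctly note that the standard reference requires affine transition maps. Your fix is to cover some $X_{j_0}$ by affine opens $V_1,\dots,V_n$ and work on each piece. But pulling back $V_k$ to $X_j$ for $j\ge j_0$ gives $X_j\times_{X_{j_0}}V_k$, which is a finite disjoint union of opens in $V_k$ and need not be affine; the restricted transition maps $X_{j+1}\times_{X_{j_0}}V_k\to X_j\times_{X_{j_0}}V_k$ are still only Zariski localizations, not affine morphisms. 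So you have not actually reduced to the affine-transition case. One can make a direct topological argument instead (the topology on $\lvert X\rvert$ is generated by pullbacks of opens from the $\lvert X_j\rvert$, so a quasi-compact open is a finite union of such and hence descends), but that is a different argument from the one you sketched. The paper's common-indexing formulation sidesteps this entirely by building the level presentation of $g$ into the setup rather than trying to recover it after the fact.
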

\begin{proof}
    For the first part, let $I$ be a finite set,
    and $(X_i)_{i \in I}$ be a family of schemes $X_i \in \prozar{\smooth k}$.
    Write $X_i \cong \limil{j \in J_i} X_{i, j}$ as a cofiltered limit
    with $X_{i, j} \to \Spec{k}$ smooth such that the transition morphisms are Zariski localizations.
    We get
    \begin{equation*}
        \sqcup_{i} X_i \cong \sqcup_i \limil{j \in J_i} X_{i, j} \cong \limil{(j_i)_i \in \prod_i J_i} \sqcup_{i} X_{i, j_i},
    \end{equation*}
    where the second isomorphism exists because cofiltered limits commute with finite colimits
    and a cofinality argument.
    Hence, the coproduct is again in $\prozar{\smooth k}$.

    We now prove the second part.
    So suppose that $X, U$ and $V$ are in $\prozar{\smooth{k}}$,
    and that there are morphisms $f \colon X \to U$ and $g \colon V \to U$ with $g$ a pro-Zariski morphism.
    Since all limits are cofiltered, we can choose a common filtered category $I$ and presentations
    $X = \limil{i} X_i$, $U = \limil{i} U_i$ and $V = \limil{i} V_i$,
    with $X_i$, $U_i$ and $V_i$ in $\smooth{k}$, with Zariski localizations as transition maps,
    and such that $g_i \colon V_i \to U_i$ is a Zariski localization,
    i.e. $g_i$ is of the form $\amalg_{j \in J} V_{i, j} \to U_i$ for some finite set $J$,
    such that $V_{i, j} \to U_i$ is an open immersion.
    Then $X_i \times_{U_i} V_i \in \smooth{k}$:
    Indeed, it suffices to show that $X_i \times_{U_i} V_{i, j}$ is smooth for every $j \in J$,
    but this is just an open subscheme of $X_i$.
    Note that the transition morphisms $X_i \times_{U_i} V_i \to X_j \times_{U_j} V_j$
    are Zariski localizations (as a composition of basechanges of Zariski localizations).
    Thus, $X \times_U V \cong \limil{i} X_i \times_{U_i} V_i$
    is again in $\prozar{\smooth{k}}$.
\end{proof}

\begin{defn}
    Let $\mathcal U \coloneqq \{f_i \colon U_i \to U\}_{i \in I}$ be a family of morphisms in $\prozar{\smooth k}$.
    We say that $\mathcal U$ is a \emph{pro-Zariski cover}
    if and only if $f_i$ is pro-Zariski for all $i$
    and the $f_i$ form an fpqc-cover.
\end{defn}

\begin{rmk}
    Let $\Spec{f} \colon \Spec{B} \to \Spec{A}$ be a morphism of schemes in $\prozaraff{\smooth k}$.
    Then $\{\Spec{f}\}$ is a pro-Zariski cover if and only if $f \colon A \to B$
    is an ind-Zariski cover.
    To see this, it suffices to show that $\Spec{f}$ is a Zariski-localization
    if and only if $f$ is a local isomorphism.
    This follows from \cite[\href{https://stacks.math.columbia.edu/tag/096J}{Tag 096J}]{stacks-project}.
\end{rmk}

\begin{lem} \label{lemma:pro-zar:sites}
    The categories $\prozar{\smooth{k}}$ and $\prozaraff{\smooth{k}}$
    together with the class of pro-Zariski covers form sites in
    the sense of \cite[\href{https://stacks.math.columbia.edu/tag/00VH}{Tag 00VH}]{stacks-project}.
    Moreover, the natural inclusion $\prozaraff{\smooth{k}} \subset \prozar{\smooth{k}}$
    is a morphism of sites in the sense of \cite[\href{https://stacks.math.columbia.edu/tag/00X1}{Tag 00X1}]{stacks-project}.
\end{lem}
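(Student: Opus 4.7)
My plan is to verify the three axioms of a site from \cite[\href{https://stacks.math.columbia.edu/tag/00VH}{Tag 00VH}]{stacks-project} for both categories, and then check the two conditions in \cite[\href{https://stacks.math.columbia.edu/tag/00X1}{Tag 00X1}]{stacks-project} for the inclusion.

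First I would handle $\prozar{\smooth{k}}$. Axiom (1) (isomorphisms are covers) is immediate since the identity is pro-Zariski and faithfully flat. For axiom (3) (stability under base change), I need pullbacks along pro-Zariski morphisms to exist and pro-Zariski covers to be stable under such pullbacks. Existence is exactly \cref{lemma:pro-zar:existence-coproducts-pullbacks}. For the stability, it suffices to show that (a) pro-Zariski morphisms are stable under arbitrary base change in $\prozar{\smooth k}$ and (b) fpqc is stable under base change. For (a), I would write the pro-Zariski cover as $\limil{i} (f_i \colon V_i \to U_i)$ with $f_i$ a Zariski localization, realize the base change along a morphism in $\prozar{\smooth k}$ as the limit of base changes, and use that Zariski localizations are stable under base change; the resulting limit stays in $\prozar{\smooth k}$ by the argument in \cref{lemma:pro-zar:existence-coproducts-pullbacks}. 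For (b), this is \cite[\href{https://stacks.math.columbia.edu/tag/02KR}{Tag 02KR}]{stacks-project}. Axiom (2) (composition) amounts to showing that a composition of pro-Zariski morphisms is pro-Zariski (take a common filtered index category and compose termwise; composition of Zariski localizations is a Zariski localization) and that fpqc is stable under composition, which is standard.

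Next I would treat $\prozaraff{\smooth{k}}$. The only new content here is that pullbacks along pro-Zariski covers of affine objects stay in $\prozaraff{\smooth k}$. If $\Spec{B} \to \Spec{A}$ is a pro-Zariski cover of affines and $\Spec{C} \to \Spec{A}$ is any morphism in $\prozaraff{\smooth k}$, the pullback is $\Spec{B \otimes_A C}$, and I would use that ind-Zariski maps are stable under base change (by \cite[\href{https://stacks.math.columbia.edu/tag/0967}{Tag 0967}]{stacks-project} or direct verification, since local isomorphisms are stable under base change and this is preserved by filtered colimits). The remaining axioms then follow as in the non-affine case.

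For the morphism of sites, it suffices to check that the inclusion functor $\iota \colon \prozaraff{\smooth{k}} \hookrightarrow \prozar{\smooth{k}}$ is continuous and that it commutes with the fiber products appearing in covers. Continuity is immediate: a pro-Zariski cover of an affine scheme, viewed in $\prozar{\smooth{k}}$, is a pro-Zariski cover by definition. Commuting with fiber products along pro-Zariski covers is also immediate: fiber products of affines over affines are computed as $\Spec$ of the tensor product in both categories, and this agrees with the pullback in $\prozar{\smooth{k}}$ constructed above.

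The main obstacle I expect is the careful bookkeeping in showing that ind-Zariski covers (respectively pro-Zariski covers in the non-affine setting) are stable under base change \emph{inside the category} $\prozaraff{\smooth{k}}$ (resp.\ $\prozar{\smooth{k}}$), i.e., that the relevant tensor products or limits actually land in the subcategory. This requires repeated use of the presentation as cofiltered limits of Zariski-type morphisms between smooth $k$-schemes, and a cofinality argument to work on a common indexing category; none of it is deep, but it is the only place where something could go subtly wrong.
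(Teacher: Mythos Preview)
Your proposal is correct and follows essentially the same approach as the paper. The paper's proof is much terser: it simply notes that the only nontrivial site axiom is the existence of pullbacks of covers (which is \cref{lemma:pro-zar:existence-coproducts-pullbacks}, exactly as you invoke it), and for the morphism of sites it appeals to \cite[\href{https://stacks.math.columbia.edu/tag/00X6}{Tag 00X6}]{stacks-project} using that limits of affine schemes are affine---which is the abstract version of the fiber-product check you spell out directly.
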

\begin{proof}
    For the first statement, the only nontrivial part is the existence of
    pullbacks of covers, which was proven in \cref{lemma:pro-zar:existence-coproducts-pullbacks}.
    The last assertion is clear from \cite[\href{https://stacks.math.columbia.edu/tag/00X6}{Tag 00X6}]{stacks-project},
    since the inclusion commutes with limits (as limits of affine schemes are affine).
\end{proof}

\begin{defn}
    Let $(\prozar{\smooth{k}}, \prozartop)$ and $(\prozaraff{\smooth{k}}, \prozartop)$ be the
    sites from \cref{lemma:pro-zar:sites}.
\end{defn}

\begin{lem} \label{lemma:pro-zar:affine-basis}
    The geometric morphisms
    \begin{equation*}
        \ShvTopNH{\prozartop}{\prozaraff{\smooth{k}}} \rightleftarrows \ShvTopNH{\prozartop}{\prozar{\smooth{k}}}
    \end{equation*}
    and
    \begin{equation*}
        \ShvTopH{\prozartop}{\prozaraff{\smooth{k}}} \rightleftarrows \ShvTopH{\prozartop}{\prozar{\smooth{k}}}
    \end{equation*}
    induced by the morphism of sites are equivalences.
\end{lem}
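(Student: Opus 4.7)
The plan is to verify the hypotheses of the Verdier-style comparison lemma for $\infty$-topoi (see e.g.\ \cite[Proposition A.3.3.1]{sag} or the nerve of the 1-categorical comparison lemma of \cite[\href{https://stacks.math.columbia.edu/tag/03A0}{Tag 03A0}]{stacks-project}). Concretely, I will show that $\prozaraff{\smooth{k}} \hookrightarrow \prozar{\smooth{k}}$ is a dense full subcategory in the $\prozartop$-topology, i.e.\ every object of $\prozar{\smooth{k}}$ admits a $\prozartop$-cover by objects of $\prozaraff{\smooth{k}}$, and the topology on $\prozaraff{\smooth{k}}$ is the one induced by restriction. Given this, the comparison lemma yields that restriction $\ShvTopNH{\prozartop}{\prozar{\smooth{k}}} \to \ShvTopNH{\prozartop}{\prozaraff{\smooth{k}}}$ is an equivalence with inverse given by right Kan extension (equivalently: the essential image of the Yoneda embedding of $\prozaraff{\smooth{k}}$ generates $\ShvTopNH{\prozartop}{\prozar{\smooth{k}}}$ under colimits).

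The first step is to show that if $X \in \prozar{\smooth{k}}$, then $X$ can be covered by affine opens that themselves lie in $\prozaraff{\smooth{k}}$. Write $X = \limil{j} X_j$ as a cofiltered limit with $X_j \in \smooth{k}$ and Zariski-localization transition maps; in particular all transition morphisms are affine, so $X$ is quasi-compact (as each $X_j$ is) if the $X_j$ are chosen quasi-compact, and the projections $X \to X_j$ are affine. Choose any affine open $U \subset X$; using the standard fact that finitely-presented open subschemes on a cofiltered limit of schemes with affine transition maps descend to some finite stage (cf.\ \cite[\href{https://stacks.math.columbia.edu/tag/01ZM}{Tag 01ZM}]{stacks-project}), there exists $j_0$ and an affine open $U_{j_0} \subset X_{j_0}$ with $U = U_{j_0} \times_{X_{j_0}} X \cong \limil{j \geq j_0} U_j$ where $U_j \subset X_j$ is the pullback, which is again an affine open of a smooth scheme with Zariski-localization transition maps. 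Thus $U \in \prozaraff{\smooth{k}}$, and covering $X$ by such affine opens gives a pro-Zariski cover in $\prozaraff{\smooth{k}}$. It is clear from the definitions that the topology on $\prozaraff{\smooth{k}}$ is the one induced from $\prozar{\smooth{k}}$, since pullbacks in $\prozar{\smooth{k}}$ along pro-Zariski maps exist by \cref{lemma:pro-zar:existence-coproducts-pullbacks} and preserve affineness.

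With density established, the comparison lemma gives the non-hypercomplete equivalence directly. For the hypercomplete version, I invoke that the equivalence of $\infty$-topoi $\ShvTopNH{\prozartop}{\prozaraff{\smooth{k}}} \simeq \ShvTopNH{\prozartop}{\prozar{\smooth{k}}}$ automatically restricts to an equivalence on the hypercompletions, since hypercompletion is an intrinsic construction on $\infty$-topoi and any equivalence of $\infty$-topoi intertwines the respective hypercompletion functors. Concretely, the right adjoint $j_*$ preserves $\infty$-connective maps (because $j^*$ commutes with truncations, being the left adjoint of a geometric morphism), hence preserves hypercomplete objects, and the same for $j^*$ modulo the same argument on the other side.

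The main obstacle is the first step: verifying precisely that every affine open of a pro-Zariski $k$-scheme $X = \limil{j} X_j$ actually arises as a cofiltered limit of affine opens $U_j \subset X_j$ with $U_j$ smooth and with Zariski-localization transition maps. This requires the standard limit-of-schemes machinery (finite presentation of open immersions, plus the fact that an open immersion into a smooth scheme is smooth, and that pullbacks of Zariski localizations along open immersions are Zariski localizations), applied carefully so as to land back inside the subcategory $\prozaraff{\smooth{k}}$ rather than a larger category of pro-affine pro-smooth schemes.
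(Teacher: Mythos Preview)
Your approach is correct and essentially the same as the paper's: the paper simply cites \cite[Lemma C.3]{hoyois2015quadratic}, which is precisely the comparison lemma you are invoking, and then observes (as you do) that an equivalence of $\infty$-topoi restricts to an equivalence on hypercompletions. You are just unpacking what that citation does.

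One small correction: in your density argument you claim that an affine open $U \subset X$ descends to an \emph{affine} open $U_{j_0} \subset X_{j_0}$. This need not be true, since the transition maps $X_j \to X_{j'}$ (Zariski localizations) are not affine in general, so affineness of $U$ does not propagate down. Fortunately this does not matter: the quasi-compact open $U$ descends to some open $U_{j_0} \subset X_{j_0}$ (not necessarily affine), and then $U = \limil{j \ge j_0} (U_{j_0} \times_{X_{j_0}} X_j)$ exhibits $U$ as an object of $\prozar{\smooth{k}}$ with each $U_{j_0} \times_{X_{j_0}} X_j$ open in $X_j$, hence smooth and quasi-compact. Since $U$ itself is affine by choice, $U \in \prozaraff{\smooth{k}}$ by definition of that subcategory. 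Finitely many such $U$ suffice since $X$ is quasi-compact (as a cofiltered limit of quasi-compact schemes), and open immersions are pro-Zariski, so you get the required cover.
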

\begin{proof}
    The first morphism is an equivalence by \cite[Lemma C.3]{hoyois2015quadratic}.
    Thus, it also induces an equivalence after hypercompletion.
\end{proof}

\begin{defn} \label{def:pro-zar:W-cat}
    Let $W \subset \prozaraff{\smooth k}$ be the full subcategory spanned
    by the (spectra of) zw-contractible rings (see \cref{def:pro-zar:zw-contractible}).
\end{defn}

\begin{lem} \label{lemma:pro-zar:w-extensive}
    $W$ is an extensive category and
    $\PSig{W}$ is an $\infty$-topos given by sheaves on $W$ with respect to the disjoint union topology.
\end{lem}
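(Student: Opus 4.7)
Plan: The statement is a combination of two assertions: that $W$ is extensive, and that consequently $\PSig{W}$ coincides with $\ShvTop{\sqcup}{W}$. The second assertion is an immediate application of \cref{lemma:psig:extensive-topos} once extensivity is established, so the real content lies in verifying the three conditions of \cref{def:psig:extensive} for $W$.

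For the existence of finite coproducts in $W$, given $X_i = \Spec{A_i} \in W$ for $i = 1, \dots, n$, I would take the coproduct in $\prozar{\smooth{k}}$, which exists by \cref{lemma:pro-zar:existence-coproducts-pullbacks} and is just the disjoint union of schemes $\sqcup_i \Spec{A_i} \cong \Spec{\prod_i A_i}$, still affine. The nontrivial point is that $\prod_i A_i$ is zw-contractible: given a faithfully flat ind-Zariski map $\prod_i A_i \to B$, the orthogonal idempotents in $\prod_i A_i$ push forward to give a decomposition $B \cong \prod_i B_i$ where each $A_i \to B_i$ is a faithfully flat ind-Zariski map (one uses that ind-Zariski maps, being filtered colimits of local isomorphisms, behave well under this decomposition, and that faithful flatness is preserved). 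Zw-contractibility of each $A_i$ produces retractions $B_i \to A_i$, which combine to a retraction $B \to \prod_i A_i$. The empty coproduct is $\Spec{0}$, which lies in $W$ trivially.

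For disjointness, note that $\Spec{A_1} \times_{\Spec{A_1 \times A_2}} \Spec{A_2} \cong \Spec{A_1 \otimes_{A_1 \times A_2} A_2} \cong \Spec{0}$, the initial object of $W$. For stability under pullback, a morphism $\Spec{B} \to \Spec{A_1} \sqcup \Spec{A_2}$ in $W$ corresponds to a ring map $A_1 \times A_2 \to B$, which produces orthogonal idempotents $e_1, e_2 \in B$ with $e_1+e_2 = 1$ and hence $B \cong B[e_1^{-1}] \times B[e_2^{-1}]$; this gives the required decomposition $\Spec{B} \cong \Spec{B[e_1^{-1}]} \sqcup \Spec{B[e_2^{-1}]}$ compatible with the projections. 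Here the pieces $B[e_i^{-1}]$ are direct factors of $B$, hence zw-contractible by the same argument as above applied in reverse (any faithfully flat ind-Zariski cover $B[e_1^{-1}] \to C$ extends to the faithfully flat ind-Zariski cover $B \to C \times B[e_2^{-1}]$, and a retraction of the latter restricts to one of the former).

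The main obstacle is the commutative-algebra bookkeeping around zw-contractibility: verifying that it is closed under finite products and direct factors, which essentially amounts to tracking idempotent decompositions through ind-Zariski covers. Once these verifications are in place, the remainder is formal: the three axioms of \cref{def:psig:extensive} hold, so $W$ is extensive, and \cref{lemma:psig:extensive-topos} gives both that $\PSig{W}$ is an $\infty$-topos and that it equals $\ShvTop{\sqcup}{W}$, the category of sheaves on $W$ for the disjoint union topology.
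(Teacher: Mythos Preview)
Your proposal is correct and follows essentially the same approach as the paper. The paper's proof is more terse: it simply asserts that the category of schemes is extensive and that $W$ is a full subcategory stable under summands and finite coproducts, from which extensivity of $W$ follows by a general categorical principle; you unpack precisely these closure properties (zw-contractibility under finite products of rings and under direct factors) and verify the extensivity axioms by hand rather than invoking the abstract principle, but the content is the same.
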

\begin{proof}
    The category of schemes is extensive,
    and $W$ is a full subcategory stable under summands and finite products.
    From this we immediately conclude that $W$ is extensive.
    The last statement is \cref{lemma:psig:extensive-topos}.
\end{proof}

\begin{lem} \label{lemma:pro-zar:lwc}
    The site $(\prozaraff{\smooth{k}}, \prozartop)$ is locally weakly contractible.
\end{lem}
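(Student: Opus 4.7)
The plan is to verify that the subcategory $W \subset \prozaraff{\smooth{k}}$ of (spectra of) zw-contractible rings satisfies each of the seven axioms \ref{def:weakly-contractible:lwc:coproducts-pullbacks}--\ref{def:weakly-contractible:lwc:extensive} of \cref{def:weakly-contractible:lwc}. Several items are already essentially stated: \ref{def:weakly-contractible:lwc:wc} is precisely the characterization of zw-contractibility from \cite[\href{https://stacks.math.columbia.edu/tag/09AZ}{Tag 09AZ}]{stacks-project} (every faithfully flat ind-Zariski map out of $w$ splits, which is exactly the lifting against a single-morphism pro-Zariski cover); \ref{def:weakly-contractible:lwc:quasicompact} holds because every affine scheme is quasi-compact; \ref{def:weakly-contractible:lwc:wc-cover} is \cref{lemma:pro-zar:existence-contractible-cover}; and \ref{def:weakly-contractible:lwc:extensive} is \cref{lemma:pro-zar:w-extensive}.

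For \ref{def:weakly-contractible:lwc:sigma}, if $\{U_i \to U\}_{i \in I}$ is a finite family with $\sqcup_i U_i \cong U$, then $U = \Spec R$ and the $U_i$ correspond to an orthogonal decomposition $1 = \sum_i e_i$ by idempotents of $R$; each map $R \to R e_i$ is a Zariski localization, so each $U_i \to U$ is pro-Zariski, and the family is jointly faithfully flat. For \ref{def:weakly-contractible:lwc:coproducts-pullbacks}, the coproduct of two objects $\Spec A = \limil{i} \Spec A_i$ and $\Spec B = \limil{j} \Spec B_j$ of $\prozaraff{\smooth k}$ is $\Spec(A \times B) = \limil{(i,j)} \Spec(A_i \times B_j)$, where each $A_i \times B_j$ is smooth over $k$ and transition maps are pro-Zariski localizations (by the argument in the proof of \cref{lemma:pro-zar:existence-coproducts-pullbacks}); distributivity of coproducts over the pullbacks that exist is inherited from the ambient category of affine schemes, where it holds in general.

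The one condition requiring real content is \ref{def:weakly-contractible:lwc:w-coproducts}: closure of $W$ under finite coproducts, i.e.\ if $A$ and $B$ are zw-contractible then so is $A \times B$. The plan is: given a faithfully flat ind-Zariski map $A \times B \to C$, use that the central idempotent $e = (1,0) \in A \times B$ lifts to a central idempotent in $C$, yielding a decomposition $C \cong C_A \times C_B$ with $C_A = C \otimes_{A \times B} A$ and $C_B = C \otimes_{A \times B} B$. Since $A \times B \to A$ and $A \times B \to B$ are themselves Zariski localizations (localization at the respective idempotent), the maps $A \to C_A$ and $B \to C_B$ are faithfully flat and ind-Zariski as base changes of $A \times B \to C$. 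Applying zw-contractibility of $A$ and $B$ yields retractions $C_A \to A$ and $C_B \to B$; their product provides the desired retraction $C \cong C_A \times C_B \to A \times B$. This is the main step; once it is in place, all seven axioms of \cref{def:weakly-contractible:lwc} are verified and the lemma follows.
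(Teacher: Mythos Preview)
Your proposal is correct and follows the same verification-of-axioms approach as the paper, which simply checks \ref{def:weakly-contractible:lwc:wc}--\ref{def:weakly-contractible:lwc:extensive} by citing the relevant definitions and \cref{lemma:pro-zar:existence-contractible-cover,lemma:pro-zar:w-extensive}. The paper does not spell out \ref{def:weakly-contractible:lwc:coproducts-pullbacks} or \ref{def:weakly-contractible:lwc:w-coproducts} at all (the latter is implicitly asserted in the proof of \cref{lemma:pro-zar:w-extensive}), so your explicit idempotent-splitting argument for closure of $W$ under finite coproducts is additional detail rather than a different method.
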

\begin{proof}
    The pro-Zariski topology is a $\Sigma$-topology, since a clopen immersion
    is in particular a pro-Zariski morphism.
    The pro-Zariski topology on $\prozaraff{\smooth{k}}$ is finitary (cf.\ \cite[Definition A.3.1.1]{sag}) by definition,
    so every object is quasi-compact.
    The category $W$ is exactly the subcategory of weakly contractible objects by definition.
    Every element in $\prozaraff{\smooth{k}}$ has a cover by a weakly contractible object,
    this is the content of \cref{lemma:pro-zar:existence-contractible-cover}.
    We have seen that $W$ is extensive, see \cref{lemma:pro-zar:w-extensive}.
    This proves the lemma.
\end{proof}

\begin{thm} \label{thm:pro-zar:topos}
    We have an equivalence of categories
    \begin{equation*}
        \ShvTopH{\prozartop}{\prozar{\smooth{k}}} \cong \PSig{W}.
    \end{equation*}
\end{thm}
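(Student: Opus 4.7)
The plan is to assemble the three preceding results into a single chain of equivalences. First I would invoke \cref{lemma:pro-zar:affine-basis}, which supplies an equivalence
\begin{equation*}
    \ShvTopH{\prozartop}{\prozar{\smooth{k}}} \cong \ShvTopH{\prozartop}{\prozaraff{\smooth{k}}}
\end{equation*}
coming from the fact that every pro-Zariski scheme is covered by affine pro-Zariski schemes (so that affine pro-Zariski schemes form a basis of the topology). This reduces the problem to identifying $\ShvTopH{\prozartop}{\prozaraff{\smooth{k}}}$ with $\PSig{W}$.

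Next I would apply \cref{lemma:pro-zar:lwc}, which verifies that the site $(\prozaraff{\smooth{k}}, \prozartop)$ is locally weakly contractible in the sense of \cref{def:weakly-contractible:lwc}, with the distinguished full subcategory of weakly contractible objects being precisely $W$ (the full subcategory of zw-contractible affine schemes, see \cref{def:pro-zar:W-cat}). Here it is important that $W$ is stable under finite coproducts in $\prozaraff{\smooth k}$, is extensive (see \cref{lemma:pro-zar:w-extensive}), and that every object admits a cover by an object of $W$ (\cref{lemma:pro-zar:existence-contractible-cover}); all of these are already packaged into \cref{lemma:pro-zar:lwc}.

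Finally, I would invoke \cref{lemma:weakly-contractible:psig-comparison} applied to the site $(\prozaraff{\smooth k}, \prozartop)$ with its subcategory $W$. This lemma produces an equivalence
\begin{equation*}
    \PSig{W} \cong \ShvTopH{\prozartop}{\prozaraff{\smooth{k}}},
\end{equation*}
realized by the pair $p^\ast = L_h j^\ast \iota_\Sigma$ and $p_\ast = L_\Sigma j_\ast \iota_h$, where $j \colon W \hookrightarrow \prozaraff{\smooth k}$ is the inclusion. Composing this with the equivalence from \cref{lemma:pro-zar:affine-basis} yields the desired identification. There is essentially no obstacle here beyond checking that the hypotheses of the locally-weakly-contractible machinery apply to our site; all the real work was done in setting up \cref{lemma:weakly-contractible:psig-comparison} and in proving \cref{lemma:pro-zar:existence-contractible-cover}.
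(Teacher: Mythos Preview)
Your proof is correct and follows exactly the same route as the paper: reduce to the affine site via \cref{lemma:pro-zar:affine-basis}, verify local weak contractibility via \cref{lemma:pro-zar:lwc}, and then apply \cref{lemma:weakly-contractible:psig-comparison}. The only difference is that you spell out more of the ingredients feeding into \cref{lemma:pro-zar:lwc} and the explicit form of the comparison functors, which is harmless extra detail.
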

\begin{proof}
    There is a chain of equivalences
    \begin{equation*}
        \ShvTopH{\prozartop}{\prozar{\smooth{k}}} \cong \ShvTopH{\prozartop}{\prozaraff{\smooth{k}}} \cong \PSig{W},
    \end{equation*}
    where the equivalences are supplied by \cref{lemma:pro-zar:affine-basis,lemma:weakly-contractible:psig-comparison}.
    Here we used that the affine pro-Zariski site is locally weakly contractible, see \cref{lemma:pro-zar:lwc}.
\end{proof}

We now want to embed the category of Zariski sheaves on $\smooth{k}$
into the category of hypercomplete pro-Zariski sheaves on $\prozar{\smooth{k}}$. 
\begin{thm} \label{thm:pro-zar:embedding}
    There is a geometric morphism
    \begin{equation*}
        \nu^* \colon \ShvTopH{\zar}{\smooth{k}} \rightleftarrows \ShvTopH{\prozartop}{\prozar{\smooth{k}}} \cong \PSig{W} \colon \nu_*,
    \end{equation*}
    where the right adjoint is given by restriction,
    and the left adjoint is fully faithful.

    Moreover, an $n$-truncated sheaf $F \in \ShvTopH{\prozartop}{\prozar{\smooth{k}}}$
    is in the essential image of $\nu^*$ (i.e.\ it is classical in the notation of \cref{def:embedding:classical})
    if and only if for all $U \in \prozar{\smooth{k}}$
    and all presentations of $U$ as cofiltered limit $U \cong \limil{i} U_i$
    (with the $U_i \in \smooth{k}$ such that the transition morphisms $U_i \to U_j$
    are Zariski) the canonical map $\colimil{i} F(U_i) \to F(U)$ is an equivalence.
\end{thm}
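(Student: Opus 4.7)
The natural strategy is to apply Proposition~\ref{prop:weakly-contractible:inclusion} to the inclusion of sites $k \colon (\smooth{k}, \zar) \hookrightarrow (\prozar{\smooth{k}}, \prozartop)$. A Zariski cover of an object of $\smooth{k}$ is a fortiori a pro-Zariski cover, and both topoi are Postnikov-complete: for the Zariski side by \cref{lemma:zar:postnikov-complete}, and for the pro-Zariski side via the equivalence with $\PSig{W}$ from \cref{thm:pro-zar:topos} together with \cref{lemma:psig-postnikov-complete}. Hence Proposition~\ref{prop:weakly-contractible:inclusion} gives a geometric morphism $\nu^* \dashv \nu_*$ with $\nu_* \cong L_h' k_* \iota_h$ identified with restriction along $k$ (third bullet of the proposition).

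The main content, and the main obstacle, is to verify the extra hypothesis of \cref{prop:weakly-contractible:inclusion}: for every $n$-truncated Zariski hypersheaf $F$ on $\smooth{k}$, the left Kan extension $k^*\iota'_h F$ is already a pro-Zariski hypersheaf on $\prozar{\smooth{k}}$. First I would compute this Kan extension on cofiltered pro-Zariski presentations $U = \limil{i} U_i$ (with $U_i \in \smooth{k}$ and Zariski transitions) by a cofinality argument: the comma category $(U \downarrow k)$ is cofiltered, and the system $(U_i)_i$ is initial in it because any map $U \to V$ with $V \in \smooth{k}$ factors through some $U_i$ (using that each $U_i$ is of finite type over $k$). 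This yields $(k^*\iota'_h F)(U) \cong \colimil{i} F(U_i)$. To check pro-Zariski descent, I would use that every pro-Zariski cover of such a $U$ can be refined by a cover coming from a finite stage (write the cover as a pullback of finitely many Zariski localizations defined on some $U_i$, using that pro-Zariski localizations are cofiltered limits of Zariski ones on a common index), then commute the filtered colimit $\colimil{i}$ past the finite \v{C}ech totalization, which is allowed since $F$ is $n$-truncated and the \v{C}ech totalization of a truncated sheaf is a finite limit. Zariski descent of $F$ at the finite level then produces pro-Zariski descent for $k^*\iota'_h F$.

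For the characterization of classical objects, let $F \in \ShvTopH{\prozartop}{\prozar{\smooth{k}}}$ be $n$-truncated. If $F \cong \nu^* G$, then by the identification of $\nu^*$ above on $n$-truncated objects with $k^*\iota'_h$ followed by the (now trivial) sheafification, and by the cofinality computation in the previous paragraph, we get $F(U) \cong \colimil{i} G(U_i)$; but $G(U_i) \cong (\nu_*\nu^* G)(U_i) \cong F(U_i)$ since $\nu^*$ is fully faithful and $\nu_*$ is restriction, which gives the continuity condition. Conversely, suppose $F$ satisfies the continuity condition. The counit $\nu^*\nu_* F \to F$ is a map between $n$-truncated pro-Zariski hypersheaves; on each $V \in \smooth{k}$ it restricts to the identity $F(V) \to F(V)$ since $\nu_*$ is restriction and the counit of a fully faithful left adjoint is an isomorphism after applying $\nu_*$. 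Both $\nu^*\nu_* F$ (by the first direction applied to $\nu_* F$) and $F$ (by assumption) satisfy the continuity condition, so they agree on every $U = \limil{i} U_i \in \prozar{\smooth{k}}$, proving $F$ is classical.

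The hardest step will be the descent verification in the second paragraph: carefully presenting an arbitrary pro-Zariski cover of a pro-Zariski scheme as a cofiltered limit of Zariski covers at finite level, in such a way that the cofiltered colimit commutes with the (finite) \v{C}ech totalization for the truncated sheaf $F$. Once this is set up, the rest of the argument is formal.
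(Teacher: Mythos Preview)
Your proposal is correct and follows essentially the same route as the paper: apply \cref{prop:weakly-contractible:inclusion}, verify Postnikov-completeness on both sides via \cref{lemma:zar:postnikov-complete} and \cref{thm:pro-zar:topos} with \cref{lemma:psig-postnikov-complete}, and then check that the left Kan extension of an $n$-truncated Zariski sheaf is already a pro-Zariski sheaf by exploiting that the sheaf condition is a finite limit for truncated objects, hence commutes with the filtered colimit in the Kan extension formula.

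One small streamlining the paper uses that you might adopt: rather than presenting an arbitrary pro-Zariski cover as pulled back from a finite stage in one go, the paper first reduces to the affine site via \cref{lemma:pro-zar:affine-basis}, then separately verifies that $k^*\iota_h' F$ is a \emph{Zariski} sheaf (by pulling finite open covers back to a finite stage), and uses this to replace any pro-Zariski cover $\{V_j \to V\}$ by the single ind-Zariski cover $\sqcup_j V_j \to V$; only then does it write the single cover as a filtered colimit of Zariski covers. This two-step reduction avoids having to control the index categories of several pro-Zariski maps simultaneously. Your characterization of the essential image is spelled out more explicitly than in the paper, which leaves it implicit in the identification $\iota_h \nu^* \cong k^* \iota_h'$ on truncated objects; your argument for the converse direction via the counit is the right one.
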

\begin{proof}
    We want to apply \cref{prop:weakly-contractible:inclusion}
    with $\Cat C = \prozar{\smooth k}$ with the pro-Zariski topology and $\Cat C' = \smooth{k}$
    with the Zariski topology, where we use the notation from \cref{prop:weakly-contractible:inclusion}.

    We have seen in \cref{lemma:zar:postnikov-complete} that $\ShvTopH{\zar}{\smooth{k}} \cong \ShvTopNH{\zar}{\smooth{k}}$
    is Postnikov-complete.
    Note that $\ShvTopH{\prozartop}{\prozar{\smooth{k}}} \cong \PSig{W}$ by \cref{thm:pro-zar:topos},
    thus this $\infty$-topos is also Postnikov-complete, see \cref{lemma:psig-postnikov-complete}.

    It remains to prove that $\iota_h j^* F \cong k^* \iota_h' F$
    for every $n$-truncated Zariski sheaf $F \in \ShvTopH{\zar}{\smooth{k}}$,
    i.e.\ we have to show that the presheaf $k^* \iota_h' F$
    is already a pro-Zariski hypersheaf.
    But note that $\ShvTopH{\zar}{\smooth{k}}_{\le n} \cong \ShvTopNH{\zar}{\smooth{k}}_{\le n}$
    (since every $\infty$-connective object in $\ShvTopNH{\zar}{\smooth{k}}$ which is also $n$-truncated 
    is automatically $0$),
    so it suffices to proof that $k^* \iota_h' F$ is a pro-Zariski sheaf.
    Note that by definition if $U \in \prozar{\smooth{k}}$
    is a scheme with presentation as a cofiltered limit $U = \limil{i} U_i$ with $U_i \in \smooth{k}$,
    $(k^* \iota_h' F)(U) \cong \colimil{i} F(U_i)$.

    Using \cref{lemma:pro-zar:affine-basis},
    it suffices to show that $k^* \iota_h' F$ has descent for all pro-Zariski covers $\{V_j \to V\}_j$
    with $V_j$ and $V$ in $\prozaraff{\smooth k}$, i.e.\ all schemes are affine.
    First note that $k^* \iota_h' F$ is a Zariski sheaf: If $\Spec{B} = \bigcup_j U_j$ is a finite
    union of affine open subschemes, and $B$ is a filtered colimit of smooth algebras $B_i$ (where 
    the transition maps are Zariski),
    then this union is pulled back from some $B_i$ (since open immersions are of finite presentation).
    But $F$ is a Zariski sheaf on $\smooth{k}$ by assumption.
    Now let $\{V_j \to V\}_j$ be some pro-Zariski cover.
    Note that $\{V_j \to \sqcup_k V_k\}$ is a Zariski cover.
    Thus, since $k^* \iota_h'F$ satisfies Zariski descent, we can reduce to the case that the cover 
    is of the form $\{ \Spec{f} \}$ for a single ind-Zariski cover $f \colon B \to C$.
    Write $C = \colimil{i} C_i$ as a filtered colimit of Zariski covers $B \to C_i$.
    Again, since $k^* \iota_h' F$ satisfies Zariski descent, we have descent for these covers.
    Thus, the claim follows by taking filtered colimits (note that filtered colimits commute with finite limits,
    and since $k^* \iota_h' F$ is $n$-truncated, the sheaf axiom is actually a finite limit).
    This proves the theorem.
\end{proof}

\begin{cor} \label{cor:pro-zar:essential-image-heart}
    Let $A \in \tstructheart{\ShvTop{\prozartop}{\prozar{\smooth{k}}, \Sp}}$.
    Then $A$ is in the essential image of $\nu^*$ 
    if and only if for all $U \in \prozar{\smooth{k}}$
    and all presentations of $U$ as cofiltered limit $U \cong \limil{i} U_i$
    (with the $U_i \in \smooth{k}$ such that the transition morphisms $U_i \to U_j$
    are Zariski) the canonical map $\colimil{i} \heart{\Gamma}(U_i, A) \to \heart{\Gamma}(U, A)$ is an equivalence.
\end{cor}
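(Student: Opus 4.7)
The plan is to reduce the statement to \cref{thm:pro-zar:embedding} by passing to the underlying $0$-truncated discrete abelian sheaf associated to $A$. First, I would observe that $\nu^*$ is t-exact for the standard t-structures by \cref{lemma:stabilization:homotopy-objects} (as the left-exact left adjoint of a geometric morphism between $\infty$-topoi), and fully faithful on stabilizations by \cref{lemma:fully-faithful-on-stabilizations}. Consequently, $A \in \heart{\ShvTop{\prozartop}{\prozar{\smooth{k}}, \Sp}}$ is in the essential image of $\nu^*$ if and only if $A \cong \nu^* E$ for some $E \in \heart{\ZarShvSp{k}}$ (given any preimage $F$, the object $\tau_{\ge 0} \tau_{\le 0} F$ works by t-exactness).

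Next, I would use the equivalence $\heart{\spectra{X}} \cong \AbObj{\Disc{\topos X}}$ realized by $E \mapsto \heart{\Gamma}(-, E)$ (from the Conventions section) to replace $A$ by its underlying abelian group object $\bar{A} \in \AbObj{\Disc{\ShvTopH{\prozartop}{\prozar{\smooth{k}}}}}$, with value $\bar{A}(U) = \heart{\Gamma}(U, A)$. Under this identification, and since $\nu^*$ is t-exact so that $\heart{\nu^*} \cong \nu^*|_{\heart{(-)}}$, the question becomes whether $\bar{A}$ lies in the essential image of $\nu^*$ regarded as a functor between categories of abelian group objects in the underlying $1$-topoi. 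Because $\nu^*$ commutes with the forgetful functor to discrete sheaves of sets (it preserves finite products, being left exact) and this forgetful functor is conservative and limit-preserving, and because $\nu^*$ is fully faithful, one checks easily that $\bar{A}$ is in the essential image on abelian group objects if and only if its underlying discrete sheaf of sets is in the essential image of $\nu^* \colon \Disc{\ShvTopH{\zar}{\smooth{k}}} \to \Disc{\ShvTopH{\prozartop}{\prozar{\smooth{k}}}}$; indeed, if $G$ is a discrete Zariski sheaf with $\nu^* G \cong \bar{A}$ as sets, then the abelian group structure maps on $\bar{A}$ transport back to $G$ via fully faithfulness of $\nu^*$ and the identification $\nu^*(G \times G) \cong \nu^* G \times \nu^* G$.

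Finally, I would apply \cref{thm:pro-zar:embedding} to $\bar{A}$ viewed as a $0$-truncated object of $\ShvTopH{\prozartop}{\prozar{\smooth{k}}}$. The theorem states that $\bar{A}$ is in the essential image of $\nu^*$ precisely when, for every $U \in \prozar{\smooth{k}}$ and every presentation $U \cong \limil{i} U_i$ with the $U_i \in \smooth{k}$ and transition maps Zariski, the canonical map $\colimil{i} \bar{A}(U_i) \to \bar{A}(U)$ is an equivalence. Rewriting $\bar{A}(V) = \heart{\Gamma}(V, A)$ yields exactly the stated criterion. The only mild obstacle is the bookkeeping in the second paragraph (carefully tracking that essential image on hearts agrees with essential image on abelian group objects, which agrees with essential image on underlying discrete sheaves); everything else is a direct consequence of the already-established \cref{thm:pro-zar:embedding}.
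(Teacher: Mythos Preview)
Your proposal is correct and follows essentially the same approach as the paper: both reduce to \cref{thm:pro-zar:embedding} via the identification $\heart{\spectra{X}} \cong \AbObj{\Disc{\topos X}}$ given by $A \mapsto \heart{\Gamma}(-,A)$. The paper's proof is considerably terser, simply invoking this equivalence and the fact that $\heart{\Gamma}(-,A)$ is $0$-truncated, whereas you spell out explicitly the bookkeeping (t-exactness and fully faithfulness of $\nu^*$, and the transport of abelian group structures) that the paper treats as implicit.
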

\begin{proof}
    Recall that the equivalence of abelian categories 
    \begin{equation*}
        \tstructheart{\ShvTop{\prozartop}{\prozar{\smooth{k}}, \Sp}} \xrightarrow{\cong} \AbObj{\Disc{\ShvTop{\prozartop}{\prozar{\smooth{k}}}}}
    \end{equation*}
    is given by $A \mapsto \heart{\Gamma}(-, A)$.
    Note that the sheaf $\heart{\Gamma}(-, A)$ is $0$-truncated.
    Thus, the result follows immediately from \cref{thm:pro-zar:embedding}.
\end{proof}

\newpage
\bibliographystyle{alpha}
\bibliography{bibliography}

\end{document}